\tikzset{elegant/.style={smooth,thick,samples=50,cyan}}
\tikzset{liltext/.style={font=\tiny}}
\numberwithin{equation}{section}
\newtheorem{theorem}{Theorem}[section]
\newtheorem{proposition}{Proposition}[section]
\newtheorem{lemma}{Lemma}[section]
\newtheorem{corollary}{Corollary}[section]
\newtheorem{remark}{Remark}[section]
\newtheorem{definition}{Definition}[section]
\newcommand\Zhyp{Z_{{\text{hyp}}}}
\newcommand\Zpd{Z_{{\text{pd}}}}
\newcommand\Zred{Z_{{\text{red}}}}
\newcommand\Zell{Z_{{\text{ell}}}}
\newcommand\Zdiss{Z_{{\text{diss}}}}
\DeclareMathOperator{\diag}{diag}
\DeclareMathOperator{\antidiag}{antidiag}
\title{Evolution models with time-dependent coefficients in friction and viscoelastic damping terms}
\author[1]{Halit Sevki Aslan\thanks{Halit Sevki Aslan (halitsevkiaslan@gmail.com)}}
\affil[1]{Department of Computer Science and Mathematics, University of S\~ao Paulo (USP),
\newline 14040-901 Ribeir\~ao Preto, SP, Brazil}
\author[2]{Michael Reissig \thanks{Michael Reissig (reissig@math.tu-freiberg.de)}}
\affil[2]{Faculty for Mathematics and Computer Science, TU Bergakademie Freiberg,
\newline Pr\"{u}ferstr. 9, 09596, Freiberg, Germany}
\date{}
\begin{document}

\maketitle

\begin{abstract}
We study the following Cauchy problem for the linear wave equation with both time-dependent friction and time-dependent viscoelastic damping:
\begin{equation} \label{EqAbstract}\tag{$\ast$}
\begin{cases}
u_{tt}- \Delta u + b(t)u_t - g(t)\Delta u_t=0, &(t,x) \in (0,\infty) \times \mathbb{R}^n, \\
u(0,x)= u_0(x),\quad u_t(0,x)= u_1(x), &x \in \mathbb{R}^n.
\end{cases}
\end{equation}
Our aim is to derive decay estimates for higher order energy norms of solutions to this problem. We focus on the interplay between the time-dependent coefficients in both damping terms and their influence on the qualitative behavior of solutions. The analysis is based on a classification of the damping mechanisms, frictional damping $b(t)u_t$ and viscoelastic damping $-g(t)\Delta u_t$ as well, and employs the WKB-method in the extended phase space.
\end{abstract}
	
\noindent\textbf{Keywords:}  wave equation; friction damping; viscoelastic damping; time-dependent damping; higher order energy estimates; WKB-analysis. \\
	
\noindent\textbf{AMS Classification (2020)}  35L05, 35L15, 35B40

\fontsize{12}{15}
\selectfont

\tableofcontents
\section{Introduction}\label{Sec_Intro}
\subsection{Our model and goal in this paper} \label{Section_OurGoal}
We consider the following Cauchy problem for the linear wave model with time-dependent friction and time-dependent viscoelastic damping terms:
\begin{equation} \label{MainEquation}
\begin{cases}
u_{tt}- \Delta u + b(t)u_t - g(t)\Delta u_t=0, &(t,x) \in (0,\infty) \times \mathbb{R}^n, \\
u(0,x)= u_0(x),\quad u_t(0,x)= u_1(x), &x \in \mathbb{R}^n.
\end{cases}
\end{equation}
Our goal in this paper is to study decay behaviors of the solution to the Cauchy problem \eqref{MainEquation}.

Very recently, the authors of this paper studied in \cite{AslanReissig2023} the following Cauchy problem for the linear viscoelastic damped wave model with general time-dependent coefficient $g=g(t)$:
\begin{equation} \label{ViscoelasticEquation}
\begin{cases}
u_{tt}- \Delta u -g(t)\Delta u_t=0, &(t,x) \in (0,\infty) \times \mathbb{R}^n, \\
u(0,x)= u_0(x),\quad u_t(0,x)= u_1(x), &x \in \mathbb{R}^n.
\end{cases}
\end{equation}
The authors distinguished the following separate cases for the time-dependent coefficient $g=g(t)$:
\begin{enumerate}
\item models with increasing time-dependent coefficient $g=g(t)$,
\item models with integrable and decaying time-dependent coefficient $g=g(t)$,
\item models with non-integrable and decreasing time-dependent coefficient $g=g(t)$,
\item models with non-integrable and slowly increasing time-dependent coefficient $g=g(t)$.
\end{enumerate}
They studied decay rates of energies of higher order for solutions to the Cauchy problem \eqref{ViscoelasticEquation} based on the classification of time-dependent coefficient $g=g(t)$ in the viscoelastic damping.

On the other hand, in the PhD thesis \cite{WirthThesis} and in the papers \cite{Wirth-Noneffective=2006,Wirth-Effective=2007} the author proposed a classification of time-dependent friction for the following model:
\begin{equation} \label{WirthModel}
\begin{cases}
u_{tt}- \Delta u + b(t)u_t=0, &(t,x) \in (0,\infty) \times \mathbb{R}^n, \\
u(0,x)= u_0(x),\quad u_t(0,x)= u_1(x), &x \in \mathbb{R}^n.
\end{cases}
\end{equation}
The classification of the term $b(t)u_t$ was proposed as follows:
\begin{enumerate}
\item scattering producing to free wave equation,
\item non-effective dissipation,
\item effective dissipation,
\item over-damping producing.
\end{enumerate}

After these classifications of the models \eqref{ViscoelasticEquation} and \eqref{WirthModel}, the following challenging question naturally comes into mind:\medskip

\textbf{Question:} \textit{How does the classification of the damping term $b(t)u_t$ and the classification of the viscoelastic damping term $-g(t)\Delta u_t$ affect the decay rates of energies of higher order for solutions to the Cauchy problem \eqref{MainEquation}? In other words, how does the relationship between both damping terms $b(t)u_t$ and $-g(t)\Delta u_t$ affect the decay rates of the solutions to the Cauchy problem \eqref{MainEquation}?}
\medskip

We are going to divide our considerations into following cases depending on the classification of the friction:
\begin{itemize}
\item the friction term is scattering producing in Section \ref{Section_Scattering},
\item the friction term is non-effective in Section \ref{Section_Noneffective},
\item the friction term is effective in Section \ref{Section_Effective},
\item the friction term is over-damping producing in Section \ref{Section_Overdamping}.
\end{itemize}
Then, we will employ in our considerations the classification of the viscoelastic damping, as well.

\subsection{Background for some evolution models with time-dependent coefficients} \label{Section1.1}
Let us consider some known results for the linear Cauchy problem to the wave equation with time-dependent dissipation in \eqref{WirthModel}. The term $b(t)u_t$ is called the damping term, which prevents the motion of the wave and reduces its energy. The coefficient $b=b(t)$ represents the strength of the damping, and the asymptotic behavior of solutions, as well as the decay of wave energy, depends crucially on this time-dependent coefficient.\\
According to Wirth's classification,
\begin{itemize}
\item if $b\in L^1([0,\infty))$, the damping term does not significantly influence the long-time behavior of solutions. In this case, the solution scatters to that of the free wave equation as $t\to\infty$, and the damping is said to be \textit{scattering producing};
\item if the $L^p-L^q$ estimates for the solution to the Cauchy problem \eqref{WirthModel} are closely related to those of the solutions to the free wave equation, then the damping term is called \textit{non-effective};
\item if the solution decays like that of the corresponding parabolic Cauchy problem
\begin{align*}
\begin{cases}
v_t=\frac{1}{b(t)}\Delta v, & (t,x)\in[0,\infty)\times \mathbb{R}^n, \\
v(0,x)=v_0(x), & x\in\mathbb{R}^n,
\end{cases}
\end{align*}
for some suitable data $v_0$, depending on $u_0$, $u_1$ and $b=b(t)$,  the damping term is called \textit{effective}, as it has a stronger influence on the solution's behavior;
\item if $\frac{1}{b(t)}\in L^1([0,\infty))$, the damping is so strong that no decay estimate for the energy is possible. This situation is referred to as \textit{over-damping producing}.
\end{itemize}
We note that in both the scattering and over-damping cases, no energy decay occurs in general. Detailed definitions and a more in-depth discussion of this classification will be provided in the corresponding sections.

Regarding recent contributions to the study of the following Cauchy problem for \emph{structurally damped $\sigma$-evolution equation with time-dependent dissipation}:
\begin{equation*}\label{Equation_sigma-delta}
\begin{cases}
u_{tt}+ (-\Delta)^\sigma u+ g(t) (-\Delta)^\delta u_t = 0, &(t,x) \in (0,\infty) \times \mathbb{R}^n, \\
u(0,x)= u_0(x),\quad u_t(0,x)= u_1(x), &x \in \mathbb{R}^n,
\end{cases}
\end{equation*}
with $\sigma\geq 1$ and $\delta\in(0,\sigma]$, we refer to \cite{{DAbbiccoEbert2016}, {EbertReissigBook}, {KainaneReissig2015-1}, {KainaneReissig2015-2}, {ReissigLu2009}, {Reissig=2011}, {JuniordaLuzNoneffective}, {JuniordaLuzEffective}, {WirthThesis}, {Wirth-Noneffective=2006}, {Wirth-Effective=2007}}, and especially to the recent paper  \cite{AslanReissig2023} and the references therein.

Finally, in \cite{AbdelatifReissig=2023}, the authors studied the model \eqref{MainEquation} with scale-invariant friction and viscoelastic damping, specifically choosing $b(t)=\dfrac{\mu_1}{1+t}$ and $g(t)=\mu_2(1+t)$ in \eqref{MainEquation}, with the nonlinearity $|u|^p$ on the right-hand side, where $\mu_1$ and $\mu_2$ are positive constants.
Let us note that the treatment of the linear problem in this case differs significantly due to the scale-invariance of the coefficients, which allows for an explicit representation of solutions using special functions and facilitates a more precise analysis.
\subsection{Two changes of variables} \label{Section_OurApproach}
To analyze the Cauchy problem \eqref{MainEquation}, we apply the partial Fourier transformation with respect to spatial variables $\hat{u}(t,\xi)=\mathcal{F}_{x\rightarrow\xi}\left( u(t,x) \right)$, and obtain
\begin{equation} \label{MainEquationFourier}
\begin{cases}
\hat{u}_{tt} + |\xi|^2\hat{u} + \big( b(t) + g(t)|\xi|^2 \big)\hat{u}_t = 0, &(t,\xi) \in [0,\infty) \times \mathbb{R}^n, \\
\hat{u}(0,\xi) = \hat{u}_0(\xi),\quad \hat{u}_t(0,\xi)= \hat{u}_1(\xi), &\xi \in \mathbb{R}^n.
\end{cases}
\end{equation}
Depending on whether or not we utilize derivatives of $b$ in the transformed model, we will consider two different changes of variables in the forthcoming analysis.
\medskip

\noindent \textbf{First change of variables:} If we apply the change of variables
\[ \hat{u}(t,\xi)=\exp\Big(-\frac{1}{2} \int_0^t g(\tau)|\xi|^2 d\tau\Big) v(t,\xi), \]
then we get
\begin{equation} \label{AuxiliaryEquation1}
\begin{cases}
v_{tt} + |\xi|^2\Big(1-\dfrac{b(t)g(t)}{2}-\dfrac{g^2(t)|\xi|^2}{4}-\dfrac{g'(t)}{2}\Big)v+b(t)v_t=0, &(t,\xi) \in [0,\infty) \times \mathbb{R}^n, \\
v(0,\xi)= v_0(\xi),\quad v_t(0,\xi)= v_1(\xi), &\xi \in \mathbb{R}^n,
\end{cases}
\end{equation}
where
\[ v_0(\xi) = \hat{u}_0(\xi) \quad \text{and} \quad v_1(\xi) = \frac{g(0)}{2}|\xi|^2\hat{u}_0(\xi) + \hat{u}_1(\xi). \]
\textbf{Second change of variables:} Applying the change of variables
\[ \hat{u}(t,\xi)=\exp\Big(-\frac{1}{2} \int_0^t \big( b(\tau)+g(\tau)|\xi|^2 \big)d\tau\Big) w(t,\xi) \]
we arrive at
\begin{equation} \label{AuxiliaryEquation3}
\begin{cases}
w_{tt} + \Big[ |\xi|^2\Big( 1-\dfrac{b(t)g(t)}{2}-\dfrac{g^2(t)|\xi|^2}{4}-\dfrac{g'(t)}{2} \Big)-\dfrac{b^2(t)}{4}-\dfrac{b'(t)}{2} \Big]w=0, &(t,\xi) \in [0,\infty) \times \mathbb{R}^n, \\
w(0,\xi)= w_0(\xi),\quad w_t(0,\xi)= w_1(\xi), &\xi \in \mathbb{R}^n,
\end{cases}
\end{equation}
where
\[ w_0(\xi) = \hat{u}_0(\xi) \quad \text{and} \quad w_1(\xi) = \Big( \frac{b(0)}{2} + \frac{g(0)}{2}|\xi|^2 \Big)\hat{u}_0(\xi) + \hat{u}_1(\xi). \]
\textbf{Notations}
\begin{itemize}
\item We write $f\lesssim g$ when there exists a constant $C>0$ such that $f\leq Cg$, and $f \approx g$ when $g\lesssim f\lesssim g$.
\item $f\sim g$ denotes $\lim_{t\rightarrow\infty}\dfrac{f(t)}{g(t)}=1$, that is, $f$ and $g$ have the same asymptotic behavior.
\item $D_t$ stands for $-i\partial_t$.
\item As usual, the spaces $H^a$ and $\dot{H}^a$ with $a \geq 0$ stand for Bessel and Riesz potential spaces based on the $L^2$ space. Here $\langle D\rangle^a$ and $|D|^a$ denote the pseudo-differential operator with symbol $\langle\xi\rangle^a$ and the fractional Laplace operator with symbol $|\xi|^a$, respectively.
\item $(|A|)$ denotes the matrix of absolute values of its entries $a_{jk}$ and $(|A|)\leq (|B|)$ denotes $a_{jk}\leq b_{jk}$ for $j,k=1, \cdots,n$. Moreover, the identity matrix is denoted by $I$.
\end{itemize}

\section{The friction term is scattering producing} \label{Section_Scattering}
In this section, we would like to study the Cauchy problem \eqref{MainEquation} with a scattering producing friction $b(t)u_t$ and with the given classification of the viscoelastic damping $-g(t)\Delta u_t$ in \cite{AslanReissig2023}, separately in the following subsections.
\subsection{Model with increasing time-dependent coefficient $g=g(t)$} \label{Subsect_Scattering_Increasing}
As we studied in Section 2 of the paper \cite{AslanReissig2023}, we assume the following properties for the coefficient $g=g(t)$ for all $t \in [0,\infty)$:
\begin{enumerate}
\item[\textbf{(A1)}] $g(t)>0$ and $g'(t)>0$,
\item[\textbf{(A2)}] $\dfrac{1}{g} \in L^1([0,\infty))$,
\item[\textbf{(A3)}] $|d_t^kg(t)|\leq C_kg(t)\Big( \dfrac{g(t)}{G(t)} \Big)^k$ for $k=1,2$, where $G(t):=\dfrac{1}{2}\displaystyle\int_0^t g(\tau)d\tau$ and $C_1$, $C_2$ are positive constants.
\end{enumerate}
On the other hand, we pose the following conditions to the coefficient $b=b(t)>0$ for all $t \in [0,\infty)$:
\begin{enumerate}
\item[\textbf{(A'1)}] $b'(t)<0$ and $b\in L^1([0,\infty))$,
\item[\textbf{(A'2)}] $b(t)\leq \widetilde{C}\dfrac{g(t)}{G(t)}$, where $\widetilde{C}$ is positive constant,
\item[\textbf{(A'3)}] $|b'(t)| \leq c \dfrac{b(t)}{1+t}$, where $c$ is a positive constant.
\end{enumerate}
We are not able to consider a general scattering producing friction $b(t)u_t$ with $b \in L^1([0,\infty))$, but we have to suppose more structural properties \textbf{(A'2)} and \textbf{(A'3)}.
\begin{theorem} \label{Theorem_Sect-Scattering_Increasing}
Let us consider the Cauchy problem
\begin{equation*}
\begin{cases}
u_{tt}- \Delta u + b(t)u_t -g(t)\Delta u_t=0, &(t,x) \in [0,\infty) \times \mathbb{R}^n, \\
u(0,x)= u_0(x),\quad u_t(0,x)= u_1(x), &x \in \mathbb{R}^n.
\end{cases}
\end{equation*}
We assume that the coefficients $g=g(t)$ and $b=b(t)$ satisfy the conditions \textbf{(A1)} to \textbf{(A3)} and \textbf{(A'1)} to \textbf{(A'3)}, respectively. Moreover, we suppose that $(u_0,u_1)\in \dot{H}^{|\beta|} \times \dot{H}^{|\beta|-2}$ with $|\beta|\geq 2$. Then, we have the following estimates for Sobolev solutions:
\begin{align*}
\|\,|D|^{|\beta|} u(t,\cdot)\|_{L^2} & \lesssim  \|u_0\|_{\dot{H}^{|\beta|}} + \|u_1\|_{\dot{H}^{|\beta|-2}},\\
\|\,|D|^{|\beta|-2} u_t(t,\cdot)\|_{L^2} & \lesssim g(t)\big( \|u_0\|_{\dot{H}^{|\beta|}} + \|u_1\|_{\dot{H}^{|\beta|-2}} \big).
\end{align*}
\end{theorem}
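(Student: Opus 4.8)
The plan is to apply the partial Fourier transform with respect to $x$ and to reduce both assertions to the uniform-in-$(t,\xi)$ pointwise multiplier bounds $|\xi|^{|\beta|}|\hat u(t,\xi)|\lesssim|\xi|^{|\beta|}|\hat u_0(\xi)|+|\xi|^{|\beta|-2}|\hat u_1(\xi)|$ and $|\xi|^{|\beta|-2}|\hat u_t(t,\xi)|\lesssim g(t)\big(|\xi|^{|\beta|}|\hat u_0(\xi)|+|\xi|^{|\beta|-2}|\hat u_1(\xi)|\big)$, after which Plancherel yields the $L^2$-estimates (note $|\beta|\ge2$, so $|\beta|-2\ge0$). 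The basic mechanism is the energy identity for \eqref{MainEquationFourier}, $\frac{d}{dt}\big(|\hat u_t|^2+|\xi|^2|\hat u|^2\big)=-2\big(b(t)+g(t)|\xi|^2\big)|\hat u_t|^2\le0$, so $E(t,\xi):=|\hat u_t(t,\xi)|^2+|\xi|^2|\hat u(t,\xi)|^2$ is non-increasing and $|\xi|\,|\hat u(t,\xi)|$, $|\hat u_t(t,\xi)|\lesssim\sqrt{E(0,\xi)}\lesssim|\hat u_1(\xi)|+|\xi|\,|\hat u_0(\xi)|$. Following \cite{AslanReissig2023}, fix a large constant $N$ and split $[0,\infty)\times\mathbb{R}^n_\xi$ by the separating curve $t_\xi$ — given by $g(t_\xi)|\xi|=N$ when $|\xi|\le N/g(0)$, otherwise the full half-line lying in the dissipative part — into a hyperbolic zone $\Zhyp=\{|\xi|g(t)\le N\}$, which is contained in $\{|\xi|\le N/g(0)\}$ and hence consists of bounded frequencies, and a dissipative zone $\Zell=\{|\xi|g(t)\ge N\}$, with a thin reduced zone in between. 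Two elementary facts from \textbf{(A1)}--\textbf{(A3)} will be used repeatedly: $1/g\in L^1$ and $g'>0$ force $G(t)\lesssim g(t)^2$ and $t\lesssim g(t)$; combined with $b\in L^1$ and \textbf{(A'2)} this gives that $\int_0^t\big(b(\tau)+g(\tau)|\xi|^2\big)\,d\tau$ is bounded uniformly on $\Zhyp$.

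On $\Zhyp$ the energy bound, together with $|\xi|\lesssim1$, already yields the first inequality and the $\hat u_1$-part of the second; the only remaining point is the $\hat u_0$-part of $|\xi|^{|\beta|-2}|\hat u_t|$, which the energy bound controls only by $|\xi|^{|\beta|-1}|\hat u_0|$ and for which one needs the improvement $|\hat u_t|\lesssim|\xi|^2t\,|\hat u_0|$. I would obtain this from the integrated equation $\hat u_t(t,\xi)=\hat u_1(\xi)-\int_0^t\big(b+g|\xi|^2\big)\hat u_t\,d\tau-|\xi|^2\int_0^t\hat u\,d\tau$ together with the bound $|\hat u(\tau,\xi)|\lesssim|\hat u_0(\xi)|+|\hat u_1(\xi)|/|\xi|$ on $\Zhyp$ coming from the energy identity: this is a Gronwall inequality whose non-decreasing inhomogeneity is $\lesssim|\hat u_1|+|\xi|^2t\,|\hat u_0|$ (using $|\xi|t\lesssim|\xi|g(t)\le N$) and whose kernel $b+g|\xi|^2$ has uniformly bounded $t$-integral on $\Zhyp$, hence $|\hat u_t(t,\xi)|\lesssim|\hat u_1(\xi)|+|\xi|^2t\,|\hat u_0(\xi)|$ there; since $t\lesssim g(t)$ and $g(t)\ge g(0)>0$, the required bound follows. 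Alternatively one can symmetrise via the change of variables \eqref{AuxiliaryEquation3} and carry out a WKB diagonalisation, with the terms $b^2$, $b'$ and $bg|\xi|^2$ treated as perturbations controlled by \textbf{(A'1)}--\textbf{(A'3)}.

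On $\Zell$ I would diagonalise the first-order system for \eqref{MainEquationFourier}, whose characteristic roots are $\lambda_\pm=\tfrac12\big(-(b+g|\xi|^2)\pm\sqrt{(b+g|\xi|^2)^2-4|\xi|^2}\big)$. Since $g|\xi|^2\ge N|\xi|$ dominates $4|\xi|^2$ here, one has $\lambda_+\approx-\tfrac{|\xi|^2}{b+g|\xi|^2}$ with $|\lambda_+|\le\tfrac1{g(t)}$, $\lambda_-\approx-(b+g|\xi|^2)$, and a large gap $\lambda_+-\lambda_-\approx b+g|\xi|^2$; crucially $b\ge0$ only decreases $|\lambda_+|$ and enlarges the gap, so the scattering-producing friction is harmless for the upper bounds, and the diagonalisation remainder is absorbed using the symbol estimates \textbf{(A3)}, \textbf{(A'3)} as in \cite{AslanReissig2023}. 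The slow mode $\exp\!\big(\int_{t_\xi}^t\lambda_+\,d\tau\big)$ is $\lesssim1$, indeed bounded below by a positive constant since $1/g\in L^1$, while the fast mode decays rapidly; solving the diagonal system and matching across the bounded reduced zone at $t\approx t_\xi$ gives, on $\Zell$, $|\hat u(t,\xi)|\lesssim|\hat u_0(\xi)|+\tfrac1{g(t)|\xi|^2}|\hat u_1(\xi)|$ and $|\hat u_t(t,\xi)|\lesssim\tfrac1{g(t)}|\hat u_0(\xi)|+|\hat u_1(\xi)|$ up to bounded constants. Using $g(t)|\xi|\ge N$, i.e.\ $1/(g(t)|\xi|^2)\le1/(N|\xi|)$ and $1/(g(t)^2|\xi|^2)\le1/N^2$, these turn into the two required multiplier bounds, and combining with the hyperbolic case finishes the proof.

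I expect the main obstacle to be the interplay of the two zones with the control of the friction: one has to verify that \textbf{(A'2)}, which ties $b$ to the natural scale $g/G$ of the viscoelastic problem, together with \textbf{(A'1)} and \textbf{(A'3)}, is strong enough to keep $\int_0^{t_\xi}\big(b+g|\xi|^2\big)\,d\tau$ uniformly bounded and to make the $b$- and $b'$-contributions to the diagonalisation remainder on $\Zell$ controllable within the symbol hierarchy of \cite{AslanReissig2023}; this is precisely where the structural hypotheses on $b$, and not merely $b\in L^1$, are indispensable. A secondary but essential point is that the weight $g(t)$ in the $u_t$-estimate is produced exactly by $t\lesssim g(t)$ on $\Zhyp$ and by $1/g(t)^2\lesssim|\xi|^2$ on $\Zell$, both immediate consequences of \textbf{(A1)}--\textbf{(A3)}. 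The reduced-zone matching and the translation between $(\hat u_0,\hat u_1)$ and the transformed data $(v_0,v_1)$, $(w_0,w_1)$ are routine.
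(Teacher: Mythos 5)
Your strategy is plausible and leads to the same conclusion, but it is not the paper's route and there is one point that needs repair. The paper splits by the curve $G(t_\xi)|\xi|^2 = N$ with $G(t)=\tfrac12\int_0^t g$, into $\Zpd(N)=\{G(t)|\xi|^2\le N\}$ (which contains the initial line $t=0$ for \emph{every} $|\xi|$) and $\Zell(N)=\{G(t)|\xi|^2\ge N\}$; in $\Zell$ it first performs the change of variables $\hat u=\exp(-\tfrac12|\xi|^2\int_0^t g)v$ and then runs a two-step diagonalisation for the micro-energy $\big(\tfrac{g}{2}|\xi|^2v,D_tv\big)$, while in $\Zpd$ it estimates the four entries of $E_{\rm pd}$ by hand via integral equations and Gronwall. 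You instead split by $g(t_\xi)|\xi|=N$, so that your hyperbolic zone lives at bounded frequencies and your dissipative zone starts at $t=0$ as soon as $|\xi|>N/g(0)$, and you diagonalise the original system directly. Your $\Zhyp$ argument (energy identity, then Gronwall with the observations $|\xi|t\lesssim N$ and $t\lesssim g(t)$ from $g'>0$ and $1/g\in L^1$) is a genuinely simpler alternative to the paper's $E_{\rm pd}$ analysis, and it is correct. The gap is in $\Zell$: you ``absorb the diagonalisation remainder using the symbol estimates \textbf{(A3)}, \textbf{(A'3)} as in [AslanReissig2023]'', but those symbol classes and the integrability statement for $\mathcal R_1$ (Proposition~2.4 there, invoked by this paper as well) are calibrated to the region $G(t)|\xi|^2\ge N$; your dissipative zone is strictly larger, in particular it contains small-$t$, large-$|\xi|$ points for which $G(t)|\xi|^2$ is small, so the cited estimates do not apply off the shelf and the uniform integrability of the remainder over $\{g(t)|\xi|\ge N\}$ (especially the $b'$-contribution near $t=0$, where \textbf{(A'2)} is vacuous because $g/G\to\infty$) has to be re-derived. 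This is likely fixable with an ad hoc computation, but it is exactly the point your proof is silent on, and it is the reason the paper uses the $G$-based curve rather than the $g$-based one. A secondary, smaller inaccuracy: your claimed bound $|\hat u_t(t,\xi)|\lesssim g(t)^{-1}|\hat u_0|+|\hat u_1|$ on $\Zell$ is not what the fundamental-solution estimate delivers; what one actually gets (e.g.\ via Proposition~\ref{Prop_Scattering_EllZone} and the backward transformation) is $|\hat u_t(t)|\lesssim g(t)|\xi|^2|\hat u(s)|+\tfrac{g(t)}{g(s)}|\hat u_t(s)|$, which after gluing with the $\Zpd$ estimate at $s=t_\xi$ gives precisely $|\hat u_t|\lesssim g(t)\big(|\xi|^2|\hat u_0|+|\hat u_1|\big)$; your bound is too strong in the $\hat u_1$-slot and would require separate justification.
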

\begin{remark} \label{Remark2.1}
We may see that Theorem \ref{Theorem_Sect-Scattering_Increasing} implies that Theorem 2.1 from \cite{AslanReissig2023} remains valid with a special class of additional scattering producing friction terms satisfying conditions \textbf{(A'1)} to \textbf{(A'3)}. This means that a scattering producing friction term from this class has no essential influence on the estimates of the solution to the Cauchy problem \eqref{MainEquation}.
\end{remark}
\begin{proof}[Proof of Theorem \ref{Theorem_Sect-Scattering_Increasing}]
We divide the extended phase space $[0,\infty)\times \mathbb{R}^n$ into zones as follows:
\begin{itemize}
\item pseudo-differential zone:
\begin{align*} \label{zonesellipticcase}
\Zpd(N)=\left\{ (t,\xi)\in [0,\infty)\times\mathbb{R}^n: G(t)|\xi|^2\leq N \right\},
\end{align*}
\item elliptic zone:
\begin{align*}
\Zell(N)=\left\{ (t,\xi)\in [0,\infty)\times \mathbb{R}^n: G(t)|\xi|^2\geq N \right\},
\end{align*}
\end{itemize}
where $N>0$ is sufficiently large. The separating line $t_\xi=t(|\xi|)$ is defined by
\[ t_\xi=\left\{ (t,\xi) \in [0,\infty) \times \mathbb{R}^n: G(t)|\xi|^2=N \right\}. \]
\subsubsection{Considerations in the elliptic zone $\Zell(N)$} \label{Sect_Scattering_Increasing-Zell}
Let us write the transformed equation \eqref{AuxiliaryEquation1} in the following form:
\begin{equation} \label{Eq:ScatteringPseudoForm}
D_t^2v + \dfrac{g^2(t)}{4}|\xi|^4v + \Big( \dfrac{g'(t)}{2} -1 \Big)|\xi|^2v + \dfrac{b(t)g(t)}{2}|\xi|^2v - ib(t)D_tv=0.
\end{equation}
We introduce the micro-energy $V=V(t,\xi):=\big( \dfrac{g(t)}{2}|\xi|^2v,D_tv \big)^{\text{T}}$. Then, by \eqref{Eq:ScatteringPseudoForm} we obtain that $V=V(t,\xi)$ satisfies the following system of first order:
\begin{equation*} \label{ScatteringSystemWithbANDg}
D_tV=\left( \begin{array}{cc}
0 & \dfrac{g(t)}{2}|\xi|^2 \\
-\dfrac{g(t)}{2}|\xi|^2 & 0
\end{array} \right)V + \left( \begin{array}{cc}
\dfrac{D_tg(t)}{g(t)} & 0 \\
-\dfrac{g'(t)-2}{g(t)}-b(t) & ib(t)
\end{array} \right)V.
\end{equation*}
We carry out the first step of diagonalization procedure. For this reason, we set
\[ M := \left( \begin{array}{cc}
1 & -1 \\
i & i
\end{array} \right), \qquad M^{-1}=\frac{1}{2}\left( \begin{array}{cc}
1 & -i \\
-1 & -i
\end{array} \right). \]
We define $V^{(0)}:=M^{-1}V$ and get the system
\begin{equation*}
D_tV^{(0)}=\big( \mathcal{D}(t,\xi)+\mathcal{R}_b(t)+\mathcal{R}_g(t) \big)V^{(0)},
\end{equation*}
where
\begin{align*}
\mathcal{D}(t,\xi) &= \left( \begin{array}{cc}
i\dfrac{g(t)}{2}|\xi|^2 & 0 \\
0 & -i\dfrac{g(t)}{2}|\xi|^2
\end{array} \right), \qquad \mathcal{R}_b(t) = \left( \begin{array}{cc}
ib(t) & 0 \\
ib(t) & 0
\end{array} \right), \\
\mathcal{R}_g(t) &= \frac{1}{2} \left( \begin{array}{cc}
\dfrac{D_tg(t)}{g(t)}+i\dfrac{g'(t)-2}{g(t)} & -\dfrac{D_tg(t)}{g(t)}-i\dfrac{g'(t)-2}{g(t)} \\
-\dfrac{D_tg(t)}{g(t)}+i\dfrac{g'(t)-2}{g(t)} & \dfrac{D_tg(t)}{g(t)}-i\dfrac{g'(t)-2}{g(t)}
\end{array} \right).
\end{align*}
Here we take account of
\begin{align*}
 M^{-1}\left( \begin{array}{cc}
0 & 0 \\
-b(t) & 0
\end{array} \right)M = \frac{1}{2}\left( \begin{array}{cc}
ib(t) & -ib(t) \\
ib(t) & -ib(t)
\end{array} \right) \qquad \text{and} \qquad M^{-1}\left( \begin{array}{cc}
0 & 0 \\
0 & ib(t)
\end{array} \right)M = \frac{1}{2}\left( \begin{array}{cc}
ib(t) & ib(t) \\
ib(t) & ib(t)
\end{array} \right).
\end{align*}
We introduce $F_0(t)=\diag( \mathcal{R}_b(t)+\mathcal{R}_g(t))$ and carry out the next step of diagonalization procedure. The difference of the diagonal entries of the matrix $\mathcal{D}(t,\xi)+F_0(t)$ is
\begin{equation*}
i\delta(t,\xi):=g(t)|\xi|^2 + \frac{g'(t)-2}{g(t)} + b(t)\sim g(t)|\xi|^2
\end{equation*}
for $t\geq t_\xi$ if we choose the zone constant $N$ sufficiently large and apply conditions \textbf{(A3)} and \textbf{(A'1)}. We choose a matrix $N^{(1)}=N^{(1)}(t,\xi)$ such that
\[ N^{(1)}(t,\xi)=\left( \begin{array}{cc}
0 & \dfrac{\mathcal{R}_{12}}{\delta(t,\xi)} \\
-\dfrac{\mathcal{R}_{21}}{\delta(t,\xi)} & 0
\end{array} \right)\sim \left( \begin{array}{cc}
0 & -i\dfrac{D_tg(t)}{2g^2(t)|\xi|^2}+\dfrac{g'(t)-2}{2g^2(t)|\xi|^2} \\
i\dfrac{D_tg(t)}{2g^2(t)|\xi|^2}+\dfrac{g'(t)-2}{2g^2(t)|\xi|^2}+\dfrac{b(t)}{g(t)|\xi|^2} & 0
\end{array} \right). \]
For a sufficiently large zone constant $N$ and all $t\geq t_\xi$ the matrix $N_1=N_1(t,\xi)$, where $N_{1}(t,\xi)=I+N^{(1)}(t,\xi)$, is invertible with uniformly bounded inverse $N_1^{-1}=N_1^{-1}(t,\xi)$. Indeed, in the elliptic zone $\Zell(N)$ it holds
\begin{align*}
\Big|\dfrac{D_tg(t)}{2g^2(t)|\xi|^2}\Big| \leq \frac{C}{G(t)|\xi|^2}\leq \frac{C}{N} \qquad \text{and} \qquad \Big| \dfrac{b(t)}{g(t)|\xi|^2} \Big| \leq \frac{\widetilde{C}}{G(t)|\xi|^2}\leq \frac{\widetilde{C}}{N},
\end{align*}
where we used conditions \textbf{(A3)} and \textbf{(A'2)}. Let
\begin{align*}
B^{(1)}(t,\xi) &= D_tN^{(1)}(t,\xi)-\big( \mathcal{R}_b(t)+\mathcal{R}_g(t)-F_0(t,\xi) \big)N^{(1)}(t,\xi), \\
\mathcal{R}_1(t,\xi) &= -N_1^{-1}(t,\xi)B^{(1)}(t,\xi).
\end{align*}
Then, we have the following operator identity:
\begin{equation*}
\big( D_t-\mathcal{D}(t,\xi)-\mathcal{R}_b(t)-\mathcal{R}_g(t) \big)N_1(t,\xi)=N_1(t,\xi)\big( D_t-\mathcal{D}(t,\xi)-F_0(t)-\mathcal{R}_1(t,\xi) \big).
\end{equation*}
\begin{proposition} \label{Prop_Scattering_EllZone}
Let $E_{\text{ell}}^V=E_{\text{ell}}^V(t,s,\xi)$ be the fundamental solution of the operator
\[ D_t-\mathcal{D}(t,\xi)-F_0(t)-\mathcal{R}_1(t,\xi). \]
Then, $E_{\text{ell}}^V=E_{\text{ell}}^V(t,s,\xi)$ holds the following estimate:
\begin{align*} \label{correctformula}
(|E_{\text{ell}}^V(t,s,\xi)|) \lesssim \frac{g(t)}{g(s)} \exp \Big(\frac{|\xi|^2}{2} \int_s^t g(\tau) d\tau \Big) \left(\begin{array}{cc}
1 & 1 \\
1 & 1
\end{array}\right)
\end{align*}
with $(t,\xi),(s,\xi)\in \Zell(N)$ and $t_\xi\leq s\leq t$.
\end{proposition}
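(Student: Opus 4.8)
The plan is to split the operator $D_t-\mathcal{D}(t,\xi)-F_0(t)-\mathcal{R}_1(t,\xi)$ into its exactly solvable diagonal part $D_t-\mathcal{D}(t,\xi)-F_0(t)$ and the perturbation $\mathcal{R}_1$, to estimate the fundamental solution $\mathcal{E}_0=\mathcal{E}_0(t,s,\xi)$ of the diagonal part explicitly, and then to recover the bound for $E_{\text{ell}}^V$ from the Duhamel integral equation associated to this splitting together with Gronwall's inequality. Since $\mathcal{D}$ and $F_0$ are diagonal, $\mathcal{E}_0(t,s,\xi)=\diag\big(\mathcal{E}_0^{(1)},\mathcal{E}_0^{(2)}\big)$ with $\mathcal{E}_0^{(j)}(t,s,\xi)=\exp\big(i\int_s^t(\mathcal{D}(\tau,\xi)+F_0(\tau))_{jj}\,d\tau\big)$, so the whole first step is a matter of computing two scalar integrals.

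First I would estimate $\mathcal{E}_0$. The $\mathcal{D}$-part contributes $\exp\big(\mp\tfrac{|\xi|^2}{2}\int_s^t g(\tau)\,d\tau\big)$ in the two diagonal entries respectively. For the $F_0=\diag(\mathcal{R}_b+\mathcal{R}_g)$-part I would use $D_tg=-ig'$ to simplify the diagonal entries: the first one should produce the factor $\exp\big(-\int_s^t(b(\tau)-g(\tau)^{-1})\,d\tau\big)$, bounded above and below by positive constants since $b\in L^1([0,\infty))$ by \textbf{(A'1)} and $g^{-1}\in L^1([0,\infty))$ by \textbf{(A2)}; the second one should produce $\tfrac{g(t)}{g(s)}\exp\big(-\int_s^t g(\tau)^{-1}\,d\tau\big)\approx\tfrac{g(t)}{g(s)}$, the ratio coming from $\int_s^t \tfrac{g'(\tau)}{g(\tau)}\,d\tau=\ln\tfrac{g(t)}{g(s)}$ and the remaining integral again controlled by \textbf{(A2)}. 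Thus $\mathcal{E}_0^{(1)}\approx\exp\big(-\tfrac{|\xi|^2}{2}\int_s^t g\big)$ and $\mathcal{E}_0^{(2)}\approx\tfrac{g(t)}{g(s)}\exp\big(\tfrac{|\xi|^2}{2}\int_s^t g\big)$, and since $g$ is increasing ($g(t)/g(s)\ge1$) we obtain $(|\mathcal{E}_0(t,s,\xi)|)\lesssim\tfrac{g(t)}{g(s)}\exp\big(\tfrac{|\xi|^2}{2}\int_s^t g(\tau)\,d\tau\big)I$ for $t_\xi\le s\le t$; that is, the asserted estimate already holds for the diagonal part.

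Next I would treat $\mathcal{R}_1$ perturbatively. Writing $E_{\text{ell}}^V(t,s,\xi)=\mathcal{E}_0(t,s,\xi)+i\int_s^t\mathcal{E}_0(t,\theta,\xi)\mathcal{R}_1(\theta,\xi)E_{\text{ell}}^V(\theta,s,\xi)\,d\theta$ and introducing the scalar majorant $\Phi(t,s,\xi):=\tfrac{g(t)}{g(s)}\exp\big(\tfrac{|\xi|^2}{2}\int_s^t g(\tau)\,d\tau\big)$, I would exploit the multiplicativity $\Phi(t,s,\xi)=\Phi(t,\theta,\xi)\,\Phi(\theta,s,\xi)$ and the bound $(|\mathcal{E}_0(t,\theta,\xi)|)\lesssim\Phi(t,\theta,\xi)I$ from the previous step. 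Taking entrywise absolute values, dividing by $\Phi(t,s,\xi)$ and setting $y(t):=\Phi(t,s,\xi)^{-1}\max_{j,k}|(E_{\text{ell}}^V)_{jk}(t,s,\xi)|$, I expect to reach $y(t)\lesssim 1+\int_s^t\|\mathcal{R}_1(\theta,\xi)\|\,y(\theta)\,d\theta$, so that Gronwall's inequality yields $y(t)\lesssim\exp\big(\int_{t_\xi}^\infty\|\mathcal{R}_1(\theta,\xi)\|\,d\theta\big)$, and the claim follows provided the exponent is uniformly bounded.

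The hard part will be exactly this: showing that $\int_{t_\xi}^\infty\|\mathcal{R}_1(\theta,\xi)\|\,d\theta\lesssim1$ uniformly in $\xi$. Here I would use $\mathcal{R}_1=-N_1^{-1}B^{(1)}$ with $N_1^{-1}$ uniformly bounded on $\Zell(N)$ (already established via \textbf{(A3)}, \textbf{(A'2)} and $N$ large) and $B^{(1)}=D_tN^{(1)}-(\mathcal{R}_b+\mathcal{R}_g-F_0)N^{(1)}$, estimating entrywise with \textbf{(A3)} for $k=1,2$, \textbf{(A'1)}--\textbf{(A'3)} and the zone inequality $G(t)|\xi|^2\ge N$. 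Every resulting term should be dominated, up to constants, by an expression of the type $\tfrac{g(t)}{G(t)^2|\xi|^2}$, $\tfrac{b(t)}{G(t)|\xi|^2}$, $\tfrac{1}{g(t)}$ or $\tfrac{1}{(1+t)G(t)|\xi|^2}$, whose integral over $[t_\xi,\infty)$ is bounded by a constant multiple of $1/N$ or of $\|g^{-1}\|_{L^1}$ once one uses the identity $G(t_\xi)|\xi|^2=N$ together with \textbf{(A2)} — crucially one must keep the factor $|\xi|^2$ and integrate exact derivatives (e.g.\ $\int_{t_\xi}^\infty \tfrac{g(t)}{G(t)^2|\xi|^2}\,dt=\tfrac{2}{G(t_\xi)|\xi|^2}=\tfrac{2}{N}$) rather than estimating $|\xi|^2$ away, which would give divergent logarithms. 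Choosing $N$ sufficiently large then makes this total arbitrarily small and closes the Gronwall estimate; this book-keeping — and in particular the control of the genuinely new terms carrying $b$ and $b'$ through \textbf{(A'1)}--\textbf{(A'3)} — is the delicate step and parallels the corresponding computation in \cite{AslanReissig2023}.
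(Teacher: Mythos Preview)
Your proposal is correct and follows essentially the same route as the paper: both split off the diagonal part $\mathcal{D}+F_0$, compute its fundamental solution explicitly (your two entries $\mathcal{E}_0^{(1)},\mathcal{E}_0^{(2)}$ match the paper's $(II)$ and $(I)$ respectively), and then control the perturbation $\mathcal{R}_1$ via its uniform integrability over $\Zell(N)$. The only cosmetic difference is that the paper introduces a scalar weight $\beta(t,\xi)$ equal to the dominant diagonal entry, writes the resulting integral equation for $\mathcal{Q}_{\text{ell}}=e^{-\int\beta}E_{\text{ell}}^V$, and bounds $\mathcal{Q}_{\text{ell}}$ by a Neumann series, whereas you use the multiplicative majorant $\Phi$ together with Gronwall; these are standard equivalent formulations of the same perturbation argument, and for the integrability of $\mathcal{R}_1$ the paper simply cites Proposition~2.4 of \cite{AslanReissig2023} rather than redoing the entry-by-entry bookkeeping you sketch.
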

\begin{proof}
We transform the system for $E_{\text{ell}}^V=E_{\text{ell}}^V(t,s,\xi)$ to an integral equation for a new matrix-valued function $\mathcal{Q}_{\text{ell}}=\mathcal{Q}_{\text{ell}}(t,s,\xi)$. If we differentiate the term
\[ \exp \bigg\{ -i\int_{s}^{t}\big( \mathcal{D}(\tau,\xi)+F_0(\tau) \big)d\tau \bigg\}E_{\text{ell}}^V(t,s,\xi) \]
and, then integrate on the interval $[s,t]$, we find that $E_{\text{ell}}^V=E_{\text{ell}}^V(t,s,\xi)$ satisfies the following integral equation:
\begin{align*}
E_{\text{ell}}^V(t,s,\xi) & = \exp\bigg\{ i\int_{s}^{t}\big( \mathcal{D}(\tau,\xi)+F_0(\tau) \big)d\tau \bigg\}E_{\text{ell}}^V(s,s,\xi)\\
& \quad + i\int_{s}^{t} \exp \bigg\{ i\int_{\theta}^{t}\big( \mathcal{D}(\tau,\xi)+F_0(\tau) \big)d\tau \bigg\}\mathcal{R}_1(\theta,\xi)E_{\text{ell}}^V(\theta,s,\xi)\,d\theta.
\end{align*}
Let us define
\[ \mathcal{Q}_{\text{ell}}^V(t,s,\xi)=\exp\bigg\{ -\int_{s}^{t}\beta(\tau,\xi)d\tau \bigg\} E_{\text{ell}}^V(t,s,\xi), \]
with a suitable $\beta=\beta(t,\xi)$ which will be fixed later. Then, it satisfies the new integral equation
\begin{align*}
\mathcal{Q}_{\text{ell}}(t,s,\xi)=&\exp \bigg\{ \int_{s}^{t}\big( i\mathcal{D}(\tau,\xi)+iF_0(\tau)-\beta(\tau,\xi)I \big)d\tau \bigg\}\mathcal{Q}_{\text{ell}}(s,s,\xi)\\
& \quad +\int_{s}^{t} \exp \bigg\{ \int_{\theta}^{t}\big( i\mathcal{D}(\tau,\xi)+iF_0(\tau)-\beta(\tau,\xi)I \big)d\tau \bigg\}\mathcal{R}_1(\theta,\xi)\mathcal{Q}_{\text{ell}}(\theta,s,\xi)\,d\theta.
\end{align*}
The function $\mathcal{R}_1=\mathcal{R}_1(\theta,\xi)$ is uniformly integrable over the elliptic zone (see Proposition 2.4 in \cite{AslanReissig2023}).

The main entries of the diagonal matrix $i\mathcal{D}(t,\xi)+iF_0(t)$ are given by
\begin{align*}
(I)  =& \frac{g(t)}{2}|\xi|^2+\frac{g'(t)}{2g(t)}+\frac{g'(t)-2}{2g(t)},\\
(II) =& -\frac{g(t)}{2}|\xi|^2+\frac{g'(t)}{2g(t)}-\frac{g'(t)-2}{2g(t)}-b(t).
\end{align*}
It follows that the term $(I)$ is dominant. Therefore, we choose the weight
\begin{align*}
\beta=\beta(t,\xi)=(I)=\frac{g(t)}{2}|\xi|^2+\frac{g'(t)}{2g(t)}+\frac{g'(t)-2}{2g(t)}.
\end{align*}
By this choice, we get
\[ i\mathcal{D}(\tau,\xi)+iF_0(\tau)-\beta(\tau,\xi)I = \left( \begin{array}{cc}
-g(\tau)|\xi|^2-\dfrac{g'(\tau)-2}{g(\tau)}-b(t) & 0 \\
0 & 0
\end{array} \right). \]
This implies
\begin{align*}
H(t,s,\xi) & =\exp \bigg\{ \int_{s}^{t}\big( i\mathcal{D}(\tau,\xi)+iF_0(\tau)-\beta(\tau,\xi)I \big)d\tau \bigg\}\\
& = \diag \bigg( \exp \bigg\{ \int_{s}^{t}\Big( -g(\tau)|\xi|^2-\dfrac{g'(\tau)-2}{g(\tau)}\Big)d\tau \bigg\},1 \bigg)\rightarrow \left( \begin{array}{cc}
0 & 0 \\
0 & 1
\end{array} \right)
\end{align*}
as $t\rightarrow \infty$ for any fixed $s\geq t_\xi$. Hence, the matrix $H=H(s,t,\xi)$ is uniformly bounded for $(s,\xi),(t,\xi)\in \Zell(N)$. So, the representation of $\mathcal{Q}_{\text{ell}}=\mathcal{Q}_{\text{ell}}(t,s,\xi)$ by a Neumann series gives
\begin{align*}
\mathcal{Q}_{\text{ell}}(t,s,\xi)=H(t,s,\xi)+\sum_{k=1}^{\infty}i^k\int_{s}^{t}H(t,t_1,\xi)&\mathcal{R}_1(t_1,\xi)\int_{s}^{t_1}H(t_1,t_2,\xi)\mathcal{R}_1(t_2,\xi) \\
& \cdots \int_{s}^{t_{k-1}}H(t_{k-1},t_k,\xi)\mathcal{R}_1(t_k,\xi)dt_k\cdots dt_2dt_1.
\end{align*}
Then, convergence of this series is obtained from the symbol estimates, since $\mathcal{R}_1=\mathcal{R}_1(t,\xi)$ is uniformly integrable over $\Zell(N)$. Hence, from the last considerations we may conclude
\begin{align*}
E_{\text{ell}}^V(t,s,\xi)&=\exp \bigg\{ \int_{s}^{t}\beta(\tau,\xi)d\tau \bigg\}\mathcal{Q}_{\text{ell}}(t,s,\xi) \\
& = \exp \bigg\{ \int_{s}^{t}\bigg( \frac{g(\tau)}{2}|\xi|^2+\frac{g'(\tau)}{2g(\tau)}+\frac{g'(\tau)-2}{2g(\tau)} \bigg)d\tau \bigg\}\mathcal{Q}_{\text{ell}}(t,s,\xi),
\end{align*}
where $\mathcal{Q}_{\text{ell}}=\mathcal{Q}_{\text{ell}}(t,s,\xi)$ is a uniformly bounded matrix. Then, it follows
\begin{align*}
(|E_{\text{ell}}^V(t,s,\xi)|) & \lesssim \exp \bigg\{ \int_{s}^{t}\bigg( \frac{g(\tau)}{2}|\xi|^2+\frac{g'(\tau)}{2g(\tau)}+\frac{g'(\tau)-2}{2g(\tau)} \bigg)d\tau \bigg\} \left( \begin{array}{cc}
1 & 1 \\
1 & 1
\end{array} \right) \\
& \lesssim \frac{g(t)}{g(s)} \exp\bigg( |\xi|^2\int_{s}^{t} \frac{g(\tau)}{2}d\tau \bigg)\left( \begin{array}{cc}
1 & 1 \\
1 & 1
\end{array} \right).
\end{align*}
This completes the proof.	
\end{proof}
\subsubsection{Considerations in the pseudo-differential zone $\Zpd(N)$} \label{Sect_Scattering_Increasing-Zpd}
We define the micro-energy $U=(\gamma(t,\xi)\hat{u},D_t\hat{u})^\text{T}$ with $\gamma(t,\xi):=\dfrac{g(t)}{2}|\xi|^2$. Then, the Cauchy problem \eqref{MainEquationFourier} leads to the system of first order
\begin{equation} \label{Eq:Scattering_Increasing_SystemZpd}
D_tU=\underbrace{\left( \begin{array}{cc}
\dfrac{D_t\gamma(t,\xi)}{\gamma(t,\xi)} & \gamma(t,\xi) \\
\dfrac{|\xi|^2}{\gamma(t,\xi)} & i\big( b(t)+g(t)|\xi|^2 \big)
\end{array} \right)}_{A(t,\xi)}U.
\end{equation}
We are interested in the fundamental solution $E_{\text{pd}}=E_{\text{pd}}(t,s,\xi)$ to the system \eqref{Eq:Scattering_Increasing_SystemZpd} as follows:
\[ D_tE_{\text{pd}}(t,s,\xi)=A(t,\xi)E_{\text{pd}}(t,s,\xi), \quad E_{\text{pd}}(s,s,\xi)=I, \]
for all $0\leq s \leq t$ and $(t,\xi), (s,\xi) \in \Zpd(N)$. Let us introduce the auxiliary function
\[ \delta=\delta(t,\xi):=\exp\bigg( \int_{0}^{t}\big( b(\tau)+g(\tau)|\xi|^2 \big)d\tau\bigg) = \exp\Big( 2|\xi|^2G(t)+\int_0^tb(\tau)d\tau \Big) \lesssim 1, \]
where we use definition of $\Zpd(N)$ and $b\in L^1([0,\infty))$.
\medskip

The entries $E_{\text{pd}}^{(k\ell)}(t,s,\xi)$, $k,\ell=1,2,$ of the fundamental solution $E_{\text{pd}}(t,s,\xi)$ satisfy the following system for $\ell=1,2$:
\begin{align*}
D_tE_{\text{pd}}^{(1\ell)}(t,s,\xi) &= \frac{D_t\gamma(t,\xi)}{\gamma(t,\xi)}E_{\text{pd}}^{(1\ell)}(t,s,\xi)+\gamma(t,\xi)E_{\text{pd}}^{(2\ell)}(t,s,\xi), \\
D_tE_{\text{pd}}^{(2\ell)}(t,s,\xi) &= \frac{|\xi|^2}{\gamma(t,\xi)}E_{\text{pd}}^{(1\ell)}(t,s,\xi)+i\big( b(t)+g(t)|\xi|^2 \big)E_{\text{pd}}^{(2\ell)}(t,s,\xi).
\end{align*}
Then, by straight-forward calculations (with $\delta_{k \ell}=1$ if $k=\ell$ and $\delta_{k \ell}=0$ otherwise), we get
\begin{align*}
E_{\text{pd}}^{(1\ell)}(t,s,\xi) & = \frac{\gamma(t,\xi)}{\gamma(s,\xi)}\delta_{1\ell}+i\gamma(t,\xi)\int_{s}^{t}E_{\text{pd}}^{(2\ell)}(\tau,s,\xi)d\tau, \\
E_{\text{pd}}^{(2\ell)}(t,s,\xi) & = \frac{\delta(s,\xi)}{\delta(t,\xi)}\delta_{2\ell}+\frac{i|\xi|^2}{\delta(t,\xi)}\int_{s}^{t}\frac{1}{\gamma(\tau,\xi)}\delta(\tau,\xi)E_{\text{pd}}^{(1\ell)}(\tau,s,\xi)d\tau.
\end{align*}
\begin{proposition} \label{Prop_Scattering_Incresing_Zpd}
We have the following estimates in the pseudo-differential zone $\Zpd(N)$ to the Cauchy problem \eqref{MainEquation}:
\begin{equation*}
(|E_{\text{pd}}(t,s,\xi)|) \lesssim \frac{g(t)}{g(s)}
\left( \begin{array}{cc}
1 & 1 \\
1 & 1
\end{array} \right)
\end{equation*}
with $(s,\xi),(t,\xi)\in\Zpd(N)$ and $0\leq s\leq t\leq t_\xi$.
\end{proposition}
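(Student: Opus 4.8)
The plan is to eliminate the second component from the coupled system of integral equations for the entries $E_{\text{pd}}^{(k\ell)}(t,s,\xi)$ displayed just above. Substituting the formula for $E_{\text{pd}}^{(2\ell)}$ into the one for $E_{\text{pd}}^{(1\ell)}$ and recalling $\gamma(\tau,\xi)=\frac{g(\tau)}{2}|\xi|^2$, one sees that the rescaled quantity $F_\ell(t):=E_{\text{pd}}^{(1\ell)}(t,s,\xi)/\gamma(t,\xi)$ solves a single scalar Volterra equation
\[ F_\ell(t)=\frac{\delta_{1\ell}}{\gamma(s,\xi)}+i\delta_{2\ell}\int_s^t\frac{\delta(s,\xi)}{\delta(\tau,\xi)}\,d\tau-|\xi|^2\int_s^t\frac{1}{\delta(\tau,\xi)}\int_s^\tau\delta(\sigma,\xi)F_\ell(\sigma)\,d\sigma\,d\tau, \]
which covers both columns $\ell=1,2$ simultaneously. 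Interchanging the order of integration in the double integral, the Volterra kernel acting on $|F_\ell|$ becomes $K(t,\sigma)=|\xi|^2\int_\sigma^t\delta(\sigma,\xi)/\delta(\tau,\xi)\,d\tau$.

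The heart of the argument is to bound $K$ and the inhomogeneous term. Since $b>0$ we have, for $\sigma\le\tau$, that $\delta(\sigma,\xi)/\delta(\tau,\xi)=\exp\bigl(-\int_\sigma^\tau(b(r)+g(r)|\xi|^2)\,dr\bigr)\le\exp\bigl(-2|\xi|^2(G(\tau)-G(\sigma))\bigr)$; performing the change of variables $v=G(\tau)$, $dv=\frac{1}{2}g(\tau)\,d\tau$, and using the monotonicity $g(\tau)\ge g(\sigma)$ guaranteed by \textbf{(A1)}, we obtain
\[ K(t,\sigma)\le|\xi|^2\cdot\frac{2}{g(\sigma)}\int_{G(\sigma)}^{G(t)}e^{-2|\xi|^2(v-G(\sigma))}\,dv\le\frac{1}{g(\sigma)}. \]
The same substitution applied to the middle term yields $\int_s^t\delta(s,\xi)/\delta(\tau,\xi)\,d\tau\le\frac{1}{g(s)|\xi|^2}$, so that in either case $|F_\ell|$ satisfies $|F_\ell(t)|\le\frac{2}{g(s)|\xi|^2}+\int_s^t\frac{1}{g(\sigma)}|F_\ell(\sigma)|\,d\sigma$.

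Next I would invoke Gronwall's inequality: since $1/g\in L^1([0,\infty))$ by \textbf{(A2)}, the resulting exponential factor is bounded by $e^{\|1/g\|_{L^1}}$ uniformly on $\Zpd(N)$, whence $|E_{\text{pd}}^{(1\ell)}(t,s,\xi)|=\gamma(t,\xi)|F_\ell(t)|\lesssim g(t)/g(s)$. Inserting this back into the integral equation for $E_{\text{pd}}^{(2\ell)}$, using $\delta(\tau,\xi)/\delta(t,\xi)\le1$ (again $b>0$) and the elementary bound $t=\int_0^t d\tau\le g(t)\int_0^t g(\tau)^{-1}\,d\tau\le\|1/g\|_{L^1}\,g(t)$ — a consequence of \textbf{(A1)} and \textbf{(A2)} — one concludes $|E_{\text{pd}}^{(2\ell)}(t,s,\xi)|\lesssim g(t)/g(s)$ as well, which completes the proof.

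The step I expect to be the genuine obstacle is the kernel estimate. The pseudo-differential zone contains arbitrarily long time intervals $[s,t]$ — indeed $t_\xi\to\infty$ as $|\xi|\to0$ — so estimating the Volterra integrals crudely, by (length of interval)$\times$(supremum), produces a useless factor of the type $e^{c(t-s)}$. The correct move is to retain the exponential weight $\delta(\sigma,\xi)/\delta(\tau,\xi)$ and to pass to the variable $v=G(\tau)$: this is precisely what converts the dangerous prefactor $|\xi|^2$ into the integrable weight $1/g(\sigma)$, and it is exactly here that the monotonicity of $g$ and the integrability of $1/g$ enter in an essential way.
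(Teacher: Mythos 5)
Your proof is correct, and it takes a genuinely different (and in fact cleaner) route than the paper. Both you and the authors arrive at a Volterra inequality in the first row and close it with Gronwall, but the kernel bound is obtained by very different means. The paper plugs the formula for $E_{\text{pd}}^{(21)}$ into that of $E_{\text{pd}}^{(11)}$, writes the inner $\tau$-integral by integration by parts using $\frac{\delta(\theta,\xi)}{\delta(\tau,\xi)}=-\frac{1}{b(\tau)+g(\tau)|\xi|^2}\partial_\tau\frac{\delta(\theta,\xi)}{\delta(\tau,\xi)}$, and then needs the sign condition $b'(t)<0$ from \textbf{(A'1)} to control the boundary and remainder terms; moreover it treats the two columns with two rather lengthy computations, and estimates the off-diagonal entries $E_{\text{pd}}^{(2\ell)}$ via the monotonicity of $\delta\gamma$ in $t$. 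You instead interchange the order of integration, evaluate the resulting kernel $K(t,\sigma)=|\xi|^2\int_\sigma^t\delta(\sigma,\xi)/\delta(\tau,\xi)\,d\tau$ directly by the substitution $v=G(\tau)$, and you need nothing beyond $b>0$ (to drop $b$ from the exponent), $g'>0$ (to replace $g(\tau)$ by $g(\sigma)$), and $1/g\in L^1$ (for Gronwall). Your treatment is also uniform in the column index $\ell$ through the Kronecker-delta trick, and your bound $t\lesssim g(t)$ for the second row replaces the paper's monotonicity-of-$\delta\gamma$ argument. The real gain is that your argument never invokes $b'<0$: the paper's proof, as written, cannot dispense with this hypothesis from \textbf{(A'1)}, while yours establishes the same estimate under the weaker assumption that $b$ is merely nonnegative and integrable. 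That makes your proof strictly more general in this zone.
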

\begin{proof}
First let us consider the first column. Plugging the representation for $E_{\text{pd}}^{(21)}=E_{\text{pd}}^{(21)}(t,s,\xi)$ into the integral equation for $E_{\text{pd}}^{(11)}=E_{\text{pd}}^{(11)}(t,s,\xi)$ gives
\begin{align*}
E_{\text{pd}}^{(11)}(t,s,\xi) = \frac{\gamma(t,\xi)}{\gamma(s,\xi)}-|\xi|^2\gamma(t,\xi)\int_{s}^{t}\int_{s}^{\tau}\frac{\delta(\theta,\xi)}{\delta(\tau,\xi)}\frac{1}{\gamma(\theta,\xi)}E_{\text{pd}}^{(11)}(\theta,s,\xi)d\theta d\tau.
\end{align*}
By setting $y(t,s,\xi):=\dfrac{\gamma(s,\xi)}{\gamma(t,\xi)}E_{\text{pd}}^{(11)}(t,s,\xi)$ we obtain
\begin{align*}
y(t,s,\xi) &= 1-|\xi|^2\int_{s}^{t}\int_{\theta}^{t}\frac{\delta(\theta,\xi)}{\delta(\tau,\xi)}y(\theta,s,\xi)d\tau d\theta \\
& = 1+|\xi|^2\int_{s}^{t}\bigg( \int_{\theta}^{t}\frac{1}{b(\tau)+g(\tau)|\xi|^2}\partial_\tau \bigg( \frac{\delta(\theta,\xi)}{\delta(\tau,\xi)} \bigg)d\tau \bigg) y(\theta,s,\xi)d\theta \\
& = 1+|\xi|^2\int_{s}^{t}\bigg( \frac{1}{b(\tau)+g(\tau)|\xi|^2}\frac{\delta(\theta,\xi)}{\delta(\tau,\xi)}\Big|_{\theta}^{t}+\int_{\theta}^{t}\frac{b'(\tau)+g'(\tau)|\xi|^2}{( b(\tau)+g(\tau)|\xi|^2)^2} \frac{\delta(\theta,\xi)}{\delta(\tau,\xi)} d\tau \bigg) y(\theta,s,\xi)d\theta.
\end{align*}
Thus, since $b'(t)<0$ it follows
\begin{align} \label{Eq:Scattering-Incr-Zpd-Est-y1}
|y(t,s,\xi)| &\leq 1+|\xi|^2\int_{s}^{t}\bigg( \frac{1}{b(t)+g(t)|\xi|^2}\underbrace{\frac{\delta(\theta,\xi)}{\delta(t,\xi)}}_{\leq1}+\int_{\theta}^{t}\frac{g'(\tau)|\xi|^2-b'(\tau)}{( b(\tau)+g(\tau)|\xi|^2)^2} \underbrace{\frac{\delta(\theta,\xi)}{\delta(\tau,\xi)}}_{\leq1}d\tau \bigg)|y(\theta,s,\xi)|d\theta \nonumber \\
&\leq 1+|\xi|^2\int_{s}^{t}\bigg( \frac{1}{b(t)+g(t)|\xi|^2} - \int_{\theta}^{t}\frac{b'(\tau)+g'(\tau)|\xi|^2}{(b(\tau)+g(\tau)|\xi|^2)^2} + 2\int_{\theta}^{t}\frac{g'(\tau)|\xi|^2}{(b(\tau)+g(\tau)|\xi|^2)^2} d\tau \bigg)|y(\theta,s,\xi)|d\theta \nonumber \\
&\leq 1+|\xi|^2\int_{s}^{t}\bigg( \frac{1}{b(t)+g(t)|\xi|^2} - \int_{\theta}^{t}\frac{b'(\tau)+g'(\tau)|\xi|^2}{(b(\tau)+g(\tau)|\xi|^2)^2} + 2\int_{\theta}^{t}\frac{g'(\tau)|\xi|^2}{(g(\tau)|\xi|^2)^2} d\tau \bigg)|y(\theta,s,\xi)|d\theta \nonumber \\
&= 1+|\xi|^2\int_{s}^{t}\bigg( \frac{1}{b(t)+g(t)|\xi|^2} + \frac{1}{b(\tau)+g(\tau)|\xi|^2}\Big|_\theta^t - \frac{2}{g(\tau)|\xi|^2}\Big|_\theta^t \bigg)|y(\theta,s,\xi)|d\theta \nonumber \\
& = 1+|\xi|^2\int_{s}^{t}\bigg( \frac{2}{b(t)+g(t)|\xi|^2} - \frac{1}{b(\theta)+g(\theta)|\xi|^2} - \frac{2}{g(t)|\xi|^2} + \frac{2}{g(\theta)|\xi|^2}\bigg)|y(\theta,s,\xi)|d\theta \nonumber \\
& \leq 1+|\xi|^2\int_{s}^{t}\bigg( \frac{2}{g(t)|\xi|^2} - \frac{1}{b(\theta)+g(\theta)|\xi|^2} - \frac{2}{g(t)|\xi|^2} + \frac{2}{g(\theta)|\xi|^2}\bigg)|y(\theta,s,\xi)|d\theta \nonumber \\
& \leq 1+|\xi|^2\int_{s}^{t}\bigg( \frac{2}{g(\theta)|\xi|^2} \bigg)|y(\theta,s,\xi)|d\theta.
\end{align}
Applying Gronwall's inequality and employing \textbf{(A2)}, we get the estimate
\begin{align*}
|y(t,s,\xi)| \leq \exp \bigg( \int_{s}^{t}\frac{1}{g(\theta)}d\theta \bigg) \lesssim 1.
\end{align*}
Thus, we may conclude that
\[ |E_{\text{pd}}^{(11)}(t,s,\xi)| \lesssim \frac{\gamma(t,\xi)}{\gamma(s,\xi)}=\frac{g(t)}{g(s)}. \]
Now we consider $E_{\text{pd}}^{(21)}(t,s,\xi)$. By using the estimate for $|E_{\text{pd}}^{(11)}(t,s,\xi)|$ we obtain
\begin{align*}
\frac{\gamma(s,\xi)}{\gamma(t,\xi)}|E_{\text{pd}}^{(21)}(t,s,\xi)| & \lesssim |\xi|^2\int_{s}^{t}\frac{1}{\gamma(\tau,\xi)}\frac{\delta(\tau,\xi)}{\delta(t,\xi)}\frac{\gamma(s,\xi)}{\gamma(t,\xi)}|E_{\text{pd}}^{(11)}(\tau,s,\xi)|d\tau \\
& \lesssim \int_{s}^{t}\frac{1}{g(\tau)}\underbrace{\frac{\delta(\tau,\xi)\gamma(\tau,\xi)}{\delta (t,\xi)\gamma(t,\xi)}}_{\leq1}d\tau \lesssim \int_{s}^{t}\frac{1}{g(\tau)}d\tau \lesssim 1,
\end{align*}
where we have used
\begin{align} \label{Eq:Scattering-Incr-Zpd-Inq1}
\frac{\delta(\tau,\xi)\gamma(\tau,\xi)}{\delta(t,\xi)\gamma(t,\xi)} \leq \frac{\delta(t,\xi)\gamma(t,\xi)}{\delta(t,\xi)\gamma(t,\xi)} = 1,
\end{align}
because $f(t,\xi) := \delta(t,\xi)\gamma(t,\xi)$ is increasing in $t$. Namely, we have
\begin{align*}
f_t(t,\xi) = \exp\Big( \int_{0}^{t}\big( b(\tau)+g(\tau)|\xi|^2 \big)d\tau \Big)\frac{1}{2}\big( ( g(t)|\xi|^2)^2 + g(t)b(t)|\xi|^2 + g'(t)|\xi|^2 \big),
\end{align*}
and we may conclude that $f_t(t,\xi)>0$, namely $f=f(t,\xi)$ is increasing in $t$. Thus, we arrive at
\[ |E_{\text{pd}}^{(21)}(t,s,\xi)| \lesssim \frac{\gamma(t,\xi)}{\gamma(s,\xi)}=\frac{g(t)}{g(s)}. \]
Next, we consider the entries of the second column. Plugging the representation for $E_{\text{pd}}^{(22)}=E_{\text{pd}}^{(22)}(t,s,\xi)$ into the integral equation for $E_{\text{pd}}^{(12)}=E_{\text{pd}}^{(12)}(t,s,\xi)$ gives
\begin{align*}
E_{\text{pd}}^{(12)}(t,s,\xi) = i\gamma(t,\xi)\int_{s}^{t}\frac{\delta(s,\xi)}{\delta(\tau,\xi)}d\tau-
|\xi|^2\gamma(t,\xi)\int_{s}^{t}\int_{s}^{\tau}\frac{\delta(\theta,\xi)}{\delta(\tau,\xi)}\frac{1}{\gamma(\theta,\xi)}E_{\text{pd}}^{(12)}(\theta,s,\xi)d\theta d\tau.
\end{align*}
After setting $y(t,s,\xi):=\dfrac{\gamma(s,\xi)}{\gamma(t,\xi)}E_{\text{pd}}^{(12)}(t,s,\xi)$, it follows
\begin{align*}
y(t,s,\xi) &= -i\gamma(s,\xi)\int_{s}^{t}\frac{1}{b(\tau)+g(\tau)|\xi|^2}\partial_\tau \bigg( \frac{\delta(s,\xi)}{\delta(\tau,\xi)} \bigg)d\tau \\
& \qquad + |\xi|^2\int_{s}^{t}\bigg( \int_{\theta}^{t}\frac{1}{b(\tau)+g(\tau)|\xi|^2}\partial_\tau \bigg( \frac{\delta(\theta,\xi)}{\delta(\tau,\xi)} \bigg)d\tau \bigg)y(\theta,s,\xi)d\theta d\tau \\
&= i\gamma(s,\xi) \bigg( -\frac{1}{b(\tau)+g(\tau)|\xi|^2}\frac{\delta(s,\xi)}{\delta(\tau,\xi)}\Big|_s^t - \int_{s}^{t}\frac{b'(\tau)+g'(\tau)|\xi|^2}{( b(\tau)+g(\tau)|\xi|^2)^2} \frac{\delta(\theta,\xi)}{\delta(\tau,\xi)}d\tau \bigg) \\
& \qquad + |\xi|^2\int_{s}^{t}\bigg( \frac{1}{b(t)+g(t)|\xi|^2}\frac{\delta(\theta,\xi)}{\delta(t,\xi)}+\int_{\theta}^{t}\frac{b'(\tau)+g'(\tau)|\xi|^2}{( b(\tau)+g(\tau)|\xi|^2)^2} \frac{\delta(\theta,\xi)}{\delta(\tau,\xi)}d\tau \bigg)y(\theta,s,\xi)d\theta.
\end{align*}
Since $b'(\tau)<0$, we have
\begin{align} \label{Eq:Scattering-Incr-Zpd-Est-y2}
|y(t,s,\xi)| &\leq \gamma(s,\xi) \bigg( \frac{1}{b(t)+g(t)|\xi|^2}\underbrace{\frac{\delta(s,\xi)}{\delta(t,\xi)}}_{\leq1} +\frac{1}{b(s)+g(s)|\xi|^2} + \int_{s}^{t}\frac{g'(\tau)|\xi|^2-b'(\tau)}{( b(\tau)+g(\tau)|\xi|^2)^2}\underbrace{\frac{\delta(\theta,\xi)}{\delta(\tau,\xi)}}_{\leq1}d\tau \bigg) \nonumber \\
& \qquad + |\xi|^2\int_{s}^{t}\bigg( \frac{1}{b(t)+g(t)|\xi|^2}\underbrace{\frac{\delta(\theta,\xi)}{\delta(t,\xi)}}_{\leq1}+\int_{\theta}^{t}\frac{g'(\tau)|\xi|^2-b'(\tau)}{( b(\tau)+g(\tau)|\xi|^2)^2}\underbrace{\frac{\delta(\theta,\xi)}{\delta(\tau,\xi)}}_{\leq1}d\tau \bigg)|y(\theta,s,\xi)|d\theta,
\end{align}
where for the first summand in \eqref{Eq:Scattering-Incr-Zpd-Est-y2} we use the following relations:
\[ \gamma(s,\xi) \frac{1}{b(t)+g(t)|\xi|^2}\frac{\delta(s,\xi)}{\delta(t,\xi)} \leq \frac{1}{2} \qquad \text{and} \qquad \gamma(s,\xi) \frac{1}{b(s)+g(s)|\xi|^2}\leq \frac{1}{2}, \]
and
\begin{align*}
& \gamma(s,\xi) \int_{s}^{t}\frac{g'(\tau)|\xi|^2-b'(\tau)}{( b(\tau)+g(\tau)|\xi|^2)^2}\underbrace{\frac{\delta(\theta,\xi)}{\delta(\tau,\xi)}}_{\leq1}d\tau \leq  \gamma(s,\xi) \int_{s}^{t}\frac{g'(\tau)|\xi|^2-b'(\tau)}{( b(\tau)+g(\tau)|\xi|^2)^2} d\tau \\
& \qquad = -\gamma(s,\xi) \int_{s}^{t}\frac{b'(\tau)+g'(\tau)|\xi|^2}{( b(\tau)+g(\tau)|\xi|^2)^2} d\tau + 2\gamma(s,\xi) \int_{s}^{t}\frac{g'(\tau)|\xi|^2}{( b(\tau)+g(\tau)|\xi|^2)^2} d\tau \\
& \qquad = \frac{\gamma(s,\xi)}{b(t)+g(t)|\xi|^2}-\frac{\gamma(s,\xi)}{b(s)+g(s)|\xi|^2} + 2\gamma(s,\xi) \int_{s}^{t}\frac{g'(\tau)|\xi|^2}{( b(\tau)+g(\tau)|\xi|^2)^2} d\tau \\
& \qquad \leq \frac{\gamma(s,\xi)}{g(t)|\xi|^2} + 2\gamma(s,\xi)\int_{s}^{t}\frac{g'(\tau)|\xi|^2}{( g(\tau)|\xi|^2)^2} d\tau \\
& \qquad = \frac{\gamma(s,\xi)}{g(t)|\xi|^2} + \big( g(s)|\xi|^2 \big)\Big[ -\frac{1}{g(\tau)||\xi|^2} \Big]_s^t \\
& \qquad = \frac{1}{2}\frac{g(s)|\xi|^2}{g(t)|\xi|^2} - \frac{g(s)|\xi|^2}{g(t)|\xi|^2} + \frac{g(s)|\xi|^2}{g(s)|\xi|^2} \leq1.
\end{align*}
For the second summand in \eqref{Eq:Scattering-Incr-Zpd-Est-y2} following the same approach to \eqref{Eq:Scattering-Incr-Zpd-Est-y1}, we find
\[ |y(t,s,\xi)| \leq 1+|\xi|^2\int_{s}^{t}\bigg( \frac{2}{g(\theta)|\xi|^2} \bigg)|y(\theta,s,\xi)|d\theta. \]
Applying Gronwall's inequality and using condition \textbf{(A2)} we have the estimate
\begin{align*}
|y(t,s,\xi)| \lesssim \exp\bigg( \int_{s}^{t}\frac{2}{g(\theta)}d\theta \bigg) \lesssim 1.
\end{align*}
This implies that
\begin{equation} \label{Eq:Scattering-Incr-Zpd-Est-E12}
|E_{\text{pd}}^{(12)}(t,s,\xi)| \lesssim \frac{\gamma(t,\xi)}{\gamma(s,\xi)} = \frac{g(t)}{g(s)}.
\end{equation}
Finally, let us estimate $|E_{\text{pd}}^{(22)}(t,s,\xi)|$ by using the estimate of $|E_{\text{pd}}^{(12)}(t,s,\xi)|$ from \eqref{Eq:Scattering-Incr-Zpd-Est-E12}. It holds
\begin{align*}
|E_{\text{pd}}^{(22)}(t,s,\xi)| & \lesssim \frac{\delta(s,\xi)}{\delta(t,\xi)}+|\xi|^2\int_{s}^{t}\frac{\delta(\tau,\xi)}{\delta(t,\xi)}\frac{1}{\gamma(\tau,\xi)}|E_{\text{pd}}^{(12)}(\tau,s,\xi)|d\tau \\
& \lesssim \frac{\delta(s,\xi)}{\delta(t,\xi)}+|\xi|^2\int_{s}^{t}\frac{\delta(\tau,\xi)}{\delta(t,\xi)}\frac{1}{\gamma(\tau,\xi)}\frac{\gamma(\tau,\xi)}{\gamma(s,\xi)}d\tau.
\end{align*}
Then, we have
\begin{align*}
\frac{\gamma(s,\xi)}{\gamma(t,\xi)}|E_{\text{pd}}^{(22)}(t,s,\xi)| & \lesssim \underbrace{\frac{\delta(s,\xi)\gamma(s,\xi)}{\delta(t,\xi)\gamma(t,\xi)}}_{\leq1}+|\xi|^2\int_{s}^{t}
\underbrace{\frac{\delta(\tau,\xi)\gamma(\tau,\xi)}{\delta(t,\xi)\gamma(t,\xi)}}_{\leq1}\frac{1}{\gamma(\tau,\xi)}d\tau \\
& \lesssim 1 + |\xi|^2\int_{s}^{t}\frac{1}{g(\tau)|\xi|^2}d\tau \lesssim 1,
\end{align*}
where we used \eqref{Eq:Scattering-Incr-Zpd-Inq1}. This shows that
\[ |E_{\text{pd}}^{(22)}(t,s,\xi)| \lesssim \frac{\gamma(t,\xi)}{\gamma(s,\xi)}=\frac{g(t)}{g(s)}. \]
This completes the proof.
\end{proof}
Following the same strategy as in Section 2 of the paper \cite{AslanReissig2023}, estimates in $\Zell(N)$ and $\Zpd(N)$ may conclude the proof of Theorem \ref{Theorem_Sect-Scattering_Increasing}.
\end{proof}

\subsection{Model with integrable and decaying time-dependent coefficient $g=g(t)$} \label{Subsect_Scattering}
In this section, we consider scattering producing damping terms $b(t)u_t$ and viscoelastic damping terms $-g(t)\Delta u_t$ with integrable and decaying time-dependent coefficient $g=g(t)$ to our equation \eqref{MainEquation}.
\medskip

We pose the following condition to the coefficient $b=b(t)$ for all $t \in [0,\infty)$:
\begin{enumerate}
\item[\textbf{(A'1)}] $b(t)>0$ and $b\in L^1([0,\infty))$.
\end{enumerate}

We assume that the coefficient $g=g(t)$ satisfies the following conditions for all $t \in [0,\infty)$:
\begin{enumerate}
\item[\textbf{(G1)}] $g(t)>0$ and $g'(t)<0$,
\item[\textbf{(G2)}] $g \in L^1([0,\infty))$,
\item[\textbf{(G3)}] $\dfrac{g''}{g'} \sim \dfrac{g'}{g}$,
\item[\textbf{(G4)}] $b(t) \leq -a\dfrac{g'(t)}{g(t)}$ with a suitable constant $a\in(0,1)$.
\end{enumerate}
Let us remember our equation in \eqref{MainEquationFourier}, that is,
\begin{equation*} \label{Eq:Scattering_PseudoForm}
\hat{u}_{tt} + |\xi|^2\hat{u} + b(t)\hat{u}_t + g(t)|\xi|^2\hat{u}_t=0.
\end{equation*}
Then, we divide the extended phase space $[0,\infty)\times \mathbb{R}^n$ into the following zones:
\begin{itemize}
\item elliptic zone:
\[ \Zell(N_1,N_2) = \Big\{ (t,\xi)\in[0,\infty)\times\mathbb{R}^n : |\xi|\geq N_1b(t) \Big\}\cap \Big\{ (t,\xi)\in[0,\infty)\times\mathbb{R}^n : |\xi|\geq \frac{N_2}{g(t)} \Big\}, \]
\item reduced zone:
\[ \Zred(N_1,N_2,\varepsilon) = \Big\{ (t,\xi)\in[0,\infty)\times\mathbb{R}^n : |\xi|\geq N_1b(t) \Big\}\cap\Big\{ (t,\xi)\in[0,\infty)\times\mathbb{R}^n : \frac{\varepsilon}{g(t)}\leq |\xi|\leq \frac{N_2}{g(t)}  \Big\}, \]
\item hyperbolic zone:
\[ \Zhyp(N_1,\varepsilon) = \Big\{ (t,\xi)\in[0,\infty)\times\mathbb{R}^n : |\xi|\geq N_1b(t) \Big\} \cap \Big\{ (t,\xi)\in[0,\infty)\times\mathbb{R}^n : |\xi|\leq \frac{\varepsilon}{g(t)} \Big\}, \]
\item dissipative zone:
\[ \Zdiss(N_1,\varepsilon) = \Big\{ (t,\xi)\in[0,\infty)\times\mathbb{R}^n : |\xi|\leq N_1b(t) \Big\}\cap\Big\{ (t,\xi)\in[0,\infty)\times\mathbb{R}^n : |\xi|\leq \frac{\varepsilon}{g(t)} \Big\}, \]
\end{itemize}
where $N_1>0$ and $N_2>0$ are sufficiently large and $\varepsilon>0$ is sufficiently small constants. Moreover, the separating lines between these zones may be defined by using the following functions:
\begin{align*}
t_{\xi,1} &= \big\{ (t,\xi) \in [0,\infty) \times \mathbb{R}^n: |\xi| = N_1b(t) \big\}, \quad (\text{between} \quad \Zdiss(N_1,\varepsilon) \,\,\, \text{and} \,\,\, \Zhyp(N_1,\varepsilon) \,), \\
t_{\xi,2} &= \big\{ (t,\xi) \in [0,\infty) \times \mathbb{R}^n: g(t)|\xi| = \varepsilon \big\}, \quad\,\,\,\, (\text{between} \quad \Zhyp(N_1,\varepsilon) \,\,\, \text{and} \,\,\,\Zred(N_2,N_1,\varepsilon) \,),\\
t_{\xi,3} &= \big\{ (t,\xi) \in [0,\infty) \times \mathbb{R}^n: g(t)|\xi| = N_2 \big\},  \quad (\text{between} \quad \Zred(N_1,N_2,\varepsilon) \,\,\, \text{and} \,\,\, \Zell(N_1,N_2)\,).
\end{align*}
For small frequencies we consider $ \Zdiss(N_1,\varepsilon)$ and $\Zhyp(N_1,\varepsilon)$, and for large frequencies we consider $\Zell(N_1,N_2)$, $\Zred(N_1,N_2,\varepsilon)$ and $\Zhyp(N_1,\varepsilon)$ (see Figure \ref{fig.zone} in Section \ref{Subsect_Noneffective_Integrable}).
\begin{theorem} \label{Theorem_Scattering_Integrable-Decaying}
Let us consider the Cauchy problem
\begin{equation*}
\begin{cases}
u_{tt}- \Delta u + b(t)u_t -g(t)\Delta u_t=0, &(t,x) \in (0,\infty) \times \mathbb{R}^n, \\
u(0,x)= u_0(x),\quad u_t(0,x)= u_1(x), &x \in \mathbb{R}^n.
\end{cases}
\end{equation*}
We assume that the coefficients $b=b(t)$ and $g=g(t)$ satisfy the conditions \textbf{(A'1)} and \textbf{(G1)} to \textbf{(G4)}, respectively. Let $(u_0,u_1)\in \dot{H}^{|\beta|+\kappa+\frac{a}{2}} \times \dot{H}^{|\beta|+\kappa+\frac{a}{2}-2}$ with $|\beta|\geq2$, $a\in(0,1)$ and arbitrarily small $\kappa>0$. Then, we have the following estimates for Sobolev solutions:
\begin{align*}
\|\,|D|^{|\beta|}u(t,\cdot)\|_{L^2} & \lesssim \|u_0\|_{\dot H^{|\beta|+\kappa+\frac{a}{2}}} + \|u_1\|_{\dot H^{|\beta|+\kappa+\frac{a}{2}-2}}, \\
\|\,|D|^{|\beta|-1}u_t(t,\cdot)\|_{L^2} & \lesssim \|u_0\|_{\dot H^{|\beta|+\kappa+\frac{a}{2}}} + \|u_1\|_{\dot H^{|\beta|+\kappa+\frac{a}{2}-2}}.
\end{align*}
\end{theorem}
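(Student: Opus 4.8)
The plan is to carry out the usual WKB analysis in the extended phase space, following Section~2 of \cite{AslanReissig2023} and the treatment of the elliptic zone in Section~\ref{Sect_Scattering_Increasing-Zell} above, the only genuinely new ingredient being the frictional term $b(t)u_t$, which by \textbf{(A'1)} is handled as an integrable perturbation. After the partial Fourier transformation one passes from \eqref{MainEquationFourier} to a first order system $D_tU=A(t,\xi)U$ for a suitable micro-energy $U$ (of the form $(|\xi|\hat u,D_t\hat u)^{\mathrm T}$ in the hyperbolic regions, and a viscoelastic-weighted variant obtained via the first change of variables in the elliptic one), and estimates its fundamental solution $E=E(t,s,\xi)$ in each of the four zones $\Zell$, $\Zred$, $\Zhyp$, $\Zdiss$. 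The two stated estimates then follow by multiplying the representation of $U$ by $\langle\xi\rangle^{|\beta|-1}$, inserting the data, and integrating over $\xi$, after splitting $\mathbb R^n_\xi$ for each fixed $t$ according to the zone containing $(t,\xi)$ and composing the zonal fundamental solutions along that trajectory.

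In the elliptic zone $\Zell(N_1,N_2)$ I would use the first change of variables $\hat u=\exp\!\big(-\tfrac12\int_0^t g(\tau)|\xi|^2\,d\tau\big)v$, so that \eqref{AuxiliaryEquation1} is elliptic there — indeed $g^2|\xi|^4/4\gtrsim N_2^2|\xi|^2\gg|\xi|^2$ — and then diagonalise exactly as in the proof of Proposition~\ref{Prop_Scattering_EllZone}; conditions \textbf{(G1)}--\textbf{(G3)} ensure that the viscoelastic remainder is uniformly integrable over $\Zell$, while the friction appears only through the first order term $b(t)v_t$, which enters the construction as a perturbation whose cumulative effect on $E_{\mathrm{ell}}$ is controlled by a factor of the form $\exp\!\big(c\int_s^t b(\tau)\,d\tau\big)$. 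Here condition \textbf{(G4)} is the decisive structural assumption: $\int_s^t b\le a\log(g(s)/g(t))$, so on the portion of a trajectory that lies in $\Zell$ — where $g$ decreases from $\sim g(0)$ down to $N_2/|\xi|$ — that factor is a power $\langle\xi\rangle^{ca}$ of the frequency, and tracking it carefully through the micro-energy/$L^2$ conversion is what produces the additional derivative loss recorded as $\langle\xi\rangle^{a/2}$. In the hyperbolic zone $\Zhyp(N_1,\varepsilon)$, where instead $g^2|\xi|^4\le\varepsilon^2|\xi|^2\ll|\xi|^2$ and $b(t)g(t)|\xi|^2\le(\varepsilon/N_1)|\xi|^2$, the same change of variables turns \eqref{AuxiliaryEquation1} into a wave-like equation; a standard diagonalisation with $b(t)v_t$ (integrable by \textbf{(A'1)}, which is why one does \emph{not} use the second change of variables here — that would bring in the uncontrolled term $b'$) and the remaining lower order terms (integrable over $\Zhyp$ by \textbf{(G2)}) as perturbations yields an oscillatory fundamental solution with uniformly bounded amplitude. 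In the dissipative zone $\Zdiss(N_1,\varepsilon)$, the low-frequency region where $b+g|\xi|^2$ is the dominant coefficient, a direct diagonalisation/energy argument gives that $E_{\mathrm{diss}}$ is uniformly bounded.

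The reduced zone $\Zred(N_1,N_2,\varepsilon)$, where $g^2|\xi|^4\approx|\xi|^2$ and neither WKB regime applies, I would handle by exploiting the exact dissipative structure of \eqref{MainEquationFourier}: since $b+g|\xi|^2>0$, the energy $|\hat u_t|^2+|\xi|^2|\hat u|^2$ is non-increasing, and by \textbf{(G3)} the reduced zone has bounded \emph{width} in the scaled time variable $|\xi|(t_{\xi,2}-t_{\xi,3})$; together these bound $E_{\mathrm{red}}$ up to a fixed power $\langle\xi\rangle^{\kappa}$ that is absorbed at the interfaces $t_{\xi,2}$, $t_{\xi,3}$ — this is where the $\kappa$-loss enters, as in the corresponding theorem of \cite{AslanReissig2023}. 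It then remains to glue the four estimates along $t_{\xi,1}$, $t_{\xi,2}$, $t_{\xi,3}$, multiply out the accumulated frequency powers $\langle\xi\rangle^{a/2}$ and $\langle\xi\rangle^{\kappa}$, and integrate in $\xi$ against the data in $\dot H^{|\beta|+\kappa+\frac a2}\times\dot H^{|\beta|+\kappa+\frac a2-2}$. The step I expect to be the main obstacle is precisely the reduced zone together with its two interfaces: one has to run the energy estimate there while simultaneously controlling the geometry of the zones through \textbf{(G3)} and converting the $L^1$-size of $b$, via \textbf{(G4)}, into the smallest possible power of $\langle\xi\rangle$, so that the total loss is held down to exactly $\kappa+\frac a2$.
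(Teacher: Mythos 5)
Your overall plan — WKB in the four zones $\Zell$, $\Zred$, $\Zhyp$, $\Zdiss$ as set up in Section~\ref{Subsect_Noneffective_Integrable}, first change of variables in $\Zell$, energy monotonicity in $\Zred$, gluing at the three interfaces — is exactly the paper's route: the proof of Theorem~\ref{Theorem_Scattering_Integrable-Decaying} simply refers to the detailed proof of Theorem~\ref{Theorem_Noneffective_Integrable-Decaying}, the only change being that $b\in L^1$ kills the exponential decay factor. Your use of \textbf{(G4)} in the form $\int_s^t b\,d\tau \leq a\log(g(s)/g(t))$ to trade the friction contribution against powers of $g$ is the correct key observation, and your reasons for using the first rather than the second change of variables are also right.

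However, you misattribute the $\kappa$-loss. In the paper's argument the reduced zone contributes \emph{no} loss at all: there one works directly with $\hat u$, the energy $\tfrac12(|\xi|^2|\hat u|^2+|\hat u_t|^2)$ satisfies $\tfrac{d}{dt}\mathcal E=-\big(b(t)+g(t)|\xi|^2\big)|\hat u_t|^2\leq0$, so $\mathcal E(t,\xi)\leq\mathcal E(s,\xi)$ exactly (Corollary~\ref{Cor:Noneffective-Integrable-Zred}), with no reference to the width of the zone. Your claim that \textbf{(G3)} gives bounded width in the scaled time $|\xi|(t_{\xi,2}-t_{\xi,3})$ is in fact false in general (for $g(t)=e^{-t}$ one has $t_{\xi,2}-t_{\xi,3}=\log(N_2/\varepsilon)$, hence $|\xi|(t_{\xi,2}-t_{\xi,3})\to\infty$), but fortunately it is not needed. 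The entire loss $\kappa+\tfrac a2$ is produced in the elliptic zone and converted to a power of $|\xi|$ at the separating line $t_{\xi,3}$. The mechanism is: $d(t,\xi)=\sqrt{g^2(t)|\xi|^4/4-|\xi|^2}$ satisfies $d(t,\xi)\geq\sqrt{1-4/N_2^2}\,\tfrac{g(t)}{2}|\xi|^2$, so the term $\tfrac{b(t)g(t)|\xi|^2}{4d(t,\xi)}$ is bounded by $\tfrac{1}{2\sqrt{1-4/N_2^2}}\,b(t)\leq -\tfrac{a}{2\sqrt{1-4/N_2^2}}\tfrac{g'(t)}{g(t)}$ via \textbf{(G4)}, yielding after integration the factor $\big(g(s)/g(t)\big)^{a/(2\sqrt{1-4/N_2^2})}$. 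Writing $\frac{a}{2\sqrt{1-4/N_2^2}}=\frac a2+\kappa$ with $\kappa=\kappa(N_2)\to0$ as $N_2\to\infty$ defines the arbitrarily small $\kappa$, and at the interface $g(t_{\xi,3})|\xi|=N_2$ turns this into the power $|\xi|^{\kappa+\frac a2}$ that determines the data spaces. So your plan as stated would leave you short by $\kappa$: you would extract only $\tfrac a2$ from $\Zell$ while waiting in vain for a $\kappa$ that the reduced zone never delivers. Once the full $\kappa+\tfrac a2$ is booked against the elliptic zone, the rest of your sketch goes through.
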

\begin{proof}
To prove this theorem, we will use the WKB analysis and derive estimates accordingly in each zone. We follow the strategy which is explained in Theorem \ref{Theorem_Noneffective_Integrable-Decaying} in Section \ref{Subsect_Noneffective_Integrable}. The main difference, however, is that now we assume $b\in L^1(\mathbb{R}^+)$ due to condition \textbf{(A'1)}. As a consequence, we obtain no longer exponential decay, in contrast to the results are given in Section \ref{Subsect_Noneffective_Integrable}.
\end{proof}

\section{The friction term is non-effective} \label{Section_Noneffective}
\begin{definition}[Non-effective dissipation, \cite{Wirth-Noneffective=2006}] \label{Definition_Non-effective}
If the nonnegative function $b=b(t)$ fulfilling the relation $\limsup_{t\to\infty}tb(t)<1$ satisfies
\begin{itemize}
\item[\textbf{(B'1)}] $b\in\mathcal{C}^1([0,\infty))$,
\item[\textbf{(B'2)}] $b'(t)<0$,
\item[\textbf{(B'3)}] $b^2(t) \lesssim -b'(t)$,
\end{itemize}
then, the damping term $b(t)u_t$ is called non-effective.
\end{definition}
\subsection{Model with increasing time-dependent coefficient $g=g(t)$} \label{Subsect_Noneffective_Increasing}
We assume the following conditions for the coefficient $g=g(t)$ for all $t \in [0,\infty)$:
\begin{enumerate}
\item[\textbf{(A1)}] $g(t)>0$ and $g'(t)>0$,
\item[\textbf{(A2)}] $\dfrac{1}{g} \in L^1([0,\infty))$,
\item[\textbf{(A3)}] $|d_t^kg(t)|\leq C_kg(t)\Big( \dfrac{g(t)}{G(t)} \Big)^k$ for $k=1,2$, where $G(t):=\dfrac{1}{2}\displaystyle\int_0^t g(\tau)d\tau$ and $C_1$, $C_2$ are positive constants.
\end{enumerate}
On the other hand, we pose the following condition for all $t \in [0,\infty)$:
\begin{itemize}
\item[\textbf{(N-EF)}] $b(t)\leq \widetilde{C}\dfrac{g(t)}{G(t)}$, where $\widetilde{C}$ is positive constant.
\end{itemize}
\begin{theorem} \label{Theorem_Sect-Noneffective_Increasing}
Let us consider the Cauchy problem
\begin{equation*}
\begin{cases}
u_{tt}- \Delta u + b(t)u_t -g(t)\Delta u_t=0, &(t,x) \in (0,\infty) \times \mathbb{R}^n, \\
u(0,x)= u_0(x),\quad u_t(0,x)= u_1(x), &x \in \mathbb{R}^n.
\end{cases}
\end{equation*}
We assume that the coefficient $g=g(t)$ satisfy the conditions \textbf{(A1)} to \textbf{(A3)} and the coefficient $b=b(t)$ satisfy the conditions \textbf{(B'1)} to \textbf{(B'3)} and \textbf{(N-EF)}. Moreover, we suppose that $(u_0,u_1)\in \dot{H}^{|\beta|} \times \dot{H}^{|\beta|-2}$ with $|\beta|\geq 2$. Then, we have the following estimates for Sobolev solutions:
\begin{align*}
\|\,|D|^{|\beta|} u(t,\cdot)\|_{L^2} & \lesssim  \|u_0\|_{\dot{H}^{|\beta|}} + \|u_1\|_{\dot{H}^{|\beta|-2}},\\
\|\,|D|^{|\beta|-2} u_t(t,\cdot)\|_{L^2} & \lesssim g(t)\big( \|u_0\|_{\dot{H}^{|\beta|}} + \|u_1\|_{\dot{H}^{|\beta|-2}} \big).
\end{align*}
\end{theorem}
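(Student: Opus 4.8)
The plan is to follow, step for step, the proof of Theorem~\ref{Theorem_Sect-Scattering_Increasing}: the conditions \textbf{(B'1)}--\textbf{(B'3)} and \textbf{(N-EF)} turn out to supply exactly those properties of the friction coefficient $b=b(t)$ that are actually used there, so essentially the same argument applies. I would pass to the partial Fourier transform, apply the \emph{first} change of variables leading to \eqref{AuxiliaryEquation1}, and split the extended phase space into the pseudo-differential zone $\Zpd(N)=\{(t,\xi):G(t)|\xi|^2\le N\}$ and the elliptic zone $\Zell(N)=\{(t,\xi):G(t)|\xi|^2\ge N\}$, with separating line $t_\xi$ and $N$ large, where $G(t)=\tfrac12\int_0^t g(\tau)\,d\tau$ is unchanged. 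The whole point is then to observe that dropping $b\in L^1([0,\infty))$ and \textbf{(A'3)} from the scattering setting costs nothing.

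In the elliptic zone I would rewrite \eqref{AuxiliaryEquation1} as the first-order system for $V=\big(\tfrac{g(t)}{2}|\xi|^2 v, D_t v\big)^{\text{T}}$ and run the two-step diagonalisation of Proposition~\ref{Prop_Scattering_EllZone} verbatim. The only facts about $b$ it uses are: $b\in\mathcal{C}^1$, $b>0$ and $b'<0$ (from \textbf{(B'1)}--\textbf{(B'2)}); the pointwise smallness $b(t)/(g(t)|\xi|^2)\le \widetilde C/(G(t)|\xi|^2)\le \widetilde C/N$ on $\Zell(N)$, which is exactly what makes $i\delta(t,\xi)=g(t)|\xi|^2+\tfrac{g'(t)-2}{g(t)}+b(t)\sim g(t)|\xi|^2$ and $N_1=I+N^{(1)}$ boundedly invertible (from \textbf{(N-EF)}); and the uniform integrability over $\Zell(N)$ of the $b$-part of the remainder $\mathcal{R}_1$, which — using that $g$ is increasing and hence $b$ is bounded — reduces to $\int_{t_\xi}^{\infty}\tfrac{|b'(\tau)|}{g(\tau)|\xi|^2}\,d\tau\le \tfrac{b(t_\xi)}{g(t_\xi)|\xi|^2}\le \widetilde C/N$ together with control of the $b$-quadratic contributions through $b^2\lesssim -b'$ (from \textbf{(B'3)}). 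Thus Proposition~\ref{Prop_Scattering_EllZone} holds unchanged: every entry of $E_{\text{ell}}^V(t,s,\xi)$ is $\lesssim \tfrac{g(t)}{g(s)}\exp\!\big(\tfrac{|\xi|^2}{2}\int_s^t g(\tau)\,d\tau\big)$ for $t_\xi\le s\le t$.

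In the pseudo-differential zone I would use the micro-energy $U=\big(\gamma(t,\xi)\hat u, D_t\hat u\big)^{\text{T}}$ with $\gamma(t,\xi)=\tfrac{g(t)}{2}|\xi|^2$ and the system \eqref{Eq:Scattering_Increasing_SystemZpd}, together with $\delta(t,\xi)=\exp\!\big(2|\xi|^2 G(t)+\int_0^t b(\tau)\,d\tau\big)$. This is where the only genuine difference appears: since $b$ need not be integrable, $\delta(t,\xi)$ need not be bounded. However, inspecting the proof of Proposition~\ref{Prop_Scattering_Incresing_Zpd}, $\delta$ enters only through the ratio bounds $\delta(\theta,\xi)/\delta(\tau,\xi)\le 1$ for $\theta\le\tau$ and $\delta(\tau,\xi)\gamma(\tau,\xi)/\big(\delta(t,\xi)\gamma(t,\xi)\big)\le 1$ for $\tau\le t$ (because $\delta\gamma$ is increasing, as the sign of $(\delta\gamma)_t$ shows), both of which require only $b,g>0$; the statement ``$\delta\lesssim 1$'' is never actually invoked. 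The sign $b'(\tau)<0$ from \textbf{(B'2)} is used precisely as in \eqref{Eq:Scattering-Incr-Zpd-Est-y1}--\eqref{Eq:Scattering-Incr-Zpd-Est-y2} (split $g'|\xi|^2-b'=-(b'+g'|\xi|^2)+2g'|\xi|^2$, integrate, and absorb the resulting $\tfrac{1}{b+g|\xi|^2}$- and $\tfrac{1}{g|\xi|^2}$-terms), and \textbf{(A2)} closes the Gronwall argument with kernel $2/g(\theta)$. Hence every entry of $E_{\text{pd}}(t,s,\xi)$ is $\lesssim \tfrac{g(t)}{g(s)}$ for $0\le s\le t\le t_\xi$, exactly as in Proposition~\ref{Prop_Scattering_Incresing_Zpd}.

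Finally I would glue the zone estimates as in Section~2 of \cite{AslanReissig2023}: for each $\xi$ use $E_{\text{pd}}(t,0,\xi)$ while $t\le t_\xi$, and for $t>t_\xi$ compose $E_{\text{pd}}(t_\xi,0,\xi)$ with $E_{\text{ell}}^V(t,t_\xi,\xi)$ after passing to $v$ via $\hat u=\exp(-|\xi|^2 G(t))\,v$. The growing factor $\exp\!\big(\tfrac{|\xi|^2}{2}\int_{t_\xi}^t g\big)$ of $E_{\text{ell}}^V$ is annihilated by the factor $\exp(-|\xi|^2 G(t))$ of the change of variables, leaving $\exp(-|\xi|^2 G(t_\xi))=e^{-N}$; only $g(t)/g(0)\lesssim g(t)$ survives. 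Writing $\hat u=\tfrac{2}{g(t)|\xi|^2}U_1$ and $\hat u_t=iU_2$, multiplying by $|\xi|^{|\beta|}$, respectively $|\xi|^{|\beta|-2}$, and integrating over $\Zpd(N)$ and $\Zell(N)$ separately, the factor $g(t)$ cancels in the bound for $\|\,|D|^{|\beta|}u(t,\cdot)\|_{L^2}$ and persists in the bound for $\|\,|D|^{|\beta|-2}u_t(t,\cdot)\|_{L^2}$, which yields the claimed estimates. I expect the main — and essentially the only — obstacle to be the bookkeeping: one must check that in the scattering proof every appeal to $b\in L^1$ was cosmetic (replaceable by monotonicity of $\delta$ and of $\delta\gamma$), that \textbf{(B'1)}--\textbf{(B'3)} and \textbf{(N-EF)} deliver precisely the regularity of $b$, the sign $b'<0$, and the smallness $b\le\widetilde C\,g/G$ consumed by the diagonalisation and the Gronwall estimates, and that the inequality $\limsup_{t\to\infty}tb(t)<1$ from Definition~\ref{Definition_Non-effective} is not needed for these particular estimates; as in Remark~\ref{Remark2.1}, this then says that a non-effective friction term from this class has no essential influence on the solution.
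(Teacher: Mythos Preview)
Your proposal is correct and follows essentially the same approach as the paper: split into $\Zpd(N)$ and $\Zell(N)$, use the first change of variables \eqref{AuxiliaryEquation1}, and repeat the diagonalisation of Propositions~\ref{Prop_Scattering_EllZone} and \ref{Prop_Scattering_Incresing_Zpd} verbatim, noting that only $b'<0$ and the smallness $b\le\widetilde C\,g/G$ (rather than $b\in L^1$) are actually consumed. Your observation that the statement ``$\delta\lesssim 1$'' in the scattering proof is never used, and that $\delta$ enters only through the monotone ratios $\delta(\theta,\xi)/\delta(\tau,\xi)\le 1$ and $\delta\gamma(\tau)/\delta\gamma(t)\le 1$, is exactly the point the paper exploits (it simply drops that line in Section~\ref{Sect_Noneffective_Increasing-Zpd} without comment); your remark that $\limsup tb(t)<1$ plays no role here is also correct.
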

\begin{proof}[Proof of Theorem \ref{Theorem_Sect-Noneffective_Increasing}]
We divide the extended phase space $[0,\infty)\times \mathbb{R}^n$ into zones as follows:
\begin{itemize}
\item pseudo-differential zone:
\begin{align*}
\Zpd(N)=\left\{ (t,\xi)\in [0,\infty)\times\mathbb{R}^n: G(t)|\xi|^2 \leq N \right\},
\end{align*}
\item elliptic zone:
\begin{align*}
\Zell(N)=\left\{ (t,\xi)\in [0,\infty)\times \mathbb{R}^n: G(t)|\xi|^2 \geq N \right\},
\end{align*}
\end{itemize}
where $G=G(t)$ is defined in condition \textbf{(A3)} and $N>0$ is sufficiently large. Moreover, the separating line $t_\xi=t(|\xi|)$ is given by
\[ t_\xi=\left\{ (t,\xi) \in [0,\infty) \times \mathbb{R}^n: G(t)|\xi|^2=N \right\}. \]
\subsubsection{Considerations in the elliptic zone $\Zell(N)$} \label{Sect_Noneffective_Increasing-Zell}
Let us write the transformed equation \eqref{AuxiliaryEquation1} in the following form:
\begin{equation*} \label{Eq:Noneffective_PseudoForm}
D_t^2v + \dfrac{g^2(t)}{4}|\xi|^4v + \Big( \dfrac{g'(t)}{2} -1 \Big)|\xi|^2v + \dfrac{b(t)g(t)}{2}|\xi|^2v - ib(t)D_tv=0.
\end{equation*}
We introduce the micro-energy $V=V(t,\xi):=\big( \dfrac{g(t)}{2}|\xi|^2v,D_tv \big)^{\text{T}}$. Then, by \eqref{Eq:ScatteringPseudoForm} we obtain that $V=V(t,\xi)$ satisfies the following system of first order:
\begin{equation*} \label{Eq:Noneffective_SystemWithbANDg}
D_tV=\left( \begin{array}{cc}
0 & \dfrac{g(t)}{2}|\xi|^2 \\
-\dfrac{g(t)}{2}|\xi|^2 & 0
\end{array} \right)V + \left( \begin{array}{cc}
\dfrac{D_tg(t)}{g(t)} & 0 \\
-\dfrac{g'(t)-2}{g(t)}-b(t) & ib(t)
\end{array} \right)V.
\end{equation*}
We carry out the first step of diagonalization procedure. For this reason, we set
\[ M := \left( \begin{array}{cc}
1 & -1 \\
i & i
\end{array} \right), \qquad M^{-1}=\frac{1}{2}\left( \begin{array}{cc}
1 & -i \\
-1 & -i
\end{array} \right). \]
We define $V^{(0)}:=M^{-1}V$ and get the system
\begin{equation*}
D_tV^{(0)}=\big( \mathcal{D}(t,\xi)+\mathcal{R}_b(t)+\mathcal{R}_g(t) \big)V^{(0)},
\end{equation*}
where
\begin{align*}
\mathcal{D}(t,\xi) &= \left( \begin{array}{cc}
i\dfrac{g(t)}{2}|\xi|^2 & 0 \\
0 & -i\dfrac{g(t)}{2}|\xi|^2
\end{array} \right), \qquad \mathcal{R}_b(t) = \left( \begin{array}{cc}
ib(t) & 0 \\
ib(t) & 0
\end{array} \right), \\
\mathcal{R}_g(t) &= \frac{1}{2} \left( \begin{array}{cc}
\dfrac{D_tg(t)}{g(t)}+i\dfrac{g'(t)-2}{g(t)} & -\dfrac{D_tg(t)}{g(t)}-i\dfrac{g'(t)-2}{g(t)} \\
-\dfrac{D_tg(t)}{g(t)}+i\dfrac{g'(t)-2}{g(t)} & \dfrac{D_tg(t)}{g(t)}-i\dfrac{g'(t)-2}{g(t)}
\end{array} \right).
\end{align*}
Here we take account of
\begin{align*}
 M^{-1}\left( \begin{array}{cc}
0 & 0 \\
-b(t) & 0
\end{array} \right)M = \frac{1}{2}\left( \begin{array}{cc}
ib(t) & -ib(t) \\
ib(t) & -ib(t)
\end{array} \right) \qquad \text{and} \qquad M^{-1}\left( \begin{array}{cc}
0 & 0 \\
0 & ib(t)
\end{array} \right)M = \frac{1}{2}\left( \begin{array}{cc}
ib(t) & ib(t) \\
ib(t) & ib(t)
\end{array} \right).
\end{align*}
We introduce $F_0(t)=\diag( \mathcal{R}_b(t)+\mathcal{R}_g(t))$ and carry out the next step of diagonalization procedure. The difference of the diagonal entries of the matrix $\mathcal{D}(t,\xi)+F_0(t)$ is
\begin{equation*}
i\delta(t,\xi):=g(t)|\xi|^2 + \frac{g'(t)-2}{g(t)} + b(t)\sim g(t)|\xi|^2
\end{equation*}
for $t\geq t_\xi$ if we choose the zone constant $N$ sufficiently large and apply conditions \textbf{(A'1)} and \textbf{(N-EF)}. We introduce a matrix $N^{(1)}=N^{(1)}(t,\xi)$ and $\mathcal{R}:=\mathcal{R}_b+\mathcal{R}_g$ such that
\[ N^{(1)}(t,\xi)=\left( \begin{array}{cc}
0 & \dfrac{\mathcal{R}_{12}}{\delta(t,\xi)} \\
-\dfrac{\mathcal{R}_{21}}{\delta(t,\xi)} & 0
\end{array} \right)\sim \left( \begin{array}{cc}
0 & -i\dfrac{D_tg(t)}{2g^2(t)|\xi|^2}+\dfrac{g'(t)-2}{2g^2(t)|\xi|^2} \\
i\dfrac{D_tg(t)}{2g^2(t)|\xi|^2}+\dfrac{g'(t)-2}{2g^2(t)|\xi|^2}+\dfrac{b(t)}{g(t)|\xi|^2} & 0
\end{array} \right). \]
For a sufficiently large zone constant $N$ and all $t\geq t_\xi$ the matrix $N_1=N_1(t,\xi)$, where $N_{1}(t,\xi)=I+N^{(1)}(t,\xi)$, is invertible with uniformly bounded inverse $N_1^{-1}=N_1^{-1}(t,\xi)$. Indeed, in the elliptic zone $\Zell(N)$ it holds
\begin{align*}
\Big|\dfrac{D_tg(t)}{2g^2(t)|\xi|^2}\Big| \leq \frac{C}{G(t)|\xi|^2}\leq \frac{C}{N} \qquad \text{and} \qquad \Big| \dfrac{b(t)}{g(t)|\xi|^2} \Big| \leq \frac{\widetilde{C}}{G(t)|\xi|^2}\leq \frac{\widetilde{C}}{N},
\end{align*}
where we used conditions \textbf{(A3)} and \textbf{(N-EF)}. Let
\begin{align*}
B^{(1)}(t,\xi) &= D_tN^{(1)}(t,\xi)-( \mathcal{R}_b(t)+\mathcal{R}_g(t)-F_0(t,\xi))N^{(1)}(t,\xi), \\
\mathcal{R}_1(t,\xi) &= -N_1^{-1}(t,\xi)B^{(1)}(t,\xi).
\end{align*}
Then, we have the following operator identity:
\begin{equation*}
\big( D_t-\mathcal{D}(t,\xi)-\mathcal{R}_b(t)-\mathcal{R}_g(t) \big)N_1(t,\xi)=N_1(t,\xi)\big( D_t-\mathcal{D}(t,\xi)-F_0(t)-\mathcal{R}_1(t,\xi) \big).
\end{equation*}
\begin{proposition} \label{Prop_Noneffective_EllZone}
Let $E_{\text{ell}}^V=E_{\text{ell}}^V(t,s,\xi)$ be the fundamental solution of the operator
\[ D_t-\mathcal{D}(t,\xi)-F_0(t)-\mathcal{R}_1(t,\xi). \]
Then, $E_{\text{ell}}^V=E_{\text{ell}}^V(t,s,\xi)$ holds the following estimate:
\begin{align*} \label{Eq:noneffective_correctformula}
(|E_{\text{ell}}^V(t,s,\xi)|) \lesssim \frac{g(t)}{g(s)} \exp \Big(\frac{|\xi|^2}{2} \int_s^t g(\tau) d\tau \Big) \left(\begin{array}{cc}
1 & 1 \\
1 & 1
\end{array}\right)
\end{align*}
with $(t,\xi),(s,\xi)\in \Zell(N)$ and $t_\xi\leq s\leq t$.
\end{proposition}
\begin{proof}
We transform the system for $E_{\text{ell}}^V=E_{\text{ell}}^V(s,t,\xi)$ to an integral equation for a new matrix-valued function $\mathcal{Q}_{\text{ell}}=\mathcal{Q}_{\text{ell}}(s,t,\xi)$. If we differentiate the term
\[ \exp \bigg\{ -i\int_{s}^{t}\big( \mathcal{D}(\tau,\xi)+F_0(\tau) \big)d\tau \bigg\}E_{\text{ell}}^V(t,s,\xi) \]
and, then integrate on the interval $[s,t]$, we find that $E_{\text{ell}}^V=E_{\text{ell}}^V(t,s,\xi)$ satisfies the following integral equation:
\begin{align*}
E_{\text{ell}}^V(t,s,\xi) & = \exp\bigg\{ i\int_{s}^{t}\big( \mathcal{D}(\tau,\xi)+F_0(\tau) \big)d\tau \bigg\}E_{\text{ell}}^V(s,s,\xi)\\
& \quad + i\int_{s}^{t} \exp \bigg\{ i\int_{\theta}^{t}\big( \mathcal{D}(\tau,\xi)+F_0(\tau) \big)d\tau \bigg\}\mathcal{R}_1(\theta,\xi)E_{\text{ell}}^V(\theta,s,\xi)\,d\theta.
\end{align*}
Let us define
\[ \mathcal{Q}_{\text{ell}}(t,s,\xi)=\exp\bigg\{ -\int_{s}^{t}\beta(\tau,\xi)d\tau \bigg\} E_{\text{ell}}^V(t,s,\xi), \]
with a suitable $\beta=\beta(t,\xi)$ which will be fixed later. Then, it satisfies the new integral equation
\begin{align*}
\mathcal{Q}_{\text{ell}}(t,s,\xi)=&\exp \bigg\{ \int_{s}^{t}\big( i\mathcal{D}(\tau,\xi)+iF_0(\tau)-\beta(\tau,\xi)I \big)d\tau \bigg\}\mathcal{Q}_{\text{ell}}(s,s,\xi)\\
& \quad +\int_{s}^{t} \exp \bigg\{ \int_{\theta}^{t}\big( i\mathcal{D}(\tau,\xi)+iF_0(\tau)-\beta(\tau,\xi)I \big)d\tau \bigg\}\mathcal{R}_1(\theta,\xi)\mathcal{Q}_{\text{ell}}(\theta,s,\xi)\,d\theta.
\end{align*}
The function $\mathcal{R}_1=\mathcal{R}_1(\theta,\xi)$ is uniformly integrable over the elliptic zone (see Proposition 2.4 in \cite{AslanReissig2023}).

The main entries of the diagonal matrix $i\mathcal{D}(t,\xi)+iF_0(t)$ are given by
\begin{align*}
(I)  =& \frac{g(t)}{2}|\xi|^2+\frac{g'(t)}{2g(t)}+\frac{g'(t)-2}{2g(t)},\\
(II) =& -\frac{g(t)}{2}|\xi|^2+\frac{g'(t)}{2g(t)}-\frac{g'(t)-2}{2g(t)}-b(t).
\end{align*}
It follows that the term $(I)$ is dominant. Therefore, we choose the weight
\begin{align*}
\beta=\beta(t,\xi)=(I)=\frac{g(t)}{2}|\xi|^2+\frac{g'(t)}{2g(t)}+\frac{g'(t)-2}{2g(t)}.
\end{align*}
By this choice, we get
\[ i\mathcal{D}(\tau,\xi)+iF_0(\tau)-\beta(\tau,\xi)I = \left( \begin{array}{cc}
-g(\tau)|\xi|^2-\dfrac{g'(\tau)-2}{g(\tau)}-b(t) & 0 \\
0 & 0
\end{array} \right). \]
This implies
\begin{align*}
H(t,s,\xi) & =\exp \bigg\{ \int_{s}^{t}\big( i\mathcal{D}(\tau,\xi)+iF_0(\tau)-\beta(\tau,\xi)I \big)d\tau \bigg\}\\
& = \diag \bigg( \exp \bigg\{ \int_{s}^{t}\Big( -g(\tau)|\xi|^2-\dfrac{g'(\tau)-2}{g(\tau)}\Big)d\tau \bigg\},1 \bigg)\rightarrow \left( \begin{array}{cc}
0 & 0 \\
0 & 1
\end{array} \right)
\end{align*}
as $t\rightarrow \infty$ for any fixed $s\geq t_\xi$. Hence, the matrix $H=H(s,t,\xi)$ is uniformly bounded for $(s,\xi),(t,\xi)\in \Zell(N)$. So, the representation of $\mathcal{Q}_{\text{ell}}=\mathcal{Q}_{\text{ell}}(t,s,\xi)$ by a Neumann series gives
\begin{align*}
\mathcal{Q}_{\text{ell}}(t,s,\xi)=H(t,s,\xi)+\sum_{k=1}^{\infty}i^k\int_{s}^{t}H(t,t_1,\xi)&\mathcal{R}_1(t_1,\xi)\int_{s}^{t_1}H(t_1,t_2,\xi)\mathcal{R}_1(t_2,\xi) \\
& \cdots \int_{s}^{t_{k-1}}H(t_{k-1},t_k,\xi)\mathcal{R}_1(t_k,\xi)dt_k\cdots dt_2dt_1.
\end{align*}
Then, convergence of this series is obtained from the symbol estimates, since $\mathcal{R}_1=\mathcal{R}_1(t,\xi)$ is uniformly integrable over $\Zell(N)$. Hence, from the last considerations we may conclude
\begin{align*}
E_{\text{ell}}^V(t,s,\xi)&=\exp \bigg\{ \int_{s}^{t}\beta(\tau,\xi)d\tau \bigg\}\mathcal{Q}_{\text{ell}}(t,s,\xi) \\
& = \exp \bigg\{ \int_{s}^{t}\bigg( \frac{g(\tau)}{2}|\xi|^2+\frac{g'(\tau)}{2g(\tau)}+\frac{g'(\tau)-2}{2g(\tau)} \bigg)d\tau \bigg\}\mathcal{Q}_g(t,s,\xi),
\end{align*}
where $\mathcal{Q}_{\text{ell}}=\mathcal{Q}_{\text{ell}}(t,s,\xi)$ is a uniformly bounded matrix. Then, it follows
\begin{align*}
(|E_{\text{ell}}^V(t,s,\xi)|) & \lesssim \exp \bigg\{ \int_{s}^{t}\bigg( \frac{g(\tau)}{2}|\xi|^2+\frac{g'(\tau)}{2g(\tau)}+\frac{g'(\tau)-2}{2g(\tau)} \bigg)d\tau \bigg\} \left( \begin{array}{cc}
1 & 1 \\
1 & 1
\end{array} \right) \\
& \lesssim \frac{g(t)}{g(s)} \exp\bigg( |\xi|^2\int_{s}^{t} \frac{g(\tau)}{2}d\tau \bigg)\left( \begin{array}{cc}
1 & 1 \\
1 & 1
\end{array} \right).
\end{align*}
This completes the proof.	
\end{proof}
\subsubsection{Considerations in the pseudo-differential zone $\Zpd(N)$} \label{Sect_Noneffective_Increasing-Zpd}
We define the micro-energy $U=\big( \gamma(t,\xi)\hat{u},D_t\hat{u} \big)^\text{T}$ with $\gamma(t,\xi):=\dfrac{g(t)}{2}|\xi|^2$. Then, the Cauchy problem \eqref{MainEquationFourier} leads to the system of first order
\begin{equation} \label{Eq:Noneffective_Increasing_SystemZpd}
D_tU=\underbrace{\left( \begin{array}{cc}
\dfrac{D_t\gamma(t,\xi)}{\gamma(t,\xi)} & \gamma(t,\xi) \\
\dfrac{|\xi|^2}{\gamma(t,\xi)} & ig(t)|\xi|^2+ib(t)
\end{array} \right)}_{A(t,\xi)}U.
\end{equation}
We are interested in the fundamental solution $E_{\text{pd}}=E_{\text{pd}}(t,s,\xi)$ to the system \eqref{Eq:Noneffective_Increasing_SystemZpd} as follows:
\[ D_tE_{\text{pd}}(t,s,\xi)=A(t,\xi)E_{\text{pd}}(t,s,\xi), \quad E_{\text{pd}}(s,s,\xi)=I, \]
for all $0\leq s \leq t$ and $(t,\xi), (s,\xi) \in \Zpd(N)$. Let us introduce the auxiliary function
\[ \delta=\delta(t,\xi):=\exp\bigg( \int_{0}^{t}\big( b(\tau)+g(\tau)|\xi|^2 \big)d\tau\bigg). \]
The entries $E_{\text{pd}}^{(k\ell)}(t,s,\xi)$, $k,\ell=1,2,$ of the fundamental solution $E_{\text{pd}}(t,s,\xi)$ satisfy the following system for $\ell=1,2$:
\begin{align*}
D_tE_{\text{pd}}^{(1\ell)}(t,s,\xi) &= \frac{D_t\gamma(t,\xi)}{\gamma(t,\xi)}E_{\text{pd}}^{(1\ell)}(t,s,\xi)+\gamma(t,\xi)E_{\text{pd}}^{(2\ell)}(t,s,\xi), \\
D_tE_{\text{pd}}^{(2\ell)}(t,s,\xi) &= \frac{|\xi|^2}{\gamma(t,\xi)}E_{\text{pd}}^{(1\ell)}(t,s,\xi)+i\big( b(t)+g(t)|\xi|^2+ \big)E_{\text{pd}}^{(2\ell)}(t,s,\xi).
\end{align*}
Then, by straight-forward calculations (with $\delta_{k \ell}=1$ if $k=\ell$ and $\delta_{k \ell}=0$ otherwise), we get
\begin{align*}
E_{\text{pd}}^{(1\ell)}(t,s,\xi) & = \frac{\gamma(t,\xi)}{\gamma(s,\xi)}\delta_{1\ell}+i\gamma(t,\xi)\int_{s}^{t}E_{\text{pd}}^{(2\ell)}(\tau,s,\xi)d\tau, \\
E_{\text{pd}}^{(2\ell)}(t,s,\xi) & = \frac{\delta(s,\xi)}{\delta(t,\xi)}\delta_{2\ell}+\frac{i|\xi|^2}{\delta(t,\xi)}\int_{s}^{t}\frac{1}{\gamma(\tau,\xi)}\delta(\tau,\xi)E_{\text{pd}}^{(1\ell)}(\tau,s,\xi)d\tau.
\end{align*}
\begin{proposition} \label{Prop_Noneffective_Incresing_Zpd}
We have the following estimates in the pseudo-differential zone $\Zpd(N)$ to the Cauchy problem \eqref{MainEquation}:
\begin{equation*}
(|E_{\text{pd}}(t,s,\xi)|) \lesssim \frac{g(t)}{g(s)}
\left( \begin{array}{cc}
1 & 1 \\
1 & 1
\end{array} \right)
\end{equation*}
with $(s,\xi),(t,\xi)\in\Zpd(N)$ and $0\leq s\leq t\leq t_\xi$.
\end{proposition}
\begin{proof}
First let us consider the first column. Plugging the representation for $E_{\text{pd}}^{(21)}=E_{\text{pd}}^{(21)}(t,s,\xi)$ into the integral equation for $E_{\text{pd}}^{(11)}=E_{\text{pd}}^{(11)}(t,s,\xi)$ gives
\begin{align*}
E_{\text{pd}}^{(11)}(t,s,\xi) = \frac{\gamma(t,\xi)}{\gamma(s,\xi)}-|\xi|^2\gamma(t,\xi)\int_{s}^{t}\int_{s}^{\tau}\frac{\delta(\theta,\xi)}{\delta(\tau,\xi)}\frac{1}{\gamma(\theta,\xi)}E_{\text{pd}}^{(11)}(\theta,s,\xi)d\theta d\tau.
\end{align*}
By setting $y(t,s,\xi):=\dfrac{\gamma(s,\xi)}{\gamma(t,\xi)}E_{\text{pd}}^{(11)}(t,s,\xi)$ we obtain
\begin{align*}
y(t,s,\xi) &= 1-|\xi|^2\int_{s}^{t}\int_{\theta}^{t}\frac{\delta(\theta,\xi)}{\delta(\tau,\xi)}y(\theta,s,\xi)d\tau d\theta \\
& = 1+|\xi|^2\int_{s}^{t}\bigg( \int_{\theta}^{t}\frac{1}{b(\tau)+g(\tau)|\xi|^2}\partial_\tau \bigg( \frac{\delta(\theta,\xi)}{\delta(\tau,\xi)} \bigg)d\tau \bigg) y(\theta,s,\xi)d\theta \\
& = 1+|\xi|^2\int_{s}^{t}\bigg( \frac{1}{b(\tau)+g(\tau)|\xi|^2}\frac{\delta(\theta,\xi)}{\delta(\tau,\xi)}\Big|_{\theta}^{t}+\int_{\theta}^{t}\frac{b'(\tau)+g'(\tau)|\xi|^2}{( b(\tau)+g(\tau)|\xi|^2)^2} \frac{\delta(\theta,\xi)}{\delta(\tau,\xi)} d\tau \bigg) y(\theta,s,\xi)d\theta.
\end{align*}
Since $b'(t)<0$ from condition \textbf{(B'2)}, it follows
\begin{align} \label{Eq:Noneffective-Incr-Zpd-Est-y1}
|y(t,s,\xi)| &\leq 1+|\xi|^2\int_{s}^{t}\bigg( \frac{1}{b(t)+g(t)|\xi|^2}\underbrace{\frac{\delta(\theta,\xi)}{\delta(t,\xi)}}_{\leq1}+\int_{\theta}^{t}\frac{g'(\tau)|\xi|^2-b'(\tau)}{( b(\tau)+g(\tau)|\xi|^2)^2} \underbrace{\frac{\delta(\theta,\xi)}{\delta(\tau,\xi)}}_{\leq1}d\tau \bigg)|y(\theta,s,\xi)|d\theta \nonumber \\
&\leq 1+|\xi|^2\int_{s}^{t}\bigg( \frac{1}{b(t)+g(t)|\xi|^2} - \int_{\theta}^{t}\frac{b'(\tau)+g'(\tau)|\xi|^2}{(b(\tau)+g(\tau)|\xi|^2)^2} + 2\int_{\theta}^{t}\frac{g'(\tau)|\xi|^2}{(b(\tau)+g(\tau)|\xi|^2)^2} d\tau \bigg)|y(\theta,s,\xi)|d\theta \nonumber \\
&\leq 1+|\xi|^2\int_{s}^{t}\bigg( \frac{1}{b(t)+g(t)|\xi|^2} - \int_{\theta}^{t}\frac{b'(\tau)+g'(\tau)|\xi|^2}{(b(\tau)+g(\tau)|\xi|^2)^2} + 2\int_{\theta}^{t}\frac{g'(\tau)|\xi|^2}{(g(\tau)|\xi|^2)^2} d\tau \bigg)|y(\theta,s,\xi)|d\theta \nonumber \\
&= 1+|\xi|^2\int_{s}^{t}\bigg( \frac{1}{b(t)+g(t)|\xi|^2} + \frac{1}{b(\tau)+g(\tau)|\xi|^2}\Big|_\theta^t - \frac{2}{g(\tau)|\xi|^2}\Big|_\theta^t \bigg)|y(\theta,s,\xi)|d\theta \nonumber \\
& = 1+|\xi|^2\int_{s}^{t}\bigg( \frac{2}{b(t)+g(t)|\xi|^2} - \frac{1}{b(\theta)+g(\theta)|\xi|^2} - \frac{2}{g(t)|\xi|^2} + \frac{2}{g(\theta)|\xi|^2}\bigg)|y(\theta,s,\xi)|d\theta \nonumber \\
& \leq 1+|\xi|^2\int_{s}^{t}\bigg( \frac{2}{g(t)|\xi|^2} - \frac{1}{b(\theta)+g(\theta)|\xi|^2} - \frac{2}{g(t)|\xi|^2} + \frac{2}{g(\theta)|\xi|^2}\bigg)|y(\theta,s,\xi)|d\theta \nonumber \\
& \leq 1+|\xi|^2\int_{s}^{t}\bigg( \frac{2}{g(\theta)|\xi|^2} \bigg)|y(\theta,s,\xi)|d\theta.
\end{align}
Applying Gronwall's inequality and employing \textbf{(A2)}, we get the estimate
\begin{align*}
|y(t,s,\xi)| \leq \exp \bigg( \int_{s}^{t}\frac{1}{g(\theta)}d\theta \bigg) \lesssim 1.
\end{align*}
Thus, we may conclude that
\[ |E_{\text{pd}}^{(11)}(t,s,\xi)| \lesssim \frac{\gamma(t,\xi)}{\gamma(s,\xi)}=\frac{g(t)}{g(s)}. \]
Now we consider $E_{\text{pd}}^{(21)}(t,s,\xi)$. By using the estimate for $|E_{\text{pd}}^{(11)}(t,s,\xi)|$ we obtain
\begin{align*}
\frac{\gamma(s,\xi)}{\gamma(t,\xi)}|E_{\text{pd}}^{(21)}(t,s,\xi)| & \lesssim |\xi|^2\int_{s}^{t}\frac{1}{\gamma(\tau,\xi)}\frac{\delta(\tau,\xi)}{\delta(t,\xi)}\frac{\gamma(s,\xi)}{\gamma(t,\xi)}|E_{\text{pd}}^{(11)}(\tau,s,\xi)|d\tau \\
& \lesssim \int_{s}^{t}\frac{1}{g(\tau)}\underbrace{\frac{\delta(\tau,\xi)\gamma(\tau,\xi)}{\delta (t,\xi)\gamma(t,\xi)}}_{\leq1}d\tau \lesssim \int_{s}^{t}\frac{1}{g(\tau)}d\tau \lesssim 1,
\end{align*}
where we have used
\begin{align} \label{Eq:Noneffective-Incr-Zpd-Inq1}
\frac{\delta(\tau,\xi)\gamma(\tau,\xi)}{\delta(t,\xi)\gamma(t,\xi)} \leq \frac{\delta(t,\xi)\gamma(t,\xi)}{\delta(t,\xi)\gamma(t,\xi)} = 1,
\end{align}
because $f(t,\xi) := \delta(t,\xi)\gamma(t,\xi)$ is increasing in $t$. Namely, we have
\begin{align*}
f_t(t,\xi) = \exp\Big( \int_{0}^{t}\big( b(\tau)+g(\tau)|\xi|^2 \big)d\tau \Big)\frac{1}{2}\big( ( g(t)|\xi|^2)^2 + g(t)b(t)|\xi|^2 + g'(t)|\xi|^2 \big),
\end{align*}
and we may conclude that $f_t(t,\xi)>0$, namely $f=f(t,\xi)$ is increasing in $t$. Thus, we arrive at
\[ |E_{\text{pd}}^{(21)}(t,s,\xi)| \lesssim \frac{\gamma(t,\xi)}{\gamma(s,\xi)}=\frac{g(t)}{g(s)}. \]
Next, we consider the entries of the second column. Plugging the representation for $E_{\text{pd}}^{(22)}=E_{\text{pd}}^{(22)}(t,s,\xi)$ into the integral equation for $E_{\text{pd}}^{(12)}=E_{\text{pd}}^{(12)}(t,s,\xi)$ gives
\begin{align*}
E_{\text{pd}}^{(12)}(t,s,\xi) = i\gamma(t,\xi)\int_{s}^{t}\frac{\delta(s,\xi)}{\delta(\tau,\xi)}d\tau-
|\xi|^2\gamma(t,\xi)\int_{s}^{t}\int_{s}^{\tau}\frac{\delta(\theta,\xi)}{\delta(\tau,\xi)}\frac{1}{\gamma(\theta,\xi)}E_{\text{pd}}^{(12)}(\theta,s,\xi)d\theta d\tau.
\end{align*}
After setting $y(t,s,\xi):=\dfrac{\gamma(s,\xi)}{\gamma(t,\xi)}E_{\text{pd}}^{(12)}(t,s,\xi)$, it follows
\begin{align*}
y(t,s,\xi) &= -i\gamma(s,\xi)\int_{s}^{t}\frac{1}{b(\tau)+g(\tau)|\xi|^2}\partial_\tau \bigg( \frac{\delta(s,\xi)}{\delta(\tau,\xi)} \bigg)d\tau \\
& \qquad + |\xi|^2\int_{s}^{t}\bigg( \int_{\theta}^{t}\frac{1}{b(\tau)+g(\tau)|\xi|^2}\partial_\tau \bigg( \frac{\delta(\theta,\xi)}{\delta(\tau,\xi)} \bigg)d\tau \bigg)y(\theta,s,\xi)d\theta d\tau \\
&= i\gamma(s,\xi) \bigg( -\frac{1}{b(\tau)+g(\tau)|\xi|^2}\frac{\delta(s,\xi)}{\delta(\tau,\xi)}\Big|_s^t - \int_{s}^{t}\frac{b'(\tau)+g'(\tau)|\xi|^2}{( b(\tau)+g(\tau)|\xi|^2)^2} \frac{\delta(\theta,\xi)}{\delta(\tau,\xi)}d\tau \bigg) \\
& \qquad + |\xi|^2\int_{s}^{t}\bigg( \frac{1}{b(t)+g(t)|\xi|^2}\frac{\delta(\theta,\xi)}{\delta(t,\xi)}+\int_{\theta}^{t}\frac{b'(\tau)+g'(\tau)|\xi|^2}{( b(\tau)+g(\tau)|\xi|^2)^2} \frac{\delta(\theta,\xi)}{\delta(\tau,\xi)}d\tau \bigg)y(\theta,s,\xi)d\theta.
\end{align*}
Since $b'(\tau)<0$, we have
\begin{align} \label{Eq:Noneffective-Incr-Zpd-Est-y2}
|y(t,s,\xi)| &\leq \gamma(s,\xi) \bigg( \frac{1}{b(t)+g(t)|\xi|^2}\underbrace{\frac{\delta(s,\xi)}{\delta(t,\xi)}}_{\leq1} +\frac{1}{b(s)+g(s)|\xi|^2} + \int_{s}^{t}\frac{g'(\tau)|\xi|^2-b'(\tau)}{( b(\tau)+g(\tau)|\xi|^2)^2}\underbrace{\frac{\delta(\theta,\xi)}{\delta(\tau,\xi)}}_{\leq1}d\tau \bigg) \nonumber \\
& \qquad + |\xi|^2\int_{s}^{t}\bigg( \frac{1}{b(t)+g(t)|\xi|^2}\underbrace{\frac{\delta(\theta,\xi)}{\delta(t,\xi)}}_{\leq1}+\int_{\theta}^{t}\frac{g'(\tau)|\xi|^2-b'(\tau)}{( b(\tau)+g(\tau)|\xi|^2)^2}\underbrace{\frac{\delta(\theta,\xi)}{\delta(\tau,\xi)}}_{\leq1}d\tau \bigg)|y(\theta,s,\xi)|d\theta,
\end{align}
where for the first summand in \eqref{Eq:Noneffective-Incr-Zpd-Est-y2} we use the following relations:
\[ \gamma(s,\xi) \frac{1}{b(t)+g(t)|\xi|^2}\frac{\delta(s,\xi)}{\delta(t,\xi)} \leq \frac{1}{2} \qquad \text{and} \qquad \gamma(s,\xi) \frac{1}{b(s)+g(s)|\xi|^2}\leq \frac{1}{2}, \]
and
\begin{align*}
& \gamma(s,\xi) \int_{s}^{t}\frac{g'(\tau)|\xi|^2-b'(\tau)}{( b(\tau)+g(\tau)|\xi|^2)^2}\underbrace{\frac{\delta(\theta,\xi)}{\delta(\tau,\xi)}}_{\leq1}d\tau \leq  \gamma(s,\xi) \int_{s}^{t}\frac{g'(\tau)|\xi|^2-b'(\tau)}{( b(\tau)+g(\tau)|\xi|^2)^2} d\tau \\
& \qquad = -\gamma(s,\xi) \int_{s}^{t}\frac{b'(\tau)+g'(\tau)|\xi|^2}{( b(\tau)+g(\tau)|\xi|^2)^2} d\tau + 2\gamma(s,\xi) \int_{s}^{t}\frac{g'(\tau)|\xi|^2}{( b(\tau)+g(\tau)|\xi|^2)^2} d\tau \\
& \qquad = \frac{\gamma(s,\xi)}{b(t)+g(t)|\xi|^2}-\frac{\gamma(s,\xi)}{b(s)+g(s)|\xi|^2} + 2\gamma(s,\xi) \int_{s}^{t}\frac{g'(\tau)|\xi|^2}{( b(\tau)+g(\tau)|\xi|^2)^2} d\tau \\
& \qquad \leq \frac{\gamma(s,\xi)}{g(t)|\xi|^2} + 2\gamma(s,\xi)\int_{s}^{t}\frac{g'(\tau)|\xi|^2}{( g(\tau)|\xi|^2)^2} d\tau \\
& \qquad = \frac{\gamma(s,\xi)}{g(t)|\xi|^2} + \big( g(s)|\xi|^2 \big)\Big[ -\frac{1}{g(\tau)||\xi|^2} \Big]_s^t \\
& \qquad = \frac{1}{2}\frac{g(s)|\xi|^2}{g(t)|\xi|^2} - \frac{g(s)|\xi|^2}{g(t)|\xi|^2} + \frac{g(s)|\xi|^2}{g(s)|\xi|^2} \leq1.
\end{align*}
For the second summand in \eqref{Eq:Noneffective-Incr-Zpd-Est-y2} following the same approach to \eqref{Eq:Noneffective-Incr-Zpd-Est-y1}, we find
\[ |y(t,s,\xi)| \leq 1+|\xi|^2\int_{s}^{t}\bigg( \frac{2}{g(\theta)|\xi|^2} \bigg)|y(\theta,s,\xi)|d\theta. \]
Applying Gronwall's inequality and using condition \textbf{(A2)} we have the estimate
\begin{align*}
|y(t,s,\xi)| \lesssim \exp\bigg( \int_{s}^{t}\frac{2}{g(\theta)}d\theta \bigg) \lesssim 1.
\end{align*}
This implies that
\begin{equation} \label{Eq:Noneffective-Incr-Zpd-Est-E12}
|E_{\text{pd}}^{(12)}(t,s,\xi)| \lesssim \frac{\gamma(t,\xi)}{\gamma(s,\xi)} = \frac{g(t)}{g(s)}.
\end{equation}
Finally, let us estimate $|E_{\text{pd}}^{(22)}(t,s,\xi)|$ by using the estimate of $|E_{\text{pd}}^{(12)}(t,s,\xi)|$ from \eqref{Eq:Noneffective-Incr-Zpd-Est-E12}. It holds
\begin{align*}
|E_{\text{pd}}^{(22)}(t,s,\xi)| & \lesssim \frac{\delta(s,\xi)}{\delta(t,\xi)}+|\xi|^2\int_{s}^{t}\frac{\delta(\tau,\xi)}{\delta(t,\xi)}\frac{1}{\gamma(\tau,\xi)}|E_{\text{pd}}^{(12)}(\tau,s,\xi)|d\tau \\
& \lesssim \frac{\delta(s,\xi)}{\delta(t,\xi)}+|\xi|^2\int_{s}^{t}\frac{\delta(\tau,\xi)}{\delta(t,\xi)}\frac{1}{\gamma(\tau,\xi)}\frac{\gamma(\tau,\xi)}{\gamma(s,\xi)}d\tau.
\end{align*}
Then, we have
\begin{align*}
\frac{\gamma(s,\xi)}{\gamma(t,\xi)}|E_{\text{pd}}^{(22)}(t,s,\xi)| & \lesssim \underbrace{\frac{\delta(s,\xi)\gamma(s,\xi)}{\delta(t,\xi)\gamma(t,\xi)}}_{\leq1}+|\xi|^2\int_{s}^{t}
\underbrace{\frac{\delta(\tau,\xi)\gamma(\tau,\xi)}{\delta(t,\xi)\gamma(t,\xi)}}_{\leq1}\frac{1}{\gamma(\tau,\xi)}d\tau \\
& \lesssim 1 + |\xi|^2\int_{s}^{t}\frac{1}{g(\tau)|\xi|^2}d\tau \lesssim 1,
\end{align*}
where we used \eqref{Eq:Noneffective-Incr-Zpd-Inq1}. This shows that
\[ |E_{\text{pd}}^{(22)}(t,s,\xi)| \lesssim \frac{\gamma(t,\xi)}{\gamma(s,\xi)}=\frac{g(t)}{g(s)}. \]
This completes the proof.
\end{proof}
Following the same strategy as in Section 2 of the paper \cite{AslanReissig2023}, estimates in $\Zell(N)$ and $\Zpd(N)$ may conclude the proof of Theorem \ref{Theorem_Sect-Noneffective_Increasing}.
\end{proof}
\subsection{Model with integrable and decaying time-dependent coefficient $g=g(t)$} \label{Subsect_Noneffective_Integrable}
In this section, we consider noneffective damping terms $b(t)u_t$ and viscoelastic damping terms $-g(t)\Delta u_t$ with integrable and a decaying time-dependent coefficient $g=g(t)$ to our equation \eqref{MainEquation}.
\medskip

Let us remember our equation in \eqref{MainEquationFourier}
\begin{equation*} \label{Eq:Noneffective_Integrable_PseudoForm}
\hat{u}_{tt} + |\xi|^2\hat{u} + b(t)\hat{u}_t + g(t)|\xi|^2\hat{u}_t=0.
\end{equation*}
This equation implies that we divide the extended phase space $[0,\infty)\times \mathbb{R}^n$ into the following zones:
\begin{itemize}
\item elliptic zone:
\[ \Zell(N_1,N_2) = \Big\{ (t,\xi)\in[0,\infty)\times\mathbb{R}^n : |\xi|\geq N_1b(t) \Big\}\cap \Big\{ (t,\xi)\in[0,\infty)\times\mathbb{R}^n : |\xi|\geq \frac{N_2}{g(t)} \Big\}, \]
\item reduced zone:
\[ \Zred(N_1,N_2,\varepsilon) = \Big\{ (t,\xi)\in[0,\infty)\times\mathbb{R}^n : |\xi|\geq N_1b(t) \Big\}\cap\Big\{ (t,\xi)\in[0,\infty)\times\mathbb{R}^n : \frac{\varepsilon}{g(t)}\leq |\xi|\leq \frac{N_2}{g(t)}  \Big\}, \]
\item hyperbolic zone:
\[ \Zhyp(N_1,\varepsilon) = \Big\{ (t,\xi)\in[0,\infty)\times\mathbb{R}^n : |\xi|\geq N_1b(t) \Big\} \cap \Big\{ (t,\xi)\in[0,\infty)\times\mathbb{R}^n : |\xi|\leq \frac{\varepsilon}{g(t)} \Big\}, \]
\item dissipative zone:
\[ \Zdiss(N_1,\varepsilon) = \Big\{ (t,\xi)\in[0,\infty)\times\mathbb{R}^n : |\xi|\leq N_1b(t) \Big\}\cap\Big\{ (t,\xi)\in[0,\infty)\times\mathbb{R}^n : |\xi|\leq \frac{\varepsilon}{g(t)} \Big\}, \]
\end{itemize}
where $N_1>0$ and $N_2>0$ are sufficiently large and $\varepsilon>0$ is sufficiently small constants. Moreover, the separating lines between these zones may be defined by using the following functions:
\begin{align*}
t_{\xi,1} &= \big\{ (t,\xi) \in [0,\infty) \times \mathbb{R}^n: |\xi| = N_1b(t) \big\}, \qquad (\text{between} \,\,\, \Zdiss(N_1,\varepsilon) \,\,\, \text{and} \,\,\, \Zhyp(N_1,\varepsilon) \,), \\
t_{\xi,2} &= \big\{ (t,\xi) \in [0,\infty) \times \mathbb{R}^n: g(t)|\xi| = \varepsilon \big\}, \qquad\,\,\,\, (\text{between} \,\,\, \Zhyp(N_1,\varepsilon) \,\,\, \text{and} \,\,\,\Zred(N_2,N_1,\varepsilon) \,),\\
t_{\xi,3} &= \big\{ (t,\xi) \in [0,\infty) \times \mathbb{R}^n: g(t)|\xi| = N_2 \big\},  \qquad (\text{between} \,\,\, \Zred(N_2,N_1,\varepsilon) \,\,\, \text{and} \,\,\, \Zell(N_1,N_2)\,).
\end{align*}
For small frequencies we consider $ \Zdiss(N_1,\varepsilon)$ and $\Zhyp(N_1,\varepsilon)$, and for large frequencies we consider $\Zell(N_1,N_2)$, $\Zred(N_1,N_2,\varepsilon)$ and $\Zhyp(N_1,\varepsilon)$.
\medskip

{\color{black}
\begin{figure}[H]
\begin{center}
\begin{tikzpicture}[>=latex,xscale=1.1]
	\draw[->] (0,0) -- (6,0)node[below]{$|\xi|$};
	\draw[->] (0,0) -- (0,5)node[left]{$t$};
    \node[below left] at(0,0){$0$};
    \node[right]  at (0.4,4.6) {$\textcolor{green}{t_{\xi_1}}$};
    \node[right]  at (3.4,4.6) {$\textcolor{cyan}{t_{\xi_2}}$};
     \node[right]  at (5.4,4.4) {$\textcolor{red}{t_{\xi_3}}$};
	\node[color=black] at (1.9, 1.6){{\footnotesize $Z_{\text{red}}$}};
    \node[color=black] at (3.4, 1.4){{\footnotesize $Z_{\text{ell}}$}};
	\draw[domain=0:4.8,color=green,variable=\t] plot ({0.3+3*exp(-\t/1.3)},\t);
	\draw[domain=0:4.6,color=red,variable=\t] plot ({2 + 0.09*pow(\t,2.4)},\t);
	\draw[domain=0:4.5,color=cyan,variable=\t] plot ({1.1 + 0.09*pow(\t,2.4)},\t);
	\node[color=black] at (.6,.8){{\footnotesize $Z_{\text{diss}}$}};
	\node[color=black] at (1.3,3.4){{\footnotesize $Z_{\text{hyp}}$}};
\end{tikzpicture}
\caption{Division of extended phase space into zones}
\label{fig.zone}
\end{center}
\end{figure}
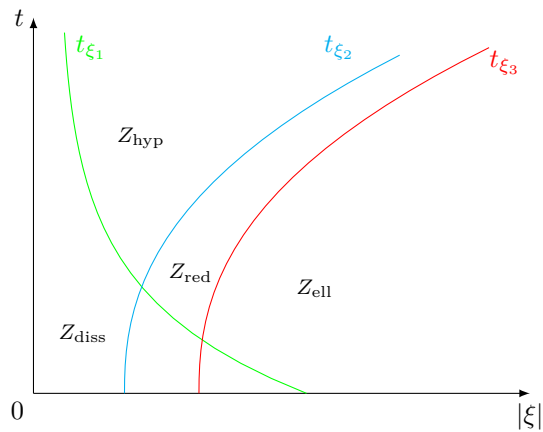}

We assume that the coefficient $g=g(t)$ satisfies the following conditions for all $t \in [0,\infty)$:
\begin{enumerate}
\item[\textbf{(G1)}] $g(t)>0$ and $g'(t)<0$,
\item[\textbf{(G2)}] $g \in L^1([0,\infty))$,
\item[\textbf{(G3)}] $\dfrac{g''}{g'} \sim \dfrac{g'}{g}$,
\item[\textbf{(G4)}] $b(t) \leq -a\dfrac{g'(t)}{g(t)}$ with a suitable constant $a\in(0,1)$.
\end{enumerate}
\begin{theorem} \label{Theorem_Noneffective_Integrable-Decaying}
Let us consider the Cauchy problem
\begin{equation*}
\begin{cases}
u_{tt}- \Delta u + b(t)u_t -g(t)\Delta u_t=0, &(t,x) \in (0,\infty) \times \mathbb{R}^n, \\
u(0,x)= u_0(x),\quad u_t(0,x)= u_1(x), &x \in \mathbb{R}^n.
\end{cases}
\end{equation*}
We assume that the coefficients $b=b(t)$ and $g=g(t)$ satisfy the conditions \textbf{(B'1)} to \textbf{(B'3)} and \textbf{(G1)} to \textbf{(G4)}, respectively. Let $(u_0,u_1)\in \dot{H}^{|\beta|+\kappa+\frac{a}{2}} \times \dot{H}^{|\beta|+\kappa+\frac{a}{2}-2}$ with $|\beta|\geq2$, $a\in(0,1)$ and arbitrarily small $\kappa>0$. Then, we have the following estimates for Sobolev solutions:
\begin{align*}
\|\,|D|^{|\beta|} u(t,\cdot)\|_{L^2} & \lesssim \exp\Big( -\frac{1}{2}\int_0^t b(\tau)d\tau \Big)\big( \|u_0\|_{\dot H^{|\beta|+\kappa+\frac{a}{2}}} + \|u_1\|_{\dot H^{|\beta|+\kappa+\frac{a}{2}-2}} \big), \\
\|\,|D|^{|\beta|-1} u_t(t,\cdot)\|_{L^2} & \lesssim \exp\Big( -\frac{1}{2}\int_0^t b(\tau)d\tau \Big)\big(\|u_0\|_{\dot H^{|\beta|+\kappa+\frac{a}{2}}} + \|u_1\|_{\dot H^{|\beta|+\kappa+\frac{a}{2}-2}} \big).
\end{align*}
\end{theorem}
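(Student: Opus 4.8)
The plan is to run a WKB analysis in the extended phase space $[0,\infty)\times\mathbb{R}^n$, using the four–zone decomposition ($\Zdiss$, $\Zhyp$, $\Zred$, $\Zell$) introduced above, to obtain pointwise bounds for the Fourier multiplier of the solution in each zone, and then to glue these bounds along the separating lines $t_{\xi,1}$, $t_{\xi,2}$, $t_{\xi,3}$. After the partial Fourier transform we work with \eqref{MainEquationFourier}. Since the target estimates carry exactly the factor $\exp\big(-\tfrac12\int_0^t b(\tau)\,d\tau\big)$ and, by \textbf{(G2)}, the factor $\exp\big(\tfrac12|\xi|^2\int_0^t g(\tau)\,d\tau\big)$ is uniformly bounded, it is natural to pass, for the large–frequency analysis, to the variable $w$ of \eqref{AuxiliaryEquation3}, where the full first–order coefficient $b+g|\xi|^2$ is absorbed into the exponential weight and the friction survives only through the lower–order terms $-\tfrac{b^2(t)}{4}-\tfrac{b'(t)}{2}$; by \textbf{(B'2)}--\textbf{(B'3)} these behave like an integrable mass perturbation and do not affect the principal symbol.

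In the elliptic zone $\Zell(N_1,N_2)$ the viscoelastic term $\tfrac{g^2(t)}{4}|\xi|^4$ dominates, so I would repeat the two–step diagonalization used in Section 3 of \cite{AslanReissig2023}: take the micro–energy $V=\big(\tfrac{g(t)}{2}|\xi|^2 v,\,D_t v\big)^{\mathrm T}$ for $v$ from \eqref{AuxiliaryEquation1}, diagonalize, and verify that the extra entries produced by $b$ are bounded on $\{|\xi|\ge N_1 b(t)\}$ by $b(t)/\big(g(t)|\xi|^2\big)\lesssim N_1^{-1}$ and are uniformly integrable over $\Zell$ — this is exactly where \textbf{(G4)} enters, as it controls $b$ by $-g'/g$. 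The outcome is the same structure as in Proposition~\ref{Prop_Scattering_EllZone}, namely $(|E_{\mathrm{ell}}^V(t,s,\xi)|)\lesssim \tfrac{g(t)}{g(s)}\exp\big(\tfrac{|\xi|^2}{2}\int_s^t g(\tau)\,d\tau\big)$ times the all–ones matrix, with $\tfrac{g(t)}{g(s)}\le1$ here because $g$ is decreasing. The reduced zone $\Zred(N_1,N_2,\varepsilon)$, where $g(t)|\xi|\approx1$, has bounded length in a suitable time variable; one puts the system in normal form, using \textbf{(G3)} for the required coefficient regularity, and obtains a fundamental solution that is bounded up to a fixed power loss $(g(t)|\xi|)^{-\kappa}$ from the transition.

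In the hyperbolic zone $\Zhyp(N_1,\varepsilon)$ I would again use $w$ from \eqref{AuxiliaryEquation3}: there the potential equals $|\xi|^2+r(t,\xi)$ with $r=-\tfrac{b(t)g(t)}{2}|\xi|^2-\tfrac{g^2(t)}{4}|\xi|^4-\tfrac{g'(t)}{2}|\xi|^2-\tfrac{b^2(t)}{4}-\tfrac{b'(t)}{2}$, so with the micro–energy $\big(|\xi|w,\,D_t w\big)^{\mathrm T}$ the principal part is the rotation with symbol $|\xi|$ and the remainder after diagonalization consists of $O(bg)$, $O(g^2|\xi|^2)$, $O(g')$, $O(b^2/|\xi|^2)$, $O(b'/|\xi|^2)$. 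The first three are uniformly integrable by \textbf{(G2)}, \textbf{(G4)} and monotonicity of $g$; for the last two one uses \textbf{(B'3)} ($b^2\lesssim-b'$, hence $\int_s^t(-b')\le b(s)$) together with $|\xi|\ge N_1 b(s)$ on $\Zhyp$, which converts them into a controllable, at worst $|\xi|^{-a}$–type, contribution. Undoing the change of variables then gives $(|E_{\mathrm{hyp}}(t,s,\xi)|)\lesssim$ const $\cdot\exp\big(-\tfrac12\int_s^t(b+g|\xi|^2)\big)$. Finally, in the dissipative zone $\Zdiss(N_1,\varepsilon)$, where $|\xi|\le N_1 b(t)$, the problem \eqref{MainEquationFourier} is parabolic–like; a direct energy estimate (or one further diagonalization) gives a bounded fundamental solution, but crossing into $\Zhyp$ along $t_{\xi,1}$, where $|\xi|\approx b(t_{\xi,1})$, forces a loss measured by a negative power of $g(t_{\xi,1})|\xi|$; tracking it through \textbf{(G4)}, which lets one compare $\int b$ with $\log g$, produces precisely the loss of $\tfrac a2$ derivatives in the statement, while the $\kappa$ loss is the usual cost of integrating the $\tfrac{g(t)}{g(s)}$–type and $(g|\xi|)^{-\kappa}$ factors across $t_{\xi,2}$ and $t_{\xi,3}$.

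The step I expect to be the main obstacle is the dissipative–to–hyperbolic gluing together with the sharp bookkeeping of the combined weights there: one must show that the product of the bounded dissipative matrix, the transition factors and the hyperbolic matrix reproduces exactly the decay $\exp\big(-\tfrac12\int_0^t b(\tau)\,d\tau\big)$ with only the $\kappa+\tfrac a2$ derivative loss, and that no spurious growth survives for small frequencies (where the hyperbolic zone is reached only for large times) — this is where \textbf{(G4)} and \textbf{(B'3)} must be used most carefully. Once the four zonewise estimates and the three gluing steps are assembled, multiplying by $|\xi|^{|\beta|}$ (respectively by $|\xi|^{|\beta|-1}$ for $u_t$, after also differentiating the exponential weight) and applying Plancherel's theorem over the corresponding frequency regions closes the argument, exactly as in Section 2 of \cite{AslanReissig2023}; in particular, as noted in the proof of Theorem~\ref{Theorem_Scattering_Integrable-Decaying}, the non–integrability of $b$ in the present setting is what replaces the no–decay conclusion there by the explicit factor $\exp\big(-\tfrac12\int_0^t b(\tau)\,d\tau\big)$.
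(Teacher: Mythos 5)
The central gap is in your elliptic zone estimate. You claim that the result is ``the same structure as in Proposition~\ref{Prop_Scattering_EllZone}'', namely $(|E_{\mathrm{ell}}^V|)\lesssim \frac{g(t)}{g(s)}\exp(\frac{|\xi|^2}{2}\int_s^t g)$, built from the micro-energy $V=(\frac{g}{2}|\xi|^2 v,D_tv)^{\mathrm T}$. After the backward transformation this gives mere boundedness of the Fourier multiplier on $\Zell$. But the paper's elliptic estimate (Proposition~\ref{Prop:Noneffective-Integrable-Estimates-Zell} and Corollary~\ref{Cor:Noneffective-Integrable-Zell}) has a genuinely different structure: it uses the micro-energy with $d(t,\xi)=\sqrt{\frac{g^2}{4}|\xi|^4-|\xi|^2}$, which makes the contribution of the friction to \emph{both} diagonal entries of $i\mathcal{D}+iF_0$ equal to $-\frac{b}{2}$, so the weight $\beta=d+\frac{d_t}{2d}+\frac{m}{2d}-\frac{b}{2}$ carries an explicit $-\frac{b}{2}$. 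The $\frac{m}{2d}$ part is then bounded via \textbf{(G4)} by $\frac{g'}{2g}-\frac{a}{2\sqrt{1-4/N_2^2}}\frac{g'}{g}$, and integrating produces \emph{simultaneously} the decay $\exp(-\frac12\int_s^t b)$ and the loss $(\frac{g(s)}{g(t)})^{\kappa+\frac{a}{2}-1}$. With the scattering-style micro-energy the friction gives $-b$ only in the non-dominant diagonal entry, so the weight you choose ($\beta=(I)$) has no $b$-term at all, hence no decay factor. This is not a cosmetic difference: the paper's Case~1 of the large-frequency conclusion reads its $\exp(-\frac12\int_0^t b)(g(t))^{-\kappa-\frac{a}{2}}$ directly off Corollary~\ref{Cor:Noneffective-Integrable-Zell}; your elliptic estimate cannot produce it.

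Your alternative plan — bound the elliptic and reduced zones, let the hyperbolic zone contribute the full decay, and use \textbf{(G4)} at the separating lines to convert the missing $\exp(\frac12\int_0^{t_{\xi,2}}b)$ into $|\xi|^{a/2}$ — is a reasonable strategy in spirit, and \textbf{(G4)} indeed gives $\int_0^{t_{\xi,2}} b\le a\log\frac{g(0)|\xi|}{\varepsilon}$. However, your claimed hyperbolic bound $\exp(-\frac12\int_s^t(b+g|\xi|^2))$ is too strong. Working with $w$ from \eqref{AuxiliaryEquation3}, the anti-diagonal remainder contains $\frac{g^2|\xi|^3}{4}$, and $\int g^2|\xi|^3\,d\tau$ is \emph{not} uniformly bounded on $\Zhyp$: it is only controlled by $\varepsilon\int g|\xi|^2\,d\tau$. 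The paper treats exactly this by bounding the matrix $Q$ as $\exp(\frac{\varepsilon}{4}\int(b+g|\xi|^2))$ rather than by a constant, arriving at $\exp(-\frac{2-\varepsilon}{4}\int(b+g|\xi|^2))$ in Proposition~\ref{Prop:Noneffective-Integrable-Hyp}. With this weaker hyperbolic rate, the extra $\exp(\frac{\varepsilon}{4}\int_{t_{\xi,2}}^t(b+g|\xi|^2))$ must also be absorbed, and without the elliptic zone already carrying $\exp(-\frac12\int_0^{t_{\xi,3}}b)$ there is nothing left to absorb it into. Finally, you locate the $\frac{a}{2}$-loss at the dissipative--hyperbolic boundary $t_{\xi,1}$; in the paper the $\kappa+\frac{a}{2}$ loss originates entirely on the large-frequency side at $t_{\xi,3}$, and the reduced zone costs only a fixed constant $(\frac{N_2}{\varepsilon})^{a/2}$, again via \textbf{(G4)}. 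The small-frequency gluing at $t_{\xi,1}$ produces no derivative loss at all (Corollary~\ref{Cor:Noneffective-Integrable-Zdiss} already decays fully).
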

\begin{proof}
To prove this theorem, we will use the WKB analysis and derive estimates accordingly in each zone.
\subsubsection{Considerations in the elliptic zone $\Zell(N_1,N_2)$} \label{Sect-Noneffective-Integrable-Zell}
Let us write the equation \eqref{AuxiliaryEquation1} in the following form:
\begin{equation} \label{Eq:Noneffective-Int-Super-Dform-Zell}
D_t^2 v + \Big( \underbrace{\dfrac{g^2(t)}{4}|\xi|^4-|\xi|^2}_{=:d^2(t,\xi)} \Big)v + \Big( \underbrace{\frac{g'(t)}{2}|\xi|^2+\frac{b(t)g(t)}{2}|\xi|^2}_{=:m(t,\xi)} \Big)v - ib(t)D_tv = 0.
\end{equation}
\begin{remark} \label{Rem:Noneffective-Integrable-Zell}
We have the following inequalities with sufficiently large $N_2$:
\begin{align} \label{Eq:Noneffective-Estimates-d-Zell}
d^2(t,\xi) \leq \frac{1}{4}g^2(t)|\xi|^4 \qquad \text{and} \qquad d^2(t,\xi)\geq \Big( \frac{1}{4}-\frac{1}{N_2^2} \Big)g^2(t)|\xi|^4.
\end{align}
Therefore, we get $d(t,\xi)\approx g(t)|\xi|^2$. Furthermore, it holds
\begin{align} \label{Eq:Noneffective-Estimates-dt-Zell}
|d_t(t,\xi)| =  \bigg| \frac{1}{4}\frac{g'(t)g(t)|\xi|^4}{\sqrt{\frac{g^2(t)}{4}|\xi|^4-|\xi|^2}} \bigg| \leq  -\frac{1}{2\sqrt{1-\frac{4}{N_2^2}}}g'(t)|\xi|^2.
\end{align}
On the other hand, we have
\begin{align*}
d_t^2(t,\xi) &=  \frac{1}{4}\frac{\Big( g''(t)g(t)|\xi|^4+(g'(t))^2|\xi|^4 \Big)\sqrt{\frac{g^2(t)}{4}|\xi|^4-|\xi|^2}}{\frac{g^2(t)}{4}|\xi|^4-|\xi|^2} - \frac{1}{16}\frac{\big( g'(t)g(t)|\xi|^4 \big)^2}{\Big( \frac{g^2(t)}{4}|\xi|^4-|\xi|^2 \Big)\sqrt{\frac{g^2(t)}{4}|\xi|^4-|\xi|^2}}.
\end{align*}
Using condition \textbf{(G3)} and estimates in \eqref{Eq:Noneffective-Estimates-d-Zell}, we get
\begin{align*}
|d_t^2(t,\xi)| &\leq  \frac{1}{4}\frac{|g''(t)|g(t)|\xi|^4+|g'(t)|^2|\xi|^4}{d(t,\xi)} + \frac{1}{16}\frac{\big( |g'(t)|g(t)|\xi|^4 \big)^2}{d^3(t,\xi)} \\
& = \frac{1}{4}\frac{|g''(t)|}{|g'(t)|}\frac{|g'(t)|g(t)|\xi|^4}{d(t,\xi)} + \frac{1}{4}\frac{|g'(t)|^2|\xi|^4}{d(t,\xi)} + \frac{1}{16}\frac{\big( |g'(t)|g(t)|\xi|^4 \big)^2}{d^3(t,\xi)} \\
& \leq \frac{1}{4}\frac{|g'(t)|}{g(t)}\frac{|g'(t)|g(t)|\xi|^4}{\big( \frac{1}{4}-\frac{1}{N_2^2} \big)^{\frac{1}{2}}g(t)|\xi|^2} + \frac{1}{4}\frac{\big( g'(t)|\xi|^2 \big)^2}{\big( \frac{1}{4}-\frac{1}{N_2^2} \big)^{\frac{1}{2}}g(t)|\xi|^2} + \frac{1}{16}\frac{\big( |g'(t)|g(t)|\xi|^4 \big)^2}{\big( \frac{1}{4}-\frac{1}{N_2^2} \big)^{\frac{3}{2}}g^3(t)|\xi|^6} \\
& \leq \bigg( \frac{1}{N_2^2\big( 1-\frac{4}{N_2^2} \big)^{\frac{1}{2}}} + \frac{1}{2N_2^2\big( 1-\frac{4}{N_2^2} \big)^{\frac{3}{2}}} \bigg)\big( g'(t)|\xi|^2 \big)^2g(t).
\end{align*}
Finally, let us note that condition \textbf{(G4)}, that is, $b(t)\leq -a \dfrac{g'(t)}{g(t)}$, $a \in (0,1)$, implies that
\[ \frac{m(t,\xi)}{d(t,\xi)} = \frac{g'(t)|\xi|^2}{2d(t,\xi)} + \frac{b(t)g(t)|\xi|^2}{2d(t,\xi)}\leq0. \]
\end{remark}
\begin{definition} \label{Def:Noneffective-integrable-symbol}
A function $f=f(t,\xi)$ belongs to the elliptic symbol class $S_{\text{ell}}^\ell\{m_1,m_2\}$, if it holds
\begin{equation*}
|D_t^kf(t,\xi)|\leq C_{k}\big( g(t)|\xi|^2 \big)^{m_1}\Big( -\frac{g'(t)}{g(t)} \Big)^{m_2+k}
\end{equation*}
for all $(t,\xi)\in \Zell(N_1,N_2)$ and all $k\leq \ell$.
\end{definition}
Some useful rules of the symbolic calculus are collected in the following proposition.
\begin{proposition} \label{Prop:Noneffective-integrable-symbol}
The following statements are true:
\begin{itemize}
\item $S_{\text{ell}}^\ell\{m_1,m_2\}$ is a vector space for all nonnegative integers $\ell$;
\item $S_{\text{ell}}^\ell\{m_1,m_2\}\cdot S_{\text{ell}}^{\ell}\{m_1',m_2'\}\hookrightarrow S^{\ell}_{\text{ell}}\{m_1+m_1',m_2+m_2'\}$;
\item $D_t^kS_{\text{ell}}^\ell\{m_1,m_2\}\hookrightarrow S_{\text{ell}}^{\ell-k}\{m_1,m_2+k\}$
for all nonnegative integers $\ell$ with $k\leq \ell$;
\item $S_{\text{ell}}^{0}\{-1,2\}\hookrightarrow L_{\xi}^{\infty}L_t^1\big( \Zell(N_1,N_2) \big)$.
\end{itemize}
\end{proposition}
\begin{proof}
Let us verify the last statement. Indeed, if $f=f(t,\xi)\in S_{\text{ell}}^{0}\{-1,2\}$, then we get
\begin{align*}
\int_{0}^{t_{\xi,3}}|f(\tau,\xi)|d\tau &\lesssim \int_{0}^{t_{\xi,3}}\frac{1}{g(\tau)|\xi|^2}\Big( -\frac{g'(\tau)}{g(\tau)} \Big)^2 d\tau \\
&= \int_{0}^{t_{\xi,3}}\frac{1}{g^2(\tau)|\xi|^2}\frac{\big( g'(\tau) \big)^2}{g(\tau)} d\tau \lesssim \int_{0}^{t_{\xi,3}}\frac{\big( g'(\tau) \big)^2}{g(\tau)} d\tau \lesssim \int_{0}^{t_{\xi,3}}g''(\tau)d\tau  \lesssim 1,
\end{align*}
where for the last two estimates we used the definition of $\Zell(N_1,N_2)$ and condition \textbf{(G3)}, respectively.
\end{proof}
\medskip

We introduce the micro-energy
\[ V=V(t,\xi):=\big( d(t,\xi)v,D_t v \big)^{\text{T}} \qquad \mbox{with} \qquad d(t,\xi) := \sqrt{\frac{g^2(t)}{4}|\xi|^4-|\xi|^2}. \]
Thus, we have to apply tools from elliptic WKB-analysis with two steps of diagonalization procedure. Transforming \eqref{Eq:Noneffective-Int-Super-Dform-Zell} to a system of first order for $V=V(t,\xi)$ gives
\begin{equation*}
D_tV=\left( \begin{array}{cc}
0 & d(t,\xi) \\
-d(t,\xi) & 0
\end{array} \right)V + \left( \begin{array}{cc}
\dfrac{D_td(t,\xi)}{d(t,\xi)} & 0 \\
-\dfrac{m(t,\xi)}{d(t,\xi)} & ib(t)
\end{array} \right)V.
\end{equation*}
Using $V=MV^{(0)}$ with $M=\begin{pmatrix} i & -i \\ 1 & 1\end{pmatrix}$, then after the first step of diagonalization we obtain
\[ D_tV^{(0)} = \big( \mathcal{D}(t,\xi) + \mathcal{R}(t,\xi) \big)V^{(0)}, \]
where
\begin{align*}
\mathcal{D}(t,\xi) &= \left( \begin{array}{cc}
-id(t,\xi) & 0 \\
0 & id(t,\xi)
\end{array} \right)\in S_{\text{ell}}^{2}\{1,0\}, \\
\mathcal{R}(t,\xi) &= \frac{1}{2} \left( \begin{array}{cc}
\dfrac{D_td(t,\xi)}{d(t,\xi)}-i\dfrac{m(t,\xi)}{d(t,\xi)}+ib(t) & -\dfrac{D_td(t,\xi)}{d(t,\xi)}+i\dfrac{m(t,\xi)}{d(t,\xi)}+ib(t) \\
-\dfrac{D_td(t,\xi)}{d(t,\xi)}-i\dfrac{m(t,\xi)}{d(t,\xi)}+ib(t) & \dfrac{D_td(t,\xi)}{d(t,\xi)}+i\dfrac{m(t,\xi)}{d(t,\xi)}+ib(t)
\end{array} \right)\in S_{\text{ell}}^{1}\{0,1\}.
\end{align*}
Here we have
\begin{align*}
\Big|\frac{D_td(t,\xi)}{d(t,\xi)}\Big| &\leq -\frac{g'(t)|\xi|^2}{\sqrt{1-\frac{4}{N_2^2}}}\frac{1}{\sqrt{1-\frac{4}{N_2^2}}g(t)|\xi|^2} \leq -\frac{1}{\big( 1-\frac{4}{N_2^2} \big)}\frac{g'(t)}{g(t)}, \\
\Big|\frac{m(t,\xi)}{d(t,\xi)}\Big| &= -\frac{m(t,\xi)}{d(t,\xi)} = -\frac{g'(t)|\xi|^2}{2d(t,\xi)} - \frac{b(t)g(t)|\xi|^2}{2d(t,\xi)} \leq -\frac{1}{\sqrt{1-\frac{4}{N_2^2}}}\frac{g'(t)}{g(t)}.
\end{align*}
Let us introduce $F_0(t,\xi):=\diag\mathcal{R}(t,\xi)$ and $\mathcal{R}_1(t,\xi):=\antidiag\mathcal{R}(t,\xi)$. To apply the second step of diagonalization procedure, we define $\delta=\delta(t,\xi)$ as the difference of the diagonal entries of the matrix $\mathcal{D}(t,\xi)+F_0(t,\xi)$ and it holds
\begin{equation*}
i\delta(t,\xi):=2d(t,\xi) + \frac{m(t,\xi)}{d(t,\xi)} = 2d(t,\xi) + \frac{g'(t)|\xi|^2}{2d(t,\xi)} + \frac{b(t)g(t)|\xi|^2}{2d(t,\xi)} \sim d(t,\xi)
\end{equation*}
for $t\leq t_{\xi_3}$ if we choose the zone constant $N_2$ sufficiently large and use condition \textbf{(G4)}.

Now we choose a matrix $N^{(1)}=N^{(1)}(t,\xi)$ such that
\begin{align*}
N^{(1)}(t,\xi) &= \left( \begin{array}{cc}
0 & -\dfrac{\mathcal{R}_{12}}{\delta(t,\xi)} \\
\dfrac{\mathcal{R}_{21}}{\delta(t,\xi)} & 0
\end{array} \right) \\
& \sim \left( \begin{array}{cc}
0 & i\dfrac{D_td(t,\xi)}{2d^2(t,\xi)}+\dfrac{m(t,\xi)}{2d^2(t,\xi)}+\dfrac{b(t)}{2d(t,\xi)} \\
-i\dfrac{D_td(t,\xi)}{2d^2(t,\xi)}+\dfrac{m(t,\xi)}{2d^2(t,\xi)}-\dfrac{b(t)}{2d(t,\xi)} & 0
\end{array} \right)\in S_{\text{ell}}^{1}\{-1,1\}.
\end{align*}
We put $N_1=N_1(t,\xi):=I+N^{(1)}(t,\xi)$. For a sufficiently large zone constant $N_2$ and all $t\leq t_{\xi,3}$ the matrix $N_1=N_1(t,\xi)$ is invertible with uniformly bounded inverse $N_1^{-1}=N_1^{-1}(t,\xi)$. Indeed, in the elliptic zone $\Zell(N_1,N_2)$, for large $N_2$ it holds
\begin{align*}
|N^{(1)}(t,\xi)| &\leq \frac{|d_t(t,\xi)|}{2d^2(t,\xi)}+\frac{|m(t,\xi)|}{2d^2(t,\xi)}+\dfrac{b(t)}{2d(t,\xi)} \\
& \leq \frac{|d_t(t,\xi)|}{2d^2(t,\xi)}+\bigg( -\frac{g'(t)|\xi|^2}{2} + \frac{b(t)g(t)|\xi|^2}{2} \bigg)\frac{1}{2d^2(t,\xi)}+\dfrac{b(t)}{2d(t,\xi)} \\
& \leq \bigg( -\frac{1}{N_2^2\Big( 1-\frac{4}{N_2^2}\Big)^{\frac{3}{2}}} - \frac{1}{N_2^2\Big( 1-\frac{4}{N_2^2}\Big)} \bigg)g'(t) + \bigg( \frac{1}{N_2^2\Big( 1-\frac{4}{N_2^2}\Big)} + \frac{1}{N_2^2\Big( 1-\frac{4}{N_2^2}\Big)^{\frac{1}{2}}} \bigg)b(t)g(t) \\
& \leq \bigg( \frac{1}{N_2^2\Big( 1-\frac{4}{N_2^2}\Big)^{\frac{3}{2}}} + \frac{1}{N_2^2\Big( 1-\frac{4}{N_2^2}\Big)} + \frac{a}{N_2^2\Big( 1-\frac{4}{N_2^2}\Big)^{\frac{1}{2}}} + \frac{a}{N_2^2\Big( 1-\frac{4}{N_2^2}\Big)} \bigg)\big(-g'(t)\big)<1,
\end{align*}
where we used estimates \eqref{Eq:Noneffective-Estimates-d-Zell} and \eqref{Eq:Noneffective-Estimates-dt-Zell} from Remark \ref{Rem:Noneffective-Integrable-Zell}, and condition \textbf{(G4)}, namely, $b(t)\leq-a\dfrac{g'(t)}{g(t)}$.\\

Let us define
\begin{align*}
B^{(1)}(t,\xi) &:= D_tN^{(1)}(t,\xi)-( \mathcal{R}(t,\xi)-F_0(t,\xi))N^{(1)}(t,\xi)\in S_{\text{ell}}^{0}\{-1,2\}, \\
\mathcal{R}_2(t,\xi) &:= -N_1^{-1}(t,\xi)B^{(1)}(t,\xi)\in S_{\text{ell}}^{0}\{-1,2\}.
\end{align*}
Consequently, we have the following operator identity:
\begin{equation*}
\big( D_t-\mathcal{D}(t,\xi)-\mathcal{R}(t,\xi) \big)N_1(t,\xi)=N_1(t,\xi)\big( D_t-\mathcal{D}(t,\xi)-F_0(t,\xi)-\mathcal{R}_2(t,\xi) \big).
\end{equation*}
\begin{proposition} \label{Prop:Noneffective-Integrable-Estimates-Zell}
The fundamental solution $E_{\text{ell}}^{V}=E_{\text{ell}}^{V}(t,s,\xi)$ to the transformed operator
\[ D_t-\mathcal{D}(t,\xi)-F_0(t,\xi)-\mathcal{R}_2(t,\xi) \]
can be estimated by
\begin{equation*}
(|E_{\text{ell}}^{V}(t,s,\xi)|) \lesssim \Big( \frac{g(s)}{g(t)} \Big)^{\frac{a}{2\sqrt{1-\frac{4}{N_2^2}}}-1}\exp \bigg( \frac{1}{2}\int_{s}^{t} g(\tau)|\xi|^2 d\tau - \frac{1}{2}\int_s^tb(\tau) d\tau \bigg) \left( \begin{array}{cc}
1 & 1 \\
1 & 1
\end{array} \right),
\end{equation*}
with $(t,\xi),(s,\xi)\in \Zell(N_1,N_2)$, $0\leq s\leq t\leq t_{\xi,3}$ and $a\in(0,1)$.
\end{proposition}
\begin{proof}
We transform the system for $E_{\text{ell}}^{V}=E_{\text{ell}}^{V}(t,s,\xi)$ to an integral equation for a new matrix-valued function $\mathcal{Q}_{\text{ell}}=\mathcal{Q}_{\text{ell}}(t,s,\xi)$ as in the proof of Proposition \ref{Prop_Scattering_EllZone}. We define
\[ \mathcal{Q}_{\text{ell}}(t,s,\xi):=\exp\bigg\{ -\int_{s}^{t}\beta(\tau,\xi)d\tau \bigg\} E_{\text{ell}}^{V}(t,s,\xi), \]
where $\beta=\beta(t,\xi)$ is chosen from the main entries of the diagonal matrix $i\mathcal{D}(t,\xi)+iF_0(t,\xi)$ as follows:
\[ \beta(t,\xi)=d(t,\xi)+\frac{d_t(t,\xi)}{2d(t,\xi)}+\frac{m(t,\xi)}{2d(t,\xi)}-\frac{b(t)}{2}. \]
The following new integral equation holds:
\begin{align*}
\mathcal{Q}_{\text{ell}}(t,s,\xi)=&\exp \bigg\{ \int_{s}^{t}\big( i\mathcal{D}(\tau,\xi)+iF_0(\tau,\xi)-\beta(\tau,\xi)I \big)d\tau \bigg\}\\
& \quad + \int_{s}^{t} \exp \bigg\{ \int_{\theta}^{t}\big( i\mathcal{D}(\tau,\xi)+iF_0(\tau,\xi)-\beta(\tau,\xi)I \big)d\tau \bigg\}\mathcal{R}_2(\theta,\xi)\mathcal{Q}_{\text{ell}}(\theta,s,\xi)\,d\theta.
\end{align*}
If we define
\begin{align*}
H(t,s,\xi) & =\exp \bigg\{ \int_{s}^{t}\big( i\mathcal{D}(\tau,\xi)+iF_0(\tau,\xi)-\beta(\tau,\xi)I \big)d\tau \bigg\}\\
& = \diag \bigg( 1, \exp \bigg\{ \int_{s}^{t}\bigg( -2d(\tau,\xi)-\frac{m(\tau,\xi)}{d(\tau,\xi)} \bigg)d\tau \bigg\} \bigg)\rightarrow \left( \begin{array}{cc}
1 & 0 \\
0 & 0
\end{array} \right)
\end{align*}
as $t\rightarrow \infty$, then we may conclude that the matrix $H=H(t,s,\xi)$ is uniformly bounded for $(s,\xi),(t,\xi)\in \Zell(N_1,N_2)$. So, the representation of $\mathcal{Q}_{\text{ell}}=\mathcal{Q}_{\text{ell}}(t,s,\xi)$ by a Neumann series gives
\begin{align*}
\mathcal{Q}_{\text{ell}}(t,s,\xi)=H(t,s,\xi)+\sum_{k=1}^{\infty}i^k\int_{s}^{t}H(t,t_1,\xi)\mathcal{R}_2(t_1,\xi)&\int_{s}^{t_1}H(t_1,t_2,\xi)\mathcal{R}_2(t_2,\xi)\\
& \cdots \int_{s}^{t_{k-1}}H(t_{k-1},t_k,\xi)\mathcal{R}_2(t_k,\xi)dt_k\cdots dt_2dt_1.
\end{align*}
Then, convergence of this series is obtained from the symbol estimates, since $\mathcal{R}_2=\mathcal{R}_2(t,\xi)$ is uniformly integrable over $\Zell(N_1,N_2)$ due to last item of Proposition \ref{Prop:Noneffective-integrable-symbol}. Thus, we may conclude
\begin{align*} \label{Eq:Noneffective-Estimate-EellV-Zell}
E_{\text{ell}}^{V}(t,s,\xi)&=\exp \bigg\{ \int_{s}^{t}\beta(\tau,\xi)d\tau \bigg\}\mathcal{Q}_{\text{ell}}(t,s,\xi) \nonumber \\
& = \exp \bigg\{ \int_{s}^{t}\bigg( d(\tau,\xi)+\frac{\partial_\tau d(\tau,\xi)}{2d(\tau,\xi)}+\frac{m(\tau,\xi)}{2d(\tau,\xi)}-\frac{b(\tau)}{2} \bigg)d\tau \bigg\}\mathcal{Q}_{\text{ell}}(t,s,\xi) \nonumber \\
& \leq  \exp \bigg\{ \int_{s}^{t}\bigg( d(\tau,\xi)+\frac{\partial_\tau d(\tau,\xi)}{2d(\tau,\xi)} + \frac{g'(\tau)}{2g(\tau)} - \frac{a}{2\sqrt{1-\frac{4}{N^2}}}\frac{g'(\tau)}{g(\tau)} - \frac{b(\tau)}{2} \bigg)d\tau \bigg\}\mathcal{Q}_{\text{ell}}(t,s,\xi) \nonumber \\
& \leq \sqrt{\frac{d(t,\xi)g(t)}{d(s,\xi)g(s)}}\Big( \frac{g(s)}{g(t)} \Big)^{\frac{a}{2\sqrt{1-\frac{4}{N_2^2}}}} \exp \bigg\{ \int_{s}^{t}d(\tau,\xi)d\tau - \frac{1}{2}\int_s^t b(\tau)d\tau \bigg\}\mathcal{Q}_{\text{ell}}(t,s,\xi),
\end{align*}
where taking into account that $g'(t)<0$ due to condition \textbf{(G1)} and using the estimates in \eqref{Eq:Noneffective-Estimates-d-Zell}, we used
\[ \frac{m(t,\xi)}{2d(t,\xi)} = \frac{g'(t)|\xi|^2}{4d(t,\xi)} + \frac{b(t)g(t)|\xi|^2}{4d(t,\xi)}  \leq \frac{g'(t)}{2g(t)} + \frac{1}{2\sqrt{1-\frac{4}{N_2^2}}}b(t) \leq \frac{g'(t)}{2g(t)} - \frac{a}{2\sqrt{1-\frac{4}{N_2^2}}}\frac{g'(t)}{g(t)}. \]
Then, using the fact that $|\mathcal{Q}_{\text{ell}}(t,s,\xi)|$ is uniformly bounded in $\Zell(N_1,N_2)$, it follows
\begin{align*}
(|E_{\text{ell}}^{V}(t,s,\xi)|) & \lesssim \frac{g(t)}{g(s)}\Big( \frac{g(s)}{g(t)} \Big)^{\frac{a}{2\sqrt{1-\frac{4}{N_2^2}}}}\exp \bigg\{ \frac{1}{2}\int_{s}^{t} g(\tau)|\xi|^2d\tau - \frac{1}{2}\int_s^tb(\tau)d\tau \bigg\} \left( \begin{array}{cc}
1 & 1 \\
1 & 1 \end{array} \right)|\mathcal{Q}_{\text{ell}}(t,s,\xi)| \\
& \lesssim \Big( \frac{g(s)}{g(t)} \Big)^{\frac{a}{2\sqrt{1-\frac{4}{N_2^2}}}-1}\exp \bigg\{ \frac{1}{2}\int_{s}^{t} g(\tau)|\xi|^2 d\tau - \frac{1}{2}\int_s^tb(\tau) d\tau \bigg\} \left( \begin{array}{cc}
1 & 1 \\
1 & 1
\end{array} \right).
\end{align*}
This completes the proof.
\end{proof}
Since we used the first change of variable from Section \ref{Section_OurApproach}, we use now the backward transformation
\[ v(t,\xi) = \exp\Big( \frac{1}{2}\int_0^t g(\tau)|\xi|^2 d\tau\Big) \hat{u}(t,\xi), \]
and arrive at the following result.
\begin{corollary} \label{Cor:Noneffective-Integrable-Zell}
We have the following estimates for $0\leq s\leq t\leq t_{\xi,3}$ and $(s,\xi), (t,\xi) \in \Zell(N_1,N_2)$:
\begin{align*}
\frac{g(t)}{2}|\xi|^{|\beta|}|\hat{u}(t,\xi)| & \lesssim \Big( \frac{g(s)}{g(t)} \Big)^{\kappa+\frac{a}{2}-1}\frac{\lambda(s)}{\lambda(t)}\Big( g(s)|\xi|^{|\beta|}|\hat{u}(s,\xi)| + |\xi|^{|\beta|-2}|\hat{u}_t(s,\xi)| \Big) \quad \mbox{for} \quad |\beta| \geq 2, \\
|\xi|^{|\beta|}|\hat{u}_t(t,\xi)| & \lesssim \Big( \frac{g(s)}{g(t)} \Big)^{\kappa+\frac{a}{2}-1}\frac{\lambda(s)}{\lambda(t)}\Big( g(s)|\xi|^{|\beta|+2}|\hat{u}(s,\xi)| + |\xi|^{|\beta|}|\hat{u}_t(s,\xi)| \Big) \quad \mbox{for} \quad |\beta| \geq 0,
\end{align*}
where $\lambda=\lambda(t) = \exp\big( \frac{1}{2}\int_0^tb(\tau)d\tau \big)$ and $\kappa=\frac{a}{2\sqrt{1-\frac{4}{N_2^2}}}-\frac{a}{2}$ is an arbitrarily small exponent for arbitrarily large $N_2=N_2(\kappa)$ for all $\kappa>0$.
\end{corollary}
\subsubsection{Considerations in the reduced zone $\Zred(N_1,N_2,\varepsilon)$} \label{Sect-Noneffective-Integrable-Zred}
We write the equation \eqref{MainEquationFourier} in the following form:
\begin{equation} \label{Eq:Noneffective-Integrable-Dform-Zred}
D_t^2\hat{u} - |\xi|^2\hat{u} - ib(t)D_t\hat{u} - ig(t)|\xi|^2D_t\hat{u} = 0.
\end{equation}
We define the micro-energy $U=(|\xi|\hat{u},D_t\hat{u})^\text{T}$. Then, Eq. \eqref{Eq:Noneffective-Integrable-Dform-Zred} leads to the system
\begin{equation*} \label{Eq:Noneffective-Integrable-System-Zred}
D_tU=\underbrace{\left( \begin{array}{cc}
0 & |\xi| \\
|\xi| & i\big( b(t) + g(t)|\xi|^2 \big)
\end{array} \right)}_{A(t,\xi)}U.
\end{equation*}
We can define for all $(t,\xi) \in \Zred(N_2,\varepsilon)$ the following energy of the solutions to \eqref{Eq:Noneffective-Integrable-Dform-Zred}:
\begin{equation*}
\mathcal{E}(t,\xi) := \frac{1}{2}\big( |\xi|^2|\hat{u}(t,\xi)|^2 + |\hat{u}_t(t,\xi)|^2 \big).
\end{equation*}
If we differentiate the energy $\mathcal{E}=\mathcal{E}(t,\xi)$ with respect to $t$ and use our equation \eqref{Eq:Noneffective-Integrable-Dform-Zred}, it follows
\[ \frac{d}{dt}\mathcal{E}(t,\xi) = -\big( b(t)+g(t)|\xi|^2 \big)|\hat{u}_t(t,\xi)|^2 \leq 0. \]
This means that $\mathcal{E}=\mathcal{E}(t,\xi)$ is monotonically decreasing in $t$. Therefore, we have $\mathcal{E}(t,\xi)\leq \mathcal{E}(s,\xi)$ for $t_{\xi,3}\leq s\leq t\leq t_{\xi,2}$ and $(s,\xi), (t,\xi) \in \Zred(N_1,N_2,\varepsilon)$. Thus, we may write
\begin{align*}
|\xi||\hat{u}(t,\xi)| &\leq \sqrt{\mathcal{E}(s,\xi)} \leq |\xi||\hat{u}(s,\xi)| + |\hat{u}_t(s,\xi)|, \\
|\hat{u}_t(t,\xi)| &\leq \sqrt{\mathcal{E}(s,\xi)} \leq |\xi||\hat{u}(s,\xi)| + |\hat{u}_t(s,\xi)|.
\end{align*}
\begin{corollary} \label{Cor:Noneffective-Integrable-Zred}
The following estimates hold with $t_{\xi,3}\leq s\leq t\leq t_{\xi,2}$ and $(s,\xi), (t,\xi) \in \Zred(N_1,N_2,\varepsilon)$:
\begin{align*}
|\xi|^{|\beta|}|\hat{u}(t,\xi)| &\lesssim |\xi|^{|\beta|}|\hat{u}(s,\xi)| + |\xi|^{|\beta|-1}|\hat{u}_t(s,\xi)| \quad \mbox{for} \quad |\beta| \geq 1, \\
|\xi|^{|\beta|}|\hat{u}_t(t,\xi)| &\lesssim |\xi|^{|\beta|+1}|\hat{u}(s,\xi)| + |\xi|^{|\beta|}|\hat{u}_t(s,\xi)| \quad \mbox{for} \quad |\beta| \geq 0.
\end{align*}
\end{corollary}
\subsubsection{Considerations in the hyperbolic zone $\Zhyp(N_1,\varepsilon)$} \label{Sect-Noneffective-Integrable-Zhyp}
Let us introduce the micro-energy $U=(|\xi|\hat{u},D_t\hat{u})^\text{T}$. Then, we obtain from \eqref{MainEquationFourier} the system of first order
\begin{equation} \label{Eq:Noneffective-Integrable-Hyp-System1}
D_tU = \underbrace{\left( \begin{array}{cc}
0 & |\xi| \\
|\xi| & i\big( b(t)+g(t)|\xi|^2 \big)
\end{array} \right)}_{A(t,\xi)}U.
\end{equation}
We want to construct the corresponding fundamental solution to \eqref{Eq:Noneffective-Integrable-Hyp-System1} as follows:
\begin{equation*}
D_tE(t,s,\xi) = A(t,\xi)E(t,s,\xi), \quad E(s,s,\xi)=I, \quad 0\leq t_{\xi_1}\leq s\leq t\leq t_{\xi_2}.
\end{equation*}
\begin{proposition} \label{Prop:Noneffective-Integrable-Hyp}
The fundamental solution $E=E(t,s,\xi)$ to the system \eqref{Eq:Noneffective-Integrable-Hyp-System1} satisfies the following estimate in $\Zhyp(N_1,\varepsilon)$:
\begin{equation*}
(|E(t,s,\xi)|) \lesssim \exp\Big( -\frac{2-\varepsilon}{4}\int_s^t\big( b(\tau)+g(\tau)|\xi|^2 \big)d\tau \Big)\left( \begin{array}{cc}
1 & 1 \\
1 & 1
\end{array} \right),
\end{equation*}
with $(t,\xi),(s,\xi)\in \Zhyp(N_1,\varepsilon)$, $t_{\xi,1}\leq s\leq t$ and $t_{\xi,2}\leq s\leq t$.
\end{proposition}
\begin{proof}
Let us start to diagonalize the matrix $A=A(t,\xi)$ in \eqref{Eq:Noneffective-Integrable-Hyp-System1}. For this reason we set
\[ M = \left( \begin{array}{cc}
1 & -1 \\
1 & 1
\end{array} \right) \qquad \text{and} \qquad  M^{-1} = \frac{1}{2}\left( \begin{array}{cc}
1 & 1 \\
-1 & 1
\end{array} \right). \]
If we define $U^{(0)}:=M^{-1}U$, then we arrive at the system
\begin{equation*}
D_tU^{(0)} = \big( \mathcal{D}(\xi) + \mathcal{R}(t,\xi) \big)U^{(0)},
\end{equation*}
where
\begin{align*}
\mathcal{D}(\xi) = \left( \begin{array}{cc}
\tau_1 & 0 \\
0 & \tau_2
\end{array} \right) =\left( \begin{array}{cc}
-|\xi| & 0 \\
0 & |\xi|
\end{array} \right) \quad \mbox{and} \quad \mathcal{R}(t,\xi) = \frac{1}{2}\left( \begin{array}{cc}
i\big( b(t) + g(t)|\xi|^2 \big) & -i\big( b(t) + g(t)|\xi|^2 \big) \\
-i\big( b(t) + g(t)|\xi|^2 \big) & i\big( b(t) + g(t)|\xi|^2 \big)
\end{array} \right).
\end{align*}
Let $F_0=F_0(t,\xi)$ be the diagonal part of $\mathcal{R}=\mathcal{R}(t,\xi)$. To carry out the second step of diagonalization procedure, we introduce the matrices
\begin{align*}
N^{(1)}(t,\xi) := \left( \begin{array}{cc}
0 & \dfrac{\mathcal{R}_{12}}{\tau_1-\tau_2} \\
\dfrac{\mathcal{R}_{21}}{\tau_2-\tau_1} & 0
\end{array} \right) = \frac{1}{4}\left( \begin{array}{cc}
0 & i\Big( g(t)|\xi| + \dfrac{b(t)}{|\xi|} \Big) \\
-i\Big( g(t)|\xi| + \dfrac{b(t)}{|\xi|} \Big) & 0
\end{array} \right),
\end{align*}
and $N_1(t,\xi) := I+N^{(1)}(t,\xi)$. For all $(t,\xi) \in \Zhyp(N_1,\varepsilon)$ the matrix $N_1=N_1(t,\xi)$ is invertible with uniformly bounded inverse $N_1^{-1}=N_1^{-1}(t,\xi)$. Indeed, in $\Zhyp(N_1,\varepsilon)$ it holds
\[ |N^{(1)}(t,\xi)| \leq \frac{g(t)|\xi|}{4} + \frac{b(t)}{4|\xi|} \leq \frac{\varepsilon}{4} + \frac{1}{4N_1}. \]
We set
\begin{align*}
B^{(1)}(t,\xi) &:= D_tN^{(1)}(t,\xi)-\big( \mathcal{R}(t,\xi)-F_0(t,\xi) \big)N^{(1)}(t,\xi) \\
& = \frac{1}{8}\left( \begin{array}{cc}
g^2(t)|\xi|^3 +2b(t)g(t)|\xi| + \dfrac{b^2(t)}{|\xi|} & 2\Big( \dfrac{b'(t)}{|\xi|} + g'(t)|\xi| \Big) \\
-2\Big( \dfrac{b'(t)}{|\xi|} + g'(t)|\xi| \Big) & -g^2(t)|\xi|^3 - 2b(t)g(t)|\xi| - \dfrac{b^2(t)}{|\xi|}
\end{array} \right),
\end{align*}
and, then introduce
\[ \mathcal{R}_1(t,\xi) := -N_1^{-1}(t,\xi)B^{(1)}(t,\xi). \]
Thus, we may conclude
\begin{equation*}
\big( D_t-\mathcal{D}(\xi)-\mathcal{R}(t,\xi) \big)N_1(t,\xi)=N_1(t,\xi)\big( D_t-\mathcal{D}(\xi)-F_0(t,\xi)-\mathcal{R}_1(t,\xi) \big).
\end{equation*}
We turn to  $U^{(1)}=U^{(1)}(t,\xi)$ as the solution to the system
\[ \big( D_t-\mathcal{D}(\xi)-F_0(t,\xi)-\mathcal{R}_1(t,\xi) \big)U^{(1)}(t,\xi) = 0. \]
We can write $U^{(1)}(t,\xi) = E_{U,1}(t,s,\xi)U^{(1)}(s,\xi)$. Here $E_{U,1}=E_{U,1}(t,s,\xi)$ is the fundamental solution to the following system:
\begin{align*}
\big( D_t-\mathcal{D}(\xi)-F_0(t,\xi)-\mathcal{R}_1(t,\xi) \big)E_{U,1}(t,s,\xi) = 0, \quad E_{U,1}(s,s,\xi) = I.
\end{align*}
The solution $E_0 = E_0(t,s,\xi)$ of the ``principal diagonal part'' satisfies
\begin{align*}
D_tE_0(t,s,\xi) = \big( \mathcal{D}(\xi)+F_0(t,\xi) \big)E_0(t,s,\xi), \quad E_{0}(s,s,\xi) = I,
\end{align*}
with $t\geq s$ and $(t,\xi), (s,\xi) \in \Zhyp(N_1,\varepsilon)$. Consequently, we have
\[ E_0(t,s,\xi) = \exp \bigg( i\int_s^t\big( \mathcal{D}(\xi)+F_0(\tau,\xi) \big)d\tau \bigg), \]
and, we can estimate it in the following way:
\[ |E_0(t,s,\xi)| \lesssim \exp \bigg( -\frac{1}{2}\int_s^t\big( b(\tau) + g(\tau)|\xi|^2 \big)d\tau \bigg). \]
Let us set
\begin{align*}
\mathcal{R}_2(t,s,\xi) &= E_0^{-1}(t,s,\xi)\mathcal{R}_1(t,\xi)E_0(t,s,\xi), \\
Q(t,s,\xi) &= I + \sum_{k=1}^\infty i^k\int_s^t\mathcal{R}_2(t_1,s,\xi)\int_s^{t_1}\mathcal{R}_2(t_2,s,\xi)\cdots\int_s^{t_{k-1}}\mathcal{R}_2(t_k,s,\xi)dt_k\cdots dt_2dt_1.
\end{align*}
Then, $Q=Q(t,s,\xi)$ solves the Cauchy problem
\begin{align*}
D_tQ(t,s,\xi) = \mathcal{R}_2(t,s,\xi)Q(t,s,\xi), \quad Q(s,s,\xi) = I.
\end{align*}
The fundamental solution $E_{U,1}=E_{U,1}(t,s,\xi)$ may be represented as $E_{U,1}(t,s,\xi) = E_0(t,s,\xi)Q(t,s,\xi)$, where the estimate for $Q=Q(t,s,\xi)$ is given as follows:
\begin{align*}
|Q(t,s,\xi)| &\leq \exp \bigg( \int_s^t|\mathcal{R}_1(\tau,\xi)|d\tau \bigg) \\
& \leq \exp \bigg( \frac{1}{8}\int_s^t\big( g^2(\tau)|\xi|^3 + 2b(\tau)g(\tau)|\xi| + \frac{b^2(\tau)}{|\xi|} - 2g'(\tau)|\xi| - \frac{2b'(\tau)}{|\xi|} \big)d\tau \bigg) \\
& \leq \exp \bigg( \frac{\varepsilon}{8}\int_s^t g(\tau)|\xi|^2d\tau + \frac{\varepsilon}{4}\int_s^t b(\tau)d\tau + \frac{1}{N_1}\int_s^t b(\tau)d\tau + \frac{1}{4}g(s)|\xi| + \frac{1}{4}\frac{b(s)}{|\xi|} \bigg) \\
& \leq \exp \Big( \frac{\varepsilon}{4}\int_s^t \big( b(\tau)+g(\tau)|\xi|^2 \big) d\tau \Big),
\end{align*}
where from the definition of $\Zhyp(N_1,\varepsilon)$ we used $g(t)|\xi|\leq \varepsilon$ and $|\xi|\geq N_1b(t)$. Consequently, we get
\begin{align*}
|E_{U,1}(t,s,\xi)| &\leq |E_0(t,s,\xi)|\,|Q(t,s,\xi)| \\
& \leq \exp \bigg( -\frac{1}{2}\int_s^t\big( b(\tau)+g(\tau)|\xi|^2 \big)d\tau+\frac{\varepsilon}{4}\int_s^t \big( b(\tau) + g(\tau)|\xi|^2 \big) d\tau \bigg) \\
& \leq \exp\bigg( -\frac{2-\varepsilon}{4}\int_s^t\big( b(\tau)+g(\tau)|\xi|^2 \big)d\tau \bigg).
\end{align*}
This completes the proof.
\end{proof}
\begin{corollary} \label{Cor:Noneffective-Integrable-Zhyp}
We have the following estimates for $(s,\xi), (t,\xi) \in \Zhyp(N_1,\varepsilon)$ with $t_{\xi,1}\leq s\leq t$ and $t_{\xi,2}\leq s\leq t$:
\begin{align*}
|\xi|^{|\beta|}|\hat{u}(t,\xi)| & \lesssim \exp\bigg( -\frac{2-\varepsilon}{4}\int_s^t\big( b(\tau)+g(\tau)|\xi|^2 \big)d\tau \bigg)\big( |\xi|^{|\beta|}|\hat{u}(s,\xi)| + |\xi|^{|\beta|-1}|\hat{u}_t(s,\xi)| \big) \quad \mbox{for} \quad |\beta|\geq 1, \\
|\xi|^{|\beta|}|\hat{u}_t(t,\xi)| & \lesssim \exp\bigg( -\frac{2-\varepsilon}{4}\int_s^t\big( b(\tau)+g(\tau)|\xi|^2 \big)d\tau \bigg)\big( |\xi|^{|\beta|+1}|\hat{u}(s,\xi)| + |\xi|^{|\beta|}|\hat{u}_t(s,\xi)| \big) \quad \mbox{for} \quad |\beta|\geq 0.
\end{align*}
\end{corollary}
\subsubsection{Considerations in the dissipative zone $\Zdiss(N_1,\varepsilon)$} \label{Sect-Noneffective-Integrable-Zdiss}
We write the equation \eqref{MainEquationFourier} in the following form:
\begin{equation} \label{Eq:Noneffective-Integrable-Dform-Zdiss}
D_t^2\hat{u} - |\xi|^2\hat{u} - ib(t)D_t\hat{u} - ig(t)|\xi|^2D_t\hat{u} = 0.
\end{equation}
We define the micro-energy $U=\big( \frac{N_1}{1+t}\hat{u}, D_t\hat{u} \big)^\text{T}$. Then, Eq. \eqref{Eq:Noneffective-Integrable-Dform-Zdiss} leads to the system of first order
\begin{equation} \label{Eq:Noneffective-Decaying-System-Zdiss}
D_tU=\underbrace{\left( \begin{array}{cc}
\dfrac{i}{1+t} & \dfrac{N_1}{1+t} \\
\dfrac{(1+t)|\xi|^{2}}{N_1} & i\big( b(t)+g(t)|\xi|^2 \big)
\end{array} \right)}_{A(t,\xi)}U.
\end{equation}
We are interested in the fundamental solution $E_{\text{diss}}=E_{\text{diss}}(t,s,\xi)$ to the system \eqref{Eq:Noneffective-Decaying-System-Zdiss}, namely
\[ D_tE_{\text{diss}}(t,s,\xi) = A(t,\xi)E_{\text{diss}}(t,s,\xi), \qquad E_{\text{diss}}(s,s,\xi) = I, \]
for all $0\leq s\leq t\leq t_{\xi,1}$ and $(t,\xi), (s,\xi) \in \Zdiss(N_1,\varepsilon)$. Thus, the solution $U=U(t,\xi)$ may be represented as
\[ U(t,\xi)=E_{\text{diss}}(t,s,\xi)U(s,\xi). \]
The entries $E_{\text{diss}}^{(k\ell)}(t,s,\xi)$, $k,\ell=1,2,$ of the fundamental solution $E_{\text{diss}}(t,s,\xi)$ satisfy the following system for $\ell=1,2$:
\begin{align*}
D_tE_{\text{diss}}^{(1\ell)}(t,s,\xi) &= \frac{i}{1+t}E_{\text{diss}}^{(1\ell)}(t,s,\xi) + \frac{N_1}{1+t}E_{\text{diss}}^{(2\ell)}(t,s,\xi), \\
D_tE_{\text{diss}}^{(2\ell)}(t,s,\xi) &= \frac{(1+t)|\xi|^2}{N_1}E_{\text{diss}}^{(1\ell)}(t,s,\xi) + i\big( b(t)+g(t)|\xi|^2 \big)E_{\text{diss}}^{(2\ell)}(t,s,\xi).
\end{align*}
Then, by straight-forward calculations (with $\delta_{k \ell}=1$ if $k=\ell$ and $\delta_{k \ell}=0$ otherwise), we get
\begin{align*}
E_{\text{diss}}^{(1\ell)}(t,s,\xi) & = \frac{1+s}{1+t}\delta_{1\ell} + i\frac{N_1}{1+t}\int_{s}^{t}E_{\text{diss}}^{(2\ell)}(\tau,s,\xi)d\tau, \\
E_{\text{diss}}^{(2\ell)}(t,s,\xi) & = \frac{\delta(s,\xi)}{\delta(t,\xi)}\delta_{2\ell} + i\frac{|\xi|^2}{N_1\delta(t,\xi)}\int_{s}^{t}(1+\tau)\delta(\tau,\xi)E_{\text{diss}}^{(1\ell)}(\tau,s,\xi)d\tau,
\end{align*}
where $\delta=\delta(t,\xi)$ is defined as follows:
\begin{equation} \label{Eq:Noneffective-Integrable-Def-delta-Zdiss}
\delta=\delta(t,\xi)=\exp\bigg( \int_{0}^{t}\big( b(\tau)+g(\tau)|\xi|^2 \big)d\tau \bigg).
\end{equation}
\begin{proposition} \label{Prop:Noneffective-Estimates-in-Zdiss}
We have the following estimates in the dissipative zone:
\[ (|E_{\text{diss}}(t,s,\xi)|) \lesssim \frac{\delta(s,\xi)}{\delta(t,\xi)}\left( \begin{array}{cc}
1 & 1 \\
1 & 1
\end{array} \right) \]
for all $0\leq s\leq t\leq t_{\xi,1}$ and $(t,\xi), (s,\xi)\in\Zdiss(N_1,\varepsilon)$.
\end{proposition}
%
\begin{lemma}
Under the condition $\limsup_{t\to\infty}tb(t)<1$, it holds
\[ \int_0^t\frac{d\tau}{\delta(\tau,\xi)} \sim \frac{t}{\delta(t,\xi)} \]
and $\dfrac{t}{\delta(t,\xi)}$ is monotonously increasing for large time $t$.
\end{lemma}
\begin{proof}
The proof of this lemma is similar to proof of Proposition 7 in \cite{Wirth-Noneffective=2006}. Nevertheless, to make the paper self-contained, we will present the proof.
\medskip

Applying partial integration, we find
\[ \int_0^t\frac{d\tau}{\delta(\tau,\xi)} = \frac{t}{\delta(t,\xi)} + \int_0^t\frac{\tau\big( b(\tau)+g(\tau)|\xi|^2 \big)}{\delta(\tau,\xi)}d\tau. \]
On the other hand, due to the condition $\limsup_{t\to\infty} tb(t)<1$, we use $tb(t)\leq c<1$ for $t\geq t_0$ such that
\begin{align*}
\int_0^t\frac{\tau\big( b(\tau)+g(\tau)|\xi|^2 \big)}{\delta(\tau,\xi)}d\tau &\leq (1+\varepsilon N_1)\int_0^t\frac{\tau b(\tau)}{\delta(\tau,\xi)}d\tau \\
& \leq (1+\varepsilon N_1)\int_0^{t_0}\frac{\tau b(\tau)}{\delta(\tau,\xi)}d\tau + c(1+\varepsilon N_1)\int_{t_0}^t\frac{d\tau}{\delta(\tau,\xi)} \leq C_1 + C_2\int_0^t\frac{d\tau}{\delta(\tau,\xi)},
\end{align*}
where due to the definition of $\Zdiss(N_1,\varepsilon)$, we used $g(t)|\xi|^2 \leq \varepsilon|\xi| \leq \varepsilon N_1b(t)$. Then, we arrive at
\[ \int_0^t\frac{d\tau}{\delta(\tau,\xi)} \leq \frac{1}{1-C_2}\Big( C_1+\frac{t}{\delta(t,\xi)} \Big) \lesssim \frac{t}{\delta(t,\xi)}. \]
The monotonicity is a consequence of $tb(t)<1$ for large time $t$ with
\[ \frac{d}{dt}\frac{t}{\delta(t,\xi)} = \frac{1-t\big( b(t)+g(t)|\xi|^2 \big)}{\delta(t,\xi)} \geq \frac{1-(1-\varepsilon N_1)tb(t)}{\delta(t,\xi)}, \]
where we used again $g(t)|\xi|^2 \leq \varepsilon|\xi| \leq \varepsilon N_1b(t)$.
\end{proof}
Taking into account the micro-energy $U=\big( \frac{N_1}{1+t}\hat{u}, D_t\hat{u} \big)^\text{T}$ and from definition of $\Zdiss(N_1,\varepsilon)$ $|\xi|\leq N_1b(t)$, by using
\[ \frac{N_1}{1+t} = \frac{N_1|\xi|}{(1+t)|\xi|} \geq\frac{N_1|\xi|}{(1+t)N_1b(t)} \geq |\xi| \]
for large time $t$ (here $(1+t)b(t)<1$), we may arrive at the following estimates.
\begin{corollary} \label{Cor:Noneffective-Integrable-Zdiss}
We have the following estimates for $0\leq s\leq t \leq t_{\xi,1}$ and $(s,\xi), (t,\xi) \in \Zdiss(N_1,\varepsilon)$:
\begin{align*}
|\xi|^{|\beta|}|\hat{u}(t,\xi)| & \lesssim \frac{\delta(s,\xi)}{\delta(t,\xi)}\big( |\xi|^{|\beta|-1}|\hat{u}(s,\xi)| + |\xi|^{|\beta|-1}|\hat{u}_t(s,\xi)| \big) \quad \mbox{for} \quad |\beta|\geq 1, \\
|\xi|^{|\beta|}|\hat{u}_t(t,\xi)| & \lesssim \frac{\delta(s,\xi)}{\delta(t,\xi)}\big( |\xi|^{|\beta|}|\hat{u}(s,\xi)| + |\xi|^{|\beta|}|\hat{u}_t(s,\xi)| \big) \quad \mbox{for} \quad |\beta|\geq 0,
\end{align*}
where $\delta=\delta(t,\xi)$ is given in \eqref{Eq:Noneffective-Integrable-Def-delta-Zdiss}.
\end{corollary}
\subsubsection{Conclusion} \label{Sec:Noneffective-Integrable-Conclusions}
From the statements of Corollaries \ref{Cor:Noneffective-Integrable-Zell}, \ref{Cor:Noneffective-Integrable-Zred}, \ref{Cor:Noneffective-Integrable-Zhyp} and \ref{Cor:Noneffective-Integrable-Zdiss} we derive our desired statements.
\medskip

\noindent\textbf{Small frequencies:}\\
\noindent \textit{Case 1:} $t\leq t_{\xi,1}.$ Due to Corollary \ref{Cor:Noneffective-Integrable-Zdiss}, we have
\begin{align*}
|\xi|^{|\beta|}|\hat{u}(t,\xi)| & \lesssim \exp\Big( -\int_{0}^{t}\big( b(\tau)+g(\tau)|\xi|^2 \big)d\tau \Big)\big( |\xi|^{|\beta|-1}|\hat{u}_0(\xi)| + |\xi|^{|\beta|-1}|\hat{u}_1(\xi)| \big) \quad \mbox{for} \quad |\beta|\geq 1, \\
|\xi|^{|\beta|}|\hat{u}_t(t,\xi)| & \lesssim \exp\Big( -\int_{0}^{t}\big( b(\tau)+g(\tau)|\xi|^2 \big)d\tau \Big)\big( |\xi|^{|\beta|}|\hat{u}_0(\xi)| + |\xi|^{|\beta|}|\hat{u}_1(\xi)| \big) \quad \mbox{for} \quad |\beta|\geq 0.
\end{align*}
\noindent \textit{Case 2:} $t_{\xi,1}\leq t$. In this case we apply Corollary \ref{Cor:Noneffective-Integrable-Zhyp} and then, use the estimates from \textit{Case 1}. Then, we get
\begin{align*}
|\xi|^{|\beta|}|\hat{u}(t,\xi)| & \leq \exp\Big( -\frac{2-\varepsilon}{4}\int_{t_{\xi,1}}^t\big( b(\tau)+g(\tau)|\xi|^2 \big)d\tau \Big)\big( |\xi|^{|\beta|}|\hat{u}(t_{\xi,1},\xi)| + |\xi|^{|\beta|-1}|\hat{u}_t(t_{\xi,1},\xi)| \big) \\
& \lesssim \exp\Big( -\frac{2-\varepsilon}{4}\int_{t_{\xi,1}}^t\big( b(\tau)+g(\tau)|\xi|^2 \big)d\tau \Big)\exp\Big( -\int_{0}^{t_{\xi,1}}\big( b(\tau)+g(\tau)|\xi|^2 \big)d\tau \Big) \\
& \qquad \times \big( |\xi|^{|\beta|-1}|\hat{u}_0(\xi)| + |\xi|^{|\beta|-1}|\hat{u}_1(\xi)| \big) \\
& \lesssim \exp\Big( - \frac{2-\varepsilon}{4}\int_{0}^t\big( b(\tau)+g(\tau)|\xi|^2 \big)d\tau \Big)\big( |\xi|^{|\beta|-1}|\hat{u}_0(\xi)| + |\xi|^{|\beta|-1}|\hat{u}_1(\xi)| \big), \\
|\xi|^{|\beta|}|\hat{u}_t(t,\xi)| & \leq \exp\Big( -\frac{2-\varepsilon}{4}\int_{t_{\xi,1}}^t\big( b(\tau)+g(\tau)|\xi|^2 \big)d\tau \Big)\big( |\xi|^{|\beta|+1}|\hat{u}(t_{\xi,1},\xi)| + |\xi|^{|\beta|}|\hat{u}_t(t_{\xi,1},\xi)| \\
& \lesssim \exp\Big( -\frac{2-\varepsilon}{4}\int_{t_{\xi,1}}^t\big( b(\tau)+g(\tau)|\xi|^2 \big)d\tau \Big)\exp\Big( -\int_{0}^{t_{\xi,1}}\big( b(\tau)+g(\tau)|\xi|^2 \big)d\tau \Big) \\
& \qquad \times \big( |\xi|^{|\beta|}|\hat{u}_0(\xi)| + |\xi|^{|\beta|}|\hat{u}_1(\xi)| \big) \\
& \lesssim \exp\Big( -\frac{2-\varepsilon}{4}\int_{0}^t\big( b(\tau)+g(\tau)|\xi|^2 \big)d\tau \Big)\big( |\xi|^{|\beta|}|\hat{u}_0(\xi)| + |\xi|^{|\beta|}|\hat{u}_1(\xi)| \big).
\end{align*}
\noindent\textbf{Large frequencies:}\\
\noindent \textit{Case 1:} $t\leq t_{\xi,3}$. In this case we apply Corollary \ref{Cor:Noneffective-Integrable-Zell}. It holds
\begin{align*}
|\xi|^{|\beta|}|\hat{u}(t,\xi)| & \lesssim \big( g(t) \big)^{-\kappa-\frac{a}{2}}\exp\Big( -\frac{1}{2}\int_0^tb(\tau)d\tau \Big)\big( |\xi|^{|\beta|}|\hat{u}_0(\xi)| + |\xi|^{|\beta|-2}|\hat{u}_1(\xi)| \big) \quad \mbox{for} \quad |\beta| \geq 2, \\
|\xi|^{|\beta|}|\hat{u}_t(t,\xi)| & \lesssim \big( g(t) \big)^{1-\kappa-\frac{a}{2}}\exp\Big( -\frac{1}{2}\int_0^tb(\tau)d\tau \Big)\big( |\xi|^{|\beta|+2}|\hat{u}_0(\xi)| + |\xi|^{|\beta|}|\hat{u}_1(\xi)| \big) \quad \mbox{for} \quad |\beta| \geq 0.
\end{align*}
\noindent \textit{Case 2:} $t_{\xi,3}\leq t\leq t_{\xi,2}$. In this case, we start with Corollary \ref{Cor:Noneffective-Integrable-Zred} and use the estimates from \textit{Case 1} with $g(t_{\xi,3})|\xi| = N_2$. Thus, we have
\begin{align*}
|\xi|^{|\beta|}|\hat{u}(t,\xi)| & \lesssim |\xi|^{|\beta|}|\hat{u}(t_{\xi,3},\xi)| + |\xi|^{|\beta|-1}|\hat{u}_t(t_{\xi,3},\xi)| \\
& \lesssim \big( g(t_{\xi,3}) \big)^{-\kappa-\frac{a}{2}}\exp\Big( -\frac{1}{2}\int_0^{t}b(\tau)d\tau \Big)\exp\Big(\frac{1}{2}\int^t_{t_{\xi,3}}b(\tau)d\tau \Big)\big( |\xi|^{|\beta|}|\hat{u}_0(\xi)| + |\xi|^{|\beta|-2}|\hat{u}_1(\xi)| \big) \\
& \qquad + \big( g(t_{\xi,3}) \big)^{-\kappa-\frac{a}{2}+1}\exp\Big( -\frac{1}{2}\int_0^{t}b(\tau)d\tau \Big)\exp\Big(\frac{1}{2}\int^t_{t_{\xi,3}}b(\tau)d\tau \Big)\big( |\xi|^{|\beta|+1}|\hat{u}_0(\xi)| + |\xi|^{|\beta|-1}|\hat{u}_1(\xi)| \big) \\
& \lesssim \exp\Big( -\frac{1}{2}\int_0^{t}b(\tau)d\tau \Big)\big( |\xi|^{|\beta|+\kappa+\frac{a}{2}}|\hat{u}_0(\xi)| + |\xi|^{|\beta|+\kappa+\frac{a}{2}-2}|\hat{u}_1(\xi)| \big).
\end{align*}
Analogously, \begin{align*}
|\xi|^{|\beta|}|\hat{u}_t(t,\xi)| &\lesssim |\xi|^{|\beta|+1}|\hat{u}(t_{\xi,3},\xi)| + |\xi|^{|\beta|}|\hat{u}_t(t_{\xi,3},\xi)| \\
& \lesssim \big( g(t_{\xi,3}) \big)^{-\kappa-\frac{a}{2}}\exp\Big( -\frac{1}{2}\int_0^{t}b(\tau)d\tau \Big)\exp\Big(\frac{1}{2}\int^t_{t_{\xi,3}}b(\tau)d\tau \Big)\big( |\xi|^{|\beta|+1}|\hat{u}_0(\xi)| + |\xi|^{|\beta|-1}|\hat{u}_1(\xi)| \big) \\
& \qquad + \big( g(t_{\xi,3}) \big)^{1-\kappa-\frac{a}{2}}\exp\Big( -\frac{1}{2}\int_0^{t}b(\tau)d\tau \Big)\exp\Big(\frac{1}{2}\int^t_{t_{\xi,3}}b(\tau)d\tau \Big)\big( |\xi|^{|\beta|+2}|\hat{u}_0(\xi)| + |\xi|^{|\beta|}|\hat{u}_1(\xi)| \big) \\
& \lesssim \exp\Big( -\frac{1}{2}\int_0^{t}b(\tau)d\tau \Big)\big( |\xi|^{|\beta|+\kappa+\frac{a}{2}+1}|\hat{u}_0(\xi)| + |\xi|^{|\beta|+\kappa+\frac{a}{2}-1}|\hat{u}_1(\xi)| \big).
\end{align*}
Here we used \textbf{(G4)} to get for all $t \in [t_{\xi,3},t_{\xi,2}]$ the uniform estimate
\begin{align*}
\exp\Big(\frac{1}{2}\int^t_{t_{\xi,3}}b(\tau)d\tau \Big)\leq \exp\Big(-\frac{a}{2}\int^{t_{\xi,2}}_{t_{\xi,3}}\frac{g'(\tau)}{g(\tau)}d\tau \Big)
=\Big(\frac{g(t_{\xi,3})}{g(t_{\xi,2})}\Big)^{\frac{a}{2}}=\Big(\frac{g(t_{\xi,3})|\xi|}{g(t_{\xi,2})|\xi|}\Big)^{\frac{a}{2}}=
\Big(\frac{N_2}{\varepsilon}\Big)^{\frac{a}{2}}.
\end{align*}
\noindent \textit{Case 3:} $t_{\xi,2}\leq t$. In this case, we start with Corollary \ref{Cor:Noneffective-Integrable-Zhyp} and use the estimates from \textit{Case 2} as follows:
\begin{align*}
|\xi|^{|\beta|}|\hat{u}(t,\xi)| & \leq \exp\Big( -\frac{2-\varepsilon}{4}\int_{t_{\xi,2}}^t\big( b(\tau)+g(\tau)|\xi|^2 \big)d\tau \Big)\big( |\xi|^{|\beta|}|\hat{u}(t_{\xi,2},\xi)| + |\xi|^{|\beta|-1}|\hat{u}_t(t_{\xi,2},\xi)| \big) \\
& \leq \exp\Big( -\frac{2-\varepsilon}{4}\int_{t_{\xi,2}}^t\big( b(\tau)+g(\tau)|\xi|^2 \big)d\tau \Big)\exp\Big( -\frac{1}{2}\int_0^{t_{\xi,2}}b(\tau)d\tau \Big)\\
& \qquad \times \big( |\xi|^{|\beta|+\kappa+\frac{a}{2}}|\hat{u}_0(\xi)| + |\xi|^{|\beta|+\kappa+\frac{a}{2}-2}|\hat{u}_1(\xi)| \big), \\
|\xi|^{|\beta|}|\hat{u}_t(t,\xi)| & \leq \exp\Big( -\frac{2-\varepsilon}{4}\int_{t_{\xi,2}}^t\big( b(\tau) + g(\tau)|\xi|^2\big)d\tau \Big)\big( |\xi|^{|\beta|+1}|\hat{u}(t_{\xi,2},\xi)| + |\xi|^{|\beta|}|\hat{u}_t(t_{\xi,2},\xi)| \big) \\
& \leq \exp\Big( -\frac{2-\varepsilon}{4}\int_{t_{\xi,2}}^t\big( b(\tau) + g(\tau)|\xi|^2\big)d\tau \Big)\exp\Big( -\frac{1}{2}\int_0^{t_{\xi,2}}b(\tau)d\tau \Big)\\
& \qquad \times \big( |\xi|^{|\beta|+\kappa+\frac{a}{2}+1}|\hat{u}_0(\xi)| + |\xi|^{|\beta|+\kappa+\frac{a}{2}-1}|\hat{u}_1(\xi)| \big).
\end{align*}
Thus, the proof of Theorem \ref{Theorem_Noneffective_Integrable-Decaying} is completed.
\end{proof}

\section{The friction term is effective} \label{Section_Effective}
In this section, we study our Cauchy problem \eqref{MainEquation} with effective damping $b(t)u_t$. More specifically, we assume that the damping term $b(t)u_t$ satisfies the so-called \textit{effective} assumptions in the following definition according to the Wirth's classification given in \cite {WirthThesis, Wirth-Effective=2007}.
\begin{definition}[Effective dissipation] \label{DefinitionEffective}
If the strictly positive function $b=b(t)$ satisfies
\begin{itemize}
\item[\textbf{(B1)}] $b\in\mathcal{C}^2([0,\infty))$,
\item[\textbf{(B2)}] $b'(t)$ does not change its sign and $tb(t)\to\infty$ as $t\to \infty$,
\item[\textbf{(B3)}] $\dfrac{|b^{(k)}(t)|}{b(t)} \lesssim \dfrac{1}{(1+t)^{k}}$ for $k=1,2$,
\end{itemize}
then the damping term $b(t)u_t$ is called effective.
\end{definition}
\noindent Moreover, we assume the following condition for $b=b(t)$:
\begin{enumerate}
\item[\textbf{(EF)}] $|b'(t)| \leq ab^2(t)$, where $a<1$.
\end{enumerate}
We will consider our equation \eqref{AuxiliaryEquation3} (namely, the equation after the second change of variables) in the following form:
\begin{equation} \label{Eq:EffectiveCase}
D_t^2w + \Big( \frac{b(t)}{2}+\frac{g(t)|\xi|^2}{2} \Big)^2w - |\xi|^2w + \frac{g'(t)}{2}|\xi|^2w + \frac{b'(t)}{2}w = 0.
\end{equation}
\subsection{Model with increasing time-dependent coefficient $g=g(t)$} \label{Subsect:Effective-Increasing}
We assume the following conditions for the coefficient $g=g(t)$ for all $t \in [0,\infty)$:
\begin{enumerate}
\item[\textbf{(A1)}] $g(t)>0$ and $g'(t)>0$,
\item[\textbf{(A2)}] $\dfrac{1}{g} \in L^1([0,\infty))$,
\item[\textbf{(A3)}] $|d_t^kg(t)|\leq C_kg(t)\Big( \dfrac{g(t)}{G(t)} \Big)^k$ for $k=1,2$, where $G(t):=\dfrac{1}{2}\displaystyle\int_0^t g(\tau)d\tau$ and $C_1$, $C_2$ are positive constants.
\end{enumerate}
\begin{theorem} \label{Theorem:Effective_Increasing}
We consider the Cauchy problem
\begin{equation*}
\begin{cases}
u_{tt}- \Delta u + b(t)u_t -g(t)\Delta u_t=0, &(t,x) \in [0,\infty) \times \mathbb{R}^n, \\
u(0,x)= u_0(x),\quad u_t(0,x)= u_1(x), &x \in \mathbb{R}^n.
\end{cases}
\end{equation*}
Let us assume that the coefficient $g=g(t)$ satisfies the conditions \textbf{(A1)} to \textbf{(A3)} and the coefficient $b=b(t)$ satisfies \textbf{(B1)} to \textbf{(B3)} and \textbf{(EF)}. Then, we have the following estimates for Sobolev solutions with $|\beta|\geq 0$:
\begin{align*}
\|\,|D|^{|\beta|} u(t,\cdot)\|_{L^2} & \lesssim  \|u_0\|_{\dot{H}^{|\beta|}} + \|\langle D \rangle^{-2} u_1\|_{\dot{H}^{|\beta|}}.
\end{align*}
If $b'\geq0$, then we have
\begin{align*}
\|\,|D|^{|\beta|} u_t(t,\cdot)\|_{L^2} & \lesssim g(t)\big( \|u_0\|_{\dot{H}^{|\beta|+2}} + \|\langle D \rangle^{-2}u_1\|_{\dot{H}^{|\beta|+2}} \big) + b(t)\big( \|u_0\|_{\dot{H}^{|\beta|}} + \|\langle D \rangle^{-2}u_1\|_{\dot{H}^{|\beta|}} \big).
\end{align*}
If $b'<0$, then we have
\begin{align*}
\|\,|D|^{|\beta|} u_t(t,\cdot)\|_{L^2} & \lesssim g(t)\big( \|u_0\|_{\dot{H}^{|\beta|+2}} + \|u_1\|_{\dot{H}^{|\beta|}} \big) + b(t)\big( \|u_0\|_{\dot{H}^{|\beta|}} + \||D|^{-2}u_1\|_{\dot{H}^{|\beta|}} \big).
\end{align*}
\end{theorem}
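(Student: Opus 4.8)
The plan is to run a WKB-analysis in the extended phase space, splitting it — as in Sections \ref{Subsect_Scattering_Increasing} and \ref{Subsect_Noneffective_Increasing} — into a pseudo-differential zone $\Zpd(N)=\{(t,\xi):G(t)|\xi|^2\le N\}$ and an elliptic zone $\Zell(N)=\{(t,\xi):G(t)|\xi|^2\ge N\}$ with separating curve $t_\xi$ given by $G(t)|\xi|^2=N$ and $N$ large, the new point with respect to those two sections being that the elliptic part is treated through the equation \eqref{Eq:EffectiveCase} coming from the \emph{second} change of variables; here conditions \textbf{(B1)}--\textbf{(B3)} and \textbf{(EF)} are exactly what keeps the extra terms $\tfrac{b^2(t)}4$, $\tfrac{b'(t)}2$ in \eqref{AuxiliaryEquation3}--\eqref{Eq:EffectiveCase} under control. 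Writing $\mu(t,\xi):=\tfrac12(b(t)+g(t)|\xi|^2)$, the preliminary step is to check — using \textbf{(A1)}--\textbf{(A3)}, so that $g(t)|\xi|\gg1$ on $\Zell(N)$, together with \textbf{(EF)} — that the principal symbol of \eqref{Eq:EffectiveCase}, namely $d^2(t,\xi):=\mu^2(t,\xi)-|\xi|^2+\tfrac12 g'(t)|\xi|^2+\tfrac12 b'(t)$, is positive on $\Zell(N)$ with $d(t,\xi)\approx\mu(t,\xi)$, and to set up an elliptic symbol class adapted to this mixed model (with the scales $g(t)|\xi|^2$, $g(t)/G(t)$, $(1+t)^{-1}$, $b(t)$) in the spirit of Definition \ref{Def:Noneffective-integrable-symbol} and Proposition \ref{Prop:Noneffective-integrable-symbol}.

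In $\Zell(N)$ I would introduce the micro-energy $V=(d(t,\xi)w,D_tw)^{\text{T}}$, rewrite \eqref{Eq:EffectiveCase} as a first-order system, and carry out the two diagonalisation steps ($V=MV^{(0)}$, then $N_1=I+N^{(1)}$), the decisive estimate being the uniform integrability over $\Zell(N)$ of the remainder $\mathcal{R}_2=\mathcal{R}_2(t,\xi)$ after the second step. A Neumann-series representation of the fundamental solution $E_{\text{ell}}^{V}=E_{\text{ell}}^{V}(t,s,\xi)$ of the diagonalised operator then yields, for $t_\xi\le s\le t$,
\[
(|E_{\text{ell}}^{V}(t,s,\xi)|)\;\lesssim\;\frac{g(t)}{g(s)}\exp\!\Big(\int_s^t d(\tau,\xi)\,d\tau\Big)\left(\begin{array}{cc}1&1\\1&1\end{array}\right),
\]
the growing diagonal entry being singled out by a weight $\beta(t,\xi)$ built from the dominant entry of $i\mathcal{D}+iF_0$, exactly as in the proof of Proposition \ref{Prop:Noneffective-Integrable-Estimates-Zell}. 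Undoing the second change of variables, $\hat u(t,\xi)=\exp(-\int_0^t\mu(\tau,\xi)\,d\tau)\,w(t,\xi)$, cancels $\exp(\int_s^t d)\approx\exp(\int_s^t\mu)$ and super-exponentially suppresses the second root, so that on $\Zell(N)$ one obtains $|\xi|^{|\beta|}|\hat u(t,\xi)|\lesssim|\xi|^{|\beta|}|\hat u_0(\xi)|+|\xi|^{|\beta|-2}|\hat u_1(\xi)|$, while the $D_tw$-component produces $|\xi|^{|\beta|}|\hat u_t(t,\xi)|\lesssim\mu(t,\xi)\big(|\xi|^{|\beta|}|\hat u_0(\xi)|+|\xi|^{|\beta|-2}|\hat u_1(\xi)|\big)$; the factor $\mu=\tfrac12(b+g|\xi|^2)$ in front of the latter is precisely what generates the $g(t)\|\cdot\|_{\dot H^{|\beta|+2}}+b(t)\|\cdot\|_{\dot H^{|\beta|}}$ structure of the $u_t$-bound.

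In $\Zpd(N)$ the viscoelastic term is negligible against the effective friction ($g(t)|\xi|^2\le Ng(t)/G(t)$ while $b(t)(1+t)\to\infty$ and $g(t)/G(t)\sim(1+t)^{-1}$), so I would analyse the first-order system attached to \eqref{MainEquationFourier} with a micro-energy $U=(\gamma(t,\xi)\hat u,D_t\hat u)^{\text{T}}$, $\gamma(t,\xi)=\tfrac12 g(t)|\xi|^2$, as in Propositions \ref{Prop_Scattering_Incresing_Zpd}--\ref{Prop_Noneffective_Incresing_Zpd}, and write the integral equations for the entries $E_{\text{pd}}^{(k\ell)}$ of $E_{\text{pd}}=E_{\text{pd}}(t,s,\xi)$. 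The difference is that $\delta(t,\xi)=\exp(\int_0^t(b(\tau)+g(\tau)|\xi|^2)\,d\tau)$ is no longer bounded, so the Gronwall argument must retain it; using \textbf{(A2)} and \textbf{(B1)}--\textbf{(B3)} one still reaches $(|E_{\text{pd}}(t,s,\xi)|)\lesssim\tfrac{g(t)}{g(s)}$ entry-wise, together with the refinement that the parabolic (diffusion) structure of the effective-friction part lets the $\hat u_1$-contribution enter with no negative power of $|\xi|$ at low frequencies — this is what upgrades the naive $|D|^{-2}u_1$-norm, which is all the conserved energy $\mathcal{E}(t,\xi)=\tfrac12(|\xi|^2|\hat u|^2+|\hat u_t|^2)$ would give, to the $\langle D\rangle^{-2}u_1$-norm. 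Gluing the two zones along $t_\xi$, multiplying by $|\xi|^{|\beta|}$ and taking $L^2$-norms in $\xi$ then gives the claimed estimate for $u$, and, through the $\mu$-factor from $\Zell(N)$ and the $b$-factor from $\Zpd(N)$, the estimate for $u_t$.

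The main obstacle is the symbolic calculus on $\Zell(N)$ for this mixed model, and specifically the uniform integrability of $\mathcal{R}_2$: the $g$-contributions go through as in \cite{AslanReissig2023} via \textbf{(A3)}, but the $b$-contributions to $\mathcal{R}(t,\xi)$ and $N^{(1)}(t,\xi)$ (terms like $b(t)/d(t,\xi)$ and $b'(t)/d^2(t,\xi)$) cannot be controlled by $b\in L^1$ — false here — and must instead be absorbed through \textbf{(B3)}, \textbf{(EF)} and the $(1+t)^{-1}$-decay of $g/G$, including the delicate verification that $d^2(t,\xi)>0$ persists near the zone boundary, where $b(t)$ may be comparable to or larger than $g(t)|\xi|^2$. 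Intertwined with this is the dichotomy $b'\gtrless0$: for $b'\ge0$ the term $\tfrac12 b'(t)w$ in \eqref{Eq:EffectiveCase} carries the favourable sign, whereas for $b'<0$ one must still secure $d^2(t,\xi)\gtrsim(g(t)|\xi|^2)^2$, which is exactly where $|b'|\le ab^2$ with $a<1$ is used, and the borderline nature of that absorption is what forces, in the case $b'<0$, the $u_t$-estimate to carry $\|u_1\|_{\dot H^{|\beta|}}$ and $\||D|^{-2}u_1\|_{\dot H^{|\beta|}}$ in place of the corresponding $\langle D\rangle^{-2}$-norms.
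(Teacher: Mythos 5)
The elliptic-zone plan is sound but more elaborate than what the paper actually does: for increasing $g$ the paper works with the simple weight $\mu(t,\xi)=\tfrac12(b(t)+g(t)|\xi|^2)$ and keeps $-|\xi|^2$, $\tfrac12 g'(t)|\xi|^2$, $\tfrac12 b'(t)$ as correction terms in the system, whereas you absorb them into the weight $d(t,\xi)$ — the route the paper reserves for the decaying-$g$ model in Section~\ref{Subsect_Effective_Integrable}, where the zone structure forces it. That variant can be made to work, but it adds the positivity question for $d^2$ near the zone boundary for no gain here, and the prefactor it produces is $\big(\mu(t,\xi)/\mu(s,\xi)\big)^{1/2}\cdot(\cdots)$, not $g(t)/g(s)$; the paper's Proposition~\ref{Prop_Effective_Increasing_Est_Zell} gives $\frac{b(t)+g(t)|\xi|^2}{b(s)+g(s)|\xi|^2}$, and since nothing in $\Zell(N)$ forbids $b(t)\gg g(t)|\xi|^2$, your $g(t)/g(s)$ is simply the wrong normalization.

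There is a genuine gap in the pseudo-differential zone. You take over the micro-energy $U=(\tfrac12 g(t)|\xi|^2\,\hat u,\,D_t\hat u)^{\mathrm T}$ from the scattering and non-effective sections and claim $(|E_{\text{pd}}|)\lesssim g(t)/g(s)$. For effective $b$ this normalization is inadequate: $b(t)$ is \emph{not} dominated by $g(t)|\xi|^2$ on $\Zpd(N)$ (low frequencies, $b(t)(1+t)\to\infty$), so the relevant weight must be $\gamma(t,\xi)=\tfrac12(b(t)+g(t)|\xi|^2)$, as in~\eqref{Eq:Effective-Incr_Zpd_System}, and the correct bound (Proposition~\ref{Prop:Effective-Incr_Zpd}) is $(|E_{\text{pd}}|)\lesssim \frac{b(t)+g(t)|\xi|^2}{b(s)+g(s)|\xi|^2}$ when $b'\ge0$, and has the strictly weaker second column $\frac{b(t)+g(t)|\xi|^2}{g(s)|\xi|^2}$ when $b'<0$. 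That denominator — $b(0)+g(0)|\xi|^2\gtrsim\langle\xi\rangle^2$ versus $g(0)|\xi|^2$ — is precisely what converts the data assumption from $\langle D\rangle^{-2}u_1$ to $|D|^{-2}u_1$. You attribute the $b'\gtrless0$ dichotomy to securing $d^2(t,\xi)>0$ in $\Zell(N)$, but the paper's elliptic-zone analysis is entirely uniform in the sign of $b'$ (only \textbf{(EF)} enters, for the invertibility of $N_1$); the split happens in the Gronwall estimate for $E_{\text{pd}}^{(12)}$, where the first summand $\gamma(s,\xi)\int_s^t\frac{g'(\tau)|\xi|^2-b'(\tau)}{(b(\tau)+g(\tau)|\xi|^2)^2}\,d\tau$ loses a factor $\frac{\gamma(s,\xi)}{g(s)|\xi|^2}$ when $b'<0$. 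Without this you cannot explain the two forms of the $u_t$-estimate, and your claim that diffusion "upgrades'' $|D|^{-2}u_1$ to $\langle D\rangle^{-2}u_1$ across the board would contradict the $b'<0$ case of the theorem.
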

\begin{proof}
We start by introducing the elliptic zone $\Zell(N)$ and the pseudo-differential zone $\Zpd(N)$ as follows:
\begin{align*}
\Zell(N) &= \left\{ (t,\xi)\in[0,\infty)\times\mathbb{R}^n : G(t)|\xi|^2 \geq N \right\}, \\
\Zpd(N) &= \left\{ (t,\xi)\in[0,\infty)\times\mathbb{R}^n : G(t)|\xi|^2 \leq N \right\},
\end{align*}
where $G=G(t)$ is defined in condition \textbf{(A3)}. Moreover, the separating line between these two zones is given by
\[ t_\xi=\left\{ (t,\xi) \in [0,\infty) \times \mathbb{R}^n: G(t)|\xi|^2 = N \right\}. \]
\subsubsection{Considerations in the elliptic zone $\Zell(N)$} \label{Subsection_Effective_Increasing_Zell}
We introduce the following family of symbol classes in the elliptic zone $\Zell(N)$.
\begin{definition} \label{Def:Effective-increasing-symbol}
A function $f=f(t,\xi)$ belongs to the elliptic symbol class $S_{\text{ell}}^\ell\{m_1,m_2\}$ if it holds
\begin{equation*}
|D_t^kf(t,\xi)|\leq C_{k}\big( b(t)+g(t)|\xi|^2 \big)^{m_1}\Big( \frac{b'(t)+g'(t)|\xi|^2}{b(t)+g(t)|\xi|^2} \Big)^{m_2+k}
\end{equation*}
for all $(t,\xi)\in \Zell(N)$ and all $k\leq \ell$.
\end{definition}
Thus, we may conclude the following rules from the definition of the symbol classes.
\begin{proposition} \label{Prop:Effective-symbol}
The following statements are true:
\begin{itemize}
\item $S_{\text{ell}}^\ell\{m_1,m_2\}$ is a vector space for all nonnegative integers $\ell$;
\item $S_{\text{ell}}^\ell\{m_1,m_2\}\cdot S_{\text{ell}}^{\ell}\{m_1',m_2'\}\hookrightarrow S^{\ell}_{\text{ell}}\{m_1+m_1',m_2+m_2'\}$;
\item $D_t^kS_{\text{ell}}^\ell\{m_1,m_2\}\hookrightarrow S_{\text{ell}}^{\ell-k}\{m_1,m_2+k\}$
for all nonnegative integers $\ell$ with $k\leq \ell$;
\item $S_{\text{ell}}^{0}\{-1,2\}\hookrightarrow L_{\xi}^{\infty}L_t^1\big( \Zell(N) \big)$.
\end{itemize}
\end{proposition}
\begin{proof}
Let us verify the last statement. Indeed, if $f=f(t,\xi)\in S_{\text{ell}}^{0}\{-1,2\}$, then using the estimates
\begin{align*}
\Big| \frac{( b'(t)+g'(t)|\xi|^2)^2}{(b(t)+g(t)|\xi|^2)^3} \Big| &\lesssim \frac{b'(t)^2+|b'(t)|g'(t)|\xi|^2+g'(t)^2|\xi|^4}{(b(t)+g(t)|\xi|^2)^3}  \\
& \lesssim \frac{G(t)^2 b'(t)^2+G(t)|b'(t)|\, g(t)^2|\xi|^2+g(t)^4|\xi|^4}{G(t)^2(b(t)+g(t)|\xi|^2)^3} \qquad \small{(\text{we used condition \textbf{(A3)}})} \\
& \lesssim \frac{(b'(t))^2}{b(t)^3} + \frac{|b'(t)|g(t)^2|\xi|^2}{G(t)(g(t)|\xi|^2)^2 b(t)} + \frac{g(t)^4|\xi|^4}{G(t)^2(g(t)|\xi|^2)^3} \\
& \lesssim \frac{b^2(t)}{(1+t)^2}\frac{1}{b^3(t)} + \frac{b(t)}{1+t}\frac{1}{G(t)|\xi|^2b(t)} + \frac{g(t)}{G(t)^2|\xi|^2} \qquad \small{(\text{we used condition \textbf{(B3)}})} \\
& = \frac{1}{b(t)(1+t)^2} + \frac{1}{(1+t)G(t)|\xi|^2} + \frac{g(t)}{G(t)^2|\xi|^2} \\
& \lesssim \frac{g(t)}{G(t)^2|\xi|^2},  \qquad \small{(\text{we used condition \textbf{(B2)} and the definition of}\,\, \Zell(N))},
\end{align*}
we get
\begin{align*}
\int_{t_\xi}^{\infty}|f(\tau,\xi)|d\tau  & \lesssim \frac{1}{|\xi|^2}\int_{t_\xi}^{\infty}\frac{g(\tau)}{G(\tau)^2} d\tau = -\frac{1}{|\xi|^2}\frac{1}{G(\tau)}\Big|_{t_\xi}^{\infty} \leq \frac{C}{G(t_{\xi})|\xi|^2} = \frac{C}{N},
\end{align*}
where we used the definition of the separating line $t_\xi$.
\end{proof}
Taking account of \eqref{Eq:EffectiveCase},
we choose the following micro-energy:
\[ W=W(t,\xi) := \Big[ \big( \frac{b(t)}{2}+\frac{g(t)}{2}|\xi|^2 \big)w,D_tw \Big]^{\text{T}}. \]
Then, by \eqref{Eq:EffectiveCase} we obtain that $W=W(t,\xi)$ satisfies the following system of first order:
\begin{equation*} \label{System_Effective_Increasing}
D_tW=\underbrace{\left[ \left( \begin{array}{cc}
0 & \dfrac{b(t)}{2}+\dfrac{g(t)}{2}|\xi|^2 \\
-\dfrac{b(t)}{2}-\dfrac{g(t)}{2}|\xi|^2 & 0
\end{array} \right) + \left( \begin{array}{cc}
\dfrac{D_t(b(t)+g(t)|\xi|^2)}{b(t)+g(t)|\xi|^2} & 0 \\
-\dfrac{b'(t)}{b(t)+g(t)|\xi|^2}-\dfrac{(g'(t)-2)|\xi|^2}{b(t)+g(t)|\xi|^2} & 0
\end{array} \right)\right]}_{A_W}W.
\end{equation*}
We want to estimate the fundamental solution $E_W=E_W(t,s,\xi)$ to the above system, namely, the solution to
\begin{equation*}
D_tE_W(t,s,\xi)=A_W(t,\xi)E_W(t,s,\xi), \quad E_W(s,s,\xi)=I \quad \mbox{for any} \quad t\geq s\geq t_\xi.
\end{equation*}
We denote by $M$ the matrix consisting of eigenvectors of the first matrix on the right-hand side and its inverse matrix
\[ M = \left( \begin{array}{cc}
i & -i \\
1 & 1
\end{array} \right), \qquad M^{-1}=\frac{1}{2}\left( \begin{array}{cc}
-i & 1 \\
i & 1
\end{array} \right). \]
Then, defining $W^{(0)}:=M^{-1}W$ we get the system
\begin{equation*}
D_tW^{(0)}=\big( \mathcal{D}(t,\xi)+\mathcal{R}(t,\xi) \big)W^{(0)},
\end{equation*}
where
\begin{align*}
\mathcal{D}(t,\xi) &= \left( \begin{array}{cc}
-i\Big( \dfrac{b(t)}{2}+\dfrac{g(t)}{2}|\xi|^2 \Big) & 0 \\
0 & i\Big( \dfrac{b(t)}{2}+\dfrac{g(t)}{2}|\xi|^2 \Big)
\end{array} \right), \\
\mathcal{R}_1(t,\xi) &= \frac{1}{2} \left( \begin{array}{cc}
\dfrac{D_t(b(t)+g(t)|\xi|^2)}{b(t)+g(t)|\xi|^2}-i\dfrac{(g'(t)-2)|\xi|^2}{b(t)+g(t)|\xi|^2} & -\dfrac{D_t(b(t)+g(t)|\xi|^2)}{b(t)+g(t)|\xi|^2}+i\dfrac{(g'(t)-2)|\xi|^2}{b(t)+g(t)|\xi|^2} \\
-\dfrac{D_t(b(t)+g(t)|\xi|^2)}{b(t)+g(t)|\xi|^2}-i\dfrac{(g'(t)-2)|\xi|^2}{b(t)+g(t)|\xi|^2} & \dfrac{D_t(b(t)+g(t)|\xi|^2)}{b(t)+g(t)|\xi|^2}+i\dfrac{(g'(t)-2)|\xi|^2}{b(t)+g(t)|\xi|^2}
\end{array} \right), \\
\mathcal{R}_2(t,\xi) &= \frac{1}{2} \left( \begin{array}{cc}
-i\dfrac{b'(t)}{b(t)+g(t)|\xi|^2} & i\dfrac{b'(t)}{b(t)+g(t)|\xi|^2} \\
-i\dfrac{b'(t)}{b(t)+g(t)|\xi|^2} & i\dfrac{b'(t)}{b(t)+g(t)|\xi|^2}
\end{array} \right),
\end{align*}
where $\mathcal{D}(t,\xi)\in S_{\text{ell}}^2\{1,0\}$ and with $\mathcal{R}:=\mathcal{R}_1+\mathcal{R}_2$, $\mathcal{R}(t,\xi)\in S_{\text{ell}}^1\{0,1\}$.
\medskip

We perform one more step of diagonalization procedure. We define $F_0(t,\xi):=\diag\mathcal{R}(t,\xi)$. The difference of the diagonal entries of the matrix $\mathcal{D}(t,\xi)+F_0(t,\xi)$ is
\begin{align*}
b(t)&+g(t)|\xi|^2 + \frac{b'(t)+(g'(t)-2)|\xi|^2}{b(t)+g(t)|\xi|^2} \\
& \leq \frac{b(t)^2+2b(t)g(t)|\xi|^2+g(t)^2|\xi|^4+b'(t)+g'(t)|\xi|^2}{b(t)+g(t)|\xi|^2} \\
& \leq \frac{(b(t)+g(t)|\xi|^2)^2+b(t)^2+\frac{g(t)^2|\xi|^4}{G(t)|\xi|^2}}{b(t)+g(t)|\xi|^2} \qquad \small{(\text{we used condition \textbf{(A3)}})} \\
& \lesssim \frac{(b(t)+g(t)|\xi|^2)^2+b(t)^2+g(t)^2|\xi|^4}{b(t)+g(t)|\xi|^2} \leq b(t)+g(t)|\xi|^2=:i\delta(t,\xi),
\end{align*}
where we used the fact that $b'(t)=o(b^2(t))$ and for $t \to \infty$. \\

Now we introduce the matrix $N^{(1)}=N^{(1)}(t,\xi)$ defined by
\begin{align*}
&N^{(1)}(t,\xi) := \left( \begin{array}{cc}
0 & -\dfrac{\mathcal{R}_{12}}{\delta(t,\xi)} \\
\dfrac{\mathcal{R}_{21}}{\delta(t,\xi)} & 0
\end{array} \right) \\
& = \left( \begin{array}{cc}
0 & i\dfrac{D_t(b(t)+g(t)|\xi|^2)}{2(b(t)+g(t)|\xi|^2)^2}-\dfrac{b'(t)+(g'(t)-2)|\xi|^2}{2(b(t)+g(t)|\xi|^2)^2} \\
i\dfrac{D_t(b(t)+g(t)|\xi|^2)}{2(b(t)+g(t)|\xi|^2)^2}+\dfrac{b'(t)+(g'(t)-2)|\xi|^2}{2(b(t)+g(t)|\xi|^2)^2} & 0
\end{array} \right),
\end{align*}
where $N^{(1)}(t,\xi)\in S_{\text{ell}}^1\{-1,1\}$. For a sufficiently large zone constant $N$ and all $t\geq t_\xi$ the matrix $N_1(t,\xi) := I + N^{(1)}(t,\xi)$ belongs to $S_{\text{ell}}^1\{0,0\}$ and is invertible with uniformly bounded inverse matrix $N_1^{-1}=N_1^{-1}(t,\xi)$. Namely, by using condition \textbf{(A3)} for $g=g(t)$ and condition \textbf{(EF)} for $b=b(t)$, we have the following estimates:
\begin{align*}
\Big|\dfrac{D_t(b(t)+g(t)|\xi|^2)}{(b(t)+g(t)|\xi|^2)^2}\Big| \leq \dfrac{|b'(t)|+C_1\frac{g(t)^2}{G(t)}|\xi|^2}{(b(t)+g(t)|\xi|^2)^2} &\leq \dfrac{G(t)|b'(t)|+C_1g(t)^2|\xi|^2}{G(t)(b(t)+g(t)|\xi|^2)^2} \\
& \leq \frac{|b'(t)|}{b(t)^2}+\frac{C_1}{G(t)|\xi|^2} \leq a+\frac{C}{N}<1,
\end{align*}
with sufficiently large zone constant $N$.

Let
\begin{align*}
B^{(1)}(t,\xi) &= D_tN^{(1)}(t,\xi)-( \mathcal{R}(t,\xi)-F_0(t,\xi))N^{(1)}(t,\xi), \\
\mathcal{R}_3(t,\xi) &= -N_1^{-1}(t,\xi)B^{(1)}(t,\xi)\in S_{\text{ell}}^0\{-1,2\}.
\end{align*}
Then, we have the following operator identity:
\begin{equation*}
( D_t-\mathcal{D}(t,\xi)-\mathcal{R}(t,\xi))N_1(t,\xi)=N_1(t,\xi)( D_t-\mathcal{D}(t,\xi)-F_0(t,\xi)-\mathcal{R}_3(t,\xi)).
\end{equation*}
\begin{proposition} \label{Prop_Effective_Increasing_Est_Zell}
The fundamental solution $E_{\text{ell}}^{W}=E_{\text{ell}}^{W}(t,s,\xi)$ to the transformed operator
\[ D_t-\mathcal{D}(t,\xi)-F_0(t,\xi)-\mathcal{R}_3(t,\xi) \]
can be estimated by
\begin{equation*}
\big( |E_{\text{ell}}^{W}(t,s,\xi)| \big) \lesssim \frac{b(t)+g(t)|\xi|^2}{b(s)+g(s)|\xi|^2}\exp\bigg( \frac{1}{2}\int_{s}^{t}\big( b(\tau)+g(\tau)|\xi|^2 \big)d\tau \bigg)
\left( \begin{array}{cc}
1 & 1 \\
1 & 1
\end{array} \right),
\end{equation*}
with $(t,\xi),(s,\xi)\in \Zell(N)$, $t_\xi\leq s\leq t$.
\end{proposition}
\begin{proof}
We transform the system for $E_{\text{ell}}^{W}=E_{\text{ell}}^{W}(t,s,\xi)$ to an integral equation for a new matrix-valued function $\mathcal{Q}_{\text{ell}}=\mathcal{Q}_{\text{ell}}(t,s,\xi)$. If we differentiate the term
\[ \exp \bigg\{ -i\int_{s}^{t}\big( \mathcal{D}(\tau,\xi)+F_0(\tau,\xi) \big)d\tau \bigg\}E_{\text{ell}}^{W}(t,s,\xi), \]
and then, integrate on $[s,t]$, we obtain that $E_{\text{ell}}^{W}=E_{\text{ell}}^{W}(t,s,\xi)$ satisfies the following integral equation:
\begin{align*}
E_{\text{ell}}^{W}(t,s,\xi) & = \exp\bigg\{ i\int_{s}^{t}\big( \mathcal{D}(\tau,\xi)+F_0(\tau,\xi) \big)d\tau \bigg\}E_{\text{ell}}^{W}(s,s,\xi)\\
& \quad + i\int_{s}^{t} \exp \bigg\{ i\int_{\theta}^{t}\big( \mathcal{D}(\tau,\xi)+F_0(\tau,\xi) \big)d\tau \bigg\}\mathcal{R}_3(\theta,\xi)E_{\text{ell}}^{W}(\theta,s,\xi)\,d\theta.
\end{align*}
Let us define
\[ \mathcal{Q}_{\text{ell}}(t,s,\xi)=\exp\bigg\{ -\int_{s}^{t}\beta(\tau,\xi)d\tau \bigg\} E_{\text{ell}}^{W}(t,s,\xi), \]
with a suitable $\beta=\beta(t,\xi)$ which will be fixed later. It satisfies the new integral equation
\begin{align*}
\mathcal{Q}_{\text{ell}}(t,s,\xi)=&\exp \bigg\{ \int_{s}^{t}\big( i\mathcal{D}(\tau,\xi)+iF_0(\tau,\xi)-\beta(\tau,\xi)I \big)d\tau \bigg\}\\
& \quad + \int_{s}^{t} \exp \bigg\{ \int_{\theta}^{t}\big( i\mathcal{D}(\tau,\xi)+iF_0(\tau,\xi)-\beta(\tau,\xi)I \big)d\tau \bigg\}\mathcal{R}_3(\theta,\xi)\mathcal{Q}_{\text{ell}}(\theta,s,\xi)\,d\theta.
\end{align*}
The function $\mathcal{R}_3=\mathcal{R}_3(\theta,\xi)\in S_{\text{ell}}^0\{-1,2\}$ is uniformly integrable over the elliptic zone because of the last property of Proposition \ref{Prop:Effective-symbol}. Hence, if the exponential term is bounded, then the solution $\mathcal{Q}_{\text{ell}}=\mathcal{Q}_{\text{ell}}(t,s,\xi)$ of the integral equation is uniformly bounded over the elliptic zone for a suitable weight $\beta=\beta(t,\xi)$.\\
The main entries of the diagonal matrix $i\mathcal{D}(t,\xi)+iF_0(t,\xi)$ are given by
\begin{align*}
(I) &= \dfrac{b(t)}{2} + \dfrac{g(t)|\xi|^2}{2} + \dfrac{b'(t)+g'(t)|\xi|^2}{2(b(t)+g(t)|\xi|^2)}+\dfrac{b'(t)+(g'(t)-2)|\xi|^2}{2(b(t)+g(t)|\xi|^2)},\\
(II) &= -\dfrac{b(t)}{2} - \dfrac{g(t)|\xi|^2}{2} + \dfrac{b'(t)+g'(t)|\xi|^2}{2(b(t)+g(t)|\xi|^2)}-\dfrac{b'(t)+(g'(t)-2)|\xi|^2}{2(b(t)+g(t)|\xi|^2)}.
\end{align*}
From the difference of $(II)-(I)$, we may see that the term $(I)$ is dominant in $\Zell(N)$ with respect to (II) for $t\geq t_{\xi}$, we need this dominance for $t \geq t_\xi$. Therefore, we choose the weight $\beta=\beta(t,\xi)=(I)$. By this choice, we get
\[ i\mathcal{D}(\tau,\xi)+iF_0(\tau,\xi)-\beta(\tau,\xi)I = \left( \begin{array}{cc}
0 & 0 \\
0 & -\big( b(t)+g(t)|\xi|^2 \big)-\dfrac{b'(t)+(g'(t)-2)|\xi|^2}{b(t)+g(t)|\xi|^2}
\end{array} \right). \]
It follows
\begin{align*}
H(t,s,\xi) & =\exp \bigg\{ \int_{s}^{t}\big( i\mathcal{D}(\tau,\xi)+iF_0(\tau,\xi)-\beta(\tau,\xi)I \big)d\tau \bigg\}\\
& = \diag \bigg( 1, \exp \bigg\{ \int_{s}^{t}\Big( -\big( b(t)+g(t)|\xi|^2 \big)-\dfrac{b'(t)+(g'(t)-2)|\xi|^2}{b(t)+g(t)|\xi|^2} \Big)d\tau \bigg\} \bigg)\rightarrow \left( \begin{array}{cc}
1 & 0 \\
0 & 0
\end{array} \right)
\end{align*}
as $t\rightarrow \infty$ for any fixed $s\geq t_\xi$. Hence, the matrix $H=H(t,s,\xi)$ is uniformly bounded for $(s,\xi),(t,\xi)\in \Zell(N)$. So, the representation of $\mathcal{Q}_{\text{ell}}=\mathcal{Q}_{\text{ell}}(t,s,\xi)$ by a Neumann series gives
\begin{align*}
\mathcal{Q}_{\text{ell}}(t,s,\xi)=H(t,s,\xi)+\sum_{k=1}^{\infty}i^k\int_{s}^{t}H(t,t_1,\xi)\mathcal{R}_3(t_1,\xi)&\int_{s}^{t_1}H(t_1,t_2,\xi)\mathcal{R}_3(t_2,\xi) \\
& \cdots \int_{s}^{t_{k-1}}H(t_{k-1},t_k,\xi)\mathcal{R}_3(t_k,\xi)dt_k\cdots dt_2dt_1.
\end{align*}
Then, this series is convergent, since $\mathcal{R}_3=\mathcal{R}_3(t,\xi)$ is uniformly integrable over $\Zell(N)$. Hence, from the last considerations we may conclude
\begin{align*}
E_{\text{ell}}^{W}(t,s,\xi)&=\exp \bigg\{ \int_{s}^{t}\beta(\tau,\xi)d\tau \bigg\}\mathcal{Q}_{\text{ell}}(t,s,\xi)\\
& = \exp \bigg\{ \int_{s}^{t}\bigg( \frac{b(\tau)}{2}+\frac{g(\tau)|\xi|^2}{2}+\frac{b'(\tau)+g'(\tau)|\xi|^2}{2( b(\tau)+g(\tau)|\xi|^2)}+\frac{b'(\tau)+( g'(\tau)-2 )|\xi|^2}{2( b(\tau)+g(\tau)|\xi|^2)} \bigg)d\tau \bigg\}\mathcal{Q}_{\text{ell}}(t,s,\xi),
\end{align*}
where $\mathcal{Q}_{\text{ell}}=\mathcal{Q}_{\text{ell}}(t,s,\xi)$ is a uniformly bounded matrix. Then, it follows
\begin{align*}
(|E_{\text{ell}}^{W}(t,s,\xi)|) & \lesssim \exp \bigg\{ \int_{s}^{t}\bigg( \frac{b(\tau)}{2}+\frac{g(\tau)|\xi|^2}{2}+\frac{b'(\tau)+g'(\tau)|\xi|^2}{ b(\tau)+g(\tau)|\xi|^2}-\frac{|\xi|^2}{b(\tau)+g(\tau)|\xi|^2} \bigg\} \left( \begin{array}{cc}
1 & 1 \\
1 & 1
\end{array} \right) \\
& \lesssim \frac{b(t)+g(t)|\xi|^2}{b(s)+g(s)|\xi|^2} \exp \bigg( \frac{1}{2}\int_{s}^{t} ( b(\tau)+g(\tau)|\xi|^2)d\tau \bigg)\left( \begin{array}{cc}
1 & 1 \\
1 & 1
\end{array} \right).
\end{align*}
This completes the proof.
\end{proof}
Now let us come back to
\begin{equation} \label{Eq:Effective_Increasing_Zell_Back}
W(t,\xi) = E_W(t,s,\xi)W(s,\xi) \qquad \text{for all} \qquad t_\xi\leq s\leq t,
\end{equation}
that is,
\begin{align*}
\left( \begin{array}{cc}
\gamma(t,\xi)w(t,\xi) \\
D_t w(t,\xi)
\end{array} \right) = E_W(t,s,\xi)\left( \begin{array}{cc}
\gamma(s,\xi)w(s,\xi) \\
D_tw(s,\xi)
\end{array} \right) \qquad \text{for} \qquad t_\xi\leq s\leq t,
\end{align*}
where $\gamma=\gamma(t,\xi):=\frac{g(t)}{2}|\xi|^2+\frac{b(t)}{2}$. Therefore, from Proposition \ref{Prop_Effective_Increasing_Est_Zell} and \eqref{Eq:Effective_Increasing_Zell_Back} we may conclude the following estimates for $t_\xi \leq s \leq t$:
\begin{align*}
\gamma(t,\xi)|w(t,\xi)| & \lesssim \frac{b(t)+g(t)|\xi|^2}{b(s)+g(s)|\xi|^2}\exp\Big( \frac{1}{2}\int_{s}^{t}\big( b(\tau)+g(\tau)|\xi|^2 \big)d\tau \Big)\big( \gamma(s,\xi)|w(s,\xi)| + |w_t(s,\xi)| \big), \\
|w_t(t,\xi)| & \lesssim \frac{b(t)+g(t)|\xi|^2}{b(s)+g(s)|\xi|^2}\exp\Big( \frac{1}{2}\int_{s}^{t}\big( b(\tau)+g(\tau)|\xi|^2 \big)d\tau \Big)\big( \gamma(s,\xi)|w(s,\xi)| + |w_t(s,\xi)| \big).
\end{align*}
Using the backward transformation
\[ w(t,\xi)=\exp\Big( \frac{1}{2} \int_0^t \big( b(\tau)+g(\tau)|\xi|^2 \big)d\tau \Big)\hat{u}(t,\xi),  \]
we arrive immediately at the following result.
\begin{corollary} \label{Cor:Effective_Increasing_Zell}
We have the following estimates in the elliptic zone $\Zell(N)$ for $t_\xi \leq s \leq t$:
\begin{align*}
|\xi|^{|\beta|}|\hat{u}(t,\xi)| & \lesssim |\xi|^{|\beta|}|\hat{u}(s,\xi)| + \frac{|\xi|^{|\beta|}}{b(s)+g(s)|\xi|^2}|\hat{u}_t(s,\xi)| \quad \mbox{for} \quad |\beta|\geq 0, \\
|\xi|^{|\beta|}|\hat{u}_t(t,\xi)| & \lesssim \big( b(t)+g(t)|\xi|^2 \big)|\xi|^{|\beta|}|\hat{u}(s,\xi)| + \frac{b(t)+g(t)|\xi|^2}{b(s)+g(s)|\xi|^2}|\xi|^{|\beta|}|\hat{u}_t(s,\xi)| \quad \mbox{for} \quad |\beta|\geq 0.
\end{align*}
\end{corollary}
\subsubsection{Considerations in the pseudo-differential zone $\Zpd(N)$} \label{Subsection_Effective-Incr_Zpd}
Let us introduce the micro-energy $U=\big( \gamma(t,\xi)\hat{u},D_t\hat{u} \big)^\text{T}$ with $\gamma(t,\xi):=\dfrac{b(t)+g(t)|\xi|^2}{2}$. Then, by \eqref{MainEquationFourier}, we find the system
\begin{equation} \label{Eq:Effective-Incr_Zpd_System}
D_tU=\underbrace{\left( \begin{array}{cc}
\dfrac{D_t\gamma(t,\xi)}{\gamma(t,\xi)} & \gamma(t,\xi) \\
\dfrac{|\xi|^2}{\gamma(t,\xi)} & i( b(t)+g(t)|\xi|^2)
\end{array} \right)}_{A(t,\xi)}U.
\end{equation}
The fundamental solution $E_{\text{pd}}=E_{\text{pd}}(t,s,\xi)$ to the system \eqref{Eq:Effective-Incr_Zpd_System} is the solution of
\[ D_tE_{\text{pd}}(t,s,\xi)=A(t,\xi)E_{\text{pd}}(t,s,\xi), \quad E_{\text{pd}}(s,s,\xi)=I, \]
for all $0\leq s \leq t$ and $(t,\xi), (s,\xi) \in \Zpd(N)$. Thus, the solution $U=U(t,\xi)$ is represented as
\[ U(t,\xi)=E_{\text{pd}}(t,s,\xi)U(s,\xi). \]
We will use the auxiliary function
\[ \delta=\delta(t,\xi)=\exp\Big( \int_{0}^{t}( b(\tau)+g(\tau)|\xi|^2)d\tau \Big). \]
The entries $E_{\text{pd}}^{(k\ell)}=E_{\text{pd}}^{(k\ell)}(t,s,\xi)$, $k,\ell=1,2,$ of the fundamental solution $E_{\text{pd}}=E_{\text{pd}}(t,s,\xi)$ satisfy the following system for $\ell=1,2$:
\begin{align*}
D_tE_{\text{pd}}^{(1\ell)}(t,s,\xi) &= \frac{D_t\gamma(t,\xi)}{\gamma(t,\xi)}E_{\text{pd}}^{(1\ell)}(t,s,\xi)+\gamma(t,\xi)E_{\text{pd}}^{(2\ell)}(t,s,\xi), \\
D_tE_{\text{pd}}^{(2\ell)}(t,s,\xi) &= \frac{|\xi|^2}{\gamma(t,\xi)}E_{\text{pd}}^{(1\ell)}(t,s,\xi)+i\big( b(t)+g(t)|\xi|^2) \big)E_{\text{pd}}^{(2\ell)}(t,s,\xi).
\end{align*}
Then, by straight-forward calculations (with $\delta_{k \ell}=1$ if $k=\ell$ and $\delta_{k \ell}=0$ otherwise), we get
\begin{align*}
E_{\text{pd}}^{(1\ell)}(t,s,\xi) & = \frac{\gamma(t,\xi)}{\gamma(s,\xi)}\delta_{1\ell}+i\gamma(t,\xi)\int_{s}^{t}E_{\text{pd}}^{(2\ell)}(\tau,s,\xi)d\tau, \\
E_{\text{pd}}^{(2\ell)}(t,s,\xi) & = \frac{\delta(s,\xi)}{\delta(t,\xi)}\delta_{2\ell}+\frac{i|\xi|^2}{\delta(t,\xi)}\int_{s}^{t}\frac{1}{\gamma(\tau,\xi)}\delta(\tau,\xi)E_{\text{pd}}^{(1\ell)}(\tau,s,\xi)d\tau.
\end{align*}
\begin{proposition} \label{Prop:Effective-Incr_Zpd}
We have the following estimates in the pseudo-differential zone with $(s,\xi),(t,\xi)\in\Zpd(N)$ and $0\leq s\leq t\leq t_\xi$:
if $b'\geq0$ it holds
\begin{equation*}
(|E_{\text{pd}}(t,s,\xi)|) \lesssim \frac{b(t)+g(t)|\xi|^2}{b(s)+g(s)|\xi|^2}
\left( \begin{array}{cc}
1 & 1 \\
1 & 1
\end{array} \right),
\end{equation*}
if $b'<0$ it holds
\begin{equation*}
(|E_{\text{pd}}(t,s,\xi)|) \lesssim \frac{b(t)+g(t)|\xi|^2}{b(s)+g(s)|\xi|^2}
\left( \begin{array}{cc}
1 & 0 \\
1 & 0
\end{array} \right) + \frac{b(t)+g(t)|\xi|^2}{g(s)|\xi|^2}
\left( \begin{array}{cc}
0 & 1 \\
0 & 1
\end{array} \right).
\end{equation*}
\end{proposition}
\begin{proof}
First let us consider the first column. Plugging the representation for $E_{\text{pd}}^{(21)}=E_{\text{pd}}^{(21)}(t,s,\xi)$ into the integral equation for $E_{\text{pd}}^{(11)}=E_{\text{pd}}^{(11)}(t,s,\xi)$ gives
\begin{align*}
E_{\text{pd}}^{(11)}(t,s,\xi) = \frac{\gamma(t,\xi)}{\gamma(s,\xi)}-|\xi|^2\gamma(t,\xi)\int_{s}^{t}\int_{s}^{\tau}\frac{\delta(\theta,\xi)}{\delta(\tau,\xi)}\frac{1}{\gamma(\theta,\xi)}
E_{\text{pd}}^{(11)}(\theta,s,\xi)d\theta d\tau.
\end{align*}
By setting $y(t,s,\xi):=\dfrac{\gamma(s,\xi)}{\gamma(t,\xi)}E_{\text{pd}}^{(11)}(t,s,\xi)$, if $b'(\tau)>0$, then we obtain
\begin{align*}
y(t,s,\xi) &= 1-|\xi|^2\int_{s}^{t}\int_{\theta}^{t}\frac{\delta(\theta,\xi)}{\delta(\tau,\xi)}y(\theta,s,\xi)d\tau d\theta \\
& = 1+|\xi|^2\int_{s}^{t}\bigg( \int_{\theta}^{t}\frac{1}{b(\tau)+g(\tau)|\xi|^2}\partial_\tau \bigg( \frac{\delta(\theta,\xi)}{\delta(\tau,\xi)} \bigg)d\tau \bigg) y(\theta,s,\xi)d\theta \\
& = 1+|\xi|^2\int_{s}^{t}\bigg( \frac{1}{b(\tau)+g(\tau)|\xi|^2}\frac{\delta(\theta,\xi)}{\delta(\tau,\xi)}\Big|_{\theta}^{t}+\int_{\theta}^{t}\frac{b'(\tau)+g'(\tau)|\xi|^2}{( b(\tau)+g(\tau)|\xi|^2)^2} \frac{\delta(\theta,\xi)}{\delta(\tau,\xi)} d\tau \bigg) y(\theta,s,\xi)d\theta.
\end{align*}
Then, it holds
\begin{align*}
|y(t,s,\xi)| &\leq 1+|\xi|^2\int_{s}^{t}\bigg( \frac{1}{b(t)+g(t)|\xi|^2}\underbrace{\frac{\delta(\theta,\xi)}{\delta(t,\xi)}}_{\leq1}+\int_{\theta}^{t}\frac{b'(\tau)+g'(\tau)|\xi|^2}{( b(\tau)+g(\tau)|\xi|^2)^2} \underbrace{\frac{\delta(\theta,\xi)}{\delta(\tau,\xi)}}_{\leq1}d\tau \bigg)|y(\theta,s,\xi)|d\theta \\
& \leq 1+|\xi|^2\int_{s}^{t}\bigg( \frac{1}{b(t)+g(t)|\xi|^2}-\frac{1}{b(\tau)+g(\tau)|\xi|^2}\Big|_{\theta}^{t} \bigg)|y(\theta,s,\xi)|d\theta \\
& = 1+|\xi|^2\int_{s}^{t}\frac{1}{b(\theta)+g(\theta)|\xi|^2}|y(\theta,s,\xi)|d\theta.
\end{align*}
Applying Gronwall's inequality and employing \textbf{(A2)}, we get the estimate
\begin{align*}
|y(t,s,\xi)| \leq \exp \bigg( \int_{s}^{t}\frac{|\xi|^2}{b(\theta)+g(\theta)|\xi|^2}d\theta \bigg) \leq \exp \bigg( \int_{s}^{t}\frac{1}{g(\theta)}d\theta \bigg) \lesssim 1.
\end{align*}
On the other hand, if $b'(\tau)<0$, then we have
\begin{align*}
y(t,s,\xi) &= 1-|\xi|^2\int_{s}^{t}\int_{\theta}^{t}\frac{\delta(\theta,\xi)}{\delta(\tau,\xi)}y(\theta,s,\xi)d\tau d\theta \\
& = 1+|\xi|^2\int_{s}^{t}\bigg( \int_{\theta}^{t}\frac{1}{b(\tau)+g(\tau)|\xi|^2}\partial_\tau \bigg( \frac{\delta(\theta,\xi)}{\delta(\tau,\xi)} \bigg)d\tau \bigg) y(\theta,s,\xi)d\theta \\
& = 1+|\xi|^2\int_{s}^{t}\bigg( \frac{1}{b(\tau)+g(\tau)|\xi|^2}\frac{\delta(\theta,\xi)}{\delta(\tau,\xi)}\Big|_{\theta}^{t}+\int_{\theta}^{t}\frac{b'(\tau)+g'(\tau)|\xi|^2}{( b(\tau)+g(\tau)|\xi|^2)^2} \frac{\delta(\theta,\xi)}{\delta(\tau,\xi)} d\tau \bigg) y(\theta,s,\xi)d\theta.
\end{align*}
Thus, it follows
\begin{align} \label{Eq:Effective-Incr-Zpd-Est-y1}
|y(t,s,\xi)| &\leq 1+|\xi|^2\int_{s}^{t}\bigg( \frac{1}{b(t)+g(t)|\xi|^2}\underbrace{\frac{\delta(\theta,\xi)}{\delta(t,\xi)}}_{\leq1}+\int_{\theta}^{t}\frac{g'(\tau)|\xi|^2-b'(\tau)}{( b(\tau)+g(\tau)|\xi|^2)^2} \underbrace{\frac{\delta(\theta,\xi)}{\delta(\tau,\xi)}}_{\leq1}d\tau \bigg)|y(\theta,s,\xi)|d\theta \nonumber \\
&\leq 1+|\xi|^2\int_{s}^{t}\bigg( \frac{1}{b(t)+g(t)|\xi|^2} - \int_{\theta}^{t}\frac{b'(\tau)+g'(\tau)|\xi|^2}{(b(\tau)+g(\tau)|\xi|^2)^2} + 2\int_{\theta}^{t}\frac{g'(\tau)|\xi|^2}{(b(\tau)+g(\tau)|\xi|^2)^2} d\tau \bigg)|y(\theta,s,\xi)|d\theta \nonumber \\
&\leq 1+|\xi|^2\int_{s}^{t}\bigg( \frac{1}{b(t)+g(t)|\xi|^2} - \int_{\theta}^{t}\frac{b'(\tau)+g'(\tau)|\xi|^2}{(b(\tau)+g(\tau)|\xi|^2)^2} + 2\int_{\theta}^{t}\frac{g'(\tau)|\xi|^2}{(g(\tau)|\xi|^2)^2} d\tau \bigg)|y(\theta,s,\xi)|d\theta \nonumber \\
&= 1+|\xi|^2\int_{s}^{t}\bigg( \frac{1}{b(t)+g(t)|\xi|^2} + \frac{1}{b(\tau)+g(\tau)|\xi|^2}\Big|_\theta^t - \frac{2}{g(\tau)|\xi|^2}\Big|_\theta^t \bigg)|y(\theta,s,\xi)|d\theta \nonumber \\
& = 1+|\xi|^2\int_{s}^{t}\bigg( \frac{2}{b(t)+g(t)|\xi|^2} - \frac{1}{b(\theta)+g(\theta)|\xi|^2} - \frac{2}{g(t)|\xi|^2} + \frac{2}{g(\theta)|\xi|^2}\bigg)|y(\theta,s,\xi)|d\theta \nonumber \\
& \leq 1+|\xi|^2\int_{s}^{t}\bigg( \frac{2}{g(t)|\xi|^2} - \frac{1}{b(\theta)+g(\theta)|\xi|^2} - \frac{2}{g(t)|\xi|^2} + \frac{2}{g(\theta)|\xi|^2}\bigg)|y(\theta,s,\xi)|d\theta \nonumber \\
& \leq 1+|\xi|^2\int_{s}^{t}\bigg( \frac{2}{g(\theta)|\xi|^2} \bigg)|y(\theta,s,\xi)|d\theta.
\end{align}
Applying Gronwall's inequality and employing \textbf{(A2)}, we get the estimate
\begin{align*}
|y(t,s,\xi)| \leq \exp \bigg( \int_{s}^{t}\frac{1}{g(\theta)}d\theta \bigg) \lesssim 1.
\end{align*}
Thus, we may conclude that
\[ |E_{\text{pd}}^{(11)}(t,s,\xi)| \lesssim \frac{\gamma(t,\xi)}{\gamma(s,\xi)}=\frac{b(t)+g(t)|\xi|^2}{b(s)+g(s)|\xi|^2}. \]
Now we consider $E_{\text{pd}}^{(21)}(t,s,\xi)$. By using the estimate for $|E_{\text{pd}}^{(11)}(t,s,\xi)|$ we obtain
\begin{align*}
\frac{\gamma(s,\xi)}{\gamma(t,\xi)}|E_{\text{pd}}^{(21)}(t,s,\xi)| & \lesssim |\xi|^2\int_{s}^{t}\frac{1}{\gamma(\tau,\xi)}\frac{\delta(\tau,\xi)}{\delta(t,\xi)}\frac{\gamma(s,\xi)}{\gamma(t,\xi)}|E_{\text{pd}}^{(11)}(\tau,s,\xi)|d\tau \\
& \lesssim |\xi|^2\int_{s}^{t}\frac{1}{b(\tau)+g(\tau)|\xi|^2}\underbrace{\frac{\delta(\tau,\xi)\gamma(\tau,\xi)}{\delta(t,\xi)\gamma(t,\xi)}}_{\leq1}d\tau \lesssim \int_{s}^{t}\frac{|\xi|^2}{b(\tau)+g(\tau)|\xi|^2}d\tau \lesssim 1,
\end{align*}
where we have used
\begin{align*} \label{Eq:Effective-Incr-Zpd-Inq1}
\frac{\delta(\tau,\xi)\gamma(\tau,\xi)}{\delta(t,\xi)\gamma(t,\xi)} \leq \frac{\delta(t,\xi)\gamma(t,\xi)}{\delta(t,\xi)\gamma(t,\xi)} = 1,
\end{align*}
because $f(t,\xi) := \delta(t,\xi)\gamma(t,\xi)$ is increasing in $t$. Namely, we have
\begin{align*}
f_t(t,\xi) = \exp\Big( \int_{0}^{t}\big( b(\tau)+g(\tau)|\xi|^2 \big)d\tau \Big)\frac{1}{2}\big( (b(t)+g(t)|\xi|^2)^2 + b'(t)+g'(t)|\xi|^2 \big),
\end{align*}
where using the  fact that $|b'(t)| = o(b^2(t))$, we may conclude that $f_t(t,\xi)>0$, namely $f=f(t,\xi)$ is increasing in $t$. Thus, we arrive at
\[ |E_{\text{pd}}^{(21)}(t,s,\xi)| \lesssim \frac{\gamma(t,\xi)}{\gamma(s,\xi)}=\frac{b(t)+g(t)|\xi|^2}{b(s)+g(s)|\xi|^2}. \]
Next, we consider the entries of the second column. Plugging the representation for $E_{\text{pd}}^{(22)}=E_{\text{pd}}^{(22)}(t,s,\xi)$ into the integral equation for $E_{\text{pd}}^{(12)}=E_{\text{pd}}^{(12)}(t,s,\xi)$ gives
\begin{align*}
E_{\text{pd}}^{(12)}(t,s,\xi) = i\gamma(t,\xi)\int_{s}^{t}\frac{\delta(s,\xi)}{\delta(\tau,\xi)}d\tau-
|\xi|^2\gamma(t,\xi)\int_{s}^{t}\int_{s}^{\tau}\frac{\delta(\theta,\xi)}{\delta(\tau,\xi)}\frac{1}{\gamma(\theta,\xi)}E_{\text{pd}}^{(12)}(\theta,s,\xi)d\theta d\tau.
\end{align*}
After setting $y(t,s,\xi):=\dfrac{\gamma(s,\xi)}{\gamma(t,\xi)}E_{\text{pd}}^{(12)}(t,s,\xi)$, it follows
\begin{align*}
y(t,s,\xi) &= -i\gamma(s,\xi)\int_{s}^{t}\frac{1}{b(\tau)+g(\tau)|\xi|^2}\partial_\tau \bigg( \frac{\delta(s,\xi)}{\delta(\tau,\xi)} \bigg)d\tau \\
& \qquad + |\xi|^2\int_{s}^{t}\bigg( \int_{\theta}^{t}\frac{1}{b(\tau)+g(\tau)|\xi|^2}\partial_\tau \bigg( \frac{\delta(\theta,\xi)}{\delta(\tau,\xi)} \bigg)d\tau \bigg)y(\theta,s,\xi)d\theta d\tau \\
&= i\gamma(s,\xi) \bigg( -\frac{1}{b(\tau)+g(\tau)|\xi|^2}\frac{\delta(s,\xi)}{\delta(\tau,\xi)}\Big|_s^t - \int_{s}^{t}\frac{b'(\tau)+g'(\tau)|\xi|^2}{( b(\tau)+g(\tau)|\xi|^2)^2} \frac{\delta(\theta,\xi)}{\delta(\tau,\xi)}d\tau \bigg) \\
& \qquad + |\xi|^2\int_{s}^{t}\bigg( \frac{1}{b(t)+g(t)|\xi|^2}\frac{\delta(\theta,\xi)}{\delta(t,\xi)}+\int_{\theta}^{t}\frac{b'(\tau)+g'(\tau)|\xi|^2}{( b(\tau)+g(\tau)|\xi|^2)^2} \frac{\delta(\theta,\xi)}{\delta(\tau,\xi)}d\tau \bigg)y(\theta,s,\xi)d\theta.
\end{align*}
If $b'(\tau) \geq 0$, then we have
\begin{align*}
|y(t,s,\xi)| &\leq \gamma(s,\xi) \bigg( \frac{1}{b(t)+g(t)|\xi|^2}\underbrace{\frac{\delta(s,\xi)}{\delta(t,\xi)}}_{\leq1} +\frac{1}{b(s)+g(s)|\xi|^2} + \int_{s}^{t}\frac{b'(\tau)+g'(\tau)|\xi|^2}{( b(\tau)+g(\tau)|\xi|^2)^2}\underbrace{\frac{\delta(\theta,\xi)}{\delta(\tau,\xi)}}_{\leq1}d\tau \bigg) \\
& \qquad + |\xi|^2\int_{s}^{t}\bigg( \frac{1}{b(t)+g(t)|\xi|^2}\underbrace{\frac{\delta(\theta,\xi)}{\delta(t,\xi)}}_{\leq1}+\int_{\theta}^{t}\frac{b'(\tau)+g'(\tau)|\xi|^2}{( b(\tau)+g(\tau)|\xi|^2)^2}\underbrace{\frac{\delta(\theta,\xi)}{\delta(\tau,\xi)}}_{\leq1}d\tau \bigg)|y(\theta,s,\xi)|d\theta \\
& \leq  \gamma(s,\xi) \bigg( \frac{2}{b(s)+g(s)|\xi|^2} - \frac{1}{b(\tau)+g(\tau)|\xi|^2}\Big|_s^t \bigg) \\
& \qquad + |\xi|^2\int_{s}^{t}\bigg( \frac{1}{b(t)+g(t)|\xi|^2} - \frac{1}{b(\tau)+g(\tau)|\xi|^2}\Big|_\theta^t \bigg)|y(\theta,s,\xi)|d\theta \\
&\leq \gamma(s,\xi) \bigg( \frac{3}{b(s)+g(s)|\xi|^2} \bigg) + |\xi|^2\int_{s}^{t}\bigg( \frac{1}{b(\theta)+g(\theta)|\xi|^2} \bigg)|y(\theta,s,\xi)|d\theta \\
& = \frac{3}{2} + \int_{s}^{t}\frac{|\xi|^2}{b(\theta)+g(\theta)|\xi|^2}|y(\theta,s,\xi)|d\theta.
\end{align*}
Employing Gronwall's inequality, we have the estimate
\begin{align*}
|y(t,s,\xi)| \lesssim \exp \bigg( \int_{s}^{t}\frac{|\xi|^2}{b(\theta)+g(\theta)|\xi|^2}d\theta \bigg) \lesssim 1.
\end{align*}
Hence, we may conclude that
\[ |E_{\text{pd}}^{(12)}(t,s,\xi)| \lesssim \frac{\gamma(t,\xi)}{\gamma(s,\xi)}=\frac{b(t)+g(t)|\xi|^2}{b(s)+g(s)|\xi|^2}. \]
Finally, let us estimate $|E_{\text{pd}}^{(22)}(t,s,\xi)|$ by using the above estimate for $|E_{\text{pd}}^{(12)}(t,s,\xi)|$. It holds
\begin{align*}
|E_{\text{pd}}^{(22)}(t,s,\xi)| & \lesssim \frac{\delta(s,\xi)}{\delta(t,\xi)}+|\xi|^2\int_{s}^{t}\frac{\delta(\tau,\xi)}{\delta(t,\xi)}\frac{1}{\gamma(\tau,\xi)}|E_{\text{pd}}^{(12)}(\tau,s,\xi)|d\tau \\
& \lesssim \frac{\delta(s,\xi)}{\delta(t,\xi)}+|\xi|^2\int_{s}^{t}\frac{\delta(\tau,\xi)}{\delta(t,\xi)}\frac{1}{\gamma(\tau,\xi)}\frac{\gamma(\tau,\xi)}{\gamma(s,\xi)}d\tau.
\end{align*}
Then, we have
\begin{align*}
\frac{\gamma(s,\xi)}{\gamma(t,\xi)}|E_{\text{pd}}^{(22)}(t,s,\xi)| & \lesssim \underbrace{\frac{\delta(s,\xi)\gamma(s,\xi)}{\delta(t,\xi)\gamma(t,\xi)}}_{\leq1}+|\xi|^2\int_{s}^{t}
\underbrace{\frac{\delta(\tau,\xi)\gamma(\tau,\xi)}{\delta(t,\xi)\gamma(t,\xi)}}_{\leq1}\frac{1}{\gamma(\tau,\xi)}d\tau \\
& \lesssim 1 + |\xi|^2\int_{s}^{t}\frac{1}{b(\tau)+g(\tau)|\xi|^2}d\tau \lesssim 1.
\end{align*}
This shows that
\[ |E_{\text{pd}}^{(22)}(t,s,\xi)| \lesssim \frac{\gamma(t,\xi)}{\gamma(s,\xi)}=\frac{b(t)+g(t)|\xi|^2}{b(s)+g(s)|\xi|^2}. \]
If $b'(\tau)<0$, then we have
\begin{align} \label{Eq:Effective-Incr-Zpd-Est-y2}
|y(t,s,\xi)| &\leq \gamma(s,\xi) \bigg( \frac{1}{b(t)+g(t)|\xi|^2}\underbrace{\frac{\delta(s,\xi)}{\delta(t,\xi)}}_{\leq1} +\frac{1}{b(s)+g(s)|\xi|^2} + \int_{s}^{t}\frac{g'(\tau)|\xi|^2-b'(\tau)}{( b(\tau)+g(\tau)|\xi|^2)^2}\underbrace{\frac{\delta(\theta,\xi)}{\delta(\tau,\xi)}}_{\leq1}d\tau \bigg) \nonumber \\
& \qquad + |\xi|^2\int_{s}^{t}\bigg( \frac{1}{b(t)+g(t)|\xi|^2}\underbrace{\frac{\delta(\theta,\xi)}{\delta(t,\xi)}}_{\leq1}+\int_{\theta}^{t}\frac{g'(\tau)|\xi|^2-b'(\tau)}{( b(\tau)+g(\tau)|\xi|^2)^2}\underbrace{\frac{\delta(\theta,\xi)}{\delta(\tau,\xi)}}_{\leq1}d\tau \bigg)|y(\theta,s,\xi)|d\theta,
\end{align}
where we have
\[ \gamma(s,\xi) \frac{1}{b(t)+g(t)|\xi|^2}\frac{\delta(s,\xi)}{\delta(t,\xi)} \leq \frac{1}{2} \qquad \text{and} \qquad \gamma(s,\xi) \frac{1}{b(s)+g(s)|\xi|^2} =\frac{1}{2}, \]
and
\begin{align*}
& \gamma(s,\xi) \int_{s}^{t}\frac{g'(\tau)|\xi|^2-b'(\tau)}{( b(\tau)+g(\tau)|\xi|^2)^2}\underbrace{\frac{\delta(\theta,\xi)}{\delta(\tau,\xi)}}_{\leq1}d\tau \leq  \gamma(s,\xi) \int_{s}^{t}\frac{g'(\tau)|\xi|^2-b'(\tau)}{( b(\tau)+g(\tau)|\xi|^2)^2} d\tau \\
& \qquad = -\gamma(s,\xi) \int_{s}^{t}\frac{b'(\tau)+g'(\tau)|\xi|^2}{( b(\tau)+g(\tau)|\xi|^2)^2} d\tau + 2\gamma(s,\xi) \int_{s}^{t}\frac{g'(\tau)|\xi|^2}{( b(\tau)+g(\tau)|\xi|^2)^2} d\tau \\
& \qquad = \frac{\gamma(s,\xi)}{2\gamma(t,\xi)}-\frac{1}{2} + 2\gamma(s,\xi) \int_{s}^{t}\frac{g'(\tau)|\xi|^2}{( b(\tau)+g(\tau)|\xi|^2)^2} d\tau \\
& \qquad \leq \frac{\gamma(s,\xi)}{2\gamma(t,\xi)} + 2\gamma(s,\xi) \int_{s}^{t}\frac{g'(\tau)|\xi|^2}{( g(\tau)|\xi|^2)^2} d\tau \\
& \qquad = \frac{\gamma(s,\xi)}{2\gamma(t,\xi)} + \big( b(s)+g(s)|\xi|^2 \big)\Big[ -\frac{1}{g(\tau)||\xi|^2} \Big]_s^t \\
& \qquad = \frac{1}{2}\frac{b(s)+g(s)|\xi|^2}{b(t)+g(t)|\xi|^2} - \frac{b(s)+g(s)|\xi|^2}{g(t)|\xi|^2} + \frac{b(s)+g(s)|\xi|^2}{g(s)|\xi|^2} \\
& \qquad \leq \frac{b(s)+g(s)|\xi|^2}{g(t)|\xi|^2} - \frac{b(s)+g(s)|\xi|^2}{g(t)|\xi|^2} + \frac{b(s)+g(s)|\xi|^2}{g(s)|\xi|^2} \\
& \qquad = \frac{b(s)+g(s)|\xi|^2}{g(s)|\xi|^2} \lesssim \frac{\gamma(s,\xi)}{g(s)|\xi|^2}.
\end{align*}
For the second summand in \eqref{Eq:Effective-Incr-Zpd-Est-y2} following the same approach to \eqref{Eq:Effective-Incr-Zpd-Est-y1}, we find
\[ |y(t,s,\xi)| \leq 1+\frac{\gamma(s,\xi)}{g(s)|\xi|^2}+|\xi|^2\int_{s}^{t}\bigg( \frac{2}{g(\theta)|\xi|^2} \bigg)|y(\theta,s,\xi)|d\theta. \]
Applying Gronwall's inequality and using condition \textbf{(A2)} we have the estimate
\begin{align*}
|y(t,s,\xi)| \lesssim \Big( 1+\frac{\gamma(s,\xi)}{g(s)|\xi|^2} \Big)\exp\bigg( \int_{s}^{t}\frac{2}{g(\theta)}d\theta \bigg) \lesssim 1+\frac{\gamma(s,\xi)}{g(s)|\xi|^2}.
\end{align*}
This implies that
\begin{equation} \label{Eq:Effective-Incr-Zpd-Est-E12}
|E_{\text{pd}}^{(12)}(t,s,\xi)| \lesssim \frac{\gamma(t,\xi)}{\gamma(s,\xi)}|y(t,s,\xi)|\lesssim \frac{b(t)+g(t)|\xi|^2}{g(s)|\xi|^2}.
\end{equation}
Finally, let us estimate $|E_{\text{pd}}^{(22)}(t,s,\xi)|$ by using the estimate of $|E_{\text{pd}}^{(12)}(t,s,\xi)|$ from \eqref{Eq:Effective-Incr-Zpd-Est-E12}. It follows
\begin{align*}
|E_{\text{pd}}^{(22)}(t,s,\xi)| & \lesssim \frac{\delta(s,\xi)}{\delta(t,\xi)}+|\xi|^2\int_{s}^{t}\frac{\delta(\tau,\xi)}{\delta(t,\xi)}\frac{1}{\gamma(\tau,\xi)}|E_{\text{pd}}^{(12)}(\tau,s,\xi)|d\tau \\
& \lesssim \frac{\delta(s,\xi)}{\delta(t,\xi)}+|\xi|^2\int_{s}^{t}\frac{\delta(\tau,\xi)}{\delta(t,\xi)}\frac{1}{\gamma(\tau,\xi)}\frac{\gamma(\tau,\xi)}{g(s)|\xi|^2}d\tau.
\end{align*}
Then, we have
\begin{align*}
\frac{g(s)||\xi|^2}{\gamma(t,\xi)}|E_{\text{pd}}^{(22)}(t,s,\xi)| & \lesssim \underbrace{\frac{\delta(s,\xi)g(s)|\xi|^2}{\delta(t,\xi)\gamma(t,\xi)}}_{\leq1}+|\xi|^2\int_{s}^{t}\underbrace{\frac{\delta(\tau,\xi)\gamma(\tau,\xi)}{\delta(t,\xi)\gamma(t,\xi)}}_{\leq1}\frac{1}{\gamma(\tau,\xi)}d\tau \\
& \lesssim 1 + |\xi|^2\int_{s}^{t}\frac{1}{b(\tau)+g(\tau)|\xi|^2}d\tau \lesssim 1.
\end{align*}
This implies that
\[ |E_{\text{pd}}^{(22)}(t,s,\xi)| \lesssim \frac{\gamma(t,\xi)}{g(s)|\xi|^2} \lesssim \frac{b(t)+g(t)|\xi|^2}{g(s)|\xi|^2}. \]
This completes the proof.
\end{proof}
Now let us come back to
\begin{equation} \label{Eq:Effective-Incr-PseudoZone}
U(t,\xi) = E(t,0,\xi)U(0,\xi) \quad \text{for all} \quad 0\leq t\leq t_{\xi}.
\end{equation}
Because of \eqref{Eq:Effective-Incr-PseudoZone} and Proposition \ref{Prop:Effective-Incr_Zpd}, the following statement can be concluded.
\begin{corollary} \label{Cor:Effective-Incr_IncreasingZpd}
In the pseudo-differential zone $\Zpd(N)$ the following estimates hold for all $0\leq t\leq t_{\xi}$:\\
If $b'\geq0$, then we have
\begin{align*}
|\xi|^{|\beta|}|\hat{u}_t(t,\xi)| &\lesssim ( b(t)+g(t)|\xi|^2)|\xi|^{|\beta|}|\hat{u}_0(\xi)| + ( b(t)+g(t)|\xi|^2) |\xi|^{|\beta|}\langle \xi \rangle^{-2}|\hat{u}_1(\xi)| \quad \mbox{for} \quad |\beta|\geq 0.
\end{align*}
If $b'<0$, then we have
\begin{align*}
|\xi|^{|\beta|}|\hat{u}_t(t,\xi)| &\lesssim ( b(t)+g(t)|\xi|^2)|\xi|^{|\beta|}|\hat{u}_0(\xi)| + ( b(t)+g(t)|\xi|^2) |\xi|^{|\beta|}|\xi|^{-2}|\hat{u}_1(\xi)| \quad \mbox{for} \quad |\beta|\geq 0.
\end{align*}
\end{corollary}
\subsubsection{Conclusion} \label{Concl:Effective-Incr_IncreasingZpd}
From the statements of Corollaries \ref{Cor:Effective_Increasing_Zell} and \ref{Cor:Effective-Incr_IncreasingZpd} we derive the following estimates for $t>0$:\\
If $b'\geq0$, then we have
\begin{align*}
|\xi|^{|\beta|}|\hat{u}_t(t,\xi)| &\lesssim ( b(t)+g(t)|\xi|^2)|\xi|^{|\beta|}|\hat{u}_0(\xi)| + ( b(t)+g(t)|\xi|^2) |\xi|^{|\beta|}\langle \xi \rangle^{-2}|\hat{u}_1(\xi)| \quad \mbox{for} \quad |\beta|\geq 0.
\end{align*}
If $b'<0$, then we have
\begin{align*}
|\xi|^{|\beta|}|\hat{u}_t(t,\xi)| &\lesssim ( b(t)+g(t)|\xi|^2)|\xi|^{|\beta|}|\hat{u}_0(\xi)| + ( b(t)+g(t)|\xi|^2) |\xi|^{|\beta|}|\xi|^{-2}|\hat{u}_1(\xi)| \quad \mbox{for} \quad |\beta|\geq 0.
\end{align*}
This completes the proof of Theorem \ref{Theorem:Effective_Increasing}.
\end{proof}

\subsection{Model with integrable and decaying time-dependent coefficient $g=g(t)$} \label{Subsect_Effective_Integrable}
We assume that the coefficient $g=g(t)$ satisfies the following conditions for all $t \in [0,\infty)$:
\begin{enumerate}
\item[\textbf{(E1)}] $g(t)>0$ and $g'(t)<0$,
\item[\textbf{(E2)}] $g \in L^1([0,\infty))$,
\item[\textbf{(E3)}] $b(t)g(t)\leq \dfrac{1}{2}$ \,\,for all\,\, $t\geq0$,
\item[\textbf{(E4)}] $\dfrac{b'(t)}{b(t)}\leq -\dfrac{g'(t)}{g(t)}$\,\, for all\,\, $t\geq0$,
\item[\textbf{(E5)}] $\dfrac{g'^2}{g}\in L^1([0,\infty))$.
\end{enumerate}
\begin{theorem} \label{Theorem_Effective-Decreasing_Integrable-Decaying}
Let us consider the Cauchy problem
\begin{equation*}
\begin{cases}
u_{tt} - \Delta u + b(t)u_t -g(t)\Delta u_t=0, &(t,x) \in [0,\infty) \times \mathbb{R}^n, \\
u(0,x)= u_0(x),\quad u_t(0,x)= u_1(x), &x \in \mathbb{R}^n.
\end{cases}
\end{equation*}
We assume that the coefficients $b=b(t)$ and $g=g(t)$ satisfy the conditions \textbf{(B1)} to \textbf{(B3)} and \textbf{(E1)} to \textbf{(E5)} and, \textbf{(EF)}, respectively. Then, we have the following estimates for Sobolev solutions for all $t\geq t_0$:
\begin{align*}
\|\,|D|^{|\beta|}u(t,\cdot)\|_{L^2} &\lesssim \Big( 1+\int_{t_0}^t\frac{1}{b(\tau)}d\tau \Big)^{-\frac{|\beta|}{2}}\|u_0\|_{H^{|\beta|}} + \Big( 1+\int_{t_0}^t\frac{1}{b(\tau)}d\tau \Big)^{-\frac{|\beta|-1}{2}}\|u_1\|_{H^{|\beta|-1}} \quad \mbox{for} \quad |\beta|\geq 1, \\
\|\,|D|^{|\beta|}u_t(t,\cdot)\|_{L^2} &\lesssim \Big( 1+\int_{t_0}^t\frac{1}{b(\tau)}d\tau \Big)^{-\frac{|\beta|+1}{2}} \|u_0\|_{H^{|\beta|+1}} + \Big(1+\int_{t_0}^t\frac{1}{b(\tau)}d\tau \Big)^{-\frac{|\beta|}{2}} \|u_1\|_{H^{|\beta|}} \quad \mbox{for} \quad |\beta|\geq 0.
\end{align*}
\end{theorem}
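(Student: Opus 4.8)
The plan is to run a WKB analysis in the extended phase space, following the pattern of Sections \ref{Subsect_Noneffective_Integrable} and \ref{Subsect:Effective-Increasing}, but now for an \emph{effective} friction. Since $b=b(t)$ is effective while $g=g(t)$ is integrable and decaying, I would split $[0,\infty)\times\mathbb{R}^n$ into four zones as in Figure \ref{fig.zone}: a dissipative zone $\Zdiss$ on which $|\xi|$ is small compared with $b(t)$ (so that the friction is in its overdamped, parabolic regime) and $g(t)|\xi|\leq\varepsilon$; a hyperbolic zone $\Zhyp$ where $|\xi|$ is comparable to or larger than $b(t)$ but still $g(t)|\xi|\leq\varepsilon$; and, for large frequencies, a reduced zone $\Zred$ and an elliptic zone $\Zell$ on which the viscoelastic term $g(t)|\xi|^2$ takes over, separated by $g(t)|\xi|=\varepsilon$ and $g(t)|\xi|=N_2$. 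Because the estimate is only claimed for $t\geq t_0$, I may freely use $tb(t)\geq c_0>1$ and $|b'|=o(b^2)$ from the start, which is what makes the overdamped picture robust.

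The heart of the proof is the dissipative zone. There I would introduce a micro-energy $U=(\gamma(t,\xi)\hat u,D_t\hat u)^{\text{T}}$ with a weight $\gamma$ adapted to the effective scaling --- chosen so that the two characteristic roots $\lambda_{-}\approx-\frac{|\xi|^2}{b(t)+g(t)|\xi|^2}$ and $\lambda_{+}\approx-(b(t)+g(t)|\xi|^2)$ are separated by a uniformly bounded diagonalizer --- pass to the first-order system coming from \eqref{MainEquationFourier}, diagonalize once, and estimate the fundamental solution $E_{\text{diss}}(t,s,\xi)$ through a Neumann series. Condition \textbf{(E3)} is exactly what yields $g(t)|\xi|^2\leq\frac{|\xi|^2}{2b(t)}\ll b(t)$ on $\Zdiss$, so that $b(t)+g(t)|\xi|^2\approx b(t)$ and $\lambda_{-}\approx-\frac{|\xi|^2}{b(t)}$; I expect the bound
\[
(|E_{\text{diss}}(t,s,\xi)|)\lesssim\frac{\gamma(s,\xi)}{\gamma(t,\xi)}\,\exp\Big(-c\,|\xi|^2\int_s^t\frac{d\tau}{b(\tau)}\Big)\left(\begin{array}{cc}1&1\\1&1\end{array}\right)
\]
up to a bounded weight ratio, with the $b'$-errors absorbed by \textbf{(B3)} and \textbf{(EF)} and the $g'$-errors by \textbf{(E4)} and \textbf{(E5)}. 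Pulling out $|\xi|^{|\beta|}$ and using $\sup_\xi|\xi|^{2|\beta|}e^{-2c|\xi|^2 T}\lesssim T^{-|\beta|}$ with $T=\int_{t_0}^t\frac{d\tau}{b(\tau)}$ produces the rate $\big(1+\int_{t_0}^t\frac{d\tau}{b}\big)^{-|\beta|/2}$ after integrating against $L^2$ data; for $u_1$ data there is one fewer half-power of decay and one fewer derivative of regularity, since its propagator in the overdamped regime carries an extra factor $|\xi|^{-1}$, which is precisely the $H^{|\beta|-1}$ appearing in the statement. This step is the main obstacle: the weight $\gamma$ must be tuned so that the diagonal part of the system produces \emph{exactly} the factor $\exp(-c|\xi|^2\int\frac{d\tau}{b})$ while the off-diagonal remainder --- which now also contains the extra terms $g'(t)|\xi|^2$, $b(t)g(t)|\xi|^2$ and $\partial_t(b+g|\xi|^2)$ coming from the viscoelastic damping --- stays uniformly integrable over $\Zdiss$, and it is there that \textbf{(E3)}--\textbf{(E5)} and \textbf{(EF)} have to be used jointly.

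The remaining three zones are perturbative and mostly already available. In $\Zell$ I would reuse the elliptic symbol calculus and two-step diagonalization of Section \ref{Subsection_Effective_Increasing_Zell} with amplitude $b(t)+g(t)|\xi|^2$; Proposition \ref{Prop_Effective_Increasing_Est_Zell} gives $(|E^W_{\text{ell}}|)\lesssim\frac{b(t)+g(t)|\xi|^2}{b(s)+g(s)|\xi|^2}\exp\big(\frac12\int_s^t(b+g|\xi|^2)\big)$, and after the backward transformation $w=\exp(\frac12\int_0^t(b+g|\xi|^2))\hat u$ only the factor $\frac{b(t)+g(t)|\xi|^2}{b(s)+g(s)|\xi|^2}\approx\frac{g(t)}{g(s)}\leq1$ survives, so $\Zell$ produces no amplification and, crucially, \emph{no loss of derivatives}. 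A monotone energy argument as in Corollary \ref{Cor:Noneffective-Integrable-Zred} disposes of $\Zred$. In $\Zhyp$ a one-step diagonalization plus Neumann series as in Proposition \ref{Prop:Noneffective-Integrable-Hyp} gives $(|E_{\text{hyp}}|)\lesssim\exp\big(-\frac{2-\varepsilon}{4}\int_s^t(b+g|\xi|^2)\big)$; since $tb(t)\to\infty$ forces $\int_0^t b\to\infty$, and since $g\in L^1$ makes $\exp(-\frac{|\xi|^2}{4}\int_s^t g)$ harmless (it is even a Gaussian in $\xi$), this decay dominates the polynomial rate on the whole relevant frequency range, so the inhomogeneous regularity of the data suffices for large frequencies.

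Finally I would glue the four zones across $t_{\xi,1},t_{\xi,2},t_{\xi,3}$, controlling the accumulated factors at each transition as in Section \ref{Sec:Noneffective-Integrable-Conclusions}, and undo the partial Fourier transform; the use of $H^{|\beta|}$ and $H^{|\beta|-1}$ instead of homogeneous spaces is exactly what marries the $L^2$-type small-frequency parabolic bound with the large-frequency behaviour. The only additional care needed in the gluing is across the reduced zone, where $g(t)|\xi|^2$ passes from negligible to dominant, and I would follow there the bookkeeping already rehearsed in \cite{AslanReissig2023}.
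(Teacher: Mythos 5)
Your plan captures the correct high‑level picture — the polynomial rate $\big(1+\int_{t_0}^t b^{-1}d\tau\big)^{-|\beta|/2}$ must come from the overdamped low‑frequency region, the large‑frequency zones are exponentially suppressed, and the final rate is extracted via $\sup_\xi|\xi|^{2|\beta|}e^{-2c|\xi|^2T}\lesssim T^{-|\beta|}$ with $T=\int_{t_0}^t b^{-1}d\tau$. However, there is a genuine gap in the way you handle the dissipative zone, and a secondary mismatch in the geometry of the decomposition.

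The key obstacle is that the WKB/diagonalization plus Neumann series you propose for $\Zdiss$ needs derivative bounds on the \emph{combined} coefficient $\tilde b(t,\xi)=b(t)+g(t)|\xi|^2$, of the type $|\partial_t\tilde b|\lesssim\tilde b/(1+t)$, so that the remainder matrix is integrable in $t$ uniformly in $\xi$ over $\Zdiss$. Condition \textbf{(B3)} gives this for $b$ alone, but the piece $g'(t)|\xi|^2$ is only controlled in $\Zdiss$ by $|g'(t)|\,b^2(t)$, and the hypotheses \textbf{(E1)}--\textbf{(E5)}, \textbf{(EF)} do not imply $|g'(t)|\,b(t)\lesssim(1+t)^{-1}$. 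The paper explicitly flags exactly this: in $\Zdiss$ one has $\tilde b\approx b$, but the elliptic WKB apparatus would require derivative estimates on $\tilde b$ that fail in general, so it abandons diagonalization there entirely and instead proves the needed bound with a low‑regularity energy‑multiplier argument in the spirit of da Luz and Vargas J\'unior (Propositions \ref{Prop:Effective-integrable-EM5-Zdiss}--\ref{Prop:Effective-integrable-EM8-Zdiss}), with multiplier $K(t,\xi)=|\xi|^2/b(t)$ on $\Zell(0,\varepsilon)$ and $K(t)=b(t)$ on $\Zred(0,\varepsilon)$. That multiplier scheme is what delivers $\exp\!\big(-C|\xi|^2\int_{t_0}^t b^{-1}\big)$ and thus the stated rate; without it, the step you label ``the heart of the proof'' is unsubstantiated. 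If you want to salvage the diagonalization route you would have to either impose an extra hypothesis controlling $g'b$ (not in the theorem) or reproduce the multiplier argument.

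The secondary issue is the zone geometry. You quote Figure \ref{fig.zone}, whose zones are built from the inequalities $|\xi|\lessgtr N_1b(t)$ and $g(t)|\xi|\lessgtr\varepsilon$ (the non‑effective setup). The paper's proof of Theorem \ref{Theorem_Effective-Decreasing_Integrable-Decaying} instead separates the extended phase space by the \emph{roots} of the quadratic $|\xi|=\tfrac{b(t)}{2}+\tfrac{g(t)|\xi|^2}{2}$, i.e. the curves $\Pi_{1,2}=\big\{|\xi|=\tfrac1{g(t)}\big(1\pm\sqrt{1-b(t)g(t)}\big)\big\}$; the hyperbolic zone is pinned between $\Pi_2$ and $\Pi_1$, and the dissipative zone sits inside $\Pi_{\text{ell}}^{\text{low}}$ to the left of $\Pi_2$, where it is further subdivided into an elliptic and a reduced subzone for the multiplier method. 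Because the monotonicity of $\Pi_2$ is governed by the sign of $b'$, the paper also has to run the gluing separately for $b$ increasing and $b$ decreasing (Figures \ref{Fig-Effective-integrable-regions}--\ref{Fig-Effective-integrable-zones} and the two subsections of the conclusion), a bookkeeping step absent from your outline. For the large‑frequency elliptic zone you cite Proposition \ref{Prop_Effective_Increasing_Est_Zell} (the increasing‑$g$ case); the proposition actually used is \ref{Prop_Effective-Integrable_EstZell}, which works with the auxiliary symbol $\langle\xi\rangle_{b(t),g(t)}$ and is paired with Lemma \ref{Lemma_Effective-Integrable_Transf-back} and the refined Duhamel estimate in Proposition \ref{Prop:Effective-Integrable_IncreasingZell}; your conclusion there is qualitatively right, but the argument you sketch (that only a bounded weight ratio survives) is weaker than what the paper needs and uses.
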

Let us consider the following transformed equation from \eqref{AuxiliaryEquation3}:
\[ w_{tt} + \Big(|\xi|^2\Big(1-\frac{b(t)g(t)}{2}-\frac{g(t)^2|\xi|^2}{4}-\frac{g'(t)}{2}\Big)-\frac{b(t)^2}{4}-\frac{b'(t)}{2}\Big)w=0.\]
We have $b'(t)=o(b^2(t))$ for $t \to \infty$. For this reason we can restrict ourselves to the equation
\[ w_{tt} + \Big(|\xi|^2\Big( 1-\frac{b(t)g(t)}{2}-\frac{g'(t)}{2} \Big) - \frac{g(t)^2}{4}|\xi|^4-\frac{b(t)^2}{4}\Big)w=0. \]
This equation helps to determine the zones of the extended phase space. The last equation can be written in the form
\[ w_{tt} + |\xi|^2\Big( 1-\frac{g'(t)}{2} \Big)w - \Big(\frac{g(t)}{2}|\xi|^2+\frac{b(t)}{2}\Big)^2 w=0. \]
 In the following we study only the case $-g'(t) \leq 2 -\varepsilon$ for all $t \geq 0$ and with $\varepsilon >0$. It is clear that $g'(t)=o(1)$ for $t \to \infty$. So, the proposed case is reasonable.
For the last equation we develop a WKB analysis. \\
We will consider our equation in the following form:
\begin{equation} \label{Eq:Effective-Integrable-Dfrom}
D_t^2w + \Big( \frac{b(t)}{2}+\frac{g(t)|\xi|^2}{2} \Big)^2w - |\xi|^2w + \frac{g'(t)}{2}|\xi|^2w + \frac{b'(t)}{2}w = 0.
\end{equation}
At first let us turn to the equation
\[ |\xi|^2 - \Big( \frac{b(t)}{2}+\frac{g(t)|\xi|^2}{2} \Big)^2 = 0. \]
The equation
\[ |\xi| = \frac{b(t)}{2}+\frac{g(t)|\xi|^2}{2} \]
implies that we have the following two separating lines:
\[ \Pi_1 := \big\{(t,\xi): |\xi| = \frac{1}{g(t)} + \frac{1}{g(t)}\sqrt{1-b(t)g(t)} \big\} \quad \text{and} \quad \Pi_2 := \big\{(t,\xi): |\xi| = \frac{1}{g(t)} - \frac{1}{g(t)}\sqrt{1-b(t)g(t)} \big\}. \]
Therefore, we can introduce the following functions corresponding to the separating lines:
\[ f_1: t\to \frac{1}{g(t)}\big( 1+\sqrt{1-b(t)g(t)} \big) \qquad\text{and} \qquad  f_2: t\to \frac{1}{g(t)}\big( 1-\sqrt{1-b(t)g(t)} \big). \]
First, we consider $f_1'=f_1'(t)$. It holds
\begin{align*}
f_1'(t) &= -\frac{g'(t)}{g^2(t)}\big( 1+\sqrt{1-b(t)g(t)} \big) - \frac{1}{2g(t)}\frac{b'(t)g(t)+b(t)g'(t)}{\sqrt{1-b(t)g(t)}} \\
&\geq -\frac{g'(t)}{g^2(t)}\big( 1+\frac{1}{\sqrt{2}} \big) \geq0,
\end{align*}
where we have used \textbf{(E3)} and \textbf{(E4)} which imply the followings, respectively:
\[ \sqrt{1-b(t)g(t)} \geq \frac{1}{\sqrt{2}} \qquad \text{and} \qquad -\big( b'(t)g(t)+b(t)g'(t) \big) \geq0. \]
Secondly, we consider $f_2'=f_2'(t)$ for $t\geq t_0$, where $t_0$ is sufficiently large. We have
\begin{align*}
f_2'(t) &= -\frac{g'(t)}{g^2(t)}\big( 1-\sqrt{1-b(t)g(t)} \big) + \frac{1}{2g(t)}\frac{b'(t)g(t)+b(t)g'(t)}{\sqrt{1-b(t)g(t)}} \\
&= \frac{1}{g(t)}\frac{1}{\sqrt{1-b(t)g(t)}} \bigg( \frac{g'(t)}{g(t)}\big( 1-b(t)g(t) \big) -\frac{g'(t)}{g(t)}\sqrt{1-b(t)g(t)} + \frac{b'(t)g(t)}{2} + \frac{b(t)g'(t)}{2} \bigg) \\
&= \frac{1}{g(t)}\frac{1}{\sqrt{1-b(t)g(t)}} \bigg( -\frac{g'(t)}{g(t)}\big( \sqrt{1-b(t)g(t)}-1\big) +\frac{b'(t)g(t)}{2} - \frac{b(t)g'(t)}{2} \bigg) \\
&\approx \frac{1}{g(t)}\frac{1}{\sqrt{1-b(t)g(t)}}\bigg( \frac{b(t)g'(t)}{2} - \frac{b(t)g'(t)}{2} + \frac{b'(t)g(t)}{2} \bigg) \\
&= \frac{b'(t)}{2}\frac{1}{\sqrt{1-b(t)g(t)}},
\end{align*}
where we have used
\[ \sqrt{1-b(t)g(t)}-1 \approx -\frac{b(t)g(t)}{2} \qquad \text{for} \quad t\geq t_0 \,\,\, \text{sufficiently large}. \]
Thus, we may conclude that $f_2'(t)\geq0$ if $b'(t)\geq0$ or $f_2'(t)\leq0$ if $b'(t)\leq0$. That is, the separating line $\Pi_2$ for $t\geq t_0$ is increasing or decreasing, depending on the sign of $b'=b'(t)$. We note that for $t\leq t_0$ and $|\xi|\leq \Pi_2$ the set is compact. For this reason we can restrict ourselves for $\Pi_2$ to $t\geq t_0$.
In the WKB analysis we can restrict ourselves to $t_{\xi,2}$ for $t\geq t_0$ and to $t_{\xi,1}$ for $t\geq 0$.
 Let us define an auxiliary symbol
\begin{equation*}
\langle\xi\rangle_{b(t),g(t)} := \sqrt{\Big| |\xi|^2 - \Big( \frac{b(t)}{2}+\frac{g(t)|\xi|^2}{2} \Big)^2 \Big|}.
\end{equation*}
Thus, we may introduce the following two regions:
\begin{align} \label{Eq:Effective-Integrable-Pi-Hyp-Ell}
\Pi_{\text{hyp}}=\bigg\{(t,\xi): |\xi| > \frac{b(t)}{2}+\frac{g(t)|\xi|^2}{2} \bigg\}, \qquad \Pi_{\text{ell}}=\bigg\{(t,\xi): |\xi| < \frac{b(t)}{2}+\frac{g(t)|\xi|^2}{2} \bigg\}.
\end{align}
\begin{remark}
It holds
\begin{equation} \label{Eq:Effective-Integrable-derv-japan}
\partial_t\langle\xi\rangle_{b(t),g(t)} = \mp\dfrac{\big( \frac{b(t)}{2}+\frac{g(t)|\xi|^2}{2} \big)\big( \frac{b'(t)}{2}+\frac{g'(t)|\xi|^2}{2} \big)}{\langle\xi\rangle_{b(t),g(t)}},
\end{equation}
where the upper sign is taken in the hyperbolic region.
\end{remark}
We introduce
\[ \alpha_1 := \frac{1}{g(0)}\big( 1 + \sqrt{1-b(0)g(0)} \big) \qquad \text{and} \qquad \alpha_2 := \frac{1}{g(0)}\big( 1 - \sqrt{1-b(0)g(0)} \big). \]
We divide the extended phase space $[0,\infty)\times\mathbb{R}^n$ into zones in the following way:
\begin{itemize}
\item elliptic zone:
\[ Z_{\rm ell}(N) = \Big\{ (t,\xi): |\xi|\geq \frac{N}{2}\frac{1}{g(t)}\Big( 1+\sqrt{1-\frac{4}{N^2}b(t)g(t)} \Big)  \Big\}\cap \Pi_{\text{ell}}, \]
\item reduced zone:
\[ Z_{\rm red}(N,\varepsilon) = \Big\{ (t,\xi): \frac{\varepsilon}{2}\frac{1}{g(t)}\Big( 1+\sqrt{1-\frac{4}{\varepsilon^2}b(t)g(t)} \Big)\leq |\xi|\leq \frac{N}{2}\frac{1}{g(t)}\Big( 1+\sqrt{1-\frac{4}{N^2}b(t)g(t)} \Big) \Big\}, \]
\item hyperbolic zone:
\[ \Zhyp(\varepsilon,t_0) = \Big\{ (t,\xi): \frac{\varepsilon}{2}\frac{1}{g(t)}\Big( 1-\sqrt{1-\frac{4}{\varepsilon^2}b(t)g(t)} \Big)\leq|\xi|\leq \frac{\varepsilon}{2}\frac{1}{g(t)}\Big( 1+\sqrt{1-\frac{4}{\varepsilon^2}b(t)g(t)} \Big) \Big\}\cap\Pi_{\text{hyp}}\cap\{t\geq t_0\}, \]
\item dissipative zone:
\[ Z_{\rm diss}(\varepsilon) = \Big\{ (t,\xi): |\xi|\leq \frac{\varepsilon}{2}\frac{1}{g(t)}\big( 1-\sqrt{1-\frac{4}{\varepsilon^2}b(t)g(t)} \big) \Big\} \cap \Pi_{\text{ell}}. \]
\end{itemize}
Here in general, $N$ is a large positive constant and $\varepsilon$ is a small positive constant, which will be chosen later. It is clear that $\Zhyp(\varepsilon,t_0)$ should be between $\Pi_2$ and $\Pi_1$. So, we have to guarantee that
\begin{align*}
& \frac{1}{g(t)}\big(1- \sqrt{1-b(t)g(t)}\big)< \frac{\varepsilon}{2}\frac{1}{g(t)}\big( 1-\sqrt{1-\frac{4}{\varepsilon^2}b(t)g(t)} \big) \\
& \qquad < \frac{\varepsilon}{2}\frac{1}{g(t)}\Big( 1+\sqrt{1-\frac{4}{\varepsilon^2}b(t)g(t)} \Big) < \frac{1}{g(t)}\big(1+ \sqrt{1-b(t)g(t)}\big).
\end{align*}
In the definition of $\Zhyp(\varepsilon,t_0)$ we consider $t\geq t_0(\varepsilon)$ with $t_0=t_0(\varepsilon)$ is sufficiently large. Namely, we fix $\varepsilon = \varepsilon_1$ and choose sufficiently large $t\geq t_0$ such that $b(t)g(t)\leq \varepsilon_1^2/8$, which implies that $\lim_{t\to\infty}b(t)g(t) = 0$. Moreover for $t\leq t_0(\varepsilon)$ the region $\Pi_{\text{hyp}}(\varepsilon,t_0)$ forms a compact set. In the same way the ``left'' elliptic region forms for $t\leq t_0(\varepsilon)$ a compact set.

Let us introduce separating lines between these zones as follows:
\begin{itemize}
\item by $t_{\text{ell}}=t_{\text{ell}}(|\xi|)$, we denote the separating line between the zones $\Zell(N)$ and $\Zred(N,\varepsilon)$;
\item by $t_{\text{red}}=t_{\text{red}}(|\xi|)$, we denote the separating line between the zones $\Zred(N,\varepsilon)$ and $\Zhyp(\varepsilon,t_0)$;
\item by $t_{\text{hyp}}=t_{\text{hyp}}(|\xi|)$, we denote the separating line between the zones $\Zhyp(\varepsilon,t_0)$ and  $\Zdiss(\varepsilon)$.
\end{itemize}
{\color{black}
\begin{figure}[h]
\begin{center}
\begin{tikzpicture}[>=latex,xscale=1.1]
\draw[->] (0,0) -- (5,0)node[below]{$|\xi|$};
	\draw[->] (0,0) -- (0,4)node[left]{$t$};
	\node[below left] at(0,0){$0$};
	\node  at (1.8,3.5) {$\textcolor{red}{\Pi_2}$};
	\node  at (3.8,3) {$\textcolor{green}{\Pi_1}$};
    \node[below] at(2.09,0){$\alpha_1$};
    \node[below] at(0.59,0){$\alpha_2$};
	\node[color=black] at (3, 1){{\footnotesize $\Pi_{\text{ell}}$}};
	\node[color=black] at (1.3, 1){{\footnotesize $\Pi_{\text{hyp}}$}};
	\draw[dashed, domain=0:3.5,color=green,variable=\t] plot ({2 + 0.09*pow(\t,2.4)},\t);
	\draw[dashed, domain=0:3.7,color=red,variable=\t] plot ({0.5 + 0.09*pow(\t,2.4)},\t);
	\node[color=black] at (0.4,2){{\footnotesize $\Pi_{\text{ell}}$}};
	\node[below] at (2.2,-.8) {{\footnotesize a. $b=b(t)$ is increasing}};
\draw[->] (7,0) -- (12,0)node[below]{$|\xi|$};
	\draw[->] (7,0) -- (7,4)node[left]{$t$};
    \node[below left] at(7,0){$0$};
    \node[below] at(10.09,0){$\alpha_1$};
    \node[below] at(9.5,0){$\alpha_2$};
    \node[right]  at (11,3.3) {$\textcolor{green}{\Pi_1}$};
    \node[right]  at (7.1,3.7) {$\textcolor{red}{\Pi_2}$};
	\draw[dashed, domain=0:3.8,color=red,variable=\t] plot ({7+2.5*exp(-\t/1.3)},\t);
	\node[color=black] at (7.5,1.1){{\footnotesize $\Pi_{\text{ell}}$}};
	\draw[dashed, domain=0:3.5,color=green,variable=\t] plot ({10 + 0.09*pow(\t,2.6)},\t);
	\node[color=black] at (11,1.1){{\footnotesize $\Pi_{\text{ell}}$}};
	\node[color=black] at (9.3,1.1){{\footnotesize $\Pi_{\text{hyp}}$}};
	\node[below] at (9.3,-.8) {{\footnotesize b. $b=b(t)$ is decreasing}};	
\end{tikzpicture}
\caption{Sketch of the regions for the case $b$ is monotonous and $g$ is integrable and decaying}
\label{Fig-Effective-integrable-regions}
\end{center}
\end{figure}
\begin{figure}[h]
\begin{center}
\begin{tikzpicture}[>=latex,xscale=1.1]
    \draw[->] (0,0) -- (5,0)node[below]{$|\xi|$};
    \draw[->] (0,0) -- (0,4)node[left]{$t$};
    \node[below left] at(0,0){$0$};
    \node[right] at (1.7,3.7) {$\textcolor{red}{t_{\text{hyp}}}$};
    \node[right] at (2.6,3.5) {$\textcolor{yellow}{t_{\text{red}}}$};
    \draw[dashed, domain=0:3.5,color=cyan,variable=\t] plot ({2.1 + 0.09*pow(\t,2.4)},\t);
	\node[color=black] at (3.7, 1.5){{\footnotesize $Z_{\text{ell}}$}};
	\node[right] at (4,3.5) {$\textcolor{green}{t_{\text{ell}}}$};
	\node[color=black] at (2.3,1.5){{\footnotesize $Z_{\text{red}}$}};
	\draw[domain=0:3.6,color=green,variable=\t] plot ({3 + 0.09*pow(\t,2.4)},\t);
	\draw[domain=0:3.7,color=yellow,variable=\t] plot ({1.5 + 0.09*pow(\t,2.4)},\t);
	\node[color=black] at (1.3,1.5){{\footnotesize $Z_{\text{hyp}}$}};
	\draw[domain=0:3.7,color=red,variable=\t] plot ({0.5 + 0.09*pow(\t,2.4)},\t);
	\node[color=black] at (0.3,1.5){{\footnotesize $Z_{\text{diss}}$}};
	\node[below] at (2.2,-.8) {{\footnotesize a. $b=b(t)$ is increasing}};
  \draw[->] (7,0) -- (12,0)node[below]{$|\xi|$};
    \draw[->] (7,0) -- (7,4)node[left]{$t$};
    \node[below left] at(7,0){$0$};
    \node[right] at (7.1,3.7) {$\textcolor{red}{t_{\text{hyp}}}$};
    \node[right] at (9.6,3.5) {$\textcolor{yellow}{t_{\text{red}}}$};
    \draw[dashed, domain=0:3.5,color=cyan,variable=\t] plot ({9.1 + 0.09*pow(\t,2.4)},\t);
	\node[color=black] at (10.7, 1.5){{\footnotesize $Z_{\text{ell}}$}};
	\node[right] at (11,3.5) {$\textcolor{green}{t_{\text{ell}}}$};
	\node[color=black] at (9.3,1.5){{\footnotesize $Z_{\text{red}}$}};
	\draw[domain=0:3.6,color=green,variable=\t] plot ({10 + 0.09*pow(\t,2.4)},\t);
	\draw[domain=0:3.7,color=yellow,variable=\t] plot ({8.5 + 0.09*pow(\t,2.4)},\t);
	\node[color=black] at (8,1.5){{\footnotesize $Z_{\text{hyp}}$}};
	\draw[domain=0:3.8,color=red,variable=\t] plot ({7+1.3*exp(-\t/1.3)},\t);
	\node[color=black] at (7.3,0.8){{\footnotesize $Z_{\text{diss}}$}};
	\node[below] at (9.3,-.8) {{\footnotesize b. $b=b(t)$ is decreasing}};
\end{tikzpicture}
\caption{Sketch of the zones for the case $b$ is monotonous and $g$ is integrable and decaying}
\label{Fig-Effective-integrable-zones}
\end{center}
\end{figure}
\subsubsection{Considerations in the hyperbolic zone $\Zhyp(\varepsilon,t_0)$} \label{Subsection_Effective_Integrable_Zhyp}
Due to the definition of the hyperbolic zone $\Zhyp(\varepsilon,t_0)$, we have the following estimates for $ \langle\xi\rangle_{b(t),g(t)}$:
\begin{align*}
\langle\xi\rangle_{b(t),g(t)}^2 &= |\xi|^2 - \Big( \frac{b(t)}{2}+\frac{g(t)|\xi|^2}{2} \Big)^2 \leq |\xi|^2, \\
\langle\xi\rangle_{b(t),g(t)}^2 &= |\xi|^2 - \Big( \frac{b(t)}{2}+\frac{g(t)|\xi|^2}{2} \Big)^2 \geq \Big( 1-\frac{\varepsilon^2}{4} \Big)|\xi|^2,
\end{align*}
where in the second inequality from the definition of $Z_{\rm hyp}(\varepsilon,t_0)$, namely, $|\xi|\leq \frac{\varepsilon}{2}\frac{1}{g(t)}\big( 1+\sqrt{1-\frac{4}{\varepsilon^2}b(t)g(t)} \big)$, we used
\begin{align*}
\frac{2g(t)|\xi|}{\varepsilon}-1 \leq \sqrt{1-\frac{4}{\varepsilon^2}b(t)g(t)} \qquad \text{implies} \qquad -\Big( \frac{g(t)|\xi|^2}{2} + \frac{b(t)}{2} \Big)^2 \geq -\frac{\varepsilon^2}{4}|\xi|^2.
\end{align*}
This implies that
\begin{equation*} \label{Eq:Effective-Integrable-Zhyp-japan}
\langle\xi\rangle_{b(t),g(t)} \approx |\xi|.
\end{equation*}
\begin{proposition} \label{Prop_Effective-Integrable-Zhyp}
The following estimates hold in $\Zhyp(\varepsilon,t_0)$ for all $t_{\text{red}}\leq s \leq t\leq t_{\text{hyp}}$ and $t\geq t_0(\varepsilon)$ with sufficiently large $t_0=t_0(\varepsilon)$:
\begin{align*}
|\xi|^{|\beta|}|\hat{u}(t,\xi)| \lesssim \exp\bigg( -\frac{1}{2}\int_s^t\big( b(\tau)+g(\tau)|\xi|^2 \big)d\tau \bigg)\Big( |\xi|^{|\beta|}|\hat{u}(s,\xi)| + |\xi|^{|\beta|-1}|\hat{u}_t(s,\xi)| \Big) \quad \mbox{for} \quad |\beta|\geq 1, \\
|\xi|^{|\beta|}|\hat{u}_t(t,\xi)| \lesssim \exp\bigg( -\frac{1}{2}\int_s^t\big( b(\tau)+g(\tau)|\xi|^2 \big)d\tau \bigg)\Big( |\xi|^{|\beta|+1}|\hat{u}(s,\xi)| + |\xi|^{|\beta|}|\hat{u}_t(s,\xi)| \Big) \quad \mbox{for} \quad |\beta|\geq 0.
\end{align*}
\end{proposition}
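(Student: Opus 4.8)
The plan is to run a hyperbolic WKB scheme on the transformed equation \eqref{Eq:Effective-Integrable-Dfrom}. In the region $\Pi_{\text{hyp}}$ it reads $D_t^2 w - \langle\xi\rangle_{b(t),g(t)}^2 w + \tfrac{g'(t)}{2}|\xi|^2 w + \tfrac{b'(t)}{2}w = 0$, and on $\Zhyp(\varepsilon,t_0)$ we already know $\langle\xi\rangle_{b(t),g(t)}\approx|\xi|$, more precisely $(1-\tfrac{\varepsilon^2}{4})|\xi|^2\le\langle\xi\rangle_{b(t),g(t)}^2\le|\xi|^2$, so that $\tfrac{b(t)}{2}+\tfrac{g(t)|\xi|^2}{2}\le\tfrac\varepsilon2|\xi|$, together with $|\partial_t\langle\xi\rangle_{b(t),g(t)}|\lesssim\frac{(b+g|\xi|^2)(|b'|+|g'||\xi|^2)}{\langle\xi\rangle_{b(t),g(t)}}$ from \eqref{Eq:Effective-Integrable-derv-japan}. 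First I would introduce the micro-energy $W=W(t,\xi):=\big(\langle\xi\rangle_{b(t),g(t)}\,w,\,D_tw\big)^{\text{T}}$, turning the equation into a first-order system $D_tW=(A_0+A_1)W$ whose principal part $A_0$ has the real eigenvalues $\pm\langle\xi\rangle_{b(t),g(t)}$ and whose perturbation $A_1$ contains $-i\frac{\partial_t\langle\xi\rangle_{b(t),g(t)}}{\langle\xi\rangle_{b(t),g(t)}}$ and the ``potential'' $\frac{m(t,\xi)}{\langle\xi\rangle_{b(t),g(t)}}$ with $m(t,\xi)=\tfrac{g'(t)}{2}|\xi|^2+\tfrac{b'(t)}{2}$. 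Crucially, no $(b+g|\xi|^2)$-term is present here, since it has already been absorbed into the weight $\exp\big(-\tfrac12\int_0^t(b+g|\xi|^2)d\tau\big)$ relating $w$ to $\hat u$.

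Then I would perform the usual two diagonalization steps, as in the proof of Proposition \ref{Prop:Noneffective-Integrable-Hyp}. Diagonalizing $A_0$ with a constant eigenvector matrix $M$ yields $\mathcal D=\diag(-\langle\xi\rangle_{b(t),g(t)},\langle\xi\rangle_{b(t),g(t)})$ and a remainder $\mathcal R$ whose diagonal part $F_0$ has \emph{equal imaginary parts} $-\tfrac12\frac{\partial_t\langle\xi\rangle_{b(t),g(t)}}{\langle\xi\rangle_{b(t),g(t)}}$ in both entries and opposite real parts $\mp\tfrac{m}{2\langle\xi\rangle_{b(t),g(t)}}$; the second step uses $N_1=I+N^{(1)}$ with $N^{(1)}$ built from the entries $\mathcal R_{jk}/(\tau_j-\tau_k)$, so $|N^{(1)}|\lesssim\frac{|b'|+|g'||\xi|^2}{\langle\xi\rangle_{b(t),g(t)}^2}$. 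On $\Zhyp(\varepsilon,t_0)$ this is $<1$: indeed $\frac{|b'|}{\langle\xi\rangle_{b(t),g(t)}^2}\lesssim\frac{b^2}{|\xi|^2}\lesssim\varepsilon^2$ by \textbf{(EF)} together with the lower zone boundary $b(t)\lesssim\varepsilon|\xi|$, and $\frac{|g'||\xi|^2}{\langle\xi\rangle_{b(t),g(t)}^2}\lesssim|g'(t)|=o(1)$ for $t\ge t_0$ large, so $N_1$ is invertible with uniformly bounded inverse. The resulting remainder $\mathcal R_1=-N_1^{-1}\big(D_tN^{(1)}-(\mathcal R-F_0)N^{(1)}\big)$ consists only of second-order terms, and the heart of the proof is their uniform integrability in $t$ over $\Zhyp(\varepsilon,t_0)$: the term $\approx g'(t)^2|\xi|=\frac{g'(t)^2}{g(t)}\,g(t)|\xi|\le\varepsilon\,\frac{g'(t)^2}{g(t)}$ is controlled by \textbf{(E5)}; the terms $\frac{|b'||g'||\xi|^2}{\langle\xi\rangle_{b(t),g(t)}^3}$ and $\frac{b'(t)^2}{\langle\xi\rangle_{b(t),g(t)}^3}$ are bounded, using \textbf{(B3)} and $b\lesssim\varepsilon|\xi|$, by $\frac{\varepsilon|g'(t)|}{1+t}$ and $\frac{\varepsilon^2}{(1+t)^2|\xi|}$ respectively, whose time integrals are $\lesssim\frac{\varepsilon}{1+t_0}g(s)$ and $\lesssim\frac{\varepsilon^3}{(1+s)b(s)}$, both bounded — the latter because $tb(t)\to\infty$ by \textbf{(B2)}; and the $g''$- and $b''$-contributions coming from $D_tN^{(1)}$ are integrated directly.

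With $\mathcal R_1$ uniformly integrable, the fundamental solution factors as $E_W=N_1(t,\xi)E_0(t,s,\xi)Q(t,s,\xi)N_1^{-1}(s,\xi)$, where $E_0=\exp\big(i\int_s^t(\mathcal D+F_0)d\tau\big)$ and $Q$ is the sum of the convergent Peano series associated with $E_0^{-1}\mathcal R_1E_0$. Since the imaginary parts of the two diagonal entries of $\mathcal D+F_0$ coincide, both diagonal entries of $E_0$ have modulus $\sqrt{\langle\xi\rangle_{b(t),g(t)}(t)/\langle\xi\rangle_{b(t),g(t)}(s)}\approx1$ — the potential term $\frac{m}{\langle\xi\rangle_{b(t),g(t)}}$, which carries $b'$, enters only the phase and does not affect the modulus — hence $|E_0^{-1}\mathcal R_1E_0|=|\mathcal R_1|$ and $|Q|\lesssim\exp\big(\int_s^t|\mathcal R_1|d\tau\big)\lesssim1$. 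Therefore $(|E_W(t,s,\xi)|)\lesssim\left(\begin{smallmatrix}1&1\\1&1\end{smallmatrix}\right)$ uniformly on $\Zhyp(\varepsilon,t_0)$ for $t_{\text{red}}\le s\le t\le t_{\text{hyp}}$. Undoing $w(t,\xi)=\exp\big(\tfrac12\int_0^t(b(\tau)+g(\tau)|\xi|^2)d\tau\big)\hat u(t,\xi)$ and using $\langle\xi\rangle_{b(t),g(t)}\approx|\xi|$ to pass from the components of $W$ to $|\xi|^{|\beta|}\hat u$ and $|\xi|^{|\beta|-1}\hat u_t$ then yields exactly the claimed estimates.

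The step I expect to be the main obstacle is verifying the uniform integrability of $\mathcal R_1$ over the hyperbolic zone: the $b'$- and $b''$-terms are the delicate ones, since only \textbf{(EF)}, \textbf{(E4)} and \textbf{(B3)} are available (there is no $L^1$-control of $b$ or $b'$), so one must systematically trade powers of $b$ for powers of $|\xi|$ via the lower zone boundary $|\xi|\gtrsim b(t)/\varepsilon$ and exploit $tb(t)\to\infty$, while every $g'$- or $g''$-factor has to be paired with the factor $g(t)|\xi|\le\varepsilon$ so that \textbf{(E5)} and the integrability of $g'$ can be invoked. The requirement that $t_0=t_0(\varepsilon)$ be sufficiently large — as in the statement — is then used both to make $N_1$ invertible and to keep all these integrated remainders uniformly small.
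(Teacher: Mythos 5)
Your micro-energy, your observation that the $(b+g|\xi|^2)$-part has already been absorbed into the weight linking $w$ and $\hat u$, your use of $\langle\xi\rangle_{b(t),g(t)}\approx|\xi|$, and the final back-transformation all match the paper. But the central mechanism is different and introduces a gap. After a \emph{single} diagonalization step the paper observes that the \emph{whole} remainder $\mathcal R$ is already uniformly integrable on $\Zhyp(\varepsilon,t_0)$: since $\mathcal D$ is real diagonal its propagator has modulus one and one has directly $(|E_{\mathrm{hyp}}|)\lesssim\exp\big(\int_s^t|\mathcal R|\,d\tau\big)$. The integrability of $\mathcal R$ uses only $\frac{D_t\langle\xi\rangle}{2\langle\xi\rangle}$ having a bounded primitive (a logarithm) and the telescoping of $\int_s^t(b'+g'|\xi|^2)\,d\tau$ — made possible because $g'<0$ everywhere and $b'$ has constant sign by \textbf{(B2)} — together with the zone bounds $b+g|\xi|^2<2|\xi|$ from $\Pi_{\text{hyp}}$. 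No second step, no $N^{(1)}$, no $\mathcal R_1$, and no second derivatives of $b$ or $g$ anywhere.

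You instead perform a second diagonalization and try to integrate the second-order remainder $\mathcal R_1$. This is where the gap appears. $D_tN^{(1)}$ contains the term $\frac{g''|\xi|^2+b''}{\langle\xi\rangle^2}\approx g''+\frac{b''}{|\xi|^2}$. The $b''$-piece is controllable via \textbf{(B3)} and the zone boundary, but the $g''$-piece is not: the theorem's hypotheses \textbf{(E1)}--\textbf{(E5)}, \textbf{(B1)}--\textbf{(B3)}, \textbf{(EF)} contain no assumption on $g''$ (unlike Section~3, where \textbf{(G3)} is available). Your statement that the $g''$- and $b''$-contributions ``are integrated directly'' presumably refers to $\int_s^t g''\,d\tau = g'(t)-g'(s)$ being bounded; but the Neumann/Peano series estimate for $Q$ requires $\int_s^t|\mathcal R_1|\,d\tau<\infty$ with an \emph{absolute value} inside, so telescoping without a sign condition on $g''$ does not suffice, and $t-s$ can be arbitrarily large in the zone. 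Also note that the cited model for ``two steps,'' Proposition~\ref{Prop:Noneffective-Integrable-Hyp}, works on $\hat u$ and genuinely needs a second step to peel off the non-integrable diagonal $\frac{i}{2}(b+g|\xi|^2)$; that obstruction is absent here precisely because you are working with $w$, so the second step is unmotivated. If you stop after one step and check directly that $\int_s^t|\mathcal R|\,d\tau\lesssim 1$ via the telescoping argument, the proof closes under exactly the stated hypotheses.
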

\begin{proof}
We define the micro-energy
\[ W(t,\xi) := \big(  \langle\xi\rangle_{b(t),g(t)}w,D_tw \big)^{\text{T}}. \]
Then, from \eqref{Eq:Effective-Integrable-Dfrom} it holds
\begin{equation*}
D_tW=\left( \begin{array}{cc}
0 & \langle\xi\rangle_{b(t),g(t)} \\
\langle\xi\rangle_{b(t),g(t)} & 0
\end{array} \right)W + \left( \begin{array}{cc}
\dfrac{D_t\langle\xi\rangle_{b(t),g(t)}}{\langle\xi\rangle_{b(t),g(t)}} & 0 \\
-\dfrac{g'(t)|\xi|^2+b'(t)}{2\langle\xi\rangle_{b(t),g(t)}} & 0
\end{array} \right)W.
\end{equation*}
Let us carry out the first step of the diagonalization procedure. First, we set
\[ M = \left( \begin{array}{cc}
1 & -1 \\
1 & 1
\end{array} \right), \qquad M^{-1} = \frac{1}{2}\left( \begin{array}{cc}
1 & 1 \\
-1 & 1
\end{array} \right) \qquad\text{and} \qquad W^{(0)}:=M^{-1}W.   \]
Thus, we obtain
\begin{equation*}
D_tW^{(0)}=\big( \mathcal{D}(t,\xi)+\mathcal{R}(t,\xi) \big)W^{(0)},
\end{equation*}
where
\begin{align*}
\mathcal{D}(t,\xi) &= \left( \begin{array}{cc}
\langle\xi\rangle_{b(t),g(t)} & 0 \\
0 & -\langle\xi\rangle_{b(t),g(t)}
\end{array} \right), \\
\mathcal{R}(t,\xi) &= \left( \begin{array}{cc}
\dfrac{D_t\langle\xi\rangle_{b(t),g(t)}}{2\langle\xi\rangle_{b(t),g(t)}}-\dfrac{g'(t)|\xi|^2+b'(t)}{4\langle\xi\rangle_{b(t),g(t)}} &
-\dfrac{D_t\langle\xi\rangle_{b(t),g(t)}}{2\langle\xi\rangle_{b(t),g(t)}}+\dfrac{g'(t)|\xi|^2+b'(t)}{4\langle\xi\rangle_{b(t),g(t)}} \\
-\dfrac{D_t\langle\xi\rangle_{b(t),g(t)}}{2\langle\xi\rangle_{b(t),g(t)}}-\dfrac{g'(t)|\xi|^2+b'(t)}{4\langle\xi\rangle_{b(t),g(t)}} & \dfrac{D_t\langle\xi\rangle_{b(t),g(t)}}{2\langle\xi\rangle_{b(t),g(t)}}+\dfrac{g'(t)|\xi|^2+b'(t)}{4\langle\xi\rangle_{b(t),g(t)}}
\end{array} \right).
\end{align*}
After the first step of diagonalization procedure the entries of the matrix $\mathcal{R}=\mathcal{R}(t,\xi)$ are uniformly integrable over the hyperbolic
zone $\Zhyp(\varepsilon,t_0)$. Namely, we have
\begin{align*}
\Big|\int_s^t \frac{D_\tau\langle\xi\rangle_{b(\tau),g(\tau)}}{2\langle\xi\rangle_{b(\tau),g(\tau)}} d\tau\Big| \leq \int_s^t\Big| \frac{D_\tau\langle\xi\rangle_{b(\tau),g(\tau)}}{2\langle\xi\rangle_{b(\tau),g(\tau)}} \Big|d\tau = \Big|\frac{1}{2}\log\frac{\langle\xi\rangle_{b(t),g(t)}}{\langle\xi\rangle_{b(s),g(s)}} \Big|
\end{align*}
and $\langle\xi\rangle_{b(t),g(t)}\approx|\xi|$ is uniformly in $\Zhyp(\varepsilon,t_0)$. Moreover, if $b'(t)\geq0$ we have
\begin{align*}
\Big| \int_s^t\frac{b'(\tau)+g'(\tau)|\xi|^2}{4\langle\xi\rangle_{b(\tau),g(\tau)}} d\tau\Big| &\leq \int_s^t \Big| \frac{b'(\tau)+g'(\tau)|\xi|^2}{4|\xi|}\Big| d\tau \\
& = -\int_s^t \frac{b'(\tau)+g'(\tau)|\xi|^2}{4|\xi|} d\tau + 2\int_s^t\frac{b'(\tau)}{4|\xi|}d\tau \\
& \leq  \frac{1}{4|\xi|}\big( b(s)+g(s)|\xi|^2 \big) + \frac{1}{2|\xi|}b(t) \lesssim 1.
\end{align*}
On the other hand, if $b'(t)\leq0$ we find
\begin{align*}
\Big| \int_s^t\frac{b'(\tau)+g'(\tau)|\xi|^2}{4\langle\xi\rangle_{b(\tau),g(\tau)}} d\tau\Big| &\leq \int_s^t \Big| \frac{b'(\tau)+g'(\tau)|\xi|^2}{4|\xi|}\Big| d\tau \\
& = -\int_s^t \frac{b'(\tau)+g'(\tau)|\xi|^2}{4|\xi|} d\tau \leq  \frac{1}{4|\xi|}\big( b(s)+g(s)|\xi|^2 \big) \lesssim 1,
\end{align*}
where in both cases we used the definition of $\Zhyp(\varepsilon,t_0)$.

We can write $W^{(1)}(t,\xi) = E_{\text{hyp}}(t,s,\xi)W^{(1)}(s,\xi)$, where $E_{\text{hyp}}=E_{\text{hyp}}(t,s,\xi)$ is the fundamental solution of the system
\begin{align*}
D_tE_{\text{hyp}}(t,s,\xi) = \big( \mathcal{D}(t,\xi) + \mathcal{R}(t,\xi) \big)E_{\text{hyp}}(t,s,\xi), \quad E_{\text{hyp}}(s,s,\xi) = I,
\end{align*}
for all $t\geq s$ and $(s,\xi)\in\Zhyp(\varepsilon,t_0)$. We may immediately obtain that
\[ |E_{\text{hyp}}(t,s,\xi)| \leq C \quad \text{for all} \quad t\geq s \quad \text{and} \quad (t,\xi),\,(s,\xi)\in\Zhyp(\varepsilon,t_0). \]
Finally, we find the following estimate for the micro-energy $W^{(1)}(t,\xi)$ in the hyperbolic zone $\Zhyp(\varepsilon,t_0)$:
\[ |W^{(1)}(t,\xi)| \lesssim |W^{(1)}(s,\xi)|,  \qquad \bigg|\left( \begin{array}{cc}
\langle\xi\rangle_{b(t),g(t)}w(t,\xi) \\
D_tw(t,\xi)
\end{array} \right)\bigg|\lesssim \bigg|\left( \begin{array}{cc}
\langle\xi\rangle_{b(s),g(s)}w(s,\xi) &  \\
D_tw(s,\xi)
\end{array} \right)\bigg| \]
for all $t\geq t_0(\varepsilon)$ and $(t,\xi),(s,\xi)\in \Zhyp(\varepsilon,t_0)$. The backward transformation
\[ \hat{u}(t,\xi) = \exp\bigg( -\frac{1}{2}\int_0^t\big( b(\tau)+g(\tau)|\xi|^2 \big)d\tau \bigg)w(t,\xi), \]
and the equivalence $\langle\xi\rangle_{b(t),g(t)}\approx |\xi|$ gives us the desired estimates.
\end{proof}
\subsubsection{Considerations in the reduced zone $\Zred(N,\varepsilon)$} \label{Subsection_Effective_Integrable_Zred}
We write the equation \eqref{MainEquationFourier} in the following form:
\begin{equation} \label{Eq:Effective-Integrable-Dform-Zred}
D_t^2\hat{u} - |\xi|^2\hat{u} - i\big( b(t) + g(t)|\xi|^2 \big)D_t\hat{u} = 0.
\end{equation}
We define the micro-energy $U=(|\xi|\hat{u},D_t\hat{u})^\text{T}$. Then, Eq. \eqref{Eq:Effective-Integrable-Dform-Zred} leads to the system
\begin{equation*} \label{Eq:Effective-Integrable-System-Zred}
D_tU=\underbrace{\left( \begin{array}{cc}
0 & |\xi| \\
|\xi| & i\big( b(t) + g(t)|\xi|^2 \big)
\end{array} \right)}_{A(t,\xi)}U.
\end{equation*}
We can define for all $(t,\xi) \in \Zred(N,\varepsilon)$ the following energy of the solutions to \eqref{Eq:Effective-Integrable-Dform-Zred}:
\begin{equation*}
\mathcal{E}(t,\xi) := \frac{1}{2}\big( |\xi|^2|\hat{u}(t,\xi)|^2 + |\hat{u}_t(t,\xi)|^2 \big).
\end{equation*}
If we differentiate the energy $\mathcal{E}=\mathcal{E}(t,\xi)$ with respect to $t$ and use our equation \eqref{Eq:Effective-Integrable-Dform-Zred}, it follows
\[ \frac{d}{dt}\mathcal{E}(t,\xi) = -\big( b(t)+g(t)|\xi|^2 \big)|\hat{u}_t(t,\xi)|^2 \leq 0. \]
This means that $\mathcal{E}=\mathcal{E}(t,\xi)$ is monotonically decreasing in $t$. Therefore, we have $\mathcal{E}(t,\xi)\leq \mathcal{E}(s,\xi)$ for $t_{\text{ell}}\leq s\leq t\leq t_{\text{red}}$ and $(s,\xi),(t,\xi)\in \Zred(N,\varepsilon)$. Thus, we may write
\begin{align*}
|\xi||\hat{u}(t,\xi)| &\leq \sqrt{\mathcal{E}(s,\xi)} \leq |\xi||\hat{u}(s,\xi)| + |\hat{u}_t(s,\xi)|, \\
|\hat{u}_t(t,\xi)| &\leq \sqrt{\mathcal{E}(s,\xi)} \leq |\xi||\hat{u}(s,\xi)| + |\hat{u}_t(s,\xi)|.
\end{align*}
\begin{corollary} \label{Cor:Effective-Integrable-Zred}
The following estimates hold with $t_{\text{ell}}\leq s\leq t\leq t_{\text{red}}$ and $(s,\xi), (t,\xi) \in \Zred(N,\varepsilon)$:
\begin{align*}
|\xi|^{|\beta|}|\hat{u}(t,\xi)| &\lesssim |\xi|^{|\beta|}|\hat{u}(s,\xi)| + |\xi|^{|\beta|-1}|\hat{u}_t(s,\xi)| \quad \mbox{for} \quad |\beta| \geq 1, \\
|\xi|^{|\beta|}|\hat{u}_t(t,\xi)| &\lesssim |\xi|^{|\beta|+1}|\hat{u}(s,\xi)| + |\xi|^{|\beta|}|\hat{u}_t(s,\xi)| \quad \mbox{for} \quad |\beta| \geq 0.
\end{align*}
\end{corollary}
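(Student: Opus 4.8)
The plan is to read the two estimates off directly from the monotonicity of the hyperbolic-type energy that has just been introduced for the reduced zone. First I would recall that, with the micro-energy $U=(|\xi|\hat u, D_t\hat u)^{\mathrm T}$, the Fourier-side equation \eqref{Eq:Effective-Integrable-Dform-Zred} controls the energy functional $\mathcal E(t,\xi)=\tfrac12\big(|\xi|^2|\hat u(t,\xi)|^2+|\hat u_t(t,\xi)|^2\big)$, and that differentiating $\mathcal E$ along solutions yields $\tfrac{d}{dt}\mathcal E(t,\xi)=-\big(b(t)+g(t)|\xi|^2\big)|\hat u_t(t,\xi)|^2$. Since both coefficients are strictly positive by \textbf{(B1)} and \textbf{(E1)}, the right-hand side is non-positive, so $\mathcal E(\cdot,\xi)$ is monotonically non-increasing in $t$ for every fixed $\xi$.

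Next I would fix $\xi$ and take $t_{\text{ell}}\le s\le t\le t_{\text{red}}$ with $(s,\xi),(t,\xi)\in\Zred(N,\varepsilon)$; here one uses that the separating lines $t_{\text{ell}}=t_{\text{ell}}(|\xi|)$ and $t_{\text{red}}=t_{\text{red}}(|\xi|)$ are monotone in $|\xi|$ — a consequence of the sign analysis of $f_1'$ and $f_2'$ carried out above — so that the whole time-segment $[s,t]$ stays inside $\Zred(N,\varepsilon)$ and the monotonicity of $\mathcal E$ applies on it. From $\mathcal E(t,\xi)\le\mathcal E(s,\xi)$ together with the elementary bound $\sqrt{a^2+c^2}\le a+c$ for $a,c\ge 0$ I obtain
\[ |\xi||\hat u(t,\xi)| \le \sqrt{2\mathcal E(t,\xi)} \le \sqrt{2\mathcal E(s,\xi)} \le |\xi||\hat u(s,\xi)| + |\hat u_t(s,\xi)|, \]
and the identical chain with $|\hat u_t(t,\xi)|$ in place of $|\xi||\hat u(t,\xi)|$ on the far left.

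Finally I would multiply the first inequality by $|\xi|^{|\beta|-1}$ — admissible precisely when $|\beta|\ge 1$, so that no negative power of $|\xi|$ appears in the leading term — to get $|\xi|^{|\beta|}|\hat u(t,\xi)|\lesssim|\xi|^{|\beta|}|\hat u(s,\xi)|+|\xi|^{|\beta|-1}|\hat u_t(s,\xi)|$, and multiply the second inequality by $|\xi|^{|\beta|}$, valid for all $|\beta|\ge 0$, to get $|\xi|^{|\beta|}|\hat u_t(t,\xi)|\lesssim|\xi|^{|\beta|+1}|\hat u(s,\xi)|+|\xi|^{|\beta|}|\hat u_t(s,\xi)|$. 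This is exactly the assertion. There is essentially no analytic obstacle in this argument; the only point requiring a moment's care is the book-keeping that the reduced zone is swept out monotonically in time for fixed frequency, which is why the geometry of $\Pi_1$, $\Pi_2$ and the induced zone boundaries had to be settled beforehand. Note in particular that the decay and integrability of $g$ and $b$ play no role here — they enter only in the elliptic, hyperbolic and dissipative zones — so this corollary is the cheapest of the zonal estimates.
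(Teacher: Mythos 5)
Your proof is correct and follows the same route as the paper: differentiate the energy $\mathcal E(t,\xi)=\tfrac12(|\xi|^2|\hat u|^2+|\hat u_t|^2)$ along solutions of the Fourier-transformed equation, observe that $\tfrac{d}{dt}\mathcal E=-(b(t)+g(t)|\xi|^2)|\hat u_t|^2\le 0$, and then pass from the monotonicity $\mathcal E(t,\xi)\le\mathcal E(s,\xi)$ to the stated estimates by the elementary inequality $\sqrt{a^2+c^2}\le a+c$ and multiplication by the appropriate powers of $|\xi|$. The only cosmetic difference is that you track the factor $\sqrt 2$ explicitly, which the paper absorbs into $\lesssim$.
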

\subsubsection{Considerations in the elliptic zone $\Zell(N)$} \label{Subsection_Effective_Integrable_Zell}
In $\Zell(N)$ we have the following estimates for $\langle\xi\rangle_{b(t),g(t)}$ with $N>2$:
\begin{align} \label{Eq:Effective-Integrable-Zell-japan-above}
\langle\xi\rangle_{b(t),g(t)}^2 &= \Big( \frac{b(t)}{2}+\frac{g(t)|\xi|^2}{2} \Big)^2 - |\xi|^2 \leq \Big( \frac{b(t)}{2}+\frac{g(t)|\xi|^2}{2} \Big)^2, \nonumber \\
\langle\xi\rangle_{b(t),g(t)}^2 &= \Big( \frac{b(t)}{2}+\frac{g(t)|\xi|^2}{2} \Big)^2 - |\xi|^2 \geq \Big( 1-\frac{4}{N^2} \Big)\Big( \frac{b(t)}{2}+\frac{g(t)|\xi|^2}{2} \Big)^2,
\end{align}
where in the second inequality from the definition of $\Zell(N)$, that is, from $|\xi|\geq \frac{N}{2}\frac{1}{g(t)}\big( 1+\sqrt{1-\frac{4}{N^2}b(t)g(t)} \big)$, we used
\begin{align*}
\frac{2g(t)}{N}|\xi|-1 \geq \sqrt{1-\frac{4}{N^2}b(t)g(t)} \qquad \text{implies} \qquad -|\xi|^2 \geq -\frac{4}{N^2}\Big( \frac{g(t)|\xi|^2}{2} + \frac{b(t)}{2} \Big)^2.
\end{align*}
Therefore, we may conclude that
\begin{equation} \label{Eq:Effective-Integrable-Zell-japan}
\langle\xi\rangle_{b(t),g(t)} \approx \frac{b(t)}{2}+\frac{g(t)|\xi|^2}{2}.
\end{equation}
We introduce the following family of symbol classes in the elliptic zone $\Zell(N)$.
\begin{definition} \label{Def:Effective-integrable-symbol}
A function $f=f(t,\xi)$ belongs to the elliptic symbol class $S_{\text{ell}}^\ell\{m_1,m_2\}$ if it holds
\begin{equation*}
|D_t^kf(t,\xi)|\leq C_{k}\langle\xi\rangle_{b(t),g(t)}^{m_1}\Big( \frac{g'(t)|\xi|^2+b'(t)}{b(t)+g(t)|\xi|^2} \Big)^{m_2+k}
\end{equation*}
for all $(t,\xi)\in \Zell(N)$ and all $k\leq \ell$.
\end{definition}
Thus, we may conclude the following rules from the definition of the symbol classes.
\begin{proposition} \label{Prop:Effective-integrable-symbol}
The following statements are true:
\begin{itemize}
\item $S_{\text{ell}}^\ell\{m_1,m_2\}$ is a vector space for all nonnegative integers $\ell$;
\item $S_{\text{ell}}^\ell\{m_1,m_2\}\cdot S_{\text{ell}}^{\ell}\{m_1',m_2'\}\hookrightarrow S^{\ell}_{\text{ell}}\{m_1+m_1',m_2+m_2'\}$;
\item $D_t^kS_{\text{ell}}^\ell\{m_1,m_2\}\hookrightarrow S_{\text{ell}}^{\ell-k}\{m_1,m_2+k\}$
for all nonnegative integers $\ell$ with $k\leq \ell$;
\item $S_{\text{ell}}^{0}\{-1,2\}\hookrightarrow L_{\xi}^{\infty}L_t^1\big( \Zell(N) \big)$.
\end{itemize}
\end{proposition}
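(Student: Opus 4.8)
The plan is to argue exactly as in Proposition \ref{Prop:Effective-symbol}. The first three items are formal and require no use of the hypotheses on $b$ and $g$ beyond the mere form of Definition \ref{Def:Effective-integrable-symbol}: $S_{\text{ell}}^\ell\{m_1,m_2\}$ is a vector space because the defining estimate is stable under finite linear combinations; the product embedding follows from the Leibniz rule $D_t^k(fh)=\sum_{j=0}^k\binom{k}{j}(D_t^jf)(D_t^{k-j}h)$, since the factor $\langle\xi\rangle_{b(t),g(t)}^{m_1}$ multiplies and the factor $\big(\tfrac{g'|\xi|^2+b'}{b+g|\xi|^2}\big)^{m_2+j}$ combines additively in the exponent; and $D_t^kS_{\text{ell}}^\ell\{m_1,m_2\}\hookrightarrow S_{\text{ell}}^{\ell-k}\{m_1,m_2+k\}$ is simply the definition re-indexed. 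I would dispatch these in one or two lines and put all the work into the last item, the uniform integrability $S_{\text{ell}}^0\{-1,2\}\hookrightarrow L_\xi^\infty L_t^1(\Zell(N))$.

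For that item, take $f\in S_{\text{ell}}^0\{-1,2\}$. Combining Definition \ref{Def:Effective-integrable-symbol} with the elliptic equivalence \eqref{Eq:Effective-Integrable-Zell-japan}, i.e.\ $\langle\xi\rangle_{b(t),g(t)}\approx \tfrac{b(t)}{2}+\tfrac{g(t)|\xi|^2}{2}$, one gets in $\Zell(N)$
\[ |f(t,\xi)|\lesssim \frac{1}{b(t)+g(t)|\xi|^2}\Big(\frac{b'(t)+g'(t)|\xi|^2}{b(t)+g(t)|\xi|^2}\Big)^2=\frac{\big(b'(t)+g'(t)|\xi|^2\big)^2}{\big(b(t)+g(t)|\xi|^2\big)^3}. \]
Expanding the numerator and using $2|b'||g'||\xi|^2\le (b')^2+(g')^2|\xi|^4$, it suffices to bound, uniformly in $\xi$, the two integrals of $(b'(\tau))^2\big(b(\tau)+g(\tau)|\xi|^2\big)^{-3}$ and of $(g'(\tau))^2|\xi|^4\big(b(\tau)+g(\tau)|\xi|^2\big)^{-3}$ over the $t$-section of $\Zell(N)$ at frequency $|\xi|$, which by the monotonicity of the separating curve $t_{\text{ell}}$ (a consequence of \textbf{(E4)}, since $g'<0$ and $b'g+bg'\le 0$) is the interval $[0,t_{\text{ell}}(|\xi|)]$. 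Here I would also record that on $\Zell(N)$ one has $g(t)|\xi|\ge N/2$ and, since $g$ is decreasing, $|\xi|\ge \tfrac{N}{2g(0)}\gtrsim 1$.

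The integral involving $(g')^2$ is the easy one: using $(b+g|\xi|^2)^3\ge (g|\xi|^2)^3$ and $g(t)|\xi|\ge N/2$, its integrand is $\lesssim N^{-2}(g'(\tau))^2/g(\tau)$, which lies in $L^1_t$ by \textbf{(E5)}. The integral involving $(b')^2$ is the main obstacle: the crude estimate $(b')^2/b^3\lesssim b$ (via \textbf{(EF)}) is useless, since $b$ is not integrable in the effective regime, where $tb(t)\to\infty$. The resolution is to keep the viscoelastic contribution in the denominator: writing $(b+g|\xi|^2)^3\ge b\,(g|\xi|^2)^2$, then invoking \textbf{(B3)} as $(b')^2\lesssim b^2/(1+t)^2$, \textbf{(E3)} as $b\le \tfrac{1}{2g}$, and finally $g(t)|\xi|\ge N/2$ together with $|\xi|\gtrsim 1$, one is left with an integrand $\lesssim (1+t)^{-2}$, which is integrable on $[0,\infty)$. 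Adding the two bounds yields $\int_{\{\tau:(\tau,\xi)\in\Zell(N)\}}|f(\tau,\xi)|\,d\tau\lesssim 1$ uniformly in $\xi$, which is the assertion. The only delicate bookkeeping is deciding, term by term, which lower bound on $b(t)+g(t)|\xi|^2$ to use so that the non-integrable friction decay is traded for the integrable $(1+t)^{-2}$ decay from \textbf{(B3)} (and the integrable $(g')^2/g$ from \textbf{(E5)}).
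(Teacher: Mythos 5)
Your proposal is correct and follows essentially the same route as the paper: the first three items are formal, and for the last item both you and the paper reduce to bounding $(b'+g'|\xi|^2)^2/(b+g|\xi|^2)^3$ over $[0,t_{\text{ell}}]$ using the same ingredients, namely \textbf{(B3)}, \textbf{(E3)}, \textbf{(E5)} and the zone inequality $g(t)|\xi|\geq N/2$. The only cosmetic differences are that you absorb the cross term via $2|b'||g'||\xi|^2\leq (b')^2+(g')^2|\xi|^4$ instead of estimating it separately (the paper bounds it by $|g'(\tau)|/(1+\tau)$ and uses $|g'|\in L^1$ from \textbf{(G1)}/\textbf{(E1)}), and for the $(b')^2$ piece you finish with the frequency lower bound $|\xi|\gtrsim 1$ to land on $(1+t)^{-2}$, whereas the paper lands on $g(t)/(1+t)^2$ and invokes $g\in L^1$; both are integrable. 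Your note that the naive bound $(b')^2/b^3\lesssim b$ fails in the effective regime, and that the viscoelastic contribution in the denominator must be retained, is exactly the right intuition behind this step.
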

\begin{proof}
Let us verify the last statement. Indeed, if $f=f(t,\xi)\in S_{\text{ell}}^{0}\{-1,2\}$, then using the estimates
\begin{align*}
&\frac{1}{\langle\xi\rangle_{b(t),g(t)}}\Big( \frac{b'(t)+g'(t)|\xi|^2}{b(t)+g(t)|\xi|^2} \Big)^2 \lesssim \Big| \frac{(b'(t)+g'(t)|\xi|^2)^2}{( b(t)+g(t)|\xi|^2)^3} \Big| \qquad \qquad \,\,\, \small{\text{(we used \eqref{Eq:Effective-Integrable-Zell-japan})}} \\
& \qquad \lesssim \frac{b'(t)^2+|b'(t)|\,|g'(t)|\,|\xi|^2+g'(t)^2|\xi|^4}{( b(t)+g(t)|\xi|^2)^3}  \\
& \qquad \lesssim \frac{b'(t)^2}{b(t)g^2(t)|\xi|^4} + \frac{|b'(t)|\,|g'(t)|\,|\xi|^2}{b(t)g(t)^2|\xi|^4} + \frac{g'(t)^2|\xi|^4}{g^3(t)|\xi|^6} \\
& \qquad \lesssim \frac{b(t)}{(1+t)^2g^2(t)|\xi|^4} + \frac{1}{1+t}\frac{|g'(t)|}{g^2(t)|\xi|^2} + \frac{g'(t)^2}{g(t)}\frac{1}{g^2(t)|\xi|^2}  \qquad \quad \small{\text{(we used condition \textbf{(B3)} for\,\,$b'(t)$)}} \\
& \qquad = \frac{b(t)g^2(t)}{(1+t)^2g^4(t)|\xi|^4} + \frac{|g'(t)|}{1+t}\frac{1}{g^2(t)|\xi|^2} + \frac{g'(t)^2}{g(t)}\frac{1}{g^2(t)|\xi|^2} \\
& \qquad \leq \frac{8}{N^4}\frac{g(t)}{(1+t)^2} + \frac{4}{N^2}\frac{|g'(t)|}{1+t} + \frac{4}{N^2}\frac{g'(t)^2}{g(t)}\quad \small{\text{(we used $b(t)g(t)\leq1/2$ from \textbf{(E3)} and $g(t)|\xi|\geq \frac{N}{2}$ from $\Zell(N)$)}},
\end{align*}
we obtain
\begin{align*}
\int_{0}^{t_{\text{ell}}}|f(\tau,\xi)|d\tau  & \lesssim  \int_{0}^{t_{\text{ell}}}\frac{g(\tau)}{(1+\tau)^2}d\tau + \int_{0}^{t_{\text{ell}}}\frac{-g'(\tau)}{1+\tau}d\tau + \int_{0}^{t_{\text{ell}}}\frac{g'(\tau)^2}{g(\tau)}d\tau \lesssim 1,
\end{align*}
where since $g\in L^1([0,\infty))$, the first and second integrals are also uniformly bounded. For the third integral we use condition \textbf{(E5)}.
\end{proof}
We consider the micro-energy
\[ W(t,\xi) := \big(  \langle\xi\rangle_{b(t),g(t)}w,D_tw \big)^{\text{T}}. \]
Then, from \eqref{Eq:Effective-Integrable-Dfrom} it holds
\begin{equation} \label{Eq:Effective-integrable-system-Zell}
D_tW=\left( \begin{array}{cc}
0 & \langle\xi\rangle_{b(t),g(t)} \\
-\langle\xi\rangle_{b(t),g(t)} & 0
\end{array} \right)W + \left( \begin{array}{cc}
\dfrac{D_t\langle\xi\rangle_{b(t),g(t)}}{\langle\xi\rangle_{b(t),g(t)}} & 0 \\
-\dfrac{b'(t)+g'(t)|\xi|^2}{2\langle\xi\rangle_{b(t),g(t)}} & 0
\end{array} \right)W.
\end{equation}
In the first step we use the diagonalizer of the first matrix as follows:
\[ M = \left( \begin{array}{cc}
i & -i \\
1 & 1
\end{array} \right), \qquad M^{-1}=\frac{1}{2}\left( \begin{array}{cc}
-i & 1 \\
i & 1
\end{array} \right). \]
Then, defining $W^{(0)}:=M^{-1}W$ we get the system
\begin{equation*}
D_tW^{(0)}=\big( \mathcal{D}(t,\xi)+\mathcal{R}(t,\xi) \big)W^{(0)},
\end{equation*}
where
\begin{align*}
\mathcal{D}(t,\xi) &= \left( \begin{array}{cc}
-i\langle\xi\rangle_{b(t),g(t)} & 0 \\
0 & i\langle\xi\rangle_{b(t),g(t)}
\end{array} \right)\in S_{\text{ell}}^{2}\{1,0\}, \\
\mathcal{R}(t,\xi) &= \frac{1}{2} \left( \begin{array}{cc}
\dfrac{D_t\langle\xi\rangle_{b(t),g(t)}}{\langle\xi\rangle_{b(t),g(t)}}-i\dfrac{b'(t)+g'(t)|\xi|^2}{2\langle\xi\rangle_{b(t),g(t)}} & -\dfrac{D_t\langle\xi\rangle_{b(t),g(t)}}{\langle\xi\rangle_{b(t),g(t)}}+i\dfrac{b'(t)+g'(t)|\xi|^2}{2\langle\xi\rangle_{b(t),g(t)}} \\
-\dfrac{D_t\langle\xi\rangle_{b(t),g(t)}}{\langle\xi\rangle_{b(t),g(t)}}-i\dfrac{b'(t)+g'(t)|\xi|^2}{2\langle\xi\rangle_{b(t),g(t)}} & \dfrac{D_t\langle\xi\rangle_{b(t),g(t)}}{\langle\xi\rangle_{b(t),g(t)}}+i\dfrac{b'(t)+g'(t)|\xi|^2}{2\langle\xi\rangle_{b(t),g(t)}}
\end{array} \right)\in S_{\text{ell}}^{1}\{0,1\}.
\end{align*}
We define $F_0(t,\xi):=\diag\mathcal{R}(t,\xi)$. The difference of the diagonal entries of the matrix $\mathcal{D}(t,\xi)+F_0(t,\xi)$ is
\begin{align*}
2\langle\xi\rangle_{b(t),g(t)} + \frac{b'(t)+g'(t)|\xi|^2}{2\langle\xi\rangle_{b(t),g(t)}} &= \frac{4\langle\xi\rangle_{b(t),g(t)}^2+b'(t)+g'(t)|\xi|^2}{2\langle\xi\rangle_{b(t),g(t)}} \\
& = \frac{4\big( \frac{b(t)}{2}+\frac{g(t)|\xi|^2}{2} \big)^2-4|\xi|^2+b'(t)+g'(t)|\xi|^2}{2\langle\xi\rangle_{b(t),g(t)}} \\
& \leq \frac{4\big( \frac{b(t)}{2}+\frac{g(t)|\xi|^2}{2} \big)^2-4|\xi|^2}{2\langle\xi\rangle_{b(t),g(t)}}= \frac{4\langle\xi\rangle_{b(t),g(t)}^2}{2\langle\xi\rangle_{b(t),g(t)}} \leq 2\langle\xi\rangle_{b(t),g(t)} =:i\delta(t,\xi),
\end{align*}
where $|b'(t)|=o(b^2(t))$ and $g'(t)<0$.\\

We introduce a matrix $N^{(1)}=N^{(1)}(t,\xi)$ such that
\begin{align*}
N^{(1)}(t,\xi) &= \left( \begin{array}{cc}
0 & -\dfrac{\mathcal{R}_{12}}{\delta(t,\xi)} \\
\dfrac{\mathcal{R}_{21}}{\delta(t,\xi)} & 0
\end{array} \right) \\
&= \left( \begin{array}{cc}
0 & i\dfrac{D_t\langle\xi\rangle_{b(t),g(t)}}{4\langle\xi\rangle_{b(t),g(t)}^2}-\dfrac{b'(t)+g'(t)|\xi|^2}{8\langle\xi\rangle_{b(t),g(t)}^2} \\
i\dfrac{D_t\langle\xi\rangle_{b(t),g(t)}}{4\langle\xi\rangle_{b(t),g(t)}^2}+\dfrac{b'(t)+g'(t)|\xi|^2}{8\langle\xi\rangle_{b(t),g(t)}^2} & 0
\end{array} \right),
\end{align*}
where $N^{(1)}(t,\xi)\in S_{\text{ell}}^{1}\{1,1\}$ and $N_1(t,\xi)=I+N^{(1)}(t,\xi)= S_{\text{ell}}^{1}\{0,0\}$. For a sufficiently large zone constant $N$ and all $t\geq t_{\text{diss}}$ the matrix $N_1=N_1(t,\xi)$ is invertible with uniformly bounded inverse matrix $N_1^{-1}=N_1^{-1}(t,\xi)$. Indeed, using \eqref{Eq:Effective-Integrable-derv-japan}, it holds
\begin{align*}
& \Big|\dfrac{D_t\langle\xi\rangle_{b(t),g(t)}}{4\langle\xi\rangle_{b(t),g(t)}^2} + \dfrac{b'(t)+g'(t)|\xi|^2}{8\langle\xi\rangle_{b(t),g(t)}^2} \Big| \leq \dfrac{\big( \frac{b(t)}{2}+\frac{g(t)|\xi|^2}{2} \big)\big( \frac{|b'(t)|}{2}+\frac{|g'(t)|\,|\xi|^2}{2} \big)}{4\langle\xi\rangle_{b(t),g(t)}^3} + \dfrac{|b'(t)|+|g'(t)|\,|\xi|^2}{8\langle\xi\rangle_{b(t),g(t)}^2} \\
& \qquad \quad \leq \frac{1}{\sqrt{1-\frac{4}{N^2}}}\dfrac{|b'(t)|+|g'(t)|\,|\xi|^2}{8\langle\xi\rangle_{b(t),g(t)}^2} + \dfrac{|b'(t)|+|g'(t)|\,|\xi|^2}{16\langle\xi\rangle_{b(t),g(t)}^2} \\
& \qquad \quad \leq \frac{1}{2}\frac{1}{\sqrt{1-\frac{4}{N^2}}}\frac{1}{1-\frac{4}{N^2}} \dfrac{|b'(t)|+|g'(t)|\,|\xi|^2}{\big( b(t)+g(t)|\xi|^2 \big)^2} + \frac{1}{4\big( 1-\frac{4}{N^2} \big)} \dfrac{|b'(t)|+|g'(t)|\,|\xi|^2}{\big( b(t)+g(t)|\xi|^2 \big)^2} \\
& \qquad \quad \leq \frac{1}{2}\bigg( \frac{1}{\big( 1-\frac{4}{N^2} \big)^{\frac{3}{2}}} + \frac{1}{1-\frac{4}{N^2}} \bigg)\Big( \frac{|b'(t)|}{b^2(t)} + \frac{|g'(t)|\xi|^2}{g^2(t)||\xi|^4}  \Big) \\
& \qquad \quad \leq \frac{1}{2} \bigg( \frac{1}{\big( 1-\frac{4}{N^2} \big)^{\frac{3}{2}}} + \frac{1}{1-\frac{4}{N^2}} \bigg)\Big( a + \frac{|g'(t)|}{g^2(t)||\xi|^2} \Big) \\
& \qquad \quad \leq \frac{1}{2}\bigg( \frac{1}{\big( 1-\frac{4}{N^2} \big)^{\frac{3}{2}}} + \frac{1}{1-\frac{4}{N^2}} \bigg)\Big( a + \frac{4}{N^2}(-g'(t)) \Big) < 1,
\end{align*}
where for the estimate of $\langle\xi\rangle_{b(t),g(t)}$ we used \eqref{Eq:Effective-Integrable-Zell-japan-above}, $|b'(t)|\leq ab^2(t)$ due to condition \textbf{(EF)} and, $g(t)|\xi|\geq \frac{N}{2}$ due to the definition of $\Zell(N)$.\\
Let
\begin{align*}
B^{(1)}(t,\xi) &= D_tN^{(1)}(t,\xi)-( \mathcal{R}(t,\xi)-F_0(t,\xi))N^{(1)}(t,\xi), \\
\mathcal{R}_1(t,\xi) &= -N_1^{-1}(t,\xi)B^{(1)}(t,\xi)\in S_{\text{ell}}^0\{-1,2\}.
\end{align*}
Then, we have the following operator identity:
\begin{equation*}
( D_t-\mathcal{D}(t,\xi)-\mathcal{R}(t,\xi))N_1(t,\xi)=N_1(t,\xi)( D_t-\mathcal{D}(t,\xi)-F_0(t)-\mathcal{R}_1(t,\xi)).
\end{equation*}
\begin{proposition} \label{Prop_Effective-Integrable_EstZell}
The fundamental solution $E_{\text{ell}}^{W}=E_{\text{ell}}^{W}(t,s,\xi)$ to the transformed operator
\[ D_t-\mathcal{D}(t,\xi)-F_0(t,\xi)-\mathcal{R}_1(t,\xi) \]
can be estimated by
\begin{equation*}
\big( |E_{\text{ell}}^{W}(t,s,\xi)| \big) \lesssim \frac{b(t)+g(t)|\xi|^2}{b(s)+g(s)|\xi|^2}\exp\bigg( \frac{1}{2}\int_{s}^{t}\big( g(\tau)|\xi|^2 +b(\tau) \big)d\tau \bigg)
\left( \begin{array}{cc}
1 & 1 \\
1 & 1
\end{array} \right),
\end{equation*}
with $(t,\xi),(s,\xi)\in \Zell(N)$, $t_0\leq s\leq t\leq t_{\text{ell}}$.
\end{proposition}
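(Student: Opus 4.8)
The plan is to repeat, verbatim in structure, the elliptic WKB argument already used for Propositions \ref{Prop_Scattering_EllZone}, \ref{Prop:Noneffective-Integrable-Estimates-Zell} and \ref{Prop_Effective_Increasing_Est_Zell}: transform the system for $E_{\text{ell}}^{W}$ into a Volterra integral equation, extract a scalar weight $\beta=\beta(t,\xi)$ from the dominant diagonal entry of $i\mathcal{D}(t,\xi)+iF_0(t,\xi)$, and solve the residual equation by a Neumann series whose convergence is guaranteed by the uniform integrability of $\mathcal{R}_1\in S_{\text{ell}}^{0}\{-1,2\}$ over $\Zell(N)$ (last item of Proposition \ref{Prop:Effective-integrable-symbol}).

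Concretely, I would first differentiate $\exp\{-i\int_s^t(\mathcal{D}(\tau,\xi)+F_0(\tau,\xi))d\tau\}E_{\text{ell}}^{W}(t,s,\xi)$ and integrate over $[s,t]$ to obtain the integral equation for $E_{\text{ell}}^{W}$, then set $\mathcal{Q}_{\text{ell}}(t,s,\xi):=\exp\{-\int_s^t\beta(\tau,\xi)d\tau\}E_{\text{ell}}^{W}(t,s,\xi)$. The two main diagonal entries of $i\mathcal{D}(t,\xi)+iF_0(t,\xi)$ are $(I)=\langle\xi\rangle_{b(t),g(t)}+\frac{\partial_t\langle\xi\rangle_{b(t),g(t)}}{2\langle\xi\rangle_{b(t),g(t)}}+\frac{b'(t)+g'(t)|\xi|^2}{4\langle\xi\rangle_{b(t),g(t)}}$ and $(II)$ with the opposite sign in front of $\langle\xi\rangle_{b(t),g(t)}$ and of the last summand; since $\langle\xi\rangle_{b(t),g(t)}\approx\frac{b(t)+g(t)|\xi|^2}{2}$ on $\Zell(N)$ by \eqref{Eq:Effective-Integrable-Zell-japan}, the difference $i\delta(t,\xi)$ shows that $(I)$ is dominant for $t\geq t_0$, so I choose $\beta(t,\xi)=(I)$. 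With this choice $i\mathcal{D}+iF_0-\beta I=\diag\bigl(0,-2\langle\xi\rangle_{b(t),g(t)}-\tfrac{b'(t)+g'(t)|\xi|^2}{2\langle\xi\rangle_{b(t),g(t)}}\bigr)$, so $H(t,s,\xi)\to\diag(1,0)$ as $t\to\infty$ and is uniformly bounded on $\Zell(N)$; the Neumann series for $\mathcal{Q}_{\text{ell}}$ then converges and $\mathcal{Q}_{\text{ell}}$ is a uniformly bounded matrix, hence $E_{\text{ell}}^{W}(t,s,\xi)=\exp\{\int_s^t\beta(\tau,\xi)d\tau\}\mathcal{Q}_{\text{ell}}(t,s,\xi)$.

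It then remains to bound $\int_s^t\beta(\tau,\xi)d\tau$. For the leading term I would use the one-sided bound $\langle\xi\rangle_{b(t),g(t)}\leq\frac{b(t)}{2}+\frac{g(t)|\xi|^2}{2}$ from \eqref{Eq:Effective-Integrable-Zell-japan-above}, which produces exactly $\exp\bigl(\frac12\int_s^t(g(\tau)|\xi|^2+b(\tau))d\tau\bigr)$. For the second term $\int_s^t\frac{\partial_\tau\langle\xi\rangle_{b(\tau),g(\tau)}}{2\langle\xi\rangle_{b(\tau),g(\tau)}}d\tau=\frac12\log\frac{\langle\xi\rangle_{b(t),g(t)}}{\langle\xi\rangle_{b(s),g(s)}}$, which by \eqref{Eq:Effective-Integrable-Zell-japan} contributes a factor $\approx\bigl(\frac{b(t)+g(t)|\xi|^2}{b(s)+g(s)|\xi|^2}\bigr)^{1/2}$; for the third term, again using $\langle\xi\rangle_{b(t),g(t)}\approx\frac{b(t)+g(t)|\xi|^2}{2}$ the integrand is comparable to $\frac12\partial_\tau\log\bigl(b(\tau)+g(\tau)|\xi|^2\bigr)$ (the signs being controlled via $g'(t)<0$ from \textbf{(E1)} and $|b'(t)|=o(b^2(t))$ from \textbf{(EF)}), giving another factor $\bigl(\frac{b(t)+g(t)|\xi|^2}{b(s)+g(s)|\xi|^2}\bigr)^{1/2}$. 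Multiplying the three contributions and replacing the uniformly bounded matrix $\mathcal{Q}_{\text{ell}}$ by the all-ones matrix up to a constant yields the claimed estimate.

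The main obstacle I expect is the bookkeeping in the last step: identifying the precise power of $\tfrac{b(t)+g(t)|\xi|^2}{b(s)+g(s)|\xi|^2}$ coming from the $F_0$-contributions to $\beta$, which forces careful use of \eqref{Eq:Effective-Integrable-derv-japan} together with conditions \textbf{(B3)}, \textbf{(EF)}, \textbf{(E4)} and \textbf{(E5)} — exactly the conditions that both control $\partial_t\langle\xi\rangle_{b(t),g(t)}$ and make $\mathcal{R}_1$ uniformly integrable — and in checking that the dominance $(I)\gtrsim(II)$, the invertibility of $N_1=I+N^{(1)}$, and the limit of $H$ all hold uniformly on the elliptic zone for $t\geq t_0$, where $b(t)g(t)$ is small. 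Everything else is a direct transcription of the diagonalization already performed before the statement.
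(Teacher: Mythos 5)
Your proposal reproduces the paper's argument essentially verbatim: the same Volterra integral equation, the same choice of weight $\beta=(I)$, the same uniform boundedness of $H$, the same Neumann series driven by $\mathcal{R}_1\in S_{\text{ell}}^{0}\{-1,2\}$, and the same bookkeeping of $\int_s^t\beta$ into three pieces whose combination (via $\langle\xi\rangle_{b,g}\approx\tfrac{1}{2}(b+g|\xi|^2)$ and the one-sided bound from \eqref{Eq:Effective-Integrable-Zell-japan-above}) yields the prefactor and the exponential. The only small imprecision is calling the third $F_0$-contribution \emph{comparable} to $\tfrac{1}{2}\partial_\tau\log(b+g|\xi|^2)$; the paper only needs, and only proves, the one-sided bound $\tfrac{b'+g'|\xi|^2}{4\langle\xi\rangle_{b,g}}\leq \tfrac{b'+g'|\xi|^2}{2(b+g|\xi|^2)}$, valid because the numerator is nonpositive in the regime under consideration, and an upper bound is all that is required.
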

\begin{proof}
We transform the system for $E_{\text{ell}}^{W}=E_{\text{ell}}^{W}(t,s,\xi)$ to an integral equation for a new matrix-valued function $\mathcal{Q}_{\text{ell}}=\mathcal{Q}_{\text{ell}}(t,s,\xi)$. If we differentiate the term
\[ \exp \bigg\{ -i\int_{s}^{t}\big( \mathcal{D}(\tau,\xi)+F_0(\tau,\xi) \big)d\tau \bigg\}E_{\text{ell}}^{W}(t,s,\xi) \]
and, then integrate on $[s,t]$, we find that $E_{\text{ell}}^{W}=E_{\text{ell}}^{W}(t,s,\xi)$ satisfies the following integral equation:
\begin{align*}
E_{\text{ell}}^{W}(t,s,\xi) & = \exp\bigg\{ i\int_{s}^{t}\big( \mathcal{D}(\tau,\xi)+F_0(\tau,\xi) \big)d\tau \bigg\}E_{\text{ell}}^{W}(s,s,\xi)\\
& \quad + i\int_{s}^{t} \exp \bigg\{ i\int_{\theta}^{t}\big( \mathcal{D}(\tau,\xi)+F_0(\tau,\xi) \big)d\tau \bigg\}\mathcal{R}_1(\theta,\xi)E_{\text{ell}}^{W}(\theta,s,\xi)\,d\theta.
\end{align*}
We define
\[ \mathcal{Q}_{\text{ell}}(t,s,\xi)=\exp\bigg\{ -\int_{s}^{t}\beta(\tau,\xi)d\tau \bigg\} E_{\text{ell}}^{W}(t,s,\xi), \]
with a suitable $\beta=\beta(t,\xi)$ which will be fixed later. Then, this new term satisfies the new integral equation
\begin{align*}
\mathcal{Q}_{\text{ell}}(t,s,\xi)=&\exp \bigg\{ \int_{s}^{t}\big( i\mathcal{D}(\tau,\xi)+iF_0(\tau,\xi)-\beta(\tau,\xi)I \big)d\tau \bigg\}\\
& \quad + \int_{s}^{t} \exp \bigg\{ \int_{\theta}^{t}\big( i\mathcal{D}(\tau,\xi)+iF_0(\tau,\xi)-\beta(\tau,\xi)I \big)d\tau \bigg\}\mathcal{R}_1(\theta,\xi)\mathcal{Q}_{\text{ell}}(\theta,s,\xi)\,d\theta.
\end{align*}
The function $\mathcal{R}_1=\mathcal{R}_1(\theta,\xi)$ is uniformly integrable over the elliptic zone because of Proposition \ref{Prop:Effective-integrable-symbol}. Hence, if the exponential term is bounded, then the solution $\mathcal{Q}_{\text{ell}}=\mathcal{Q}_{\text{ell}}(t,s,\xi)$ of the integral equation is uniformly bounded over the elliptic zone for a suitable weight $\beta=\beta(t,\xi)$.

The main entries of the diagonal matrix $i\mathcal{D}(t,\xi)+iF_0(t,\xi)$ are given by
\begin{align*}
(I) &= \langle\xi\rangle_{b(t),g(t)} + \dfrac{D_t\langle\xi\rangle_{b(t),g(t)}}{2\langle\xi\rangle_{b(t),g(t)}}+\dfrac{b'(t)+g'(t)|\xi|^2}{4\langle\xi\rangle_{b(t),g(t)}},\\
(II) &= -\langle\xi\rangle_{b(t),g(t)} + \dfrac{D_t\langle\xi\rangle_{b(t),g(t)}}{2\langle\xi\rangle_{b(t),g(t)}}-\dfrac{b'(t)+g'(t)|\xi|^2}{4\langle\xi\rangle_{b(t),g(t)}}.
\end{align*}
We may see that the term $(I)$ is dominant in $\Zell(N)$ with respect to $(II)$ for $t\geq t_{0}$. Therefore, we choose the weight $\beta=\beta(t,\xi)=(I)$. By this choice, we get
\[ i\mathcal{D}(\tau,\xi)+iF_0(\tau,\xi)-\beta(\tau,\xi)I = \left( \begin{array}{cc}
0 & 0 \\
0 & -2\langle\xi\rangle_{b(t),g(t)}-\dfrac{b'(t)+g'(t)|\xi|^2}{2\langle\xi\rangle_{b(t),g(t)}}
\end{array} \right). \]
It follows
\begin{align*}
H(t,s,\xi) & =\exp \bigg\{ \int_{s}^{t}\big( i\mathcal{D}(\tau,\xi)+iF_0(\tau,\xi)-\beta(\tau,\xi)I \big)d\tau \bigg\}\\
& = \diag \bigg( 1, \exp \bigg\{ \int_{s}^{t}\Big( -2\langle\xi\rangle_{b(t),g(t)}-\dfrac{b'(\tau)+g'(\tau)|\xi|^2}{2\langle\xi\rangle_{b(\tau),g(\tau)}} \Big)d\tau \bigg\} \bigg)\rightarrow \left( \begin{array}{cc}
1 & 0 \\
0 & 0
\end{array} \right)
\end{align*}
as $t\rightarrow \infty$ for any fixed $0\leq s\leq t\leq t_{\text{ell}}$. Hence, the matrix $H=H(t,s,\xi)$ is uniformly bounded for $(s,\xi),(t,\xi)\in \Zell(N)$. So, the representation of $\mathcal{Q}_{\text{ell}}=\mathcal{Q}_{\text{ell}}(t,s,\xi)$ by a Neumann series gives
\begin{align*}
\mathcal{Q}_{\text{ell}}(t,s,\xi)=H(t,s,\xi)+\sum_{k=1}^{\infty}i^k\int_{s}^{t}H(t,t_1,\xi)\mathcal{R}_1(t_1,\xi)&\int_{s}^{t_1}H(t_1,t_2,\xi)\mathcal{R}_1(t_2,\xi) \\
& \cdots \int_{s}^{t_{k-1}}H(t_{k-1},t_k,\xi)\mathcal{R}_1(t_k,\xi)dt_k\cdots dt_2dt_1.
\end{align*}
Then, this series is convergent, since $\mathcal{R}_1=\mathcal{R}_1(t,\xi)$ is uniformly integrable over $\Zell(N)$ due to the last item of Proposition \ref{Prop:Effective-integrable-symbol}. Hence, from the last considerations we may conclude
\begin{align*}
E_{\text{ell}}^{W}(t,s,\xi)&=\exp \bigg\{ \int_{s}^{t}\beta(\tau,\xi)d\tau \bigg\}\mathcal{Q}_{\text{ell}}(t,s,\xi)\\
& = \exp \bigg\{ \int_{s}^{t}\bigg( \langle\xi\rangle_{b(\tau),g(\tau)}+\frac{D_\tau\langle\xi\rangle_{b(\tau),g(\tau)}}{2\langle\xi\rangle_{b(\tau),g(\tau)}}+\frac{( b'(\tau)+g'(\tau)|\xi|^2}{4\langle\xi\rangle_{b(\tau),g(\tau)}} \bigg)d\tau \bigg\}\mathcal{Q}_{\text{ell}}(t,s,\xi) \\
& \leq \exp \bigg\{ \int_{s}^{t}\bigg( \langle\xi\rangle_{b(\tau),g(\tau)}+\frac{D_\tau\langle\xi\rangle_{b(\tau),g(\tau)}}{2\langle\xi\rangle_{b(\tau),g(\tau)}}+\frac{( b'(\tau)+g'(\tau)|\xi|^2}{2\big( b(\tau)+g(\tau)|\xi|^2 \big)} \bigg)d\tau \bigg\}\mathcal{Q}_{\text{ell}}(t,s,\xi) \\
& = \Big( \frac{\langle\xi\rangle_{b(t),g(t)}}{\langle\xi\rangle_{b(s),g(s)}} \Big)^{\frac{1}{2}}\Big( \frac{b(t)+g(t)|\xi|^2}{b(s)+g(s)|\xi|^2} \Big)^{\frac{1}{2}} \exp \bigg\{ \int_{s}^{t}\langle\xi\rangle_{b(\tau),g(\tau)}d\tau \bigg\}\mathcal{Q}_{\text{ell}}(t,s,\xi),
\end{align*}
where $\mathcal{Q}_{\text{ell}}=\mathcal{Q}_{\text{ell}}(t,s,\xi)$ is a uniformly bounded matrix. Then, it follows
\begin{align*}
(|E_{\text{ell}}^{W}(t,s,\xi)|) & \lesssim \frac{b(t)+g(t)|\xi|^2}{b(s)+g(s)|\xi|^2}\exp \bigg\{ \int_{s}^{t}\langle\xi\rangle_{b(\tau),g(\tau)}d\tau \bigg\}|\mathcal{Q}_{\text{ell}}(t,s,\xi)| \left( \begin{array}{cc}
1 & 1 \\
1 & 1
\end{array} \right) \\
& \lesssim \frac{b(t)+g(t)|\xi|^2}{b(s)+g(s)|\xi|^2}\exp \bigg\{ \int_{s}^{t}\langle\xi\rangle_{b(\tau),g(\tau)}d\tau \bigg\}\left( \begin{array}{cc}
1 & 1 \\
1 & 1
\end{array} \right).
\end{align*}
This completes the proof.
\end{proof}
\textbf{Transforming back to the original Cauchy problem.} Now we want to obtain an estimate for the energy of the solution to our original Cauchy problem. For this reason we need to transform back to get an estimate of the fundamental solution $E_{\text{ell}}=E_{\text{ell}}(t,s,\xi)$ which is related to a system of first order for the micro-energy $(|\xi|\hat{u},D_t\hat{u}).$
\begin{lemma} \label{Lemma_Effective-Integrable_Transf-back}
Under the conditions \textbf{(B1)} to \textbf{(B3)} the following holds:
\begin{itemize}
\item [1.] in the elliptic zone it holds $\langle\xi\rangle_{b(t),g(t)}-\dfrac{b(t)+g(t)|\xi|^2}{2} \leq -\dfrac{|\xi|^2}{b(t)+g(t)|\xi|^2}$.
\item [2.] $\dfrac{\lambda(s,\xi)}{\lambda(t,\xi)}\exp \Big( \displaystyle \int_{s}^{t}\langle\xi\rangle_{b(\tau),g(\tau)}d\tau \Big)\leq \exp
\Big( -|\xi|^2\displaystyle \int_{s}^{t}\dfrac{1}{b(\tau)+g(\tau)|\xi|^2}d\tau \Big)$,\\
where $\lambda=\lambda(t,\xi)=\exp\Big( \dfrac{1}{2}\displaystyle\int_{0}^{t}\big( b(\tau)+g(\tau)|\xi|^2 \big)d\tau \Big)$.
\end{itemize}	
\end{lemma}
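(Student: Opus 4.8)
The plan is to handle the two items in order, since item 2 is an immediate consequence of item 1. For item 1, recall that in the elliptic zone $\Zell(N)$ we have $|\xi| < \frac{b(t)}{2}+\frac{g(t)|\xi|^2}{2}$, so the auxiliary symbol reduces to $\langle\xi\rangle_{b(t),g(t)}^2 = \big(\frac{b(t)}{2}+\frac{g(t)|\xi|^2}{2}\big)^2 - |\xi|^2$ (this is exactly \eqref{Eq:Effective-Integrable-Zell-japan-above}). Abbreviating $A=A(t,\xi):=\frac{b(t)+g(t)|\xi|^2}{2}$, the claim becomes $\sqrt{A^2-|\xi|^2}-A\leq -\frac{|\xi|^2}{2A}$. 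First I would rationalize the left-hand side:
\[ \sqrt{A^2-|\xi|^2}-A=\frac{(A^2-|\xi|^2)-A^2}{\sqrt{A^2-|\xi|^2}+A}=\frac{-|\xi|^2}{\sqrt{A^2-|\xi|^2}+A}. \]
Then, since $|\xi|^2\geq0$ forces $\sqrt{A^2-|\xi|^2}\leq A$, hence $0<\sqrt{A^2-|\xi|^2}+A\leq 2A$, dividing the negative numerator $-|\xi|^2$ by the smaller positive denominator yields $\frac{-|\xi|^2}{\sqrt{A^2-|\xi|^2}+A}\leq\frac{-|\xi|^2}{2A}=-\frac{|\xi|^2}{b(t)+g(t)|\xi|^2}$, which is item 1.

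For item 2, I would rewrite the prefactor with the definition $\lambda(t,\xi)=\exp\big(\frac{1}{2}\int_0^t(b(\tau)+g(\tau)|\xi|^2)d\tau\big)$, so that $\frac{\lambda(s,\xi)}{\lambda(t,\xi)}=\exp\big(-\frac{1}{2}\int_s^t(b(\tau)+g(\tau)|\xi|^2)d\tau\big)$. Multiplying by $\exp\big(\int_s^t\langle\xi\rangle_{b(\tau),g(\tau)}d\tau\big)$ and collecting the exponents gives
\[ \frac{\lambda(s,\xi)}{\lambda(t,\xi)}\exp\Big(\int_s^t\langle\xi\rangle_{b(\tau),g(\tau)}d\tau\Big)=\exp\Big(\int_s^t\Big(\langle\xi\rangle_{b(\tau),g(\tau)}-\frac{b(\tau)+g(\tau)|\xi|^2}{2}\Big)d\tau\Big). \]
Since the assumption $t_0\leq s\leq t\leq t_{\text{ell}}$ means that the whole interval $[s,t]$ lies in $\Zell(N)$, item 1 applies pointwise in $\tau$; integrating that inequality and using monotonicity of the exponential gives the claimed bound $\exp\big(-|\xi|^2\int_s^t\frac{1}{b(\tau)+g(\tau)|\xi|^2}d\tau\big)$.

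There is essentially no obstacle here: the lemma is a purely algebraic manipulation of the square-root symbol together with the definition of $\lambda$. The only points needing a little care are the sign bookkeeping in item 1 (dividing a negative numerator reverses the usual inequality between reciprocals) and the remark that in item 2 the range of integration stays inside the elliptic zone, so that the pointwise estimate from item 1 is available throughout. Conditions \textbf{(B1)}--\textbf{(B3)} enter only implicitly, guaranteeing that $\langle\xi\rangle_{b(t),g(t)}$ is well defined on $\Zell(N)$ and that the integrals make sense.
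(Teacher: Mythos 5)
Your proof is correct and amounts to the same elementary computation as the paper's, presented slightly differently: the paper invokes the tangent-line inequality $\sqrt{x+y}\leq\sqrt{x}+\frac{y}{2\sqrt{x}}$ with $x=A^2$, $y=-|\xi|^2$, whereas you rationalize $\sqrt{A^2-|\xi|^2}-A$ and bound the denominator $\sqrt{A^2-|\xi|^2}+A\leq 2A$; the two are algebraically equivalent and both give item~1 in one step, with item~2 following immediately from the definition of $\lambda$.
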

\begin{proof} Using the elementary inequality
\[ \sqrt{x+y}\leq \sqrt{x}+\frac{y}{2\sqrt{x}} \]
for any $x\geq 0$ and $y\geq -x$, the first statement is equivalent to the following inequality:
\[ \sqrt{\Big( \dfrac{b(t)+g(t)|\xi|^2}{2} \Big)^2-|\xi|^2} - \dfrac{b(t)+g(t)|\xi|^2}{2} \leq -\frac{|\xi|^2}{b(t)+g(t)|\xi|^2}. \]
The second statement follows directly from the first one together with the definition of $\lambda=\lambda(t,\xi)$.
\end{proof}
From Proposition \ref{Prop_Effective-Integrable_EstZell}, for $(t,\xi), (s,\xi)\in \Zell(N)$ we have the estimate
\[ (|E_{\text{ell}}^W(t,s,\xi)|) \lesssim \frac{b(t)+g(t)|\xi|^2}{b(s)+g(s)|\xi|^2}\exp \Big( \int_{s}^{t}\langle\xi\rangle_{b(\tau),g(\tau)}d\tau \Big)
\left( \begin{array}{cc}
1 & 1 \\
1 & 1
\end{array} \right). \]
This yields the estimate
\begin{align} \label{Eq:Effective-Integrable_Transf-back}
\nonumber
(|E_{\text{ell}}(t,s,\xi)|) & \lesssim \left( \begin{array}{cc}
|\xi| & 0 \\  [5pt] \nonumber
b(t)+g(t)|\xi|^2 & b(t)+g(t)|\xi|^2
\end{array} \right) \exp \bigg( \int_{s}^{t}\Big( \langle\xi\rangle_{b(\tau),g(\tau)}-\frac{b(\tau)+g(\tau)|\xi|^2}{2} \Big)d\tau \bigg) \\
& \qquad \times
\left( \begin{array}{cc}
1 & 1 \\
1 & 1
\end{array} \right)
\left( \begin{array}{cc}
\frac{1}{|\xi|} & 0 \\ [5pt]
\frac{1}{|\xi|} & \frac{1}{b(s)+g(s)|\xi|^2}
\end{array} \right) \nonumber \\
& \,\,\, \lesssim \exp \Big( -|\xi|^2\int_{s}^{t}\frac{1}{b(\tau)+g(\tau)|\xi|^2}d\tau \Big) \left( \begin{array}{cc}
1 & \frac{|\xi|}{b(s)+g(s)|\xi|^2} \\ [5pt]
\frac{b(t)+g(t)|\xi|^2}{|\xi|} & \frac{b(t)+g(t)|\xi|^2}{b(s)+g(s)|\xi|^2}
\end{array} \right),
\end{align}
where we used Lemma \ref{Lemma_Effective-Integrable_Transf-back}. \\

\textbf{A refined estimate for the fundamental solution in the elliptic zone $\Zell(N)$}
\begin{proposition} \label{Prop:Effective-Integrable_IncreasingZell}
The fundamental solution $E_{\text{ell}}=E_{\text{ell}}(t,s,\xi)$ satisfies the following estimate:
\begin{align*}
(|E_{\text{ell}}(t,s,\xi)|) &\lesssim \exp \bigg( -|\xi|^2\int_{s}^{t}\frac{1}{b(\tau)+g(\tau)|\xi|^2}d\tau \bigg)
\left( \begin{array}{cc}
1 & \frac{|\xi|}{b(s)+g(s)|\xi|^2} \\ [5pt]
\frac{|\xi|}{b(t)+g(t)|\xi|^2} & \frac{|\xi|^2}{(b(s)+g(s)|\xi|^2)(b(t)+g(t)|\xi|^2)}
\end{array} \right) \\
& \qquad + \exp \bigg( -\int_{s}^{t} \big( b(\tau)+g(\tau)|\xi|^2 \big)d\tau \bigg)
\left( \begin{array}{cc}
0 & 0 \\
0 & 1
\end{array} \right)
\end{align*}
for all $t\geq s$ and $(t,\xi), (s,\xi)\in\Zell(N)$.
\end{proposition}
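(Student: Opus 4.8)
The plan is to redo the transformation-back step behind \eqref{Eq:Effective-Integrable_Transf-back}, but this time retaining the internal structure of $E_{\text{ell}}^W$ instead of collapsing it to the matrix of ones. Recall from the proof of Proposition \ref{Prop_Effective-Integrable_EstZell} that $E_{\text{ell}}^W(t,s,\xi)=\exp\big(\int_s^t\beta(\tau,\xi)\,d\tau\big)\mathcal{Q}_{\text{ell}}(t,s,\xi)$ with $\mathcal{Q}_{\text{ell}}=H+(\text{Neumann series})$, where $H(t,s,\xi)=\diag\big(1,\,h(t,s,\xi)\big)$ and $h(t,s,\xi)=\exp\big(\int_s^t\big(-2\langle\xi\rangle_{b(\tau),g(\tau)}-\tfrac{b'(\tau)+g'(\tau)|\xi|^2}{2\langle\xi\rangle_{b(\tau),g(\tau)}}\big)d\tau\big)$. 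First I would record the following refinement of $\mathcal{Q}_{\text{ell}}$: its first row is uniformly bounded, while its second row satisfies $\big|[\mathcal{Q}_{\text{ell}}(t,s,\xi)]_{2,j}\big|\lesssim |h(t,s,\xi)|+\big(b(t)+g(t)|\xi|^2\big)^{-1}$. This holds because the left-most factor in every term of the Neumann series is $H(t,t_1,\xi)$, whose $(2,1)$-entry vanishes and whose $(2,2)$-entry equals $h(t,t_1,\xi)$, so that the second row always carries a factor $h(t,t_1,\xi)$; combined with $2\langle\xi\rangle_{b(\tau),g(\tau)}\gtrsim b(\tau)+g(\tau)|\xi|^2$ from \eqref{Eq:Effective-Integrable-Zell-japan-above} and the uniform integrability of $\mathcal{R}_1$ over $\Zell(N)$ one gets $\int_s^t|h(t,t_1,\xi)|\,|\mathcal{R}_1(t_1,\xi)|\,dt_1\lesssim\big(b(t)+g(t)|\xi|^2\big)^{-1}$. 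In particular $\exp\big(\int_s^t\beta\,d\tau\big)[\mathcal{Q}_{\text{ell}}]_{2,2}$ splits into the genuinely fast-decaying piece $\exp\big(\int_s^t(II)(\tau,\xi)\,d\tau\big)$ plus a term of size $\exp\big(\int_s^t\beta\,d\tau\big)\big(b(t)+g(t)|\xi|^2\big)^{-1}$.

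Next I would carry out the backward transformation keeping track of every matrix entry. Writing $w=\lambda(t,\xi)\hat u$ with $\lambda(t,\xi)=\exp\big(\tfrac12\int_0^t(b(\tau)+g(\tau)|\xi|^2)d\tau\big)$, the micro-energy $W=(\langle\xi\rangle_{b(t),g(t)}w,D_tw)^{\mathrm T}$ equals $\lambda(t,\xi)P(t,\xi)(|\xi|\hat u,D_t\hat u)^{\mathrm T}$ for the lower triangular matrix $P(t,\xi)$ with entries $\langle\xi\rangle_{b(t),g(t)}/|\xi|$, $0$, $-\tfrac{i}{2}(b(t)+g(t)|\xi|^2)/|\xi|$, $1$; undoing also the diagonalizers $M$ and $N_1$ gives $E_{\text{ell}}(t,s,\xi)=\tfrac{\lambda(s,\xi)}{\lambda(t,\xi)}\,L(t,\xi)\,E_{\text{ell}}^W(t,s,\xi)\,R(s,\xi)$ with $L(t,\xi)=P(t,\xi)^{-1}MN_1(t,\xi)$ and $R(s,\xi)=N_1(s,\xi)^{-1}M^{-1}P(s,\xi)$. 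The key algebraic observation is that, thanks to $\langle\xi\rangle_{b(t),g(t)}^2=\tfrac14(b(t)+g(t)|\xi|^2)^2-|\xi|^2$, the matrices $P(t,\xi)^{-1}M$ and $M^{-1}P(s,\xi)$ are \emph{upper} triangular up to a $(2,1)$-entry of size $|\xi|^2/(b+g|\xi|^2)^2$, with their remaining entries of sizes $|\xi|/(b+g|\xi|^2)$, $(b+g|\xi|^2)/|\xi|$ and $O(1)$ in the respective slots, and $N_1=I+N^{(1)}$ only perturbs this mildly. Multiplying out with the refined bound for $E_{\text{ell}}^W$, the large factor $(b(s)+g(s)|\xi|^2)/|\xi|$ coming from $R(s,\xi)$ is always paired with the small factor $|\xi|/(b(t)+g(t)|\xi|^2)$ from $L(t,\xi)$ (in the first row of the product) or with a small second-row entry of $E_{\text{ell}}^W$ (in the second row of the product); in other words the second row of $E_{\text{ell}}$ only ever meets the second row of $E_{\text{ell}}^W$. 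This produces a ``diffusive'' contribution carrying the weight $\exp\big(\int_s^t\big(\langle\xi\rangle_{b(\tau),g(\tau)}-\tfrac12(b(\tau)+g(\tau)|\xi|^2)\big)d\tau\big)$ together with the coefficient matrix having entries $1$, $|\xi|/(b(s)+g(s)|\xi|^2)$, $|\xi|/(b(t)+g(t)|\xi|^2)$, $|\xi|^2/\big((b(s)+g(s)|\xi|^2)(b(t)+g(t)|\xi|^2)\big)$, and a ``fast'' contribution supported in the $(2,2)$-slot and carrying $\tfrac{\lambda(s,\xi)}{\lambda(t,\xi)}\exp\big(\int_s^t(II)(\tau,\xi)d\tau\big)$; the non-$(2,2)$ entries of the fast part, being multiplied by additional factors $|\xi|/(b+g|\xi|^2)$, are absorbed into the diffusive part.

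Finally I would rewrite the two weights in the stated form. For the diffusive part, Lemma \ref{Lemma_Effective-Integrable_Transf-back}(1) gives $\langle\xi\rangle_{b(\tau),g(\tau)}-\tfrac12(b(\tau)+g(\tau)|\xi|^2)\le-\dfrac{|\xi|^2}{b(\tau)+g(\tau)|\xi|^2}$, so the first matrix of the claim appears with the factor $\exp\big(-|\xi|^2\int_s^t\tfrac{1}{b(\tau)+g(\tau)|\xi|^2}d\tau\big)$. For the fast part, $\tfrac{\lambda(s,\xi)}{\lambda(t,\xi)}\exp\big(\int_s^t(II)\,d\tau\big)$ is, up to uniformly bounded ratios, $\exp\big(-\int_s^t\big(\tfrac12(b(\tau)+g(\tau)|\xi|^2)+\langle\xi\rangle_{b(\tau),g(\tau)}\big)d\tau\big)$, which by the equivalence $\langle\xi\rangle_{b(\tau),g(\tau)}\approx\tfrac12(b(\tau)+g(\tau)|\xi|^2)$ on $\Zell(N)$ (see \eqref{Eq:Effective-Integrable-Zell-japan-above}) yields the factor $\exp\big(-\int_s^t(b(\tau)+g(\tau)|\xi|^2)d\tau\big)$ of the second matrix. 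Collecting the two contributions gives the asserted estimate. The step I expect to be the main obstacle is the middle one: one must push the entrywise, two-scale bookkeeping far enough that every occurrence of the large factor $(b+g|\xi|^2)/|\xi|$ in $R(s,\xi)$ is neutralised, which forces one to establish both the refined second-row bound for $\mathcal{Q}_{\text{ell}}$ and the near-upper-triangular structure of $L(t,\xi)$ and $R(s,\xi)$ with exactly the right powers of $b+g|\xi|^2$ and $|\xi|$, while simultaneously controlling the sign of $b'+g'|\xi|^2$ and the $\sqrt{1-4/N^2}$-losses hidden in the equivalence $\langle\xi\rangle_{b(t),g(t)}\approx\tfrac12(b(t)+g(t)|\xi|^2)$.
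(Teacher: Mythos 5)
Your approach is genuinely different from the paper's. The paper's proof does not attempt to refine the WKB estimate for $E_{\text{ell}}^W$ at all: it writes $E_{\text{ell}}$ in terms of two scalar solutions $\Phi_1,\Phi_2$ of the original Fourier ODE $\Phi_{tt}+|\xi|^2\Phi+(b+g|\xi|^2)\Phi_t=0$, reads off the crude bounds on $\Phi_k,\partial_t\Phi_k$ from \eqref{Eq:Effective-Integrable_Transf-back}, and then observes that $\Psi_k=\partial_t\Phi_k$ solves a \emph{first-order} ODE. The Duhamel formula $\Psi_1=-|\xi|^2\int_s^t\frac{\lambda^2(\tau)}{\lambda^2(t)}\Phi_1\,d\tau$, together with one integration by parts, then converts the unfavourable $(b(t)+g(t)|\xi|^2)$ factor into $\frac{|\xi|^2}{b(t)+g(t)|\xi|^2}$ and at the same time produces the $\lambda^2(s)/\lambda^2(t)$ term feeding the fast $(2,2)$ contribution. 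This is a clean scalar argument; no refinement of $\mathcal{Q}_{\text{ell}}$ is ever needed.

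In your proposal, the load-bearing step is the claimed refinement $\big|[\mathcal{Q}_{\text{ell}}]_{2,j}\big|\lesssim|h(t,s,\xi)|+(b(t)+g(t)|\xi|^2)^{-1}$, and here there is a genuine gap. First, the justification is too weak: uniform integrability of $\mathcal{R}_1$ over $\Zell$ combined with $|h|\le1$ only yields $\int_s^t|h(t,t_1)||\mathcal{R}_1(t_1)|\,dt_1\lesssim1$, not $\lesssim(b(t)+g(t)|\xi|^2)^{-1}$; the extra factor would require, e.g., a pointwise bound on $|\mathcal{R}_1|$ that is not available since $\mathcal{R}_1\in S_{\text{ell}}^0\{-1,2\}$ involves $(b'+g'|\xi|^2)/(b+g|\xi|^2)$, which need not be bounded (recall $g'/g$ need not be bounded). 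Second, even if this refinement held as stated, it would be insufficient. After multiplying by $\lambda(s)/\lambda(t)$, $L(t,\xi)$, and $R(s,\xi)$, the problematic term in $(E_{\text{ell}})_{21}$ is $L_{22}\,E^W_{21}\,R_{11}$; since $R_{11}\sim(b(s)+g(s)|\xi|^2)/|\xi|$ and the prefactors combine to $\frac{b(t)+g(t)|\xi|^2}{b(s)+g(s)|\xi|^2}\exp(-|\xi|^2\int\frac{1}{b+g|\xi|^2})$, matching the target $\frac{|\xi|}{b(t)+g(t)|\xi|^2}\exp(\ldots)$ forces $|\mathcal{Q}_{21}|\lesssim|\xi|^2/(b(t)+g(t)|\xi|^2)^2$, two powers better than what you claim. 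Similar mismatches occur in the other second-row terms (e.g.\ $L_{22}E^W_{22}R_{21}$ produces a ratio of $(b(s)+g(s)|\xi|^2)$'s that is not obviously $\lesssim1$ when $b$ is increasing). So the entrywise bookkeeping as outlined does not close; if you want to pursue this route you would need either sharper symbol estimates (more diagonalization steps to push $\mathcal{R}_1$ into a better symbol class) or an argument that exploits $\mathcal{Q}_{21}(s,s)=0$ quantitatively. The paper's Duhamel route avoids all of this.
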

\begin{proof} Let us assume that $\Phi_k=\Phi_k(t,s,\xi)$, $k=1,2,$ are solutions to the equation
\[ \Phi_{tt}+|\xi|^2\Phi+\big( b(t) + g(t)|\xi|^2 \big)\Phi_t=0 \]
with initial values $\Phi_k(s,s,\xi)=\delta_{1k}$ and $\partial_t\Phi_k(s,s,\xi)=\delta_{2k}$. Then, we have
\[ \left( \begin{array}{cc}
    |\xi|w(t,\xi) \\
	D_tw(t,\xi)
	\end{array} \right)=\left( \begin{array}{cc}
	\Phi_1(t,s,\xi) & i|\xi|\Phi_2(t,s,\xi) \\ [5pt]
	\dfrac{D_t\Phi_1(t,s,\xi)}{|\xi|} & iD_t\Phi_2(t,s,\xi)
	\end{array} \right) \left( \begin{array}{cc}
	|\xi|w(s,\xi) \\
	D_tw(s,\xi)
	\end{array} \right). \]
Our idea is to relate the entries of the above given estimates to the multipliers $\Phi_k=\Phi_k(t,s,\xi)$ and use Duhamel's formula to improve the estimates for the second row using estimates from the first one. Hence, if we compare with the estimates \eqref{Eq:Effective-Integrable_Transf-back}, then we obtain
\begin{align*}
|\Phi_1(t,s,\xi)| &\lesssim \exp \Big( -|\xi|^2\int_{s}^{t}\frac{1}{b(\tau)+g(\tau)|\xi|^2}d\tau \Big), \\
|\Phi_2(t,s,\xi)| &\lesssim \frac{1}{b(s)+g(s)|\xi|^2}\exp \Big( -|\xi|^2\int_{s}^{t}\frac{1}{b(\tau)+g(\tau)|\xi|^2}d\tau \Big), \\
|\partial_t\Phi_1(t,s,\xi)| &\lesssim \big( b(t)+g(t)|\xi|^2 \big)\exp \Big( -|\xi|^2\int_{s}^{t}\frac{1}{b(\tau)+g(\tau)|\xi|^2}d\tau \Big), \\
\big| \partial_t\Phi_2(t,s,\xi) \big| &\lesssim \frac{b(t)+g(t)|\xi|^2}{b(s)+g(s)|\xi|^2}\exp \Big( -|\xi|^2\int_{s}^{t}\frac{1}{b(\tau)+g(\tau)|\xi|^2}d\tau \Big).
\end{align*}
Let $\Psi_k=\Psi_k(t,s,\xi)=\partial_t\Phi_k(t,s,\xi), \,k=1,2$. Then, we obtain the equations of first order
\[ \partial_t\Psi_k+\big( b(t) + g(t)|\xi|^2 \big)\Psi_k=-|\xi|^2\Phi_k, \quad \Psi_k(s,s,\xi)=\delta_{2k}. \]
Standard calculations lead to
\begin{align*}
\Psi_1(t,s,\xi) &= -|\xi|^2\int_{s}^{t}\frac{\lambda^2(\tau,\xi)}{\lambda^2(t,\xi)}\Phi_1(\tau,s,\xi)d\tau, \\
\Psi_2(t,s,\xi) &= \frac{\lambda^2(s,\xi)}{\lambda^2(t,\xi)}-|\xi|^2\int_{s}^{t}\frac{\lambda^2(\tau,\xi)}{\lambda^2(t,\xi)}\Phi_2(\tau,s,\xi)d\tau.
\end{align*}
If we are able to derive the desired estimate for $|\Psi_1(t,s,\xi)|$, then we conclude immediately the desired estimate for $|\Psi_2(t,s,\xi)|$. Using the estimates for $|\Phi_1(t,s,\xi)|$ and applying partial integration we get
\begin{align*}
|\Psi_1(t,s,\xi)| &\lesssim \frac{|\xi|^2}{\lambda^2(t,\xi)}\int_{s}^{t}\lambda^2(\tau,\xi)\exp \Big( -|\xi|^2\int_{s}^{\tau}\frac{1}{b(\theta)+g(\theta)|\xi|^2}d\theta \Big)d\tau \\
& \lesssim \frac{|\xi|^2}{\lambda^2(t,\xi)}\int_{s}^{t}\frac{1}{b(\tau)+g(\tau)|\xi|^2}\exp \Big( -|\xi|^2\int_{s}^{\tau}\frac{1}{b(\theta)+g(\theta)|\xi|^2}d\theta \Big)\partial_\tau\lambda^2(\tau,\xi)d\tau \\
& \lesssim \frac{|\xi|^2}{\lambda^2(t,\xi)}\bigg( \lambda^2(\tau,\xi)\frac{1}{b(\tau)+g(\tau)|\xi|^2}\exp \Big( -|\xi|^2\int_{s}^{\tau}\frac{1}{b(\theta)+g(\theta)|\xi|^2}d\theta \Big) \bigg)\Big|_s^t \\
& \,\,\,\, + \frac{|\xi|^2}{\lambda^2(t,\xi)}\int_{s}^{t}\lambda^2(\tau,\xi)\bigg( \underbrace{\frac{b'(\tau)-g'(\tau)|\xi|^2}{\big( b(\tau)+g(\tau)|\xi|^2 \big)^2}+\frac{|\xi|^2}{\big( b(\tau)+g(\tau)|\xi|^2 \big)^2}}_{\lesssim C(\tau)\leq C_0<1} \bigg) \\
& \qquad \times \exp\Big( -|\xi|^2\int_{s}^{\tau}\frac{1}{b(\theta)+g(\theta)|\xi|^2}d\theta \Big)d\tau \\
& \lesssim \frac{|\xi|^2}{b(t)+g(t)|\xi|^2} \exp\Big( -|\xi|^2\int_{s}^{t}\frac{1}{b(\tau)+g(\tau)|\xi|^2}d\tau \Big)-\frac{|\xi|^2}{b(s)+g(s)|\xi|^2}\frac{\lambda^2(s,\xi)}{\lambda^2(t,\xi)},
\end{align*}
where we have used
\begin{align*}
\frac{b'(\tau)-g'(\tau)|\xi|^2}{\big( b(\tau)+g(\tau)|\xi|^2 \big)^2}+\frac{|\xi|^2}{\big( b(\tau)+g(\tau)|\xi|^2 \big)^2} &\leq \frac{b'(\tau)}{b^2(\tau)} + \frac{-g'(\tau)|\xi|^2}{\big( b(\tau)+g(\tau)|\xi|^2 \big)^2} + \frac{|\xi|^2}{\big( b(\tau)+g(\tau)|\xi|^2 \big)^2} \\
& \leq a + \frac{-g'(t)}{g^2(t)|\xi|^2} + \frac{1}{g^2(t)|\xi|^2} \leq a + \frac{4}{N^2}\big( -g'(t) + 1 \big) < 1,
\end{align*}
for sufficiently large $N$. Here we have employed $|b'(t)|\leq ab^2(t)$ with $a<1$ due to condition \textbf{(EF)} and $g(t)|\xi|\geq \frac{N}{2}$ due to the definition of $\Zell(N)$.
\end{proof}
Taking account of the representation of solutions from the previous corollary with $s=0$ and the refined estimates from Proposition \ref{Prop:Effective-Integrable_IncreasingZell}, it follows
\begin{align*}
|\xi||\hat{u}(t,\xi)| &\lesssim \exp\Big( -C|\xi|^2\int_0^t\frac{1}{b(\tau)+g(\tau)|\xi|^2}d\tau \Big)\big( |\xi||\hat{u}_0(\xi)| + |\hat{u}_1(\xi)| \big), \\
|\hat{u}_t(t,\xi)| &\lesssim  \exp\bigg( -C|\xi|^2\int_0^t\frac{1}{b(\tau)+g(\tau)|\xi|^2}d\tau \bigg)\big( |\xi||\hat{u}_0(\xi)| + |\hat{u}_1(\xi)| \big) \\
& \qquad + \exp\bigg( -\int_0^t \big( b(\tau)+g(\tau)|\xi|^2 \big)d\tau \bigg)|\hat{u}_1(\xi)|.
\end{align*}
\begin{corollary} \label{Cor:Effective-Integrable_IncreasingZell}
The following estimates hold for the solutions to \eqref{Eq:Effective-integrable-system-Zell} for all $t\in[0,t_{\text{ell}}]$:
\begin{align*}
|\xi|^{|\beta|}|\hat{u}(t,\xi)| &\lesssim \exp\bigg( -C|\xi|^2\int_0^t\frac{1}{b(\tau)+g(\tau)|\xi|^2}d\tau \bigg)\Big( |\xi|^{|\beta|}|\hat{u}_0(\xi)| + |\xi|^{|\beta|-1|}|\hat{u}_1(\xi)| \Big) \quad \mbox{for} \quad |\beta|\geq1, \\
|\xi|^{|\beta|}|\hat{u}_t(t,\xi)| &\lesssim \exp\bigg( -C|\xi|^2\int_{0}^t\frac{1}{b(\tau)+g(\tau)|\xi|^2}d\tau \bigg)\Big( |\xi|^{|\beta|+1}|\hat{u}_0(\xi)| + |\xi|^{|\beta|}|\hat{u}_1(\xi)| \Big) \\
& \qquad + \exp\bigg( -\int_{0}^t \big( b(\tau)+g(\tau)|\xi|^2 \big)d\tau \bigg)|\xi|^{|\beta|}|\hat{u}_1(\xi)| \quad \mbox{for} \quad |\beta|\geq0.
\end{align*}
\end{corollary}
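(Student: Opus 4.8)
The plan is to obtain this corollary by reading off the two inequalities directly from the refined matrix estimate for the fundamental solution $E_{\text{ell}}=E_{\text{ell}}(t,s,\xi)$ established in Proposition \ref{Prop:Effective-Integrable_IncreasingZell}, specialised to $s=0$. First I would recall that $E_{\text{ell}}$ was constructed to propagate the micro-energy $(|\xi|\hat{u},D_t\hat{u})^{\text{T}}$ of the original Cauchy problem: the backward transformation $\hat{u}(t,\xi)=\exp\big(-\tfrac12\int_0^t(b(\tau)+g(\tau)|\xi|^2)\,d\tau\big)w(t,\xi)$ has already been absorbed in the step ``Transforming back to the original Cauchy problem''. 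Hence, for every $\xi$ whose trajectory meets $\Zell(N)$ and all $0\le t\le t_{\text{ell}}$, componentwise one has
\[
\left( \begin{array}{c} |\xi||\hat{u}(t,\xi)| \\ |\hat{u}_t(t,\xi)| \end{array} \right) \le \big( |E_{\text{ell}}(t,0,\xi)| \big) \left( \begin{array}{c} |\xi||\hat{u}_0(\xi)| \\ |\hat{u}_1(\xi)| \end{array} \right),
\]
using $|D_t\hat{u}|=|\hat{u}_t|$ and $E_{\text{ell}}(0,0,\xi)=I$, so that it only remains to expand the right-hand side.

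Next I would simplify the four entries of the matrix from Proposition \ref{Prop:Effective-Integrable_IncreasingZell} by exploiting the geometry of $\Zell(N)$. From the definition of the elliptic zone we have $g(t)|\xi|\gtrsim N$, hence $b(t)+g(t)|\xi|^2\geq g(t)|\xi|^2\gtrsim|\xi|$, and together with $b,g>0$ this gives, for $s=0$ and $t$ in the relevant range,
\[
\frac{|\xi|}{b(0)+g(0)|\xi|^2}\lesssim 1, \qquad |\xi|\cdot\frac{|\xi|}{b(t)+g(t)|\xi|^2}\lesssim |\xi|, \qquad \frac{|\xi|^2}{\big(b(0)+g(0)|\xi|^2\big)\big(b(t)+g(t)|\xi|^2\big)}\lesssim 1 .
\]
Inserting these bounds into the two rows of the matrix estimate, and absorbing the constant in the exponent of Proposition \ref{Prop:Effective-Integrable_IncreasingZell}, I recover precisely the two displayed inequalities for $|\xi||\hat{u}(t,\xi)|$ and $|\hat{u}_t(t,\xi)|$ stated just before the corollary; the extra summand $\exp\big(-\int_0^t(b(\tau)+g(\tau)|\xi|^2)\,d\tau\big)|\hat{u}_1(\xi)|$ in the second one originates from the rank-one matrix $\diag(0,1)$ appearing in Proposition \ref{Prop:Effective-Integrable_IncreasingZell}.

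Finally, I would multiply the first inequality by $|\xi|^{|\beta|-1}$ (which forces the restriction $|\beta|\geq1$) and the second by $|\xi|^{|\beta|}$ (admissible for $|\beta|\geq0$) to reach the two asserted estimates for $|\xi|^{|\beta|}|\hat{u}(t,\xi)|$ and $|\xi|^{|\beta|}|\hat{u}_t(t,\xi)|$. I do not expect a genuine analytic obstacle here: the only points needing care are the bookkeeping in the second paragraph — the systematic use of $g(t)|\xi|\gtrsim 1$ in $\Zell(N)$ to convert the rational expressions in the entries of $E_{\text{ell}}$ into clean powers of $|\xi|$ — and verifying that $(0,\xi)\in\Zell(N)$ whenever $t_{\text{ell}}(|\xi|)>0$, so that Proposition \ref{Prop:Effective-Integrable_IncreasingZell} may legitimately be invoked at $s=0$.
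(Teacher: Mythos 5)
Your proposal is correct and follows the same route as the paper: apply Proposition \ref{Prop:Effective-Integrable_IncreasingZell} with $s=0$ to the micro-energy representation $U(t,\xi)=E_{\text{ell}}(t,0,\xi)U(0,\xi)$, simplify the matrix entries using $g(t)|\xi|\gtrsim N$ in $\Zell(N)$ (whence $b+g|\xi|^2\gtrsim|\xi|$ at both times $0$ and $t$, the latter also using the monotonicity of $g$ to ensure $(0,\xi)\in\Zell(N)$), and then multiply by $|\xi|^{|\beta|-1}$ and $|\xi|^{|\beta|}$ respectively. The paper elides the explicit bookkeeping on the four matrix entries and simply states the resulting intermediate inequalities; your version supplies exactly the omitted verification.
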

\subsubsection{Considerations in the dissipative zone $\Zdiss(\varepsilon)$} \label{Subsection_Effective_Integrable_Zdiss}
The definition of the dissipative zone is
\[ Z_{\rm diss}(\varepsilon) = \bigg\{ (t,\xi): |\xi|\leq \frac{\varepsilon}{2}\frac{1}{g(t)}\big( 1-\sqrt{1-\frac{4}{\varepsilon^2}b(t)g(t)} \big) \bigg\}\cap \Pi_{\text{ell}}\cap\{t\geq t_0\}. \]
We fix $\varepsilon>0$. Then, we can choose with a sufficiently large $M$ the parameter $t_0=t_0(\varepsilon,M)$ such that $b(t)g(t)\leq \frac{1}{M}\varepsilon^2$ for $t \geq t_0$. This gives $\frac{4}{\varepsilon^2}b(t)g(t)\leq \frac{4}{M}<1$ for large $M$. So, $\sqrt{1-\frac{4}{\varepsilon^2}b(t)g(t)}$ is well-defined for $t \geq t_0(\varepsilon,M)$. For $t \geq t_0$ we have
\[ \frac{\varepsilon}{2}\frac{1}{g(t)}\big( 1-\sqrt{1-\frac{4}{\varepsilon^2}b(t)g(t)} \big)\sim \frac{b(t)}{\varepsilon}.\]
So, this part of the definition of the dissipative zone is consistent with the definitions on pages 15 and 36. On pages 15 and 36 we use $\{|\xi|\leq \frac{\varepsilon}{g(t)}\}$.

It holds $|\xi| \leq \frac{b(t)}{2}$ if $t_0$ is chosen sufficiently large. So, $|\xi|$ is dominated by $b(t)$ in this zone. Hence,
$g(t)|\xi|^2 \leq \frac{g(t)b(t)}{4} b(t)$. Now we can use $t_0=t_0(\kappa)$ so large, that $\frac{g(t)b(t)}{4} \leq \kappa$ for $t \geq t_0(\kappa)$
and a sufficiently small $\kappa$. Hence $b(t) \leq b(t)+g(t)|\xi|^2 \leq (1+\kappa)b(t)$.
\medskip

Let us consider
\begin{equation} \label{Eq:Effective-integrable-EM1-Zdiss}
\hat{u}_{tt} + |\xi|^2 \hat{u} + \tilde{b}(t,\xi)\hat{u}_t = 0, \,\,\,\,\,\, \mbox{where}\,\,\,\,\,\, \tilde b(t,\xi) = b(t)+g(t)|\xi|^2.
\end{equation}
Using $b(t) \leq b(t)+g(t)|\xi|^2 \leq (1+\kappa)b(t)$, then the above dissipative zone may be divided into elliptic zone and reduced zone as follows:
\begin{align}
\label{Eq:Effective-integrable-EM01-Zdiss} \Zell(0,\varepsilon) &= \Big\{ (t,\xi): 0<|\xi|\leq \frac{b(t)}{2}\sqrt{1-\varepsilon^2} \Big\}, \\
\label{Eq:Effective-integrable-EM02-Zdiss} \Zred(0,\varepsilon) &= \Big\{ (t,\xi): \frac{b(t)}{2}\sqrt{1-\varepsilon^2} \leq  |\xi| \leq \frac{b(t)}{2} \Big\}.
\end{align}
The separating line $t_1(|\xi|)$ solves $|\xi| = \dfrac{b(t)}{2}\sqrt{1-\varepsilon^2}$.
\medskip

We have in these zones $\tilde{b}(t,\xi) \sim b(t)$. But for the elliptic approach we need derivatives of $\tilde{b} = \tilde{b}(t,\xi)$ with respect to $t$ which do not satisfy, in general, the condition $\textbf{(B3)}$ on page 49. Therefore, we are going to use the low regularity in the coefficient $\tilde{b}$, that is, without further assumptions on derivatives of $\tilde{b}$, employing the energy method in Fourier space and zones with energy multipliers we may derive the energy estimates. This method was developed in \cite{JuniordaLuzNoneffective, JuniordaLuzEffective}.
\medskip

We define the energy to the solution of \eqref{Eq:Effective-integrable-EM1-Zdiss} as follows:
\begin{equation} \label{Eq:Effective-integrable-EM3-Zdiss}
E(t,\xi) := \frac{1}{2}\big( |\hat{u}_t|^2 + |\xi|^2|\hat{u}|^2 \big) \quad \text{for all} \quad t\geq 0.
\end{equation}
Let us multiply both sides of \eqref{Eq:Effective-integrable-EM1-Zdiss} by $\hat{u}_t$. Then, we get
\begin{equation} \label{Eq:Effective-integrable-EM2-Zdiss}
\frac{1}{2}\frac{d}{dt}\big( |\hat{u}_t|^2 + |\xi|^2|\hat{u}|^2 \big) + \tilde{b}(t,\xi)|\hat{u}_t|^2 = 0.
\end{equation}
Now we integrate the previous equality on $[s,t]$ and find
\begin{equation*}
E(t,\xi) + \int_s^t \tilde{b}(\tau,\xi)|u_t(\tau,\xi)|^2d\tau = E(s,\xi).
\end{equation*}
Let $Z\subset[0,\infty)\times \mathbb{R}^n$ and define $K: Z\to [0,\infty)$. Now we multiply both sides of \eqref{Eq:Effective-integrable-EM2-Zdiss} by $K(t,\xi)\hat{u}$. Then, for $(t,\xi)\in Z$ we have
\begin{align*} \label{Eq:Effective-integrable-EM4-Zdiss}
\frac{d}{dt}\big( K(t,\xi)\operatorname{Re}&(\hat{u}_t\hat{u}) \big) + \tilde{b}(t,\xi)K(t,\xi)\operatorname{Re}(\hat{u}_t\hat{u}) + K(t,\xi)|\xi|^2|\hat{u}|^2 \nonumber \\
& = K(t,\xi)|\hat{u}_t|^2 + \Big( \frac{d}{dt}K(t,\xi) \Big)\operatorname{Re}(\hat{u}_t\hat{u}).
\end{align*}
\begin{proposition}[Proposition 2.1, \cite{JuniordaLuzNoneffective}] \label{Prop:Effective-integrable-EM5-Zdiss}
Let $\hat{u}=\hat{u}(t,\xi)$ be the solution of \eqref{Eq:Effective-integrable-EM1-Zdiss}, $Z\subset[0,\infty)\times \mathbb{R}^n$ and $K:Z\to [0,\infty)$, where $K=K(t,\cdot)$ is a measurable and $K(\cdot,\xi)$ is a piecewise $\mathcal{C}^1$ function for each $(t,\xi)\in Z$. Suppose that there exists $\lambda_1$, $\lambda_2$ and $\lambda_3\geq 0$ such that $K$ and $Z$ satisfy the following properties for all $(t,\xi)\in Z$:
\begin{enumerate}
\item if $(\tau_1,\xi)$, $(\tau_2,\xi)\in Z$, then $(\tau,\xi)\in Z$ for all $\tau_1\leq \tau\leq \tau_2$;
\item $K(t,\xi) \leq \lambda_1\tilde{b}(t,\xi)$;
\item $\tilde{b}(t,\xi)K(t,\xi) \leq \lambda_2|\xi|^2$;
\item $\big( \dfrac{d}{dt}K(t,\xi) \big)^2 \leq \lambda_3\tilde{b}(t,\xi)K(t,\xi)|\xi|^2$.
\end{enumerate}
Then, there exists $C>0$ and for all $(s,\xi), (t,\xi)\in Z$ with $t\geq s\geq0$ such that
\begin{equation*}
\int_s^t K(\theta,\xi)|\xi|^2|\hat{u}(\theta,\xi)|^2d\theta \leq CE(s,\xi).
\end{equation*}
\end{proposition}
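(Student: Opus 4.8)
\medskip
\begin{proof}
The plan is to run the classical energy‑multiplier scheme on the basis of the two identities already prepared above. From the first‑order identity obtained by multiplying \eqref{Eq:Effective-integrable-EM1-Zdiss} by $\overline{\hat u_t}$ and taking real parts, integration on $[s,t]$ gives
\[
E(t,\xi)+\int_s^t \tilde b(\tau,\xi)\,|\hat u_t(\tau,\xi)|^2\,d\tau=E(s,\xi),
\]
so, since $\tilde b\ge 0$, one has $E(t,\xi)\le E(s,\xi)$ for $t\ge s$ and $\int_s^t \tilde b(\tau,\xi)|\hat u_t(\tau,\xi)|^2\,d\tau\le E(s,\xi)$. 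Property (1) of the zone $Z$ guarantees that the whole segment $\{(\tau,\xi):s\le\tau\le t\}$ lies in $Z$, so $K$ and properties (2)--(4) are available throughout $[s,t]$. Integrating the multiplier identity (the one obtained by multiplying \eqref{Eq:Effective-integrable-EM1-Zdiss} by $K(t,\xi)\overline{\hat u}$) on $[s,t]$ and isolating the target quantity, one obtains
\[
\int_s^t K|\xi|^2|\hat u|^2\,d\tau
=-\big[K\operatorname{Re}(\hat u_t\overline{\hat u})\big]_s^t
-\int_s^t \tilde b K\operatorname{Re}(\hat u_t\overline{\hat u})\,d\tau
+\int_s^t K|\hat u_t|^2\,d\tau
+\int_s^t (\partial_\tau K)\operatorname{Re}(\hat u_t\overline{\hat u})\,d\tau,
\]
with all functions evaluated at $(\tau,\xi)$. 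The task then reduces to bounding the four terms on the right by $C(\lambda_1,\lambda_2,\lambda_3)\,E(s,\xi)$, up to a multiple of $\int_s^t K|\xi|^2|\hat u|^2\,d\tau$ small enough to be absorbed into the left‑hand side.

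Two elementary pointwise bounds drive the whole estimate. First, multiplying (2) by (3) gives $K^2\le\lambda_1\lambda_2|\xi|^2$, hence $K(\tau,\xi)\le\sqrt{\lambda_1\lambda_2}\,|\xi|$; combined with $|\operatorname{Re}(\hat u_t\overline{\hat u})|\le|\hat u_t|\,|\hat u|\le 2E(\tau,\xi)/|\xi|$ and the energy monotonicity, the boundary term is controlled by $4\sqrt{\lambda_1\lambda_2}\,E(s,\xi)$. Second, since $K\ge 0$, property (3) yields $\tilde b K^2|\hat u|^2=(\tilde b K)(K|\hat u|^2)\le\lambda_2\,K|\xi|^2|\hat u|^2$. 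I would then use Young's inequality, tuned so that the $K|\xi|^2|\hat u|^2$‑contributions come out with coefficient $\tfrac14$ each: for the second term, $\tilde b K|\hat u_t|\,|\hat u|\le\lambda_2\,\tilde b|\hat u_t|^2+\tfrac14\,K|\xi|^2|\hat u|^2$; for the fourth term, inserting (4) in the form $|\partial_\tau K|\le\sqrt{\lambda_3\,\tilde b K}\,|\xi|$, one gets $|\partial_\tau K|\,|\hat u_t|\,|\hat u|\le\lambda_3\,\tilde b|\hat u_t|^2+\tfrac14\,K|\xi|^2|\hat u|^2$; and the third term is controlled directly by (2), namely $\int_s^t K|\hat u_t|^2\,d\tau\le\lambda_1\int_s^t\tilde b|\hat u_t|^2\,d\tau\le\lambda_1 E(s,\xi)$. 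Summing the four estimates, moving $\tfrac12\int_s^t K|\xi|^2|\hat u|^2\,d\tau$ to the left, and using $\int_s^t\tilde b|\hat u_t|^2\,d\tau\le E(s,\xi)$ yields
\[
\int_s^t K(\tau,\xi)|\xi|^2|\hat u(\tau,\xi)|^2\,d\tau\le 2\big(4\sqrt{\lambda_1\lambda_2}+\lambda_1+\lambda_2+\lambda_3\big)E(s,\xi),
\]
which is the claim with $C=2(4\sqrt{\lambda_1\lambda_2}+\lambda_1+\lambda_2+\lambda_3)$.

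This argument contains no real obstacle; it is essentially bookkeeping. The only points that require a little care are: (i) spotting the two consequences $K\le\sqrt{\lambda_1\lambda_2}\,|\xi|$ and $\tilde b K^2|\hat u|^2\le\lambda_2\,K|\xi|^2|\hat u|^2$ of hypotheses (2)--(3), which are what make the boundary term and the cross terms expressible through the dissipation $\int\tilde b|\hat u_t|^2$ and the target integral; (ii) choosing the Young parameters so that the two absorbable pieces sum to $\tfrac12<1$; and (iii) invoking property (1) so that the entire integration path stays in $Z$ and hypotheses (2)--(4) hold pointwise along it. In particular no regularity of $\tilde b$ in $t$ enters, which is exactly why this low‑regularity multiplier method is the right tool in the zone $Z$. (This is Proposition 2.1 of \cite{JuniordaLuzNoneffective}, recorded here for completeness.)
\end{proof}
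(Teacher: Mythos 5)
Your proof is correct and follows the energy--multiplier route that the two identities set up in the paragraph preceding the statement clearly point toward (the paper itself only cites the result from the reference and does not reproduce the argument). The main ideas — energy monotonicity giving $E(\tau,\xi)\le E(s,\xi)$ and $\int_s^t\tilde b|\hat u_t|^2\,d\tau\le E(s,\xi)$; the derived bounds $K\le\sqrt{\lambda_1\lambda_2}\,|\xi|$ and $\tilde b K^2|\hat u|^2\le\lambda_2 K|\xi|^2|\hat u|^2$; Young's inequality with parameters tuned for absorption — are exactly what the hypotheses are designed for, and all the estimates check out. One cosmetic point: the sharp elementary bound is $|\hat u_t|\,|\hat u|\le E(\tau,\xi)/|\xi|$ (write $|\hat u_t||\hat u|=\tfrac{|\hat u_t|}{\sqrt{|\xi|}}\cdot\sqrt{|\xi|}|\hat u|\le\tfrac12(\tfrac{|\hat u_t|^2}{|\xi|}+|\xi||\hat u|^2)$), not $2E/|\xi|$; this only means your constant $C$ is a factor of two larger than necessary, and the conclusion is unaffected.
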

\begin{definition}
We say that $K: Z \to [0,\infty)$ is a multiplier of energy in $Z\subset[0,\infty)\times \mathbb{R}^n$ if $K$ and $Z$ satisfy the properties 1 to 4 of Proposition \ref{Prop:Effective-integrable-EM5-Zdiss}.
\end{definition}
\begin{proposition} \label{Prop:Effective-integrable-EM6-Zdiss}
The multiplier of the energy in $\Zell(0,\varepsilon)$ is $K(t,\xi) := \dfrac{|\xi|^2}{b(t)}$.
\end{proposition}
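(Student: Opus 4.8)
The plan is to verify directly that the pair $\big(K,Z\big)$ with $K(t,\xi)=|\xi|^2/b(t)$ and $Z=\Zell(0,\varepsilon)$ satisfies the four conditions listed in Proposition \ref{Prop:Effective-integrable-EM5-Zdiss}; once these hold, $K$ is by definition a multiplier of the energy in $\Zell(0,\varepsilon)$ and there is nothing more to prove. Throughout I would use the two facts already established for this part of the dissipative zone: first, $\tilde b(t,\xi)=b(t)+g(t)|\xi|^2$ satisfies $b(t)\leq\tilde b(t,\xi)\leq(1+\kappa)b(t)$ for $t\geq t_0$ with $\kappa$ as small as we wish, so $\tilde b(t,\xi)\approx b(t)$; second, in $\Zell(0,\varepsilon)$ one has $|\xi|\leq\frac{b(t)}{2}\sqrt{1-\varepsilon^2}\leq\frac{b(t)}{2}$, so $|\xi|$ is dominated by $b(t)$.

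For property 1 I would invoke the monotonicity of $b=b(t)$ contained in condition \textbf{(B2)}: for a fixed $\xi$ the time-section $\{t\geq t_0:(t,\xi)\in\Zell(0,\varepsilon)\}=\{t\geq t_0:\ b(t)\geq 2|\xi|/\sqrt{1-\varepsilon^2}\}$ is the intersection of a ray with a super-level set of the monotone function $b$, hence an interval, so the zone is connected in $t$ for each frequency. Property 2 follows from $|\xi|^2\leq b(t)^2/4$, which gives $K(t,\xi)=|\xi|^2/b(t)\leq\tfrac14 b(t)\leq\tfrac14\tilde b(t,\xi)$, so $\lambda_1=\tfrac14$ works. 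Property 3 is immediate: $\tilde b(t,\xi)K(t,\xi)=\tilde b(t,\xi)\,|\xi|^2/b(t)\leq(1+\kappa)|\xi|^2$, so $\lambda_2=1+\kappa$.

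The only computation with any substance is property 4. Since $\frac{d}{dt}K(t,\xi)=-|\xi|^2 b'(t)/b(t)^2$, we have $\big(\frac{d}{dt}K(t,\xi)\big)^2=|\xi|^4\,b'(t)^2/b(t)^4$, while $\tilde b(t,\xi)K(t,\xi)|\xi|^2\geq b(t)\cdot\frac{|\xi|^2}{b(t)}\cdot|\xi|^2=|\xi|^4$. Hence it suffices to bound $b'(t)^2/b(t)^4$ by a constant, and this is precisely what condition \textbf{(EF)}, $|b'(t)|\leq a\,b^2(t)$, gives, with $\lambda_3=a^2$ (alternatively one may use condition \textbf{(B3)} together with $tb(t)\to\infty$ from \textbf{(B2)} to conclude $b'(t)^2/b(t)^4\lesssim (b(t)(1+t))^{-2}\lesssim 1$). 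Collecting $\lambda_1,\lambda_2,\lambda_3$ finishes the verification. I do not expect a genuine obstacle; the only mildly delicate points are to invoke property 1 correctly via the monotonicity of $b$ and to keep the roles of $b$ and $\tilde b$ straight, using $b(t)\leq\tilde b(t,\xi)\leq(1+\kappa)b(t)$ in the right direction in each of the four estimates.
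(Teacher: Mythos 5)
Your proposal is correct and follows essentially the same line of reasoning as the paper: you verify the four conditions of Proposition \ref{Prop:Effective-integrable-EM5-Zdiss} using the monotonicity of $b$ for property 1, the bound $|\xi|\le b(t)/2$ from the definition of $\Zell(0,\varepsilon)$ for property 2, the equivalence $b(t)\le\tilde b(t,\xi)\le(1+\kappa)b(t)$ for property 3, and condition \textbf{(EF)} to control $b'(t)^2/b(t)^4$ in property 4, obtaining the same admissible constants $\lambda_1,\lambda_2,\lambda_3$ up to the harmless simplification of dropping the factor $1-\varepsilon^2$ in property 2.
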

\begin{proof}
We notice that the first property of Proposition \ref{Prop:Effective-integrable-EM5-Zdiss} is satisfied since $b=b(t)$ is monotone. Now, using $b(t) \leq \tilde{b}(t,\xi) \leq (1+\kappa)b(t)$, definition of $\Zell(0,\varepsilon)$ \eqref{Eq:Effective-integrable-EM01-Zdiss} and condition \textbf{(EF)} on page 49, we get
\begin{align*}
& 2. \,\,\, K(t,\xi) = \frac{1}{b(t)}|\xi|^2 \leq \frac{1}{b(t)}\frac{b^2(t)}{4}(1-\varepsilon^2) = \frac{(1-\varepsilon^2)}{4}b(t) \leq \frac{(1-\varepsilon^2)}{4}\tilde{b}(t,\xi), \\
& 3. \,\,\, \tilde{b}(t,\xi)K(t,\xi) = \tilde{b}(t,\xi)\frac{1}{b(t)}|\xi|^2 \leq (\kappa+1)|\xi|^2, \\
& 4. \,\,\, \Big( \frac{d}{dt}K(t,\xi) \Big)^2 = \Big( \frac{b'(t)}{b^2(t)} \Big)^2|\xi|^4 \leq a^2|\xi|^4 = a^2\underbrace{\frac{|\xi|^2}{b(t)}}_{=K(t,\xi)}b(t)|\xi|^2 = a^2K(t,\xi)b(t)|\xi|^2\\
& \qquad \qquad \leq a^2 \tilde{b}(t,\xi)K(t,\xi)|\xi|^2.
\end{align*}
\end{proof}
\begin{lemma} [Lemma 2.1, \cite{JuniordaLuzNoneffective}]\label{Lemma:Effective-integrable-EM7-Zdiss}
Let $E: [t_0,\infty)\to [0,\infty)$ differentiable and non-increasing, $f: [t_0,T_0) \to [0,\infty)$ continuous, where $t_0\in \mathbb{R}$ and $t_0<T_0\in\mathbb{R}\cup\{\infty\}$. Suppose that there exists $C>0$ such that
\[ \int_s^{T_0}f(\tau)E(\tau)d\tau \leq CE(s) \quad \mbox{for all} \quad s\in[t_0,T_0), \]
then, for every $0<\epsilon<1$ holds
\[ E(t)\leq \frac{E(t_0)}{1-\epsilon}\exp\Big( -\frac{\epsilon}{C}\int_{t_0}^tf(\tau)d\tau \Big) \quad \mbox{for all} \quad t\in[t_0,T_0). \]
\end{lemma}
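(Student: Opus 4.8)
The plan is to reduce the claimed exponential estimate to a scalar first-order differential inequality for the \emph{tail} of the weighted integral of $E$, solve that inequality by a Gronwall argument, and then transfer the resulting decay back to $E$ itself by exploiting that $E$ is non-increasing. (Equivalently, one may first perform the change of variables $\sigma=h(t):=\int_{t_0}^t f(\tau)\,d\tau$, which turns the hypothesis into $\int_{\sigma}^{h(T_0)}\tilde E(r)\,dr\le C\tilde E(\sigma)$ with $\tilde E$ non-increasing; but it is just as clean to work directly.)

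First I would introduce $h(t)=\int_{t_0}^t f(\tau)\,d\tau$, which is continuous, non-decreasing, with $h(t_0)=0$, and the tail function $\phi(s):=\int_{s}^{T_0} f(\tau)E(\tau)\,d\tau$. By the hypothesis applied at $s=t_0$ this is finite, hence finite and non-increasing for all $s\in[t_0,T_0)$, and since $fE$ is continuous we have $\phi'(s)=-f(s)E(s)$. The hypothesis $\phi(s)\le CE(s)$ then gives $\phi'(s)\le-\tfrac{f(s)}{C}\phi(s)$, so a standard Gronwall argument yields $\phi(s)\le\phi(t_0)e^{-h(s)/C}\le CE(t_0)e^{-h(s)/C}$ for all $s\in[t_0,T_0)$.

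Next I would recover $E$ from $\phi$. For $t_0\le s<t<T_0$ the monotonicity of $E$ gives $\phi(s)\ge\int_s^t f(\tau)E(\tau)\,d\tau\ge E(t)\,(h(t)-h(s))$, hence $E(t)\le \dfrac{CE(t_0)\,e^{-h(s)/C}}{h(t)-h(s)}$ whenever $h(t)>h(s)$. Fixing $t$, if $h(t)\ge C$ I would choose, by the intermediate value theorem for the continuous $h$, a point $s\in[t_0,t)$ with $h(s)=h(t)-C$; this gives $E(t)\le e\,E(t_0)e^{-h(t)/C}$, and then splitting $e^{-h(t)/C}=e^{-\epsilon h(t)/C}e^{-(1-\epsilon)h(t)/C}\le e^{-(1-\epsilon)}e^{-\epsilon h(t)/C}$ (using $h(t)\ge C$) leads to $E(t)\le e^{\epsilon}E(t_0)e^{-\epsilon h(t)/C}$; the elementary inequality $e^{\epsilon}\le(1-\epsilon)^{-1}$ for $\epsilon\in(0,1)$ closes this case. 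If instead $h(t)<C$ (which includes $t=t_0$), I would simply use $E(t)\le E(t_0)$ together with $\tfrac{\epsilon}{C}h(t)<\epsilon\le-\ln(1-\epsilon)$, which is precisely the inequality $E(t_0)\le\frac{E(t_0)}{1-\epsilon}e^{-\epsilon h(t)/C}$.

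The Gronwall step and the two elementary scalar bounds $e^{\epsilon}\le(1-\epsilon)^{-1}$, $\epsilon\le-\ln(1-\epsilon)$ are routine. The one genuinely delicate point — and the place where care is needed — is the passage from the exponential decay of the tail $\phi$ to that of $E$: monotonicity of $E$ alone only yields the polynomial bound $E(t)\le CE(t_0)/h(t)$, so one must insert an auxiliary time $s$ with the gap $h(t)-h(s)$ correctly tuned (taken equal to $C$ above) and then track the constants so as to land exactly on the prefactor $\frac{1}{1-\epsilon}$ and the rate $\frac{\epsilon}{C}$ rather than on some larger constant such as $e$.
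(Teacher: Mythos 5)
Your proof is correct: the Gronwall inequality applied to the tail $\phi(s)=\int_s^{T_0} f(\tau)E(\tau)\,d\tau$ gives $\phi(s)\le CE(t_0)e^{-h(s)/C}$ with $h(s)=\int_{t_0}^s f(\tau)\,d\tau$, the monotonicity of $E$ then yields $E(t)\bigl(h(t)-h(s)\bigr)\le\phi(s)$, and the choice $h(s)=h(t)-C$ (when $h(t)\ge C$) combined with the elementary bounds $e^{\epsilon}\le(1-\epsilon)^{-1}$ and $\epsilon\le-\ln(1-\epsilon)$ produces exactly the claimed prefactor $\frac{1}{1-\epsilon}$ and rate $\frac{\epsilon}{C}$, with the case $h(t)<C$ handled by $E(t)\le E(t_0)$ and the same scalar inequalities. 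The paper does not prove this lemma itself --- it quotes it verbatim as Lemma 2.1 of \cite{JuniordaLuzNoneffective} --- so there is no in-paper argument to compare against; your tail-Gronwall route with the tuned auxiliary time $s$ is the standard argument and is what the cited reference uses.
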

\begin{proposition} \label{Prop:Effective-integrable-EM8-Zdiss}
For a fixed $\xi\in \mathbb{R}^n$ and we define
\[ t_0(\xi) = \inf\{t\in[t_0,t_1]: (t,\xi)\in \Zell(0,\varepsilon) \} \quad \text{and} \quad t_1(\xi) = \sup\{t\in[t_0,t_1]: (t,\xi)\in \Zell(0,\varepsilon) \}. \]
Then, there exists $C>0$ independent of $\xi$ such that
\[ E(t,\xi) \lesssim \exp\Big( -\frac{1}{C}|\xi|^2\int_{t_0}^t \frac{1}{b(\tau)}d\tau \Big)E(t_0,\xi) \quad \text{for all} \quad t_0 \leq t \leq t_1. \]
\end{proposition}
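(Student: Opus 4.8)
The plan is to combine the energy dissipation identity for the model equation \eqref{Eq:Effective-integrable-EM1-Zdiss}, the multiplier estimate already established in Propositions \ref{Prop:Effective-integrable-EM5-Zdiss} and \ref{Prop:Effective-integrable-EM6-Zdiss}, and the Gronwall-type Lemma \ref{Lemma:Effective-integrable-EM7-Zdiss}, all applied along the section $[t_0(\xi),t_1(\xi)]$ of the zone $\Zell(0,\varepsilon)$ for a fixed $\xi$ (these section endpoints play the role of the endpoints written $t_0$ and $t_1$ in the statement). First I would record what the balance \eqref{Eq:Effective-integrable-EM2-Zdiss} gives after integration on $[s,t]$, namely $E(t,\xi)+\int_s^t\tilde b(\tau,\xi)\,|\hat u_t(\tau,\xi)|^2\,d\tau=E(s,\xi)$, so that $E=E(\cdot,\xi)$ is differentiable and non-increasing and $\int_s^t\tilde b(\tau,\xi)\,|\hat u_t(\tau,\xi)|^2\,d\tau\le E(s,\xi)$ for all $s\le t$. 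These are precisely the structural hypotheses on the solution energy needed to invoke Lemma \ref{Lemma:Effective-integrable-EM7-Zdiss}.

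The main step is to verify, for the weight $f(\tau):=|\xi|^2/b(\tau)$, an inequality $\int_s^{t_1(\xi)}f(\tau)E(\tau,\xi)\,d\tau\le C'E(s,\xi)$ on $\Zell(0,\varepsilon)$, uniformly in $\xi$. Splitting $E(\tau,\xi)=\tfrac{1}{2}|\hat u_t|^2+\tfrac{1}{2}|\xi|^2|\hat u|^2$: the potential part is controlled directly by Proposition \ref{Prop:Effective-integrable-EM5-Zdiss} applied with the multiplier $K(\tau,\xi)=|\xi|^2/b(\tau)$ of Proposition \ref{Prop:Effective-integrable-EM6-Zdiss}, since $f(\tau)\,|\xi|^2|\hat u|^2=K(\tau,\xi)\,|\xi|^2|\hat u|^2$; for the kinetic part, the defining inequality \eqref{Eq:Effective-integrable-EM01-Zdiss} of $\Zell(0,\varepsilon)$ gives $|\xi|^2\le\tfrac{1}{4}(1-\varepsilon^2)b^2(\tau)\le\tfrac{1}{4}b^2(\tau)$, hence $f(\tau)\le\tfrac{1}{4}b(\tau)\le\tfrac{1}{4}\tilde b(\tau,\xi)$, and the dissipation bound from the previous step yields $\int_s^t f(\tau)|\hat u_t|^2\,d\tau\le\tfrac{1}{4}\int_s^t\tilde b(\tau,\xi)|\hat u_t|^2\,d\tau\le\tfrac{1}{4}E(s,\xi)$. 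Adding the two contributions gives the claimed inequality with $C'$ depending only on the constant $C$ of Proposition \ref{Prop:Effective-integrable-EM5-Zdiss}.

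Finally I would apply Lemma \ref{Lemma:Effective-integrable-EM7-Zdiss} on $[t_0(\xi),t_1(\xi)]$ with this $f$, this $C'$ and, say, $\epsilon=\tfrac{1}{2}$, which produces $E(t,\xi)\le 2\,E(t_0(\xi),\xi)\exp\big(-\tfrac{1}{2C'}|\xi|^2\int_{t_0(\xi)}^t b(\tau)^{-1}\,d\tau\big)$ for $t\in[t_0(\xi),t_1(\xi)]$; absorbing the factor $2$ into $\lesssim$ and setting $C=2C'$ gives the assertion. The one point requiring care is that the multiplier estimate of Proposition \ref{Prop:Effective-integrable-EM5-Zdiss} controls only the potential part $|\xi|^2|\hat u|^2$ and not the kinetic part $|\hat u_t|^2$ of $E$; this is exactly where the zone inequality $|\xi|\lesssim b(t)$ characterising $\Zell(0,\varepsilon)$, together with the plain dissipation identity, has to be used. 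One should also keep in mind that $\tilde b=\tilde b(t,\xi)$ has only low regularity in this region, so no derivatives of $\tilde b$ may enter — all constants come from the energy method and from Propositions \ref{Prop:Effective-integrable-EM5-Zdiss}--\ref{Prop:Effective-integrable-EM6-Zdiss}.
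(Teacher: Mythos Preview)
Your proposal is correct and follows essentially the same approach as the paper: both split $E$ into potential and kinetic parts, control the potential part via Propositions \ref{Prop:Effective-integrable-EM5-Zdiss}--\ref{Prop:Effective-integrable-EM6-Zdiss} with $K(\tau,\xi)=|\xi|^2/b(\tau)$, control the kinetic part using the zone inequality $|\xi|^2\lesssim b^2(\tau)$ together with the energy dissipation identity $\int_s^t \tilde b|\hat u_t|^2\,d\tau\le E(s,\xi)$, and then apply Lemma \ref{Lemma:Effective-integrable-EM7-Zdiss}. Your write-up is in fact slightly more explicit about the constants and about the choice of $\epsilon$ in the lemma.
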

\begin{proof}
Let $K: Z\to [0,\infty)$ a multiplier of energy in $Z\subset[0,\infty)\times \mathbb{R}^n$. By Proposition \ref{Prop:Effective-integrable-EM5-Zdiss}, we know that
\[ \int_s^t K(\tau,\xi)|\xi|^2|\hat{u}(\tau,\xi)|^2d\tau \lesssim E(s,\xi) \]
for all $(s,\xi)$, $(t,\xi)\in\Zell(0,\varepsilon)$ with $t\geq s\geq t_0$. On the other hand, by the definition of $\Zell(0,\varepsilon)$ and \eqref{Eq:Effective-integrable-EM3-Zdiss}, we obtain
\begin{align*}
\int_s^t K(\tau,\xi)|\hat{u}_t(\tau,\xi)|^2d\tau = \int_s^t \frac{|\xi|^2}{b(\tau)}|\hat{u}_t(\tau,\xi)|^2d\tau \lesssim \int_s^t b(\tau)|\hat{u}_t(\tau,\xi)|^2d\tau \lesssim E(s,\xi)
\end{align*}
for all $(s,\xi)$, $(t,\xi)\in\Zell(0,\varepsilon)$. Therefore, we have
\[ \int_s^t K(\tau,\xi)E(\tau,\xi)d\tau \lesssim E(s,\xi). \]
Fixing $\xi\in\mathbb{R}^n$ such that $\{t\in[t_0,t_1]\in\Zell(0,\varepsilon)\}$ has nonzero measure, we may apply Lemma \ref{Lemma:Effective-integrable-EM7-Zdiss} and conclude
\begin{equation*} \label{Eq:Effective-integrable-EM9-Zdiss}
E(t,\xi) \lesssim \exp\Big( -\frac{1}{C}\int_{t_0}^t K(\tau,\xi)d\tau \Big)E(t_0,\xi)
\end{equation*}
for all $t_0\leq t\leq t_1$. This completes the proof.
\end{proof}
Finally, from Proposition \ref{Prop:Effective-integrable-EM8-Zdiss} and the definition of $E=E(t,\xi)$ in \eqref{Eq:Effective-integrable-EM3-Zdiss} we may conclude the following estimates.
\begin{corollary} \label{Cor:Effective-Integrable-EM10-Zdiss}
The following estimates hold for all $t\in[t_0,t_1]$ in $\Zell(0,\varepsilon)$:
\begin{align*}
|\xi|^{|\beta|}|\hat{u}(t,\xi)| &\lesssim \exp\Big( -C_1|\xi|^2\int_{t_0}^t\frac{1}{b(\tau)}d\tau \Big)\Big( |\xi|^{|\beta|}|\hat{u}(t_0,\xi)| + |\xi|^{|\beta|-1|}|\hat{u}_t(t_0,\xi)| \Big) \quad \mbox{for} \quad |\beta|\geq1, \\
|\xi|^{|\beta|}|\hat{u}_t(t,\xi)| &\lesssim \exp\Big( -C_1|\xi|^2\int_{t_0}^t\frac{1}{b(\tau)}d\tau \Big)\Big( |\xi|^{|\beta|+1}|\hat{u}(t_0,\xi)| + |\xi|^{|\beta|}|\hat{u}_t(t_0,\xi)| \Big) \quad \mbox{for} \quad |\beta|\geq0.
\end{align*}
\end{corollary}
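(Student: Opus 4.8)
The plan is to obtain Corollary~\ref{Cor:Effective-Integrable-EM10-Zdiss} as an immediate consequence of Proposition~\ref{Prop:Effective-integrable-EM8-Zdiss} together with the definition of the energy $E=E(t,\xi)$ in \eqref{Eq:Effective-integrable-EM3-Zdiss}. First I would record the two elementary comparisons linking the energy to the quantities appearing in the statement: since $E(t,\xi)=\tfrac12\big(|\hat u_t(t,\xi)|^2+|\xi|^2|\hat u(t,\xi)|^2\big)$, one has $|\xi|\,|\hat u(t,\xi)|\le\sqrt{2\,E(t,\xi)}$ and $|\hat u_t(t,\xi)|\le\sqrt{2\,E(t,\xi)}$, and conversely, using $\sqrt{a+b}\le\sqrt a+\sqrt b$, $\sqrt{E(t_0,\xi)}\lesssim |\xi|\,|\hat u(t_0,\xi)|+|\hat u_t(t_0,\xi)|$.

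Next I would take square roots in the estimate of Proposition~\ref{Prop:Effective-integrable-EM8-Zdiss}. For $(t,\xi)\in\Zell(0,\varepsilon)$ with $t_0\le t\le t_1$ this yields
\[ \sqrt{E(t,\xi)}\;\lesssim\; \exp\Big(-C_1|\xi|^2\int_{t_0}^t\frac{1}{b(\tau)}\,d\tau\Big)\big(|\xi|\,|\hat u(t_0,\xi)|+|\hat u_t(t_0,\xi)|\big), \]
with $C_1:=\tfrac{1}{2C}$, where $C$ is the constant from that proposition. Inserting the comparisons from the first step then gives the two desired inequalities after multiplying by the appropriate power of $|\xi|$: multiplying $|\xi|\,|\hat u(t,\xi)|\le\sqrt{2\,E(t,\xi)}$ by $|\xi|^{|\beta|-1}$ produces the first line of the corollary, and multiplying $|\hat u_t(t,\xi)|\le\sqrt{2\,E(t,\xi)}$ by $|\xi|^{|\beta|}$ produces the second line.

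There is essentially no obstacle here; the statement is a repackaging of the energy estimate already established. The only point requiring attention is the bookkeeping of powers of $|\xi|$: extracting $|\hat u|$ from $\sqrt{E}$ costs one power of $|\xi|$ in the denominator, which is precisely why the estimate for $|\xi|^{|\beta|}|\hat u(t,\xi)|$ is stated only for $|\beta|\ge 1$, whereas no such division is needed for $|\hat u_t|$, so that $|\beta|\ge 0$ suffices in the second line. I would also note that $t_1=t_1(|\xi|)$ is meaningful only for those $\xi$ for which $\{t\in[t_0,t_1]:(t,\xi)\in\Zell(0,\varepsilon)\}$ has positive measure, in agreement with the hypotheses of Proposition~\ref{Prop:Effective-integrable-EM8-Zdiss}.
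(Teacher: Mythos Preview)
Your proposal is correct and follows essentially the same approach as the paper, which simply states that the corollary is concluded from Proposition~\ref{Prop:Effective-integrable-EM8-Zdiss} together with the definition of $E(t,\xi)$ in \eqref{Eq:Effective-integrable-EM3-Zdiss}. Your explicit unpacking of the square-root step and the bookkeeping of powers of $|\xi|$ is exactly what the paper leaves implicit.
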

Now we consider $\Zred(0,\varepsilon)$ in \eqref{Eq:Effective-integrable-EM02-Zdiss} in the same manner as we treated $\Zell(0,\varepsilon)$.
\begin{proposition} \label{Prop:Effective-integrable-EM11-Zdiss}
The multiplier of the energy in $\Zred(0,\varepsilon)$ is $K(t) := b(t)$.
\end{proposition}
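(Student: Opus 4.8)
The plan is to verify, exactly in the spirit of the proof of Proposition~\ref{Prop:Effective-integrable-EM6-Zdiss}, that the pair consisting of $K(t):=b(t)$ and the zone $Z=\Zred(0,\varepsilon)$ satisfies the four defining properties of a multiplier of energy from Proposition~\ref{Prop:Effective-integrable-EM5-Zdiss}. The only facts I would use are: the two-sided bound $b(t)\le\tilde b(t,\xi)\le(1+\kappa)b(t)$, valid in $\Zdiss(\varepsilon)$ for $t\ge t_0(\kappa)$ with $\kappa$ as small as we wish; the description \eqref{Eq:Effective-integrable-EM02-Zdiss} of the reduced part, $\tfrac{b(t)}{2}\sqrt{1-\varepsilon^2}\le|\xi|\le\tfrac{b(t)}{2}$; the monotonicity of $b=b(t)$ from condition \textbf{(B2)}; and condition \textbf{(EF)}, $|b'(t)|\le ab^2(t)$ with $a<1$.

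Property~1 (connectedness of the $t$-sections): for a fixed $\xi$ the section of $\Zred(0,\varepsilon)$ is $\{t\ge t_0:\ 2|\xi|\le b(t)\le 2|\xi|/\sqrt{1-\varepsilon^2}\}$, and since $b$ is monotone each of the inequalities $b(t)\ge 2|\xi|$ and $b(t)\le 2|\xi|/\sqrt{1-\varepsilon^2}$ defines an interval in $t$, so their intersection is an interval. Property~2 is immediate from $K(t)=b(t)\le\tilde b(t,\xi)$, so one may take $\lambda_1=1$. For Property~3, $\tilde b(t,\xi)K(t)\le(1+\kappa)b^2(t)$, and the lower bound $|\xi|\ge\tfrac{b(t)}{2}\sqrt{1-\varepsilon^2}$ in \eqref{Eq:Effective-integrable-EM02-Zdiss} yields $b^2(t)\le\tfrac{4}{1-\varepsilon^2}|\xi|^2$; hence $\tilde b(t,\xi)K(t)\le\tfrac{4(1+\kappa)}{1-\varepsilon^2}|\xi|^2$ and $\lambda_2=\tfrac{4(1+\kappa)}{1-\varepsilon^2}$ works.

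Property~4 is where condition \textbf{(EF)} enters: $\big(\tfrac{d}{dt}K(t)\big)^2=(b'(t))^2\le a^2b^4(t)$. Using $b(t)\le\tilde b(t,\xi)$ and $K(t)=b(t)$ together with $|\xi|^2\ge\tfrac{1-\varepsilon^2}{4}b^2(t)$, one has $\tilde b(t,\xi)K(t)|\xi|^2\ge b^2(t)|\xi|^2\ge\tfrac{1-\varepsilon^2}{4}b^4(t)$, so $(b'(t))^2\le\tfrac{4a^2}{1-\varepsilon^2}\tilde b(t,\xi)K(t)|\xi|^2$, i.e. $\lambda_3=\tfrac{4a^2}{1-\varepsilon^2}$. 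Collecting the three constants finishes the verification.

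The computation is entirely routine; the one conceptual point — and the reason $\Zdiss(\varepsilon)$ is split into the two subzones \eqref{Eq:Effective-integrable-EM01-Zdiss} and \eqref{Eq:Effective-integrable-EM02-Zdiss} with different multipliers — is that in $\Zred(0,\varepsilon)$ the balancing in Properties~3 and~4 can no longer be drawn from the upper frequency bound $|\xi|\le\tfrac{b(t)}{2}\sqrt{1-\varepsilon^2}$ used for $\Zell(0,\varepsilon)$, but must come from the complementary lower bound; this forces the change of multiplier from $|\xi|^2/b(t)$ to $b(t)$. Once the proposition is established I would, in complete analogy with Proposition~\ref{Prop:Effective-integrable-EM8-Zdiss} and Lemma~\ref{Lemma:Effective-integrable-EM7-Zdiss}, deduce a pointwise energy estimate of the form $E(t,\xi)\lesssim\exp\big(-\tfrac{1}{C}\int b(\tau)\,d\tau\big)E(\cdot,\xi)$ along the $t$-section of $\Zred(0,\varepsilon)$, and then glue it to the $\Zell(0,\varepsilon)$-estimate of Corollary~\ref{Cor:Effective-Integrable-EM10-Zdiss} across the separating line $t_1(|\xi|)$.
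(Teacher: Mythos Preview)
Your proof is correct and follows essentially the same route as the paper: verifying properties 1--4 of Proposition~\ref{Prop:Effective-integrable-EM5-Zdiss} using the two-sided bound $b(t)\le\tilde b(t,\xi)\le(1+\kappa)b(t)$ and the lower frequency bound $|\xi|\ge\tfrac{b(t)}{2}\sqrt{1-\varepsilon^2}$ from \eqref{Eq:Effective-integrable-EM02-Zdiss}. The only difference is in Property~4, where you invoke condition \textbf{(EF)} directly to get $(b'(t))^2\le a^2b^4(t)$, whereas the paper instead combines \textbf{(B3)} with $(1+t)b(t)>1$ from \textbf{(B2)} to reach the same bound $(b'(t))^2\lesssim b^4(t)$; both are perfectly valid under the standing assumptions of Theorem~\ref{Theorem_Effective-Decreasing_Integrable-Decaying}.
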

\begin{proof}
Analogously to Proposition \ref{Prop:Effective-integrable-EM6-Zdiss}, we use $b(t) \leq \tilde{b}(t,\xi) \leq (1+\kappa)b(t)$, definition of $\Zred(0,\varepsilon)$ from \eqref{Eq:Effective-integrable-EM02-Zdiss} and $(1+t)b(t)>1$ from condition \textbf{(B2)} on page 49. Then, we find
\begin{align*}
& 2. \,\,\, K(t) = b(t) \leq \tilde{b}(t,\xi), \\
& 3. \,\,\, \tilde{b}(t,\xi)K(t) = \tilde{b}(t,\xi)b(t) \leq (\kappa+1)b^2(t) \leq \frac{4}{1-\varepsilon^2}(1+\kappa)|\xi|^2, \\
& 4. \,\,\, \Big( \frac{d}{dt}K(t) \Big)^2 = ( b'(t) \big)^2 \leq C_1b^2(t)\frac{1}{(1+t)^2} \leq C_1b^2(t)b^2(t) \leq C_1b^2(t)\frac{4}{1-\varepsilon^2}|\xi|^2 \\
& \qquad \quad = \frac{4C_1}{1-\varepsilon^2}K(t)b(t)|\xi|^2 \leq \frac{4C_1}{1-\varepsilon^2}K(t)\tilde{b}(t,\xi)|\xi|^2.
\end{align*}
This completes the proof.
\end{proof}
\begin{proposition} \label{Prop:Effective-integrable-EM12-Zdiss}
For a fixed $\xi\in \mathbb{R}^n$ and we define
\[ t_1(\xi) = \inf\{t\in[t_1,t_{\text{hyp}}]: (t,\xi)\in \Zred(0,\varepsilon) \} \quad \text{and} \quad t_{\text{hyp}}(\xi) = \sup\{t\in[t_1,t_{\text{hyp}}]: (t,\xi)\in \Zred(0,\varepsilon) \}. \]
Then, there exists $C>0$ independent of $\xi$ such that
\[ E(t,\xi) \lesssim \exp\Big( -\frac{1}{C}\int_{s}^t b(\tau)d\tau \Big)E(s,\xi) \quad \text{for all} \quad t_1 \leq s\leq t  \leq t_{\text{hyp}}. \]
\end{proposition}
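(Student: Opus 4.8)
The plan is to repeat, almost verbatim, the argument used for $\Zell(0,\varepsilon)$ in Proposition \ref{Prop:Effective-integrable-EM8-Zdiss}, now with the energy multiplier $K(t)=b(t)$ supplied by Proposition \ref{Prop:Effective-integrable-EM11-Zdiss}. First I would recall that differentiating the energy $E=E(t,\xi)$ defined in \eqref{Eq:Effective-integrable-EM3-Zdiss} and using equation \eqref{Eq:Effective-integrable-EM1-Zdiss} gives $\frac{d}{dt}E(t,\xi)=-\tilde b(t,\xi)|\hat{u}_t(t,\xi)|^2\le 0$, so that $E(\cdot,\xi)$ is non-increasing; integrating this identity on $[s,t]$ yields $\int_s^t\tilde b(\tau,\xi)|\hat{u}_t(\tau,\xi)|^2\,d\tau\le E(s,\xi)$. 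Since the multiplier $K$ from Proposition \ref{Prop:Effective-integrable-EM11-Zdiss} satisfies properties 1--4 of Proposition \ref{Prop:Effective-integrable-EM5-Zdiss} on $\Zred(0,\varepsilon)$, and since $b$ is monotone so that the slice $\{t:(t,\xi)\in\Zred(0,\varepsilon)\}$ is an interval, Proposition \ref{Prop:Effective-integrable-EM5-Zdiss} gives $\int_s^tK(\tau)|\xi|^2|\hat{u}(\tau,\xi)|^2\,d\tau\lesssim E(s,\xi)$ for all $(s,\xi),(t,\xi)\in\Zred(0,\varepsilon)$ with $t_1\le s\le t\le t_{\text{hyp}}$.

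Next I would control the kinetic part: in $\Zred(0,\varepsilon)$ one has $b(t)\le\tilde b(t,\xi)\le(1+\kappa)b(t)$, hence $K(t)=b(t)\le\tilde b(t,\xi)$ and therefore $\int_s^tK(\tau)|\hat{u}_t(\tau,\xi)|^2\,d\tau\le\int_s^t\tilde b(\tau,\xi)|\hat{u}_t(\tau,\xi)|^2\,d\tau\le E(s,\xi)$. Adding the two bounds and recalling $E(\tau,\xi)=\tfrac12(|\hat{u}_t(\tau,\xi)|^2+|\xi|^2|\hat{u}(\tau,\xi)|^2)$, I obtain the integral inequality
\[ \int_s^tK(\tau)E(\tau,\xi)\,d\tau\lesssim E(s,\xi)\qquad\text{for all }\ t_1\le s\le t\le t_{\text{hyp}}. \]
Fixing $\xi$ and applying Lemma \ref{Lemma:Effective-integrable-EM7-Zdiss} on the interval $[s,t_{\text{hyp}}]$ with $f(\tau)=K(\tau)=b(\tau)$ (which is licit because $E(\cdot,\xi)$ is differentiable and non-increasing there and the inequality above holds with the left endpoint taken to be any admissible $s$), I conclude
\[ E(t,\xi)\lesssim\exp\Big(-\tfrac1C\int_s^tK(\tau)\,d\tau\Big)E(s,\xi)=\exp\Big(-\tfrac1C\int_s^tb(\tau)\,d\tau\Big)E(s,\xi) \]
for all $t_1\le s\le t\le t_{\text{hyp}}$, with $C>0$ independent of $\xi$ since the constants in Propositions \ref{Prop:Effective-integrable-EM5-Zdiss} and \ref{Prop:Effective-integrable-EM11-Zdiss} and in the energy identity are $\xi$-uniform.

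The only points requiring a little care --- and the main (mild) obstacle --- are book-keeping ones: verifying that the definition \eqref{Eq:Effective-integrable-EM02-Zdiss} of $\Zred(0,\varepsilon)$ indeed lies inside the region where $b(t)\le\tilde b(t,\xi)\le(1+\kappa)b(t)$ (this uses $t\ge t_0$ with $t_0$ large, as arranged on the preceding lines), and checking that applying Lemma \ref{Lemma:Effective-integrable-EM7-Zdiss} with a moving left endpoint $s$ rather than a fixed $t_0$ is harmless, which it is because the hypothesis of that lemma is an integral inequality valid for every starting time. No estimates beyond those already established for $\Zell(0,\varepsilon)$ are needed; the structural difference is only that here $K(t)=b(t)$ replaces $K(t,\xi)=|\xi|^2/b(t)$, which is precisely why the decay exponent switches from $|\xi|^2\int 1/b$ to $\int b$.
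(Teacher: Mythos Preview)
Your proposal is correct and follows essentially the same route as the paper: apply Proposition~\ref{Prop:Effective-integrable-EM5-Zdiss} with the multiplier $K(t)=b(t)$ from Proposition~\ref{Prop:Effective-integrable-EM11-Zdiss} to control $\int K|\xi|^2|\hat u|^2$, bound the kinetic piece via $K(t)=b(t)\le\tilde b(t,\xi)$ and the energy identity, combine to get $\int_s^t K(\tau)E(\tau,\xi)\,d\tau\lesssim E(s,\xi)$, and invoke Lemma~\ref{Lemma:Effective-integrable-EM7-Zdiss}. Your remarks on the moving left endpoint and the $\tilde b\sim b$ comparison are reasonable sanity checks but do not alter the argument.
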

\begin{proof}
Let $K: Z\to [0,\infty)$ a multiplier of energy in $Z\subset[0,\infty)\times \mathbb{R}^n$. By Proposition \ref{Prop:Effective-integrable-EM5-Zdiss}, we know that
\[ \int_s^t K(\tau)|\xi|^2|\hat{u}(\tau,\xi)|^2d\tau \lesssim E(s,\xi) \]
for all $(s,\xi)$, $(t,\xi)\in\Zred(0,\varepsilon)$ with $t_1 \leq s\leq t  \leq t_{\text{hyp}}$. On the other hand, by the definition of $\Zred(0,\varepsilon)$ in \eqref{Eq:Effective-integrable-EM02-Zdiss} and \eqref{Eq:Effective-integrable-EM3-Zdiss}, we obtain
\begin{align*}
\int_s^t K(\tau)|\hat{u}_t(\tau,\xi)|^2d\tau = \int_s^t b(\tau)|\hat{u}_t(\tau,\xi)|^2d\tau \lesssim E(s,\xi)
\end{align*}
for all $(s,\xi)$, $(t,\xi)\in\Zred(0,\varepsilon)$. Therefore, we have
\[ \int_s^t K(\tau,\xi)E(\tau,\xi)d\tau \lesssim E(s,\xi). \]
Fixing $\xi\in\mathbb{R}^n$ such that $\{t\in[t_1,t_{\text{hyp}}]\in\Zred(0,\varepsilon)\}$ has nonzero measure, we may apply Lemma \ref{Lemma:Effective-integrable-EM7-Zdiss} and conclude
\begin{equation*} \label{Eq:Effective-integrable-EM13-Zdiss}
E(t,\xi) \lesssim \exp\Big( -\frac{1}{C}\int_{s}^t K(\tau)d\tau \Big)E(s,\xi)
\end{equation*}
for all $t_1 \leq s\leq t  \leq t_{\text{hyp}}$. This completes the proof.
\end{proof}
Thus, from Proposition \ref{Prop:Effective-integrable-EM12-Zdiss} and the definition of $E=E(t,\xi)$ in \eqref{Eq:Effective-integrable-EM3-Zdiss} we may arrive at the following estimates.
\begin{corollary} \label{Cor:Effective-Integrable-EM14-Zdiss}
The following estimates hold for all $t\in[t_1,t_{\text{hyp}}]$ in $\Zred(0,\varepsilon)$:
\begin{align*}
|\xi|^{|\beta|}|\hat{u}(t,\xi)| &\lesssim \exp\Big( -C_2\int_{s}^tb(\tau)d\tau \Big)\Big( |\xi|^{|\beta|}|\hat{u}(s,\xi)| + |\xi|^{|\beta|-1|}|\hat{u}_t(s,\xi)| \Big) \quad \mbox{for} \quad |\beta|\geq1, \\
|\xi|^{|\beta|}|\hat{u}_t(t,\xi)| &\lesssim \exp\Big( -C_2\int_{s}^t b(\tau)d\tau \Big)\Big( |\xi|^{|\beta|+1}|\hat{u}(s,\xi)| + |\xi|^{|\beta|}|\hat{u}_t(s,\xi)| \Big) \quad \mbox{for} \quad |\beta|\geq0.
\end{align*}
\end{corollary}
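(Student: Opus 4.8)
The plan is to read off the two pointwise estimates in Corollary~\ref{Cor:Effective-Integrable-EM14-Zdiss} directly from the energy-decay statement of Proposition~\ref{Prop:Effective-integrable-EM12-Zdiss}, by simply unwinding the definition of the energy $E=E(t,\xi)$ in \eqref{Eq:Effective-integrable-EM3-Zdiss}. No new analysis is needed: the substantive work in this sub-zone — constructing the energy multiplier $K(t)=b(t)$ (Proposition~\ref{Prop:Effective-integrable-EM11-Zdiss}), verifying properties 1--4 of Proposition~\ref{Prop:Effective-integrable-EM5-Zdiss}, and running the Grönwall-type Lemma~\ref{Lemma:Effective-integrable-EM7-Zdiss} — has already been carried out to produce the decay of $E$.

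Concretely, I would first recall that for $(s,\xi),(t,\xi)\in\Zred(0,\varepsilon)$ with $t_1\leq s\leq t\leq t_{\text{hyp}}$, Proposition~\ref{Prop:Effective-integrable-EM12-Zdiss} provides a constant $C>0$, independent of $\xi$, such that $E(t,\xi)\lesssim\exp\big(-\tfrac1C\int_s^t b(\tau)\,d\tau\big)E(s,\xi)$. Writing $C_2:=\tfrac{1}{2C}$ and using $E(\cdot,\xi)=\tfrac12\big(|\hat u_t(\cdot,\xi)|^2+|\xi|^2|\hat u(\cdot,\xi)|^2\big)$, one bounds each of $|\xi|^2|\hat u(t,\xi)|^2$ and $|\hat u_t(t,\xi)|^2$ from above by $2E(t,\xi)$, and $2E(s,\xi)$ from above by $|\hat u_t(s,\xi)|^2+|\xi|^2|\hat u(s,\xi)|^2$. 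Taking square roots and using $\sqrt{a^2+b^2}\leq a+b$ for $a,b\geq0$ then gives
\begin{align*}
|\xi|\,|\hat u(t,\xi)| &\lesssim \exp\Big(-C_2\int_s^t b(\tau)\,d\tau\Big)\big(|\xi|\,|\hat u(s,\xi)|+|\hat u_t(s,\xi)|\big), \\
|\hat u_t(t,\xi)| &\lesssim \exp\Big(-C_2\int_s^t b(\tau)\,d\tau\Big)\big(|\xi|\,|\hat u(s,\xi)|+|\hat u_t(s,\xi)|\big).
\end{align*}

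Finally I would multiply the first inequality by $|\xi|^{|\beta|-1}$ (admissible precisely when $|\beta|\geq1$) and the second by $|\xi|^{|\beta|}$ (for $|\beta|\geq0$) to recover exactly the two bounds asserted in Corollary~\ref{Cor:Effective-Integrable-EM14-Zdiss}. The only points that require a little care are bookkeeping ones: tracking the $\tfrac12$-factor in the definition of $E$ so that the displayed constant $C_2$ is genuinely the one inherited from Proposition~\ref{Prop:Effective-integrable-EM12-Zdiss}, and checking that the stated restrictions on $|\beta|$ are what is needed to keep every power of $|\xi|$ harmless in $\Zred(0,\varepsilon)$, where $|\xi|\approx b(t)$. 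I do not expect any genuine obstacle: the delicate issue in this region — deriving an energy dissipation inequality for \eqref{Eq:Effective-integrable-EM1-Zdiss} without controlling $t$-derivatives of the coefficient $\tilde b(t,\xi)=b(t)+g(t)|\xi|^2$ — was already resolved by the low-regularity energy-multiplier method, so this corollary is a routine consequence.
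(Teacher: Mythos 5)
Your proposal is correct and matches the paper's approach: the paper derives Corollary~\ref{Cor:Effective-Integrable-EM14-Zdiss} in a single line by combining Proposition~\ref{Prop:Effective-integrable-EM12-Zdiss} with the definition of $E$ in \eqref{Eq:Effective-integrable-EM3-Zdiss}, exactly the unwinding you carry out, and your bookkeeping (taking square roots to get $C_2=\tfrac{1}{2C}$, multiplying by $|\xi|^{|\beta|-1}$ and $|\xi|^{|\beta|}$ with the stated range restrictions on $|\beta|$) is accurate.
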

\subsubsection{Conclusion} \label{Sec:Effective-Decaying-Conclusions}
From the statements of Proposition \ref{Prop_Effective-Integrable-Zhyp} and Corollaries \ref{Cor:Effective-Integrable-Zred}, \ref{Cor:Effective-Integrable_IncreasingZell}, \ref{Cor:Effective-Integrable-EM10-Zdiss}, and \ref{Cor:Effective-Integrable-EM14-Zdiss}, we derive our desired statements. However, due to the localization of the zones where $b=b(t)$ is increasing or decreasing, we will divide our considerations into two cases in order to apply the \textit{gluing procedure}.
\subsubsection*{a. $b=b(t)$ is increasing}
For this case see Figure \ref{Fig-Effective-integrable-zones}.a.
\medskip

\noindent \underline{Small frequencies}:
\medskip

\noindent \textit{Case 1:} $t_0\leq t\leq t_1(|\xi|)$. Due to Corollary \ref{Cor:Effective-Integrable-EM10-Zdiss} we have the following lemma.
\begin{lemma} \label{Lemma:Effective-Decaying-Large0}
We have the following estimates for large frequencies and $t_0\leq t\leq t_1(|\xi|)$:
\begin{align*}
|\xi|^{|\beta|}|\hat{u}(t,\xi)| &\lesssim \exp\Big( -C_1|\xi|^2\int_{t_0}^t\frac{1}{b(\tau)}d\tau \Big)\big( |\xi|^{|\beta|}|\hat{u}(t_0,\xi)| + |\xi|^{|\beta|-1|}|\hat{u}_t(t_0,\xi)| \big) \quad \mbox{for} \quad |\beta|\geq1, \\
|\xi|^{|\beta|}|\hat{u}_t(t,\xi)| &\lesssim \exp\Big( -C_1|\xi|^2\int_{t_0}^t\frac{1}{b(\tau)}d\tau \Big)\big( |\xi|^{|\beta|+1}|\hat{u}(t_0,\xi)| + |\xi|^{|\beta|}|\hat{u}_t(t_0,\xi)| \big) \quad \mbox{for} \quad |\beta|\geq0.
\end{align*}
\end{lemma}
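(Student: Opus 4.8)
The plan is to obtain Lemma~\ref{Lemma:Effective-Decaying-Large0} directly from the energy estimate of Proposition~\ref{Prop:Effective-integrable-EM8-Zdiss} (equivalently, it is just the convenient repackaging recorded in Corollary~\ref{Cor:Effective-Integrable-EM10-Zdiss}); no new diagonalization or WKB step is required. The first thing I would do is record the geometric fact underlying the statement: for the frequencies occurring in this case, namely those with $|\xi|$ so small that $(t_0,\xi)\in\Zell(0,\varepsilon)$ — by \eqref{Eq:Effective-integrable-EM01-Zdiss} this means $|\xi|\leq\frac{b(t_0)}{2}\sqrt{1-\varepsilon^2}$ — the whole segment $\{(t,\xi):t_0\leq t\leq t_1(|\xi|)\}$ stays inside $\Zell(0,\varepsilon)$. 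This is exactly where the monotonicity of $b=b(t)$ is used: since $b$ is increasing, $|\xi|\leq\frac{b(t_0)}{2}\sqrt{1-\varepsilon^2}\leq\frac{b(t)}{2}\sqrt{1-\varepsilon^2}$ for all $t\geq t_0$, so by \eqref{Eq:Effective-integrable-EM01-Zdiss} one remains in $\Zell(0,\varepsilon)$ until the separating line $t_1(|\xi|)$, which is precisely the time where $|\xi|=\frac{b(t)}{2}\sqrt{1-\varepsilon^2}$. Hence, in the notation of Proposition~\ref{Prop:Effective-integrable-EM8-Zdiss}, $t_0(\xi)=t_0$ and $t_1(\xi)=t_1(|\xi|)$, and that proposition applies verbatim on this segment.

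With this identification, Proposition~\ref{Prop:Effective-integrable-EM8-Zdiss} gives $E(t,\xi)\lesssim\exp\big(-\tfrac{1}{C}|\xi|^2\int_{t_0}^{t}\tfrac{1}{b(\tau)}d\tau\big)E(t_0,\xi)$ for $t_0\leq t\leq t_1(|\xi|)$, with $C>0$ independent of $\xi$. The remaining step is the routine translation between the energy functional and the individual quantities: from the definition \eqref{Eq:Effective-integrable-EM3-Zdiss} one has $|\xi||\hat u(t,\xi)|\leq\sqrt{2E(t,\xi)}$ and $|\hat u_t(t,\xi)|\leq\sqrt{2E(t,\xi)}$, while $\sqrt{E(t_0,\xi)}\lesssim|\xi||\hat u(t_0,\xi)|+|\hat u_t(t_0,\xi)|$. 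Taking the square root of the energy estimate (which turns $C$ into $C_1:=\tfrac{1}{2C}$), then multiplying the $\hat u$-inequality by $|\xi|^{|\beta|-1}$ for $|\beta|\geq1$ and the $\hat u_t$-inequality by $|\xi|^{|\beta|}$ for $|\beta|\geq0$, and finally expanding $\sqrt{E(t_0,\xi)}$ as above, produces exactly the two claimed estimates — in particular the $|\xi|^{|\beta|-1}$ weight on the $\hat u_t(t_0,\xi)$ term, which is the one-power mismatch between $E$ and its elastic part $|\xi|^2|\hat u|^2$.

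There is no genuinely hard step: the content of the lemma is entirely contained in Corollary~\ref{Cor:Effective-Integrable-EM10-Zdiss}, and the lemma merely states it in the form needed for the gluing procedure of Section~\ref{Sec:Effective-Decaying-Conclusions}. The only point I would take care to present explicitly is the geometric containment $[t_0,t_1(|\xi|)]\subseteq\Zell(0,\varepsilon)$, since it is precisely this (together with $b$ increasing) that allows one to use the fixed reference time $t_0$ rather than the frequency-dependent entry time $t_0(\xi)$, and hence to express the bound directly in terms of the Cauchy data $\hat u(t_0,\xi)$ and $\hat u_t(t_0,\xi)$. Everything else is bookkeeping of powers of $|\xi|$ and of multiplicative constants.
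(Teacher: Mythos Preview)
Your proposal is correct and matches the paper's approach exactly: the paper simply states that Lemma~\ref{Lemma:Effective-Decaying-Large0} follows ``due to Corollary~\ref{Cor:Effective-Integrable-EM10-Zdiss}'', and you have spelled out precisely why that corollary applies (the geometric containment in $\Zell(0,\varepsilon)$ via monotonicity of $b$) and how the energy bound of Proposition~\ref{Prop:Effective-integrable-EM8-Zdiss} translates into the stated pointwise estimates. Your write-up is in fact more explicit than the paper's one-line justification, but the substance is identical.
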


\noindent \underline{Large frequencies}:
\medskip

\noindent \textit{Case 1:} $t\leq t_{\text{ell}}$. In this case, we use Corollary \ref{Cor:Effective-Integrable_IncreasingZell}.
\begin{lemma} \label{Lemma:Effective-Decaying-Large1}
We have the following estimates for large frequencies and $t\leq t_{\text{ell}}$:
\begin{align*}
|\xi|^{|\beta|}|\hat{u}(t,\xi)| &\lesssim \exp\Big( -C|\xi|^2\int_0^t\frac{1}{b(\tau)+g(\tau)|\xi|^2}d\tau \Big)\big( |\xi|^{|\beta|}|\hat{u}_0(\xi)| + |\xi|^{|\beta|-1}|\hat{u}_1(\xi)| \big) \quad \mbox{for} \quad |\beta| \geq 1, \\
|\xi|^{|\beta|}|\hat{u}_t(t,\xi)|& \lesssim \exp\Big( -C|\xi|^2\int_0^t\frac{1}{b(\tau)+g(\tau)|\xi|^2}d\tau \Big)\big( |\xi|^{|\beta|+1}|\hat{u}_0(\xi)| + |\xi|^{|\beta|}|\hat{u}_1(\xi)| \big) \quad \mbox{for} \quad |\beta| \geq 0.
\end{align*}
\end{lemma}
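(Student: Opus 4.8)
The plan is to read both estimates directly off Corollary \ref{Cor:Effective-Integrable_IncreasingZell}, which applies precisely in the range $t\in[0,t_{\text{ell}}]$ for large frequencies. The first inequality in Lemma \ref{Lemma:Effective-Decaying-Large1} coincides verbatim with the first estimate of that corollary, so nothing remains to be done there. The second estimate of the corollary carries one extra summand, $\exp\big(-\int_0^t(b(\tau)+g(\tau)|\xi|^2)\,d\tau\big)|\xi|^{|\beta|}|\hat u_1(\xi)|$, and the whole content of the lemma is to show that inside the elliptic zone $\Zell(N)$ this summand is dominated by the first one, i.e. by $\exp\big(-C|\xi|^2\int_0^t\frac{d\tau}{b(\tau)+g(\tau)|\xi|^2}\big)|\xi|^{|\beta|}|\hat u_1(\xi)|$.

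First I would record that, since $g'<0$ by \textbf{(E1)} and $t_{\text{ell}}=t_{\text{ell}}(|\xi|)$ is the separating line, the whole time-slice $\{(\tau,\xi):0\le\tau\le t\}$ lies in $\Zell(N)$; hence $g(\tau)|\xi|\ge N/2$ there, and condition \textbf{(E3)} ($b(\tau)g(\tau)\le\frac12$) gives $b(\tau)\le\frac{1}{2g(\tau)}\le\frac{|\xi|}{N}$, so that $g(\tau)|\xi|^2\le b(\tau)+g(\tau)|\xi|^2\le\big(1+\tfrac{2}{N^2}\big)g(\tau)|\xi|^2$ uniformly on $[0,t]$. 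Using this equivalence together with the monotonicity of $g$ (so $\int_0^t g(\tau)\,d\tau\ge tg(t)$ and $\int_0^t\frac{d\tau}{g(\tau)}\le\frac{t}{g(t)}$) and the zone inequality $|\xi|\ge\frac{N}{2g(t)}$, I obtain
\begin{align*}
|\xi|^2\int_0^t\frac{d\tau}{b(\tau)+g(\tau)|\xi|^2} &\le \int_0^t\frac{d\tau}{g(\tau)} \le \frac{t}{g(t)}, \\
\int_0^t\big(b(\tau)+g(\tau)|\xi|^2\big)\,d\tau &\ge |\xi|^2\int_0^t g(\tau)\,d\tau \ge \frac{N^2}{4}\,\frac{t}{g(t)},
\end{align*}
whence $|\xi|^2\int_0^t\frac{d\tau}{b+g|\xi|^2}\le\frac{4}{N^2}\int_0^t(b+g|\xi|^2)\,d\tau$. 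Choosing the zone constant $N$ large enough that $\frac{4C}{N^2}\le1$ (with $C$ the constant appearing in Corollary \ref{Cor:Effective-Integrable_IncreasingZell}) yields $\exp\big(-\int_0^t(b+g|\xi|^2)\,d\tau\big)\le\exp\big(-C|\xi|^2\int_0^t\frac{d\tau}{b+g|\xi|^2}\big)$, so the extra summand is absorbed into the first one and the two claimed inequalities follow from Corollary \ref{Cor:Effective-Integrable_IncreasingZell}.

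I do not expect a genuine obstacle here; the only points needing care are the uniform equivalence $b(\tau)+g(\tau)|\xi|^2\approx g(\tau)|\xi|^2$ along the time-slice $[0,t]\times\{\xi\}$ (which relies on the whole slice lying in $\Zell(N)$, a consequence of $g'<0$ and the definition of $t_{\text{ell}}$) and the comparison of the two time integrals via $\int_0^t g\ge tg(t)$ and $|\xi|\ge\frac{N}{2g(t)}$. Everything else is a direct quotation of Corollary \ref{Cor:Effective-Integrable_IncreasingZell}.
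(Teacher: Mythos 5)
Your argument is correct, but it takes a noticeably longer route than the paper's. To absorb the extra summand $\exp\big(-\int_0^t(b+g|\xi|^2)\,d\tau\big)|\xi|^{|\beta|}|\hat u_1|$ into the main one, you bound the two time-integrals separately against the intermediate quantity $t/g(t)$ (using the monotonicity of $g$ from \textbf{(E1)} and the zone bound $g(t)|\xi|\ge N/2$), obtaining $|\xi|^2\int_0^t\frac{d\tau}{b+g|\xi|^2}\le\frac{4}{N^2}\int_0^t(b+g|\xi|^2)\,d\tau$, and then you impose a size condition on $N$ so that the constant $C$ fits. The paper does the same absorption pointwise: by the definition of $\Pi_{\text{ell}}$ in \eqref{Eq:Effective-Integrable-Pi-Hyp-Ell} one has $2|\xi|<b(\tau)+g(\tau)|\xi|^2$ for every $\tau\in[0,t]$, hence $\frac{4|\xi|^2}{b(\tau)+g(\tau)|\xi|^2}<b(\tau)+g(\tau)|\xi|^2$, and integrating gives the comparison with the clean constant $4$ and no constraint on $N$. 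The paper's route is both shorter and sharper, since it never introduces the intermediary $t/g(t)$ and works for any $C\le 4$ uniformly in $N$.

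Two side remarks. First, your opening observation that the whole time-slice $\{(\tau,\xi):0\le\tau\le t\}$ lies in $\Zell(N)$ is in fact not needed for the chain of inequalities you run: the monotonicity steps $\int_0^t\frac{d\tau}{g(\tau)}\le\frac{t}{g(t)}$ and $\int_0^t g(\tau)\,d\tau\ge tg(t)$ use only $g'<0$, and the zone bound $g(t)|\xi|\ge N/2$ is used only at the endpoint $t$. Second, the sandwich $g(\tau)|\xi|^2\le b(\tau)+g(\tau)|\xi|^2\le(1+\tfrac{2}{N^2})g(\tau)|\xi|^2$ you record is never invoked afterwards — only the trivial lower bound $b+g|\xi|^2\ge g|\xi|^2$ enters. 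Both extra observations are correct but can be dropped.
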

\begin{proof}
We have
\begin{align*}
|\xi|^{|\beta|}|\hat{u}(t,\xi)| &\lesssim \exp\Big( -C|\xi|^2\int_0^t\frac{1}{b(\tau)+g(\tau)|\xi|^2}d\tau \Big)\big( |\xi|^{|\beta|}|\hat{u}_0(\xi)| + |\xi|^{|\beta|-1}|\hat{u}_1(\xi)| \big), \\
|\xi|^{|\beta|}|\hat{u}_t(t,\xi)| &\lesssim  \exp\Big( -C|\xi|^2\int_0^t\frac{1}{b(\tau)+g(\tau)|\xi|^2}d\tau \Big)\big( |\xi|^{|\beta|+1}|\hat{u}_0(\xi)| + |\xi|^{|\beta|}|\hat{u}_1(\xi)| \big) \\
& \qquad + \exp\Big( -\int_0^t \big( b(\tau)+g(\tau)|\xi|^2 \big)d\tau \Big)|\xi|^{|\beta|}|\hat{u}_1(\xi)| \\
& \lesssim  \exp\Big( -C|\xi|^2\int_0^t\frac{1}{b(\tau)+g(\tau)|\xi|^2}d\tau \Big)\big( |\xi|^{|\beta|+1}|\hat{u}_0(\xi)| + |\xi|^{|\beta|}|\hat{u}_1(\xi)| \big),
\end{align*}
where due to the definition of $\Pi_{\text{ell}}$ in \eqref{Eq:Effective-Integrable-Pi-Hyp-Ell}, we used
\[ 2|\xi| < b(t) + g(t)|\xi|^2, \qquad \text{which implies that} \qquad -\frac{4|\xi|^2}{b(t) + g(t)|\xi|^2} > - \big( b(t) + g(t)|\xi|^2 \big), \]
which shows the desired estimates.
\end{proof}
\noindent \textit{Case 2:} $t_{\text{ell}} \leq t \leq t_{\text{red}}$. In this case, we use first Corollary \ref{Cor:Effective-Integrable-Zred} with $s=t_{\text{ell}}$ and, then the estimates from Lemma \ref{Lemma:Effective-Decaying-Large1}.
\begin{lemma} \label{Lemma:Effective-Decaying-Large2}
We have the following estimates for large frequencies and $t_{\text{ell}} \leq t \leq t_{\text{red}}$:
\begin{align*}
|\xi|^{|\beta|}|\hat{u}(t,\xi)| &\lesssim \exp\Big( -C|\xi|^2\int_0^{t}\frac{1}{b(\tau)+g(\tau)|\xi|^2}d\tau \Big)\big( |\xi|^{|\beta|}|\hat{u}_0(\xi)| + |\xi|^{|\beta|-1}|\hat{u}_1(\xi)| \big) \quad \mbox{for} \quad |\beta| \geq 1, \\
|\xi|^{|\beta|}|\hat{u}_t(t,\xi)|& \lesssim  \exp\Big( -C|\xi|^2\int_0^{t}\frac{1}{b(\tau)+g(\tau)|\xi|^2}d\tau \Big)\big( |\xi|^{|\beta|+1}|\hat{u}_0(\xi)| + |\xi|^{|\beta|}|\hat{u}_1(\xi)| \big) \quad \mbox{for} \quad |\beta| \geq 0.
\end{align*}
\end{lemma}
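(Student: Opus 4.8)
The plan is to prove Lemma \ref{Lemma:Effective-Decaying-Large2} by a gluing argument along the separating line $t_{\text{ell}}=t_{\text{ell}}(|\xi|)$, combining the energy monotonicity in the reduced zone with the elliptic-zone estimate already obtained. First I would fix $(t,\xi)$ with $t_{\text{ell}}\leq t\leq t_{\text{red}}$ and apply Corollary \ref{Cor:Effective-Integrable-Zred} with $s=t_{\text{ell}}$, which gives
\begin{align*}
|\xi|^{|\beta|}|\hat{u}(t,\xi)| &\lesssim |\xi|^{|\beta|}|\hat{u}(t_{\text{ell}},\xi)| + |\xi|^{|\beta|-1}|\hat{u}_t(t_{\text{ell}},\xi)| \qquad (|\beta|\geq 1), \\
|\xi|^{|\beta|}|\hat{u}_t(t,\xi)| &\lesssim |\xi|^{|\beta|+1}|\hat{u}(t_{\text{ell}},\xi)| + |\xi|^{|\beta|}|\hat{u}_t(t_{\text{ell}},\xi)| \qquad (|\beta|\geq 0).
\end{align*}
This reduces everything to controlling $\hat{u}$ and $\hat{u}_t$ on the separating line $t_{\text{ell}}$ in terms of the data.

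For that I would insert the estimates from Lemma \ref{Lemma:Effective-Decaying-Large1} evaluated at $t=t_{\text{ell}}$. Writing $\mathcal{K}(\xi):=\exp\Big( -C|\xi|^2\int_0^{t_{\text{ell}}}\frac{d\tau}{b(\tau)+g(\tau)|\xi|^2} \Big)$ for a suitable $C>0$, the term $|\xi|^{|\beta|}|\hat{u}(t_{\text{ell}},\xi)|$ is bounded directly by $\mathcal{K}(\xi)\big( |\xi|^{|\beta|}|\hat{u}_0(\xi)|+|\xi|^{|\beta|-1}|\hat{u}_1(\xi)| \big)$, while $|\xi|^{|\beta|-1}|\hat{u}_t(t_{\text{ell}},\xi)|$ is bounded by the $\hat{u}_t$-estimate of Lemma \ref{Lemma:Effective-Decaying-Large1} applied with $|\beta|$ replaced by $|\beta|-1$ (admissible since $|\beta|\geq 1$), which produces the same right-hand side; similarly $|\xi|^{|\beta|+1}|\hat{u}(t_{\text{ell}},\xi)|$ and $|\xi|^{|\beta|}|\hat{u}_t(t_{\text{ell}},\xi)|$ are each bounded by $\mathcal{K}(\xi)\big( |\xi|^{|\beta|+1}|\hat{u}_0(\xi)|+|\xi|^{|\beta|}|\hat{u}_1(\xi)| \big)$. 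Substituting these back into the two displays above yields exactly the claimed estimates, except that the exponential weight is the cut-off weight $\mathcal{K}(\xi)$ rather than $\exp\big( -C|\xi|^2\int_0^{t}\frac{d\tau}{b(\tau)+g(\tau)|\xi|^2} \big)$; this step is pure bookkeeping of the exponents $|\beta|$.

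The only genuine work is therefore to compare the two weighted integrals $\int_0^{t_{\text{ell}}}\frac{d\tau}{b(\tau)+g(\tau)|\xi|^2}$ and $\int_0^{t}\frac{d\tau}{b(\tau)+g(\tau)|\xi|^2}$ uniformly for $t_{\text{ell}}\leq t\leq t_{\text{red}}$: once they are shown to be comparable up to a multiplicative constant, $\mathcal{K}(\xi)\lesssim \exp\big( -C'|\xi|^2\int_0^{t}\frac{d\tau}{b(\tau)+g(\tau)|\xi|^2} \big)$ for a smaller $C'>0$, which is harmless. Here I would use that on $\Zred(N,\varepsilon)$ one has $g(\tau)|\xi|\approx 1$, hence $b(\tau)+g(\tau)|\xi|^2\approx|\xi|$, whereas on $[0,t_{\text{ell}}]\subset\Zell(N)$ one has $b(\tau)+g(\tau)|\xi|^2\approx g(\tau)|\xi|^2$; so it suffices to verify $\int_{t_{\text{ell}}}^{t_{\text{red}}}\frac{d\tau}{g(\tau)}\lesssim \int_0^{t_{\text{ell}}}\frac{d\tau}{g(\tau)}$ uniformly in $\xi$, which one deduces from conditions \textbf{(E1)}, \textbf{(E2)}, \textbf{(E5)} together with the defining relations $g(t_{\text{ell}})|\xi|\approx N$ and $g(t_{\text{red}})|\xi|\approx\varepsilon$, confining the variation of $g$ over the reduced zone to the fixed ratio $\varepsilon/N$. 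This weighted-integral comparison is the main obstacle; everything else then follows by substitution.
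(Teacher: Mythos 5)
Your gluing structure is identical to the paper's (Corollary \ref{Cor:Effective-Integrable-Zred} with $s=t_{\text{ell}}$, followed by Lemma \ref{Lemma:Effective-Decaying-Large1} at the separating line), but you handle the crucial extension of the decay integral from $[0,t_{\text{ell}}]$ to $[0,t]$ differently, and your version is actually the more defensible one. The paper attempts to prove $|\xi|^2\int_{t_{\text{ell}}}^{t}\frac{d\tau}{b(\tau)+g(\tau)|\xi|^2}\lesssim 1$, but this is false: with $g(\tau)=e^{-\tau}$, $b(\tau)=1+\tau$ (which satisfy \textbf{(E1)}--\textbf{(E5)} for $\tau\geq t_0$), on the reduced zone one has $\tfrac{|\xi|^2}{b+g|\xi|^2}\approx\tfrac{1}{g(\tau)}=e^\tau$, so the integral over $[t_{\text{ell}},t_{\text{red}}]$ grows like $|\xi|$. (Indeed the paper's displayed inequality goes the wrong way: from $\tfrac{b(\tau)}{|\xi|^2}+g(\tau)\leq Cg(\tau)$ one gets $\tfrac{|\xi|^2}{b+g|\xi|^2}\geq \tfrac{1}{Cg}$, not $\leq Cg$.) Your comparison route -- showing $\int_{t_{\text{ell}}}^{t}\lesssim\int_{0}^{t_{\text{ell}}}$ and then shrinking the constant $C$ to $C'$ -- is the correct fix: it needs only that the two pieces of $\int_0^t$ be comparable, which the $O(1)$ claim does not. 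The genuine gap in your proposal is that you assert without proof that $\int_{t_{\text{ell}}}^{t_{\text{red}}}\frac{d\tau}{g(\tau)}\lesssim\int_{0}^{t_{\text{ell}}}\frac{d\tau}{g(\tau)}$ follows from \textbf{(E1)}, \textbf{(E2)}, \textbf{(E5)}. This holds in every natural example (exponential, power law, log-modified power law, always with ratio roughly a power of $N/\varepsilon$), but the general deduction is not obvious: a Laplace-type heuristic shows the ratio is controlled by $\psi(t_{\text{ell}})/\psi(t_{\text{red}})$ where $\psi=-g'/g$, and you would need to show $\psi$ cannot vary wildly across the reduced strip -- a constraint that does not follow immediately from the listed hypotheses and would require its own argument.
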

\begin{proof}
We have
\begin{align*}
|\xi|^{|\beta|}|\hat{u}(t,\xi)| &\lesssim |\xi|^{|\beta|}|\hat{u}(t_{\text{ell}},\xi)| + |\xi|^{|\beta|-1}|\hat{u}_t(t_{\text{ell}},\xi)| \\
& \leq \exp\Big( -C|\xi|^2\int_0^{t_{\text{ell}}}\frac{1}{b(\tau)+g(\tau)|\xi|^2}d\tau \Big)\big( |\xi|^{|\beta|}|\hat{u}_0(\xi)| + |\xi|^{|\beta|-1}|\hat{u}_1(\xi)| \big) \\
& \leq \exp\Big( -C|\xi|^2\int_0^{t}\frac{1}{b(\tau)+g(\tau)|\xi|^2}d\tau \Big)\big( |\xi|^{|\beta|}|\hat{u}_0(\xi)| + |\xi|^{|\beta|-1}|\hat{u}_1(\xi)| \big), \\
|\xi|^{|\beta|}|\hat{u}_t(t,\xi)| &\lesssim |\xi|^{|\beta|+1}|\hat{u}(t_{\text{ell}},\xi)| + |\xi|^{|\beta|}|\hat{u}_t(t_{\text{ell}},\xi)| \\
& \leq \exp\Big( -C|\xi|^2\int_0^{t_{\text{ell}}}\frac{1}{b(\tau)+g(\tau)|\xi|^2}d\tau \Big)\big( |\xi|^{|\beta|+1}|\hat{u}_0(\xi)| + |\xi|^{|\beta|}|\hat{u}_1(\xi)| \big) \\
& \leq \exp\Big( -C|\xi|^2\int_0^{t}\frac{1}{b(\tau)+g(\tau)|\xi|^2}d\tau \Big)\big( |\xi|^{|\beta|+1}|\hat{u}_0(\xi)| + |\xi|^{|\beta|}|\hat{u}_1(\xi)| \big).
\end{align*}
Here to extend the integral, we need to show that
\[ \exp\Big( -C|\xi|^2\int_{t_{\text{ell}}}^t\frac{1}{b(\tau)+g(\tau)|\xi|^2}d\tau \Big) \sim 1. \]
For this, it is enough to show that
\[ |\xi|^2\int_{t_{\text{ell}}}^t\frac{1}{b(\tau)+g(\tau)|\xi|^2}d\tau \lesssim 1. \]
Indeed, we have
\[ \frac{b(\tau)}{|\xi|^2} + g(\tau) \leq \frac{4}{\varepsilon^2} b(\tau)g(\tau)^2+g(\tau)\leq \Big( \frac{2}{\varepsilon^2}+1 \Big)g(\tau), \]
where we used $|\xi|\geq \dfrac{\varepsilon}{2g(t)}$ due to the definition of $\Zred(N,\varepsilon)$ and $b(t)g(t)\leq \dfrac{1}{2}$ due to condition \textbf{(E3)}. This implies
\[ |\xi|^2\int_{t_{\text{ell}}}^t\frac{1}{b(\tau)+g(\tau)|\xi|^2}d\tau \leq \Big( \frac{2}{\varepsilon^2}+1 \Big)\int_{t_{\text{ell}}}^tg(\tau)d\tau \lesssim 1. \]
Hence, we may conclude the proof.
\end{proof}
\noindent \textit{Case 3:} $t_{\text{red}}\leq t \leq t_{\text{hyp}}$. In this case, we use Proposition \ref{Prop_Effective-Integrable-Zhyp} for $s=t_{\text{red}}$ and, then the estimates from Lemma \ref{Lemma:Effective-Decaying-Large2}.
\begin{lemma} \label{Lemma:Effective-Decaying-Large3}
We have the following estimates for large frequencies and $t_{\text{red}}\leq t \leq t_{\text{hyp}}$:
\begin{align*}
|\xi|^{|\beta|}|\hat{u}(t,\xi)| & \lesssim \exp\Big( -\widetilde{C}\int_{0}^t b(\tau)d\tau \Big)\big( |\xi|^{|\beta|}|\hat{u}_0(\xi)| + |\xi|^{|\beta|-1}|\hat{u}_1(\xi)| \big), \\
|\xi|^{|\beta|}|\hat{u}_t(t,\xi)| & \lesssim \exp\Big( -\widetilde{C}\int_{0}^t b(\tau)d\tau \Big)\big( |\xi|^{|\beta|+1}|\hat{u}_0(\xi)| + |\xi|^{|\beta|}|\hat{u}_1(\xi)| \big).
\end{align*}
\end{lemma}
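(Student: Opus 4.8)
The plan is to carry out the \textit{gluing procedure} announced before the statement: combine the hyperbolic estimate of Proposition \ref{Prop_Effective-Integrable-Zhyp} with the elliptic/reduced estimates already established in Lemma \ref{Lemma:Effective-Decaying-Large2}. First I would fix a sufficiently large frequency $\xi$, so that the trajectory $\{(t,\xi):t\geq 0\}$ crosses $\Zell(N)$, then $\Zred(N,\varepsilon)$, and only afterwards --- for $t\geq t_0(\varepsilon)$ --- enters $\Zhyp(\varepsilon,t_0)$. Applying Proposition \ref{Prop_Effective-Integrable-Zhyp} with $s=t_{\text{red}}=t_{\text{red}}(|\xi|)$ gives, for $t_{\text{red}}\leq t\leq t_{\text{hyp}}$,
\begin{align*}
|\xi|^{|\beta|}|\hat u(t,\xi)| &\lesssim \exp\Big(-\tfrac12\int_{t_{\text{red}}}^t\big(b(\tau)+g(\tau)|\xi|^2\big)d\tau\Big)\big(|\xi|^{|\beta|}|\hat u(t_{\text{red}},\xi)|+|\xi|^{|\beta|-1}|\hat u_t(t_{\text{red}},\xi)|\big),
\end{align*}
together with the analogous inequality for $|\xi|^{|\beta|}|\hat u_t(t,\xi)|$. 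Next I would substitute the bounds of Lemma \ref{Lemma:Effective-Decaying-Large2} evaluated at the time $t_{\text{red}}$ (using the $\hat u_t$-line with $|\beta|$ shifted down by one), which replaces $|\hat u(t_{\text{red}},\xi)|$ and $|\hat u_t(t_{\text{red}},\xi)|$ by the data $\hat u_0,\hat u_1$ at the price of an extra factor $\exp\big(-C|\xi|^2\int_0^{t_{\text{red}}}(b(\tau)+g(\tau)|\xi|^2)^{-1}d\tau\big)$.

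It then remains to reorganize the two exponential weights into the single factor $\exp(-\widetilde C\int_0^t b(\tau)d\tau)$. For the hyperbolic portion this is trivial, since $g\geq 0$ forces $\exp\big(-\tfrac12\int_{t_{\text{red}}}^t(b+g|\xi|^2)\big)\leq\exp\big(-\tfrac12\int_{t_{\text{red}}}^t b\big)$. For the elliptic/reduced portion I would establish the pointwise comparison
\[ \frac{|\xi|^2}{b(t)+g(t)|\xi|^2}\ \gtrsim\ b(t)\qquad\text{on}\quad\big(\Zell(N)\cup\Zred(N,\varepsilon)\big), \]
which, together with the fact that the segment $0\leq t\leq t_{\text{red}}$ of the trajectory lies in $\Zell(N)\cup\Zred(N,\varepsilon)$, yields $|\xi|^2\int_0^{t_{\text{red}}}(b+g|\xi|^2)^{-1}d\tau\gtrsim\int_0^{t_{\text{red}}}b(\tau)d\tau$. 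In $\Zell(N)$ the comparison is immediate from $g(t)|\xi|\geq N/2$ (hence $b(t)\leq\tfrac1{2g(t)}\leq g(t)|\xi|^2$ by \textbf{(E3)}, so $b+g|\xi|^2\leq 2g|\xi|^2$ and $\tfrac{|\xi|^2}{b+g|\xi|^2}\geq\tfrac1{2g(t)}\geq b(t)$, again by \textbf{(E3)}); in $\Zred(N,\varepsilon)$ the quantity $g(t)|\xi|$ is pinched between $\varepsilon/2$ and a multiple of $N$ while $b(t)g(t)\lesssim\varepsilon^2$, so $b(t)+g(t)|\xi|^2\approx|\xi|$ and $|\xi|\gtrsim b(t)$. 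Multiplying the two weights and shrinking $\widetilde C$ to $\min\{\widetilde C,\tfrac12\}$ produces $\exp(-\widetilde C\int_0^{t_{\text{red}}}b)\exp(-\tfrac12\int_{t_{\text{red}}}^t b)\lesssim\exp(-\widetilde C\int_0^t b)$, which is the desired weight.

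The argument is essentially routine once these zone comparisons are in place, and I expect the main obstacle to be the bookkeeping in the reduced zone, where $b+g|\xi|^2$ is dominated neither by $b$ alone nor by $g|\xi|^2$ alone; there one has to play both boundaries of $\Zred(N,\varepsilon)$ off against condition \textbf{(E3)} to get $b+g|\xi|^2\approx|\xi|\gtrsim b$, and also to recall --- as already checked inside the proof of Lemma \ref{Lemma:Effective-Decaying-Large2} via $g\in L^1([0,\infty))$ --- that the sub-integral over $[t_{\text{ell}},t_{\text{red}}]$ contributes only an $O(1)$ term, so that the weight produced in the reduced zone is comparable to the genuine elliptic weight over $[0,t_{\text{ell}}]$. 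The resulting estimates hold for $t_{\text{red}}\leq t\leq t_{\text{hyp}}$ and all admissible $|\beta|$, which is exactly Lemma \ref{Lemma:Effective-Decaying-Large3}.
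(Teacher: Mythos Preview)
Your proposal is correct and follows the paper's gluing strategy exactly: apply Proposition~\ref{Prop_Effective-Integrable-Zhyp} with $s=t_{\text{red}}$, feed in Lemma~\ref{Lemma:Effective-Decaying-Large2} at time $t_{\text{red}}$, and combine the two exponential weights. The only point of divergence is \emph{how} the elliptic/reduced weight $\exp\big(-C|\xi|^2\int_0^{t_{\text{red}}}(b+g|\xi|^2)^{-1}d\tau\big)$ is converted into a pure $b$-weight. The paper passes through the intermediate bound $\exp\big(-\tfrac{C}{4}\int_0^{t_{\text{red}}}(b+g|\xi|^2)d\tau\big)$ by invoking the $\Pi_{\text{hyp}}$ inequality $2|\xi|>b+g|\xi|^2$ on the interval $[0,t_{\text{red}}]$; but that interval lies in $\Zell(N)\cup\Zred(N,\varepsilon)\subset\Pi_{\text{ell}}\cup\Zred$, where the inequality points the \emph{wrong} way on the elliptic part. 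Your route---using \textbf{(E3)} to get $\tfrac{|\xi|^2}{b+g|\xi|^2}\geq\tfrac{1}{2g}\geq b$ directly on $\Zell(N)$, and the equivalence $b+g|\xi|^2\approx|\xi|\gtrsim b$ on $\Zred(N,\varepsilon)$---avoids this issue entirely and is in fact the same comparison the paper itself uses later in the proof of Theorem~\ref{Theorem_Effective-Decreasing_Integrable-Decaying}. So your argument is not only aligned with the paper's approach but supplies a sounder justification for the weight conversion.
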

\begin{proof}
We have
\begin{align*}
|\xi|^{|\beta|}|\hat{u}(t,\xi)| &\lesssim \exp\Big( -\frac{1}{2}\int_{t_{\text{red}}}^t\big( b(\tau)+g(\tau)|\xi|^2 \big)d\tau \Big)\big( |\xi|^{|\beta|}|\hat{u}(t_{\text{red}},\xi)| + |\xi|^{|\beta|-1}|\hat{u}_t(t_{\text{red}},\xi)| \big) \\
& \lesssim  \exp\Big( -\frac{1}{2}\int_{t_{\text{red}}}^t\big( b(\tau)+g(\tau)|\xi|^2 \big)d\tau \Big)\exp\Big( -C|\xi|^2\int_0^{t_{\text{red}}}\frac{1}{b(\tau)+g(\tau)|\xi|^2}d\tau \Big)\\
& \quad \times \big( |\xi|^{|\beta|}|\hat{u}_0(\xi)| + |\xi|^{|\beta|-1}|\hat{u}_1(\xi)| \big) \\
& \lesssim \exp\Big( -\frac{1}{2}\int_{t_{\text{red}}}^t\big( b(\tau)+g(\tau)|\xi|^2 \big)d\tau \Big)\exp\Big( -\frac{C}{4}\int_0^{t_{\text{red}}}\big( b(\tau)+g(\tau)|\xi|^2 \big)d\tau \Big)\\
& \quad \times \big( |\xi|^{|\beta|}|\hat{u}_0(\xi)| + |\xi|^{|\beta|-1}|\hat{u}_1(\xi)| \big) \\
& \lesssim \exp\Big( -\widetilde{C}\int_{0}^t\big( b(\tau)+g(\tau)|\xi|^2 \big)d\tau \Big) \big( |\xi|^{|\beta|}|\hat{u}_0(\xi)| + |\xi|^{|\beta|-1}|\hat{u}_1(\xi)| \big), \\
|\xi|^{|\beta|}|\hat{u}_t(t,\xi)| &\lesssim \exp\Big( -\frac{1}{2}\int_{t_{\text{red}}}^t\big( b(\tau)+g(\tau)|\xi|^2 \big)d\tau \Big)\big( |\xi|^{|\beta|+1}|\hat{u}(t_{\text{red}},\xi)| + |\xi|^{|\beta|}|\hat{u}_t(t_{\text{red}},\xi)| \big) \\
& \lesssim  \exp\Big( -\frac{1}{2}\int_{t_{\text{red}}}^t\big( b(\tau)+g(\tau)|\xi|^2 \big)d\tau \Big)\exp\Big( -C|\xi|^2\int_0^{t_{\text{ell}}}\frac{1}{b(\tau)+g(\tau)|\xi|^2}d\tau \Big)\\
& \quad \times \big( |\xi|^{|\beta|+1}|\hat{u}_0(\xi)| + |\xi|^{|\beta|}|\hat{u}_1(\xi)| \big) \\
& \lesssim \exp\Big( -\widetilde{C}\int_{0}^t\big( b(\tau)+g(\tau)|\xi|^2 \big)d\tau \Big) \big( |\xi|^{|\beta|+1}|\hat{u}_0(\xi)| + |\xi|^{|\beta|}|\hat{u}_1(\xi)| \big),
\end{align*}
where due to the definition of $\Pi_{\text{hyp}}$ in \eqref{Eq:Effective-Integrable-Pi-Hyp-Ell}, we have used
\[ 2|\xi| > b(t) + g(t)|\xi|^2, \qquad \text{which implies that} \qquad -\frac{4|\xi|^2}{b(t) + g(t)|\xi|^2} < - \big( b(t) + g(t)|\xi|^2 \big). \]
Since $g\in L^1([0,\infty))$ due to condition \textbf{(E2)}, we have
\begin{align*}
\exp\Big( -\widetilde{C}\int_{0}^t\big( b(\tau)+g(\tau)|\xi|^2 \big)d\tau \Big) &=  \exp\Big( -\widetilde{C}\int_{0}^t b(\tau)d\tau \Big) \exp\Big( -\widetilde{C}|\xi|^2\int_{0}^t g(\tau)d\tau \Big) \\
& \leq \exp\Big( -\widetilde{C}\int_{0}^t b(\tau)d\tau \Big).
\end{align*}
Then, the proof is completed.
\end{proof}
\noindent \textit{Case 4:} $t_{\text{hyp}}\leq t \leq t_1(|\xi|)$. In this case, we use Corollary \ref{Cor:Effective-Integrable-EM14-Zdiss} for $s=t_{\text{hyp}}$ and, then the estimates from Lemma \ref{Lemma:Effective-Decaying-Large3}.
\begin{lemma} \label{Lemma:Effective-Decaying-Large4}
We have the following estimates for large frequencies and $t_{\text{hyp}}\leq t \leq t_1(|\xi|)$:
\begin{align*}
|\xi|^{|\beta|}|\hat{u}(t,\xi)| & \lesssim \exp\Big( -C'\int_{0}^t b(\tau)d\tau \Big)\big( |\xi|^{|\beta|}|\hat{u}_0(\xi)| + |\xi|^{|\beta|-1}|\hat{u}_1(\xi)| \big), \\
|\xi|^{|\beta|}|\hat{u}_t(t,\xi)| & \lesssim \exp\Big( -C'\int_{0}^t b(\tau)d\tau \Big)\big( |\xi|^{|\beta|+1}|\hat{u}_0(\xi)| + |\xi|^{|\beta|}|\hat{u}_1(\xi)| \big).
\end{align*}
\end{lemma}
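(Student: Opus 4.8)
The plan is to use the \emph{gluing procedure}, matching the estimates from the hyperbolic zone to those in the sub-region $\Zred(0,\varepsilon)$ of the dissipative zone across the separating line $t_{\text{hyp}}$. Since Case~4 concerns $(t,\xi)\in \Zred(0,\varepsilon)$ with $t_{\text{hyp}}\leq t\leq t_1(|\xi|)$, both points $(t_{\text{hyp}},\xi)$ and $(t,\xi)$ belong to $\Zred(0,\varepsilon)$, so Corollary~\ref{Cor:Effective-Integrable-EM14-Zdiss} is applicable with the choice $s=t_{\text{hyp}}$.

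First I would invoke Corollary~\ref{Cor:Effective-Integrable-EM14-Zdiss} at $s=t_{\text{hyp}}$ to get, for $|\beta|\geq 1$,
\[ |\xi|^{|\beta|}|\hat{u}(t,\xi)| \lesssim \exp\Big( -C_2\int_{t_{\text{hyp}}}^{t} b(\tau)\,d\tau \Big)\big( |\xi|^{|\beta|}|\hat{u}(t_{\text{hyp}},\xi)| + |\xi|^{|\beta|-1}|\hat{u}_t(t_{\text{hyp}},\xi)| \big), \]
and the analogous bound for $|\xi|^{|\beta|}|\hat{u}_t(t,\xi)|$ with $|\beta|\geq 0$. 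Then I would substitute into the right-hand sides the estimates for $|\xi|^{|\beta|}|\hat{u}(t_{\text{hyp}},\xi)|$ and $|\xi|^{|\beta|}|\hat{u}_t(t_{\text{hyp}},\xi)|$ supplied by Lemma~\ref{Lemma:Effective-Decaying-Large3} evaluated at time $t_{\text{hyp}}$, namely a bound by $\exp\big(-\widetilde{C}\int_0^{t_{\text{hyp}}} b(\tau)\,d\tau\big)$ times the data terms $|\xi|^{|\beta|}|\hat{u}_0(\xi)|+|\xi|^{|\beta|-1}|\hat{u}_1(\xi)|$ (respectively $|\xi|^{|\beta|+1}|\hat{u}_0(\xi)|+|\xi|^{|\beta|}|\hat{u}_1(\xi)|$). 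The combined exponential factor is then $\exp\big(-C_2\int_{t_{\text{hyp}}}^{t} b(\tau)\,d\tau\big)\exp\big(-\widetilde{C}\int_0^{t_{\text{hyp}}} b(\tau)\,d\tau\big)$, and since $b(\tau)\geq 0$ this is bounded by $\exp\big(-C'\int_0^{t} b(\tau)\,d\tau\big)$ with $C':=\min\{C_2,\widetilde{C}\}$, which is exactly the claimed estimate.

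The only point that deserves care is the compatibility of the two time intervals: one must check that the endpoint $t_{\text{hyp}}$ used as the ``initial time'' for the dissipative sub-zone estimate is the same point at which the hyperbolic-zone bound of Lemma~\ref{Lemma:Effective-Decaying-Large3} terminates. This is guaranteed by the definition of the separating lines together with the equivalence $\tilde{b}(t,\xi)\sim b(t)$ valid on $\Zdiss(\varepsilon)$, so no genuine obstacle arises; the argument is purely a matter of bookkeeping with the exponential weights. For completeness I would also note that whenever an intermediate estimate still carries a factor $g(\tau)|\xi|^2$ in the exponent it can be dropped via $g\in L^1([0,\infty))$ from condition \textbf{(E2)}, although here Corollary~\ref{Cor:Effective-Integrable-EM14-Zdiss} already provides a bound purely in terms of $b=b(t)$, so this remark is not even needed.
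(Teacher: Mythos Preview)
Your proposal is correct and follows essentially the same route as the paper: apply Corollary~\ref{Cor:Effective-Integrable-EM14-Zdiss} with $s=t_{\text{hyp}}$, substitute the bounds from Lemma~\ref{Lemma:Effective-Decaying-Large3} at time $t_{\text{hyp}}$, and combine the two exponential factors via $C'=\min\{C_2,\widetilde C\}$. Your additional remarks on the compatibility of the separating lines and the role of $g\in L^1$ are accurate but not needed for the argument.
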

\begin{proof}
We have
\begin{align*}
|\xi|^{|\beta|}|\hat{u}(t,\xi)| &\lesssim\exp\Big( -C_2\int_{t_{\text{hyp}}}^t b(\tau)d\tau \Big) \big( |\xi|^{|\beta|}|\hat{u}(t_{\text{hyp}},\xi)| + |\xi|^{|\beta|-1}|\hat{u}_t(t_{\text{hyp}},\xi)| \big)  \\
& \lesssim  \exp\Big( -C_2\int_{t_{\text{hyp}}}^t b(\tau)d\tau \Big) \exp\Big( -\widetilde{C}\int_{0}^{t_{\text{hyp}}} b(\tau)d\tau \Big)\big( |\xi|^{|\beta|}|\hat{u}_0(\xi)| + |\xi|^{|\beta|-1}|\hat{u}_1(\xi)| \big) \\
& \lesssim \exp\Big( -C'\int_{0}^t b(\tau)d\tau \Big)\big( |\xi|^{|\beta|}|\hat{u}_0(\xi)| + |\xi|^{|\beta|-1}|\hat{u}_1(\xi)| \big), \\
|\xi|^{|\beta|}|\hat{u}_t(t,\xi)| &\lesssim \exp\Big( -C_2\int_{t_{\text{hyp}}}^t b(\tau)d\tau \Big) \big( |\xi|^{|\beta|+1}|\hat{u}(t_{\text{hyp}},\xi)| + |\xi|^{|\beta|}|\hat{u}_t(t_{\text{hyp}},\xi)| \big) \\
& \lesssim \Big( -C_2\int_{t_{\text{hyp}}}^t b(\tau)d\tau \Big)\exp\Big( -\widetilde{C}\int_{0}^{t_{\text{hyp}}} b(\tau)d\tau \Big)\big( |\xi|^{|\beta|+1}|\hat{u}_0(\xi)| + |\xi|^{|\beta|}|\hat{u}_1(\xi)| \big) \\
& \lesssim \exp\Big( -C'\int_{0}^{t} b(\tau)d\tau \Big)\big( |\xi|^{|\beta|+1}|\hat{u}_0(\xi)| + |\xi|^{|\beta|}|\hat{u}_1(\xi)| \big).
\end{align*}
\end{proof}
\noindent \textit{Case 5:} $t \geq t_1(|\xi|)$. In this case, we use Corollary \ref{Cor:Effective-Integrable-EM10-Zdiss} and the estimates from Lemma \ref{Lemma:Effective-Decaying-Large4}.
\begin{lemma}  \label{Lemma:Effective-Decaying-Large5}
We have the following estimates for large frequencies and $t\geq t_1(|\xi|)$:
\begin{align*}
|\xi|^{|\beta|}|\hat{u}(t,\xi)| & \lesssim \exp\Big( -C_1'|\xi|^2\int_{0}^t \frac{1}{b(\tau)}d\tau \Big)\big( |\xi|^{|\beta|}|\hat{u}_0(\xi)| + |\xi|^{|\beta|-1}|\hat{u}_1(\xi)| \big), \\
|\xi|^{|\beta|}|\hat{u}_t(t,\xi)| & \lesssim \exp\Big( -C_1'|\xi|^2\int_{0}^t \frac{1}{b(\tau)}d\tau \Big)\big( |\xi|^{|\beta|+1}|\hat{u}_0(\xi)| + |\xi|^{|\beta|}|\hat{u}_1(\xi)| \big).
\end{align*}
\end{lemma}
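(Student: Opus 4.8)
The plan is to treat the final time window $t \ge t_1(|\xi|)$ for large frequencies, which for increasing $b$ is exactly the subregion $\Zell(0,\varepsilon)\subset\Zdiss(\varepsilon)$, by pasting the energy-method estimate available there onto the estimate already obtained in $\Zred(0,\varepsilon)$ in Lemma~\ref{Lemma:Effective-Decaying-Large4}. In other words, the whole lemma is a gluing statement across the transition line $t=t_1(|\xi|)$, in the same spirit as the proofs of Lemmas~\ref{Lemma:Effective-Decaying-Large3} and \ref{Lemma:Effective-Decaying-Large4}.

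First I would apply Corollary~\ref{Cor:Effective-Integrable-EM10-Zdiss} with the initial time chosen as the entrance time $s = t_1(|\xi|)$ of the trajectory into $\Zell(0,\varepsilon)$; here we use that $b$ is increasing, so $(t,\xi)\in\Zell(0,\varepsilon)$ iff $b(t)\ge \tfrac{2|\xi|}{\sqrt{1-\varepsilon^2}}$, i.e.\ iff $t\ge t_1(|\xi|)$, and that $K(t,\xi)=|\xi|^2/b(t)$ is an admissible energy multiplier there by Proposition~\ref{Prop:Effective-integrable-EM6-Zdiss}. This gives, for $|\beta|\ge 1$,
\[
|\xi|^{|\beta|}|\hat u(t,\xi)| \lesssim \exp\Big(-C_1|\xi|^2\int_{t_1}^t\frac{1}{b(\tau)}\,d\tau\Big)\big(|\xi|^{|\beta|}|\hat u(t_1,\xi)| + |\xi|^{|\beta|-1}|\hat u_t(t_1,\xi)|\big),
\]
together with the matching inequality for $|\xi|^{|\beta|}|\hat u_t(t,\xi)|$. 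Then I would insert the junction bound from Lemma~\ref{Lemma:Effective-Decaying-Large4}, which is valid up to and including $t=t_1(|\xi|)$, namely $|\xi|^{|\beta|}|\hat u(t_1,\xi)| + |\xi|^{|\beta|-1}|\hat u_t(t_1,\xi)| \lesssim \exp\big(-C'\int_0^{t_1}b(\tau)\,d\tau\big)\big(|\xi|^{|\beta|}|\hat u_0(\xi)| + |\xi|^{|\beta|-1}|\hat u_1(\xi)|\big)$, and analogously in the $\hat u_t$ case. The composite decay factor is then $\exp\big(-C_1|\xi|^2\int_{t_1}^t b^{-1}\,d\tau\big)\exp\big(-C'\int_0^{t_1}b\,d\tau\big)$.

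The remaining step, and the one carrying the actual content, is to absorb the junction factor into the sought-for single exponential by extending its integral back to the origin: one needs $\exp\big(-C'\int_0^{t_1(|\xi|)}b\,d\tau\big)\lesssim \exp\big(-C_1'|\xi|^2\int_0^{t_1(|\xi|)}b^{-1}\,d\tau\big)$, i.e.\ $|\xi|^2\int_0^{t_1(|\xi|)}b^{-1}\,d\tau\lesssim\int_0^{t_1(|\xi|)}b\,d\tau$, after which taking $C_1'\le C_1$ and splitting $\int_0^t=\int_0^{t_1}+\int_{t_1}^t$ yields the claimed estimate $\exp\big(-C_1'|\xi|^2\int_0^t b^{-1}\,d\tau\big)(\dots)$; the $\hat u_t$-estimate then follows in the same way. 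The point making this comparison work is that $t_1(|\xi|)$ is precisely the crossover time defined by $|\xi|\approx b(t_1(|\xi|))$, so that, combined with the monotonicity of $b$ and the polynomial-type control $|b'|/b\lesssim(1+t)^{-1}$ together with $tb(t)\to\infty$ from conditions \textbf{(B2)} and \textbf{(B3)}, both weighted integrals remain comparable to $t_1(|\xi|)\,b(t_1(|\xi|))$. I expect this last comparison of the weighted integrals at the transition frequency to be the main obstacle — it is the only place where the structural hypotheses on $b$ enter essentially — whereas the first two steps are a straightforward combination of the already-established Corollary~\ref{Cor:Effective-Integrable-EM10-Zdiss} and Lemma~\ref{Lemma:Effective-Decaying-Large4}.
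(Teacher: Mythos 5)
Your gluing strategy — Corollary~\ref{Cor:Effective-Integrable-EM10-Zdiss} started at the entrance time into $\Zell(0,\varepsilon)$, then inserting the junction estimate from Lemma~\ref{Lemma:Effective-Decaying-Large4} — is exactly the paper's approach for steps one and two. The discrepancy is in your step three, which is where the actual content lies, and which I believe fails.

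You need the comparison $|\xi|^2\int_0^{t_1(|\xi|)}b^{-1}\,d\tau\lesssim\int_0^{t_1(|\xi|)}b\,d\tau$ to upgrade the junction factor $\exp\big(-C'\int_0^{t_1}b\big)$ into the form $\exp\big(-C_1'|\xi|^2\int_0^{t_1}b^{-1}\big)$. This inequality does not follow from \textbf{(B1)}--\textbf{(B3)} and \textbf{(EF)}. Take $b(t)=3(1+t)^2$, which is increasing, effective, and satisfies $b'/b=2/(1+t)$, $b''/b=2/(1+t)^2$, and $|b'|/b^2=\tfrac{2}{3}(1+t)^{-3}\le\tfrac{2}{3}<1$, so \textbf{(B1)}--\textbf{(B3)}, \textbf{(EF)} all hold; and one may pair it with $g(t)=\tfrac{1}{10}(1+t)^{-2}$ to satisfy \textbf{(E1)}--\textbf{(E5)}. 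Here $b(t_1)\approx|\xi|$ forces $t_1\approx|\xi|^{1/2}$, while $\int_0^{t_1}b^{-1}\to\tfrac{1}{3}$ stays bounded, so $|\xi|^2\int_0^{t_1}b^{-1}\approx|\xi|^2$, whereas $\int_0^{t_1}b\approx t_1^3\approx|\xi|^{3/2}$. Thus $|\xi|^2\int_0^{t_1}b^{-1}\gg\int_0^{t_1}b$ and the comparison fails. Your heuristic that ``both weighted integrals remain comparable to $t_1(|\xi|)b(t_1(|\xi|))$'' is only true for $\int_0^{t_1}b$; the other integral $|\xi|^2\int_0^{t_1}b^{-1}$ can be strictly larger whenever the implicit constant in \textbf{(B3)} exceeds one, which \textbf{(EF)} does not exclude (it only imposes a smallness on $b'/b^2$, not on $tb'/b$).

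For comparison, the paper does \emph{not} attempt this conversion. Its displayed final line reads $\exp\big(-C_1'|\xi|^2\int_{t_0}^t b^{-1}\big)$ (lower limit $t_0$, which in context is the entrance time into $\Zell(0,\varepsilon)$, i.e.\ your $t_1(|\xi|)$) and the ``absorption'' consists only of dropping the factor $\exp\big(-C'\int_0^{t_0}b\big)\le1$. The inequality $-b(\tau)\le-\tfrac{4}{1-\varepsilon^2}\,\tfrac{|\xi|^2}{b(\tau)}$ cited there is a statement valid only \emph{inside} $\Zell(0,\varepsilon)$, that is for $\tau\ge t_1(|\xi|)$, so it cannot be applied to the range $[0,t_1(|\xi|)]$ of the junction factor, and the proof in fact establishes the weaker bound with the truncated integral. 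In other words, what the paper actually proves (and what is then used via the elementary estimate $|\xi|^r e^{-c|\xi|^2 I}\lesssim I^{-r/2}$ in the conclusion of Theorem~\ref{Theorem_Effective-Decreasing_Integrable-Decaying}) is the bound with $\int_{t_1(|\xi|)}^t b^{-1}$, and the lower limit $0$ in the lemma's display should be read as this truncated version. Your proposal identifies that upgrading to $\int_0^t$ would need an extra comparison — that observation is correct — but the comparison itself cannot be salvaged under the stated hypotheses.
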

\begin{proof}
We have
\begin{align*}
|\xi|^{|\beta|}|\hat{u}(t,\xi)| &\lesssim \exp\Big( -C_1|\xi|^2\int_{t_0}^t\frac{1}{b(\tau)}d\tau \Big)\big( |\xi|^{|\beta|}|\hat{u}(t_0,\xi)| + |\xi|^{|\beta|-1|}|\hat{u}_t(t_0,\xi)| \big) \\
&\lesssim \exp\Big( -C_1|\xi|^2\int_{t_0}^t\frac{1}{b(\tau)}d\tau \Big)\exp\Big( -C'\int_{0}^{t_0} b(\tau)d\tau \Big)\big( |\xi|^{|\beta|}|\hat{u}_0(\xi)| + |\xi|^{|\beta|-1}|\hat{u}_1(\xi)| \big) \\
&\lesssim \exp\Big( -C_1'|\xi|^2\int_{t_0}^t\frac{1}{b(\tau)}d\tau \Big)\big( |\xi|^{|\beta|}|\hat{u}_0(\xi)| + |\xi|^{|\beta|-1}|\hat{u}_1(\xi)| \big), \\
|\xi|^{|\beta|}|\hat{u}_t(t,\xi)| &\lesssim \exp\Big( -C_1|\xi|^2\int_{t_0}^t\frac{1}{b(\tau)}d\tau \Big)\big( |\xi|^{|\beta|+1}|\hat{u}(t_0,\xi)| + |\xi|^{|\beta|}|\hat{u}_t(t_0,\xi)| \big) \\
& \lesssim \exp\Big( -C_1|\xi|^2\int_{t_0}^t\frac{1}{b(\tau)}d\tau \Big)\exp\Big( -C'\int_{0}^{t_0} b(\tau)d\tau \Big)\big( |\xi|^{|\beta|+1}|\hat{u}_0(\xi)| + |\xi|^{|\beta|}|\hat{u}_1(\xi)| \big) \\
&\lesssim \exp\Big( -C_1'|\xi|^2\int_{t_0}^t\frac{1}{b(\tau)}d\tau \Big)\big( |\xi|^{|\beta|+1}|\hat{u}_0(\xi)| + |\xi|^{|\beta|}|\hat{u}_1(\xi)| \big),
\end{align*}
where from the definition of $\Zell(0,\varepsilon)$ in \eqref{Eq:Effective-integrable-EM01-Zdiss}, we used
\[ -b(\tau)\leq -\frac{4}{1-\varepsilon^2}|\xi|^2\frac{1}{b(\tau)}, \]
which completes the proof.
\end{proof}

\subsubsection*{b. $b=b(t)$ is decreasing}
For this case see Figure \ref{Fig-Effective-integrable-zones}.b.
\medskip

\noindent \underline{Small frequencies}:
\medskip

\noindent \textit{Case 1:} $t_0\leq t\leq t_1(|\xi|)$. Due to Corollary \ref{Cor:Effective-Integrable-EM10-Zdiss}, we have the following estimates hold with $t_0\leq t\leq t_1(|\xi|)$ and $(t,\xi) \in \Zell(0,\varepsilon)$:
\begin{align*}
|\xi|^{|\beta|}|\hat{u}(t,\xi)| &\lesssim \exp\Big( -C_1|\xi|^2\int_{t_0}^t\frac{1}{b(\tau)}d\tau \Big)\big( |\xi|^{|\beta|}|\hat{u}(t_0,\xi)| + |\xi|^{|\beta|-1|}|\hat{u}_t(t_0,\xi)| \big) \quad \mbox{for} \quad |\beta|\geq1, \\
|\xi|^{|\beta|}|\hat{u}_t(t,\xi)| &\lesssim \exp\Big( -C_1|\xi|^2\int_{t_0}^t\frac{1}{b(\tau)}d\tau \Big)\big( |\xi|^{|\beta|+1}|\hat{u}(t_0,\xi)| + |\xi|^{|\beta|}|\hat{u}_t(t_0,\xi)| \big) \quad \mbox{for} \quad |\beta|\geq0.
\end{align*}
\medskip

\noindent \textit{Case 2:} $t_1(|\xi|)\leq t \leq t_{\text{hyp}}$. In this case, we use Corollary \ref{Cor:Effective-Integrable-EM14-Zdiss} for $s=t_1(|\xi|)$ and, then the estimates from \textit{Case 1}.
\begin{lemma} \label{Lemma:Effective-Decaying-Small-2}
We have the following estimates for small frequencies and $t_1(|\xi|)\leq t \leq t_{\text{hyp}}$:
\begin{align*}
|\xi|^{|\beta|}|\hat{u}(t,\xi)| & \lesssim \exp\Big( -C_2'|\xi|^2\int_{t_0}^{t}\frac{1}{b(\tau)}d\tau \Big)\big( |\xi|^{|\beta|}|\hat{u}(t_0,\xi)| + |\xi|^{|\beta|-1}|\hat{u}_t(t_0,\xi)| \big), \\
|\xi|^{|\beta|}|\hat{u}_t(t,\xi)| & \lesssim \exp\Big( -C_2'|\xi|^2\int_{t_0}^{t}\frac{1}{b(\tau)}d\tau \Big)\big( |\xi|^{|\beta|+1}|\hat{u}(t_0,\xi)| + |\xi|^{|\beta|}|\hat{u}_t(t_0,\xi)| \big).
\end{align*}
\end{lemma}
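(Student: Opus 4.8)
The plan is to carry out a \emph{gluing procedure} in the time variable, combining the reduced–zone estimate of Corollary~\ref{Cor:Effective-Integrable-EM14-Zdiss} with the \textit{Case 1} estimate at the transition time $t_1:=t_1(|\xi|)$. First I would invoke Corollary~\ref{Cor:Effective-Integrable-EM14-Zdiss} with $s=t_1$: using that $\Zred(0,\varepsilon)$ is an interval in $\tau$ for each fixed $\xi$, one obtains for $t_1\le t\le t_{\text{hyp}}$
\begin{align*}
|\xi|^{|\beta|}|\hat u(t,\xi)| &\lesssim \exp\Big(-C_2\int_{t_1}^{t}b(\tau)\,d\tau\Big)\Big(|\xi|^{|\beta|}|\hat u(t_1,\xi)|+|\xi|^{|\beta|-1}|\hat u_t(t_1,\xi)|\Big),
\end{align*}
together with the analogous bound for $|\xi|^{|\beta|}|\hat u_t(t,\xi)|$ in terms of $|\xi|^{|\beta|+1}|\hat u(t_1,\xi)|+|\xi|^{|\beta|}|\hat u_t(t_1,\xi)|$. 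Then I would feed in the \textit{Case 1} estimates, which are valid for $t_0\le\tau\le t_1$ because $(t_1,\xi)$ lies on the separating line $|\xi|=\tfrac{b(t_1)}{2}\sqrt{1-\varepsilon^2}$ and hence in the closure of both $\Zell(0,\varepsilon)$ and $\Zred(0,\varepsilon)$. Applying them with the exponents $|\beta|$, $|\beta|-1$, $|\beta|+1$ as needed bounds each of $|\xi|^{|\beta|}|\hat u(t_1,\xi)|$, $|\xi|^{|\beta|-1}|\hat u_t(t_1,\xi)|$, $|\xi|^{|\beta|+1}|\hat u(t_1,\xi)|$, $|\xi|^{|\beta|}|\hat u_t(t_1,\xi)|$ by $\exp\big(-C_1|\xi|^2\int_{t_0}^{t_1}\tfrac{1}{b(\tau)}\,d\tau\big)$ times the appropriate data expression at $t_0$, with the powers of $|\xi|$ matching those required in the lemma.

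The one genuinely non-routine step is to merge the two exponential factors into a single decay of the form $\exp\big(-C|\xi|^2\int_{t_0}^{t}\tfrac{1}{b(\tau)}\,d\tau\big)$. For $\tau\in[t_1,t]\subset\Zred(0,\varepsilon)$ the upper defining inequality $|\xi|\le\tfrac{b(\tau)}{2}$ from \eqref{Eq:Effective-integrable-EM02-Zdiss} gives $b(\tau)^2\ge 4|\xi|^2$, so $b(\tau)\ge \tfrac{4|\xi|^2}{b(\tau)}$ and therefore
\[
-C_2\int_{t_1}^{t}b(\tau)\,d\tau \;\le\; -4C_2\,|\xi|^2\int_{t_1}^{t}\frac{1}{b(\tau)}\,d\tau .
\]
Setting $C_2':=\min\{C_1,4C_2\}$ and adding the two exponents then yields
\[
\exp\Big(-C_1|\xi|^2\!\int_{t_0}^{t_1}\!\tfrac{1}{b}\,d\tau\Big)\exp\Big(-4C_2|\xi|^2\!\int_{t_1}^{t}\!\tfrac{1}{b}\,d\tau\Big)\le \exp\Big(-C_2'|\xi|^2\!\int_{t_0}^{t}\tfrac{1}{b(\tau)}\,d\tau\Big),
\]
which is precisely the claimed decay factor; combining this with the data terms collected above gives both asserted estimates.

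I expect the principal obstacle to be bookkeeping rather than genuine analysis: one must confirm that $C_1$ (from Proposition~\ref{Prop:Effective-integrable-EM8-Zdiss}, hence Corollary~\ref{Cor:Effective-Integrable-EM10-Zdiss}) and $C_2$ (from Proposition~\ref{Prop:Effective-integrable-EM12-Zdiss}, hence Corollary~\ref{Cor:Effective-Integrable-EM14-Zdiss}) are uniform in $\xi$, so that $C_2'$ is too. This is the case because both constants come out of the energy–multiplier machinery of Proposition~\ref{Prop:Effective-integrable-EM5-Zdiss} and Lemma~\ref{Lemma:Effective-integrable-EM7-Zdiss}, whose parameters $\lambda_1,\lambda_2,\lambda_3$ for the multipliers $K(t,\xi)=|\xi|^2/b(t)$ and $K(t)=b(t)$ depend only on $\varepsilon$, on the constant $a<1$ from \textbf{(EF)}, and on the structural bounds \textbf{(B2)}--\textbf{(B3)} for $b$. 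One should also note that, since $[0,t_0]$ is compact and the coefficients as well as the fundamental solution are continuous there, replacing the lower limit $t_0$ by any fixed earlier time costs only a harmless multiplicative constant, so stating the estimate with lower limit $t_0$ as in the lemma involves no loss.
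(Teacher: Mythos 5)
Your proposal is correct and follows the same gluing strategy as the paper: apply Corollary~\ref{Cor:Effective-Integrable-EM14-Zdiss} on the reduced interval $[t_1(|\xi|),t]$, plug in the Case~1 (elliptic zone) estimate at $t_1(|\xi|)$, and merge the two exponential factors. The one genuinely non-routine step is the merging, and your treatment of it is the right one: on $[t_1(|\xi|),t]\subset\Zred(0,\varepsilon)$ the defining inequality $|\xi|\le\tfrac{b(\tau)}{2}$ gives $b(\tau)\ge 4|\xi|^2/b(\tau)$, hence
\[
\exp\Big(-C_2\int_{t_1}^{t}b\,d\tau\Big)\le\exp\Big(-4C_2|\xi|^2\int_{t_1}^{t}\tfrac{d\tau}{b}\Big),
\]
and combining with the elliptic-zone factor via $C_2'=\min\{C_1,4C_2\}$ produces exactly the decay claimed in the lemma, namely $\exp\big(-C_2'|\xi|^2\int_{t_0}^t\tfrac{d\tau}{b}\big)$.

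It is worth flagging that the paper's own writeup of this step ends with $\exp\big(-C_2'\int_{t_0}^t b\,d\tau\big)$, which does not match the lemma statement, and the conversion inequality it cites, $-\tfrac{|\xi|^2}{b(\tau)}\le-\tfrac{1-\varepsilon^2}{4}b(\tau)$, is the lower-bound inequality valid in $\Zred(0,\varepsilon)$; applying it on the elliptic interval $[t_0,t_1]$ (where the opposite inequality holds) does not produce a valid upper bound. So your version of the merging argument — converting the reduced-zone factor $\int b$ into the $|\xi|^2\int 1/b$ form rather than the other way around — is the one consistent with the stated result. Your uniformity remark about $C_1,C_2$ (hence $C_2'$) is also correct and is the kind of bookkeeping the paper leaves implicit.
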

\begin{proof}
We have
\begin{align*}
|\xi|^{|\beta|}&|\hat{u}(t,\xi)| \lesssim \exp\Big( -C_2\int_{t_1(|\xi|)}^tb(\tau)d\tau \Big)\big( |\xi|^{|\beta|}|\hat{u}(t_1(|\xi|),\xi)| + |\xi|^{|\beta|-1}|\hat{u}_t(t_1(|\xi|),\xi)| \big) \\
& \lesssim \exp\Big( -C_2\int_{t_1(|\xi|)}^tb(\tau)d\tau \Big)\exp\Big( -C_1|\xi|^2\int_{t_0}^{t_1(|\xi|)}\frac{1}{b(\tau)}d\tau \Big)\big( |\xi|^{|\beta|}|\hat{u}(t_0,\xi)| + |\xi|^{|\beta|-1}|\hat{u}_t(t_0,\xi)| \big)\\
& \lesssim \exp\Big( -C_2'\int_{t_0}^{t}b(\tau)d\tau \Big)\big( |\xi|^{|\beta|}|\hat{u}(t_0,\xi)| + |\xi|^{|\beta|-1}|\hat{u}_t(t_0,\xi)| \big), \\
|\xi|^{|\beta|}&|\hat{u}_t(t,\xi)| \lesssim \exp\Big( -C_2\int_{t_1(|\xi|)}^tb(\tau)d\tau \Big)\big( |\xi|^{|\beta|+1}|\hat{u}(t_1(|\xi|),\xi)| + |\xi|^{|\beta|}|\hat{u}_t(t_1(|\xi|),\xi)| \big) \\
& \lesssim \exp\Big( -C_2\int_{t_1(|\xi|)}^tb(\tau)d\tau \Big)\exp\Big( -C_1|\xi|^2\int_{t_0}^{t_1(|\xi|)}\frac{1}{b(\tau)}d\tau \Big)\big( |\xi|^{|\beta|+1}|\hat{u}(t_0,\xi)| + |\xi|^{|\beta|}|\hat{u}_t(t_0,\xi)| \big) \\
& \lesssim \exp\Big( -C_2'\int_{t_0}^{t}b(\tau)d\tau \Big)\big( |\xi|^{|\beta|+1}|\hat{u}(t_0,\xi)| + |\xi|^{|\beta|}|\hat{u}_t(t_0,\xi)| \big),
\end{align*}
where from the definition of $\Zred(0,\varepsilon)$ in \eqref{Eq:Effective-integrable-EM02-Zdiss}, we used
\[ -\frac{1}{b(\tau)}|\xi|^2 \leq -\frac{1-\varepsilon^2}{4}b(\tau), \]
which completes the proof.
\end{proof}
\medskip

\noindent \textit{Case 3:} $t \geq t_{\text{hyp}}$ and $t \geq t_{\text{red}}$. In this case, we use Proposition \ref{Prop_Effective-Integrable-Zhyp} for $s=t_{\text{hyp}}$ and, then the estimates from Lemma \ref{Lemma:Effective-Decaying-Small-2}.
\begin{lemma} \label{Lemma:Effective-Decaying-Small-3}
\begin{align*}
|\xi|^{|\beta|}|\hat{u}(t,\xi)| & \lesssim \exp\Big( -C_3\int_{t_0}^{t}b(\tau)d\tau \Big)\big( |\xi|^{|\beta|}|\hat{u}(t_0,\xi)| + |\xi|^{|\beta|-1}|\hat{u}_t(t_0,\xi)| \big), \\
|\xi|^{|\beta|}|\hat{u}_t(t,\xi)| & \lesssim \exp\Big( -C_3\int_{t_0}^{t}b(\tau)d\tau \Big)\big( |\xi|^{|\beta|+1}|\hat{u}(t_0,\xi)| + |\xi|^{|\beta|}|\hat{u}_t(t_0,\xi)| \big).
\end{align*}
\end{lemma}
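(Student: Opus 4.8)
The plan is to establish Lemma \ref{Lemma:Effective-Decaying-Small-3} by a \emph{gluing procedure}, entirely in the style of the earlier cases (Lemmas \ref{Lemma:Effective-Decaying-Large3} and \ref{Lemma:Effective-Decaying-Small-2}): transport the dissipative estimate already available up to the separating time $t_{\text{hyp}}=t_{\text{hyp}}(|\xi|)$, and then propagate it through the hyperbolic zone by Proposition \ref{Prop_Effective-Integrable-Zhyp}. Concretely, first apply Proposition \ref{Prop_Effective-Integrable-Zhyp} with $s=t_{\text{hyp}}$: for $(t,\xi),(t_{\text{hyp}},\xi)\in\Zhyp(\varepsilon,t_0)$ it gives
\[ |\xi|^{|\beta|}|\hat{u}(t,\xi)| \lesssim \exp\Big( -\tfrac12\int_{t_{\text{hyp}}}^t\big( b(\tau)+g(\tau)|\xi|^2 \big)d\tau \Big)\big( |\xi|^{|\beta|}|\hat{u}(t_{\text{hyp}},\xi)| + |\xi|^{|\beta|-1}|\hat{u}_t(t_{\text{hyp}},\xi)| \big), \]
together with the corresponding bound for $|\xi|^{|\beta|}|\hat{u}_t(t,\xi)|$. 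Then, since $t_{\text{hyp}}\geq t_1(|\xi|)$ in this case, insert Lemma \ref{Lemma:Effective-Decaying-Small-2} evaluated at $t=t_{\text{hyp}}$ to replace the data at $t_{\text{hyp}}$ by
\[ |\xi|^{|\beta|}|\hat{u}(t_{\text{hyp}},\xi)| + |\xi|^{|\beta|-1}|\hat{u}_t(t_{\text{hyp}},\xi)| \lesssim \exp\Big( -C_2'|\xi|^2\int_{t_0}^{t_{\text{hyp}}}\frac{1}{b(\tau)}d\tau \Big)\big( |\xi|^{|\beta|}|\hat{u}(t_0,\xi)| + |\xi|^{|\beta|-1}|\hat{u}_t(t_0,\xi)| \big). \]

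The remaining step is to absorb the two decay factors into the single factor $\exp\big(-C_3\int_{t_0}^t b(\tau)d\tau\big)$. On $[t_{\text{hyp}},t]$ this is immediate: dropping the nonnegative term $g(\tau)|\xi|^2$ gives $\exp\big(-\tfrac12\int_{t_{\text{hyp}}}^t(b+g|\xi|^2)d\tau\big)\leq\exp\big(-\tfrac12\int_{t_{\text{hyp}}}^t b(\tau)d\tau\big)$, and since $g\in L^1([0,\infty))$ by \textbf{(E2)} nothing essential is lost here. On $[t_0,t_{\text{hyp}}]$ I would split the integral along the subdivision of the dissipative zone into $\Zell(0,\varepsilon)$ and $\Zred(0,\varepsilon)$ from \eqref{Eq:Effective-integrable-EM01-Zdiss}--\eqref{Eq:Effective-integrable-EM02-Zdiss}: on the $\Zred(0,\varepsilon)$-portion one has $\tfrac{b(\tau)}{2}\sqrt{1-\varepsilon^2}\leq|\xi|\leq\tfrac{b(\tau)}{2}$, hence $\tfrac{|\xi|^2}{b(\tau)}\approx b(\tau)$ and the parabolic factor and $\exp(-c\int b)$ coincide up to constants, while the $\Zell(0,\varepsilon)$-portion must be handled with more care, using $|\xi|\leq\tfrac{b(\tau)}{2}\sqrt{1-\varepsilon^2}$ and the monotonicity of $b$, to check that this part of the trajectory is compatible with the bound $\exp\big(-C_3\int_{t_0}^{t_{\text{hyp}}}b\,d\tau\big)$ once $C_3$ is chosen appropriately. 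Merging the two intervals yields the first estimate of the lemma; the estimate for $|\xi|^{|\beta|}|\hat{u}_t(t,\xi)|$ follows by identical bookkeeping, keeping track of the shift $|\beta|\mapsto|\beta|+1$ on the $\hat{u}_0$-term.

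I expect the only genuine obstacle to be this last reconciliation: the dissipative factor $\exp\big(-C|\xi|^2\int 1/b\big)$ is of parabolic type, whereas the hyperbolic factor has the form $\exp\big(-C\int b\big)$, and one must exploit the geometry of the zones — the equivalence $|\xi|\approx b(\tau)$ in the reduced strip $\Zred(0,\varepsilon)$, the inequality $2|\xi|>b(\tau)+g(\tau)|\xi|^2$ valid in $\Pi_{\text{hyp}}$, and the integrability $g\in L^1$ — so that the constants line up and no decay is lost across the separating lines $t_{\text{hyp}}$ and $t_{\text{red}}$. Once the exponents are matched this way, the argument closes exactly as in the previous cases, and plugging in $s=t_0$ completes the proof.
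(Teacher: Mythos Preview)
Your strategy coincides with the paper's: apply Proposition~\ref{Prop_Effective-Integrable-Zhyp} with $s=t_{\text{hyp}}$, feed in the estimates from Lemma~\ref{Lemma:Effective-Decaying-Small-2} at $t=t_{\text{hyp}}$, and drop the $g(\tau)|\xi|^2$ contribution on $[t_{\text{hyp}},t]$. The only difference lies in how the two decay factors are merged into $\exp\big(-C_3\int_{t_0}^t b(\tau)\,d\tau\big)$. The paper does not carry out the $\Zell(0,\varepsilon)/\Zred(0,\varepsilon)$ subdivision you outline; instead it converts the parabolic factor $\exp\big(-C_2'|\xi|^2\int_{t_0}^{t_{\text{hyp}}}\frac{1}{b(\tau)}\,d\tau\big)$ to $b$-form in a single step by invoking the inequality
\[
|\xi|>\frac{b(t)}{2}+\frac{g(t)|\xi|^2}{2}>\frac{b(t)}{2}\qquad\Longrightarrow\qquad -\frac{|\xi|^2}{b(t)}<-\frac{1}{4}b(t),
\]
coming from the definition of $\Pi_{\text{hyp}}$ in \eqref{Eq:Effective-Integrable-Pi-Hyp-Ell}. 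Your more explicit plan---using $|\xi|\approx b(\tau)/2$ on the $\Zred(0,\varepsilon)$ strip to identify the two exponents---is exactly what underlies this one-line conversion, so the two arguments are the same once compressed; you need not worry separately about the $\Zell(0,\varepsilon)$ portion.
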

\begin{proof}
We have
\begin{align*}
|\xi|^{|\beta|}|\hat{u}(t,\xi)| &\lesssim \exp\Big( -\frac{1}{2}\int_{t_{\text{hyp}}}^{t}\big( b(\tau)+g(\tau)|\xi|^2 \big)d\tau \Big) \big( |\xi|^{|\beta|}|\hat{u}(t_{\text{hyp}},\xi)| + |\xi|^{|\beta|-1}|\hat{u}_t(t_{\text{hyp}},\xi)| \big)  \\
& \lesssim \exp\Big( -\frac{1}{2}\int_{t_{\text{hyp}}}^{t} b(\tau) d\tau \Big)\exp\Big( -C_2'|\xi|^2\int_{t_0}^{t_{\text{hyp}}}\frac{1}{b(\tau)}d\tau \Big) \big( |\xi|^{|\beta|}|\hat{u}(t_0,\xi)| + |\xi|^{|\beta|-1}|\hat{u}_t(t_0,\xi)| \big) \\
& \lesssim \exp\Big( -C_3\int_{t_0}^{t} b(\tau)d\tau \Big) \big( |\xi|^{|\beta|}|\hat{u}_0(\xi)| + |\xi|^{|\beta|-1}|\hat{u}_1(\xi)| \big), \\
|\xi|^{|\beta|}|\hat{u}_t(t,\xi)| &\lesssim \exp\Big( -\frac{1}{2}\int_{t_{\text{hyp}}}^{t}\big( b(\tau)+g(\tau)|\xi|^2 \big)d\tau \Big) \big( |\xi|^{|\beta|+1}|\hat{u}(t_{\text{hyp}},\xi)| + |\xi|^{|\beta|}|\hat{u}_t(t_{\text{hyp}},\xi)| \big)  \\
& \lesssim \exp\Big( -\frac{1}{2}\int_{t_{\text{hyp}}}^{t} b(\tau) d\tau \Big)\exp\Big( -C_2'|\xi|^2\int_{t_0}^{t_{\text{hyp}}}\frac{1}{b(\tau)}d\tau \Big) \big( |\xi|^{|\beta|+1}|\hat{u}(t_0,\xi)| + |\xi|^{|\beta|}|\hat{u}_t(t_0,\xi)| \big) \\
& \lesssim \exp\Big( -C_3\int_{t_0}^{t} b(\tau)d\tau \Big) \big( |\xi|^{|\beta|+1}|\hat{u}_0(\xi)| + |\xi|^{|\beta|}|\hat{u}_1(\xi)| \big),
\end{align*}
where from definition of $\Pi_{\text{hyp}}$ in \eqref{Eq:Effective-Integrable-Pi-Hyp-Ell}, we used
\[ |\xi|> \frac{b(t)}{2} +\frac{g(t)|\xi|^2}{2} > \frac{b(t)}{2} \qquad \text{implies} \qquad -|\xi|^2\frac{1}{b(t)} < -\frac{1}{4}b(t). \]
This concludes the proof.
\end{proof}

\noindent \underline{Large frequencies}:
\medskip

In the case where $b=b(t)$ is decreasing, the estimates for large frequencies remain exactly the same as in \textit{Case 1}, \textit{Case 2} and \textit{Case 3} when $b=b(t)$ is increasing. This is because the localization of the zones in these cases does not change. Note that \textit{Case 4} and \textit{Case 5} do not occur here.
\medskip

\noindent \textit{Case 1:} $t\leq t_{\text{ell}}$. In this case, we use Lemma \ref{Lemma:Effective-Decaying-Large1}. Then, we have
\begin{align*}
|\xi|^{|\beta|}|\hat{u}(t,\xi)| &\lesssim \exp\Big( -C|\xi|^2\int_0^t\frac{1}{b(\tau)+g(\tau)|\xi|^2}d\tau \Big)\big( |\xi|^{|\beta|}|\hat{u}_0(\xi)| + |\xi|^{|\beta|-1}|\hat{u}_1(\xi)| \big) \quad \mbox{for} \quad |\beta| \geq 1, \\
|\xi|^{|\beta|}|\hat{u}_t(t,\xi)|& \lesssim \exp\Big( -C|\xi|^2\int_0^t\frac{1}{b(\tau)+g(\tau)|\xi|^2}d\tau \Big)\big( |\xi|^{|\beta|+1}|\hat{u}_0(\xi)| + |\xi|^{|\beta|}|\hat{u}_1(\xi)| \big) \quad \mbox{for} \quad |\beta| \geq 0.
\end{align*}
\medskip

\noindent \textit{Case 2:} $t_{\text{ell}} \leq t \leq t_{\text{red}}$. In this case, we use Lemma \ref{Lemma:Effective-Decaying-Large2}. Then, it holds
\begin{align*}
|\xi|^{|\beta|}|\hat{u}(t,\xi)| &\lesssim \exp\Big( -C|\xi|^2\int_0^t\frac{1}{b(\tau)+g(\tau)|\xi|^2}d\tau \Big)\big( |\xi|^{|\beta|}|\hat{u}_0(\xi)| + |\xi|^{|\beta|-1}|\hat{u}_1(\xi)| \big) \quad \mbox{for} \quad |\beta| \geq 1, \\
|\xi|^{|\beta|}|\hat{u}_t(t,\xi)|& \lesssim \exp\Big( -C|\xi|^2\int_0^t\frac{1}{b(\tau)+g(\tau)|\xi|^2}d\tau \Big)\big( |\xi|^{|\beta|+1}|\hat{u}_0(\xi)| + |\xi|^{|\beta|}|\hat{u}_1(\xi)| \big) \quad \mbox{for} \quad |\beta| \geq 0.
\end{align*}
\medskip

\noindent \textit{Case 3:} $t_{\text{red}}\leq t \leq t_{\text{hyp}}$. In this case, we use Lemma \ref{Lemma:Effective-Decaying-Large3}. Then, we find
\begin{align*}
|\xi|^{|\beta|}|\hat{u}(t,\xi)| & \lesssim \exp\Big( -\widetilde{C}\int_{0}^t b(\tau)d\tau \Big)\big( |\xi|^{|\beta|}|\hat{u}_0(\xi)| + |\xi|^{|\beta|-1}|\hat{u}_1(\xi)| \big), \\
|\xi|^{|\beta|}|\hat{u}_t(t,\xi)| & \lesssim \exp\Big( -\widetilde{C}\int_{0}^t b(\tau)d\tau \Big)\big( |\xi|^{|\beta|+1}|\hat{u}_0(\xi)| + |\xi|^{|\beta|}|\hat{u}_1(\xi)| \big).
\end{align*}
\begin{proof}[Proof of Theorem \ref{Theorem_Effective-Decreasing_Integrable-Decaying}]
In the elliptic zone $\Zell(N)$, we have
\[ \frac{b(\tau)}{|\xi|^2} + g(\tau) \leq C_N^2 b(\tau)g(\tau)^2+g(\tau)\leq \Big(\frac{C_N^2}{2}+1\Big)g(\tau), \]
where we used $|\xi|\geq \dfrac{N}{2g(t)}$ due to the definition of $\Zell(N)$ and $b(t)g(t)\leq \dfrac{1}{2}$ due to condition \textbf{(E3)}. Hence, we find
\[ -\dfrac{1}{\frac{b(\tau)}{|\xi|^2} + g(\tau)} \leq -\frac{\widetilde{C}_N}{g(\tau)}. \]
Therefore, from Lemma \ref{Lemma:Effective-Decaying-Large1}, in the case where $b=b(t)$ increasing (and similarly for the decreasing case), we obtain
\begin{equation*}
\exp\Big( -C|\xi|^2\int_0^t\frac{1}{b(\tau)+g(\tau)|\xi|^2}d\tau \Big) \leq \exp\Big( -\widetilde{C}_N\int_0^t\frac{1}{g(\tau)}d\tau \Big).
\end{equation*}
Moreover, from Lemma \ref{Lemma:Effective-Decaying-Large2}, in the reduced zone $\Zred(N,\varepsilon)$, using  $|\xi|\geq \dfrac{\varepsilon}{2g(t)}$ due to the definition of $\Zred(N,\varepsilon)$ we have
\begin{equation*}
\exp\Big( -C|\xi|^2\int_0^t\frac{1}{b(\tau)+g(\tau)|\xi|^2}d\tau \Big) \leq \exp\Big( -\widetilde{C}_\varepsilon\int_0^t\frac{1}{g(\tau)}d\tau \Big).
\end{equation*}
Thus, from Lemma \ref{Lemma:Effective-Decaying-Large1} and Lemma \ref{Lemma:Effective-Decaying-Large2}, in the case where $b=b(t)$ increasing (similarly for the decreasing case), we have
\begin{align*}
|\xi|^{|\beta|}|\hat{u}(t,\xi)| &\lesssim \exp\Big( -\widetilde{C}_{N,\varepsilon}\int_0^t\frac{1}{g(\tau)}d\tau \Big)\big( |\xi|^{|\beta|}|\hat{u}_0(\xi)| + |\xi|^{|\beta|-1}|\hat{u}_1(\xi)| \big) \quad \mbox{for} \quad |\beta| \geq 1, \\
|\xi|^{|\beta|}|\hat{u}_t(t,\xi)|& \lesssim  \exp\Big( -\widetilde{C}_{N,\varepsilon}\int_0^t\frac{1}{g(\tau)}d\tau \Big)\big( |\xi|^{|\beta|+1}|\hat{u}_0(\xi)| + |\xi|^{|\beta|}|\hat{u}_1(\xi)| \big) \quad \mbox{for} \quad |\beta| \geq 0.
\end{align*}
On the other hand, from Lemma \ref{Lemma:Effective-Decaying-Large3} and Lemma \ref{Lemma:Effective-Decaying-Large4}, in the case where $b=b(t)$ increasing (similarly for the decreasing case), we have
\begin{align*}
|\xi|^{|\beta|}|\hat{u}(t,\xi)| & \lesssim \exp\Big( -\widetilde{C}\int_{0}^t b(\tau)d\tau \Big)\big( |\xi|^{|\beta|}|\hat{u}_0(\xi)| + |\xi|^{|\beta|-1}|\hat{u}_1(\xi)| \big), \\
|\xi|^{|\beta|}|\hat{u}_t(t,\xi)| & \lesssim \exp\Big( -\widetilde{C}\int_{0}^t b(\tau)d\tau \Big)\big( |\xi|^{|\beta|+1}|\hat{u}_0(\xi)| + |\xi|^{|\beta|}|\hat{u}_1(\xi)| \big).
\end{align*}
Hence, in order to compare these estimates, using condition \textbf{(E3)}, that is,
\[ b(t)g(t)\leq \frac{1}{2} \qquad \text{implies} \qquad -\frac{1}{2g(t)} \leq -b(t), \]
we may obtain
\[ \exp\Big( -C_1\int_0^t\frac{1}{g(\tau)}d\tau \Big) \leq \exp\bigg( -C_2\int_{0}^t b(\tau)d\tau \bigg). \]
Finally, from Lemma \ref{Lemma:Effective-Decaying-Large0} and Lemma \ref{Lemma:Effective-Decaying-Large5}, in the case where $b=b(t)$ increasing (similarly for the decreasing case), using
\[ |\xi|^r\exp\Big( -C_N|\xi|^2\int_{t_0}^{t}\frac{1}{b(\tau)}d\tau \Big) \lesssim \Big( 1+\int_{t_0}^{t}\frac{1}{b(\tau)}d\tau \Big)^{-\frac{r}{2}} \quad \mbox{for} \quad r\geq0, \]
may complete the proof.
\end{proof}

\section{The friction term is over-damping producing} \label{Section_Overdamping}
In this section, we consider $b(t)u_t$ as an over-damping producing dissipation. We assume that the following properties of $b=b(t)$ hold as in the paper \cite{Wirth-Effective=2007}:
\begin{enumerate}
\item[\textbf{(B1)}] $b(t)>0$ and $b'(t)>0$,
\item[\textbf{(B2)}] $\dfrac{|b^{(k)}(t)|}{b(t)} \lesssim \dfrac{1}{(1+t)^{k}}$\,\, for\,\, $k=1,2$.
\end{enumerate}
Moreover, we assume the conditions
\begin{enumerate}
\item[\textbf{(OD1)}] $\displaystyle\int_0^\infty\dfrac{1}{b(\tau)}d\tau < \infty$,
\item[\textbf{(OD2)}] $b'(t) = o(b(t)^2)$ for $t\to\infty$.
\end{enumerate}
We are going to use the the equation \eqref{AuxiliaryEquation3}. This means, we consider the following transformed equation:
\begin{equation} \label{Eq:Overdamping1}
D_t^2w + \Big(-|\xi|^2 + \dfrac{b(t)g(t)}{2}|\xi|^2 + \dfrac{g(t)^2}{4}|\xi|^4 + \frac{g'(t)}{2}|\xi|^2 +  \dfrac{b(t)^2}{4} + \frac{b'(t)}{2} \Big)w=0.
\end{equation}
\subsection{Model with increasing time-dependent coefficient $g=g(t)$} \label{Subsect:Overdamping-Increasing}
As we studied in Section 2 of the paper \cite{AslanReissig2023}, we assume the following properties of the function $g=g(t)$:
\begin{enumerate}
\item[\textbf{(A1)}] $g(t)>0$ and $g'(t)>0$ for all $t \in [0,\infty)$,
\item[\textbf{(A2)}] $\displaystyle\int_0^\infty\dfrac{1}{g(\tau)}d\tau < \infty$,
\item[\textbf{(A3)}] $|d_t^kg(t)|\leq C_kg(t)\Big( \dfrac{g(t)}{G(t)} \Big)^k$ for all $t \in [0,\infty)$, $k=1,2$, where $G(t):=\dfrac{1}{2}\displaystyle\int_0^t g(\tau)d\tau$ and $C_1$, $C_2$ are positive constants.
\end{enumerate}
\begin{theorem} \label{Theorem:Overdamping1}
Let us consider the Cauchy problem
\begin{equation*}
\begin{cases}
u_{tt}- \Delta u + b(t)u_t -g(t)\Delta u_t=0, &(t,x) \in (0,\infty) \times \mathbb{R}^n, \\
u(0,x)= u_0(x),\quad u_t(0,x)= u_1(x), &x \in \mathbb{R}^n.
\end{cases}
\end{equation*}
We assume that the coefficient $g=g(t)$ satisfies the conditions \textbf{(A1)} to \textbf{(A3)} and $b=b(t)$ holds the conditions \textbf{(B1)} and \textbf{(B2)}, \textbf{(OD1)} and \textbf{(OD2)}. Moreover, for the data we suppose $(u_0,\langle D \rangle^{-2}u_1)\in \dot{H}^{|\beta|} \times \dot{H}^{|\beta|+2}$ with $|\beta|\geq 0$. Then, we have the following estimates for Sobolev solutions:
\begin{align*}
\|\,|D|^{|\beta|} u(t,\cdot)\|_{L^2} & \lesssim  \|u_0\|_{\dot{H}^{|\beta|}} + \|\langle D \rangle^{-2} u_1\|_{\dot{H}^{|\beta|}},\\
\|\,|D|^{|\beta|} u_t(t,\cdot)\|_{L^2} & \lesssim g(t)\big( \|u_0\|_{\dot{H}^{|\beta|+2}} + \|\langle D \rangle^{-2}u_1\|_{\dot{H}^{|\beta|+2}} \big) + b(t)\big( \|u_0\|_{\dot{H}^{|\beta|}} + \|\langle D \rangle^{-2}u_1\|_{\dot{H}^{|\beta|}} \big).
\end{align*}
\end{theorem}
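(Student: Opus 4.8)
The strategy mirrors the one already used in Sections \ref{Subsect:Effective-Increasing} and \ref{Subsect:Overdamping-Increasing} for the related over-damping/effective models: split the extended phase space $[0,\infty)\times\mathbb{R}^n$ into an elliptic zone $\Zell(N)$ and a pseudo-differential zone $\Zpd(N)$, separated by the curve $t_\xi$ defined by $G(t)|\xi|^2=N$. The point is that conditions \textbf{(OD1)}, \textbf{(OD2)} for $b$ and \textbf{(A1)}--\textbf{(A3)} for $g$ are structurally identical (both $1/b$ and $1/g$ are integrable, both $b'=o(b^2)$ and $g'$ is controlled via $G$), so the transformed equation \eqref{Eq:Overdamping1} can be handled exactly as \eqref{Eq:EffectiveCase} was. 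First I would set up the elliptic symbol classes $S_{\text{ell}}^\ell\{m_1,m_2\}$ with weight $(b(t)+g(t)|\xi|^2)$ and characteristic factor $\frac{b'(t)+g'(t)|\xi|^2}{b(t)+g(t)|\xi|^2}$ precisely as in Definition \ref{Def:Effective-increasing-symbol}, verify the embedding $S_{\text{ell}}^0\{-1,2\}\hookrightarrow L^\infty_\xi L^1_t(\Zell(N))$ (the computation in Proposition \ref{Prop:Effective-symbol} goes through verbatim, using \textbf{(B2)} and \textbf{(OD2)} in place of \textbf{(B3)} and the old \textbf{(EF)}), and then run the two-step diagonalization of the first-order system for the micro-energy $W=\big((\tfrac{b(t)}{2}+\tfrac{g(t)}{2}|\xi|^2)w,\,D_tw\big)^{\mathrm{T}}$.

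\textbf{Elliptic zone.} In $\Zell(N)$ the term $-|\xi|^2$ in \eqref{Eq:Overdamping1} is dominated by $\big(\tfrac{b(t)+g(t)|\xi|^2}{2}\big)^2$ (since $G(t)|\xi|^2\geq N$ and $g(t)/G(t)$ is small there), so the equation is genuinely elliptic and the analogue of Proposition \ref{Prop_Effective_Increasing_Est_Zell} holds: the fundamental solution $E_{\text{ell}}^W(t,s,\xi)$ satisfies
\[
(|E_{\text{ell}}^W(t,s,\xi)|)\lesssim \frac{b(t)+g(t)|\xi|^2}{b(s)+g(s)|\xi|^2}\exp\Big(\tfrac12\int_s^t(b(\tau)+g(\tau)|\xi|^2)\,d\tau\Big)\begin{pmatrix}1&1\\1&1\end{pmatrix}.
\]
Transforming back with $w(t,\xi)=\exp\big(\tfrac12\int_0^t(b(\tau)+g(\tau)|\xi|^2)d\tau\big)\hat u(t,\xi)$ then yields, just as in Corollary \ref{Cor:Effective_Increasing_Zell},
\[
|\xi|^{|\beta|}|\hat u(t,\xi)|\lesssim |\xi|^{|\beta|}|\hat u(s,\xi)|+\frac{|\xi|^{|\beta|}}{b(s)+g(s)|\xi|^2}|\hat u_t(s,\xi)|,
\]
and a matching bound for $|\xi|^{|\beta|}|\hat u_t(t,\xi)|$ with the extra factor $b(t)+g(t)|\xi|^2$; here the $\exp(\tfrac12\int)$ factors cancel against the backward transformation, exactly as in the effective case.

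\textbf{Pseudo-differential zone.} For $0\le s\le t\le t_\xi$ I would work with $U=(\gamma(t,\xi)\hat u,D_t\hat u)^{\mathrm T}$, $\gamma(t,\xi)=\tfrac{b(t)+g(t)|\xi|^2}{2}$, obtain the integral system for the entries $E_{\text{pd}}^{(k\ell)}$ exactly as in Section \ref{Subsection_Effective-Incr_Zpd}, and estimate via Gronwall's inequality together with \textbf{(OD1)} and \textbf{(A2)}; since $b'>0$ here by \textbf{(B1)} we are in the ``$b'\geq 0$'' branch of Proposition \ref{Prop:Effective-Incr_Zpd}, so
\[
(|E_{\text{pd}}(t,s,\xi)|)\lesssim \frac{b(t)+g(t)|\xi|^2}{b(s)+g(s)|\xi|^2}\begin{pmatrix}1&1\\1&1\end{pmatrix},
\]
and transforming back (with $s=0$) gives, as in Corollary \ref{Cor:Effective-Incr_IncreasingZpd} (the $b'\geq 0$ case),
\[
|\xi|^{|\beta|}|\hat u_t(t,\xi)|\lesssim (b(t)+g(t)|\xi|^2)\big(|\xi|^{|\beta|}|\hat u_0(\xi)|+|\xi|^{|\beta|}\langle\xi\rangle^{-2}|\hat u_1(\xi)|\big).
\]
Finally, gluing the $\Zpd(N)$ estimate at $t=t_\xi$ into the $\Zell(N)$ estimate (noting that on $t_\xi$ one has $b(t)+g(t)|\xi|^2\approx$ whatever is needed and that $|\hat u(0,\xi)|$, $\langle\xi\rangle^{-2}|\hat u_1(\xi)|$ control the data) produces the global bounds
\[
|\xi|^{|\beta|}|\hat u(t,\xi)|\lesssim |\xi|^{|\beta|}|\hat u_0(\xi)|+|\xi|^{|\beta|}\langle\xi\rangle^{-2}|\hat u_1(\xi)|,\qquad
|\xi|^{|\beta|}|\hat u_t(t,\xi)|\lesssim (b(t)+g(t)|\xi|^2)\big(|\xi|^{|\beta|}|\hat u_0(\xi)|+|\xi|^{|\beta|}\langle\xi\rangle^{-2}|\hat u_1(\xi)|\big),
\]
and an application of Plancherel's theorem, splitting $b(t)+g(t)|\xi|^2$ into its two summands, gives the claimed $\dot H^{|\beta|}$ and $\dot H^{|\beta|+2}$ norms of the data in the statement.

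\textbf{Main obstacle.} The routine parts are the two diagonalization steps and the Gronwall estimates, which are copies of the effective-case arguments. The genuinely delicate point is verifying that the symbol estimates and, in particular, the invertibility of $N_1=I+N^{(1)}$ and the integrability $\mathcal R_3\in S_{\text{ell}}^0\{-1,2\}$ survive when the friction is over-damping rather than effective: in the over-damping regime $b$ grows and $1/b$ is merely integrable (not bounded by $1/(1+t)$-type decay), so one must check carefully that the $L^1_t$ bound in the last item of the symbol-class proposition still holds — this is where conditions \textbf{(B2)}, \textbf{(OD1)} and \textbf{(OD2)} must be combined, and it is the step I would write out in full detail. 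A secondary subtlety is the dominance of entry $(I)$ over $(II)$ in the diagonal matrix $i\mathcal D+iF_0$ for all $t\geq t_\xi$, which again relies on $b'=o(b^2)$, $g'$ controlled by $G$, and the zone constant $N$ being large; once these are established the rest is bookkeeping.
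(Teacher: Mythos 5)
Your proposal is correct and follows essentially the same route as the paper: the same $\Zell(N)/\Zpd(N)$ splitting with separating curve $G(t)|\xi|^2=N$, the same micro-energy $W=\bigl((\tfrac{b(t)}{2}+\tfrac{g(t)}{2}|\xi|^2)w,D_tw\bigr)^{\mathrm T}$ and two-step diagonalization in the elliptic zone, and the same Gronwall treatment of the integral system for $E_{\text{pd}}$ in the pseudo-differential zone, with the estimates then glued at $t_\xi$ and converted by Plancherel. You also correctly single out the only place where the over-damping hypotheses do new work: in the symbol-class embedding $S_{\text{ell}}^0\{-1,2\}\hookrightarrow L^\infty_\xi L^1_t(\Zell(N))$, the integrand is no longer controlled by $g(\tau)/G(\tau)^2$ alone but produces a residual $1/b(\tau)$ term which one handles precisely with \textbf{(OD1)} together with \textbf{(B2)} and \textbf{(OD2)} — exactly what the paper does in its Proposition~\ref{Prop:Overdamping-symbol}.
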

\begin{proof}
We divide the extended phase space $[0,\infty)\times \mathbb{R}^n$ into zones as follows:
\begin{itemize}
\item elliptic zone:
\begin{align*}
\Zell(N)=\left\{ (t,\xi)\in [0,\infty)\times \mathbb{R}^n: G(t)|\xi|^2 \geq N \right\},
\end{align*}
\item pseudo-differential zone:
\begin{align*} \label{zonesellipticcase}
\Zpd(N)=\left\{ (t,\xi)\in [0,\infty)\times\mathbb{R}^n: G(t)|\xi|^2 \leq N \right\},
\end{align*}
\end{itemize}
where $N>0$ is sufficiently large zone constant. Moreover, the separating line $t_\xi=t(|\xi|)$ is defined by
\[ t_\xi=\left\{ (t,\xi) \in [0,\infty) \times \mathbb{R}^n: G(t)|\xi|^2 = N \right\} .\]
\subsubsection{Considerations in the elliptic zone $\Zell(N)$} \label{Subsection_Overdamping_Zell}
Let us introduce the following family of symbol classes in the elliptic zone $\Zell(N)$.
\begin{definition} \label{Def:Overdamping-increasing-symbol}
A function $f=f(t,\xi)$ belongs to the elliptic symbol class $S_{\text{ell}}^\ell\{m_1,m_2\}$ if it holds
\begin{equation*}
|D_t^kf(t,\xi)|\leq C_{k}\big(b(t)+g(t)|\xi|^2\big)^{m_1}\Big( \frac{g'(t)|\xi|^2+b'(t)}{b(t)+g(t)|\xi|^2} \Big)^{m_2+k}
\end{equation*}
for all $(t,\xi)\in \Zell(N)$ and all $k\leq \ell$.
\end{definition}
Some useful rules of the symbolic calculus are collected in the following proposition.
\begin{proposition} \label{Prop:Overdamping-symbol}  The following statements are true:
\begin{itemize}
\item $S_{\text{ell}}^\ell\{m_1,m_2\}$ is a vector space for all nonnegative integers $\ell$;
\item $S_{\text{ell}}^\ell\{m_1,m_2\}\cdot S_{\text{ell}}^{\ell}\{m_1',m_2'\}\hookrightarrow S^{\ell}_{\text{ell}}\{m_1+m_1',m_2+m_2'\}$;
\item $D_t^kS_{\text{ell}}^\ell\{m_1,m_2\}\hookrightarrow S_{\text{ell}}^{\ell-k}\{m_1,m_2+k\}$
for all nonnegative integers $\ell$ with $k\leq \ell$;
\item $S_{\text{ell}}^{0}\{-1,2\}\hookrightarrow L_{\xi}^{\infty}L_t^1\big( \Zell(N) \big)$.
\end{itemize}
\end{proposition}
\begin{proof}
Let us verify the last statement. Indeed, if $f=f(t,\xi)\in S_{\text{ell}}^{0}\{-1,2\}$, then using the estimate:
\begin{align*}
 \frac{( g'(t)|\xi|^2+b'(t))^2}{( b(t)+g(t)|\xi|^2)^3}  &\lesssim \frac{g'(t)^2|\xi|^4+|b'(t)|\,g'(t)|\xi|^2+b'(t)^2}{( b(t)+g(t)|\xi|^2)^3}  \\
& \lesssim \frac{g(t)^4|\xi|^4+G(t)|b'(t)|\, g(t)^2|\xi|^2+G(t)^2 \,b'(t)^2}{G(t)^2( b(t)+g(t)|\xi|^2)^3} \qquad \small{(\text{we used condition \textbf{(A3)}})} \\
& \lesssim \frac{g(t)^4|\xi|^4}{G(t)^2(g(t)|\xi|^2)^3} + \frac{|b'(t)|\, g(t)^2|\xi|^2}{G(t)(g(t)|\xi|^2)^2\,b(t)} + \frac{b'(t)^2}{b(t)^3} \\
& \lesssim \frac{g(t)}{G(t)^2|\xi|^2} + \frac{b(t)}{1+t}\frac{1}{G(t)|\xi|^2b(t)} + \frac{b(t)^2}{(1+t)^2}\frac{1}{b(t)^3} \qquad \small{(\text{we used condition \textbf{(B3)}})} \\
& = \frac{g(t)}{G(t)^2|\xi|^2} + \frac{1}{(1+t)G(t)|\xi|^2} +\frac{1}{b(t)(1+t)^2} \\
& \lesssim \frac{g(t)}{G(t)^2|\xi|^2}  + \frac{1}{b(t)} \qquad \small{(\text{we used the definition of}\,\, \Zell(N))},
\end{align*}
we get
\begin{align*}
\int_{t_\xi}^{\infty}|f(\tau,\xi)|d\tau  & \lesssim \frac{1}{|\xi|^2}\int_{t_\xi}^{\infty}\frac{g(\tau)}{G(\tau)^2} d\tau + \int_{0}^{\infty}\frac{1}{b(\tau)}d\tau \leq \frac{C}{G(t_{\xi})|\xi|^2} + \widetilde{C} = \frac{C}{N} + \widetilde{C},
\end{align*}
where we used the definition of the separating line $t_\xi$ and condition \textbf{(OD1)}.
\end{proof}
In \eqref{Eq:Overdamping1}, due to the identity
\[  \frac{g(t)^2}{4}|\xi|^4 + \dfrac{b(t)g(t)}{2}|\xi|^2 + \dfrac{b(t)^2}{4} = \Big( \frac{g(t)}{2}|\xi|^2 + \frac{b(t)}{2} \Big)^2, \]
we consider the following micro-energy:
\[ W=W(t,\xi) := \Big[ \big( \frac{g(t)}{2}|\xi|^2 + \frac{b(t)}{2} \big)w,D_tw \Big]^{\text{T}}. \]
Then, by \eqref{Eq:Overdamping1} we obtain that $W=W(t,\xi)$ satisfies the following system of first order:
\begin{equation} \label{SystemOverDamping}
D_tW=\underbrace{\left[ \left( \begin{array}{cc}
0 & \dfrac{g(t)}{2}|\xi|^2+\dfrac{b(t)}{2} \\
-\dfrac{g(t)}{2}|\xi|^2-\dfrac{b(t)}{2} & 0
\end{array} \right) + \left( \begin{array}{cc}
\dfrac{D_t( b(t)+g(t)|\xi|^2)}{b(t)+g(t)|\xi|^2} & 0 \\
-\dfrac{(g'(t)-2)|\xi|^2}{b(t)+g(t)|\xi|^2}-\dfrac{b'(t)}{b(t)+g(t)|\xi|^2} & 0
\end{array} \right)\right]}_{A_W}W.
\end{equation}
We want to estimate the fundamental solution $E_W=E_W(t,s,\xi)$ to the system \eqref{SystemOverDamping}, namely, the solution to
\begin{equation*}
D_tE_W(t,s,\xi)=A_W(t,\xi)E_W(t,s,\xi), \quad E_W(s,s,\xi)=I \quad \mbox{for any} \quad t\geq s\geq t_\xi.
\end{equation*}
We denote by $M$ the matrix consisting of eigenvectors of the first matrix on the right-hand side and its inverse matrix
\[ M = \left( \begin{array}{cc}
i & -i \\
1 & 1
\end{array} \right), \qquad M^{-1}=\frac{1}{2}\left( \begin{array}{cc}
-i & 1 \\
i & 1
\end{array} \right). \]
Then, defining $W^{(0)}:=M^{-1}W$ we get the system
\begin{equation*}
D_tW^{(0)}=\big( \mathcal{D}(t,\xi)+\mathcal{R}(t,\xi) \big)W^{(0)},
\end{equation*}
where
\begin{align*}
\mathcal{D}(t,\xi) &= \left( \begin{array}{cc}
-i\Big( \dfrac{g(t)}{2}|\xi|^2 + \dfrac{b(t)}{2} \Big) & 0 \\
0 & i\Big( \dfrac{g(t)}{2}|\xi|^2 + \dfrac{b(t)}{2} \Big)
\end{array} \right), \\
\mathcal{R}_1(t,\xi) &= \frac{1}{2} \left( \begin{array}{cc}
\dfrac{D_t( b(t)+g(t)|\xi|^2)}{b(t)+g(t)|\xi|^2}-i\dfrac{(g'(t)-2)|\xi|^2}{b(t)+g(t)|\xi|^2} & -\dfrac{D_t( b(t)+g(t)|\xi|^2)}{b(t)+g(t)|\xi|^2}+i\dfrac{(g'(t)-2)|\xi|^2}{b(t)+g(t)|\xi|^2} \\
-\dfrac{D_t( b(t)+g(t)|\xi|^2)}{b(t)+g(t)|\xi|^2}-i\dfrac{(g'(t)-2)|\xi|^2}{b(t)+g(t)|\xi|^2} & \dfrac{D_t( b(t)+g(t)|\xi|^2)}{b(t)+g(t)|\xi|^2}+i\dfrac{(g'(t)-2)|\xi|^2}{b(t)+g(t)|\xi|^2}
\end{array} \right), \\
\mathcal{R}_2(t,\xi) &= \frac{1}{2} \left( \begin{array}{cc}
-i\dfrac{b'(t)}{b(t)+g(t)|\xi|^2} & i\dfrac{b'(t)}{b(t)+g(t)|\xi|^2} \\
-i\dfrac{b'(t)}{b(t)+g(t)|\xi|^2} & i\dfrac{b'(t)}{b(t)+g(t)|\xi|^2}
\end{array} \right),
\end{align*}
where $\mathcal{D}\in S_{\text{ell}}^2\{1,0\}$ and with $\mathcal{R}:=\mathcal{R}_1+\mathcal{R}_2$, $\mathcal{R}\in S_{\text{ell}}^1\{0,1\}$.

We perform one more step of diagonalization procedure. We define $F_0(t,\xi):=\diag\mathcal{R}(t,\xi)$. The difference of the diagonal entries of the matrix $\mathcal{D}(t,\xi)+F_0(t,\xi)$ is
\begin{align*}
g(t)|\xi|^2&+b(t) + \frac{(g'(t)-2)|\xi|^2+b'(t)}{b(t)+g(t)|\xi|^2} \\
& \leq \frac{g(t)^2|\xi|^4+2b(t)g(t)|\xi|^2+b(t)^2+g'(t)|\xi|^2+b'(t)}{b(t)+g(t)|\xi|^2} \\
& \leq \frac{( b(t)+g(t)|\xi|^2)^2+\frac{g(t)^2|\xi|^4}{G(t)|\xi|^2}+b(t)^2}{b(t)+g(t)|\xi|^2} \qquad \small{(\text{we used condition \textbf{(A3)}})} \\
& \lesssim \frac{( b(t)+g(t)|\xi|^2)^2+g(t)^2|\xi|^4+b(t)^2}{b(t)+g(t)|\xi|^2} \leq b(t)+g(t)|\xi|^2=:i\delta(t,\xi),
\end{align*}
where we use the fact that $b'(t)=o(b(t)^2)$ for $t \to \infty$.

Now we choose a matrix $N^{(1)}=N^{(1)}(t,\xi)$ such that
\begin{align*}
&N^{(1)}(t,\xi) := \left( \begin{array}{cc}
0 & -\dfrac{\mathcal{R}_{12}}{\delta(t,\xi)} \\
\dfrac{\mathcal{R}_{21}}{\delta(t,\xi)} & 0
\end{array} \right) \\
& = \left( \begin{array}{cc}
0 & i\dfrac{D_t( b(t)+g(t)|\xi|^2)}{2\big( b(t)+g(t)|\xi|^2 \big)^2}+\dfrac{(g'(t)-2)|\xi|^2+b'(t)}{2( b(t)+g(t)|\xi|^2)^2} \\
-i\dfrac{D_t( b(t)+g(t)|\xi|^2)}{2( b(t)+g(t)|\xi|^2)^2}+\dfrac{(g'(t)-2)|\xi|^2+b'(t)}{2( b(t)+g(t)|\xi|^2)^2} & 0
\end{array} \right)
\end{align*}
with $N^{(1)}\in S_{\text{ell}}^1\{-1,1\}$. For a sufficiently large zone constant $N$ and all $t\geq t_\xi$ the matrix $N_1=N_1(t,\xi) := I + N^{(1)}(t,\xi)$ belongs to $S_{\text{ell}}^1\{0,0\}$ and is invertible with uniformly bounded inverse matrix $N_1^{-1}=N_1^{-1}(t,\xi)$. Namely, by using the condition \textbf{(A3)} of $g=g(t)$ and condition \textbf{(OD2)} of $b=b(t)$, we find
\begin{align*} \label{Eq:OverdampingR1}
& \Big|\dfrac{D_t( b(t)+g(t)|\xi|^2)}{( b(t)+g(t)|\xi|^2)^2}\Big| \leq \dfrac{C_1\frac{g(t)^2}{G(t)}|\xi|^2+|b'(t)|}{( b(t)+g(t)|\xi|^2)^2} \leq \dfrac{C_1g(t)^2|\xi|^2+G(t)|b'(t)|}{G(t)( g(t)^2|\xi|^4+b(t)^2)} \nonumber \\
& \qquad \leq \frac{C_1}{G(t)|\xi|^2} + \frac{|b'(t)|}{b(t)^2} \leq \frac{C}{N}<1,
\end{align*}
where we used the definition of $\Zell(N)$ with sufficiently large zone constant $N$ and $b'(t)=o(b(t)^2)$ for $t \to \infty$.

Let
\begin{align*}
B^{(1)}(t,\xi) &= D_tN^{(1)}(t,\xi)-( \mathcal{R}(t,\xi)-F_0(t,\xi))N^{(1)}(t,\xi), \\
\mathcal{R}_3(t,\xi) &= -N_1^{-1}(t,\xi)B^{(1)}(t,\xi)\in S_{\text{ell}}^0\{-1,2\}.
\end{align*}
Then, we have the following operator identity:
\begin{equation*}
( D_t-\mathcal{D}(t,\xi)-\mathcal{R}(t,\xi))N_1(t,\xi)=N_1(t,\xi)( D_t-\mathcal{D}(t,\xi)-F_0(t)-\mathcal{R}_3(t,\xi)).
\end{equation*}
\begin{proposition} \label{Prop_Overdapming_EstZell}
The fundamental solution $E_{\text{ell}}^{W}=E_{\text{ell}}^{W}(t,s,\xi)$ to the transformed operator
\[ D_t-\mathcal{D}(t,\xi)-F_0(t,\xi)-\mathcal{R}_3(t,\xi) \]
can be estimated by
\begin{equation*}
\big( |E_{\text{ell}}^{W}(t,s,\xi)| \big) \lesssim \frac{b(t)+g(t)|\xi|^2}{b(s)+g(s)|\xi|^2}\exp\bigg( \frac{1}{2}\int_{s}^{t}\big( g(\tau)|\xi|^2 +b(\tau) \big)d\tau \bigg)
\left( \begin{array}{cc}
1 & 1 \\
1 & 1
\end{array} \right),
\end{equation*}
with $(t,\xi),(s,\xi)\in \Zell(N)$, $t_\xi\leq s\leq t$.
\end{proposition}
\begin{proof}
We will follow the approach from the proof of Proposition \ref{Prop_Scattering_EllZone}. Namely, we transform the system for $E_{\text{ell}}^{W}=E_{\text{ell}}^{W}(t,s,\xi)$ to an integral equation for a new matrix-valued function $\mathcal{Q}_{\text{ell}}=\mathcal{Q}_{\text{ell}}(t,s,\xi)$. If we differentiate the term
\[ \exp \bigg\{ -i\int_{s}^{t}\big( \mathcal{D}(\tau,\xi)+F_0(\tau,\xi) \big)d\tau \bigg\}E_{\text{ell}}^{W}(t,s,\xi) \]
and, then integrate on $[s,t]$, we find that $E_{\text{ell}}^{W}=E_{\text{ell}}^{W}(t,s,\xi)$ satisfies the following integral equation:
\begin{align*}
E_{\text{ell}}^{W}(t,s,\xi) & = \exp\bigg\{ i\int_{s}^{t}\big( \mathcal{D}(\tau,\xi)+F_0(\tau,\xi) \big)d\tau \bigg\}E_{\text{ell}}^{W}(s,s,\xi)\\
& \quad + i\int_{s}^{t} \exp \bigg\{ i\int_{\theta}^{t}\big( \mathcal{D}(\tau,\xi)+F_0(\tau,\xi) \big)d\tau \bigg\}\mathcal{R}_1(\theta,\xi)E_{\text{ell}}^{W}(\theta,s,\xi)\,d\theta.
\end{align*}
We define
\[ \mathcal{Q}_{\text{ell}}(t,s,\xi)=\exp\bigg\{ -\int_{s}^{t}\beta(\tau,\xi)d\tau \bigg\} E_{\text{ell}}^{W}(t,s,\xi), \]
with a suitable $\beta=\beta(t,\xi)$ which will be fixed later. Then, this new term satisfies the new integral equation
\begin{align*}
\mathcal{Q}_{\text{ell}}(t,s,\xi)=&\exp \bigg\{ \int_{s}^{t}\big( i\mathcal{D}(\tau,\xi)+iF_0(\tau,\xi)-\beta(\tau,\xi)I \big)d\tau \bigg\}\\
& \quad + \int_{s}^{t} \exp \bigg\{ \int_{\theta}^{t}\big( i\mathcal{D}(\tau,\xi)+iF_0(\tau,\xi)-\beta(\tau,\xi)I \big)d\tau \bigg\}\mathcal{R}_3(\theta,\xi)\mathcal{Q}_{\text{ell}}(\theta,s,\xi)\,d\theta.
\end{align*}
The function $\mathcal{R}_3=\mathcal{R}_3(\theta,\xi)$ is uniformly integrable over the elliptic zone because of Proposition \ref{Prop:Overdamping-symbol}.
Hence, if the exponential term is bounded, then the solution $\mathcal{Q}_{\text{ell}}=\mathcal{Q}_{\text{ell}}(t,s,\xi)$ of the integral equation is uniformly bounded over the elliptic zone for a suitable weight $\beta=\beta(t,\xi)$.

The main entries of the diagonal matrix $i\mathcal{D}(t,\xi)+iF_0(t,\xi)$ are given by
\begin{align*}
(I) &= \dfrac{g(t)|\xi|^2}{2} + \dfrac{b(t)}{2}+\dfrac{g'(t)|\xi|^2+b'(t)}{2\big( b(t)+g(t)|\xi|^2 \big)}+\dfrac{(g'(t)-2)|\xi|^2+b'(t)}{2\big( b(t)+g(t)|\xi|^2 \big)},\\
(II) &= -\dfrac{g(t)|\xi|^2}{2} - \dfrac{b(t)}{2}+\dfrac{g'(t)|\xi|^2+b'(t)}{2\big( b(t)+g(t)|\xi|^2 \big)}-\dfrac{(g'(t)-2)|\xi|^2+b'(t)}{2\big( b(t)+g(t)|\xi|^2 \big)}.
\end{align*}
We may see that the term $(I)$ is dominant in $\Zell(N)$ with respect to $(II)$ for $t\geq t_\xi$. Therefore, we choose the weight $\beta=\beta(t,\xi)=(I)$. By this choice, we get
\[ i\mathcal{D}(\tau,\xi)+iF_0(\tau,\xi)-\beta(\tau,\xi)I = \left( \begin{array}{cc}
0 & 0 \\
0 & -\big( b(t)+g(t)|\xi|^2 \big)-\dfrac{(g'(t)-2)|\xi|^2+b'(t)}{b(t)+g(t)|\xi|^2}
\end{array} \right). \]
It follows
\begin{align*}
H(t,s,\xi) & =\exp \bigg\{ \int_{s}^{t}\big( i\mathcal{D}(\tau,\xi)+iF_0(\tau,\xi)-\beta(\tau,\xi)I \big)d\tau \bigg\}\\
& = \diag \bigg( 1, \exp \bigg\{ \int_{s}^{t}\Big( -\big( b(t)+g(t)|\xi|^2 \big)-\dfrac{(g'(t)-2)|\xi|^2+b'(t)}{b(t)+g(t)|\xi|^2} \Big)d\tau \bigg\} \bigg)\rightarrow \left( \begin{array}{cc}
1 & 0 \\
0 & 0
\end{array} \right)
\end{align*}
as $t\rightarrow \infty$ for any fixed $s\geq t_\xi$. Hence, the matrix $H=H(t,s,\xi)$ is uniformly bounded for $(s,\xi),(t,\xi)\in \Zell(N)$. So, the representation of $\mathcal{Q}_{\text{ell}}=\mathcal{Q}_{\text{ell}}(t,s,\xi)$ by a Neumann series gives
\begin{align*}
\mathcal{Q}_{\text{ell}}(t,s,\xi)=H(t,s,\xi)+\sum_{k=1}^{\infty}i^k\int_{s}^{t}H(t,t_1,\xi)\mathcal{R}_3(t_1,\xi)&\int_{s}^{t_1}H(t_1,t_2,\xi)\mathcal{R}_3(t_2,\xi) \\
& \cdots \int_{s}^{t_{k-1}}H(t_{k-1},t_k,\xi)\mathcal{R}_3(t_k,\xi)dt_k\cdots dt_2dt_1.
\end{align*}
Then, this series is convergent, since $\mathcal{R}_3=\mathcal{R}_3(t,\xi)$ is uniformly integrable over $\Zell(N)$ due to the last item of Proposition \ref{Prop:Overdamping-symbol}. Hence, from the last considerations we may conclude
\begin{align*}
E_{\text{ell}}^{W}(t,s,\xi)&=\exp \bigg\{ \int_{s}^{t}\beta(\tau,\xi)d\tau \bigg\}\mathcal{Q}_{\text{ell}}(t,s,\xi)\\
& = \exp \bigg\{ \int_{s}^{t}\bigg( \frac{g(\tau)|\xi|^2}{2}+\frac{b(\tau)}{2}+\frac{b'(\tau)+g'(\tau)|\xi|^2}{2( b(\tau)+g(\tau)|\xi|^2)}+\frac{( g'(\tau)-2)|\xi|^2+b'(\tau)}{2( b(\tau)+g(\tau)|\xi|^2)} \bigg)d\tau \bigg\}\mathcal{Q}_{\text{ell}}(t,s,\xi),
\end{align*}
where $\mathcal{Q}_{\text{ell}}=\mathcal{Q}_{\text{ell}}(t,s,\xi)$ is a uniformly bounded matrix. Then, it follows
\begin{align*}
(|E_{\text{ell}}^{V}(t,s,\xi)|) & \lesssim \exp \bigg\{ \int_{s}^{t}\bigg( \frac{g(\tau)|\xi|^2}{2}+\frac{b(\tau)}{2}+\frac{b'(\tau)+g'(\tau)|\xi|^2}{ b(\tau)+g(\tau)|\xi|^2}-\frac{|\xi|^2}{b(\tau)+g(\tau)|\xi|^2} \bigg\} \left( \begin{array}{cc}
1 & 1 \\
1 & 1
\end{array} \right) \\
& \lesssim \frac{b(t)+g(t)|\xi|^2}{b(s)+g(s)|\xi|^2} \exp \bigg( \frac{1}{2}\int_{s}^{t} \big( b(\tau)+g(\tau)|\xi|^2 \big)d\tau \bigg)\left( \begin{array}{cc}
1 & 1 \\
1 & 1
\end{array} \right).
\end{align*}
This completes the proof.
\end{proof}
Now let us come back to
\begin{equation} \label{Eq:Overdamping_ZellBack}
W(t,\xi) = E_W(t,s,\xi)W(s,\xi) \qquad \text{for all} \qquad t_\xi\leq s\leq t,
\end{equation}
that is,
\begin{align*}
\left( \begin{array}{cc}
\gamma(t,\xi)w(t,\xi) \\
D_t w(t,\xi)
\end{array} \right) = E_W(t,s,\xi)\left( \begin{array}{cc}
\gamma(s,\xi)w(s,\xi) \\
D_tw(s,\xi)
\end{array} \right) \qquad \text{for} \qquad t_\xi\leq s\leq t,
\end{align*}
where $\gamma=\gamma(t,\xi):=\frac{g(t)}{2}|\xi|^2+\frac{b(t)}{2}$. Therefore, from Proposition \ref{Prop_Overdapming_EstZell} and \eqref{Eq:Overdamping_ZellBack} we may conclude the following estimates for $t_\xi \leq s \leq t$:
\begin{align*}
\gamma(t,\xi)|w(t,\xi)| & \lesssim \frac{b(t)+g(t)|\xi|^2}{b(s)+g(s)|\xi|^2}\exp\bigg( \frac{1}{2}\int_{s}^{t}\big( b(\tau)+g(\tau)|\xi|^2 \big)d\tau \bigg)\Big( \gamma(s,\xi)|w(s,\xi)| + |w_t(s,\xi)| \Big), \\
|w_t(t,\xi)| & \lesssim \frac{b(t)+g(t)|\xi|^2}{b(s)+g(s)|\xi|^2}\exp\bigg( \frac{1}{2}\int_{s}^{t}\big( b(\tau)+g(\tau)|\xi|^2 \big)d\tau \bigg)\Big( \gamma(s,\xi)|w(s,\xi)| + |w_t(s,\xi)| \Big).
\end{align*}
Using the backward transformation
\[ w(t,\xi)=\exp\bigg( \frac{1}{2} \int_0^t \big( b(\tau)+g(\tau)|\xi|^2 \big)d\tau \bigg)\hat{u}(t,\xi),  \]
we arrive immediately at the following result.
\begin{corollary} \label{Cor:Overdamping_IncreasingZell}
We have the following estimates in the elliptic zone $\Zell(N)$ for $t_\xi \leq s \leq t$:
\begin{align*}
|\xi|^{|\beta|}|\hat{u}(t,\xi)| & \lesssim |\xi|^{|\beta|}|\hat{u}(s,\xi)| + \frac{|\xi|^{|\beta|}}{b(s)+g(s)|\xi|^2}|\hat{u}_t(s,\xi)| \quad \mbox{for} \quad |\beta|\geq 0, \\
|\xi|^{|\beta|}|\hat{u}_t(t,\xi)| & \lesssim \big( b(t)+g(t)|\xi|^2 \big)|\xi|^{|\beta|}|\hat{u}(s,\xi)| + \frac{b(t)+g(t)|\xi|^2}{b(s)+g(s)|\xi|^2}|\xi|^{|\beta|}|\hat{u}_t(s,\xi)| \quad \mbox{for} \quad |\beta|\geq 0.
\end{align*}
\end{corollary}
\subsubsection{Considerations in the pseudo-differential zone $\Zpd(N)$} \label{Subsection_Overdamping_Zpd}
Let us introduce the micro-energy $U=\big( \gamma(t,\xi)\hat{u},D_t\hat{u} \big)^\text{T}$ with $\gamma(t,\xi):=\dfrac{b(t)+g(t)|\xi|^2}{2}$. Then, by \eqref{MainEquationFourier}, we find the system
\begin{equation} \label{Eq:Overdamping_Zpd_System}
D_tU=\underbrace{\left( \begin{array}{cc}
\dfrac{D_t\gamma(t,\xi)}{\gamma(t,\xi)} & \gamma(t,\xi) \\
\dfrac{|\xi|^2}{\gamma(t,\xi)} & i\big( b(t)+g(t)|\xi|^2 \big)
\end{array} \right)}_{A(t,\xi)}U.
\end{equation}
The fundamental solution $E_{\text{pd}}=E_{\text{pd}}(t,s,\xi)$ to the system \eqref{Eq:Overdamping_Zpd_System} is the solution of
\[ D_tE_{\text{pd}}(t,s,\xi)=A(t,\xi)E_{\text{pd}}(t,s,\xi), \quad E_{\text{pd}}(s,s,\xi)=I, \]
for all $0\leq s \leq t$ and $(t,\xi), (s,\xi) \in \Zpd(N)$. Thus, the solution $U=U(t,\xi)$ is represented as
\[ U(t,\xi)=E_{\text{pd}}(t,s,\xi)U(s,\xi). \]
We will use the auxiliary function
\[ \delta=\delta(t,\xi)=\exp\bigg( \int_{0}^{t}\big( b(\tau)+g(\tau)|\xi|^2 \big)d\tau \bigg). \]
The entries $E_{\text{pd}}^{(k\ell)}(t,s,\xi)$, $k,\ell=1,2,$ of the fundamental solution $E_{\text{pd}}(t,s,\xi)$ satisfy the following system for $\ell=1,2$:
\begin{align*}
D_tE_{\text{pd}}^{(1\ell)}(t,s,\xi) &= \frac{D_t\gamma(t,\xi)}{\gamma(t,\xi)}E_{\text{pd}}^{(1\ell)}(t,s,\xi)+\gamma(t,\xi)E_{\text{pd}}^{(2\ell)}(t,s,\xi), \\
D_tE_{\text{pd}}^{(2\ell)}(t,s,\xi) &= \frac{|\xi|^2}{\gamma(t,\xi)}E_{\text{pd}}^{(1\ell)}(t,s,\xi)+i\big( b(t)+g(t)|\xi|^2 \big)E_{\text{pd}}^{(2\ell)}(t,s,\xi).
\end{align*}
Then, by straight-forward calculations (with $\delta_{k \ell}=1$ if $k=\ell$ and $\delta_{k \ell}=0$ otherwise), we get
\begin{align*}
E_{\text{pd}}^{(1\ell)}(t,s,\xi) & = \frac{\gamma(t,\xi)}{\gamma(s,\xi)}\delta_{1\ell}+i\gamma(t,\xi)\int_{s}^{t}E_{\text{pd}}^{(2\ell)}(\tau,s,\xi)d\tau, \\
E_{\text{pd}}^{(2\ell)}(t,s,\xi) & = \frac{\delta(s,\xi)}{\delta(t,\xi)}\delta_{2\ell}+\frac{i|\xi|^2}{\delta(t,\xi)}\int_{s}^{t}\frac{1}{\gamma(\tau,\xi)}\delta(\tau,\xi)E_{\text{pd}}^{(1\ell)}(\tau,s,\xi)d\tau.
\end{align*}
\begin{proposition} \label{Prop:Overdamping_Zpd}
We have the following estimates in the pseudo-differential zone:
\begin{equation*}
(|E_{\text{pd}}(t,s,\xi)|) \lesssim \frac{b(t)+g(t)|\xi|^2}{b(s)+g(s)|\xi|^2}
\left( \begin{array}{cc}
1 & 1 \\
1 & 1
\end{array} \right)
\end{equation*}
with $(s,\xi),(t,\xi)\in\Zpd(N)$ and $0\leq s\leq t\leq t_\xi$.
\end{proposition}
\begin{proof}
First let us consider the first column. Plugging the representation for $E_{\text{pd}}^{(21)}=E_{\text{pd}}^{(21)}(t,s,\xi)$ into the integral equation for $E_{\text{pd}}^{(11)}=E_{\text{pd}}^{(11)}(t,s,\xi)$ gives
\begin{align*}
E_{\text{pd}}^{(11)}(t,s,\xi) = \frac{\gamma(t,\xi)}{\gamma(s,\xi)}-|\xi|^2\gamma(t,\xi)\int_{s}^{t}\int_{s}^{\tau}\frac{\delta(\theta,\xi)}{\delta(\tau,\xi)}\frac{1}{\gamma(\theta,\xi)}
E_{\text{pd}}^{(11)}(\theta,s,\xi)d\theta d\tau.
\end{align*}
By setting $y(t,s,\xi):=\dfrac{\gamma(s,\xi)}{\gamma(t,\xi)}E_{\text{pd}}^{(11)}(t,s,\xi)$ we obtain
\begin{align*}
y(t,s,\xi) &= 1-|\xi|^2\int_{s}^{t}\int_{\theta}^{t}\frac{\delta(\theta,\xi)}{\delta(\tau,\xi)}y(\theta,s,\xi)d\tau d\theta \\
& = 1+|\xi|^2\int_{s}^{t}\bigg( \int_{\theta}^{t}\frac{1}{b(\tau)+g(\tau)|\xi|^2}\partial_\tau \bigg( \frac{\delta(\theta,\xi)}{\delta(\tau,\xi)} \bigg)d\tau \bigg) y(\theta,s,\xi)d\theta \\
& = 1+|\xi|^2\int_{s}^{t}\bigg( \frac{1}{b(\tau)+g(\tau)|\xi|^2}\frac{\delta(\theta,\xi)}{\delta(\tau,\xi)}\Big|_{\theta}^{t}+\int_{\theta}^{t}\frac{b'(\tau)+g'(\tau)|\xi|^2}{( b(\tau)+g(\tau)|\xi|^2)^2} \frac{\delta(\theta,\xi)}{\delta(\tau,\xi)} d\tau \bigg) y(\theta,s,\xi)d\theta.
\end{align*}
Then, we get
\begin{align*}
|y(t,s,\xi)| &\lesssim 1+|\xi|^2\int_{s}^{t}\bigg( \frac{1}{b(t)+g(t)|\xi|^2}\underbrace{\frac{\delta(\theta,\xi)}{\delta(t,\xi)}}_{\leq1}+\int_{\theta}^{t}\frac{b'(\tau)+g'(\tau)|\xi|^2}{( b(\tau)+g(\tau)|\xi|^2)^2} \underbrace{\frac{\delta(\theta,\xi)}{\delta(\tau,\xi)}}_{\leq1}d\tau \bigg) |y(\theta,s,\xi)|d\theta \\
& \lesssim 1+|\xi|^2\int_{s}^{t}\bigg( \frac{1}{b(t)+g(t)|\xi|^2}-\frac{1}{b(\tau)+g(\tau)|\xi|^2}\Big|_{\theta}^{t} \bigg)|y(\theta,s,\xi)|d\theta \\
& \lesssim 1+|\xi|^2\int_{s}^{t}\frac{1}{b(\theta)+g(\theta)|\xi|^2}|y(\theta,s,\xi)|d\theta.
\end{align*}
Applying Gronwall's inequality and employing \textbf{(A2)}, we get the estimate
\begin{align*}
|y(t,s,\xi)| \lesssim \exp \bigg( \int_{s}^{t}\frac{|\xi|^2}{b(\theta)+g(\theta)|\xi|^2}d\theta \bigg) \lesssim \exp \bigg( \int_{s}^{t}\frac{1}{g(\theta)}d\theta \bigg) \lesssim 1.
\end{align*}
This implies
\[ |E_{\text{pd}}^{(11)}(t,s,\xi)| \lesssim \frac{\gamma(t,\xi)}{\gamma(s,\xi)}=\frac{b(t)+g(t)|\xi|^2}{b(s)+g(s)|\xi|^2}. \]
Now we consider $E_{\text{pd}}^{(21)}(t,s,\xi)$. By using the estimate for $|E_{\text{pd}}^{(11)}(t,s,\xi)|$ we obtain
\begin{align*}
\frac{\gamma(s,\xi)}{\gamma(t,\xi)}|E_{\text{pd}}^{(21)}(t,s,\xi)| & \lesssim |\xi|^2\int_{s}^{t}\frac{1}{\gamma(\tau,\xi)}\frac{\delta(\tau,\xi)}{\delta(t,\xi)}\frac{\gamma(s,\xi)}{\gamma(t,\xi)}|E_{\text{pd}}^{(11)}(\tau,s,\xi)|d\tau \\
& \lesssim |\xi|^2\int_{s}^{t}\frac{1}{b(\tau)+g(\tau)|\xi|^2}\underbrace{\frac{\delta(\tau,\xi)\gamma(\tau,\xi)}{\delta(t,\xi)\gamma(t,\xi)}}_{\leq1}d\tau \lesssim \int_{s}^{t}\frac{|\xi|^2}{b(\tau)+g(\tau)|\xi|^2}d\tau \lesssim 1.
\end{align*}
This gives
\[ |E_{\text{pd}}^{(21)}(t,s,\xi)| \lesssim \frac{\gamma(t,\xi)}{\gamma(s,\xi)}=\frac{b(t)+g(t)|\xi|^2}{b(s)+g(s)|\xi|^2}. \]
Next, we consider the entries of the second column. Plugging the representation for $E_{\text{pd}}^{(22)}=E_{\text{pd}}^{(22)}(t,s,\xi)$ into the integral equation for $E_{\text{pd}}^{(12)}=E_{\text{pd}}^{(12)}(t,s,\xi)$ gives
\begin{align*}
E_{\text{pd}}^{(12)}(t,s,\xi) = i\gamma(t,\xi)\int_{s}^{t}\frac{\delta(s,\xi)}{\delta(\tau,\xi)}d\tau-
|\xi|^2\gamma(t,\xi)\int_{s}^{t}\int_{s}^{\tau}\frac{\delta(\theta,\xi)}{\delta(\tau,\xi)}\frac{1}{\gamma(\theta,\xi)}E_{\text{pd}}^{(12)}(\theta,s,\xi)d\theta d\tau.
\end{align*}
After setting $y(t,s,\xi):=\dfrac{\gamma(s,\xi)}{\gamma(t,\xi)}E_{\text{pd}}^{(12)}(t,s,\xi)$, it follows
\begin{align*}
y(t,s,\xi) &= -i\gamma(s,\xi)\int_{s}^{t}\frac{1}{b(\tau)+g(\tau)|\xi|^2}\partial_\tau \bigg( \frac{\delta(s,\xi)}{\delta(\tau,\xi)} \bigg)d\tau \\
& \qquad + |\xi|^2\int_{s}^{t}\bigg( \int_{\theta}^{t}\frac{1}{b(\tau)+g(\tau)|\xi|^2}\partial_\tau \bigg( \frac{\delta(\theta,\xi)}{\delta(\tau,\xi)} \bigg)d\tau \bigg)y(\theta,s,\xi)d\theta d\tau \\
&= i\gamma(s,\xi) \bigg( -\frac{1}{b(\tau)+g(\tau)|\xi|^2}\frac{\delta(s,\xi)}{\delta(\tau,\xi)}\Big|_s^t - \int_{s}^{t}\frac{b'(\tau)+g'(\tau)|\xi|^2}{( b(\tau)+g(\tau)|\xi|^2)^2} \underbrace{\frac{\delta(\theta,\xi)}{\delta(\tau,\xi)}}_{\leq1}d\tau \bigg) \\
& \qquad + |\xi|^2\int_{s}^{t}\bigg( \frac{1}{b(t)+g(t)|\xi|^2}\underbrace{\frac{\delta(\theta,\xi)}{\delta(t,\xi)}}_{\leq1}+\int_{\theta}^{t}\frac{b'(\tau)+g'(\tau)|\xi|^2}{( b(\tau)+g(\tau)|\xi|^2)^2} \underbrace{\frac{\delta(\theta,\xi)}{\delta(\tau,\xi)}}_{\leq1}d\tau \bigg)y(\theta,s,\xi)d\theta.
\end{align*}
Therefore, we have
\begin{align*}
|y(t,s,\xi)| &\lesssim \gamma(s,\xi) \bigg( \frac{1}{b(s)+g(s)|\xi|^2} + \frac{1}{b(t)+g(t)|\xi|^2} \bigg) \\
& \qquad + |\xi|^2\int_{s}^{t}\bigg( \frac{1}{b(t)+g(t)|\xi|^2}+\int_{\theta}^{t}\frac{b'(\tau)+g'(\tau)|\xi|^2}{( b(\tau)+g(\tau)|\xi|^2)^2}d\tau \bigg)|y(\theta,s,\xi)|d\theta \\
&\lesssim \gamma(s,\xi) \bigg( \frac{2}{b(s)+g(s)|\xi|^2} \bigg) + |\xi|^2\int_{s}^{t}\bigg( \frac{1}{b(t)+g(t)|\xi|^2} -\frac{1}{b(\tau)+g(\tau)|\xi|^2}\Big|_{\theta}^{t} \bigg)|y(\theta,s,\xi)|d\theta \\
& = 1 + \int_{s}^{t}\frac{|\xi|^2}{b(\theta)+g(\theta)|\xi|^2}|y(\theta,s,\xi)|d\theta.
\end{align*}
Employing Gronwall's inequality, we have the estimate
\begin{align*}
|y(t,s,\xi)| \lesssim \exp \bigg( \int_{s}^{t}\frac{|\xi|^2}{b(\theta)+g(\theta)|\xi|^2}d\theta \bigg) \lesssim 1.
\end{align*}
Hence, we may conclude that
\[ |E_{\text{pd}}^{(12)}(t,s,\xi)| \lesssim \frac{\gamma(t,\xi)}{\gamma(s,\xi)}=\frac{b(t)+g(t)|\xi|^2}{b(s)+g(s)|\xi|^2}. \]
Finally, let us estimate $|E_{\text{pd}}^{(22)}(t,s,\xi)|$ by using the above estimate for $|E_{\text{pd}}^{(12)}(t,s,\xi)|$. It holds
\begin{align*}
|E_{\text{pd}}^{(22)}(t,s,\xi)| & \lesssim \frac{\delta(s,\xi)}{\delta(t,\xi)}+|\xi|^2\int_{s}^{t}\frac{\delta(\tau,\xi)}{\delta(t,\xi)}\frac{1}{\gamma(\tau,\xi)}|E_{\text{pd}}^{(12)}(\tau,s,\xi)|d\tau \\
& \lesssim \frac{\delta(s,\xi)}{\delta(t,\xi)}+|\xi|^2\int_{s}^{t}\frac{\delta(\tau,\xi)}{\delta(t,\xi)}\frac{1}{\gamma(\tau,\xi)}\frac{\gamma(\tau,\xi)}{\gamma(s,\xi)}d\tau.
\end{align*}
Then, we have
\begin{align*}
\frac{\gamma(s,\xi)}{\gamma(t,\xi)}|E_{\text{pd}}^{(22)}(t,s,\xi)| & \lesssim \underbrace{\frac{\delta(s,\xi)\gamma(s,\xi)}{\delta(t,\xi)\gamma(t,\xi)}}_{\leq1}+|\xi|^2\int_{s}^{t}
\underbrace{\frac{\delta(\tau,\xi)\gamma(\tau,\xi)}{\delta(t,\xi)\gamma(t,\xi)}}_{\leq1}\frac{1}{\gamma(\tau,\xi)}d\tau \\
& \lesssim 1 + |\xi|^2\int_{s}^{t}\frac{1}{b(\tau)+g(\tau)|\xi|^2}d\tau \lesssim 1.
\end{align*}
This shows that
\[ |E_{\text{pd}}^{(22)}(t,s,\xi)| \lesssim \frac{\gamma(t,\xi)}{\gamma(s,\xi)}=\frac{b(t)+g(t)|\xi|^2}{b(s)+g(s)|\xi|^2}. \]
This completes the proof.
\end{proof}
Now let us come back to
\begin{equation} \label{Eq:OverdampingPseudoZone}
U(t,\xi) = E(t,0,\xi)U(0,\xi) \quad \text{for all} \quad 0\leq t\leq t_{\xi}.
\end{equation}
Because of \eqref{Eq:OverdampingPseudoZone} and Proposition \ref{Prop:Overdamping_Zpd}, the following statement can be concluded.
\begin{corollary} \label{Cor:Overdamping_IncreasingZpd}
In the pseudo-differential zone $\Zpd(N)$ the following estimates hold for all $0\leq t\leq t_{\xi}$:
\begin{align*}
|\xi|^{|\beta|}|\hat{u}(t,\xi)| &\lesssim |\xi|^{|\beta|}|\hat{u}_0(\xi)| +|\xi|^{|\beta|}\langle \xi \rangle^{-2}|\hat{u}_1(\xi)| \quad \mbox{for} \quad |\beta|\geq 0, \\
|\xi|^{|\beta|}|\hat{u}_t(t,\xi)| &\lesssim ( b(t)+g(t)|\xi|^2)|\xi|^{|\beta|}|\hat{u}_0(\xi)| + ( b(t)+g(t)|\xi|^2) |\xi|^{|\beta|}\langle \xi \rangle^{-2}|\hat{u}_1(\xi)| \quad \mbox{for} \quad |\beta|\geq 0.
\end{align*}
\end{corollary}
\subsubsection{Conclusion} \label{Section2.3}
From the statements of Corollaries \ref{Cor:Overdamping_IncreasingZell} and \ref{Cor:Overdamping_IncreasingZpd} we derive the following estimates for $t>0$:
\begin{align*}
|\xi|^{|\beta|}|\hat{u}(t,\xi)| &\lesssim |\xi|^{|\beta|}|\hat{u}_0(\xi)| +|\xi|^{|\beta|}\langle \xi \rangle^{-2}|\hat{u}_1(\xi)| \quad \mbox{for} \quad |\beta|\geq 0, \\
|\xi|^{|\beta|}|\hat{u}_t(t,\xi)| &\lesssim ( b(t)+g(t)|\xi|^2)|\xi|^{|\beta|}|\hat{u}_0(\xi)| + ( b(t)+g(t)|\xi|^2) |\xi|^{|\beta|}\langle \xi \rangle^{-2}|\hat{u}_1(\xi)| \quad \mbox{for} \quad |\beta|\geq 0.
\end{align*}
This completes the proof of Theorem \ref{Theorem:Overdamping1}.
\end{proof}

\subsection{Model with integrable and decaying time-dependent coefficient $g=g(t)$} \label{Subsect_Overdamping_Integrable}
We assume that the coefficient $g=g(t)$ satisfies the following conditions for all $t \in [0,\infty)$:
\begin{enumerate}
\item[\textbf{(E1)}] $g(t)>0$ and $g'(t)<0$,
\item[\textbf{(E2)}] $g \in L^1([0,\infty))$.
\end{enumerate}
Let us consider the following transformed equation from \eqref{AuxiliaryEquation3}:
\[ w_{tt} + \Big(|\xi|^2\Big(1-\frac{b(t)g(t)}{2}-\frac{g(t)^2|\xi|^2}{4}-\frac{g'(t)}{2}\Big)-\frac{b(t)^2}{4}-\frac{b'(t)}{2}\Big)w=0.\]
We have $b'(t)=o(b^2(t))$ for $t \to \infty$. For this reason we can restrict ourselves to the equation
\[ w_{tt} + \Big(|\xi|^2\Big( 1-\frac{b(t)g(t)}{2}-\frac{g'(t)}{2} \Big) - \frac{g(t)^2}{4}|\xi|^4-\frac{b(t)^2}{4}\Big)w=0. \]
This equation helps to determine the zones of the extended phase space. The last equation can be written in the form
\[ w_{tt} + |\xi|^2\Big( 1-\frac{g'(t)}{2} \Big)w - \Big(\frac{g(t)}{2}|\xi|^2+\frac{b(t)}{2}\Big)^2 w=0. \]
In the following we study only the case $-g'(t) \leq 2 -\varepsilon$ for all $t \geq 0$ and with $\varepsilon >0$. It is clear that $g'(t)=o(1)$ for $t \to \infty$. So, the proposed case is reasonable.
For the last equation we develop a WKB analysis. \\
We will consider our equation in the following form:
\begin{equation} \label{Eq:Effective-Overdamping-Dfrom}
D_t^2w + \Big( \frac{b(t)}{2}+\frac{g(t)|\xi|^2}{2} \Big)^2w - |\xi|^2w + \Big( \frac{b'(t)}{2}+\frac{g'(t)}{2}|\xi|^2 \Big)w = 0.
\end{equation}
At first let us turn to the equation
\[ |\xi|^2 - \Big( \dfrac{b(t)}{2}+\dfrac{g(t)|\xi|^2}{2} \Big)^2 = 0. \]
The equation
\[ |\xi| = \frac{b(t)}{2}+\frac{g(t)|\xi|^2}{2} \]
implies that we have the following two separating lines:
\[ \Pi_1 := \big\{(t,\xi): |\xi| = \frac{1}{g(t)} + \frac{1}{g(t)}\sqrt{1-b(t)g(t)} \big\} \quad \text{and} \quad \Pi_2 := \big\{(t,\xi): |\xi| = \frac{1}{g(t)} - \frac{1}{g(t)}\sqrt{1-b(t)g(t)} \big\}. \]
Therefore, we can introduce the following functions corresponding to the separating lines:
\begin{equation} \label{Eq:Effective-Overdamping-Sep-lines}
 f_1: t\to \frac{1}{g(t)}\big( 1+\sqrt{1-b(t)g(t)} \big) \qquad\text{and} \qquad  f_2: t\to \frac{1}{g(t)}\big( 1-\sqrt{1-b(t)g(t)} \big).
\end{equation}
The interaction between these two damping mechanisms is governed by the behavior of the product $b(t)g(t)$. Therefore, we distinguish the following examples:
\begin{itemize}
\item[\textbf{a)}] $b(t)=e^t$ and $g(t)=\frac{1}{2}e^{-e^t}$,
\item[\textbf{b)}] $b(t)=e^t$ and $g(t)=\frac{1}{2}e^{-t}$,
\item[\textbf{c)}] $b(t)=e^{e^t}$ and $g(t)=2e^{-t}$.
\end{itemize}
All these examples satisfy the over-damping conditions
\begin{enumerate}
\item[\textbf{(OD1)}] $\displaystyle\int_0^\infty\dfrac{1}{b(\tau)}d\tau < \infty$,
\item[\textbf{(OD2)}] $b'(t) = o(b(t)^2)$ for $t\to\infty$.
\end{enumerate}

\subsubsection{The model with $b(t)=e^t$ and $g(t)=\frac{1}{2}e^{-e^t}$} \label{Subsect_Overdamping_Integrable-SubSec1}
In this case, we have
\[ b(t)g(t) = \frac{1}{2}e^te^{-e^t}\leq \frac{1}{2}. \]
The definition of functions in \eqref{Eq:Effective-Overdamping-Sep-lines} imply the following division of extended phase space:
\begin{itemize}
\item hyperbolic region:
\[ \Pi_{\text{hyp}}:= \Big\{(t,\xi): \frac{1}{g(t)} - \frac{1}{g(t)}\sqrt{1-b(t)g(t)} < |\xi| < \frac{1}{g(t)} + \frac{1}{g(t)}\sqrt{1-b(t)g(t)} \Big\} \]
\item elliptic regions:
\begin{align*}
\Pi_{\text{ell}}^{\text{low}} := \Big\{ (t,\xi): |\xi| < \dfrac{1}{g(t)} - \dfrac{1}{g(t)}\sqrt{1-b(t)g(t)} \Big\} \quad \text{and} \quad \Pi_{\text{ell}}^{\text{high}} := \Big\{ (t,\xi): |\xi| > \dfrac{1}{g(t)} + \dfrac{1}{g(t)}\sqrt{1-b(t)g(t)} \Big\}.
\end{align*}
\end{itemize}
Under the explicit expression $b'(t)=b(t)$ and the conditions \textbf{(E1)} to \textbf{(E4)} from Section \ref{Subsect_Effective_Integrable}, formal application of Theorem \ref{Theorem_Effective-Decreasing_Integrable-Decaying} implies the following result.
\begin{theorem} \label{Theorem:Overdamping-Decreasing-Case1}
Let us consider the Cauchy problem
\begin{equation*}
\begin{cases}
u_{tt} - \Delta u + b(t)u_t -g(t)\Delta u_t=0, &(t,x) \in [0,\infty) \times \mathbb{R}^n, \\
u(0,x)= u_0(x),\quad u_t(0,x)= u_1(x), &x \in \mathbb{R}^n.
\end{cases}
\end{equation*}
We assume that $b(t)=e^t$ and $g(t)=\frac{1}{2}e^{-e^t}$. Then, we have the following estimates for Sobolev solutions for all $t\geq t_0$:
\begin{align*}
\|\,|D|^{|\beta|}u(t,\cdot)\|_{L^2} &\lesssim \|u_0\|_{H^{|\beta|}} + \|u_1\|_{H^{|\beta|-1}} \quad \mbox{for} \quad |\beta|\geq 1, \\
\|\,|D|^{|\beta|}u_t(t,\cdot)\big\|_{L^2} &\lesssim \|u_0\|_{H^{|\beta|+1}} + \|u_1\|_{H^{|\beta|}} \quad \mbox{for} \quad |\beta|\geq 0.
\end{align*}
\end{theorem}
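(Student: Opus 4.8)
The plan is to deduce the statement from Theorem~\ref{Theorem_Effective-Decreasing_Integrable-Decaying} in two steps: first verify that the explicit pair $b(t)=e^t$, $g(t)=\tfrac12 e^{-e^t}$ fits, at least for $t\geq t_0$ with $t_0$ large, into the structural framework of Section~\ref{Subsect_Effective_Integrable}, and then specialize the resulting estimates to this particular $b=b(t)$. For the hypotheses on $g=g(t)$: $g>0$ and $g'(t)=-\tfrac12 e^{t}e^{-e^t}<0$ give \textbf{(E1)}; the substitution $s=e^t$ yields $\int_0^\infty g(\tau)\,d\tau<\infty$ and $\int_0^\infty g'(\tau)^2/g(\tau)\,d\tau=\tfrac12\int_0^\infty e^{2\tau}e^{-e^\tau}\,d\tau<\infty$, hence \textbf{(E2)} and \textbf{(E5)}. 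For the coupling conditions, since $s\mapsto s e^{-s}$ is decreasing on $[1,\infty)$ we get $b(t)g(t)=\tfrac12 e^{t}e^{-e^t}\leq \tfrac1{2e}<\tfrac12$, which is \textbf{(E3)}, and $b'(t)/b(t)=1\leq e^t=-g'(t)/g(t)$, which is \textbf{(E4)}. Condition \textbf{(EF)}, $|b'(t)|=e^t\leq a\,e^{2t}=a\,b^2(t)$, holds for $t\geq t_0$ with any fixed $a\in(e^{-t_0},1)$; moreover $b\in\mathcal{C}^2$, $b'>0$ does not change sign and $tb(t)\to\infty$, so \textbf{(B1)} and \textbf{(B2)} hold as well. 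The single hypothesis that is genuinely unavailable is the effectiveness bound \textbf{(B3)}, since here $|b'(t)|/b(t)=1\not\lesssim 1/(1+t)$; this is why the statement is phrased as a \emph{formal} application of Theorem~\ref{Theorem_Effective-Decreasing_Integrable-Decaying}, and overcoming this is the main obstacle.

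The way I would deal with the failure of \textbf{(B3)} is to revisit the (few) places in the proof of Theorem~\ref{Theorem_Effective-Decreasing_Integrable-Decaying} where it is invoked — the last embedding in Proposition~\ref{Prop:Effective-integrable-symbol}, the bound on $N^{(1)}$ in the elliptic zone, and the refined estimate of Proposition~\ref{Prop:Effective-Integrable_IncreasingZell} — and check that the terms there controlled by \textbf{(B3)} can be majorized directly from the explicit coefficients. On the elliptic and reduced zones one always has $b(t)+g(t)|\xi|^2\geq b(t)=e^t$, and with $b'(t)=b(t)$ the offending quantities $|b'(t)|^2/(b(t)+g(t)|\xi|^2)^3$ and $|b'(t)|/(b(t)+g(t)|\xi|^2)^2$ are bounded by $1/b(t)=e^{-t}$, while the mixed terms are controlled by $|g'|$ or by $g'^2/g$; all three of $1/b$, $|g'|$ and $g'^2/g$ lie in $L^1([0,\infty))$, which is exactly the integrability that \textbf{(B3)} was needed to guarantee in the purely effective setting. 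On the dissipative zone the original proof already uses the energy method of \cite{JuniordaLuzNoneffective, JuniordaLuzEffective} with energy multipliers, which requires only monotonicity of $b$ and $|b'|\leq ab^2$ and no derivative condition on $\tilde b=b+g|\xi|^2$, so nothing there changes. With these local replacements, the whole zone decomposition of Section~\ref{Subsect_Effective_Integrable} — this is the ``$b$ increasing'' configuration of Figure~\ref{Fig-Effective-integrable-zones}.a, since $b'>0$ — together with the gluing procedure carries over verbatim and produces the estimates of Theorem~\ref{Theorem_Effective-Decreasing_Integrable-Decaying}.

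Finally I would specialize those estimates to $b(t)=e^t$. Here $\int_{t_0}^t \tfrac{1}{b(\tau)}\,d\tau=e^{-t_0}-e^{-t}$, so $1\leq 1+\int_{t_0}^t\tfrac{1}{b(\tau)}\,d\tau\leq 1+e^{-t_0}$ is uniformly bounded above and below; hence every factor $\big(1+\int_{t_0}^t\tfrac{1}{b(\tau)}\,d\tau\big)^{-r/2}$ occurring in the conclusion of Theorem~\ref{Theorem_Effective-Decreasing_Integrable-Decaying} is $\approx 1$, and those estimates reduce to
\begin{align*}
\|\,|D|^{|\beta|}u(t,\cdot)\|_{L^2} &\lesssim \|u_0\|_{H^{|\beta|}}+\|u_1\|_{H^{|\beta|-1}} \quad\text{for}\quad |\beta|\geq 1,\\
\|\,|D|^{|\beta|}u_t(t,\cdot)\|_{L^2} &\lesssim \|u_0\|_{H^{|\beta|+1}}+\|u_1\|_{H^{|\beta|}} \quad\text{for}\quad |\beta|\geq 0,
\end{align*}
which is the assertion of Theorem~\ref{Theorem:Overdamping-Decreasing-Case1}. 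As a consistency check, $b(t)=e^t$ does satisfy the over-damping conditions \textbf{(OD1)}--\textbf{(OD2)}, namely $\int_0^\infty e^{-\tau}\,d\tau<\infty$ and $b'(t)=o(b(t)^2)$, so the outcome — boundedness, with no genuine decay rate, of all higher order energy norms — is precisely what one expects in the over-damping regime.
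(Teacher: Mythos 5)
Your proposal is correct and follows essentially the same approach as the paper: both identify the effectiveness condition \textbf{(B3)} as the single hypothesis of Theorem~\ref{Theorem_Effective-Decreasing_Integrable-Decaying} that fails for $b(t)=e^t$ (since $|b'|/b=1\not\lesssim 1/(1+t)$), both then re-verify the integrability statement $S_{\text{ell}}^0\{-1,2\}\hookrightarrow L^\infty_\xi L^1_t$ in Proposition~\ref{Prop:Effective-integrable-symbol} directly from the explicit expressions $b'(t)=b(t)$ and $g'(t)=-e^tg(t)$, and both observe that the conclusion of Theorem~\ref{Theorem_Effective-Decreasing_Integrable-Decaying} collapses to uniform boundedness because $\int_{t_0}^\infty b(\tau)^{-1}\,d\tau<\infty$. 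The only cosmetic difference is that you note \textbf{(E5)} actually holds here, whereas the paper lumps it with \textbf{(B3)} as ``no longer required''; and your intermediate majorants are $1/b$, $|g'|$, $g'^2/g$ rather than the paper's $g$, $|g'|$, $e^{2t}g$, which are interchangeable in this setting.
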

\begin{proof}
Let us observe that in the proof of Theorem \ref{Theorem_Effective-Decreasing_Integrable-Decaying}, we previously relied on condition \textbf{(B3)} (page 49), namely, $|b'(t)|\leq \frac{b(t)}{1+t}$, and condition \textbf{(E5)} (page 62), which states $\frac{{g'}^2}{g}\in L^1([0,\infty))$ during the analysis of the elliptic zone $\Zell(N)$ in Section \ref{Subsection_Effective_Integrable_Zell}. However, in the current setting, we have the explicit expressions
\[ b'(t)=b(t) \qquad \text{and} \qquad g'(t)=-e^tg(t), \]
from which it follows directly that the conditions \textbf{(E1)}-\textbf{(E4)} are satisfied, and that the additional assumptions \textbf{(B3)} and \textbf{(E5)} are no longer required for the proof of Proposition \ref{Prop:Effective-integrable-symbol}. For the sake of completeness and clarity, we now present the proof of Proposition \ref{Prop:Effective-integrable-symbol} without invoking these assumptions. Under these considerations, and following the structure of the proof of Theorem \ref{Theorem_Effective-Decreasing_Integrable-Decaying}, we may then proceed to complete the proof of Theorem \ref{Theorem:Overdamping-Decreasing-Case1}.
\medskip

If $f=f(t,\xi)\in S_{\text{ell}}^{0}\{-1,2\}$, then using the estimates
\begin{align*}
\frac{1}{\langle\xi\rangle_{b(t),g(t)}}\Big( \frac{b'(t)+g'(t)|\xi|^2}{b(t)+g(t)|\xi|^2} \Big)^2 &\lesssim \Big| \frac{(b'(t)+g'(t)|\xi|^2)^2}{( b(t)+g(t)|\xi|^2)^3} \Big| \qquad \qquad \,\,\, \small{\text{(we used \eqref{Eq:Effective-Integrable-Zell-japan})}} \\
& \lesssim \frac{b'(t)^2+|b'(t)|\,|g'(t)|\,|\xi|^2+g'(t)^2|\xi|^4}{( b(t)+g(t)|\xi|^2)^3}  \\
& \lesssim \frac{b'(t)^2}{b^2(t)g(t)|\xi|^2} + \frac{|b'(t)|\,|g'(t)|\,|\xi|^2}{b(t)g(t)^2|\xi|^4} + \frac{g'(t)^2|\xi|^4}{g^3(t)|\xi|^6} \\
& \lesssim \frac{1}{g(t)|\xi|^2} + \frac{|g'(t)|}{g^2(t)|\xi|^2} + \frac{e^{2t}g^2(t)}{g^3(t)|\xi|^2}  \qquad \small{\text{(we used \,\,$b'(t)=b(t)$ and $g'(t)=-e^tg(t)$)}} \\
& = \frac{g(t)}{g^2(t)|\xi|^2} + \frac{|g'(t)|}{g^2(t)|\xi|^2} + \frac{e^{2t}}{g(t)|\xi|^2} \\
& = \frac{g(t)}{g^2(t)|\xi|^2} + \frac{|g'(t)|}{g^2(t)|\xi|^2} + \frac{e^{2t}g(t)}{g^2(t)|\xi|^2} \\
& \leq \frac{4}{N^2} \big( g(t) + |g'(t)| + e^{2t}g(t) \big) \qquad \small{\text{(we used $g(t)|\xi|\geq \frac{N}{2}$ from $\Zell(N)$)}},
\end{align*}
we obtain
\begin{align*}
\int_{0}^{t_{\text{ell}}}|f(\tau,\xi)|d\tau  & \lesssim  \int_{0}^{t_{\text{ell}}}g(\tau)d\tau + \int_{0}^{t_{\text{ell}}}e^{\tau}g(\tau)d\tau + \int_{0}^{t_{\text{ell}}}e^{2\tau}g(\tau)d\tau \lesssim 1,
\end{align*}
where since $g=\frac{1}{2}e^{-e^t}$, the integrals are also uniformly bounded. On the other hand, instead of relying on condition \textbf{(EF)} from page 49, namely $|b'(t)|\leq ab^2(t)$, we will now use the explicit expression $b'(t)=b(t)$ in the treatment of Theorem \ref{Theorem_Effective-Decreasing_Integrable-Decaying}, particularly in the elliptic zone $\Zell(N)$.
\end{proof}
\subsubsection{The model with $b(t)=e^t$ and $g(t)=\frac{1}{2}e^{-t}$} \label{Subsec:Overdamping-Integrable-SubSec2}
In this case, we have
\[ b(t)g(t) = \frac{1}{2}e^te^{-t}=\frac{1}{2}. \]
Then, we find
\[ |\xi|_{1/2} = \frac{1}{g(t)}\big( 1\pm \sqrt{1/2} \big). \]
So, the hyperbolic region is
\[ \Pi_{\text{hyp}}:= \Big\{(t,\xi): \frac{1}{g(t)}\Big( 1-\frac{1}{\sqrt{2}} \Big) < |\xi| <  \frac{1}{g(t)}\Big( 1+\frac{1}{\sqrt{2}} \Big) \Big\}, \]
and the elliptic regions are
\begin{align*}
\Pi_{\text{ell}}^{\text{low}} := \Big\{ (t,\xi): |\xi| < \frac{1}{g(t)}\Big( 1-\frac{1}{\sqrt{2}} \Big) \Big\} \qquad \text{and} \qquad \Pi_{\text{ell}}^{\text{high}} := \Big\{ (t,\xi): |\xi| > \frac{1}{g(t)}\Big( 1+\frac{1}{\sqrt{2}} \Big) \Big\}.
\end{align*}
Thus, we divide the extended phase space $[0,\infty)\times \mathbb{R}^n$ into the following zones:
\begin{itemize}
\item elliptic zone:
\[ \Zell(N) = \Big\{ (t,\xi)\in[0,\infty)\times\mathbb{R}^n : |\xi|\geq \frac{N}{g(t)} \Big\} \cap\Pi_{\text{ell}}^{\text{high}}, \]
\item first reduced zone:
\[ \Zred^1(N,\varepsilon_1) = \Big\{ (t,\xi)\in[0,\infty)\times\mathbb{R}^n : \frac{\varepsilon_1}{g(t)}\leq |\xi|\leq \frac{N}{g(t)} \Big\}, \]
with $\varepsilon_1<1+\frac{1}{\sqrt{2}}$,
\item hyperbolic zone:
\[ \Zhyp(\varepsilon_1,\varepsilon_2) = \Big\{ (t,\xi)\in[0,\infty)\times\mathbb{R}^n : \frac{\varepsilon_2}{g(t)}\leq |\xi|\leq \frac{\varepsilon_1}{g(t)} \Big\}\cap \Pi_{\text{hyp}}, \]
with $1-\frac{1}{\sqrt{2}}<\varepsilon_2<\varepsilon_1<1+\frac{1}{\sqrt{2}}$,
\item second reduced zone:
\[ \Zred^2(\varepsilon_2,\varepsilon_3) = \Big\{ (t,\xi)\in[0,\infty)\times\mathbb{R}^n : \frac{\varepsilon_3}{g(t)}\leq |\xi|\leq \frac{\varepsilon_2}{g(t)} \Big\}, \]
\item dissipative zone:
\[ \Zdiss(\varepsilon_3) = \Big\{ (t,\xi)\in[0,\infty)\times\mathbb{R}^n : |\xi|\leq \frac{\varepsilon_3}{g(t)} \Big\}\cap\Pi_{\text{ell}}^{\text{low}}, \]
with $\varepsilon_3<1-\frac{1}{\sqrt{2}}$,
\end{itemize}
where $N>0$ is sufficiently large and $\varepsilon_1, \varepsilon_2, \varepsilon_3>0$ are sufficiently small constants. Moreover, the separating lines between these zones may be defined by using the following functions:
\begin{align*}
t_{\xi,1} &= \big\{ (t,\xi) \in [0,\infty) \times \mathbb{R}^n: g(t)|\xi| = \varepsilon_3 \big\},\quad\,\,\, (\text{between} \,\,\, \Zdiss(\varepsilon_3) \,\,\, \text{and} \,\,\, \Zred(\varepsilon_2,\varepsilon_3) \,), \\
t_{\xi,2} &= \big\{ (t,\xi) \in [0,\infty) \times \mathbb{R}^n: g(t)|\xi| = \varepsilon_2 \big\}, \quad\,\,\, (\text{between} \,\,\, \Zred(\varepsilon_2,\varepsilon_3) \,\,\, \text{and} \,\,\,\Zhyp(\varepsilon_1,\varepsilon_2) \,),\\
t_{\xi,3} &= \big\{ (t,\xi) \in [0,\infty) \times \mathbb{R}^n: g(t)|\xi| = \varepsilon_1 \big\}, \quad\,\,\,  (\text{between} \,\,\, \Zhyp(\varepsilon_1,\varepsilon_2) \,\,\, \text{and} \,\,\, \Zred(N,\varepsilon_1) \,), \\
t_{\xi,4} &= \big\{ (t,\xi) \in [0,\infty) \times \mathbb{R}^n: g(t)|\xi| = N \big\}, \quad\,\,\, (\text{between} \,\,\, \Zred(N,\varepsilon_1) \,\,\, \text{and} \,\,\, \Zell(N)\,).
\end{align*}

{\color{black}
\begin{figure}[h]
\begin{center}
\begin{tikzpicture}[>=latex,xscale=1.1]
    \draw[->] (0,0) -- (5,0)node[below]{$|\xi|$};
    \draw[->] (0,0) -- (0,4)node[left]{$t$};
    \node[below left] at(0,0){$0$};
    \node[right] at (1.6,3.7) {$\textcolor{red}{t_{\xi,1}}$};
    \node[right] at (2.4,3.5) {$\textcolor{yellow}{t_{\xi,2}}$};
    \node[right] at (3.4,3.5) {$\textcolor{cyan}{t_{\xi,3}}$};
    \node[right] at (4.5,3.5) {$\textcolor{green}{t_{\xi,4}}$};
    \node[below] at(3.59,0){$N$};
    \node[below] at(2.59,0){$\varepsilon_1$};
    \node[below] at(1.49,0){$\varepsilon_2$};
    \node[below] at(0.49,0){$\varepsilon_3$};
	\node[color=black] at (4.1, 1.5){{\footnotesize $Z_{\text{ell}}$}};
	\node[color=black] at (3.2,1.5){{\footnotesize $Z_{\text{red}}^1$}};
	\node[color=black] at (2.2,1.5){{\footnotesize $Z_{\text{hyp}}$}};
	\node[color=black] at (1.1,1.5){{\footnotesize $Z_{\text{red}}^2$}};
	\node[color=black] at (0.3,1.5){{\footnotesize $Z_{\text{diss}}$}};
	\draw[domain=0:3.5,color=green,variable=\t] plot ({3.5 + 0.09*pow(\t,2.4)},\t);
    \draw[domain=0:3.5,color=cyan,variable=\t] plot ({2.5 + 0.09*pow(\t,2.4)},\t);
    \draw[domain=0:3.7,color=yellow,variable=\t] plot ({1.4 + 0.09*pow(\t,2.4)},\t);
	\draw[domain=0:3.7,color=red,variable=\t] plot ({0.4 + 0.09*pow(\t,2.4)},\t);
\end{tikzpicture}
\caption{Sketch of the zones for the case $b(t)=e^t$ and $g(t)=\frac{1}{2}e^{-t}$}
\label{Fig-Overdamping-integrable-zones}
\end{center}
\end{figure}
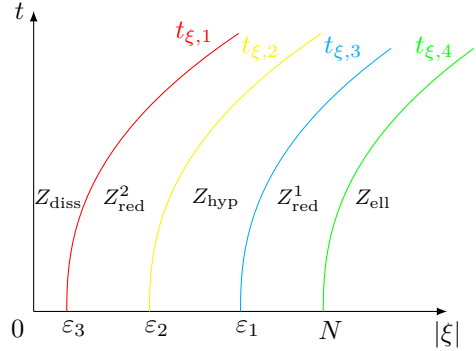}
\begin{theorem} \label{Theorem_Overdamping-Decreasing_Integrable}
Let us consider the Cauchy problem
\begin{equation*}
\begin{cases}
u_{tt} - \Delta u + b(t)u_t - g(t)\Delta u_t=0, &(t,x) \in [0,\infty) \times \mathbb{R}^n, \\
u(0,x)= u_0(x),\quad u_t(0,x)= u_1(x), &x \in \mathbb{R}^n.
\end{cases}
\end{equation*}
We assume that $b(t)=e^t$ and $g(t)=\frac{1}{2}e^{-t}$. Then, we have the following estimates for Sobolev solutions:
\begin{align*}
\|\,|D|^{|\beta|}u(t,\cdot)\|_{L^2} &\lesssim \|u_0\|_{\dot{H}^{|\beta|}\cap \dot{H}^{|\beta|+1}} + \|u_1\|_{\dot{H}^{|\beta|-1}\cap \dot{H}^{|\beta|}} \quad \mbox{for} \quad |\beta|\geq 1, \\
\|\,|D|^{|\beta|}u_t(t,\cdot)\big\|_{L^2} &\lesssim \|u_0\|_{\dot{H}^{|\beta|+1}\cap \dot{H}^{|\beta|+2}} + \|u_1\|_{\dot{H}^{|\beta|}\cap \dot{H}^{|\beta|+1}} \quad \mbox{for} \quad |\beta|\geq 0.
\end{align*}
\end{theorem}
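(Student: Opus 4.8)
The plan is to run a zone-by-zone WKB and energy analysis through the five regions $\Zell(N)$, $\Zred^1(N,\varepsilon_1)$, $\Zhyp(\varepsilon_1,\varepsilon_2)$, $\Zred^2(\varepsilon_2,\varepsilon_3)$, $\Zdiss(\varepsilon_3)$ introduced above, reusing in each region the building block already developed for the effective–integrable case (Section~\ref{Subsect_Effective_Integrable}), and then to glue the resulting pointwise-in-$\xi$ estimates. A preliminary observation organizes everything: since $b(t)g(t)\equiv\tfrac12$, the separating curves $\Pi_1,\Pi_2$ and the lines $t_{\xi,1},\dots,t_{\xi,4}$ are all of the form $|\xi|\propto e^{t}$; hence along any fixed frequency the time spent in $\Zred^1$, $\Zhyp$, $\Zred^2$ is a constant (respectively $\log(N/\varepsilon_1)$, $\log(\varepsilon_1/\varepsilon_2)$, $\log(\varepsilon_2/\varepsilon_3)$) independent of $\xi$, whereas $\int_0^{t_{\xi,4}}b(\tau)\,d\tau$ and $\int_{t_{\xi,3}}^{t_{\xi,2}}\big(b(\tau)+g(\tau)|\xi|^2\big)d\tau$ are of size $\sim|\xi|$. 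This self-similarity is exactly what makes the derivative losses in the non-hyperbolic zones harmless. I would also dispose of $t\in[0,t_0]$ at once: there $b,g,b',g'$ are bounded above and below by positive constants, so the standard energy estimate for \eqref{MainEquationFourier} gives $\||D|^{|\beta|}u(t,\cdot)\|_{L^2}+\||D|^{|\beta|-1}u_t(t,\cdot)\|_{L^2}\lesssim\|u_0\|_{\dot H^{|\beta|}}+\|u_1\|_{\dot H^{|\beta|-1}}$, which is dominated by the claim; from now on $t\ge t_0$.

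In the elliptic zone I would copy the two-step diagonalization of Section~\ref{Subsection_Effective_Integrable_Zell} with micro-energy $W=(\langle\xi\rangle_{b(t),g(t)}w,D_tw)^{\mathrm T}$ and the symbol classes $S^\ell_{\mathrm{ell}}\{m_1,m_2\}$ of Definition~\ref{Def:Effective-integrable-symbol}. The only points needing fresh verification, because \textbf{(B3)}, \textbf{(E5)}, \textbf{(EF)} are unavailable here, are the embedding $S^0_{\mathrm{ell}}\{-1,2\}\hookrightarrow L^\infty_\xi L^1_t(\Zell(N))$, the invertibility of $N_1(t,\xi)$, and the smallness needed in the refined estimate of Proposition~\ref{Prop:Effective-Integrable_IncreasingZell}; I would carry these out with the explicit values $b'(t)=b(t)=e^{t}$, $g'(t)=-g(t)=-\tfrac12 e^{-t}$. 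In $\Zell(N)$ one has $g(t)|\xi|\ge N$, hence $g(t)|\xi|^2\ge N|\xi|\gg b(t)$, so $\langle\xi\rangle_{b(t),g(t)}\approx g(t)|\xi|^2$ and the key integrand $\tfrac{(b'+g'|\xi|^2)^2}{(b+g|\xi|^2)^3}$ is $\lesssim\tfrac{e^{\tau}}{|\xi|^2}$, whose integral over $[0,t_{\xi,4}]$ (with $e^{t_{\xi,4}}=|\xi|/(2N)$) is $\lesssim\tfrac1{N|\xi|}\lesssim1$; the remaining smallness conditions reduce, via $b'/b^2=e^{-t}\le e^{-t_0}$, to choosing $t_0$ and $N$ large. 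After the backward transformation and Lemma~\ref{Lemma_Effective-Integrable_Transf-back} one obtains the analogue of Corollary~\ref{Cor:Effective-Integrable_IncreasingZell}, in which the factor $\exp\!\big(-C|\xi|^2\int_s^t\frac{d\tau}{b(\tau)+g(\tau)|\xi|^2}\big)$ equals $\exp\!\big(-C\int_s^t\frac{d\tau}{g(\tau)}\big)\sim\exp(-c\,e^{t})$ and is therefore exponentially small in $|\xi|$ at the exit time $t_{\xi,4}$.

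For the two reduced zones I would use the energy $\mathcal E(t,\xi)=\tfrac12(|\xi|^2|\hat u|^2+|\hat u_t|^2)$ of Section~\ref{Subsection_Effective_Integrable_Zred}: $\tfrac{d}{dt}\mathcal E=-(b(t)+g(t)|\xi|^2)|\hat u_t|^2\le0$ yields, as in Corollary~\ref{Cor:Effective-Integrable-Zred}, $|\xi|^{|\beta|}|\hat u(t,\xi)|\lesssim|\xi|^{|\beta|}|\hat u(s,\xi)|+|\xi|^{|\beta|-1}|\hat u_t(s,\xi)|$ and $|\xi|^{|\beta|}|\hat u_t(t,\xi)|\lesssim|\xi|^{|\beta|+1}|\hat u(s,\xi)|+|\xi|^{|\beta|}|\hat u_t(s,\xi)|$, a loss of one derivative only when passing to $\hat u_t$. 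In the hyperbolic zone, where $\langle\xi\rangle_{b(t),g(t)}\approx|\xi|$ since $g(t)|\xi|\le\varepsilon_1$, I would run the one-step diagonalization of Proposition~\ref{Prop_Effective-Integrable-Zhyp}; the remainder is $L^1_t$ over $\Zhyp$ not because it is small but because $\Zhyp$ is crossed in time $\log(\varepsilon_1/\varepsilon_2)$, so $|E_{\mathrm{hyp}}(t,s,\xi)|\lesssim1$ and, after undoing $\hat u=\exp(-\tfrac12\int_0^t(b+g|\xi|^2)d\tau)w$, the zone estimate carries the factor $\exp(-\tfrac12\int_s^t(b+g|\xi|^2)d\tau)\sim\exp(-c|\xi|)$, again exponentially small in $|\xi|$. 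Finally in the dissipative zone $g(t)|\xi|^2\le\varepsilon_3|\xi|\ll b(t)$, so $\tilde b(t,\xi):=b(t)+g(t)|\xi|^2\approx b(t)=e^t$; since $\tilde b$ carries no useful regularity I would invoke the energy-multiplier machinery of \cite{JuniordaLuzNoneffective} exactly as in Section~\ref{Subsection_Effective_Integrable_Zdiss}, checking that $K(t,\xi):=|\xi|^2/b(t)$ is an admissible multiplier in the sense of Proposition~\ref{Prop:Effective-integrable-EM5-Zdiss} — here $\tfrac{d}{dt}K=-K$ because $b'=b$, so property~4 amounts to $1/b\lesssim\tilde b$, valid for $t\ge t_0$ — obtaining $E(t,\xi)\lesssim\exp(-\tfrac1C|\xi|^2\int_s^t\frac{d\tau}{b(\tau)})E(s,\xi)$ and hence the analogue of Corollary~\ref{Cor:Effective-Integrable-EM10-Zdiss} with uniformly bounded constants.

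It then remains to glue: for fixed $\xi$ the quantity $g(t)|\xi|$ is monotone decreasing, so the ray traverses the zones in the order $\Zell\to\Zred^1\to\Zhyp\to\Zred^2\to\Zdiss$, entering at whichever zone contains $g(0)|\xi|=|\xi|/2$ and staying in $\Zdiss$ thereafter. Composing the five families of pointwise estimates, the polynomial losses produced in $\Zred^1$, $\Zred^2$, $\Zdiss$ are at worst one power of $|\xi|$ on passing to $\hat u_t$; for the rays that actually meet $\Zell$ or $\Zhyp$ these are absorbed by the factors $\exp(-c|\xi|)$ (large $|\xi|$) or are mere constants (for $|\xi|$ in the fixed bounded window $2\varepsilon_3\le|\xi|\le 2N$), while for $|\xi|\lesssim\varepsilon_3$ only the $\Zdiss$ estimate is used. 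Combining with the $[0,t_0]$ estimate and applying Plancherel gives the stated $\dot H^{|\beta|}\cap\dot H^{|\beta|+1}$ (resp.\ $\dot H^{|\beta|+1}\cap\dot H^{|\beta|+2}$) bounds, the lower index handling the small-frequency branch and the higher index the moderate/large-frequency branches. The main obstacles I expect are, first, re-establishing the elliptic symbolic calculus and the refined fundamental-solution estimate from the explicit exponential coefficients instead of the structural hypotheses \textbf{(B3)}, \textbf{(E5)}, \textbf{(EF)} — all the auxiliary integrals are of the convergent type $\int e^{\alpha\tau-\beta e^{\pm\tau}}d\tau$, but this must be checked line by line — and, second, the derivative bookkeeping in the five-zone gluing that produces exactly the $\dot H^{\,\cdot}\cap\dot H^{\,\cdot}$ pairs appearing in the statement.
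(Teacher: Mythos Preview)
Your five-zone scheme, energy estimates in the reduced zones, hyperbolic WKB, and energy-multiplier treatment of the dissipative zone are exactly what the paper does. The one noticeable divergence is in the elliptic zone $\Zell(N)$: you keep the full symbol $\langle\xi\rangle_{b(t),g(t)}$ as principal part (mimicking Section~\ref{Subsection_Effective_Integrable_Zell}), whereas the paper splits off the $b$-contribution as a mass, writing $h^2(t,\xi)=\tfrac{g^2(t)}{4}|\xi|^4-\tfrac34|\xi|^2$ and $m(t,\xi)=\tfrac{b^2(t)}{4}+\tfrac{b(t)}{2}-\tfrac{g(t)|\xi|^2}{2}$. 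With your choice the second-step remainder lands in the symbol class of Definition~\ref{Def:Effective-integrable-symbol} and, as you correctly compute, $\int_0^{t_{\xi,4}}\tfrac{(b'+g'|\xi|^2)^2}{(b+g|\xi|^2)^3}\,d\tau\lesssim\tfrac{1}{N|\xi|}$, so $\mathcal{Q}_{\rm ell}$ is uniformly bounded. The paper's splitting, by contrast, produces a remainder $\mathcal{R}_2$ governed by $b^2(t)/(g(t)|\xi|^2)$, whose time-integral over $\Zell(N)$ grows like $|\xi|$, giving $|\mathcal{Q}_{\rm ell}(t,s,\xi)|\lesssim|\xi|$; this extra factor is then killed by the exponential $\exp\big({-}C_N\!\int_0^t b\big)\sim\exp(-c|\xi|)$ when gluing. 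Both approaches prove the stated theorem, and yours is a little cleaner at this step; the paper's route on the other hand makes the derivative loss visible in the Fourier estimates before absorption, which is how the $\dot H^{|\beta|}\cap\dot H^{|\beta|+1}$ norms in the statement arise. Two small completions to make in your write-up: in $\Zdiss(\varepsilon_3)$ you should also record the second multiplier $K(t)=b(t)$ on the sub-region $\Zred(0,\varepsilon)$ (cf.\ Proposition~\ref{Prop:Effective-integrable-EM11-Zdiss}), not just $K=|\xi|^2/b$; and in the refined estimate (Proposition~\ref{Prop:Effective-Integrable_IncreasingZell}) the contraction constant becomes $\tfrac{1}{b+g|\xi|^2}+\tfrac{1}{N^2}$ here, which is indeed $<1$ for $t\ge t_0$ and $N$ large.
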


\subsubsection*{Considerations in the hyperbolic zone $\Zhyp(\varepsilon_1,\varepsilon_2)$} \label{Sect-Overdamping-Integrable-Zhyp}
We will consider our equation in \eqref{AuxiliaryEquation3} as follows:
\begin{equation} \label{Eq:Overdamping-Integrable-Dfrom-Zhyp}
D_t^2w + \Big( \frac{b(t)}{2}+\frac{g(t)|\xi|^2}{2} \Big)^2w - |\xi|^2w + \frac{g'(t)}{2}|\xi|^2w + \frac{b'(t)}{2}w = 0.
\end{equation}
Since we have $b(t)g(t)=1/2$, the previous equation reduces to the following equation:
\begin{equation*} \label{Eq:Overdamping-Int-Super-Dform-Zhyp}
D_t^2 w - \Big( \underbrace{\frac{3}{4}|\xi|^2 - \dfrac{b^2(t)}{4} - \dfrac{g^2(t)|\xi|^4}{4}}_{=:p^2(t,\xi)} \Big)w + \Big( \frac{b'(t)}{2} + \frac{g'(t)|\xi|^2}{2} \Big)w = 0.
\end{equation*}
Due to the definition of the hyperbolic zone $\Zhyp(\varepsilon_1,\varepsilon_2)$, we have the following estimates for $p=p(t,\xi)$:
\begin{align}
p^2(t,\xi) &= \frac{3}{4}|\xi|^2 - \dfrac{b^2(t)}{4} - \dfrac{g^2(t)|\xi|^4}{4} \leq \frac{3}{4}|\xi|^2 \nonumber \\
\label{Eq:Overdamping-Estimates-p2-Zhyp} p^2(t,\xi) &= \frac{3}{4}|\xi|^2 - \dfrac{b^2(t)}{4} - \dfrac{g^2(t)|\xi|^4}{4} \geq \Big( \frac{3}{4}-\frac{1}{16\varepsilon_2^2} - \frac{\varepsilon_1^2}{4} \Big)|\xi|^2,
\end{align}
where from the definition of $\Zhyp(\varepsilon_1,\varepsilon_2)$ we used $\varepsilon_2\leq g(t)|\xi| \leq \varepsilon_1$, which also implies that $2\varepsilon_2b(t)\leq |\xi| \leq 2\varepsilon_1b(t)$. To show the quantity in the bracket in \eqref{Eq:Overdamping-Estimates-p2-Zhyp} is positive, we suppose that $\varepsilon_1=1+\kappa$ and $\varepsilon_2=1-\kappa$ with sufficiently small $\kappa>0$. Then, we want to show that
\[ f(\kappa) = \frac{1}{16(1-\kappa)^2} + \frac{(1+\kappa)^2}{4} < \frac{3}{4}. \]
If we choose $\kappa=0.1$, then we see that
\[ f(\kappa) = \frac{1}{12.96} + \frac{1.21}{4} \approx 03797<0.75, \]
which holds the inequality. Thus, we may conclude that $p(t,\xi) \approx|\xi|$.
\begin{proposition} \label{Prop:Overdamping-Integrable-Zhyp}
The following estimates hold in $\Zhyp(\varepsilon_1,\varepsilon_2)$ for all $t_{\xi,3}\leq s \leq t\leq t_{\xi,2}$:
\begin{align*}
|\xi|^{|\beta|}|\hat{u}(t,\xi)| \lesssim \exp\Big( -\frac{1}{2}\int_s^t\big( b(\tau)+g(\tau)|\xi|^2 \big)d\tau \Big)\big( |\xi|^{|\beta|}|\hat{u}(s,\xi)| + |\xi|^{|\beta|-1}|\hat{u}_t(s,\xi)| \big) \quad \mbox{for} \quad |\beta|\geq 1, \\
|\xi|^{|\beta|}|\hat{u}_t(t,\xi)| \lesssim \exp\Big( -\frac{1}{2}\int_s^t\big( b(\tau)+g(\tau)|\xi|^2 \big)d\tau \Big)\big( |\xi|^{|\beta|+1}|\hat{u}(s,\xi)| + |\xi|^{|\beta|}|\hat{u}_t(s,\xi)| \big) \quad \mbox{for} \quad |\beta|\geq 0.
\end{align*}
\end{proposition}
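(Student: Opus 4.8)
The plan is to run the hyperbolic WKB argument of Proposition~\ref{Prop_Effective-Integrable-Zhyp}, with the symbol $\langle\xi\rangle_{b(t),g(t)}$ replaced by $p=p(t,\xi)$. Using $b(t)g(t)=1/2$ to rewrite equation~\eqref{Eq:Overdamping-Integrable-Dfrom-Zhyp} in the reduced form $D_t^2w-p^2(t,\xi)w+\big(\tfrac{b'(t)}{2}+\tfrac{g'(t)}{2}|\xi|^2\big)w=0$, I would introduce the micro-energy
\[ W=W(t,\xi):=\big(p(t,\xi)w,\,D_tw\big)^{\text{T}}, \]
turning this into the first order system
\[ D_tW=\begin{pmatrix} 0 & p(t,\xi) \\ p(t,\xi) & 0 \end{pmatrix}W+\begin{pmatrix} \dfrac{D_tp(t,\xi)}{p(t,\xi)} & 0 \\ -\dfrac{b'(t)+g'(t)|\xi|^2}{2p(t,\xi)} & 0 \end{pmatrix}W. \]
Then I would carry out one step of diagonalization with the diagonalizer $M=\left(\begin{smallmatrix}1&-1\\1&1\end{smallmatrix}\right)$ of the first matrix, obtaining $D_tW^{(0)}=\big(\mathcal{D}(t,\xi)+\mathcal{R}(t,\xi)\big)W^{(0)}$ with $\mathcal{D}(t,\xi)=\diag\big(p(t,\xi),-p(t,\xi)\big)$ and $\mathcal{R}=\mathcal{R}(t,\xi)$ whose entries are linear combinations of $D_tp/p$ and $(b'+g'|\xi|^2)/p$.

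The decisive step is to prove that $\mathcal{R}$ is uniformly integrable over $\Zhyp(\varepsilon_1,\varepsilon_2)$. Here I would exploit the two special features of the coefficients $b(t)=e^t$, $g(t)=\tfrac12e^{-t}$. First, since $b(t)g(t)=1/2$, the constraint $\varepsilon_2\le g(t)|\xi|\le\varepsilon_1$ defining this zone forces simultaneously $g(t)|\xi|^2\approx|\xi|$ and $b(t)=\tfrac1{2g(t)}\approx|\xi|$; together with the equivalence $p(t,\xi)\approx|\xi|$ already established from \eqref{Eq:Overdamping-Estimates-p2-Zhyp}, this yields $b(t)+g(t)|\xi|^2\approx p(t,\xi)\approx|\xi|$ uniformly on the zone. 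Second, because $g(t)=\tfrac12e^{-t}$, the slice of $\Zhyp(\varepsilon_1,\varepsilon_2)$ above a fixed frequency is the time interval $\big[\ln\tfrac{|\xi|}{2\varepsilon_1},\ln\tfrac{|\xi|}{2\varepsilon_2}\big]$, of constant length $\ln(\varepsilon_1/\varepsilon_2)$. Using $b'(t)=b(t)$ and $g'(t)=-g(t)$ I would then estimate, for $t_{\xi,3}\le s\le t\le t_{\xi,2}$,
\[ \int_s^t\frac{\big|b'(\tau)+g'(\tau)|\xi|^2\big|}{2p(\tau,\xi)}\,d\tau\le\int_s^t\frac{b(\tau)+g(\tau)|\xi|^2}{2p(\tau,\xi)}\,d\tau\lesssim\frac{1}{|\xi|}\int_s^t|\xi|\,d\tau\lesssim\ln\frac{\varepsilon_1}{\varepsilon_2}\lesssim1, \]
and $\int_s^t|D_\tau p(\tau,\xi)|/p(\tau,\xi)\,d\tau=\big|\log\big(p(t,\xi)/p(s,\xi)\big)\big|\lesssim1$, again by $p\approx|\xi|$. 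Hence the fundamental solution $E_{\text{hyp}}=E_{\text{hyp}}(t,s,\xi)$ of $D_tE_{\text{hyp}}=(\mathcal{D}+\mathcal{R})E_{\text{hyp}}$, $E_{\text{hyp}}(s,s,\xi)=I$, satisfies $|E_{\text{hyp}}(t,s,\xi)|\le C$ on the zone, so no second diagonalization step is needed.

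Finally I would transform back. Undoing $M$ gives $|W(t,\xi)|\lesssim|W(s,\xi)|$, i.e. $|p(t,\xi)w(t,\xi)|+|D_tw(t,\xi)|\lesssim|p(s,\xi)w(s,\xi)|+|D_tw(s,\xi)|$, and then the backward transformation $\hat{u}(t,\xi)=\exp\big(-\tfrac12\int_0^t(b(\tau)+g(\tau)|\xi|^2)\,d\tau\big)w(t,\xi)$ of Section~\ref{Section_OurApproach}, together with $p(t,\xi)\approx|\xi|$, converts this into the claimed bounds for $|\xi|^{|\beta|}|\hat u(t,\xi)|$ and $|\xi|^{|\beta|}|\hat u_t(t,\xi)|$. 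I expect the main obstacle to be exactly the uniform integrability of the off-diagonal part of $\mathcal{R}$: in contrast to the effective case of Proposition~\ref{Prop_Effective-Integrable-Zhyp}, here $b$ grows extremely fast, so the argument genuinely relies on the two observations above (that $b(t)$, $g(t)|\xi|^2$ and $p(t,\xi)$ are all comparable to $|\xi|$ throughout the zone, and that the zone is crossed in bounded time); one must also verify that the constants in these equivalences are uniform in $\xi$, i.e. that the bracket $\tfrac34-\tfrac{1}{16\varepsilon_2^2}-\tfrac{\varepsilon_1^2}{4}$ in \eqref{Eq:Overdamping-Estimates-p2-Zhyp} stays bounded away from zero for the chosen $\varepsilon_1,\varepsilon_2$.
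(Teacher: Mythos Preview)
Your proposal is correct and follows essentially the same route as the paper: one-step diagonalization of the system for $W=(p\,w,D_tw)^{\text T}$, uniform integrability of the remainder $\mathcal{R}$ over $\Zhyp(\varepsilon_1,\varepsilon_2)$, and backward transformation via $p(t,\xi)\approx|\xi|$. The only difference is the bookkeeping for $\int_s^t\frac{|b'(\tau)+g'(\tau)|\xi|^2|}{p(\tau,\xi)}\,d\tau$: the paper integrates $b'$ and $g'$ directly to get $\frac{1}{4|\xi|}\big(b(s)+g(s)|\xi|^2\big)+\frac{1}{2|\xi|}b(t)\lesssim 1$, whereas you use $b'=b$, $g'=-g$ to bound the integrand by $\frac{b(\tau)+g(\tau)|\xi|^2}{2p(\tau,\xi)}\approx 1$ and then invoke the fixed time-width $\ln(\varepsilon_1/\varepsilon_2)$ of the zone slice---either argument works here.
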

\begin{proof}
We define the micro-energy $W(t,\xi) := (p(t,\xi)w,D_tw)^{\text{T}}$. Then, from \eqref{Eq:Overdamping-Integrable-Dfrom-Zhyp} it holds
\begin{equation*}
D_tW=\left( \begin{array}{cc}
0 & p(t,\xi) \\
p(t,\xi) & 0
\end{array} \right)W + \left( \begin{array}{cc}
\dfrac{D_tp(t,\xi)}{p(t,\xi)} & 0 \\
-\dfrac{b'(t)+g'(t)|\xi|^2}{2p(t,\xi)} & 0
\end{array} \right)W.
\end{equation*}
Let us carry out the first step of the diagonalization procedure. First, we set
\[ M = \left( \begin{array}{cc}
1 & -1 \\
1 & 1
\end{array} \right), \qquad M^{-1} = \frac{1}{2}\left( \begin{array}{cc}
1 & 1 \\
-1 & 1
\end{array} \right) \qquad\text{and} \qquad W^{(0)}:=M^{-1}W.   \]
Then, we obtain
\begin{equation*}
D_tW^{(0)}=\big( \mathcal{D}(t,\xi)+\mathcal{R}(t,\xi) \big)W^{(0)},
\end{equation*}
where
\begin{align*}
\mathcal{D}(t,\xi) &= \left( \begin{array}{cc}
p(t,\xi) & 0 \\
0 & -p(t,\xi)
\end{array} \right), \\
\mathcal{R}(t,\xi) &= \left( \begin{array}{cc}
\dfrac{D_tp(t,\xi)}{2p(t,\xi)}-\dfrac{b'(t)+g'(t)|\xi|^2}{4p(t,\xi)} &
-\dfrac{D_tp(t,\xi)}{2p(t,\xi)}+\dfrac{b'(t)+g'(t)|\xi|^2}{4p(t,\xi)} \\
-\dfrac{D_tp(t,\xi)}{2p(t,\xi)}-\dfrac{b'(t)+g'(t)|\xi|^2}{4p(t,\xi)} & \dfrac{D_tp(t,\xi)}{2p(t,\xi)}+\dfrac{b'(t)+g'(t)|\xi|^2}{4p(t,\xi)}
\end{array} \right).
\end{align*}
After the first step of diagonalization procedure, the entries of the matrix $\mathcal{R}=\mathcal{R}(t,\xi)$ are uniformly integrable over the hyperbolic
zone $\Zhyp(\varepsilon_1,\varepsilon_2)$. Namely, we have
\begin{align*}
\Big|\int_s^t \frac{D_\tau p(\tau,\xi)}{2p(\tau,\xi)} d\tau\Big| \leq \int_s^t\Big| \frac{D_\tau p(\tau,\xi)}{2p(\tau,\xi)} \Big|d\tau = \Big|\frac{1}{2}\log\frac{p(t,\xi)}{p(s,\xi)} \Big|
\end{align*}
and $p(t,\xi)\approx|\xi|$ is uniformly in $\Zhyp(\varepsilon_1,\varepsilon_2)$. Moreover, since $b'(t)\geq0$ and $g'(t)\leq0$ we have
\begin{align*}
\Big| \int_s^t\frac{b'(\tau)+g'(\tau)|\xi|^2}{4p(\tau,\xi)} d\tau\Big| &\leq \int_s^t \Big| \frac{b'(\tau)+g'(\tau)|\xi|^2}{4|\xi|}\Big| d\tau \\
& = -\int_s^t \frac{b'(\tau)+g'(\tau)|\xi|^2}{4|\xi|} d\tau + 2\int_s^t\frac{b'(\tau)}{4|\xi|}d\tau \\
& \leq  \frac{1}{4|\xi|}\big( b(s)+g(s)|\xi|^2 \big) + \frac{1}{2|\xi|}b(t) \lesssim 1,
\end{align*}
where from the definition of $\Zhyp(\varepsilon_1,\varepsilon_2)$, we used $\varepsilon_2\leq g(t)|\xi| \leq \varepsilon_1$ and $2\varepsilon_2b(t)\leq |\xi| \leq 2\varepsilon_1b(t)$.

We can write $W^{(1)}(t,\xi) = E_{\text{hyp}}(t,s,\xi)W^{(1)}(s,\xi)$, where $E_{\text{hyp}}=E_{\text{hyp}}(t,s,\xi)$ is the fundamental solution of the system
\begin{align*}
D_tE_{\text{hyp}}(t,s,\xi) = \big( \mathcal{D}(t,\xi) + \mathcal{R}(t,\xi) \big)E_{\text{hyp}}(t,s,\xi), \quad E_{\text{hyp}}(s,s,\xi) = I,
\end{align*}
for all $t\geq s$ and $(s,\xi)\in\Zhyp(\varepsilon,t_0)$. We may immediately obtain that
\[ |E_{\text{hyp}}(t,s,\xi)| \leq C \quad \text{for all} \quad t\geq s \quad \text{and} \quad (t,\xi),\,(s,\xi)\in\Zhyp(\varepsilon_1,\varepsilon_2). \]
Finally, we find the following estimate for the micro-energy $W^{(1)}(t,\xi)$ in the hyperbolic zone $\Zhyp(\varepsilon,t_0)$:
\[ |W^{(1)}(t,\xi)| \lesssim |W^{(1)}(s,\xi)|,  \qquad \bigg|\left( \begin{array}{cc}
\langle\xi\rangle_{b(t),g(t)}w(t,\xi) \\
D_tw(t,\xi)
\end{array} \right)\bigg|\lesssim \bigg|\left( \begin{array}{cc}
\langle\xi\rangle_{b(s),g(s)}w(s,\xi) &  \\
D_tw(s,\xi)
\end{array} \right)\bigg| \]
for all $(t,\xi),(s,\xi)\in \Zhyp(\varepsilon_1,\varepsilon_2)$. The backward transformation
\[ \hat{u}(t,\xi) = \exp\Big( -\frac{1}{2}\int_0^t\big( b(\tau)+g(\tau)|\xi|^2 \big)d\tau \Big)w(t,\xi), \]
and the equivalence $p(t,\xi)\approx |\xi|$ gives us the desired estimates.
\end{proof}
\subsubsection*{Considerations in the elliptic zone $\Zell(N)$} \label{Sect-Overdamping-Integrable-Zell}
We will consider our equation in \eqref{AuxiliaryEquation3} as follows:
\begin{equation*} \label{Eq:Overdamping-Integrable-Dfrom}
D_t^2w + \Big( \frac{b(t)}{2}+\frac{g(t)|\xi|^2}{2} \Big)^2w - |\xi|^2w + \frac{g'(t)}{2}|\xi|^2w + \frac{b'(t)}{2}w = 0.
\end{equation*}
Since we have $b(t)g(t)=1/2$, $b'(t)=b(t)$ and $g'(t)=-g(t)$, the previous equation reduces to the following equation:
\begin{equation} \label{Eq:Overdamping-Int-Super-Dform-Zell}
D_t^2 w + \Big( \underbrace{\dfrac{g^2(t)}{4}|\xi|^4-\frac{3}{4}|\xi|^2}_{=:h^2(t,\xi)} \Big)w + \Big( \underbrace{ \frac{b^2(t)}{4} + \frac{b(t)}{2} - \frac{g(t)|\xi|^2}{2}}_{=:m(t,\xi)} \Big)w = 0.
\end{equation}
\begin{remark} \label{Rem:Overdamping-Integrable-Zell}
We have the following inequalities with sufficiently large $N$:
\begin{align} \label{Eq:Overdamping-Estimates-h-Zell}
h^2(t,\xi) \leq \frac{1}{4}g^2(t)|\xi|^4 \qquad \text{and} \qquad h^2(t,\xi)\geq \Big( \frac{1}{4}-\frac{3}{4N^2} \Big)g^2(t)|\xi|^4.
\end{align}
Therefore, we get $h(t,\xi)\approx g(t)|\xi|^2$. Furthermore, it holds
\begin{align} \label{Eq:Overdamping-Estimates-ht-Zell}
|h_t(t,\xi)| =  \bigg| \frac{1}{4}\frac{g'(t)g(t)|\xi|^4}{\sqrt{\frac{g^2(t)}{4}|\xi|^4-\frac{3}{4}|\xi|^2}} \bigg| \leq  \frac{1}{2\sqrt{1-\frac{3}{N^2}}}g(t)|\xi|^2.
\end{align}
On the other hand, we have
\begin{align*}
h_t^2(t,\xi) &=  \frac{1}{4}\frac{\Big( g''(t)g(t)|\xi|^4+(g'(t))^2|\xi|^4 \Big)\sqrt{\frac{g^2(t)}{4}|\xi|^4-\frac{3}{4}|\xi|^2}}{\frac{g^2(t)}{4}|\xi|^4-\frac{3}{4}|\xi|^2} - \frac{1}{16}\frac{\big( g'(t)g(t)|\xi|^4 \big)^2}{\Big( \frac{g^2(t)}{4}|\xi|^4-\frac{3}{4}|\xi|^2 \Big)\sqrt{\frac{g^2(t)}{4}|\xi|^4-\frac{3}{4}|\xi|^2}}.
\end{align*}
Using the estimates in \eqref{Eq:Overdamping-Estimates-h-Zell} and, $g'(t)=-g(t)$ and $g''(t)=g(t)$ we get
\begin{align*}
|h_t^2(t,\xi)| &\leq  \frac{1}{2}\frac{g^2(t)|\xi|^4}{d(t,\xi)} + \frac{1}{16}\frac{\big( g^2(t)|\xi|^4 \big)^2}{d^3(t,\xi)} \leq \bigg( \frac{1}{\big( 1-\frac{3}{N^2} \big)^{\frac{1}{2}}} + \frac{1}{2\big( 1-\frac{3}{N^2} \big)^{\frac{3}{2}}} \bigg)g(t)|\xi|^2.
\end{align*}
Moreover, let us note that $m(t,\xi)>0$ for $t\geq t_0$.
\end{remark}
We introduce the following family of symbol classes in the elliptic zone $\Zell(N)$.
\begin{definition} \label{Def:Overdamping-integrable-symbol-Zell}
A function $f=f(t,\xi)$ belongs to the elliptic symbol class $S_{\text{ell}}^\ell\{m_1,m_2\}$ if it holds
\begin{equation*}
|D_t^kf(t,\xi)|\leq C_{k}\big( g(t)|\xi|^2 \big)^{m_1}b(t)^{m_2+k}
\end{equation*}
for all $(t,\xi)\in \Zell(N)$ and all $k\leq \ell$.
\end{definition}
We introduce the micro-energy
\[ V=V(t,\xi):=\big( h(t,\xi)v,D_t v \big)^{\text{T}} \qquad \mbox{with} \qquad h(t,\xi) := \sqrt{\frac{g^2(t)}{4}|\xi|^4-\frac{3}{4}|\xi|^2}. \]
Transforming \eqref{Eq:Overdamping-Int-Super-Dform-Zell} to a system of first order for $V=V(t,\xi)$ gives
\begin{equation*}
D_tV=\left( \begin{array}{cc}
0 & h(t,\xi) \\
-h(t,\xi) & 0
\end{array} \right)V + \left( \begin{array}{cc}
\dfrac{D_th(t,\xi)}{h(t,\xi)} & 0 \\
-\dfrac{m(t,\xi)}{h(t,\xi)} & 0
\end{array} \right)V.
\end{equation*}
Using $V=MV^{(0)}$ with $M=\begin{pmatrix} i & -i \\ 1 & 1\end{pmatrix}$, then after the first step of diagonalization we obtain
\[ D_tV^{(0)} = \big( \mathcal{D}(t,\xi) + \mathcal{R}(t,\xi) \big)V^{(0)}, \]
where
\begin{align*}
\mathcal{D}(t,\xi) &= \left( \begin{array}{cc}
-ih(t,\xi) & 0 \\
0 & ih(t,\xi)
\end{array} \right)\in S_{\text{ell}}^1\{1,0\}, \\
\mathcal{R}(t,\xi) &= \frac{1}{2} \left( \begin{array}{cc}
\dfrac{D_th(t,\xi)}{h(t,\xi)}-i\dfrac{m(t,\xi)}{h(t,\xi)} & -\dfrac{D_th(t,\xi)}{h(t,\xi)}+i\dfrac{m(t,\xi)}{h(t,\xi)} \\
-\dfrac{D_th(t,\xi)}{h(t,\xi)}-i\dfrac{m(t,\xi)}{h(t,\xi)} & \dfrac{D_th(t,\xi)}{h(t,\xi)}+i\dfrac{m(t,\xi)}{h(t,\xi)}
\end{array} \right)\in S_{\text{ell}}^1\{0,1\}.
\end{align*}
Indeed we have
\begin{align*}
\Big|\frac{D_th(t,\xi)}{h(t,\xi)}\Big| &\leq \frac{g(t)|\xi|^2}{2\sqrt{1-\frac{3}{N^2}}}\frac{2}{\sqrt{1-\frac{3}{N^2}}g(t)|\xi|^2} = \frac{1}{1-\frac{3}{N^2}}, \\
\Big|\frac{m(t,\xi)}{h(t,\xi)}\Big| & \leq \frac{b^2(t)}{4h(t,\xi)} + \frac{b(t)}{2h(t,\xi)} + \frac{g(t)|\xi|^2}{2h(t,\xi)} \\
& \leq \frac{b^2(t)}{2\sqrt{1-\frac{3}{N^2}}g(t)|\xi|^2} + \frac{b(t)}{\sqrt{1-\frac{3}{N^2}}g(t)|\xi|^2} + \frac{g(t)|\xi|^2}{\sqrt{1-\frac{3}{N^2}}g(t)|\xi|^2} \\
& = \frac{b^2(t)g(t)}{2\sqrt{1-\frac{3}{N^2}}g^2(t)|\xi|^2} + \frac{b(t)g(t)}{\sqrt{1-\frac{3}{N^2}}g^2(t)|\xi|^2} + \frac{1}{\sqrt{1-\frac{3}{N^2}}} \\
& = \frac{b(t)}{4N^2\sqrt{1-\frac{3}{N^2}}} + \frac{1}{2N^2\sqrt{1-\frac{3}{N^2}}} + \frac{1}{\sqrt{1-\frac{3}{N^2}}}.
\end{align*}
Let us introduce $F_0(t,\xi):=\diag\mathcal{R}(t,\xi)$ and $\mathcal{R}_1(t,\xi):=\antidiag\mathcal{R}(t,\xi)$. To apply the second step of diagonalization procedure, we define $\delta(t,\xi)$ as the difference of the diagonal entries of the matrix $\mathcal{D}(t,\xi)+F_0(t,\xi)$ as follows:
\begin{align*}
i\delta(t,\xi) := 2h(t,\xi) +  \frac{m(t,\xi)}{h(t,\xi)} &= 2h(t,\xi) + \frac{b^2(t)}{4h(t,\xi)} + \frac{b(t)}{2h(t,\xi)} - \frac{g(t)|\xi|^2}{2h(t,\xi)} \\
&= 2h(t,\xi) + \frac{e^{2t}}{4h(t,\xi)} + \frac{e^t}{2h(t,\xi)} - \frac{e^{-t}|\xi|^2}{4h(t,\xi)} \\
& \approx e^{-t}|\xi|^2 + \frac{e^{3t}}{|\xi|^2} + \frac{e^{3t}}{|\xi|^2} - 1,
\end{align*}
where the first term $e^{-t}|\xi|^2$ grows as $|\xi|^2$, but since $|\xi|\geq 2Ne^t$, we have $e^{-t}|\xi|^2\geq 4N^2e^t$, which grows exponentially for large $t$, whereas the second term, since $|\xi|\geq 2Ne^t$, then $\frac{e^{3t}}{|\xi|^2}\leq \frac{e^{3t}}{4N^2e^{2t}}=\frac{e^t}{4N^2}$, which also grows exponentially for large $t$, however slower than the first term. Thus, we choose the first term, that is, $i\delta(t,\xi)\sim 2h(t,\xi)$.

Now we choose a matrix $N^{(1)}=N^{(1)}(t,\xi)$ such that
\begin{align*}
N^{(1)}(t,\xi) &= \left( \begin{array}{cc}
0 & -\dfrac{\mathcal{R}_{12}}{\delta(t,\xi)} \\
\dfrac{\mathcal{R}_{21}}{\delta(t,\xi)} & 0
\end{array} \right) = \left( \begin{array}{cc}
0 & \dfrac{h_t(t,\xi)}{4h^2(t,\xi)}+\dfrac{m(t,\xi)}{4h^2(t,\xi)} \\
-\dfrac{h_t(t,\xi)}{4h^2(t,\xi)}+\dfrac{m(t,\xi)}{4h^2(t,\xi)} & 0
\end{array} \right)\in S_{\text{ell}}^1\{-1,1\}.
\end{align*}
We put $N_1=N_1(t,\xi):=I+N^{(1)}(t,\xi)$. For a sufficiently large zone constant $N$ and all $t\leq t_{\xi,3}$ the matrix $N_1=N_1(t,\xi)$ is invertible with uniformly bounded inverse $N_1^{-1}=N_1^{-1}(t,\xi)$. Indeed, in the elliptic zone $\Zell(N)$, for large $N$ it holds
\begin{align*}
|N^{(1)}(t,\xi)| &\leq \frac{|h_t(t,\xi)|}{4h^2(t,\xi)}+\frac{|m(t,\xi)|}{4h^2(t,\xi)} \\
&\leq \frac{1}{2\big( 1-\frac{3}{N^2}\big)^{\frac{3}{2}}}\frac{g(t)|\xi|^2}{g^2(t)|\xi|^4} + \frac{1}{\big( 1-\frac{3}{N^2}\big)g^2(t)|\xi|^4}\Big( \frac{b^2(t)}{4} + \frac{b(t)}{2} + \frac{g(t)|\xi|^2}{2} \Big) \\
& = \frac{1}{2\big( 1-\frac{3}{N^2}\big)^{\frac{3}{2}}}\frac{g(t)}{g^2(t)|\xi|^2} + \frac{1}{\big( 1-\frac{3}{N^2}\big)}\Big( \frac{b^2(t)g^2(t)}{4g^4(t)|\xi|^4} + \frac{b(t)g^2(t)}{2g^4(t)|\xi|^4} + \frac{g(t)}{2g^2(t)|\xi|^2} \Big) \\
& \leq \frac{g(t)}{2N^2\big( 1-\frac{3}{N^2}\big)^{\frac{3}{2}}} + \frac{1}{\big( 1-\frac{3}{N^2}\big)}\Big( \frac{1}{16N^4} + \frac{g(t)}{4N^4} + \frac{g(t)}{2N^2} \Big)<1,
\end{align*}
where we used estimates \eqref{Eq:Overdamping-Estimates-h-Zell} and \eqref{Eq:Overdamping-Estimates-ht-Zell} from Remark \ref{Rem:Overdamping-Integrable-Zell}. Moreover, in the last line we employed definition of $\Zell(N)$ and the fact that $b(t)g(t)=1/2$.

Let us define
\begin{align*}
B^{(1)}(t,\xi) &:= D_tN^{(1)}(t,\xi)-( \mathcal{R}(t,\xi)-F_0(t,\xi))N^{(1)}(t,\xi), \\
\mathcal{R}_2(t,\xi) &:= -N_1^{-1}(t,\xi)B^{(1)}(t,\xi)\in S_{\text{ell}}^0\{-1,2\}.
\end{align*}
Consequently, we have the following operator identity:
\begin{equation*}
\big( D_t-\mathcal{D}(t,\xi)-\mathcal{R}(t,\xi) \big)N_1(t,\xi)=N_1(t,\xi)\big( D_t-\mathcal{D}(t,\xi)-F_0(t,\xi)-\mathcal{R}_2(t,\xi) \big).
\end{equation*}
\begin{proposition} \label{Prop:Overdamping-Integrable-Estimates-Zell}
The fundamental solution $E_{\text{ell}}^{W}=E_{\text{ell}}^{V}(t,s,\xi)$ to the transformed operator
\[ D_t-\mathcal{D}(t,\xi)-F_0(t,\xi)-\mathcal{R}_2(t,\xi) \]
can be estimated by
\begin{equation*}
(|E_{\text{ell}}^{W}(t,s,\xi)|) \lesssim \frac{g(t)}{g(s)}\exp\Big( \frac{1}{2}\int_{s}^{t} \big( b(\tau)+g(\tau)|\xi|^2 \big)d\tau \Big)\left( \begin{array}{cc}
1 & 1 \\
1 & 1
\end{array} \right),
\end{equation*}
with $(t,\xi),(s,\xi)\in \Zell(N)$, $0\leq s\leq t\leq t_{\xi,3}$.
\end{proposition}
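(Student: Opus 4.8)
The plan is to run the standard elliptic WKB argument of Proposition~\ref{Prop_Effective-Integrable_EstZell}, whose conclusion has exactly the same shape with $\langle\xi\rangle_{b(t),g(t)}$ replaced by $h(t,\xi)$, using the $\mathcal{Q}$-transformation already employed in Proposition~\ref{Prop_Scattering_EllZone}. First I would differentiate $\exp\{-i\int_s^t(\mathcal{D}(\tau,\xi)+F_0(\tau,\xi))\,d\tau\}E_{\text{ell}}^{V}(t,s,\xi)$ and integrate on $[s,t]$, turning the system $D_t-\mathcal{D}-F_0-\mathcal{R}_2$ into a Volterra integral equation for $E_{\text{ell}}^{V}$. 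Then I would set $\mathcal{Q}_{\text{ell}}(t,s,\xi):=\exp\{-\int_s^t\beta(\tau,\xi)\,d\tau\}E_{\text{ell}}^{V}(t,s,\xi)$ with $\beta$ the dominant diagonal entry of $i\mathcal{D}+iF_0$, namely
\[ \beta(t,\xi)=h(t,\xi)+\frac{\partial_t h(t,\xi)}{2h(t,\xi)}+\frac{m(t,\xi)}{2h(t,\xi)}. \]
This entry dominates its companion $-h+\frac{\partial_t h}{2h}-\frac{m}{2h}$ on the set $t\ge t_0$, because $m(t,\xi)>0$ there (Remark~\ref{Rem:Overdamping-Integrable-Zell}) and $i\delta(t,\xi)=2h(t,\xi)+m(t,\xi)/h(t,\xi)\sim 2h(t,\xi)>0$; the strip $t\le t_0$ is compact and contributes only a harmless constant.

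With this choice, $i\mathcal{D}+iF_0-\beta I=\diag\big(0,-2h-m/h\big)$, so that
\[ H(t,s,\xi)=\diag\Big(1,\exp\big\{-\textstyle\int_s^t(2h(\tau,\xi)+m(\tau,\xi)/h(\tau,\xi))\,d\tau\big\}\Big)\longrightarrow\diag(1,0) \]
as $t\to\infty$ and $H$ is uniformly bounded on $\Zell(N)$. Next I would verify that $\mathcal{R}_2\in S_{\text{ell}}^{0}\{-1,2\}$ is uniformly integrable over $\Zell(N)$, i.e. the embedding $S_{\text{ell}}^{0}\{-1,2\}\hookrightarrow L_\xi^{\infty}L_t^{1}(\Zell(N))$ for the symbol classes of Definition~\ref{Def:Overdamping-integrable-symbol-Zell}; here one substitutes $b(t)=e^t$, $g(t)=\tfrac12 e^{-t}$, $b'(t)=b(t)$, $g'(t)=-g(t)$, $b(t)g(t)=\tfrac12$, and uses the zone constraint $g(t)|\xi|\ge N$ (equivalently $e^{t}\le|\xi|/(2N)$). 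Once this is established, the Neumann series for $\mathcal{Q}_{\text{ell}}$ with kernels $H\cdot\mathcal{R}_2$ converges and $|\mathcal{Q}_{\text{ell}}(t,s,\xi)|$ is uniformly bounded on $\Zell(N)$.

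Finally I would recover the estimate from $E_{\text{ell}}^{V}=\exp\{\int_s^t\beta\,d\tau\}\,\mathcal{Q}_{\text{ell}}$ by bounding $\int_s^t\beta\,d\tau$. Writing $h(t,\xi)=\tfrac{g(t)|\xi|^2}{2}\sqrt{1-3/(g^2(t)|\xi|^2)}$, using $\sqrt{1-x}\le 1-\tfrac{x}{2}$ and $bg=\tfrac12$, one checks $h(t,\xi)\le\tfrac12\big(b(t)+g(t)|\xi|^2\big)$, hence $\exp\{\int_s^t h\}\le\exp\{\tfrac12\int_s^t(b+g|\xi|^2)\}$; the term $\frac{\partial_t h}{2h}$ integrates to $\sqrt{h(t,\xi)/h(s,\xi)}\lesssim\sqrt{g(t)/g(s)}$ (since $h/g$ is decreasing in $t$); and in $\frac{m}{2h}=\frac{b^2/4+b/2-g|\xi|^2/2}{2h}$ the piece $-\frac{g|\xi|^2}{4h}\approx-\tfrac12$ produces the remaining factor $\sqrt{g(t)/g(s)}$ (note $g(t)/g(s)=e^{-(t-s)}$), while $\frac{b^2/4+b/2}{2h}$ is, thanks to $g|\xi|\ge N$, dominated by a small multiple of $b(t)$ and thus absorbed into $\exp\{\tfrac12\int_s^t b\}$. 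Combining these with the uniform bound on $\mathcal{Q}_{\text{ell}}$ gives the claimed estimate.

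The hard part will be the uniform integrability of $\mathcal{R}_2$ over $\Zell(N)$. Because $b(t)=e^t$ grows exponentially, a crude pointwise bound such as $|\mathcal{R}_2(t,\xi)|\lesssim(g(t)|\xi|^2)^{-1}b^2(t)$ is \emph{not} $L_t^{1}$ uniformly in $\xi$ — its integral over $[0,t_\xi]$ grows like $|\xi|$. One must instead retain the exact powers of $e^{t}$ and of $|\xi|$ generated by the second diagonalization (using the cancellations built into the choice of $N^{(1)}$ and the relation $b(t)g(t)=\tfrac12$) and only then integrate against the $|\xi|$-dependent length $t_\xi\sim\log|\xi|$ of the elliptic zone, so that each surviving contribution is bounded independently of $\xi$; any genuinely non-integrable remainder has to be recognized as dominated by $\tfrac12(b+g|\xi|^2)$ and reabsorbed into the weight $\beta$. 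The bookkeeping in the last step that cleanly isolates the $g(t)/g(s)$ prefactor and the exponential $\exp\{\tfrac12\int_s^t(b+g|\xi|^2)\}$ from the lower-order parts of $\int_s^t\beta$ is a further, though routine, point requiring care.
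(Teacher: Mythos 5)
Your diagnosis of the ``hard part'' is exactly right -- the crude symbol-class bound $|\mathcal{R}_2(t,\xi)|\lesssim (g(t)|\xi|^2)^{-1}b^2(t)=2e^{3t}/|\xi|^2$ integrates over $[0,t_{\xi,4}]$ to roughly $|\xi|/(4N^3)$, not a $\xi$-uniform constant. But the remedy you propose is not what the paper does, and it is where your proof has a genuine gap.

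The paper does \emph{not} establish uniform integrability of $\mathcal{R}_2$, nor does it bound $\mathcal{Q}_{\text{ell}}$ uniformly in $\xi$. Instead it writes $|\mathcal{Q}_{\text{ell}}(t,s,\xi)|\lesssim|\xi|$ (equation \eqref{Eq:Overdamping-Estimate-Qell-Zell}) and carries the extra factor of $|\xi|$ forward: the form in which this proposition is actually invoked (just after the proof, and in the refined estimate \eqref{Eq:Overdamping-Integrable_Transf-back}) reads $(|E_{\text{ell}}^{W}|)\lesssim \tfrac{g(t)|\xi|}{g(s)}\exp\big(\int_s^t h\,d\tau+\tfrac1{C(N)}\int_s^t b\,d\tau\big)$, with an explicit $|\xi|$. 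That polynomial loss is then absorbed when transforming back to $(|\xi|\hat u,D_t\hat u)$ and using $g(t)|\xi|\ge N$, not at the level of $\mathcal{Q}_{\text{ell}}$. Your hope that one can ``retain exact powers'' and find cancellations so that ``each surviving contribution is bounded independently of $\xi$'' is unsubstantiated: e.g.\ in $\mathcal{R}_2=-N_1^{-1}\bigl(D_tN^{(1)}-(\mathcal{R}-F_0)N^{(1)}\bigr)$, the product $(\mathcal{R}-F_0)N^{(1)}$ carries the full power $b^2(t)/(g(t)|\xi|^2)$ (via $m/(2h)\cdot m/(4h^2)$) and does not become $\xi$-uniformly $L^1$ after a sharper count of exponents; and ``reabsorbing a genuinely non-integrable remainder into the weight $\beta$'' is not an operation the $\mathcal{Q}$-transformation supports, since the residual $\mathcal{R}_2$ is not diagonal. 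So the plan as written would not close. You should either reproduce the paper's route -- accept $|\mathcal{Q}_{\text{ell}}|\lesssim|\xi|$ and keep the factor through the backward transformation -- or explain concretely which cancellation removes the $|\xi|$, which the paper itself does not claim. A minor further discrepancy: the paper's proof keeps $\exp(\int_s^t h)$ (later compared to $\tfrac12\int(b+g|\xi|^2)$ in Lemma \ref{Lemma_Overdamping-Integrable_Transf-back}), whereas you insert $h\le \tfrac12(b+g|\xi|^2)$ already inside the proof; that is legitimate for the stated (weaker) conclusion, but throws away the $-\tfrac32\int b$ gain that the subsequent refined estimates rely on.
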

\begin{proof}
We transform the system for $E_{\text{ell}}^{W}=E_{\text{ell}}^{W}(t,s,\xi)$ to an integral equation for a new matrix-valued function $\mathcal{Q}_{\text{ell}}=\mathcal{Q}_{\text{ell}}(t,s,\xi)$ as in the proof of Proposition \ref{Prop_Scattering_EllZone}. We define
\[ \mathcal{Q}_{\text{ell}}(t,s,\xi):=\exp\bigg\{ -\int_{s}^{t}\beta(\tau,\xi)d\tau \bigg\} E_{\text{ell}}^{V}(t,s,\xi), \]
where $\beta=\beta(t,\xi)$ is chosen from the main entries of the diagonal matrix $i\mathcal{D}(t,\xi)+iF_0(t,\xi)$ as follows:
\[ \beta(t,\xi)=h(t,\xi)+\frac{h_t(t,\xi)}{2h(t,\xi)}+\frac{m(t,\xi)}{2h(t,\xi)}. \]
The following new integral equation holds:
\begin{align*}
\mathcal{Q}_{\text{ell}}(t,s,\xi)=&\exp \bigg\{ \int_{s}^{t}\big( i\mathcal{D}(\tau,\xi)+iF_0(\tau,\xi)-\beta(\tau,\xi)I \big)d\tau \bigg\}\\
& \quad + \int_{s}^{t} \exp \bigg\{ \int_{\theta}^{t}\big( i\mathcal{D}(\tau,\xi)+iF_0(\tau,\xi)-\beta(\tau,\xi)I \big)d\tau \bigg\}\mathcal{R}_2(\theta,\xi)\mathcal{Q}_{\text{ell}}(\theta,s,\xi)\,d\theta.
\end{align*}
If we define
\begin{align*}
H(t,s,\xi) & =\exp \bigg\{ \int_{s}^{t}\big( i\mathcal{D}(\tau,\xi)+iF_0(\tau,\xi)-\beta(\tau,\xi)I \big)d\tau \bigg\}\\
& = \diag \bigg( 1, \exp \bigg\{ \int_{s}^{t}\bigg( -2h(\tau,\xi)-\frac{m(\tau,\xi)}{h(\tau,\xi)} \bigg)d\tau \bigg\} \bigg)\rightarrow \left( \begin{array}{cc}
1 & 0 \\
0 & 0
\end{array} \right)
\end{align*}
as $t\rightarrow \infty$, then we may conclude that the matrix $H=H(t,s,\xi)$ is uniformly bounded for $(s,\xi),(t,\xi)\in \Zell(N)$. So, the representation of $\mathcal{Q}_{\text{ell}}=\mathcal{Q}_{\text{ell}}(t,s,\xi)$ by a Neumann series gives
\begin{align*}
\mathcal{Q}_{\text{ell}}(t,s,\xi)=H(t,s,\xi)+\sum_{k=1}^{\infty}i^k\int_{s}^{t}H(t,t_1,\xi)\mathcal{R}_2(t_1,\xi)&\int_{s}^{t_1}H(t_1,t_2,\xi)\mathcal{R}_2(t_2,\xi)\\
& \cdots \int_{s}^{t_{k-1}}H(t_{k-1},t_k,\xi)\mathcal{R}_2(t_k,\xi)dt_k\cdots dt_2dt_1.
\end{align*}
This series can be estimated by
\[ |\mathcal{Q}_{\text{ell}}(t,s,\xi)| \lesssim \exp\Big( \int_s^t|\mathcal{R}_2(\tau,\xi)|d\tau \Big). \]
Since $\mathcal{R}_2(t,\xi)\in S_{\text{ell}}^0\{-1,2\}$, we have
\begin{align} \label{Eq:Overdamping-Estimate-Qell-Zell}
|\mathcal{Q}_{\text{ell}}&(t,s,\xi)| \leq \int_0^{t_{\xi,4}}\frac{b^2(\tau)}{g(\tau)|\xi|^2}d\tau = 2\int_0^{t_{\xi,4}}\frac{e^{2\tau}}{e^{-\tau}|\xi|^2}d\tau = 2\int_0^{t_{\xi,4}}\frac{e^{3\tau}}{|\xi|^2}d\tau \nonumber \\
& \leq \frac{1}{2N^2}\int_0^{t_{\xi,4}}e^{\tau}d\tau \leq \frac{1}{2N^2}e^{t_{\xi,4}} = \frac{1}{4N^3}|\xi|,
\end{align}
where due to $g(t)|\xi|\geq N$ in $\Zell(N)$, we used $|\xi|^2\geq 4N^2e^{2t}$ and $2Nb(t_{\xi,4})=|\xi|$. Thus, we may conclude
\begin{align*} \label{Eq:Overdamping-Estimate-EellV-Zell}
&E_{\text{ell}}^{W}(t,s,\xi)=\exp \bigg\{ \int_{s}^{t}\beta(\tau,\xi)d\tau \bigg\}\mathcal{Q}_{\text{ell}}(t,s,\xi) \nonumber \\
& = \exp \bigg\{ \int_{s}^{t}\bigg( h(\tau,\xi)+\frac{\partial_\tau h(\tau,\xi)}{2h(\tau,\xi)}+\frac{m(\tau,\xi)}{2h(\tau,\xi)} \bigg)d\tau \bigg\}\mathcal{Q}_{\text{ell}}(t,s,\xi) \nonumber \\
& = \exp \bigg\{ \int_{s}^{t}\bigg( h(\tau,\xi)+\frac{\partial_\tau h(\tau,\xi)}{2h(\tau,\xi)}+\frac{b^2(\tau)}{8h(\tau,\xi)} + \frac{b(\tau)}{4h(\tau,\xi)} - \frac{g(\tau)|\xi|^2}{4h(\tau,\xi)} \bigg)d\tau \bigg\}\mathcal{Q}_{\text{ell}}(t,s,\xi) \nonumber \\
& \leq \sqrt{\frac{h(t,\xi)}{h(s,\xi)}} \exp \bigg\{ \int_{s}^{t}\bigg( h(\tau,\xi)+\frac{b^2(\tau)}{4\sqrt{1-\frac{3}{N^2}}g(\tau)|\xi|^2} + \frac{b(\tau)}{2\sqrt{1-\frac{3}{N^2}}g(\tau)|\xi|^2} + \frac{g'(\tau)|\xi|^2}{2g(\tau)|\xi|^2} \bigg)d\tau \bigg\}\mathcal{Q}_{\text{ell}}(t,s,\xi) \nonumber \\
& = \sqrt{\frac{h(t,\xi)}{h(s,\xi)}} \exp \bigg\{ \int_{s}^{t}\bigg( h(\tau,\xi)+\frac{b^2(\tau)g(\tau)}{4\sqrt{1-\frac{3}{N^2}}g^2(\tau)|\xi|^2} + \frac{b(\tau)g(\tau)}{2\sqrt{1-\frac{3}{N^2}}g^2(\tau)|\xi|^2} + \frac{g'(\tau)}{2g(\tau)} \bigg)d\tau \bigg\}\mathcal{Q}_{\text{ell}}(t,s,\xi) \nonumber \\
& \leq \sqrt{\frac{h(t,\xi)}{h(s,\xi)}}\sqrt{\frac{g(t)}{g(s)}} \exp \bigg\{ \int_{s}^{t}\bigg( h(\tau,\xi)+\frac{b(\tau)}{8N^2\sqrt{1-\frac{3}{N^2}}} + \frac{1}{4N^2\sqrt{1-\frac{3}{N^2}}} \bigg)d\tau \bigg\}\mathcal{Q}_{\text{ell}}(t,s,\xi) \nonumber \\
& \leq \sqrt{\frac{h(t,\xi)}{h(s,\xi)}}\sqrt{\frac{g(t)}{g(s)}}\exp \bigg\{ \int_{s}^{t}h(\tau,\xi)d\tau + \frac{1}{{8N^2\sqrt{1-\frac{3}{N^2}}}}\int_s^t b(\tau)d\tau \bigg\}\mathcal{Q}_{\text{ell}}(t,s,\xi),
\end{align*}
where we used again estimates \eqref{Eq:Overdamping-Estimates-h-Zell} and \eqref{Eq:Overdamping-Estimates-ht-Zell} from Remark \ref{Rem:Overdamping-Integrable-Zell}, definition of $\Zell(N)$ and, the fact that $b(t)g(t)=1/2$. Then, for $N>\sqrt{3}$, it follows
\begin{align*}
(|E_{\text{ell}}^{W}(t,s,\xi)|) & \lesssim \frac{g(t)}{g(s)} \exp\bigg\{ \int_{s}^{t} h(\tau,\xi)d\tau + \frac{1}{{8N^2\sqrt{1-\frac{3}{N^2}}}}\int_s^tb(\tau)d\tau \bigg\} \left( \begin{array}{cc}
1 & 1 \\
1 & 1 \end{array} \right)|\mathcal{Q}_{\text{ell}}(t,s,\xi)| \\
& \lesssim \frac{g(t)|\xi|}{g(s)} \bigg\{ \int_{s}^{t} h(\tau,\xi)d\tau + \frac{1}{{8N^2\sqrt{1-\frac{3}{N^2}}}}\int_s^tb(\tau)d\tau \bigg\}\left( \begin{array}{cc}
1 & 1 \\
1 & 1
\end{array} \right),
\end{align*}
where we used $|\mathcal{Q}_{\text{ell}}(t,s,\xi)|\lesssim |\xi|$ from \eqref{Eq:Overdamping-Estimate-Qell-Zell}. This completes the proof.
\end{proof}
\textbf{Transforming back to the original Cauchy problem.} As we did in Section \ref{Subsection_Effective_Integrable_Zell}, in order to obtain an estimate for the energy of the solution to our original Cauchy problem, we need to derive an estimate for the fundamental solution $E_{\text{ell}}=E_{\text{ell}}(t,s,\xi)$, which is associated with a first order system for the micro-energy $(|\xi|\hat{u},D_t\hat{u})$.

\begin{lemma} \label{Lemma_Overdamping-Integrable_Transf-back}
The following inequalities hold:
\begin{itemize}
\item [1.] in the elliptic zone it holds $h(t,\xi)-\dfrac{g(t)|\xi|^2}{2} \leq -\dfrac{3}{4}\dfrac{1}{g(t)} = -\dfrac{3}{2}b(t)$.
\item [2.] $\dfrac{\beta(s,\xi)}{\beta(t,\xi)}\exp \Big( \displaystyle \int_{s}^{t}h(\tau,\xi)d\tau \Big)\leq \exp \Big( -\frac{3}{4}\displaystyle \int_{s}^{t}\dfrac{1}{g(\tau)}d\tau \Big)=\exp \Big( -\frac{3}{2}\displaystyle \int_{s}^{t}b(\tau)d\tau \Big)$,\\
where $\beta=\beta(t,\xi)=\exp\Big( \dfrac{1}{2}\displaystyle\int_{0}^{t}g(\tau)|\xi|^2d\tau \Big)$.
\end{itemize}	
\end{lemma}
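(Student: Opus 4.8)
The plan is to deduce both inequalities from a single application of the concavity of the square root, exactly as was done for the analogous Lemma~\ref{Lemma_Effective-Integrable_Transf-back} in Section~\ref{Subsection_Effective_Integrable_Zell}. Recall the elementary inequality $\sqrt{x+y}\le\sqrt{x}+\dfrac{y}{2\sqrt{x}}$, valid for every $x>0$ and $y\ge-x$. I would apply it with $x=\big(\tfrac{g(t)}{2}|\xi|^2\big)^2$ and $y=-\tfrac34|\xi|^2$, so that $h(t,\xi)=\sqrt{x+y}$ by the definition of $h$ in~\eqref{Eq:Overdamping-Int-Super-Dform-Zell}. In the elliptic zone $\Zell(N)$ we have $g(t)|\xi|\ge N>\sqrt3$, hence $x>0$ and $x+y\ge0$, so the inequality applies and gives
\[
h(t,\xi)\le\frac{g(t)}{2}|\xi|^2-\frac{\tfrac34|\xi|^2}{g(t)|\xi|^2}=\frac{g(t)}{2}|\xi|^2-\frac{3}{4}\frac{1}{g(t)},
\]
which is the first assertion. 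The equality $-\tfrac34\tfrac{1}{g(t)}=-\tfrac32 b(t)$ is simply the identity $b(t)g(t)=\tfrac12$, i.e.\ $\tfrac1{g(t)}=2e^t=2b(t)$.

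For the second assertion I would first rewrite the ratio of weights. From $\beta(t,\xi)=\exp\big(\tfrac12\int_0^t g(\tau)|\xi|^2\,d\tau\big)$ one gets $\dfrac{\beta(s,\xi)}{\beta(t,\xi)}=\exp\big(-\tfrac12\int_s^t g(\tau)|\xi|^2\,d\tau\big)$, so that
\[
\frac{\beta(s,\xi)}{\beta(t,\xi)}\exp\Big(\int_s^t h(\tau,\xi)\,d\tau\Big)=\exp\Big(\int_s^t\Big(h(\tau,\xi)-\frac{g(\tau)}{2}|\xi|^2\Big)\,d\tau\Big).
\]
Substituting the pointwise bound from part~1 into the exponent and using monotonicity of $\exp$ yields $\exp\big(-\tfrac34\int_s^t\tfrac1{g(\tau)}\,d\tau\big)$, and one final use of $b(t)g(t)=\tfrac12$ turns this into $\exp\big(-\tfrac32\int_s^t b(\tau)\,d\tau\big)$, which is the desired estimate.

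There is no genuine obstacle here: the argument is purely computational. The only point deserving a line of care is the legitimacy of the concavity step, namely that $h(t,\xi)$ is real and that $\tfrac{g(t)}{2}|\xi|^2>0$; both follow immediately from the defining inequality $g(t)|\xi|\ge N$ of $\Zell(N)$ together with the standing choice $N>\sqrt3$ (cf.\ Remark~\ref{Rem:Overdamping-Integrable-Zell}, where $h(t,\xi)\approx g(t)|\xi|^2$ is already established). With that observation recorded, the lemma is immediate.
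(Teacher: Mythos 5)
Your proof is correct and uses essentially the same argument as the paper: the elementary concavity inequality $\sqrt{x+y}\le\sqrt{x}+\tfrac{y}{2\sqrt{x}}$ with $x=\big(\tfrac{g(t)}{2}|\xi|^2\big)^2$ and $y=-\tfrac34|\xi|^2$, followed by the identity $b(t)g(t)=\tfrac12$ and integration over $[s,t]$. Your write-up is if anything a bit more explicit than the paper's (which leaves the choice of $x,y$ and the verification $y\ge -x$ implicit); there is no gap.
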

\begin{proof} Using the elementary inequality
\[ \sqrt{x+y}\leq \sqrt{x}+\frac{y}{2\sqrt{x}} \]
for any $x\geq 0$ and $y\geq -x$, the first statement is equivalent to the following inequality:
\[ \sqrt{\dfrac{g^2(t)|\xi|^4}{4}-\frac{3}{4}|\xi|^2} - \dfrac{g(t)|\xi|^2}{2} \leq -\frac{3}{4}\frac{1}{g(t)} = -\frac{3}{2}b(t). \]
The second statement follows directly from the first one together with the definition of $\beta=\beta(t,\xi)$.
\end{proof}
From Proposition \ref{Prop:Overdamping-Integrable-Estimates-Zell}, for $(t,\xi), (s,\xi)\in \Zell(N)$ and denoting $8N^2\sqrt{1-\frac{3}{N^2}}:=C(N)$, we have the estimate
\[ (|E_{\text{ell}}^W(t,s,\xi)|) \lesssim \frac{g(t)|\xi|}{g(s)}\exp \Big( \int_{s}^{t} h(\tau,\xi)d\tau \Big)\exp\Big( \frac{1}{C(N)}\int_s^tb(\tau)d\tau \Big)
\left( \begin{array}{cc}
1 & 1 \\
1 & 1
\end{array} \right). \]
Analogously as we have done in Section \ref{Subsection_Effective_Integrable_Zell} in \eqref{Eq:Effective-Integrable_Transf-back}, we arrive at the estimate
\begin{align} \label{Eq:Overdamping-Integrable_Transf-back}
\nonumber
(|E_{\text{ell}}(t,s,\xi)|) & \lesssim \exp\bigg( \int_{s}^{t} \Big( h(\tau,\xi)-\frac{g(\tau)|\xi|^2}{2} \Big) d\tau \bigg)\exp \bigg( \Big( \frac{1}{C(N)}-\frac{1}{2} \Big)\int_{s}^{t} b(\tau)d\tau \bigg) \\
& \qquad \times \left( \begin{array}{cc}
|\xi|^2 & 0 \\  [5pt] \nonumber
|\xi|\big( b(t)+g(t)|\xi|^2 \big) & g(t)|\xi|^3
\end{array} \right)\left( \begin{array}{cc}
1 & 1 \\
1 & 1
\end{array} \right)
\left( \begin{array}{cc}
\dfrac{1}{|\xi|} & 0 \\ [5pt]
\dfrac{b(s)+g(s)|\xi|^2}{g(s)|\xi|^3} & \dfrac{1}{g(s)|\xi|^2}
\end{array} \right) \nonumber \\
& \,\,\, \lesssim \exp \Big( -\frac{3}{2}\int_s^tb(\tau)d\tau \Big)\exp\bigg( \Big( \frac{1}{C(N)}-\frac{1}{2} \Big)\int_{s}^{t} b(\tau)d\tau \bigg)\left( \begin{array}{cc}
|\xi| & |\xi| \\ [5pt]
b(t)+g(t)|\xi|^2 & b(t)+g(t)|\xi|^2
\end{array} \right) \nonumber \\
& \,\,\, = \exp\bigg( \Big( \frac{1}{C(N)}-2 \Big)\int_{s}^{t} b(\tau)d\tau \bigg)
\left( \begin{array}{cc}
|\xi| & |\xi| \\ [5pt]
b(t)+g(t)|\xi|^2 & b(t)+g(t)|\xi|^2
\end{array} \right),
\end{align}
where due to the definition of $\Zell(N)$ we used $g(t)|\xi|\geq N$. Let us verify that for large $N>0$
\[ \frac{1}{C(N)}-2= \frac{1}{{8N^2\sqrt{1-\frac{3}{N^2}}}}-2 <0. \]
Since $\sqrt{1-r}\sim 1-\frac{r}{2}$ for small $r$, we can write for large $N$
\[ \sqrt{1-\frac{3}{N^2}}=1-\frac{3}{2N^2}+o\Big( \frac{1}{N^2} \Big). \]
Now plug into the original expression
\begin{align*}
 \frac{1}{{8N^2\sqrt{1-\frac{3}{N^2}}}}-2 \sim \frac{1}{8N^2\Big( 1-\frac{3}{2N^2}+o\big( \frac{1}{N^2} \big) \Big)}-2 \sim \frac{1}{8N^2-12+o(1)}-2.
\end{align*}
For large $N$, the first term is smaller than $2$, which guarantees that the expression is negative.
\medskip

\textbf{A refined estimate for the fundamental solution in the elliptic zone $\Zell(N)$}
\begin{proposition} \label{Prop:Overdamping-Integrable-Refined-Zell}
The fundamental solution $E_{\text{ell}}=E_{\text{ell}}(t,s,\xi)$ satisfies the following estimate:
\begin{align*}
(|E_{\text{ell}}(t,s,\xi)|) &\lesssim \exp \bigg( -C_N\int_{s}^{t}b(\tau)d\tau \bigg)
\left( \begin{array}{cc}
|\xi| & |\xi| \\ [5pt]
\dfrac{|\xi|^2}{b(t)+g(t)|\xi|^2} & \dfrac{|\xi|^2}{b(t)+g(t)|\xi|^2}
\end{array} \right) \\
& \qquad + \exp \bigg( -\int_{s}^{t} \big( b(\tau)+g(\tau)|\xi|^2 \big)d\tau \bigg)
\left( \begin{array}{cc}
0 & 0 \\
0 & 1
\end{array} \right)
\end{align*}
for all $t\geq s$ and $(t,\xi), (s,\xi)\in\Zell(N)$.
\end{proposition}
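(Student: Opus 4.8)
The plan is to follow, step by step, the proof of Proposition~\ref{Prop:Effective-Integrable_IncreasingZell} in Section~\ref{Subsection_Effective_Integrable_Zell}: starting from the raw estimate~\eqref{Eq:Overdamping-Integrable_Transf-back} for $E_{\text{ell}}$, the second row is sharpened by a Duhamel argument, and the only substantive change is that the step where condition \textbf{(EF)} ($|b'|\le ab^2$) was used must now be handled by the explicit identities $b(t)=e^t$, $g(t)=\tfrac12e^{-t}$, $b'(t)=b(t)$, $g'(t)=-g(t)$, $b(t)g(t)=\tfrac12$. First I would introduce the multipliers $\Phi_k=\Phi_k(t,s,\xi)$, $k=1,2$, as the solutions of
$$\Phi_{tt}+|\xi|^2\Phi+\big(b(t)+g(t)|\xi|^2\big)\Phi_t=0,\qquad \Phi_k(s,s,\xi)=\delta_{1k},\quad \partial_t\Phi_k(s,s,\xi)=\delta_{2k},$$
and express the fundamental solution $E_{\text{ell}}=E_{\text{ell}}(t,s,\xi)$ for the micro-energy $(|\xi|\hat u,D_t\hat u)$ through $\Phi_1,\Phi_2$ and their $t$-derivatives, exactly as in Proposition~\ref{Prop:Effective-Integrable_IncreasingZell}. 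Since $\tfrac{1}{C(N)}-2<0$ for large $N$, I would set $C_N:=2-\tfrac{1}{C(N)}>0$ and $\Lambda(t,s,\xi):=\exp\big(-C_N\int_s^tb(\tau)\,d\tau\big)$; comparing moduli with~\eqref{Eq:Overdamping-Integrable_Transf-back} then gives
$$|\Phi_1|\lesssim |\xi|\,\Lambda,\quad |\Phi_2|\lesssim \Lambda,\quad |\partial_t\Phi_1|\lesssim |\xi|\big(b(t)+g(t)|\xi|^2\big)\Lambda,\quad |\partial_t\Phi_2|\lesssim \big(b(t)+g(t)|\xi|^2\big)\Lambda.$$
The bounds for $\Phi_1,\Phi_2$ already reproduce the first row of the claimed estimate, so only $\partial_t\Phi_1$ and $\partial_t\Phi_2$ need to be improved.

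Next I would put $\Psi_k:=\partial_t\Phi_k$, which solves $\partial_t\Psi_k+(b(t)+g(t)|\xi|^2)\Psi_k=-|\xi|^2\Phi_k$ with $\Psi_k(s,s,\xi)=\delta_{2k}$, hence
$$\Psi_1(t,s,\xi)=-|\xi|^2\int_s^t\frac{\lambda^2(\tau,\xi)}{\lambda^2(t,\xi)}\Phi_1(\tau,s,\xi)\,d\tau,\qquad \Psi_2(t,s,\xi)=\frac{\lambda^2(s,\xi)}{\lambda^2(t,\xi)}-|\xi|^2\int_s^t\frac{\lambda^2(\tau,\xi)}{\lambda^2(t,\xi)}\Phi_2(\tau,s,\xi)\,d\tau,$$
with $\lambda(t,\xi):=\exp\big(\tfrac12\int_0^t(b(\tau)+g(\tau)|\xi|^2)d\tau\big)$, so that the leading term of $\Psi_2$ equals $\lambda^2(s,\xi)/\lambda^2(t,\xi)=\exp\big(-\int_s^t(b(\tau)+g(\tau)|\xi|^2)d\tau\big)$, which is precisely the second summand attached to the $(2,2)$-entry in the statement. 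Into the Duhamel integrals I would insert the raw bounds for $\Phi_k$, write $\lambda^2(\tau,\xi)=(b(\tau)+g(\tau)|\xi|^2)^{-1}\partial_\tau\lambda^2(\tau,\xi)$ and integrate by parts; the boundary terms supply the gain $(b(t)+g(t)|\xi|^2)^{-1}$ relative to the raw bound (so $|\Psi_1|\lesssim |\xi|^3(b(t)+g(t)|\xi|^2)^{-1}\Lambda$, whence $|E_{\text{ell}}^{(21)}|=|\xi|^{-1}|\Psi_1|\lesssim |\xi|^2(b(t)+g(t)|\xi|^2)^{-1}\Lambda$, and likewise for $E_{\text{ell}}^{(22)}$), while the remaining integral reproduces the same quantity multiplied by a factor bounded by $\dfrac{C_Nb(\tau)}{b(\tau)+g(\tau)|\xi|^2}+\dfrac{b'(\tau)-g'(\tau)|\xi|^2}{(b(\tau)+g(\tau)|\xi|^2)^2}$. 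If this factor is $\le C_0<1$ uniformly in $\Zell(N)$, the remainder can be absorbed on the left, and, together with Lemma~\ref{Lemma_Overdamping-Integrable_Transf-back}, this assembles the stated matrix estimate.

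The main obstacle is precisely the bound on that correction factor. In Proposition~\ref{Prop:Effective-Integrable_IncreasingZell} the corresponding step rested on \textbf{(EF)}, but here $b'(\tau)/b^2(\tau)=e^{-\tau}$ is only $\le1$, so the earlier argument breaks down and the explicit forms of $b$ and $g$ must be used. Employing $b+g|\xi|^2\ge g(t)|\xi|^2$ together with $b(t)g(t)=\tfrac12$ and $g(t)|\xi|\ge N$ in $\Zell(N)$ (which, since $g(t)\le\tfrac12$, also forces $|\xi|\ge 2N$): for the first summand, $\dfrac{C_Nb(\tau)}{b(\tau)+g(\tau)|\xi|^2}\le\dfrac{C_Nb(\tau)}{g(\tau)|\xi|^2}=\dfrac{C_N}{2g^2(\tau)|\xi|^2}\le\dfrac{C_N}{2N^2}$; and since $b'(\tau)-g'(\tau)|\xi|^2=b(\tau)+g(\tau)|\xi|^2$ exactly, the second summand equals $(b(\tau)+g(\tau)|\xi|^2)^{-1}\le(g(\tau)|\xi|^2)^{-1}\le(2N^2)^{-1}$. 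Since $C_N\le 2$, this yields a correction factor $\le\tfrac{3}{2N^2}<1$ for $N$ large. These elementary inequalities replace \textbf{(EF)} at the single point where it entered, and all remaining manipulations are word-for-word those of Section~\ref{Subsection_Effective_Integrable_Zell}.
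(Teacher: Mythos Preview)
Your proof is correct and follows the same Duhamel-plus-integration-by-parts strategy as the paper's proof of Proposition~\ref{Prop:Overdamping-Integrable-Refined-Zell}. Your bound on the correction factor---exploiting the exact identity $b'-g'|\xi|^2=b+g|\xi|^2$ together with $b+g|\xi|^2\ge g|\xi|^2\ge 2N^2$ to obtain the uniform estimate $\tfrac{3}{2N^2}$---is slightly cleaner than the paper's (which bounds $b'/b^2=o(1)$ and $-g'/(g^2|\xi|^2)$ separately), but the overall argument is essentially the same.
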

\begin{proof} Let us assume that $\Phi_k=\Phi_k(t,s,\xi)$, $k=1,2,$ are solutions to the equation
\[ \Phi_{tt}+|\xi|^2\Phi+\big( b(t) + g(t)|\xi|^2 \big)\Phi_t=0 \]
with initial values $\Phi_k(s,s,\xi)=\delta_{1k}$ and $\partial_t\Phi_k(s,s,\xi)=\delta_{2k}$. Then, we have
\[ \left( \begin{array}{cc}
    |\xi|w(t,\xi) \\
	D_tw(t,\xi)
	\end{array} \right)=\left( \begin{array}{cc}
	\Phi_1(t,s,\xi) & i|\xi|\Phi_2(t,s,\xi) \\ [5pt]
	\dfrac{D_t\Phi_1(t,s,\xi)}{|\xi|} & iD_t\Phi_2(t,s,\xi)
	\end{array} \right) \left( \begin{array}{cc}
	|\xi|w(s,\xi) \\
	D_tw(s,\xi)
	\end{array} \right). \]
Our idea is to relate the entries of the above given estimates to the multipliers $\Phi_k=\Phi_k(t,s,\xi)$ and use Duhamel's formula to improve the estimates for the second row using estimates from the first one. Hence, if we compare with the estimates \eqref{Eq:Overdamping-Integrable_Transf-back}, then we obtain
\begin{align*}
|\Phi_1(t,s,\xi)| &\lesssim |\xi|\exp \Big( -C_N\int_{s}^{t}b(\tau)d\tau \Big), \\
|\Phi_2(t,s,\xi)| &\lesssim \exp\Big( -C_N\int_{s}^{t}b(\tau)d\tau \Big), \\
|\partial_t\Phi_1(t,s,\xi)| &\lesssim \frac{b(t)+g(t)|\xi|^2}{|\xi|}\exp \Big( -C_N\int_{s}^{t}b(\tau)d\tau \Big), \\
\big| \partial_t\Phi_2(t,s,\xi) \big| &\lesssim \big( b(t)+g(t)|\xi|^2 \big)\exp \Big( -C_N\int_{s}^{t}b(\tau)d\tau \Big).
\end{align*}
Let $\Psi_k=\Psi_k(t,s,\xi)=\partial_t\Phi_k(t,s,\xi), \,k=1,2$. Then, we obtain the equations of first order
\[ \partial_t\Psi_k+\big( b(t) + g(t)|\xi|^2 \big)\Psi_k=-|\xi|^2\Phi_k, \quad \Psi_k(s,s,\xi)=\delta_{2k}. \]
Standard calculations lead to
\begin{align*}
\Psi_1(t,s,\xi) &= -|\xi|^2\int_{s}^{t}\frac{\lambda^2(\tau,\xi)}{\lambda^2(t,\xi)}\Phi_1(\tau,s,\xi)d\tau, \\
\Psi_2(t,s,\xi) &= \frac{\lambda^2(s,\xi)}{\lambda^2(t,\xi)}-|\xi|^2\int_{s}^{t}\frac{\lambda^2(\tau,\xi)}{\lambda^2(t,\xi)}\Phi_2(\tau,s,\xi)d\tau,
\end{align*}
where $\lambda=\lambda(t,\xi)=\exp\Big( \dfrac{1}{2}\displaystyle\int_{0}^{t}\big( b(\tau)+g(\tau)|\xi|^2 \big)d\tau \Big)$.

If we are able to derive the desired estimate for $|\Psi_1(t,s,\xi)|$, then we conclude immediately the desired estimate for $|\Psi_2(t,s,\xi)|$. Using the estimates for $|\Phi_1(t,s,\xi)|$ and applying partial integration we get
\begin{align*}
|\Psi_1(t,s,\xi)| &\lesssim \frac{|\xi|^3}{\lambda^2(t,\xi)}\int_{s}^{t}\lambda^2(\tau,\xi)\exp \Big( -C_N\int_{s}^{\tau} b(\theta)d\theta \Big)d\tau \\
& \lesssim \frac{|\xi|^3}{\lambda^2(t,\xi)}\int_{s}^{t}\frac{1}{b(\tau)+g(\tau)|\xi|^2}\exp \Big( -C_N\int_{s}^{\tau}b(\theta)d\theta \Big)\partial_\tau\lambda^2(\tau,\xi)d\tau \\
& \lesssim \frac{|\xi|^3}{\lambda^2(t,\xi)}\bigg( \lambda^2(\tau,\xi)\frac{1}{b(\tau)+g(\tau)|\xi|^2}\exp \Big( -C_N\int_{s}^{\tau}b(\theta)d\theta \Big) \bigg)\Big|_s^t \\
& \,\,\,\, + \frac{|\xi|^3}{\lambda^2(t,\xi)}\int_{s}^{t}\lambda^2(\tau,\xi)\bigg( \underbrace{\frac{b'(\tau)-g'(\tau)|\xi|^2}{\big( b(\tau)+g(\tau)|\xi|^2 \big)^2}+\frac{C_Nb(\tau)}{b(\tau)+g(\tau)|\xi|^2}}_{\lesssim C(\tau)\leq C_0<1} \bigg)\exp\Big( -C_N\int_{s}^{\tau}b(\theta)d\theta \Big)d\tau \\
& \lesssim \frac{|\xi|^3}{b(t)+g(t)|\xi|^2} \exp\Big( -C_N\int_{s}^{t}b(\tau)d\tau \Big)-\frac{|\xi|^3}{b(s)+g(s)|\xi|^2}\frac{\lambda^2(s,\xi)}{\lambda^2(t,\xi)},
\end{align*}
where we have used
\begin{align*}
\frac{b'(\tau)-g'(\tau)|\xi|^2}{\big( b(\tau)+g(\tau)|\xi|^2 \big)^2}+\frac{C_Nb(\tau)}{b(\tau)+g(\tau)|\xi|^2} &\leq \frac{b'(\tau)}{b^2(\tau)} + \frac{-g'(\tau)|\xi|^2}{g^2(\tau)|\xi|^4} + \frac{C_Nb(\tau)}{b(\tau)} \\
& = o(1) + \frac{-g'(\tau)}{g^2(\tau)|\xi|^2} + C_N \leq o(1) + \frac{1}{N^2}\big( -g'(\tau) \big) < 1,
\end{align*}
where we have employed $|b'(t)|=o(b^2(t))$ and $g(t)|\xi|\geq N$ due to the definition of $\Zell(N)$. So, we find
\[ |\Phi_t^1(t,s,\xi)| \lesssim \frac{|\xi|^3}{b(t)+g(t)|\xi|^2} \exp\Big( -C_N\int_{s}^{t}b(\tau)d\tau \Big). \]
Similarly, we can estimate $|\Psi_2(t,s,\xi)|$ in the following way:
\begin{align*}
|\Psi_2(t,s,\xi)| & \lesssim  \frac{\lambda^2(s,\xi)}{\lambda^2(t,\xi)}+\frac{|\xi|^2}{\lambda^2(t,\xi)}\int_{s}^{t}\lambda^2(\tau,\xi)\exp \Big( -C_N\int_{s}^{\tau}b(\theta)d\theta \Big)d\tau \\
& \lesssim \frac{\lambda^2(s,\xi)}{\lambda^2(t,\xi)}+\frac{|\xi|^2}{b(t)+g(t)|\xi|^2}\exp \Big( -C_N\int_{s}^{t}b(\tau)d\tau \Big).
\end{align*}
Summarizing all desired estimates are proved.
\end{proof}
Taking account of the representation of solutions from the previous corollary with $s=0$ and the refined estimates from Proposition \ref{Prop:Overdamping-Integrable-Refined-Zell}, it follows
\begin{align*}
|\xi||\hat{u}(t,\xi)| &\lesssim \exp\Big( -C_N\int_0^t b(\tau)d\tau \Big)\big( |\xi|^2|\hat{u}_0(\xi)| + |\xi||\hat{u}_1(\xi)| \big), \\
|\hat{u}_t(t,\xi)| &\lesssim \exp\Big( -C_N\int_0^t b(\tau)d\tau \Big)\big( |\xi|^2|\hat{u}_0(\xi)| + |\xi||\hat{u}_1(\xi)| \big) \\
& \qquad + \exp\Big( -\int_0^t \big( b(\tau)+g(\tau)|\xi|^2 \big)d\tau \Big)|\xi||\hat{u}_1(\xi)| \\
& \lesssim \exp\Big( -C_N\int_0^t b(\tau)d\tau \Big)\big( |\xi|^2|\hat{u}_0(\xi)| + |\xi||\hat{u}_1(\xi)| \big).
\end{align*}
\begin{corollary} \label{Cor:Overdamping-Integrable_IncreasingZell}
The following estimates hold in $\Zell(N)$ for all $t\in[0,t_{\xi,3}]$:
\begin{align*}
|\xi|^{|\beta|}|\hat{u}(t,\xi)| &\lesssim \exp\Big( -C_N\int_0^t b(\tau)d\tau \Big)\big( |\xi|^{|\beta|+1}|\hat{u}_0(\xi)| + |\xi|^{|\beta|}|\hat{u}_1(\xi)| \big)\quad \mbox{for} \quad |\beta|\geq0, \\
|\xi|^{|\beta|}|\hat{u}_t(t,\xi)| &\lesssim \exp\Big( -C_N\int_0^t b(\tau)d\tau \Big)\big( |\xi|^{|\beta|+2}|\hat{u}_0(\xi)| + |\xi|^{|\beta|+1}|\hat{u}_1(\xi)| \big) \quad \mbox{for} \quad |\beta|\geq0.
\end{align*}
\end{corollary}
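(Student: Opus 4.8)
The plan is to obtain Corollary~\ref{Cor:Overdamping-Integrable_IncreasingZell} as a short bookkeeping deduction from the refined fundamental-solution estimate of Proposition~\ref{Prop:Overdamping-Integrable-Refined-Zell}; all the analytic content is already in place there and in Proposition~\ref{Prop:Overdamping-Integrable-Estimates-Zell}. First I would use the representation $U(t,\xi)=E_{\text{ell}}(t,0,\xi)U(0,\xi)$ for the micro-energy $U=(|\xi|\hat u,D_t\hat u)^{\mathrm T}$, valid for $(t,\xi)\in\Zell(N)$ with $0\le t\le t_{\xi,4}$ (the point $(0,\xi)$ lies in $\Zell(N)$ once $|\xi|\ge N/g(0)$), together with $U(0,\xi)=(|\xi|\hat u_0(\xi),\hat u_1(\xi))^{\mathrm T}$. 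Reading the matrix bound of Proposition~\ref{Prop:Overdamping-Integrable-Refined-Zell} against this initial vector gives the two scalar estimates
\[ |\xi||\hat u(t,\xi)|\lesssim \exp\Big(-C_N\int_0^t b(\tau)\,d\tau\Big)\big(|\xi|^2|\hat u_0(\xi)|+|\xi||\hat u_1(\xi)|\big) \]
and
\[ |\hat u_t(t,\xi)|\lesssim \exp\Big(-C_N\int_0^t b(\tau)\,d\tau\Big)\frac{|\xi|^2}{b(t)+g(t)|\xi|^2}\big(|\xi||\hat u_0(\xi)|+|\hat u_1(\xi)|\big)+\exp\Big(-\int_0^t\big(b(\tau)+g(\tau)|\xi|^2\big)\,d\tau\Big)|\xi||\hat u_1(\xi)|, \]
which are exactly the displays written just above the statement.

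Next I would absorb the two remainder factors. On the one hand $\dfrac{|\xi|^2}{b(t)+g(t)|\xi|^2}\le|\xi|$, so the first term in the $\hat u_t$ estimate already has the admissible shape $\exp(-C_N\int_0^t b)\,(|\xi|^2|\hat u_0|+|\xi||\hat u_1|)$. On the other hand, the defining inequality of $\Zell(N)$ gives $g(t)|\xi|\ge N$, and since $b(t)g(t)=\tfrac12$ in this model this forces $|\xi|\ge 2Nb(t)$, hence $g(t)|\xi|^2\ge 2N^2 b(t)$; as $C_N=2-\big(8N^2\sqrt{1-3/N^2}\big)^{-1}<2<1+2N^2$ for $N$ large, it follows that
\[ \exp\Big(-\int_0^t\big(b(\tau)+g(\tau)|\xi|^2\big)\,d\tau\Big)\le \exp\Big(-(1+2N^2)\int_0^t b(\tau)\,d\tau\Big)\le \exp\Big(-C_N\int_0^t b(\tau)\,d\tau\Big), \]
so the second remainder term is also dominated. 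Combining, $|\hat u_t(t,\xi)|\lesssim \exp(-C_N\int_0^t b)\,(|\xi|^2|\hat u_0|+|\xi||\hat u_1|)$, and dividing the $\hat u$-estimate by $|\xi|$ puts it in the same form. Multiplying both by $|\xi|^{|\beta|}$ for arbitrary $|\beta|\ge 0$ yields precisely the two inequalities of the Corollary. Since the energy $\mathcal E$ is non-increasing on the adjacent first reduced zone $\Zred(N,\varepsilon_1)$, the bound propagates without loss from $t_{\xi,4}$ up to $t_{\xi,3}$, which accounts for the stated range $t\in[0,t_{\xi,3}]$.

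I do not expect any real obstacle here: the only things to check are the elementary inequalities $\dfrac{|\xi|^2}{b+g|\xi|^2}\le|\xi|$, the zone-driven lower bound $g(t)|\xi|^2\gtrsim N^2 b(t)$, and $C_N<1+2N^2$ — all immediate. Everything substantial, namely the two-step diagonalization in $\Zell(N)$, the embedding $S_{\text{ell}}^0\{-1,2\}\hookrightarrow L^\infty_\xi L^1_t(\Zell(N))$, the Neumann-series bound $|\mathcal Q_{\text{ell}}|\lesssim|\xi|$, and the Duhamel bootstrap improving the bottom row of $E_{\text{ell}}$, has already been carried out in Propositions~\ref{Prop:Overdamping-Integrable-Estimates-Zell} and~\ref{Prop:Overdamping-Integrable-Refined-Zell}. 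So the remaining task is purely the homogeneity rescaling by $|\xi|^{|\beta|}$ and the two absorptions above.
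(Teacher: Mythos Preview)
Your proposal is correct and follows essentially the same approach as the paper: read off the two scalar estimates from the refined matrix bound of Proposition~\ref{Prop:Overdamping-Integrable-Refined-Zell} applied to $U(0,\xi)=(|\xi|\hat u_0,D_t\hat u|_{t=0})^{\mathrm T}$, use $\dfrac{|\xi|^2}{b(t)+g(t)|\xi|^2}\lesssim|\xi|$, and absorb the remaining $\exp\big(-\int_0^t(b+g|\xi|^2)\big)$ term via $g(\tau)|\xi|^2\ge 2N^2 b(\tau)$ in $\Zell(N)$. This is exactly what the paper does in the display immediately preceding the Corollary.

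One minor remark: your closing sentence about propagating ``without loss'' through $\Zred^1(N,\varepsilon_1)$ up to $t_{\xi,3}$ is unnecessary and not quite right. The energy argument in the reduced zone would only freeze the bound at its $t_{\xi,4}$ value, not extend the decay $\exp(-C_N\int_0^t b)$. But this does not matter here: the Corollary is stated \emph{in} $\Zell(N)$, and since $g(t)|\xi|\ge N>\varepsilon_1$ there, every point of $\Zell(N)$ automatically satisfies $t\le t_{\xi,4}<t_{\xi,3}$, so the range $[0,t_{\xi,3}]$ is redundant rather than an extension. The paper itself uses the Corollary only for $t\le t_{\xi,4}$ in the subsequent gluing.
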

\subsubsection*{Considerations in the first reduced zone $\Zred^1(N,\varepsilon_1)$ and the second reduced zone $\Zred^2(\varepsilon_2,\varepsilon_3)$} \label{Subsection_Overdamping_Integrable_Zred}
Analogously to Section \ref{Subsection_Effective_Integrable_Zred}, we may conclude the following estimates in $\Zred^1(N,\varepsilon_1)$ and $\Zred^2(\varepsilon_2,\varepsilon_3)$.
\begin{corollary} \label{Cor:Overdamping-Integrable-Zred}
The following estimates hold with $t_{\xi,4}\leq s\leq t\leq t_{\xi,3}$ and $(s,\xi), (t,\xi) \in \Zred^1(N,\varepsilon_1)$, and with $t_{\xi,2}\leq s\leq t\leq t_{\xi,1}$ and $(s,\xi), (t,\xi) \in \Zred^2(\varepsilon_2,\varepsilon_3)$:
\begin{align*}
|\xi|^{|\beta|}|\hat{u}(t,\xi)| &\lesssim |\xi|^{|\beta|}|\hat{u}(s,\xi)| + |\xi|^{|\beta|-1}|\hat{u}_t(s,\xi)| \quad \mbox{for} \quad |\beta| \geq 1, \\
|\xi|^{|\beta|}|\hat{u}_t(t,\xi)| &\lesssim |\xi|^{|\beta|+1}|\hat{u}(s,\xi)| + |\xi|^{|\beta|}|\hat{u}_t(s,\xi)| \quad \mbox{for} \quad |\beta| \geq 0.
\end{align*}
\end{corollary}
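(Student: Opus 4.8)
The plan is to establish Corollary~\ref{Cor:Overdamping-Integrable-Zred} by the same elementary hyperbolic energy argument employed for the reduced zone in the effective case (Section~\ref{Subsection_Effective_Integrable_Zred}), which transfers verbatim to the present situation: the only structural input it requires is that the total damping coefficient $b(t)+g(t)|\xi|^2$ be nonnegative, and this is immediate for $b(t)=e^t$ and $g(t)=\frac{1}{2}e^{-t}$.

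First I would observe that in both $\Zred^1(N,\varepsilon_1)$ and $\Zred^2(\varepsilon_2,\varepsilon_3)$ the appropriate form of the equation is just the Fourier-transformed model \eqref{MainEquationFourier},
\[ D_t^2\hat{u} - |\xi|^2\hat{u} - i\big( b(t) + g(t)|\xi|^2 \big)D_t\hat{u} = 0, \]
with micro-energy $U=(|\xi|\hat{u},D_t\hat{u})^{\text{T}}$, and I would introduce the energy
\[ \mathcal{E}(t,\xi) := \frac{1}{2}\big( |\xi|^2|\hat{u}(t,\xi)|^2 + |\hat{u}_t(t,\xi)|^2 \big). \]
Differentiating along solutions and using the equation gives
\[ \frac{d}{dt}\mathcal{E}(t,\xi) = -\big( b(t)+g(t)|\xi|^2 \big)\,|\hat{u}_t(t,\xi)|^2 \le 0, \]
so $\mathcal{E}(\cdot,\xi)$ is nonincreasing and $\mathcal{E}(t,\xi)\le\mathcal{E}(s,\xi)$ for all $s\le t$; this covers both $t_{\xi,4}\le s\le t\le t_{\xi,3}$ inside $\Zred^1(N,\varepsilon_1)$ and $t_{\xi,2}\le s\le t\le t_{\xi,1}$ inside $\Zred^2(\varepsilon_2,\varepsilon_3)$.

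From this I would deduce, exactly as in the effective case,
\[ |\xi|\,|\hat{u}(t,\xi)| \le \sqrt{\mathcal{E}(s,\xi)} \le |\xi|\,|\hat{u}(s,\xi)| + |\hat{u}_t(s,\xi)|, \qquad |\hat{u}_t(t,\xi)| \le \sqrt{\mathcal{E}(s,\xi)} \le |\xi|\,|\hat{u}(s,\xi)| + |\hat{u}_t(s,\xi)|, \]
and then multiply the first inequality by $|\xi|^{|\beta|-1}$ (requiring $|\beta|\ge1$) and the second by $|\xi|^{|\beta|}$ (for $|\beta|\ge0$) to arrive at precisely the two asserted estimates. Since the argument is global in time and uses no regularity or decay assumption on $b$ or $g$, there is no real obstacle; the only item worth making explicit is the positivity $b(t)+g(t)|\xi|^2>0$ entering the sign computation above, together with the routine check that each reduced zone is crossed with $t$ increasing, so that the monotone energy may legitimately be compared between the boundary time $s$ and the interior time $t$.
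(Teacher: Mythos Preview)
Your proof is correct and follows exactly the same approach as the paper, which simply states that the estimates are obtained ``analogously to Section~\ref{Subsection_Effective_Integrable_Zred}'': the monotone hyperbolic energy $\mathcal{E}(t,\xi)=\tfrac12(|\xi|^2|\hat u|^2+|\hat u_t|^2)$ with $\tfrac{d}{dt}\mathcal{E}=-(b+g|\xi|^2)|\hat u_t|^2\le0$ immediately yields the claimed bounds in both reduced zones.
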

\subsubsection*{Considerations in the dissipative zone $\Zdiss(\varepsilon_3)$} \label{Sect-Overdamping-Integrable-Zdiss}
From the definition of $\Zdiss(\varepsilon_3)$, we have $g(t)|\xi|\leq \varepsilon_3$. So, due to the fact that $b(t)g(t)=1/2$
\[  g(t)|\xi|^2 \leq \frac{\varepsilon_3^2}{g(t)} = \frac{\varepsilon_3^2b(t)}{b(t)g(t)} = 2\varepsilon_3^2b(t). \]
Therefore, as we have done in Section \ref{Subsection_Effective_Integrable_Zdiss} we obtain that $b(t)\leq b(t)+g(t)|\xi|^2 \leq (1+2\varepsilon_3^2)b(t)$ and, this allows us to follow the same approach. We have
\begin{equation*} \label{Eq:Overdamping-integrable-EM1-Zdiss}
\hat{u}_{tt} + |\xi|^2 \hat{u} + \tilde{b}(t,\xi)\hat{u}_t = 0, \,\,\,\,\,\, \mbox{where}\,\,\,\,\,\, \tilde b(t,\xi) = b(t)+g(t)|\xi|^2.
\end{equation*}
Using $b(t) \leq b(t)+g(t)|\xi|^2 \leq (1+\kappa)b(t)$, then the above dissipative zone is divided into elliptic zone and reduced zone as follows:
\begin{align*}
\Zell(0,\varepsilon) &= \Big\{ (t,\xi): 0<|\xi|\leq \frac{b(t)}{2}\sqrt{1-\varepsilon^2} \Big\}, \\
\Zred(0,\varepsilon) &= \Big\{ (t,\xi): \frac{b(t)}{2}\sqrt{1-\varepsilon^2} \leq  |\xi| \leq \frac{b(t)}{2} \Big\}.
\end{align*}
The separating line $t_1(|\xi|)$ solves $|\xi| = \dfrac{b(t)}{2}\sqrt{1-\varepsilon^2}$.

From Corollary \ref{Cor:Effective-Integrable-EM10-Zdiss} and due to $\int_0^\infty\frac{1}{b(\tau)}d\tau\in L^1([0,\infty)$, we have the  following estimates in $\Zell(0,\varepsilon)$.
\begin{corollary} \label{Cor:Overdamping-Integrable-EM10-Zdiss}
The following estimates hold for all $t\in[0,t_1]$ in $\Zell(0,\varepsilon)$:
\begin{align*}
|\xi|^{|\beta|}|\hat{u}(t,\xi)| &\lesssim |\xi|^{|\beta|}|\hat{u}_0(\xi)| + |\xi|^{|\beta|-1|}|\hat{u}_1(\xi)| \quad \mbox{for} \quad |\beta|\geq1, \\
|\xi|^{|\beta|}|\hat{u}_t(t,\xi)| &\lesssim |\xi|^{|\beta|+1}|\hat{u}_0(\xi)| + |\xi|^{|\beta|}|\hat{u}_1(\xi)| \quad \mbox{for} \quad |\beta|\geq0.
\end{align*}
\end{corollary}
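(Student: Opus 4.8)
The plan is to deduce Corollary~\ref{Cor:Overdamping-Integrable-EM10-Zdiss} directly from the energy--multiplier estimate of Corollary~\ref{Cor:Effective-Integrable-EM10-Zdiss}, exploiting that here the coefficient $b=b(t)$ falls under the over--damping hypothesis \textbf{(OD1)}, so that $1/b\in L^1([0,\infty))$. The conceptual point is that the parabolic--type gain $\exp\big(-c|\xi|^2\int b^{-1}\big)$ produced by the method of \cite{JuniordaLuzNoneffective} collapses to a harmless $O(1)$ factor as soon as $b^{-1}$ is integrable, so that in the dissipative zone one gets neither decay nor growth of the Sobolev norms.

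First I would recall the reduction already performed in Section~\ref{Sect-Overdamping-Integrable-Zdiss}: since $b(t)g(t)=1/2$ and $g(t)|\xi|\le\varepsilon_3$ in $\Zdiss(\varepsilon_3)$, one has $g(t)|\xi|^2\le 2\varepsilon_3^2 b(t)$, whence $b(t)\le\tilde b(t,\xi):=b(t)+g(t)|\xi|^2\le(1+2\varepsilon_3^2)b(t)$; thus \eqref{MainEquationFourier} becomes a damped wave equation with coefficient $\tilde b(t,\xi)\approx b(t)$, and $\Zdiss(\varepsilon_3)$ splits into $\Zell(0,\varepsilon)=\{|\xi|\le\tfrac{b(t)}{2}\sqrt{1-\varepsilon^2}\}$ and $\Zred(0,\varepsilon)=\{\tfrac{b(t)}{2}\sqrt{1-\varepsilon^2}\le|\xi|\le\tfrac{b(t)}{2}\}$ exactly as in Section~\ref{Subsection_Effective_Integrable_Zdiss}. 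On $\Zell(0,\varepsilon)$ I would check that $K(t,\xi)=|\xi|^2/b(t)$ is a multiplier of energy in the sense of Proposition~\ref{Prop:Effective-integrable-EM5-Zdiss}: property~1 follows from the monotonicity of $b(t)=e^t$; the inequality $|\xi|\le\tfrac{b(t)}{2}\sqrt{1-\varepsilon^2}$ together with $b\le\tilde b\le(1+2\varepsilon_3^2)b$ gives properties~2 and~3; and for property~4 one replaces condition~\textbf{(EF)} by the explicit identity $b'(t)=b(t)$, so that
\begin{equation*}
\Big(\tfrac{d}{dt}K(t,\xi)\Big)^2=\Big(\tfrac{b'(t)}{b^2(t)}\Big)^2|\xi|^4=e^{-2t}|\xi|^4\le K(t,\xi)\,b(t)\,|\xi|^2\le\tilde b(t,\xi)K(t,\xi)|\xi|^2 .
\end{equation*}
Proposition~\ref{Prop:Effective-integrable-EM5-Zdiss} and Lemma~\ref{Lemma:Effective-integrable-EM7-Zdiss} then apply verbatim and yield precisely the statement of Corollary~\ref{Cor:Effective-Integrable-EM10-Zdiss}, namely
\begin{equation*}
|\xi|^{|\beta|}|\hat{u}(t,\xi)|\lesssim\exp\Big(-C_1|\xi|^2\int_{\bar{t}}^t\tfrac{1}{b(\tau)}\,d\tau\Big)\big(|\xi|^{|\beta|}|\hat{u}(\bar{t},\xi)|+|\xi|^{|\beta|-1}|\hat{u}_t(\bar{t},\xi)|\big)
\end{equation*}
and the analogous bound for $\hat{u}_t$, where $\bar{t}=\bar{t}(\xi)$ is the entry time of $(\cdot,\xi)$ into $\Zell(0,\varepsilon)$ (with $\bar{t}=0$ for those frequencies already lying in $\Zell(0,\varepsilon)$ at $t=0$).

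Next I would invoke \textbf{(OD1)}: since $1/b(\tau)=e^{-\tau}$, one has $\int_{\bar{t}}^{t}b(\tau)^{-1}\,d\tau\le\int_0^\infty e^{-\tau}\,d\tau=1$ uniformly, hence $\exp\big(-C_1|\xi|^2\int_{\bar{t}}^t b^{-1}\big)\le1$ and the two displays above reduce to $|\xi|^{|\beta|}|\hat{u}(t,\xi)|\lesssim|\xi|^{|\beta|}|\hat{u}(\bar{t},\xi)|+|\xi|^{|\beta|-1}|\hat{u}_t(\bar{t},\xi)|$ together with the corresponding estimate for $\hat{u}_t$. It then remains to pass from $\bar{t}$ back to $t=0$, and here I would use the global monotonicity of the total energy $\mathcal E(t,\xi)=\tfrac12(|\xi|^2|\hat{u}|^2+|\hat{u}_t|^2)$: multiplying \eqref{MainEquationFourier} by $\overline{\hat{u}_t}$ and taking real parts gives $\tfrac{d}{dt}\mathcal E(t,\xi)=-\tilde b(t,\xi)|\hat{u}_t(t,\xi)|^2\le0$ for every $(t,\xi)$, because $b,g>0$. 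Therefore $\mathcal E(\bar{t},\xi)\le\mathcal E(0,\xi)$ no matter which sub-zones $(\cdot,\xi)$ crosses before $\bar{t}$, so $|\xi||\hat{u}(\bar{t},\xi)|+|\hat{u}_t(\bar{t},\xi)|\lesssim|\xi||\hat{u}_0(\xi)|+|\hat{u}_1(\xi)|$. Substituting this and redistributing powers of $|\xi|$ gives the estimates claimed in Corollary~\ref{Cor:Overdamping-Integrable-EM10-Zdiss} on the whole interval $[0,t_1]$.

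The main obstacle I anticipate is a bookkeeping one rather than an analytic one: one must verify that the energy--multiplier apparatus of \cite{JuniordaLuzNoneffective} still functions once condition~\textbf{(EF)} is discarded in favour of the explicit expressions $b'(t)=b(t)$ and $g'(t)=-g(t)$ (this amounts to re-running the property checks of Proposition~\ref{Prop:Effective-integrable-EM6-Zdiss} for $K=|\xi|^2/b$ on $\Zell(0,\varepsilon)$ with the new structural data), and one must ensure the entry time $\bar{t}(\xi)$ is handled uniformly in $\xi$ — which, as noted, is guaranteed by the a priori bound $\mathcal E(\bar{t},\xi)\le\mathcal E(0,\xi)$ that holds throughout the extended phase space. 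Conceptually the corollary simply records that over-damping, through $1/b\in L^1$, neutralizes the parabolic gain and leaves the dissipative-zone contribution bounded.
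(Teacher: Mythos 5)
Your proof is correct and takes the same route as the paper: invoke the energy-multiplier estimate of Corollary~\ref{Cor:Effective-Integrable-EM10-Zdiss} on $\Zell(0,\varepsilon)$ and observe that, by \textbf{(OD1)}, the exponential factor $\exp\big(-C_1|\xi|^2\int_0^t b(\tau)^{-1}\,d\tau\big)$ is $O(1)$ and may be dropped. You also rightly make explicit a point the paper glosses over in its one-line citation: Corollary~\ref{Cor:Effective-Integrable-EM10-Zdiss} was proved under \textbf{(EF)} and \textbf{(B3)}, neither of which holds for $b(t)=e^t$ (for instance $|b'(t)|/b(t)=1\not\lesssim(1+t)^{-1}$), so the four multiplier properties of Proposition~\ref{Prop:Effective-integrable-EM5-Zdiss} must be re-verified directly for this example, and your check of property~4 with $\lambda_3=1$ via $b'(t)/b^2(t)=e^{-t}\le 1$, together with the global monotonicity of $\mathcal{E}(t,\xi)$ to pass from the entry time $\bar t(\xi)$ back to $t=0$, does exactly that.
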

Moreover, we have the following estimates in $\Zred(0,\varepsilon)$.
\begin{corollary} \label{Cor:Overdamping-Integrable-EM14-Zdiss}
The following estimates hold for all $t\in[t_{\xi,1},t_1]$ in $\Zred(0,\varepsilon)$:
\begin{align*}
|\xi|^{|\beta|}|\hat{u}(t,\xi)| &\lesssim \exp\Big( -C\int_{s}^tb(\tau)d\tau \Big)\Big( |\xi|^{|\beta|}|\hat{u}(s,\xi)| + |\xi|^{|\beta|-1|}|\hat{u}_t(s,\xi)| \Big) \quad \mbox{for} \quad |\beta|\geq1, \\
|\xi|^{|\beta|}|\hat{u}_t(t,\xi)| &\lesssim \exp\Big( -C\int_{s}^t b(\tau)d\tau \Big)\Big( |\xi|^{|\beta|+1}|\hat{u}(s,\xi)| + |\xi|^{|\beta|}|\hat{u}_t(s,\xi)| \Big) \quad \mbox{for} \quad |\beta|\geq0.
\end{align*}
\end{corollary}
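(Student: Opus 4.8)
The plan is to prove Corollary \ref{Cor:Overdamping-Integrable-EM14-Zdiss} along exactly the lines of the effective-damping dissipative-zone analysis in Section \ref{Subsection_Effective_Integrable_Zdiss}, the only genuinely new point being that the general assumptions \textbf{(B3)} and \textbf{(EF)} must be replaced by the explicit identities available here, $b(t)=e^t$ (so $b'(t)=b(t)$) and $b(t)g(t)=\tfrac12$. First I would recall the reduction already carried out: inside $\Zdiss(\varepsilon_3)$ one has $g(t)|\xi|\le\varepsilon_3$ and $b(t)g(t)=\tfrac12$, hence $g(t)|\xi|^2\le 2\varepsilon_3^2 b(t)$ and therefore $b(t)\le\tilde b(t,\xi):=b(t)+g(t)|\xi|^2\le(1+2\varepsilon_3^2)b(t)$; one studies the scalar equation $\hat u_{tt}+|\xi|^2\hat u+\tilde b(t,\xi)\hat u_t=0$ with energy $E(t,\xi)=\tfrac12\big(|\hat u_t|^2+|\xi|^2|\hat u|^2\big)$, which is non-increasing in $t$ by $\frac{d}{dt}E+\tilde b(t,\xi)|\hat u_t|^2=0$, and splits $\Zdiss(\varepsilon_3)$ into the sub-zones $\Zell(0,\varepsilon)$ and $\Zred(0,\varepsilon)$ of \eqref{Eq:Effective-integrable-EM01-Zdiss}--\eqref{Eq:Effective-integrable-EM02-Zdiss}, the present corollary concerning the latter.

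The core step is to verify that $K(t):=b(t)$ is a multiplier of energy on $\Zred(0,\varepsilon)$ in the sense of Proposition \ref{Prop:Effective-integrable-EM5-Zdiss}. Property (1) holds because $b=e^t$ is monotone, so the $\xi$-fibre of $\Zred(0,\varepsilon)$ is a time interval; property (2) is $K(t)=b(t)\le\tilde b(t,\xi)$; property (3) follows from $\tilde b(t,\xi)K(t)\le(1+2\varepsilon_3^2)b^2(t)\le\frac{4(1+2\varepsilon_3^2)}{1-\varepsilon^2}|\xi|^2$, using $|\xi|\ge\tfrac{b(t)}{2}\sqrt{1-\varepsilon^2}$; and property (4), which in the effective case used \textbf{(B3)}, is obtained here directly from $b'(t)=b(t)$: $\big(\tfrac{d}{dt}K(t)\big)^2=b^2(t)\lesssim b^4(t)\lesssim\tilde b(t,\xi)K(t)|\xi|^2$, the last step again by $|\xi|^2\gtrsim b^2(t)$ and $b(t)=e^t\ge1$. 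With this, Proposition \ref{Prop:Effective-integrable-EM5-Zdiss} gives $\int_s^t K(\tau)|\xi|^2|\hat u(\tau,\xi)|^2\,d\tau\lesssim E(s,\xi)$, while $\int_s^t K(\tau)|\hat u_t(\tau,\xi)|^2\,d\tau\le\int_s^t\tilde b(\tau,\xi)|\hat u_t(\tau,\xi)|^2\,d\tau\le E(s,\xi)$ from the energy identity, so $\int_s^t K(\tau)E(\tau,\xi)\,d\tau\lesssim E(s,\xi)$; Lemma \ref{Lemma:Effective-integrable-EM7-Zdiss} then yields $E(t,\xi)\lesssim\exp\big(-\tfrac1C\int_s^t b(\tau)\,d\tau\big)E(s,\xi)$ for $t_{\xi,1}\le s\le t\le t_1$, mirroring Propositions \ref{Prop:Effective-integrable-EM11-Zdiss}--\ref{Prop:Effective-integrable-EM12-Zdiss}.

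Finally I would convert this energy bound into the stated pointwise estimates: from the definition of $E$ one has $|\xi|\,|\hat u(t,\xi)|\le\sqrt{2E(t,\xi)}$ and $|\hat u_t(t,\xi)|\le\sqrt{2E(t,\xi)}$, together with $\sqrt{2E(s,\xi)}\le|\xi|\,|\hat u(s,\xi)|+|\hat u_t(s,\xi)|$; multiplying by $|\xi|^{|\beta|-1}$ (for the $\hat u$-estimate, $|\beta|\ge1$) respectively $|\xi|^{|\beta|}$ (for the $\hat u_t$-estimate, $|\beta|\ge0$) and inserting the decay factor gives precisely Corollary \ref{Cor:Overdamping-Integrable-EM14-Zdiss}, exactly as Corollary \ref{Cor:Effective-Integrable-EM14-Zdiss} was obtained. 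I do not expect a real obstacle: the argument is a direct adaptation. The only points needing attention are (i) replacing condition \textbf{(B3)} by the explicit identity $b'(t)=b(t)$ in property (4) of the multiplier, which is harmless since $b(t)=e^t$ grows fast enough that $b^2(t)\lesssim b^4(t)$ on all of $[0,\infty)$, and (ii) retaining the restriction $t\ge t_0$ under which $\tilde b(t,\xi)\approx b(t)$ holds uniformly on the zone, so that every constant is $\xi$-independent. Combined with the hyperbolic-, elliptic- and reduced-zone estimates already established, this corollary supplies the last input for the gluing procedure that will conclude the proof of Theorem \ref{Theorem_Overdamping-Decreasing_Integrable}.
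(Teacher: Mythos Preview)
Your proposal is correct and follows essentially the same route as the paper. The paper states this corollary without proof, merely indicating that one follows the approach of Section~\ref{Subsection_Effective_Integrable_Zdiss}; you have spelled out exactly that adaptation, correctly identifying that the only nontrivial modification is in verifying property~(4) of Proposition~\ref{Prop:Effective-integrable-EM5-Zdiss}, where the general hypothesis~\textbf{(B3)} is replaced by the explicit identity $b'(t)=b(t)$ together with $b(t)=e^t\ge1$.
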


\subsubsection*{Conclusion} \label{Sec:Overdamping-Decaying-Conclusions}
From the statements of Proposition \ref{Prop:Overdamping-Integrable-Zhyp} and Corollaries \ref{Cor:Overdamping-Integrable_IncreasingZell}, \ref{Cor:Overdamping-Integrable-Zred}, \ref{Cor:Overdamping-Integrable-EM10-Zdiss}, and \ref{Cor:Overdamping-Integrable-EM14-Zdiss}, we derive our desired statements.\\

\noindent \underline{Small frequencies}:
\medskip

\noindent \textit{Case 1:} $t\leq t_1(|\xi|)$. Due to Corollary \ref{Cor:Overdamping-Integrable-EM10-Zdiss} we have the following estimates:
\begin{align*}
|\xi|^{|\beta|}|\hat{u}(t,\xi)| &\lesssim |\xi|^{|\beta|}|\hat{u}_0(\xi)| + |\xi|^{|\beta|-1|}|\hat{u}_1(\xi)|  \quad \mbox{for} \quad |\beta|\geq1, \\
|\xi|^{|\beta|}|\hat{u}_t(t,\xi)| &\lesssim |\xi|^{|\beta|+1}|\hat{u}_0(\xi)| + |\xi|^{|\beta|}|\hat{u}_0(\xi)| \quad \mbox{for} \quad |\beta|\geq0.
\end{align*}

\noindent \underline{Large frequencies}:
\medskip

\noindent \textit{Case 1:} $t\leq t_{\xi,4}$. In this case, we use Corollary \ref{Cor:Overdamping-Integrable_IncreasingZell}. We have
\begin{align*}
|\xi|^{|\beta|}|\hat{u}(t,\xi)| &\lesssim \exp\Big( -C_N\int_0^t b(\tau)d\tau \Big)\big( |\xi|^{|\beta|+1}|\hat{u}_0(\xi)| + |\xi|^{|\beta|}|\hat{u}_1(\xi)| \big) \quad \mbox{for} \quad |\beta| \geq 0, \\
|\xi|^{|\beta|}|\hat{u}_t(t,\xi)|& \lesssim \exp\Big( -C_N\int_0^t b(\tau)d\tau \Big) \big( |\xi|^{|\beta|+2}|\hat{u}_0(\xi)| + |\xi|^{|\beta|+1}|\hat{u}_1(\xi)| \big) \quad \mbox{for} \quad |\beta| \geq 0.
\end{align*}
\noindent \textit{Case 2:} $t_{\xi,4} \leq t \leq t_{\xi,3}$. In this case, we use first Corollary \ref{Cor:Overdamping-Integrable-Zred} with $s=t_{\xi,4}$ and, then the estimates from \textit{Case 1}. Then, we have the following estimates:
\begin{align*}
|\xi|^{|\beta|}|\hat{u}(t,\xi)| &\lesssim |\xi|^{|\beta|}|\hat{u}(t_{\xi,4},\xi)| + |\xi|^{|\beta|-1}|\hat{u}_t(t_{\xi,4},\xi)| \\
& \lesssim \exp\Big( -C_N\int_0^{t_{\xi,4}} b(\tau)d\tau \Big)\big( |\xi|^{|\beta|+1}|\hat{u}_0(\xi)| + |\xi|^{|\beta|}|\hat{u}_1(\xi)| \big), \\
|\xi|^{|\beta|}|\hat{u}_t(t,\xi)|& \lesssim  |\xi|^{|\beta|+1}|\hat{u}(t_{\xi,4},\xi)| + |\xi|^{|\beta|}|\hat{u}_t(t_{\xi,4},\xi)| \\
& \lesssim \exp\Big( -C_N\int_0^{t_{\xi,4}} b(\tau)d\tau \Big)\big( |\xi|^{|\beta|+2}|\hat{u}_0(\xi)| + |\xi|^{|\beta|+1}|\hat{u}_1(\xi)| \big).
\end{align*}
\noindent \textit{Case 3:} $t_{\xi,3}\leq t \leq t_{\xi,2}$. In this case, we use Proposition \ref{Prop:Overdamping-Integrable-Zhyp} for $s=t_{\xi,3}$ and, then the estimates from \textit{Case 2}. So, we have the following estimates:
\begin{align*}
|\xi|^{|\beta|}|\hat{u}(t,\xi)| &\lesssim \exp\Big( -\frac{1}{2}\int_{t_{\xi,3}}^t\big( b(\tau)+g(\tau)|\xi|^2 \big)d\tau \Big)\big( |\xi|^{|\beta|}|\hat{u}(t_{\xi,3},\xi)| + |\xi|^{|\beta|-1}|\hat{u}_t(t_{\xi,3},\xi)| \big) \\
& \lesssim \exp\Big( -\big( \frac{1}{2}+\varepsilon_2^2 \big)\int_{t_{\xi,3}}^t b(\tau)d\tau \Big)\exp\Big( -C_N\int_0^{t_{\xi,4}} b(\tau)d\tau \Big)\big( |\xi|^{|\beta|+1}|\hat{u}_0(\xi)| + |\xi|^{|\beta|}|\hat{u}_1(\xi)| \big) \\
& \lesssim \exp\Big( -\big( \frac{1}{2}+\varepsilon_2^2 \big)\int_{t_{\xi,4}}^t b(\tau)d\tau \Big)\exp\Big( -C_N\int_0^{t_{\xi,4}} b(\tau)d\tau \Big)\big( |\xi|^{|\beta|+1}|\hat{u}_0(\xi)| + |\xi|^{|\beta|}|\hat{u}_1(\xi)| \big) \\
& \lesssim \exp\Big( -C_{N,\varepsilon_2}\int_0^t b(\tau)d\tau \Big)\big( |\xi|^{|\beta|+1}|\hat{u}_0(\xi)| + |\xi|^{|\beta|}|\hat{u}_1(\xi)| \big), \\
|\xi|^{|\beta|}|\hat{u}_t(t,\xi)| &\lesssim \exp\Big( -\frac{1}{2}\int_{t_{\xi,3}}^t\big( b(\tau)+g(\tau)|\xi|^2 \big)d\tau \Big)\big( |\xi|^{|\beta|+2}|\hat{u}(t_{\xi,3},\xi)| + |\xi|^{|\beta|+1}|\hat{u}_t(t_{\xi,2},\xi)| \big) \\
& \lesssim \exp\Big( -\big( \frac{1}{2}+\varepsilon_2^2 \big)\int_{t_{\xi,3}}^t b(\tau)d\tau \Big)\exp\Big( -C_N\int_0^{t_{\xi,4}} b(\tau)d\tau \Big)\big( |\xi|^{|\beta|+2}|\hat{u}_0(\xi)| + |\xi|^{|\beta|+1}|\hat{u}_1(\xi)| \big) \\
& \lesssim \exp\Big( -\big( \frac{1}{2}+\varepsilon_2^2 \big)\int_{t_{\xi,4}}^t b(\tau)d\tau \Big)\exp\Big( -C_N\int_0^{t_{\xi,4}} b(\tau)d\tau \Big)\big( |\xi|^{|\beta|+2}|\hat{u}_0(\xi)| + |\xi|^{|\beta|+1}|\hat{u}_1(\xi)| \big) \\
& \lesssim \exp\Big( -C_{N,\varepsilon_2}\int_0^t b(\tau)d\tau \Big)\big( |\xi|^{|\beta|+2}|\hat{u}_0(\xi)| + |\xi|^{|\beta|+1}|\hat{u}_1(\xi)|  \big),
\end{align*}
where due to $\varepsilon_2\leq g(t)|\xi|\leq \varepsilon_1$ from the definition of $\Zhyp(\varepsilon_1,\varepsilon_2)$, we used
\[ -g(\tau)|\xi|^2\leq -\varepsilon_2|\xi| \leq -2\varepsilon_2^2b(\tau), \]
moreover, for $t_{\xi,4}\leq t_{\xi,3}\leq t \leq t_{\xi,2}$ we used
\begin{equation} \label{Eq:Overdamping-integrable-Gluing1}
\exp\Big( -\big( \frac{1}{2}+\varepsilon_2^2 \big)\int_{t_{\xi,3}}^t b(\tau)d\tau \Big) \leq \exp\Big( -\big( \frac{1}{2}+\varepsilon_2^2 \big)\int_{t_{\xi,4}}^t b(\tau)d\tau \Big).
\end{equation}
\noindent \textit{Case 4:} $t_{\xi,2} \leq t \leq t_{\xi,1}$. In this case, we use first Corollary \ref{Cor:Overdamping-Integrable-Zred} with $s=t_{\xi,2}$ and, then the estimates from \textit{Case 3:}. Then, we have the following estimates:
\begin{align*}
|\xi|^{|\beta|}|\hat{u}(t,\xi)| &\lesssim |\xi|^{|\beta|}|\hat{u}(t_{\xi,2},\xi)| + |\xi|^{|\beta|-1}|\hat{u}_t(t_{\xi,2},\xi)| \\
& \lesssim \exp\Big( -C_{N,\varepsilon_2}\int_0^{t_{\xi,2}} b(\tau)d\tau \Big)\big( |\xi|^{|\beta|+1}|\hat{u}_0(\xi)| + |\xi|^{|\beta|}|\hat{u}_1(\xi)| \big), \\
|\xi|^{|\beta|}|\hat{u}_t(t,\xi)|& \lesssim  |\xi|^{|\beta|+1}|\hat{u}(t_{\xi,2},\xi)| + |\xi|^{|\beta|}|\hat{u}_t(t_{\xi,2},\xi)| \\
& \lesssim \exp\Big( -C_{N,\varepsilon_2}\int_0^{t_{\xi,2}} b(\tau)d\tau \Big)\big( |\xi|^{|\beta|+2}|\hat{u}_0(\xi)| + |\xi|^{|\beta|+1}|\hat{u}_1(\xi)| \big).
\end{align*}
\noindent \textit{Case 5:} $t_{\xi,1}\leq t \leq t_1(|\xi|)$. In this case, we use Corollary \ref{Cor:Overdamping-Integrable-EM14-Zdiss} for $s=t_{\xi,1}$ and, then the estimates from \textit{Case 4}. Thus, we have
\begin{align*}
|\xi|^{|\beta|}|\hat{u}(t,\xi)| &\lesssim\exp\Big( -C\int_{t_{\xi,1}}^t b(\tau)d\tau \Big) \big( |\xi|^{|\beta|}|\hat{u}(t_{\xi,1},\xi)| + |\xi|^{|\beta|-1}|\hat{u}_t(t_{\xi,1},\xi)| \big) \\
& \lesssim \exp\Big( -C\int_{t_{\xi,1}}^t b(\tau)d\tau \Big)\exp\Big( -C_{N,\varepsilon_2}\int_0^{t_{\xi,2}} b(\tau)d\tau \Big)\big( |\xi|^{|\beta|+1}|\hat{u}_0(\xi)| + |\xi|^{|\beta|}|\hat{u}_1(\xi)| \big) \\
& \lesssim \exp\Big( -\tilde{C}_{N,\varepsilon_2}\int_{0}^t b(\tau)d\tau \Big)\big( |\xi|^{|\beta|+1}|\hat{u}_0(\xi)| + |\xi|^{|\beta|}|\hat{u}_1(\xi)| \big), \\
|\xi|^{|\beta|}|\hat{u}_t(t,\xi)| &\lesssim \exp\Big( -C\int_{t_{\xi,1}}^t b(\tau)d\tau \Big) \big( |\xi|^{|\beta|+1}|\hat{u}(t_{\xi,1},\xi)| + |\xi|^{|\beta|}|\hat{u}_t(t_{\xi,1},\xi)| \big) \\
& \lesssim \exp\Big( -C\int_{t_{\xi,1}}^t b(\tau)d\tau \Big)\exp\Big( -C_{N,\varepsilon_2}\int_0^{t_{\xi,2}} b(\tau)d\tau \Big)\big( |\xi|^{|\beta|+2}|\hat{u}_0(\xi)| + |\xi|^{|\beta|+1}|\hat{u}_1(\xi)| \big) \\
& \lesssim \exp\Big( -\tilde{C}_{N,\varepsilon_2}\int_{0}^t b(\tau)d\tau \Big)\big( |\xi|^{|\beta|+2}|\hat{u}_0(\xi)| + |\xi|^{|\beta|+1}|\hat{u}_1(\xi)| \big),
\end{align*}
where similarly as in \eqref{Eq:Overdamping-integrable-Gluing1}, we used
\[ \exp\Big( -C\int_{t_{\xi,1}}^t b(\tau)d\tau \Big) \leq \exp\Big( -C\int_{t_{\xi,2}}^t b(\tau)d\tau \Big). \]
\noindent \textit{Case 6:} $t \geq t_1(|\xi|)$. In this case, we use Corollary \ref{Cor:Overdamping-Integrable-EM10-Zdiss} with $s=t_1(|\xi|)$ and the estimates from \textit{Case 5}. We get
\begin{align*}
|\xi|^{|\beta|}|\hat{u}(t,\xi)| &\lesssim |\xi|^{|\beta|}|\hat{u}(t_1(|\xi|),\xi)| + |\xi|^{|\beta|-1|}|\hat{u}_t(t_1(|\xi|),\xi)| \\
& \lesssim \exp\Big( -\tilde{C}_{N,\varepsilon_2}\int_{0}^{t_1(|\xi|)} b(\tau)d\tau \Big)\big( |\xi|^{|\beta|+1}|\hat{u}_0(\xi)| + |\xi|^{|\beta|}|\hat{u}_1(\xi)| \big), \\
|\xi|^{|\beta|}|\hat{u}_t(t,\xi)| &\lesssim |\xi|^{|\beta|+1}|\hat{u}(t_1(|\xi|),\xi)| + |\xi|^{|\beta|}|\hat{u}_t(t_1(|\xi|),\xi)| \\
& \lesssim \exp\Big( -\tilde{C}_{N,\varepsilon_2}\int_{0}^{t_1(|\xi|)} b(\tau)d\tau \Big)\big( |\xi|^{|\beta|+2}|\hat{u}_0(\xi)| + |\xi|^{|\beta|+1}|\hat{u}_1(\xi)| \big).
\end{align*}
\subsubsection*{Proof of Theorem \ref{Theorem_Overdamping-Decreasing_Integrable}}
From the preceding estimates in the small and large frequencies, it is evident that the dominant contributions to the final estimates arise from the dissipative zone $\Zdiss(\varepsilon_3)$ and the elliptic zone $\Zell(N)$. These results together complete the proof of Theorem \ref{Theorem_Overdamping-Decreasing_Integrable}.

\subsubsection{The model with $b(t)=e^{e^t}$ and $g(t)=2e^{-t}$} \label{Subsec:Overdamping-Integrable-SubSec3}
In this case, we consider $b(t)=e^{e^t}$ and $g(t)=2e^{-t}$, which yield
\[ b(t)g(t) = 2e^{-t}e^{e^t}\geq2. \]
From \eqref{Eq:Effective-Overdamping-Sep-lines}, this implies that the separating lines are given by
\[ |\xi|_{1/2} = \frac{1}{g(t)} \pm i\frac{1}{g(t)}\sqrt{b(t)g(t)-1}. \]
However, since $|\xi|_{1/2}$ becomes complex-valued, it is no longer possible to define real-valued separating lines. Consequently, we consider the transformed equation
\begin{align*}
D_t^2w + \Big(-|\xi|^2 + \dfrac{b(t)g(t)}{2}|\xi|^2 + \dfrac{g(t)^2}{4}|\xi|^4 + \frac{g'(t)}{2}|\xi|^2 +  \dfrac{b(t)^2}{4} + \frac{b'(t)}{2} \Big)w=0.
\end{align*}
For all $(t,\xi)$ in the extended phase space, we have
\begin{equation*} \label{Eq:Overdamping-Decreasing_Integrable-Elliptic-WKB}
-|\xi|^2 + \dfrac{b(t)g(t)}{2}|\xi|^2 + \dfrac{g(t)^2}{4}|\xi|^4 + \frac{g'(t)}{2}|\xi|^2 +  \dfrac{b(t)^2}{4} + \frac{b'(t)}{2} \geq C_0>0.
\end{equation*}
This shows that the equation is uniformly elliptic, and thus the analysis must rely entirely on techniques from elliptic WKB-analysis.
\begin{theorem} \label{Theorem_Overdamping-Decreasing_Integrable-3}
Let us consider the Cauchy problem
\begin{equation*}
\begin{cases}
u_{tt} - \Delta u + b(t)u_t - g(t)\Delta u_t=0, &(t,x) \in [0,\infty) \times \mathbb{R}^n, \\
u(0,x)= u_0(x),\quad u_t(0,x)= u_1(x), &x \in \mathbb{R}^n.
\end{cases}
\end{equation*}
We assume that $b(t)=e^{e^t}$ and $g(t)=2e^{-t}$. Then, we have the following estimates for Sobolev solutions:
\begin{align*}
\|\,|D|^{|\beta|}u(t,\cdot)\|_{L^2} &\lesssim \|u_0\|_{\dot{H}^{|\beta|}} + \|u_1\|_{\dot{H}^{|\beta|-2}} \quad \mbox{for} \quad |\beta|\geq 2, \\
\|\,|D|^{|\beta|}u_t(t,\cdot)\big\|_{L^2} &\lesssim b(t)\big( \|u_0\|_{\dot{H}^{|\beta|}} + \|u_1\|_{\dot{H}^{|\beta|-2}} \big) + g(t)\big( \|u_0\|_{\dot{H}^{|\beta|+2}} + \|u_1\|_{\dot{H}^{|\beta|}} \big) \quad \mbox{for} \quad |\beta|\geq 2.
\end{align*}
\end{theorem}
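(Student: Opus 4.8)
The plan is to exploit the uniform ellipticity of the transformed equation \eqref{Eq:Overdamping1}. Writing the coefficient of $w$ in the form $\big(\tfrac{b(t)}{2}+\tfrac{g(t)}{2}|\xi|^2\big)^2-|\xi|^2+\tfrac{g'(t)}{2}|\xi|^2+\tfrac{b'(t)}{2}$, the identity $b(t)g(t)=2e^{e^t-t}\ge 2$ forces, via the AM--GM inequality, $\tfrac{b(t)}{2}+\tfrac{g(t)}{2}|\xi|^2\ge\sqrt{b(t)g(t)}\,|\xi|\ge\sqrt2\,|\xi|$, hence $\big(\tfrac{b(t)}{2}+\tfrac{g(t)}{2}|\xi|^2\big)^2-|\xi|^2\ge\tfrac12\big(\tfrac{b(t)}{2}+\tfrac{g(t)}{2}|\xi|^2\big)^2$. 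Combined with $b'(t)=b(t)e^t=o(b^2(t))$ and $g'(t)=-g(t)=o(1)$ this shows that the full symbol is positive and comparable to $(b(t)+g(t)|\xi|^2)^2$ on all of $[0,\infty)\times\mathbb{R}^n$. Consequently, in contrast to every previous case, there is no hyperbolic, reduced, or pseudo-differential zone: the analysis is carried out by elliptic WKB on the whole extended phase space, and no gluing procedure is needed.

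Concretely, I would set $\langle\xi\rangle_{b(t),g(t)}:=\sqrt{\big(\tfrac{b(t)}{2}+\tfrac{g(t)}{2}|\xi|^2\big)^2-|\xi|^2}$, record $\langle\xi\rangle_{b(t),g(t)}\approx b(t)+g(t)|\xi|^2$, and introduce the elliptic symbol classes $S^\ell_{\mathrm{ell}}\{m_1,m_2\}$ exactly as in Definition \ref{Def:Overdamping-increasing-symbol}. The crucial symbolic fact $S^0_{\mathrm{ell}}\{-1,2\}\hookrightarrow L^\infty_\xi L^1_t$ (the analogue of the last item of Proposition \ref{Prop:Overdamping-symbol}) must be re-verified with the explicit coefficients: after substituting $g'(t)=-g(t)$, $b'(t)=b(t)e^t$ and using the cross term $bg|\xi|^2$ inside $(b+g|\xi|^2)^2$, the bad integrands are dominated by quantities of the type $g(t)/\langle\xi\rangle_{b,g}$ and $1/b(t)$, which are integrable because $g\in L^1([0,\infty))$ (condition \textbf{(E2)}) and $\int_0^\infty 1/b(\tau)\,d\tau<\infty$ (condition \textbf{(OD1)}). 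Then, with the micro-energy $W=(\gamma(t,\xi)w,D_tw)^{\mathrm T}$, $\gamma(t,\xi)=\tfrac{b(t)+g(t)|\xi|^2}{2}$, I would rewrite \eqref{Eq:Overdamping1} as a first order system, diagonalize its principal skew-symmetric part by $M=\left(\begin{smallmatrix} i & -i \\ 1 & 1 \end{smallmatrix}\right)$, and perform the standard second step of diagonalization with $N^{(1)}=\mathrm{antidiag}(\mathcal R_{12}/\delta,-\mathcal R_{21}/\delta)$, $\delta\sim b+g|\xi|^2$, mirroring the proof of Proposition \ref{Prop_Overdapming_EstZell}. A Neumann series argument then yields
\[ \big(|E^W_{\mathrm{ell}}(t,s,\xi)|\big)\lesssim\frac{b(t)+g(t)|\xi|^2}{b(s)+g(s)|\xi|^2}\exp\Big(\tfrac12\int_s^t\big(b(\tau)+g(\tau)|\xi|^2\big)\,d\tau\Big)\begin{pmatrix}1&1\\1&1\end{pmatrix}. \]
Transforming back through $w(t,\xi)=\exp\big(\tfrac12\int_0^t(b(\tau)+g(\tau)|\xi|^2)\,d\tau\big)\hat u(t,\xi)$ cancels the exponential weight; using in addition $\langle\xi\rangle_{b(t),g(t)}-\tfrac{b(t)+g(t)|\xi|^2}{2}\le -c\,\tfrac{|\xi|^2}{b(t)+g(t)|\xi|^2}$ (proved via $\sqrt{x+y}\le\sqrt x+\tfrac{y}{2\sqrt x}$, as in Lemma \ref{Lemma_Overdamping-Integrable_Transf-back}) and the Duhamel/multiplier refinement of the lower row (as in Proposition \ref{Prop:Overdamping-Integrable-Refined-Zell}), one arrives, with $s=0$, at
\[ |\xi|^{|\beta|}|\hat u(t,\xi)|\lesssim |\xi|^{|\beta|}|\hat u_0(\xi)|+|\xi|^{|\beta|-2}|\hat u_1(\xi)|,\qquad |\xi|^{|\beta|}|\hat u_t(t,\xi)|\lesssim\big(b(t)+g(t)|\xi|^2\big)\big(|\xi|^{|\beta|}|\hat u_0(\xi)|+|\xi|^{|\beta|-2}|\hat u_1(\xi)|\big), \]
where the factor $\exp\big(-c|\xi|^2\int_0^t\tfrac{d\tau}{b(\tau)+g(\tau)|\xi|^2}\big)\le 1$ (so boundedness, no decay, as expected for an over-damping producing friction). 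Splitting $b(t)+g(t)|\xi|^2$ and invoking Plancherel's theorem gives the stated $\dot H$-estimates for $|D|^{|\beta|}u$ and $|D|^{|\beta|}u_t$ with $|\beta|\ge 2$.

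The main obstacle I expect is of the "check that the elliptic machinery still closes" type, on two fronts. First, verifying $S^0_{\mathrm{ell}}\{-1,2\}\hookrightarrow L^\infty_\xi L^1_t$ against the doubly exponential growth of $b$, where all cross terms between $b,b',g,g'$ have to be balanced using $bg|\xi|^2$ and the over-damping integral $\int 1/b$. Second, and more delicate since one now works on all of $[0,\infty)\times\mathbb{R}^n$ rather than on a zone carved out by a large constant $N$: showing that every smallness requirement in the two-step diagonalization (in particular the uniform invertibility of $N_1=I+N^{(1)}$) holds globally; this hinges precisely on $b(t)g(t)\ge 2$ and AM--GM applied to $(b+g|\xi|^2)^2\ge 2bg|\xi|^2$, while the residual compact part $\{t\le t_0\}$ is harmless because there the fundamental solution is continuous, hence bounded. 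Once these points are settled, the remaining steps are routine repetitions of the elliptic-zone arguments already carried out in Sections \ref{Subsection_Overdamping_Zell} and \ref{Sect-Overdamping-Integrable-Zell}.
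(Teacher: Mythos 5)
Your plan is essentially the paper's plan: observe that $b(t)g(t)=2e^{e^t-t}\ge 2$ makes the transformed symbol uniformly elliptic, so there is a single zone and the whole argument is carried out by elliptic WKB on all of $[0,\infty)\times\mathbb{R}^n$, with the same two-step diagonalization, the same symbol classes, and the same fundamental solution bound $\frac{b(t)+g(t)|\xi|^2}{b(s)+g(s)|\xi|^2}\exp(\frac12\int_s^t(b+g|\xi|^2))$. The final estimates you reach are identical to those in the paper.

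The one place where you over-engineer is the transformation back to $\hat u$. You invoke the auxiliary symbol $\langle\xi\rangle_{b(t),g(t)}$, the inequality $\langle\xi\rangle_{b,g}-\frac{b+g|\xi|^2}{2}\le -c|\xi|^2/(b+g|\xi|^2)$ (Lemma \ref{Lemma_Overdamping-Integrable_Transf-back} style), and a Duhamel refinement of the second row (Proposition \ref{Prop:Overdamping-Integrable-Refined-Zell} style). None of this is needed here and the paper does not use it: since the micro-energy is $\gamma(t,\xi)w=\frac{b(t)+g(t)|\xi|^2}{2}w$ and not $\langle\xi\rangle_{b,g}\,w$, the backward substitution $w=\exp(\frac12\int_0^t(b+g|\xi|^2))\hat u$ cancels the exponential in the fundamental-solution bound exactly, and the two stated estimates fall out immediately (using $\gamma(0,\xi)\gtrsim |\xi|^2$ to convert $1/\gamma(0,\xi)$ into $|\xi|^{-2}$). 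The $\langle\xi\rangle_{b,g}$-based gymnastics are what the paper uses for the marginal case $b(t)g(t)\equiv\tfrac12$ of Theorem \ref{Theorem_Overdamping-Decreasing_Integrable}, where the extra exponential weight actually survives and a refined second-row estimate is needed to avoid a loss of $|\xi|$; in the present $b(t)g(t)\ge 2$ case those complications do not arise. You correctly anticipate that the surviving exponential factor is bounded by $1$, but the point is that it in fact disappears identically.

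One more minor imprecision: the justification of $S^0_{\mathrm{ell}}\{-1,2\}\hookrightarrow L^\infty_\xi L^1_t$ does not quite come from ``$g\in L^1$ and $\int 1/b<\infty$''. When you expand $(b'+g'|\xi|^2)^2/(b+g|\xi|^2)^3$ with $b'=e^tb$, $g'=-g$ and use $b'g'=-2b$, the dominant term is $e^{2t}/b(t)=e^{2t}e^{-e^t}$, and the relevant fact is the integrability of $e^{2t}e^{-e^t}$ (which is genuinely stronger than $\int 1/b<\infty$). This is precisely the computation the paper carries out in Proposition \ref{Prop:Overdamping-Integrable-symbol-Zod}.
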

\begin{proof}
We introduce the following family of symbol classes in the extended phase space.
\begin{definition} \label{Def:Overdamping-Integrable-symbol-Zod}
A function $f=f(t,\xi)$ belongs to the symbol class $S_{\text{ell}}^\ell\{m_1,m_2\}$ if it holds
\begin{equation*}
|D_t^kf(t,\xi)|\leq C_{k}\big( b(t)+g(t)|\xi|^2 \big)^{m_1}\Big( \frac{b'(t)+g'(t)|\xi|^2}{b(t)+g(t)|\xi|^2} \Big)^{m_2+k}
\end{equation*}
for all $(t,\xi)\in [0,\infty)\times \mathbb{R}^n$ and all $k\leq \ell$.
\end{definition}
Thus, we may conclude the following rules from the definition of the symbol classes.
\begin{proposition} \label{Prop:Overdamping-Integrable-symbol-Zod}
The following statements are true:
\begin{itemize}
\item $S_{\text{ell}}^\ell\{m_1,m_2\}$ is a vector space for all nonnegative integers $\ell$;
\item $S_{\text{ell}}^\ell\{m_1,m_2\}\cdot S_{\text{ell}}^{\ell}\{m_1',m_2'\}\hookrightarrow S_{\text{ell}}^{\ell}\{m_1+m_1',m_2+m_2'\}$;
\item $D_t^kS_{\text{ell}}^\ell\{m_1,m_2\}\hookrightarrow S_{\text{ell}}^{\ell-k}\{m_1,m_2+k\}$
for all nonnegative integers $\ell$ with $k\leq \ell$;
\item $S_{\text{ell}}^{0}\{-1,2\}\hookrightarrow L_{\xi}^{\infty}L_t^1([0,\infty))$.
\end{itemize}
\end{proposition}
\begin{proof}
Let us verify the last statement. Indeed, if $f=f(t,\xi)\in S_{\text{ell}}^{0}\{-1,2\}$, then we have
\begin{align*}
\Big| \frac{( b'(t)+g'(t)|\xi|^2)^2}{(b(t)+g(t)|\xi|^2)^3} \Big| &\leq \frac{b'(t)^2+2b'(t)|g'(t)|\xi|^2+g'(t)^2|\xi|^4}{(b(t)+g(t)|\xi|^2)^3}  \\
& = \frac{e^{2t}b^2(t)+4b(t)|\xi|^2+g^2(t)|\xi|^4}{(b(t)+g(t)|\xi|^2)^3} \\
& \leq \frac{e^{2t}b^2(t)}{b^3(t)} + \frac{4b(t)|\xi|^2}{b^2(t)g(t)|\xi|^2} + \frac{g^2(t)|\xi|^4}{b(t)g^2(t)|\xi|^4} \\
& = \frac{e^{2t}}{b(t)} + \frac{4}{b(t)g(t)} + \frac{1}{b(t)} = e^{2t}e^{-e^t} + 2e^te^{-e^t} + e^{-e^t} \leq e^{2t}e^{-e^t},
\end{align*}
where since $b(t)=e^{e^t}$ and $g(t)=2e^{-t}$, we used the fact that $b'(t)=e^tb(t)$, $g'(t)=-g(t)$ and $b'(t)g'(t)=-2e^{e^t}=-2b(t)$. Hence, we get
\begin{align*}
\int_{0}^{\infty}|f(\tau,\xi)|d\tau  & \leq \int_{0}^{\infty}e^{2\tau}e^{-e^\tau}d\tau \leq 1.
\end{align*}
\end{proof}
Taking account of \eqref{Eq:Effective-Overdamping-Dfrom}, we choose the following micro-energy:
\[ W=W(t,\xi) := \Big[ \big( \frac{b(t)}{2}+\frac{g(t)|\xi|^2}{2} \big)w,D_tw \Big]^{\text{T}}. \]
Then, we obtain that $W=W(t,\xi)$ satisfies the following system of first order:
\begin{equation*} \label{Eq:Overdamping-Integrable-System-Zod}
D_tW=\underbrace{\left[ \left( \begin{array}{cc}
0 & \dfrac{g(t)|\xi|^2}{2}+\dfrac{b(t)}{2} \\
-\dfrac{g(t)|\xi|^2}{2}-\dfrac{b(t)}{2} & 0
\end{array} \right) + \left( \begin{array}{cc}
\dfrac{D_t(b(t)+g(t)|\xi|^2)}{b(t)+g(t)|\xi|^2} & 0 \\
-\dfrac{(g'(t)-2)|\xi|^2}{b(t)+g(t)|\xi|^2}-\dfrac{b'(t)}{b(t)+g(t)|\xi|^2} & 0
\end{array} \right)\right]}_{A_W}W.
\end{equation*}
We want to estimate the fundamental solution $E_W=E_W(t,s,\xi)$ to the above system, namely, the solution to
\begin{equation*}
D_tE_W(t,s,\xi)=A_W(t,\xi)E_W(t,s,\xi), \quad E_W(s,s,\xi)=I \quad \mbox{for any} \quad t\geq s\geq 0.
\end{equation*}
We denote by $M$ the matrix consisting of eigenvectors of the first matrix on the right-hand side and its inverse matrix
\[ M = \left( \begin{array}{cc}
i & -i \\
1 & 1
\end{array} \right), \qquad M^{-1}=\frac{1}{2}\left( \begin{array}{cc}
-i & 1 \\
i & 1
\end{array} \right). \]
Then, defining $W^{(0)}:=M^{-1}W$ we get the system
\begin{equation*}
D_tW^{(0)}=\big( \mathcal{D}(t,\xi)+\mathcal{R}(t,\xi) \big)W^{(0)},
\end{equation*}
where
\begin{align*}
\mathcal{D}(t,\xi) &= \left( \begin{array}{cc}
-i\Big( \dfrac{g(t)}{2}|\xi|^2 + \dfrac{b(t)}{2} \Big) & 0 \\
0 & i\Big( \dfrac{g(t)}{2}|\xi|^2 + \dfrac{b(t)}{2} \Big)
\end{array} \right), \\
\mathcal{R}_1(t,\xi) &= \frac{1}{2} \left( \begin{array}{cc}
\dfrac{D_t(b(t)+g(t)|\xi|^2)}{b(t)+g(t)|\xi|^2}-i\dfrac{(g'(t)-2)|\xi|^2}{b(t)+g(t)|\xi|^2} & -\dfrac{D_t(b(t)+g(t)|\xi|^2)}{b(t)+g(t)|\xi|^2}+i\dfrac{(g'(t)-2)|\xi|^2}{b(t)+g(t)|\xi|^2} \\
-\dfrac{D_t(b(t)+g(t)|\xi|^2)}{b(t)+g(t)|\xi|^2}-i\dfrac{(g'(t)-2)|\xi|^2}{b(t)+g(t)|\xi|^2} & \dfrac{D_t(b(t)+g(t)|\xi|^2)}{b(t)+g(t)|\xi|^2}+i\dfrac{(g'(t)-2)|\xi|^2}{b(t)+g(t)|\xi|^2}
\end{array} \right), \\
\mathcal{R}_2(t,\xi) &= \frac{1}{2} \left( \begin{array}{cc}
-i\dfrac{b'(t)}{b(t)+g(t)|\xi|^2} & i\dfrac{b'(t)}{b(t)+g(t)|\xi|^2} \\
-i\dfrac{b'(t)}{b(t)+g(t)|\xi|^2} & i\dfrac{b'(t)}{b(t)+g(t)|\xi|^2}
\end{array} \right),
\end{align*}
where $\mathcal{D}(t,\xi)\in S_{\text{ell}}^2\{1,0\}$ and with $\mathcal{R}:=\mathcal{R}_1+\mathcal{R}_2\in S_{\text{ell}}^1\{0,1\}$.
\medskip

We perform one more step of diagonalization procedure. We define $F_0(t,\xi):=\diag\mathcal{R}(t,\xi)$. The difference of the diagonal entries of the matrix $\mathcal{D}(t,\xi)+F_0(t,\xi)$ is
\begin{align*}
b(t) + g(t)|\xi|^2 + \frac{b'(t)+(g'(t)-2)|\xi|^2}{b(t)+g(t)|\xi|^2} &= e^{e^t} + 2e^{-t}|\xi|^2 + \frac{e^te^{e^t}+(-2e^{-t}-2)|\xi|^2}{e^{e^t}+2e^{-t}|\xi|^2} \\
& \leq  e^{e^t} + 2e^{-t}|\xi|^2 + \frac{e^te^{e^t}+2e^{-t}|\xi|^2}{e^{e^t}+2e^{-t}|\xi|^2} \\
& \leq e^{e^t} + 2e^{-t}|\xi|^2 = b(t) + g(t)|\xi|^2 =:i\eta(t,\xi).
\end{align*}
Now we introduce the matrix $N^{(1)}=N^{(1)}(t,\xi)$ defined by
\begin{align*}
&N^{(1)}(t,\xi) := \left( \begin{array}{cc}
0 & -\dfrac{\mathcal{R}_{12}}{\eta(t,\xi)} \\
\dfrac{\mathcal{R}_{21}}{\eta(t,\xi)} & 0
\end{array} \right) \\
& = \left( \begin{array}{cc}
0 & i\dfrac{D_t(b(t)+g(t)|\xi|^2)}{2(b(t)+g(t)|\xi|^2)^2}-\dfrac{(g'(t)-2)|\xi|^2+b'(t)}{2(b(t)+g(t)|\xi|^2)^2} \\
i\dfrac{D_t(b(t)+g(t)|\xi|^2)}{2(b(t)+g(t)|\xi|^2)^2}+\dfrac{(g'(t)-2)|\xi|^2+b'(t)}{2(b(t)+g(t)|\xi|^2)^2} & 0
\end{array} \right),
\end{align*}
where $N^{(1)}(t,\xi)\in S_{\text{ell}}^1\{-1,1\}$. The matrix $N_1(t,\xi) := I + N^{(1)}(t,\xi)$ belongs to $S_{\text{ell}}^1\{0,0\}$ and is invertible with uniformly bounded inverse matrix $N_1^{-1}=N_1^{-1}(t,\xi)$. So, we have the following estimates for large time $t$:
\begin{align*}
\Big|\dfrac{D_t(b(t)+g(t)|\xi|^2)}{(b(t)+g(t)|\xi|^2)^2}\Big| &\leq \dfrac{b'(t)+|g'(t)|\,|\xi|^2}{(b(t)+g(t)|\xi|^2)^2} = \dfrac{e^tb(t)+g(t)|\xi|^2}{(b(t)+g(t)|\xi|^2)^2} \leq \frac{e^tb(t)}{b^2(t)} + \frac{g(t)|\xi|^2}{b(t)g(t)|\xi|^2} = \frac{e^t}{b(t)} + \frac{1}{b(t)}<1.
\end{align*}
Let
\begin{align*}
B^{(1)}(t,\xi) &= D_tN^{(1)}(t,\xi)-( \mathcal{R}(t,\xi)-F_0(t,\xi))N^{(1)}(t,\xi), \\
\mathcal{R}_3(t,\xi) &= -N_1^{-1}(t,\xi)B^{(1)}(t,\xi)\in S_{\text{ell}}^0\{-1,2\}.
\end{align*}
Then, we have the following operator identity:
\begin{equation*}
( D_t-\mathcal{D}(t,\xi)-\mathcal{R}(t,\xi))N_1(t,\xi)=N_1(t,\xi)( D_t-\mathcal{D}(t,\xi)-F_0(t,\xi)-\mathcal{R}_3(t,\xi)).
\end{equation*}
\begin{proposition} \label{Prop:Overdamping-Decreasing-Est-Zod}
The fundamental solution $E^{W}=E^{W}(t,s,\xi)$ to the transformed operator
\[ D_t-\mathcal{D}(t,\xi)-F_0(t,\xi)-\mathcal{R}_3(t,\xi) \]
can be estimated by
\begin{equation*}
(|E^{W}(t,s,\xi)|) \lesssim \frac{b(t)+g(t)|\xi|^2}{b(s)+g(s)|\xi|^2}\exp\Big( \frac{1}{2}\int_{s}^{t}\big( b(\tau)+g(\tau)|\xi|^2 \big)d\tau \Big)
\left( \begin{array}{cc}
1 & 1 \\
1 & 1
\end{array} \right),
\end{equation*}
with $(t,\xi),(s,\xi)\in [0,\infty)\times \mathbb{R}^n$.
\end{proposition}
\begin{proof}
We transform the system for $E^{W}=E^{W}(t,s,\xi)$ to an integral equation for a new matrix-valued function $\mathcal{Q}=\mathcal{Q}(t,s,\xi)$. If we differentiate the term
\[ \exp \bigg\{ -i\int_{s}^{t}\big( \mathcal{D}(\tau,\xi)+F_0(\tau,\xi) \big)d\tau \bigg\}E^{W}(t,s,\xi), \]
and then, integrate on $[s,t]$, we obtain that $E^{W}=E^{W}(t,s,\xi)$ satisfies the following integral equation:
\begin{align*}
E^{W}(t,s,\xi) & = \exp\bigg\{ i\int_{s}^{t}\big( \mathcal{D}(\tau,\xi)+F_0(\tau,\xi) \big)d\tau \bigg\}E^{W}(s,s,\xi)\\
& \quad + i\int_{s}^{t} \exp \bigg\{ i\int_{\theta}^{t}\big( \mathcal{D}(\tau,\xi)+F_0(\tau,\xi) \big)d\tau \bigg\}\mathcal{R}_3(\theta,\xi)E^{W}(\theta,s,\xi)\,d\theta.
\end{align*}
Let us define
\[ \mathcal{Q}(t,s,\xi)=\exp\bigg\{ -\int_{s}^{t}\beta(\tau,\xi)d\tau \bigg\} E^{W}(t,s,\xi), \]
with a suitable $\beta=\beta(t,\xi)$ which will be fixed later. It satisfies the new integral equation
\begin{align*}
\mathcal{Q}(t,s,\xi)=&\exp \bigg\{ \int_{s}^{t}\big( i\mathcal{D}(\tau,\xi)+iF_0(\tau,\xi)-\beta(\tau,\xi)I \big)d\tau \bigg\}\\
& \quad + \int_{s}^{t} \exp \bigg\{ \int_{\theta}^{t}\big( i\mathcal{D}(\tau,\xi)+iF_0(\tau,\xi)-\beta(\tau,\xi)I \big)d\tau \bigg\}\mathcal{R}_3(\theta,\xi)\mathcal{Q}(\theta,s,\xi)\,d\theta.
\end{align*}
The function $\mathcal{R}_3=\mathcal{R}_3(\theta,\xi)\in S_{\text{ell}}^0\{-1,2\}$ is uniformly integrable because of the last property of Proposition \ref{Prop:Overdamping-Integrable-symbol-Zod}. Hence, if the exponential term is bounded, then the solution $\mathcal{Q}=\mathcal{Q}(t,s,\xi)$ of the integral equation is uniformly bounded for a suitable weight $\beta=\beta(t,\xi)$.\\
The main entries of the diagonal matrix $i\mathcal{D}(t,\xi)+iF_0(t,\xi)$ are given by
\begin{align*}
(I) &= \dfrac{g(t)|\xi|^2}{2} + \dfrac{b(t)}{2}+\dfrac{g'(t)|\xi|^2+b'(t)}{2(b(t)+g(t)|\xi|^2)}+\dfrac{(g'(t)-2)|\xi|^2+b'(t)}{2(b(t)+g(t)|\xi|^2)},\\
(II) &= -\dfrac{g(t)|\xi|^2}{2} - \dfrac{b(t)}{2}+\dfrac{g'(t)|\xi|^2+b'(t)}{2(b(t)+g(t)|\xi|^2)}-\dfrac{(g'(t)-2)|\xi|^2+b'(t)}{2(b(t)+g(t)|\xi|^2)}.
\end{align*}
From the difference of $(II)-(I)$, we may see that the term $(I)$ is dominant with respect to (II). Therefore, we choose the weight $\beta=\beta(t,\xi)=(I)$. By this choice, we get
\[ i\mathcal{D}(\tau,\xi)+iF_0(\tau,\xi)-\beta(\tau,\xi)I = \left( \begin{array}{cc}
0 & 0 \\
0 & -\big( b(t)+g(t)|\xi|^2 \big)-\dfrac{(g'(t)-2)|\xi|^2+b'(t)}{b(t)+g(t)|\xi|^2}
\end{array} \right). \]
It follows
\begin{align*}
H(t,s,\xi) & =\exp \bigg\{ \int_{s}^{t}\big( i\mathcal{D}(\tau,\xi)+iF_0(\tau,\xi)-\beta(\tau,\xi)I \big)d\tau \bigg\}\\
& = \diag \bigg( 1, \exp \bigg\{ \int_{s}^{t}\Big( -\big( b(t)+g(t)|\xi|^2 \big)-\dfrac{(g'(t)-2)|\xi|^2+b'(t)}{b(t)+g(t)|\xi|^2} \Big)d\tau \bigg\} \bigg)\rightarrow \left( \begin{array}{cc}
1 & 0 \\
0 & 0
\end{array} \right)
\end{align*}
as $t\rightarrow \infty$. Hence, the matrix $H=H(t,s,\xi)$ is uniformly bounded for $(s,\xi),(t,\xi)\in [0,\infty)\times \mathbb{R}^n$. So, the representation of $\mathcal{Q}=\mathcal{Q}(t,s,\xi)$ by a Neumann series gives
\begin{align*}
\mathcal{Q}(t,s,\xi)=H(t,s,\xi)+\sum_{k=1}^{\infty}i^k\int_{s}^{t}H(t,t_1,\xi)\mathcal{R}_3(t_1,\xi)&\int_{s}^{t_1}H(t_1,t_2,\xi)\mathcal{R}_3(t_2,\xi) \\
& \cdots \int_{s}^{t_{k-1}}H(t_{k-1},t_k,\xi)\mathcal{R}_3(t_k,\xi)dt_k\cdots dt_2dt_1.
\end{align*}
Then, this series is convergent, since $\mathcal{R}_3=\mathcal{R}_3(t,\xi)$ is uniformly integrable. Hence, from the last considerations we may conclude
\begin{align*}
E^{W}(t,s,\xi)&=\exp \bigg\{ \int_{s}^{t}\beta(\tau,\xi)d\tau \bigg\}\mathcal{Q}(t,s,\xi)\\
& = \exp \bigg\{ \int_{s}^{t}\bigg( \frac{g(\tau)|\xi|^2}{2}+\frac{b(\tau)}{2}+\frac{b'(\tau)+g'(\tau)|\xi|^2}{2( b(\tau)+g(\tau)|\xi|^2)}+\frac{( g'(\tau)-2 )|\xi|^2+b'(\tau)}{2( b(\tau)+g(\tau)|\xi|^2)} \bigg)d\tau \bigg\}\mathcal{Q}(t,s,\xi),
\end{align*}
where $\mathcal{Q}=\mathcal{Q}(t,s,\xi)$ is a uniformly bounded matrix. Then, it follows
\begin{align*}
(|E^{W}(t,s,\xi)|) & \lesssim \exp \bigg\{ \int_{s}^{t}\bigg( \frac{g(\tau)|\xi|^2}{2}+\frac{b(\tau)}{2}+\frac{b'(\tau)+g'(\tau)|\xi|^2}{ b(\tau)+g(\tau)|\xi|^2}-\frac{|\xi|^2}{b(\tau)+g(\tau)|\xi|^2} \bigg\} \left( \begin{array}{cc}
1 & 1 \\
1 & 1
\end{array} \right) \\
& \lesssim \frac{b(t)+g(t)|\xi|^2}{b(s)+g(s)|\xi|^2} \exp \bigg( \frac{1}{2}\int_{s}^{t} ( b(\tau)+g(\tau)|\xi|^2)d\tau \bigg)\left( \begin{array}{cc}
1 & 1 \\
1 & 1
\end{array} \right).
\end{align*}
This completes the proof.
\end{proof}
Now let us come back to
\begin{equation} \label{Eq:Overdamping-Decreasing-Back-Zod}
W(t,\xi) = E_W(t,s,\xi)W(s,\xi) \qquad \text{for all} \qquad 0\leq s\leq t,
\end{equation}
that is,
\begin{align*}
\left( \begin{array}{cc}
\gamma(t,\xi)w(t,\xi) \\
D_t w(t,\xi)
\end{array} \right) = E_W(t,s,\xi)\left( \begin{array}{cc}
\gamma(s,\xi)w(s,\xi) \\
D_tw(s,\xi)
\end{array} \right),
\end{align*}
where $\gamma=\gamma(t,\xi):=\frac{b(t)}{2}+\frac{g(t)|\xi|^2}{2}$. Therefore, from Proposition \ref{Prop:Overdamping-Decreasing-Est-Zod} and \eqref{Eq:Overdamping-Decreasing-Back-Zod} we may conclude the following estimates for $0\leq s \leq t$:
\begin{align*}
\gamma(t,\xi)|w(t,\xi)| & \lesssim \frac{b(t)+g(t)|\xi|^2}{b(s)+g(s)|\xi|^2}\exp\Big( \frac{1}{2}\int_{s}^{t}\big( b(\tau)+g(\tau)|\xi|^2 \big)d\tau \Big)\big( \gamma(s,\xi)|w(s,\xi)| + |w_t(s,\xi)| \big), \\
|w_t(t,\xi)| & \lesssim \frac{b(t)+g(t)|\xi|^2}{b(s)+g(s)|\xi|^2}\exp\bigg( \frac{1}{2}\int_{s}^{t}\big( b(\tau)+g(\tau)|\xi|^2 \big)d\tau \bigg)\big( \gamma(s,\xi)|w(s,\xi)| + |w_t(s,\xi)| \big).
\end{align*}
Using the backward transformation
\[ w(t,\xi)=\exp\Big( \frac{1}{2} \int_0^t \big( b(\tau)+g(\tau)|\xi|^2 \big)d\tau \Big)\hat{u}(t,\xi),  \]
we arrive immediately at the following estimates for $(t,\xi)\in[0,\infty)\times \mathbb{R}^n$:
\begin{align*}
|\xi|^{|\beta|}|\hat{u}(t,\xi)| & \lesssim |\xi|^{|\beta|}|\hat{u}_0(\xi)| + |\xi|^{|\beta|-2}|\hat{u}_1(\xi)| \quad \mbox{for} \quad |\beta|\geq 2, \\
|\xi|^{|\beta|}|\hat{u}_t(t,\xi)| & \lesssim \big( b(t)+g(t)|\xi|^2 \big)|\xi|^{|\beta|}|\hat{u}_0(\xi)| + \big( b(t)+g(t)|\xi|^2 \big)|\xi|^{|\beta|-2}|\hat{u}_1(\xi)| \quad \mbox{for} \quad |\beta|\geq 2,
\end{align*}
which are our desired estimates in the theorem and the proof is completed.
\end{proof}

\section{Final remarks} \label{Section5}
\begin{remark}[Evaluation of results] \label{Rem:FinalRemarks-1}
Up to now we have results for the following general cases:
\begin{itemize}
\item $b(t)u_t$ scattering producing and $1/g \in L^1(\mathbb{R}^+)$ or $g \in L^1(\mathbb{R}^+)$,
\item $b(t)u_t$ non-effective and $1/g \in L^1(\mathbb{R}^+)$ or $g \in L^1(\mathbb{R}^+)$,
\item $b(t)u_t$ effective and $1/g \in L^1(\mathbb{R}^+)$ or $g \in L^1(\mathbb{R}^+)$,
\item $b(t)u_t$ overdamping producing and $1/g \in L^1(\mathbb{R}^+)$ and for special cases $g \in L^1(\mathbb{R}^+)$.
\end{itemize}
\end{remark}
\begin{remark}[Open Problem 1] \label{openproblem1}
As presented in Section \ref{Sec_Intro}, we classified 16 cases based on the structure of the friction term $b(t)u_t$ and of the viscoelastic damping term $-g(t)\Delta u_t$. In this paper, we have explored 8 of these cases (see Remark \ref{Rem:FinalRemarks-1}), leaving the remaining 8 for future work. Specifically, we will investigate the following cases of the time-dependent coefficient $g=g(t)$:
\begin{enumerate}
\item models with non-integrable and decreasing time-dependent coefficient $g=g(t)$,
\item models with non-integrable and slowly increasing time-dependent coefficient $g=g(t)$,
\end{enumerate}
with the following classification of the term $b(t)u_t$:
\begin{enumerate}
\item scattering producing to free wave equation,
\item non-effective dissipation,
\item effective dissipation,
\item over-damping producing.
\end{enumerate}
\end{remark}
\begin{remark} [Open Problem 2] \label{openproblem2}
Other interesting problems to study are the following Cauchy problems for the semilinear equations:
\begin{equation} \label{Eq:OpenProblem}
\begin{cases}
u_{tt}- \Delta u -g(t)\Delta u_t=|u|^p, &(t,x) \in (0,\infty) \times \mathbb{R}^n, \\
u(0,x) = u_0(x),\quad u_t(0,x) = u_1(x), &x \in \mathbb{R}^n,
\end{cases}
\end{equation}
and
\begin{equation} \label{Eq:OpenProblem1}
\begin{cases}
u_{tt}- \Delta u + b(t)u_t -g(t)\Delta u_t=|u|^p, &(t,x) \in (0,\infty) \times \mathbb{R}^n, \\
u(0,x) = u_0(x),\quad u_t(0,x) = u_1(x), &x \in \mathbb{R}^n,
\end{cases}
\end{equation}
with $p>1$. Very recently, decay estimates for solutions to the corresponding linear Cauchy problems \eqref{Eq:OpenProblem} and \eqref{Eq:OpenProblem1} have been derived by the authors of this paper in \cite{AslanReissig2023} and in the present work, specifically in the following cases:
\begin{enumerate}
\item models with non-integrable and decreasing time-dependent coefficient $g=g(t)$,
\item models with non-integrable and slowly increasing time-dependent coefficient $g=g(t)$.
\end{enumerate}
These results naturally motivate the investigation of the associated semilinear problems \eqref{Eq:OpenProblem} and \eqref{Eq:OpenProblem1} in regimes where decay estimates are available.
\end{remark}
\begin{remark} [Open Problem 3] \label{openproblem3}
Let us consider the semilinear model \eqref{Eq:OpenProblem1},
where the term $b(t)u_t$ is supposed to be effective in the sense of the paper \cite{Wirth-Effective=2007}.
If $g=g(t)\equiv 0$, then we have a critical exponent larger than $1$, this is the so-called Fujita exponent.
The question of the open problem 3 is as follows:
{\it Do we have any influence of the visco-elastic damping term $- g(t)\Delta u_t$ such that we have no longer a critical exponent larger than $1$?}
\end{remark}

\section*{Acknowledgments}
Halit S. Aslan was supported by Funda\c c\~ao de Amparo \`a Pesquisa do Estado de S\~ao Paulo (FAPESP) (Grant No. 2021/01743-3 and Grant No. 2023/07827-0).

\end{document}